\definecolor{darkgreen}{rgb}{0.0, 0.6, 0.13}
\newtheorem{thm}{Theorem}[section]
 \newtheorem{cor}[thm]{Corollary}
 \newtheorem{lem}[thm]{Lemma}
 \newtheorem{prop}[thm]{Proposition}
 \theoremstyle{definition}
 \newtheorem{df}[thm]{Definition}
 \theoremstyle{remark}
 \newtheorem{rem}[thm]{Remark}
 \numberwithin{equation}{section}
\newcommand{\Ab}{\mathbb A}
\newcommand{\Bb}{\mathbb B}
\newcommand{\Cb}{\mathbb C}
\newcommand{\Db}{\mathbb D}
\newcommand{\Eb}{\mathbb E}
\newcommand{\Mb}{\mathbb M}
\newcommand{\Nb}{\mathbb N}
\newcommand{\Pb}{\mathbb P}
\newcommand{\Qb}{\mathbb Q}
\newcommand{\Rb}{\mathbb R}
\newcommand{\Tb}{\mathbb T}
\newcommand{\Zb}{\mathbb Z}
\newcommand{\Ac}{\mathcal A}
\newcommand{\Bc}{\mathcal B}
\newcommand{\Cc}{\mathcal C}
\newcommand{\Dc}{\mathcal D}
\newcommand{\Ec}{\mathcal E}
\newcommand{\Fc}{\mathcal F}
\newcommand{\Gc}{\mathcal{G}}
\newcommand{\Hc}{\mathcal H}
\newcommand{\Ic}{\mathcal I}
\newcommand {\Jc}{\mathcal{J}}
\newcommand{\Kc}{\mathcal K}
\newcommand{\Lc}{\mathcal L}
\renewcommand{\Mc}{\mathcal M}
\newcommand{\Nc}{\mathcal N}
\newcommand{\Pc}{\mathcal P}
\newcommand{\Qc}{\mathcal Q}
\newcommand{\Rc}{\mathcal R}
\newcommand{\Sc}{\mathcal S}
\newcommand{\Tc}{\mathcal T}
\newcommand{\Uc}{\mathcal U}
\newcommand{\Vc}{\mathcal V}
\newcommand{\Wc}{\mathcal W}
\newcommand{\Xc}{\mathcal X}
\newcommand{\Yc}{\mathcal Y}
\newcommand{\Zc}{\mathcal Z}
\newcommand{\As}{\mathscr A}
\newcommand{\Bs}{\mathscr B}
\newcommand{\Cs}{\mathscr C}
\newcommand{\Ds}{\mathscr D}
\newcommand{\Es}{\mathscr E}
\newcommand{\Fs}{\mathscr F}
\newcommand{\Is}{\mathscr I}
\newcommand{\Ks}{\mathscr K}
\newcommand{\Ls}{\mathscr L}
\newcommand{\Ms}{\mathscr M}
\newcommand{\Ps}{\mathscr P}
\newcommand{\Qs}{\mathscr Q}
\newcommand{\Rs}{\mathscr R}
\newcommand{\Xs}{\mathscr X}
\newcommand{\Ys}{\mathscr Y}
\newcommand{\Zs}{\mathscr Z}
\newcommand{\Df}{\mathfrak D}
\newcommand{\Lf}{\mathfrak L}
\newcommand{\Nf}{\mathfrak N}
\newcommand{\Rf}{\mathfrak R}
\newcommand{\Sf}{\mathfrak S}
\newcommand{\Zf}{\mathfrak Z}
\newcommand{\bff}{\mathfrak b}
\newcommand{\cf}{\mathfrak c}
\newcommand{\ef}{\mathfrak e}
\newcommand{\ff}{\mathfrak f}
\newcommand{\hf}{\mathfrak h}
\newcommand{\lf}{\mathfrak l}
\newcommand{\mf}{\mathfrak m}
\newcommand{\nf}{\mathfrak n}
\newcommand{\pf}{\mathfrak p}
\newcommand{\qf}{\mathfrak q}
\newcommand{\rf}{\mathfrak r}
\newcommand{\vx}{\boldsymbol{x}}
\newcommand{\vy}{\boldsymbol{y}}
\newcommand{\vg}{\boldsymbol{g}}
\newcommand{\vh}{\boldsymbol{h}}
\newcommand{\vs}{\boldsymbol{s}}
\newcommand{\vxi}{\boldsymbol{\xi}}
\newcommand{\dirac}{\boldsymbol{\delta}}
\newcommand{\veta}{\boldsymbol{\eta}}
\newcommand{\vOmega}{\boldsymbol{\Omega}}
\newcommand{\valpha}{\boldsymbol{\alpha}}
\newcommand{\vbeta}{\boldsymbol{\beta}}
\newcommand{\vsigma}{\boldsymbol{\sigma}}
\newcommand{\vlambda}{\boldsymbol{\lambda}}
\begin{document}
\title{Full derivation of the wave kinetic equation}
\author{Yu Deng}
\address{\textsc{Department of Mathematics, University of Southern California, Los Angeles, CA, USA}}
\email{\texttt{yudeng@usc.edu}}
\author{Zaher Hani}
\address{\textsc{Department of Mathematics, University of Michigan, Ann Arbor, MI, USA}}
\email{\texttt{zhani@umich.edu}}
\begin{abstract} We provide the rigorous derivation of the wave kinetic equation from the cubic nonlinear Schr\"odinger (NLS) equation \emph{at the kinetic timescale}, under a particular \emph{scaling law} that describes the limiting process. This solves a main conjecture in the theory of \emph{wave turbulence}, i.e. the kinetic theory of nonlinear wave systems. Our result is the wave analog of Lanford's theorem on the derivation of the Boltzmann kinetic equation from particle systems, where in both cases one takes the thermodynamic limit as the size of the system diverges to infinity, and as the interaction strength of waves/radius of particles vanishes to $0$, according to a particular scaling law (Boltzmann-Grad in the particle case).

\smallskip
More precisely, in dimensions $d\geq 3$, we consider the (NLS) equation in a large box of size $L$ with a weak nonlinearity of strength $\alpha$. In the limit $L\to\infty$ and $\alpha\to 0$, under the scaling law $\alpha\sim L^{-1}$, we show that the long-time behavior of (NLS) is statistically described by the wave kinetic equation, with well justified approximation, up to times that are $O(1)$ (i.e independent of $L$ and $\alpha$) multiples of the kinetic timescale $T_{\text{kin}}\sim \alpha^{-2}$. This is the first result of its kind for any nonlinear dispersive system.
\end{abstract}

\maketitle
\tableofcontents

\section{Introduction}\label{intro} 
The kinetic theory of nonlinear wave systems is the formal basis of the non-equilibrium statistical physics of such systems. It is an extension of the kinetic framework, first laid out by Boltzmann in the context of particle systems, to nonlinear dispersive systems. The wave kinetic theory can be traced back to the work of Peierls in 1928 on anharmonic crystals \cite{Peierls}, which exhibited the very first wave kinetic equation (the phonon Boltzmann equation).
Soon after, the kinetic framework for waves was widely adopted in plasma theory \cite{Vedenov, Zaslavskii, Davidson,GS79}, water waves \cite{Hass1, Hass2, BennSaff, BennNewell}, and later formalized into a systematic approach to understand the effective long-time behavior of large systems of interacting waves undergoing weak nonlinear interactions \cite{ZLFBook,SpohnBE, Nazarenko}. This kinetic theory for waves came to be known as \emph{wave turbulence theory}, due to its surprising and profound implications on the spectral energy dynamics and cascades for nonlinear wave systems, similar to those made in Kolmogorov's theory of hydrodynamic turbulence.

\medskip 

The central object in wave turbulence theory is the \emph{wave kinetic equation} (WKE), which plays the analogous role of Boltzmann's kinetic equation for particles. The (WKE) was derived, at a heuristic level, in the physics literature to describe the effective behavior of the normal frequency amplitudes of solutions in some statistically averaged sense.
The analogy to Boltzmann's theory also comes from the \emph{thermodynamic limit} involved in both theories: The number of particles $N\to\infty$ in Boltzmann's theory is paralleled by the size $L\to\infty$ of the dispersive system in the wave kinetic theory, and the particle radius $r\to 0$ is paralleled by the strength of nonlinear wave interactions, which we shall denote by $\alpha\to 0$. A \emph{scaling law} is a rule that dictates how these two limits are taken; for example the well-known Boltzmann-Grad limit corresponds to the scaling law $N r^{d-1}\sim 1$ as $N\to \infty$ and $r\to 0$. 

\medskip

From the mathematical viewpoint, the fundamental problem is to give a rigorous justification or derivation of the wave kinetic equation starting from the nonlinear dispersive equation that governs the wave system as a first principle. This is Hilbert's Sixth Problem for the statistical theory of wave systems. It should be said, though, that this question is far from being a mere mathematical curiosity. In fact, it is a question that was posed by physicists as a means to better understand the exact regimes and limitations of the wave kinetic theory \cite{Nazarenko}. The particle analog of this problem is the rigorous derivation of the Boltzmann equation starting from the Newtonian dynamics of particles as a first principle. This was given by Lanford's celebrated theorem \cite{Lanford, CIP,GSRT}, which justifies the derivation in the above-mentioned Boltzmann-Grad scaling law where the particle number $N\to \infty$ and the particle size $r\to 0$ in such a way that $Nr^{d-1}\sim 1$. 

Despite being open for quite some time, progress on this problem for wave systems only started in the past twenty years. In part, this is due to the fact that it relied on techniques that didn't mature until then, like progress in the analysis of probabilistic nonlinear PDE, combinatorics of Feynman diagrams, and in some cases analytic number theory, all of which are components that address various facets of the problem. We shall survey the previous results leading up to this work in Section \ref{background}. In another part, as we shall see and explain below (see Section \ref{CriticalityComment}), the full resolution of this problem is a \emph{probabilistically-critical problem}, and prior to this work, no such result existed even in the parabolic setting.

We consider the nonlinear Schr\"odinger (NLS) equation as a {fundamental and prototypical system in nonlinear wave theory. This is partly due its unique \emph{universality property} in this class, in the sense that any Hamiltonian dispersive system gives (NLS) in an appropriate scaling limit (see \cite{SuSu})}. Our main result is a full rigorous derivation of the wave kinetic equation (WKE) up to $O(1)$ timescales. This means timescales that are independent of the asymptotic parameters involved in the thermodynamic limit, namely the size $L$ of the domain and the strength $\alpha$ of the nonlinearity. For the sake of definiteness, this will be done under the scaling law $\alpha L\sim 1$, which is of particular mathematical interest as we shall explain later. However, our approach is fairly general and allows treating some other scaling laws with minor modifications (cf. Section \ref{SLComment}).
\subsection{Statement of the main result} \subsubsection{(NLS) as the microscopic system}In dimension $d\geq 3$, consider the cubic nonlinear Schr\"{o}dinger equation
$$
(i\partial_t-\Delta)w+|w|^2w=0
$$
on a generic irrational torus of size $L\gg 1$. For convenience, we will adjust by dilations and work equivalently on the square torus $\Tb_L^d=[0,L]^d$ of size $L$, but with the twisted Laplacian
\begin{equation}\label{notations}\Delta_\beta=(2\pi)^{-1}(\beta^1 \partial_1^2+\cdots+\beta^d\partial_d^2).
\end{equation} Here $(2\pi)^{-1}$ is a normalizing constant, and $\beta=(\beta^1,\cdots,\beta^d)\in(\Rb^+)^d$ represents the aspect ratios of the torus. We assume $\beta$ is \emph{generic}, i.e. belongs to the complement of some Lebesgue null set $\Zf$, which is fixed by a set of explicit Diophantine conditions, stated precisely in Lemma \ref{genericity}. We will comment in Section \ref{SLComment} below in more detail on the necessity of this genericity condition, but roughly speaking, it is necessary for some scaling laws, including the one we impose in this paper, due to some number theoretic considerations. Other scaling laws, some of which can also be covered by our proof, do not require this genericity condition as we shall discuss later.

As mentioned above, the strength of the nonlinearity is the other asymptotic parameter in the wave kinetic theory. Of course, this strength is intimately tied to the size of solutions (say in terms of $L^2$ norm). To emphasize this size, we adopt the ansatz $w=\lambda u$ where $\lambda$ can be thought of as the the conserved $L^2$ norm of $w$. This leads us to study the equation
\begin{equation}\label{nls}\tag{NLS}
\left\{
\begin{split}&(i\partial_t-\Delta_\beta)u+\lambda^2|u|^2u=0,\quad x\in \Tb_L^d=[0,L]^d,\\
&u(0,x)=u_{\mathrm{in}}(x).
\end{split}
\right.
\end{equation} 
The defocusing sign of the nonlinearity adopted here is merely for concreteness purposes. The same results hold for the focusing case; this is due to the weak nonlinearity setting inherent in the wave kinetic theory we study here.

\medskip

The kinetic theory seeks to give the effective dynamics of frequency amplitudes $\mathbb E |\widehat u(t, k)|^2$ where\footnote{Here we note that one has freedom to choose a different normalization of the Fourier transform. We caution that, while this has no effect on the theory, it does change the expression for the strength of the nonlinearity $\alpha$ below, and hence the kinetic timescale $T_{\text{kin}}=1/2\alpha^2$, in terms of $\lambda$ and $L$. {For example, another common normalization is the one that puts $L^{-d/2}$ in front of the Fourier integral; there $\alpha$ would be $\lambda^2$ and $T_{\text{kin}}=1/2\alpha^{2}=1/2\lambda^4$.}} 
\begin{equation}\label{fourier}
\widehat u(t, k) =\int_{\Tb^d_L} u(t, x) e^{-2\pi i k\cdot x} \, dx, \qquad u(t,x) =\frac{1}{L^d}\sum_{k\in\Zb_L^d}\widehat{u}(t,k)e^{2\pi ik\cdot x},
\end{equation}
and the averaging happens over a random distribution of the initial data. Such random distribution is chosen in a way that allows for the kinetic description; we call such data \emph{well-prepared}. More precisely, we consider random homogeneous initial data given by
\begin{equation}
\label{data}\tag{DAT}u_{\mathrm{in}}(x)=\frac{1}{L^d}\sum_{k\in\Zb_L^d}\widehat{u_{\mathrm{in}}}(k)e^{2\pi ik\cdot x},\quad \widehat{u_{\mathrm{in}}}(k)=\sqrt{n_{\mathrm{in}}(k)}\eta_k(\omega),
\end{equation}
where $\Zb_L^d:=(L^{-1}\Zb)^d$, and $n_{\mathrm{in}}:\Rb^d\to[0,\infty)$ is a given Schwartz function, $\{\eta_k(\omega)\}$ is a collection of i.i.d. random variables. We assume that each $\eta_k$ is either a \emph{centered normalized complex Gaussian}, or \emph{uniformly distributed on the unit circle of $\Cb$}. This is sometimes called the \emph{random phase assumption} in the literature \cite{Nazarenko}. For simplicity, in the proof below, we will only consider the Gaussian case; the unimodular case can be treated with minor modifications (see for example Lemma 3.1 of \cite{DH}).

\smallskip

Given such random solutions, we define the \emph{strength of the nonlinearity parameter} to be $\alpha:=\lambda^2L^{-d}$. This nomenclature can be justified, heuristically at this point, by noting that if $u$ is a randomly chosen $L^2(\Tb^d_L)$ function with norm $O(1)$, then with high probability one has that $\|u\|_{L^\infty(\Tb^d_L)}\lesssim L^{-d/2}$, which makes the nonlinearity $\lambda^2 |u|^2u$ of size $\sim \lambda^2 L^{-d}=\alpha$ in $L^2(\Tb^d)$. This heuristic can be directly verified for the well-prepared initial data $u_{\mathrm{in}}$ using Gaussian hypercontractivity estimates, but it will follow from our proof that it is also true for the solution $u(t)$ itself at later timescales of interest to us. 

Finally, we define the \emph{kinetic timescale}
\[T_{\mathrm{kin}}:=\frac{1}{2\alpha^2}=\frac{1}{2}\cdot\frac{L^{2d}}{\lambda^4},\]
which will be the timescale at which the kinetic behavior will start exhibiting itself for (NLS). 

\subsubsection{The wave kinetic equation for NLS} Under the homogeneity assumption on the initial data in \eqref{data} (i.e. the independence of $\widehat u_{\mathrm{in}}(k)$ for different $k$), the relevant wave kinetic equation is also homogeneous (i.e. has no transport term) and is given by:

\begin{equation}\label{wke}\tag{WKE}
\left\{
\begin{split}&\partial_t n(t,k)=\Kc(n(t),n(t),n(t))(k),\\
&n(0,k)=n_{\mathrm{in}}(k),
\end{split}
\right.
\end{equation} where the nonlinearity
\begin{multline}\label{wke2}\tag{KIN}
\Kc(\phi_1,\phi_2,\phi_3)(k)=\int_{(\Rb^d)^3}\big\{\phi_1(k_1)\phi_2(k_2)\phi_3(k_3)-\phi_1(k)\phi_2(k_2)\phi_3(k_3)+\phi_1(k_1)\phi_2(k)\phi_3(k_3)\\-\phi_1(k_1)\phi_2(k_2)\phi_3(k)\big\}\times\dirac(k_1-k_2+k_3-k)\cdot\dirac(|k_1|_\beta^2-|k_2|_\beta^2+|k_3|_\beta^2-|k|_\beta^2)\,\mathrm{d}k_1\mathrm{d}k_2\mathrm{d}k_3.
\end{multline} Here and below $\dirac$ denotes the Dirac delta, and we define
\[|k|_\beta^2:=\langle k,k\rangle_\beta,\quad \langle k,\ell\rangle_\beta:=\beta^1 k^1 \ell^1+\cdots +\beta^dk^d\ell^d,\] where $k=(k^1,\cdots,k^d)$ and $\ell=(\ell^1,\cdots,\ell^d)$ are $\Zb_L^d$ or $\Rb^d$ vectors.

Note that the initial data of \eqref{wke} matches that for (NLS) in \eqref{data} in the sense that $\mathbb E|\widehat{u_{\mathrm{in}}}(k)|^2=n_{\mathrm{in}}(k)$, hence the description \emph{well-prepared} for \eqref{data}. We shall show as part of our proof (Proposition \ref{wkelwp}; see also an optimal local well-posedness result in \cite{GIT}) that given such initial data $n_{\mathrm{in}}(k)$, there exists $\delta>0$ small enough depending on $n_{\mathrm{in}}$, such that there exists a unique local solution $n=n(t,k)\,(k \in \Rb^d)$ of \eqref{wke} on the interval $[0,\delta]$.
\subsubsection{The main result} The main result of this manuscript is the rigorous and quantitative justification of \eqref{wke} over all the existence interval $[0,\delta]$, as the limit of the averaged (NLS) dynamics under the scaling law $\alpha L=1$.
\begin{thm}\label{main} Let $d\geq 3$, and consider the Lebesgue null set $\Zf\subset(\Rb^+)^d$ defined in Lemma \ref{genericity}. The followings hold for any fixed $\beta\in(\Rb^+)^d\backslash\Zf$.

Fix $A\geq 40d$, a Schwartz function $n_{\mathrm{in}}\geq 0$, and fix $\delta\ll 1$ depending on $(A,\beta,n_{\mathrm{in}})$. Consider the equation (\ref{nls}) with random initial data (\ref{data}), and assume $\lambda=L^{(d-1)/2}$ so that $\alpha=L^{-1}$ and $T_{\mathrm{kin}}=L^2/2$. Then, for sufficiently large $L$ (depending on $\delta$), the equation has a smooth solution up to time \[T=\frac{\delta L^2}{2}=\delta\cdot T_{\mathrm{kin}},\] with probability $\geq 1- L^{-A}$. Moreover we have (here $\widehat{u}$ is as in \eqref{fourier})
\begin{equation}\label{limit}\lim_{L\to\infty}\sup_{\tau\in[0,\delta]}\sup_{k\in\Zb_L^d}\left|\Eb\,|\widehat{u}(\tau\cdot T_{\mathrm{kin}},k)|^2-n(\tau,k)\right|=0,
\end{equation} where $n(\tau,k)$ is the solution to (\ref{wke}). 
\end{thm}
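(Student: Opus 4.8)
The plan is to expand the NLS solution $\widehat u(t,k)$ as a (formal, then truncated) power series in the interaction strength $\alpha$ — a Dyson/Duhamel iteration — indexed by ternary trees, where each tree of depth $n$ contributes a term carrying $n$ factors of $\lambda^2$ and an $n$-fold time integral against oscillatory phases $e^{2\pi i t \Omega}$ with $\Omega$ a signed sum of the $|k_j|_\beta^2$. Writing this series out for $u(\tau T_{\mathrm{kin}})$ and taking $\Eb|\widehat u|^2$, the Gaussian (random phase) structure of the data forces a pairing of the $2n$ leaves of the two conjugate trees; the combinatorics of these pairings is exactly the combinatorics of Feynman diagrams. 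First I would set up this machinery carefully: define the tree-indexed terms, record the precise form of the time integrals and the "counting measure on a resonant manifold" factor $L^{-d(n-1)}\sum_{\text{momenta}}$ that replaces $\int dk_1\cdots$, and isolate the leading diagrams — the \emph{ladders} (nested pairings) — whose contribution, after integration in time, reproduces the Picard iterates of \eqref{wke} with the kernel $\Kc$ in \eqref{wke2}. The key heuristic to make rigorous is that each resonant interaction contributes a factor $\sim \alpha^2 t \sim \tau$ after summing over a lattice of size $L^d$ against the two delta functions (momentum and a \emph{near}-resonance $|\Omega|\lesssim T_{\mathrm{kin}}^{-1}$), so that the order-$n$ term is $O(\tau^n)$ and the series converges for $\tau\le\delta$.

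The core estimates split into three parts. First, a \textbf{counting/number-theoretic input}: one must show that the sum over $\Zb_L^d$-lattice points constrained by momentum conservation and an approximate energy resonance is, up to acceptable errors, $\sim L^{d} \cdot (\text{continuum integral against } \dirac(\Omega))$, with the genericity (Diophantine) hypothesis on $\beta$ from Lemma \ref{genericity} used to rule out anomalous clustering of lattice points near the resonant quadric — this is where the scaling law $\alpha L=1$ and $d\ge 3$ enter crucially, and it is the analytic-number-theory component advertised in the introduction. Second, a \textbf{diagrammatic estimate}: one must bound the contribution of every \emph{non-ladder} pairing (crossings, nestings with repeated momenta, "bubbles", etc.) by a power of $L^{-\gamma}$ for some $\gamma>0$, so that only ladders survive the limit $L\to\infty$; this requires a careful gain from each "extra" resonance or degeneracy and is the analog of the cumulant/graph expansion bookkeeping. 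Third, \textbf{convergence and remainder control}: the Duhamel expansion is truncated at order $N\sim N(\delta)$ and one shows the remainder (the part of $u$ not captured by the series, controlled via a suitable iteration space and the a priori smoothness/size $\|u(t)\|_{L^\infty}\lesssim L^{-d/2}$) contributes $o(1)$ to $\Eb|\widehat u|^2$ uniformly on $[0,\delta]\times\Zb_L^d$, simultaneously yielding the claimed smooth-solution-with-probability $\ge 1-L^{-A}$ statement.

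Finally I would assemble the pieces: the surviving ladder sums converge as $L\to\infty$ to the (absolutely convergent, by Proposition \ref{wkelwp}) Taylor series in $\tau$ of the solution $n(\tau,k)$ of \eqref{wke}; all other diagrams and the analytic remainder go to zero; taking $\sup_{\tau\in[0,\delta]}\sup_{k}$ and then $L\to\infty$ gives \eqref{limit}.

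\textbf{Main obstacle.} The crux is that this is a \emph{probabilistically critical} problem: the naive power-counting for the non-ladder diagrams is \emph{exactly} borderline — individual diagrams are $O(1)$, not $o(1)$ — so one cannot afford to estimate diagram-by-diagram. The essential difficulty is therefore to extract the missing logarithmic/polynomial gain by exploiting cancellations \emph{between} families of diagrams (resumming large classes of "nested" or "recollision" diagrams together), and to control the factorial growth in the number of diagrams of a given order against only a fixed power of $L$. This is the step where a genuinely new mechanism beyond the earlier (subcritical, shorter-time) derivations is needed, and it is where I expect essentially all the technical weight of the proof to lie; the number-theoretic counting lemma and the Duhamel remainder control, while nontrivial, are comparatively standard once the right functional framework is fixed.
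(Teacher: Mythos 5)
Your overall architecture---tree-indexed Duhamel expansion, Gaussian leaf-pairing into couples, ladder terms reproducing the iterates of \eqref{wke}, number-theoretic lattice counting, remainder closure---matches the paper's, and you correctly locate the heart of the difficulty at probabilistic criticality and inter-diagram cancellation. But truncating the expansion at a fixed order $N\sim N(\delta)$ is a genuine gap. Each iterate gains only a factor $\sim\sqrt\delta$, not any negative power of $L$, so the tail of the series at fixed order is a constant $\sim(\sqrt\delta)^N$, never $o(1)$ as $L\to\infty$. To make the remainder vanish in the limit one must take $N\sim\log L$ (the paper sets $N=\lfloor\log L\rfloor$); but then at each order $n$ one is summing $\sim n!$ couples of size $(C\delta)^n$ for $n$ as large as $\log L$, which diverges unless the overwhelming majority of them gain extra powers of $L$. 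The paper's central device is a rigidity theorem (Proposition~\ref{gain}) showing that, after peeling off exactly the structures that saturate, a couple gains $L^{-\nu r}$ where $r$ is the size of the residual structure, and it is this $r$-dependent gain---not a uniform $L^{-\gamma}$ over all non-ladders---that offsets the $(Cr)!$ multiplicity. A single fixed power of $L$, which is what your phrasing suggests, would not suffice.

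Two further omissions. The dichotomy ``ladders vs.\ everything else'' is too coarse: the paper identifies three distinct families of near-saturating couples, each requiring a different mechanism. Regular couples contain the dominant ones that give the WKE iterates, plus non-dominant ones whose time-integral coefficient $\Jc\widetilde{\Bc}_{\Qc,Z}$ is shown to vanish identically. Irregular chains are handled by an exact cancellation between congruent couples (this is close in spirit to the resummation you envision). But type II molecular chains are handled by a genuine $L^1$ bound on the time-integral coefficient, with no cancellation at all---so the ``resum large classes'' idea does not apply uniformly. Finally, on the remainder: $b$ satisfies a fixed-point equation $b=(1-\Ls)^{-1}(\Rc+\Bs(b,b)+\Cs(b,b,b))$ with a random linear operator $\Ls$, and in the critical setting there is no natural function space on which $\Ls$ has small operator norm. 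The paper instead controls the spectral radius by estimating $\Ls^n$ directly for $n$ up to $N$, using the same couple/molecule machinery adapted to ``flower trees''. A standard contraction argument in an iteration space built from $\|u\|_{L^\infty}\lesssim L^{-d/2}$, as you suggest, would not close the linear term.
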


\medskip

A few remarks about this result are in order. First, we understand that the expected value $\Eb$ in (\ref{limit}) is taken only when (\ref{nls}) has a smooth solution on $[0,\delta\cdot T_{\mathrm{kin}}]$, {and the quantity that we take expectation of is defined to be $0$ otherwise}. As stated in Theorem \ref{main}, this is a set of probability $\geq  1- L^{-A}$, and hence its complement has no effect on \eqref{limit} (using the mass conservation of $u$). Second, the convergence as $L \to \infty$ is actually quantitative in the sense that there exists $\nu=\nu(d)>0$ and a constant $C$ independent of $L$ such that 
$$
\sup_{\tau\in[0,\delta]}\sup_{k\in\Zb_L^d}\left|\Eb\,|\widehat{u}(\tau\cdot T_{\mathrm{kin}},k)|^2-n(\tau,k)\right|\leq CL^{-\nu}.
$$ 
We also point out that the requirement that $n_{\mathrm{in}}$ be Schwartz is an overkill, and the proof only requires control on finitely many Schwartz semi-norms of $n_{\mathrm{in}}$.

Finally, we remark that Theorem \ref{main} extends, with essentially the same proof, to scaling laws of the form $\alpha=L^{-\kappa}$ for $\kappa$ smaller than and sufficiently close to 1. For such scaling laws, we do not need the genericity assumption for $\beta$, and \eqref{limit} holds independent of the shape of the torus. We shall discuss this in some more detail in Section \ref{SLComment} below.

\subsection{Comments on Theorem \ref{main}}
\subsubsection{Background Work}\label{background} Starting from the middle of the past century, wave turbulence has become a significant component in the study of nonlinear wave theory, and a vibrant field of scientific study in plasma theory \cite{Davidson}, oceanography \cite{Janssen,WMO}, crystal thermodynamics \cite{SpohnPBE} to mention only a few. We refer to \cite{ZLFBook, Nazarenko} for textbook treatments. Mathematically speaking, problems related to wave turbulence theory have attracted considerable attention in the last couple of decades. The focus was initially on constructing solutions to nonlinear dispersive equations that exhibited some form of energy cascade\footnote{Upper bounds on this cascades, measured in terms of the growth of high Sobolev norms was also investigated in \cite{Staffilani97,CollianderDKS01,Bourgain04,CatoireW10,Sohinger11,CollianderKO12,PTV17, Deng}.} \cite{Kuksin96, Kuksin97, Bourgain00b, CKSTT, GuardiaK12,Hani12,Guardia14,HaniPTV15, HausProcesi, GuardiaHP16, GerardG10, GHHMP}. This is one of the important conclusions of the wave kinetic theory, which predicates the presence of stationary power-like solutions to \eqref{wke}, called the \emph{forward and backward cascade spectra}. These are the wave-analogues of Kolmogorov spectra in hydrodynamic turbulence \cite{Zakh65, ZLFBook, Nazarenko}. The rigorous study of such solutions of the \eqref{wke} has been initiated in \cite{EV1, EV2}. The wide range of applicability of this kinetic theory, combined with its profound turbulence implications, emphasized the importance of setting it on rigorous mathematical foundations.

\medskip

In terms of justifying the kinetic formalism, several works addressed certain aspects of the problem \cite{LukSpohn,Faou, DK1, DK2} {(see also \cite{EY, ESY} for related results on the linear Schr\"{o}dinger equation with random potential)}. The full question of deriving the (WKE) starting from the unperturbed dispersive system was first treated in \cite{BGHS2}. There, the authors justify the derivation of \eqref{wke} for (NLS) up to timescales that are vanishingly small relative to the kinetic timescale, namely up to $L^{-\gamma}T_{\mathrm{kin}}$ for some $\gamma>0$. The later works in \cite{DH, CG1} were able to substantially improve such timescales of approximation all the way to $L^{-\varepsilon}T_{\mathrm{kin}}$ for arbitrarily small $\varepsilon$ and for some particular scaling laws. We shall elaborate a bit more on these works given their relevance to this manuscript, and the fact that they were the first to showcase the importance of the {scaling law} to this problem. 

The result in \cite{DH} suggested that the rigorous derivation of the wave kinetic equation depends on the scaling law at which $L$ diverges to $\infty$ and $\alpha$ vanishes to $0$. More precisely, it is shown that for \emph{two favorable scaling laws}, including the one studied in this manuscript, one can justify the approximation as in \eqref{limit} but up to times scales of the form $L^{-\varepsilon}$ for arbitrarily small $\varepsilon$. The main difficulty in such a result is in proving the existence of solutions to the (NLS) equation \emph{as a Feynman diagram expansion} up to times $T\sim L^{-\varepsilon}T_{\mathrm{kin}}$. This time $T$ plays the role of the radius of convergence of this power series expansion. When it comes to absolute convergence, the result in \cite{DH} gives optimal, up to $L^\varepsilon$ loss, estimates on this radius of convergence $T$, and proves that \eqref{limit} holds for such timescales. This is done for all admissible scaling laws (cf. Section \ref{SLComment}), and outside the two favorable scaling laws mentioned above, the time $T$ is {much shorter} than the conjectured kinetic timescale. In fact, we show that the expansion diverges absolutely in a certain sense for times longer than $T$, which raised the question whether one can justify the kinetic equation at the kinetic timescales outside the two scaling laws identified in \cite{DH}. This issue was also investigated in \cite{CG2} which further analyzed this divergence.

Of course, the central question, for any scaling law, is whether one can justify the approximation \eqref{limit} up to times that are $O(1)$ multiples of the kinetic timescale. Such a result, regardless of the scaling law, would allow transferring the rich set of behaviors exhibited by the wave kinetic equation (such as energy cascade or formation of condensate \cite{EV1, EV2}) on the interval of approximation into long-time behaviors of the cubic NLS equation. This includes NLS set on the unit torus by rescaling. Our main theorem provides such quantitative approximation, for the scaling law $\alpha\sim L^{-1}$. Moreover, as we shall discuss in Section \ref{SLComment} below, the proof extends with minor modifications to some close-by scaling laws.

Finally, we mention a recent deep work \cite{ST} of Staffilani-Tran, which was submitted to arXiv shortly after the completion of this manuscript. It concerns a higher dimensional KdV-type equation under a time-dependent Stratonovich stochastic forcing, which effectively randomizes the phases without injecting energy into the system. The authors derive the corresponding wave kinetic equation up to the kinetic timescale, for the specific scaling law $\alpha\sim L^{-0}$ (i.e. first taking $L\to\infty$ and then taking $\alpha\to 0$).
\subsubsection{Criticality of the problem}\label{CriticalityComment}
Criticality is one of the fundamental concepts in the study of nonlinear PDE. While {the classical} scaling criticality plays a central role in the study of deterministic equations, a different type of scaling takes the spotlight for probabilistic problems as showcased in several recent works both in the parabolic and dispersive setting \cite{Hairer, DNY2, DNY3}. To explain the difference, it is worth recalling the following robust definition of criticality: A problem is \emph{subcritical} if subsequent (Picard) iterates of the solution get better and better compared to previous ones; it is \emph{critical} if the iterates neither exhibit an improved nor worse behavior compared to previous ones, and \emph{supercritical} if the iterates successively deteriorate. For instance, the classical {(deterministic)} scaling criticality for the {cubic} (NLS) equation $i\partial_t v+\Delta v=\pm |v|^{2}v$ can be defined as the minimum regularity $s$ for which the first iterate of an $H^s$-normalized rescaled bump function of the form $u_{\mathrm{in}}:=N^{-s+\frac{d}{2}}\varphi(Nx)$ is better behaved than the zeroth iterate. This can be easily seen by comparing $|u_{\mathrm{in}}|^{2} u_{\mathrm{in}}$ and $\Delta u_{\mathrm{in}}$ to obtain that the problem is critical if $s=s_c:=\frac{d}{2}-1$ and subcrticial (resp. supercritical) if $s>s_c$ (resp. $s<s_c$).

The more relevant notion of criticality for us is that of \emph{probabilistic scaling criticality}. This can be formulated in terms of the $H^s$ regularity of the initial data for (NLS) on the unit torus as above, (see \cite{DNY2, DNY3}), but for our problem \eqref{nls} it translates (or rescales) into the trichotomy of whether the time interval $[0, T]$ on which we study the solutions satisfies $T\ll T_{\mathrm{kin}}$ (subcritical regime), $T\sim T_{\mathrm{kin}}$ (critical regime), or $T\gg T_{\mathrm{kin}}$ (supercritical regime). To see this, we note that the first iterate of \eqref{nls} is given in Fourier space by
\begin{equation}\label{firstiterate}
\widehat u^{(1)}(t,k):=i\frac{\lambda^2}{L^{2d}}\sum_{S(k)} \widehat{u_{\mathrm{in}}}({k_1}) \overline{\widehat{u_{\mathrm{in}}}(k_2)} \widehat{u_{\mathrm{in}}}({k_3}) \frac{e^{\pi i\Omega t}-1}{\pi i \Omega}, \quad \Omega=|k_1|_\beta^2-|k_2|_\beta^2+|k_3|_\beta^2-|k|_\beta^2,
\end{equation}
where $S(k)=\{(k_1, k_2, k_3) \in \Zb^d_L: k_1-k_2+k_3=k\}$. A deterministic analysis using the fact that $\widehat{u_{\mathrm{in}}}(k)$ decays like a Schwartz function (think of it as compactly supported in $B(0,1)$) shows that this term is bounded (up to logarithmic losses) by $\frac{\lambda^2}{L^{2d}} \sup_{m} |S_{T, m}|$ where $S_{T, m}=\{(k_1, k_2, k_3) \in S(k): |\Omega-m|\leq T^{-1}\}$. It's not too hard to see that $\sup_{m} |S_{T, m}| \sim L^{2d}T^{-1}$ (at least when $T\ll L^d$, see Lemma \ref{lem:counting} or Lemma {3.2} in \cite{DH}). However, {with random data $u_{\mathrm{in}}$} and using Gaussian hypercontractivity estimates, a major cancellation happens in the sum over $S(k)$ above, and with overwhelming probability, one has the much improved central-limit-theorem-type bound
\begin{equation}\label{firstiterate2}
|\widehat u^{(1)}(t,k)|\sim \frac{\lambda^2}{L^{2d}}\left(\sup_{m} |S_{T, m}|\right)^{1/2}\sim \frac{\lambda^2T^{1/2}}{L^{d}}.
\end{equation}
From this it is clear that the iterate $\widehat u^{(1)}(t,k)$ is much better behaved compared to the zeroth iterate $\widehat{u_{\mathrm{in}}}(k)$ on timescales $T\ll \frac{L^{2d}}{\lambda^4}\sim T_{\mathrm{kin}}$, and does not feature any improvement for times $T\sim T_{\mathrm{kin}}$. For this reason, all previous works \cite{BGHS2, DH, CG1, CG2} on this subject deal with the probabilistically subcritical setting, albeit the results in \cite{DH, CG1} cover the full subcritical regime $T<L^{-\varepsilon} T_{\mathrm{kin}}$ for the scaling law $\alpha L=1$, which we also adopt in this paper. Consequently, obtaining the rigorous derivation of the wave kinetic equation at the kinetic timescale $T_{\mathrm{kin}}$ as in Theorem \ref{main} is a quintessential \emph{probabilistically critical problem}. In fact, Theorem \ref{main} seems to be the first solution of a probabilistically critical problem, both in the dispersive and parabolic setting (here we should note that recent developments in the parabolic setting allow covering the full subcritical range \cite{Hairer, GIP}).

\subsubsection{On scaling laws and the torus genericity condition}\label{SLComment} Theorem \ref{main} justifies the kinetic approximation under the scaling law $\alpha L=1$, i.e. $\alpha$ goes to zero like $L^{-1}$. This is one of the two favorable scaling laws identified in \cite{DH}, and is also the one treated in \cite{CG1}. Moreover, it also holds a particular mathematical importance. In fact, Theorem \ref{main} scales back, in this scaling law, to time $\sim 1$ results (i.e. local well-posedness with precise description of statistical properties) for the cubic (NLS) equation on the unit torus, in the probabilistically critical space $H^{-1/2}$, which is linked to a main open problem raised in \cite{DNY3}. A particularly interesting case happens when $d=3$. There, for an appropriate choice of $n_{\mathrm{in}}$ (namely $\varphi(\xi)|\xi|^{-1}$ for some $\varphi \in \mathcal S(\Rb^3)$ vanishing near 0 and infinity), Theorem \ref{main} rescales into a local existence result for the Littlewood-Paley projection of data in (essentially) the support of the \emph{Gibbs measure} for the (NLS) equation on $\Tb^3$. Such local existence results for Gibbs measure initial data would be a central part of a potential proof of the invariance of the Gibbs measure. As is well-known, the Gibbs measure invariance problem for (NLS) on $\Tb^3$ is another outstanding probabilistically critical problem. In fact, after the work \cite{DNY2} which solves the two-dimensional case, it is the only remaining Gibbs measure invariance problem for (NLS), given that the question of existence (or lack thereof) of such measures is now well understood in constructive quantum field theory \cite{AC,BG,Fro,GJ,Simon}.

\medskip

As explained in \cite{DH}, not all scaling laws are admissible for the kinetic theory, and the admissibility of the scaling law depends on the whether the torus is generic or not. In fact, suppose one adopts the scaling law $\alpha=L^{-\kappa}$ for $\kappa\geq 0$. Here $\kappa=0$ means that one takes the $L\to\infty$ limit followed by the $\alpha\to 0$ limit, which incidentally was the other favorable scaling law identified in \cite{DH}. Since the kinetic timescale is given by $T_{\mathrm{kin}}\sim \alpha^{-2}=L^{2\kappa}$, restrictions on the admissible $\kappa$ come from any restriction posed by the kinetic theory on the time interval of approximation. The relevant restriction here is that the exact resonances, for which $\Omega=0$, in a sum like \eqref{firstiterate} should not overwhelm the quasi-resonances for which $0<|\Omega|\lesssim T^{-1}$. The latter interactions are the ones responsible for the emergence of the kinetic equation in the large box limit. For an arbitrary torus (including the rational or square torus), the exact resonances can have a contribution of $(L^{2d-2})^{1/2}$ to the sum in \eqref{firstiterate} (taking into account the Gaussian $\ell^2$ cancellation), which should be compared to the $(L^{2d}/T)^{1/2}$ estimate used above. This means that if the torus is rational, then the limitation of the kinetic theory is given by $T_{\mathrm{kin}}\ll L^2$. This translates into the requirement that $\kappa <1$ on a rational torus. On the other hand, on a generic torus, the contribution of exact resonances is much less, namely $(L^{d})^{1/2}$, which when compared to $(L^{2d}/T)^{1/2}$ yields the requirement that $T_{\mathrm{kin}}\ll L^d$, and hence $\kappa$ has to $< d/2$ on a generic torus. Note that our scaling law $\alpha=L^{-1}$ lies just outside the range of admissible scaling laws for a rational torus, but well within the range for a generic torus. This explains why Theorem \ref{main} is stated for a generic torus.

\medskip

\begin{center}
    \begin{tabular}{| l | l | l | l |}
    \hline
    Scaling Law $\alpha=L^{-\kappa}$ & $T_{\mathrm{kin}}=1/2\alpha^2$ & Torus type \\ \hline
    $0\leq \kappa <1$ & $ T_{\mathrm{kin}}\sim L^{2\kappa}\ll L^2$ &  Any torus\\ \hline
    $1\leq \kappa < L^{d/2}$ & $L^{2} \lesssim T_{\mathrm{kin}}\ll L^d$ & generic torus \\ \hline
    \end{tabular}
\end{center}

\medskip

That being said, our proof extends with minor modifications to scaling laws $\alpha=L^{-\kappa}$ for $\kappa$ smaller than but sufficiently close to $1$. This is within the admissible range of scaling laws {on an arbitrary torus}, and as such our result can be extended to such scaling laws {which require no restrictions on the shape of the torus}. Given the complexity of the proof, we chose to focus the discussion here to the single scaling law $\alpha=L^{-1}$. {We will address the remaining scaling laws $\kappa<1$ (on the arbitrary torus) in a separate forthcoming note}. Note that some challenges are apparent in the case $\kappa>1$, and new ideas seem to be needed there.
\begin{rem} {After the submission of this paper, the authors have completed the subsequent works \cite{DH2,DH3,DH4}. In particular \cite{DH4} addresses the full range of scaling laws $0<\kappa<1$ without genericity assumption; see also discussions in \cite{DH4} regarding the endpoint case $\gamma=0$, which is in fact not compatible with the continuum setting (the difficulty comes from the remainder terms $R_N$ in (\ref{duhamelintro}) below). Moreover, \cite{DH2} establishes important results including propagation of chaos and non-Gaussian density evolution, which are again true for the full range of scaling laws \cite{DH4}.}
\end{rem}
\subsection{A high-level sketch of the proof}\label{ProofComment}
A proper overview of the proof requires introducing quite a bit of notation and setup. We shall do this in Section \ref{proofoverview} after we set up the problem in Section \ref{setup}. Here, we shall be content with a zoomed-out overview of the proof. As in our previous work in \cite{DH} on the subcritical timescales, the idea is to expand the (NLS) solution as a power series (Feynman diagram expansion) of its iterates 
\begin{equation}\label{duhamelintro}
u=u^{(0)}+u^{(1)}+\ldots+u^{(N)} +R_N, 
\end{equation}
for sufficiently large $N$. Here, the $j$-th iterates $u^{j}$ can be written as a sum over ternary trees of scale $j$ (cf. Section \ref{setup}) and $R_N$ is the remainder. In the subcritical problem in which $T\leq L^{-\varepsilon} T_{\mathrm{kin}}$, it is sufficient to do a finite (but $O(\varepsilon^{-1})$ long) expansion to prove an approximation result like \eqref{limit}. Roughly speaking, the reason for that is that each iterate exhibits at least a $L^{-\varepsilon}$ improvement over the previous one. In particular, one does not need to keep track of any factorial dependences on $N$ when estimating the iterates and the remainder $\Rc_N$. Such factorial growth appear when one computes the correlations, like $\mathbb E \big(u^{(k)} \overline{u^{(\ell)}}\big)$, and hence in the estimates on the iterates and the remainder.

This becomes one of the major difficulties in the critical problem. In fact, in our critical setting here where $T=\delta T_{\mathrm{kin}}$, the only improvement in the successive iterates is $\sim \sqrt \delta$ (cf. \eqref{firstiterate2}), and as such the best estimates one can dream of for $R_N$ is to control it by $(\sqrt{\delta})^{N}$. For the contribution of $R_N$ in \eqref{limit} to vanish in the limit $L\to \infty$, one has to allow $N$ to diverge as $L\to \infty$. This means that one has to track carefully the factorial divergences in $N$ in the correlations $\mathbb E \big(u^{(k)} \overline{u^{(\ell)}}\big)$. In fact, such correlations can be represented as sums over pairs of ternary trees whose leaves are paired to each other. We call those such objects \emph{couples}, and the number of those couples is factorial in $n:=k+\ell$, which is called the scale of the couple. This brings us to the central idea in the proof: \emph{can one classify the couples into groups, such that those saturating or almost saturating the worst-case-scenario estimates are relatively few and do not lead to factorial losses in $n=k+\ell$, while the remaining (factorially many) couples satisfy much better estimates than the worst-case scenario, i.e. feature a gain of powers of $L$, which is sufficient to offset the factorial loss?}

The positive answer to this question constitutes the bulk of the proof. However, the answer is not as straightforward as one might first hope. In fact, one would hope that the couples with almost saturated estimates would be small perturbations of the ``leading" ones that converge to the iterates of the wave kinetic equation. Unfortunately, these are not the only ones. In our proof we will actually identify three families of couples with almost saturated estimates. The first family, which we call \emph{regular couples}, are essentially the leading ones that converge to the iterates of the wave kinetic equation, plus some similar couples whose contribution cancel out in the limit. The second family, which we call \emph{irregular chains}, can also lead to almost saturated estimates and is dealt with in Section \ref{irchaincancel}. The last family, which we call {Type II (molecular) chains, satisfy an $L^1$ bound that makes its contribution acceptable.} This is dealt with in Section \ref{l1coef}.

The good news is that there are only $O(C^n)$ couples that lead to almost saturated estimates, whereas the remaining (factorial in $n$) number of couples all feature a gain in $L$. This is the content of our main rigidity theorem in Section \ref{improvecount}. In fact, we show that if one performs a type of surgery on an arbitrary couple to remove all its regular sub-couples, all its irregular chains, and all its Type II molecular chains (which are exactly the structures that lead to almost saturated estimates), then we are left with a reduced {structure} whose estimate features a gain $L^{-r}$ where $r$ is comparable to the size of this structure! This is enough to offset the factorial divergence $r!$ that comes from the possibilities of these size $r$ structures, provided that $r$ is small enough relative to $L$. Since $r\leq N$, this is more than guaranteed if we pick $N\sim \log L$.  

We should mention that the analysis of each of the couple families mentioned above requires a different genre of argument, ranging from sophisticated combinatorial constructions in Sections \ref{domasymp}, \ref{irchaincancel} and \ref{improvecount}, to analytic ones in Section \ref{regasymp} and \ref{l1coef}, and number theoretic ones in Section \ref{numbertheory}. Once this picture is made clear, the estimate on the remainder term is relatively easier and can be derived from the analysis above. There are some subtleties involved, which will be treated in Section \ref{operatornorm}. For a more detailed discussion of the proof, See Section \ref{proofoverview}.
\subsubsection*{Acknowledgements} The first author is supported in part by NSF grant DMS-1900251 and a Sloan Fellowship. The second author is supported in part by NSF grant DMS-1654692 and a Simons Collaboration Grant on Wave Turbulence. The authors would like to thank Ruixiang Zhang, Fan Zheng and Herbert Spohn for helpful discussions, and Gigliola Staffilani and Minh-Binh Tran for explaining the work \cite{ST}. {The authors would like to thank the referee for carefully going through the manuscript and providing the detailed report and suggestions.}

\section{Basic setup}\label{setup}
\subsection{Preliminary reductions}Consider the equation (\ref{nls}). Let $M=\fint|u|^2$ be the conserved mass of $u$ (where $\fint$ takes the average on $\Tb_L^d$), and define $v:=e^{-2i\lambda^2Mt}\cdot u$, then $v$ satisfies the Wick ordered equation
\begin{equation}(i\partial_t-\Delta_\beta)v+\lambda^2\bigg(|v|^2v-2\fint|v|^2\cdot v\bigg)=0.
\end{equation} By switching to Fourier space, rescaling in time and taking back the linear flow, we can define
\begin{equation}a_k(t)=e^{-\delta\pi iL^2|k|_\beta^2t}\cdot\widehat{v}( \delta T_{\mathrm{kin}}\cdot t,k),
\end{equation} with $\widehat{v}$ as in (\ref{fourier}). {By the same calculations as in Section 2.1 of \cite{DH},} we obtain that $a:=a_k(t)$ satisfies the equation
 \begin{equation}\label{akeqn}
 \left\{
\begin{aligned}
\partial_ta_k &= \Cc_+(a,\overline{a},a)_k(t),\\
a_k(0) &=(a_k)_{\mathrm{in}}=\sqrt{n_{\mathrm{in}}(k)}\eta_k(\omega),
\end{aligned}
\right.
\end{equation} with the nonlinearity
\begin{equation}\label{akeqn2} \Cc_\zeta(f,g,h)_k(t):=\frac{\delta}{2L^{d-1}}\cdot(i\zeta)\sum_{k_1-k_2+k_3=k}\epsilon_{k_1k_2k_3}e^{\zeta\delta\pi iL^2\Omega(k_1,k_2,k_3,k)t}f_{k_1}(t)g_{k_2}(t)h_{k_3}(t).
\end{equation}for $\zeta\in\{\pm\}$. Here in (\ref{akeqn2}) and below, the summation is taken over $(k_1,k_2,k_3)\in(\Zb_L^d)^3$, and \begin{equation}\label{defcoef0}\epsilon_{k_1k_2k_3}=
\left\{
\begin{aligned}&1,&&\mathrm{if\ }k_2\not\in\{k_1,k_3\};\\
-&1,&&\mathrm{if\ }k_1=k_2=k_3;\\
&0,&&\mathrm{otherwise},
\end{aligned}
\right.\end{equation} and the resonance factor
\begin{equation}\label{res}
\Omega=\Omega(k_1,k_2,k_3,k):=|k_1|_\beta^2-|k_2|_\beta^2+|k_3|_\beta^2-|k|_\beta^2=2\langle k_1-k,k-k_3\rangle_\beta;\end{equation} {the last equality in (\ref{res}) requires $k_1-k_2+k_3=k$.} Note that $\epsilon_{k_1k_2k_3}$ is always supported in the non-resonant set \begin{equation}\label{defset}\Sf:=\big\{(k_1,k_2,k_3):\mathrm{\ either\ }k_2\not\in\{k_1,k_3\},\mathrm{\ or\ }k_1=k_2=k_3\big\}.\end{equation} The rest of this paper is focused on the system (\ref{akeqn})--(\ref{akeqn2}), with the relevant terms defined in (\ref{defcoef0})--(\ref{defset}), in the time interval $t\in[0,1]$.
\subsection{Trees and couples} Throughout the proof we will make extensive use of ternary trees and pairs of ternary trees, to characterize the expressions appearing in the formal expansion of solutions to (\ref{nls}). These are alternative formulations of the classical \emph{Feynman diagrams}.
\begin{df}\label{deftree} A \emph{ternary tree} $\Tc$ (see Figure \ref{fig:tree}, we will simply say a \emph{tree} below) is a rooted tree where each non-leaf (or \emph{branching}) node has exactly three children nodes, which we shall distinguish as the \emph{left}, \emph{mid} and \emph{right} ones. We say $\Tc$ is \emph{trivial} (and write $\Tc=\bullet$) if it consists only of the root, in which case this root is also viewed as a leaf.

We denote generic nodes by $\nf$, generic leaves by $\lf$, the root by $\rf$, the set of leaves by $\Lc$ and the set of branching nodes by $\Nc$. The \emph{scale} of a tree $\Tc$ is defined by $n(\Tc)=|\Nc|$, so if $n(\Tc)=n$ then $|\Lc|=2n+1$ and $|\Tc|=3n+1$.

A tree $\Tc$ may have sign $+$ or $-$. If its sign is fixed then we decide the signs of its nodes as follows: the root $\rf$ has the same sign as $\Tc$, and for any branching node $\nf\in\Nc$, the signs of the three children nodes of $\nf$ from left to right are $(\zeta,-\zeta,\zeta)$ if $\nf$ has sign $\zeta\in\{\pm\}$. Once the sign of $\Tc$ is fixed, we will denote the sign of $\nf\in\Tc$ by $\zeta_\nf$. Define the conjugate $\overline{\Tc}$ of a tree $\Tc$ to be the same tree but with opposite sign.
  \begin{figure}[h!]
  \includegraphics[scale=0.55]{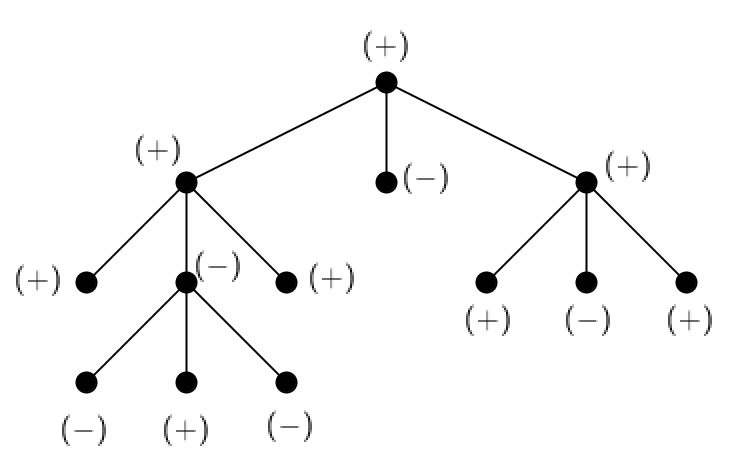}
  \caption{An example of a tree with $+$ sign (Definition \ref{deftree}).}
  \label{fig:tree}
\end{figure} 
\end{df}
\begin{df}\label{defcouple} A \emph{couple} $\Qc$ (see Figure \ref{fig:cpl}) is an unordered pair $(\Tc^+,\Tc^-)$ of two trees $\Tc^\pm$ with signs $+$ and $-$ respectively, together with a partition $\Ps$ of the set $\Lc^+\cup\Lc^-$ into $(n+1)$ pairwise disjoint two-element subsets, where $\Lc^\pm$ is the set of leaves for $\Tc^\pm$, and $n=n^++n^-$ where $n^\pm$ is the scale of $\Tc^\pm$. This $n$ is also called the \emph{scale} of $\Qc$, denoted by $n(\Qc)$. The subsets $\{\lf,\lf'\}\in\Ps$ are referred to as \emph{pairs}, and we require that $\zeta_{\lf'}=-\zeta_\lf$, i.e. the signs of paired leaves must be opposite. If both $\Tc^\pm$ are trivial, we call $\Qc$ the \emph{trivial couple} (and write $\Qc=\times$).

For a couple $\Qc=(\Tc^+,\Tc^-,\Ps)$ we denote the set of branching nodes by $\Nc^*=\Nc^+\cup\Nc^-$, and the set of leave by $\Lc^*=\Lc^+\cup\Lc^-$; for simplicity we will abuse notation and write $\Qc=\Tc^+\cup\Tc^-$. We also define a \emph{paired tree} to be a tree where \emph{some} leaves are paired to each other, according to the same pairing rule for couples. We say a paired tree is \emph{saturated} if there is only one unpaired leaf (called the \emph{lone leaf}). In this case the tree forms a couple with the trivial tree $\bullet$.
  \begin{figure}[h!]
  \includegraphics[scale=.5]{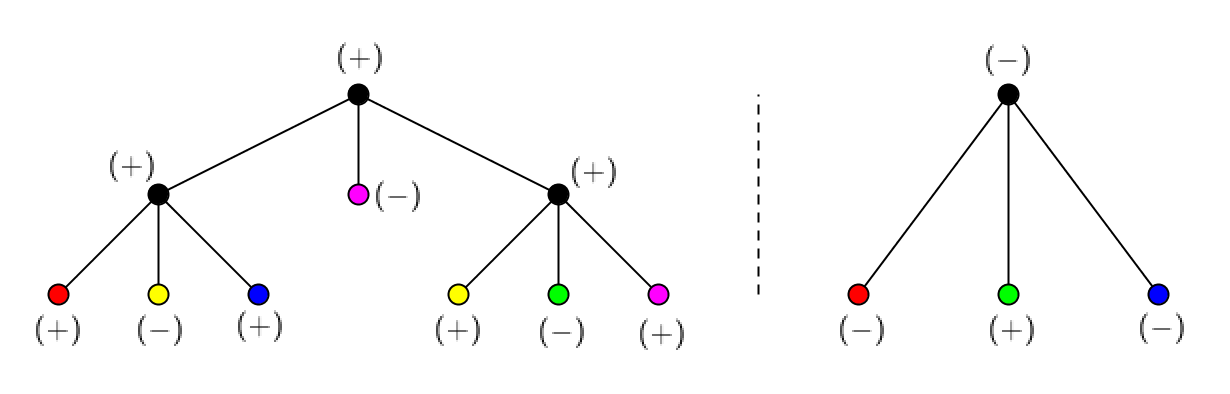}
  \caption{An example of a couple (Definition \ref{defcouple}). Here and below two nodes of same color (other than black) represent a pair of leaves.}
  \label{fig:cpl}
\end{figure} 
\end{df}
\begin{rem} Our notions about trees and couples will be fixed throughout, for example $\Lc^\pm$ will always mean the set of leaves for the tree $\Tc^\pm$, and $\Nc_j^*$ will always mean the $\Nc^*$ set for a couple $\Qc_j$, etc.
\end{rem}
\begin{df}\label{defdec} A \emph{decoration} $\Ds$ of a tree $\Tc$ (see Figure \ref{fig:dectree}) is a set of vectors $(k_\nf)_{\nf\in\Tc}$, such that $k_\nf\in\Zb_L^d$ for each node $\nf$, and that \[k_\nf=k_{\nf_1}-k_{\nf_2}+k_{\nf_3},\quad \mathrm{or\ equivalently}\quad \zeta_\nf k_\nf=\zeta_{\nf_1}k_{\nf_1}+\zeta_{\nf_2}k_{\nf_2}+\zeta_{\nf_3}k_{\nf_3},\] for each branching node $\nf\in\Nc$, where $\zeta_\nf$ is the sign of $\nf$ as in Definition \ref{deftree}, and $\nf_1,\nf_2,\nf_3$ are the three children nodes of $\nf$ from left to right. Clearly a decoration $\Ds$ is uniquely determined by the values of $(k_\lf)_{\lf\in\Lc}$. For $k\in\Zb_L^d$, we say $\Ds$ is a $k$-decoration if $k_\rf=k$ for the root $\rf$.\footnote{{Note that our notion of decoration is different from some earlier literature, in which vectors $k$ are assigned not to the nodes of the couple but to the edges connecting nodes to its children. These differences are of course non-essential.}}

Given a decoration $\Ds$, we define the coefficient
\begin{equation}\label{defcoef}\epsilon_\Ds:=\prod_{\nf\in\Nc}\epsilon_{k_{\nf_1}k_{\nf_2}k_{\nf_3}}\end{equation} where $\epsilon_{k_1k_2k_3}$ is as in (\ref{defcoef0}). Note that in the support of $\epsilon_\Ds$ we have that $(k_{\nf_1},k_{\nf_2},k_{\nf_3})\in\Sf$ for each $\nf\in\Nc$. We also define the resonance factor $\Omega_\nf$ for each $\nf\in\Nc$ by
\begin{equation}\label{defres}\Omega_\nf=\Omega(k_{\nf_1},k_{\nf_2},k_{\nf_3},k_\nf)=|k_{\nf_1}|_\beta^2-|k_{\nf_2}|_\beta^2+|k_{\nf_3}|_\beta^2-|k_\nf|_\beta^2.\end{equation}

A decoration $\Es$ of a couple $\Qc=(\Tc^+,\Tc^-,\Ps)$, see Figure \ref{fig:deccpl}, is a set of vectors $(k_\nf)_{\nf\in\Qc}$, such that $\Ds^\pm:=(k_\nf)_{\nf\in\Tc^\pm}$ is a decoration of $\Tc^\pm$, and moreover $k_\lf=k_{\lf'}$ for each pair $\{\lf,\lf'\}\in\Ps$. We define $\epsilon_\Es:=\epsilon_{\Ds^+}\epsilon_{\Ds^-}$, and define the resonance factors $\Omega_\nf$ for $\nf\in\Nc^*$ as in (\ref{defres}). Note that we must have $k_{\rf^+}=k_{\rf^-}$ where $\rf^\pm$ is the root of $\Tc^\pm$; again we say $\Es$ is a $k$-decoration if $k_{\rf^+}=k_{\rf^-}=k$. Finally, we can define decorations $\Ds$ of paired trees, as well as $\epsilon_\Ds$ and $\Omega_\nf$ etc., similar to the above.
  \begin{figure}[h!]
  \includegraphics[scale=.5]{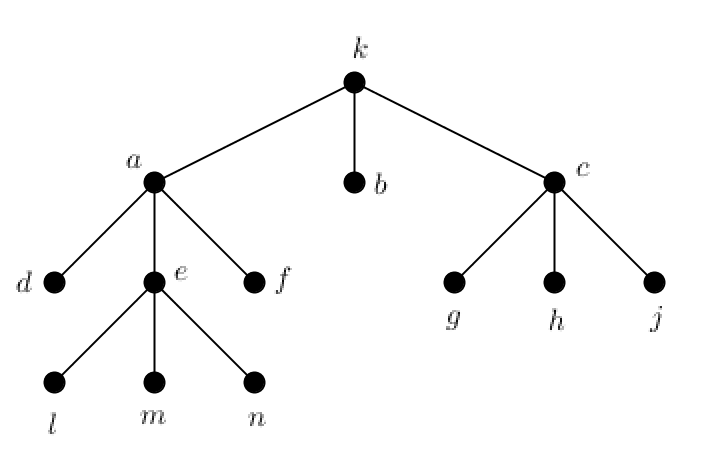}
  \caption{An example of a decorated tree (Definition \ref{defdec}). It satisfies $k=a-b+c$ and $a=d-e+f$ etc.}
  \label{fig:dectree}
\end{figure} 
  \begin{figure}[h!]
  \includegraphics[scale=.5]{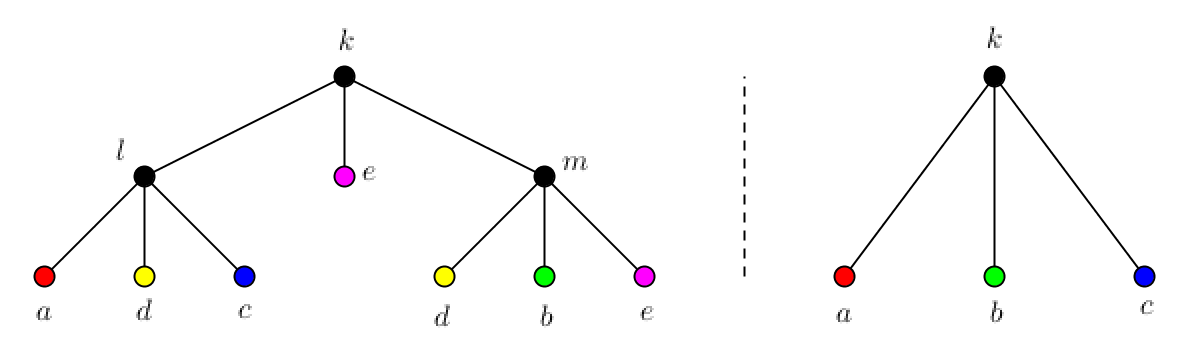}
  \caption{An example of a decorated couple (Definition \ref{defdec}). It satisfies $k=l-e+m$ and $l=a-d+c$ etc.}
  \label{fig:deccpl}
\end{figure} 
\end{df}
\subsubsection{Multilinear Gaussians associated with trees}\label{treeterm}
For any tree $\Tc$, define the expression $\Jc_\Tc$ inductively by
\begin{equation}\label{defjt}(\Jc_\Tc)_k(t)=\left\{
\begin{aligned}
&(a_k)_{\mathrm{in}}^\zeta,&&\textrm{if $\Tc=\bullet$,}\\
&\Ic\Cc_\zeta(\Jc_{\Tc_1},\Jc_{\Tc_2},\Jc_{\Tc_3})_k(t),&&\textrm{otherwise,}
\end{aligned}
\right.
\end{equation} where $\Tc_1$, $\Tc_2$ and $\Tc_3$ are subtrees of $\Tc$ from left to right, $\zeta$ is the sign of $\Tc$, $\Cc_\zeta$ is defined as in (\ref{akeqn2}), and linear Duhamel operator $\Ic$ is given by
\begin{equation}\label{duhamel}\Ic F(t)=\int_0^t F(s){\,\mathrm{d}s}.
\end{equation} Denote $z^+=z$ and $z^-=\overline{z}$ for complex numbers $z$ (note that similar expressions like $m^\pm$ or $\alpha_j^\pm$ that occur later may also have different meanings; this will depend on the context). By induction, we can show that if $\Tc$ has scale $n$, then $\Jc_\Tc$ has the expression
\begin{equation}\label{formulajt}(\Jc_\Tc)_k(t)=\bigg(\frac{\delta}{2L^{d-1}}\bigg)^n\prod_{\nf\in\Nc}(i\zeta_\nf)\sum_\Ds\epsilon_\Ds\cdot\Ac_\Tc(t,\delta L^2\cdot\Omega[\Nc])\prod_{\lf\in\Lc}\sqrt{n_{\mathrm{in}}(k_\lf)}\eta_{k_\lf}^{\zeta_\lf}(\omega),\end{equation} where $\Omega[\Nc]$ represents $(\Omega_\nf)_{\nf\in\Nc}$, and the sum is taken over all $k$-decorations $\Ds$ of $\Tc$ (or equivalently, all choices of $(k_\lf)_{\lf\in\Lc}$). In (\ref{formulajt}) the coefficient $\Ac_\Tc=\Ac_\Tc(t,\alpha[\Nc])$ is defined inductively by 
\begin{equation}\label{defcoefa} \Ac_\bullet(t,\alpha[\Nc])=1;\quad\Ac_\Tc(t,\alpha[\Nc])=\int_0^t e^{\zeta \pi i\alpha_\rf t'}\prod_{j=1}^3\Ac_{\Tc_j}(t',\alpha[\Nc_j])\,\mathrm{d}t',
\end{equation} where $\Tc_1$, $\Tc_2$ and $\Tc_3$ are subtrees of $\Tc$ from left to right so that $\Nc=\Nc_1\cup\Nc_2\cup\Nc_3\cup\{\rf\}$, and $\zeta$ is the sign of $\Tc$.
Finally, for $n\geq 0$ we define
\begin{equation}\Jc_n=\sum_{n(\Tc^+)=n}\Jc_{\Tc^+},
\end{equation} where the sum is taken over all trees $\Tc^+$ of scale $n$ that have $+$ sign.
\subsubsection{The ansatz and the remainder term}\label{ansatzrem} Define $N:=\lfloor \log L\rfloor$. With the definition of $\Jc_\Tc$ and $\Jc_n$ in Section \ref{treeterm} we may introduce the ansatz
\begin{equation}
\label{ansatz}a_k(t)=\sum_{n=0}^{N}(\Jc_n)_k(t)+b_k(t)=\sum_{n(\Tc^+)\leq N}(\Jc_{\Tc^+})_k(t)+b_k(t),
\end{equation} where again the sum is taken over all trees $\Tc^+$ of scale at most $N$ that have $+$ sign. The remainder term $b:=b_k(t)$ then satisfies the equation
\begin{equation}\label{eqnbk}b=\Rc+\Ls b+\Bs(b,b)+\Cs(b,b,b),
\end{equation} {see for example Section 2.2 of \cite{DH},} where the terms on the right hand side are defined by
\begin{equation}\label{eqnbk1.5}\Rc=\sum_{(0)}\Ic\Cc_+(u,\overline{v},w),\,\,\,\Ls b=\sum_{(1)}\Ic\Cc_+(u,\overline{v},w),\,\,\, \Bs(b,b)=\sum_{(2)}\Ic\Cc_+(u,\overline{v},w),\,\,\, \Cs(b,b,b)=\Ic\Cc_+(b,\overline{b},b).\end{equation} The above summations are taken over $(u,v,w)$, each of which being either $b$ or $\Jc_n$ for some $0\leq n\leq N$; moreover in the summation $\sum_{(j)}$ for $0\leq j\leq 2$, exactly $j$ inputs in $(u,v,w)$ equals $b$, and in the summation $\sum_{(0)}$ we require that the sum of the three $n$'s in the $\Jc_n$'s is at least $N$. Note that $\Ls$, $\Bs$ and $\Cs$ are $\Rb$-linear, $\Rb$-bilinear and $\Rb$-trilinear operators respectively, and (\ref{eqnbk}) is equivalent to
\begin{equation}\label{eqnbk2}b=(1-\Ls)^{-1}(\Rc+\Bs(b,b)+\Cs(b,b,b)),
\end{equation} provided $1-\Ls$ is invertible in a suitable space.
\subsubsection{Correlations associated with couples}\label{correlcal} Given $t,s\in[0,1]$ and $k\in\Zb_L^d$, we want to calculate the correlation $\Eb(a_k(t)\overline{a_k(s)})$. Neglecting the remainder $b$ for the moment, we obtain the main contribution
\begin{equation}\label{defe}\Ec:=\sum_{(\Tc^+,\Tc^-)}\Eb[(\Jc_{\Tc^+})_k(t)(\Jc_{\Tc^-})_k(s)],\end{equation} where the sum is taken over all pairs of trees $(\Tc^+,\Tc^-)$ where $\Tc^\pm$ has sign $\pm$ and scale at most $N$. For fixed $(\Tc^+,\Tc^-)$, by (\ref{formulajt}) we get that
\begin{multline}\Eb[(\Jc_{\Tc^+})_k(t)(\Jc_{\Tc^-})_k(s)]=\bigg(\frac{\delta}{2L^{d-1}}\bigg)^n\prod_{\nf\in\Nc^*}(i\zeta_\nf)\sum_{(\Ds^+,\Ds^-)}\epsilon_{\Ds^+}\epsilon_{\Ds^-}\cdot\Ac_{\Tc^+}(t,\delta L^2\cdot \Omega[\Nc^+])\\\times\Ac_{\Tc^-}(s,\delta L^2\cdot\Omega[\Nc^-])\cdot\Eb\bigg[\prod_{\lf\in\Lc^*}\sqrt{n_{\mathrm{in}}(k_\lf)}\eta_{k_\lf}^{\zeta_\lf}(\omega)\bigg],\end{multline}where $n$ equals the sum of scales of $\Tc^+$ and $\Tc^-$. By using the complex version of a specific case of the Isserlis' theorem, which is proved in Lemma \ref{isserlis0}, we conclude that
\begin{equation}\label{formulae}\Eb[(\Jc_{\Tc^+})_k(t)(\Jc_{\Tc^-})_k(s)]=\bigg(\frac{\delta}{2L^{d-1}}\bigg)^n\zeta^*(\Qc)\sum_\Ps\sum_\Es\epsilon_\Es\cdot\Bc_\Qc(t,s,\delta L^2\cdot\Omega[\Nc^*])\cdot{\prod_{\lf\in\Lc^*}^{(+)}n_{\mathrm{in}}(k_\lf)},\end{equation} where $\zeta^*(\Qc)$ and $\Bc_\Qc$ are defined by
\begin{equation}\label{defzetaq}\zeta^*(\Qc)=\prod_{\nf\in\Nc^*}(i\zeta_\nf),\end{equation}
\begin{equation}\label{defcoefb}\Bc_\Qc(t,s,\alpha[\Nc^*])=\Ac_{\Tc^+}(t,\alpha[\Nc^+])\cdot\Ac_{\Tc^-}(s,\alpha[\Nc^-]).\end{equation}Here in (\ref{formulae}) the first summation is taken over all possible sets of pairings $\Ps$ that make a couple $\Qc:=(\Tc^+,\Tc^-,\Ps)$, and the second summation is taken over all $k$-decorations $\Es$ of the couple $\Qc$. {The product $\prod_{\lf\in\Lc^*}^{(+)}$ is taken over $\lf\in\Lc^*$ that have sign $+$.}

By summing over all $(\Tc^+,\Tc^-)$, we conclude that $\Ec=\sum_\Qc\Kc_\Qc(t,s,k)$, where the summation is taken over all couples $\Qc=(\Tc^+,\Tc^-,\Ps)$ with both $\Tc_\pm$ having scale at most $N$, and $\Kc_\Qc$ is defined by
\begin{equation}\label{defkq}\Kc_\Qc(t,s,k):=\bigg(\frac{\delta}{2L^{d-1}}\bigg)^n\zeta^*(\Qc)\sum_\Es\epsilon_\Es\cdot\Bc_\Qc(t,s,\delta L^2\cdot\Omega[\Nc^*])\cdot{\prod_{\lf\in\Lc^*}^{(+)}n_{\mathrm{in}}(k_\lf)}.\end{equation} Here $n$ is the scale of $\Qc$.
\subsection{Notations and estimates} Here we state the main notations, norms and estimates.
\subsubsection{Parameters and norms}\label{notations} From now on we fix $\beta\in(\Rb^+)^d\backslash \Zf$ with $\Zf$ defined in Lemma \ref{genericity}. Let $C$ denote any large constant that depends only on the dimension $d$, and $C^+$ denote any large constant that depends only on $(d,\beta,n_{\mathrm{in}})$. These constants may vary from line to line. The notations $X\lesssim Y$ and $X=O(Y)$ will mean $X\leq C^+ Y$, unless otherwise stated.

Recall that $A\geq 40d$ is fixed in Theorem \ref{main}. We will fix $\nu=(100d)^{-1}\ll 1$, { and fix an even integer $p$ that is sufficiently large depending on $A$ and $\nu$, abbreviated as $p\gg_{A,\nu}1$ (same below). Then fix the value of $\delta$ in Theorem \ref{main}, such that $\delta\ll_{p,C^+}1$ (so $\delta$ is sufficiently small depending on $p$ and $C^+$).} Finally assume $L\gg_{\delta}1$ and fix $N=\lfloor \log L\rfloor$.

Let $\chi_0=\chi_0(z)$ be a smooth {even} function for $z\in\Rb$ that equals $1$ for $|z|\leq 1/2$ and equals $0$ for $|z|\geq 1$; define also $\chi_0(z^1,\cdots,z^d)=\chi_0(z^1)\cdots \chi_0(z^d)$ and $\chi_\infty=1-\chi_0$. By abusing notation, sometimes we may also use $\chi_0$ to denote other cutoff functions with slightly different supports. These functions, as well as the other cutoff functions, will be in Gevrey class $2$ (i.e. the $k$-th order derivatives are bounded by $(2k)!$). For a multi-index $\rho=(\rho_1,\cdots,\rho_m)$, we adopt the usual notations $|\rho|=\rho_1+\cdots+\rho_m$ and $\rho!=(\rho_1)!\cdots(\rho_m)!$, etc. For an index set $A$, we use the vector notation $\alpha[A]=(\alpha_j)_{j\in A}$ and $\mathrm{d}\alpha[A]=\prod_{j\in A}\mathrm{d}\alpha_j$, etc.

Define the time Fourier transform (the meaning of $\widehat{\cdot}$ later may depend on the context)
\[\widehat{u}(\lambda)=\int_\Rb u(t) e^{-2\pi i\lambda t}\,\mathrm{d}t,\quad u(t)=\int_\Rb \widehat{u}(\lambda)e^{2\pi i\lambda t}\,\mathrm{d}\lambda.\] Define the $X^\kappa$ norm for functions $F=F(t,k)$ or $G=G(t,s,k)$ by \[\|F\|_{X^\kappa}=\int_\Rb\langle\lambda\rangle^{\frac{1}{9}}\sup_k\langle k\rangle^{\kappa}|\widehat{F}(\lambda,k)|\,\mathrm{d}\lambda,\quad \|G\|_{X^\kappa}=\int_{\Rb^2}(\langle\lambda\rangle+\langle\mu\rangle)^{\frac{1}{9}}\sup_k\langle k\rangle^{\kappa}|\widehat{F}(\lambda,\mu,k)|\,\mathrm{d}\lambda\mathrm{d}\mu,\] where $\widehat{\cdot}$ denotes the Fourier transform in $t$ or $(t,s)$. If $F=F(t)$ or $G=G(t,s)$ does not depend on $k$, the norms are modified accordingly; they do not depend on $\kappa$ so we simply call it $X$. Define the localized version $X_{\mathrm{loc}}^\kappa$ (and similarly $X_{\mathrm{loc}}$) as 
\[\|F\|_{X_{\mathrm{loc}}^\kappa}=\inf\big\{\|\widetilde{F}\|_{X^\kappa}:\widetilde{F}=F\mathrm{\ for\ }0\leq t\leq 1\big\};\quad \|G\|_{X_{\mathrm{loc}}^\kappa}=\inf\big\{\|\widetilde{G}\|_{X^\kappa}:\widetilde{G}=G\mathrm{\ for\ }0\leq t,s\leq 1\big\}.\] If we will only use the value of $G$ in some set (for example $\{t>s\}$ in Proposition \ref{varregtree}), then in the above definition we may only require $\widetilde{G}=G$ in this set. Define the $Z$ norm for function $a=a_k(t)$,
\begin{equation}\label{defznorm}\|a\|_Z^2=\sup_{0\leq t\leq 1}L^{-d}\sum_{k\in\Zb_L^d}\langle k\rangle^{10d}|a_k(t)|^2.
\end{equation}
\subsubsection{Key estimates} In this section we state the key estimates of the paper. The rest of the paper until Section \ref{operatornorm} is devoted to the proof of these estimates, and in Section \ref{endgame} we use them to prove Theorem \ref{main}.
\begin{prop}\label{mainprop1} Let $\Jc_\Tc$ and $\Jc_n$ be defined as in Section \ref{treeterm}. Then, for each $0\leq n\leq N^3$, $k\in\Zb_L^d$ and $t\in[0,1]$ we have
\begin{equation}\label{mainest1}\Eb|(\Jc_n)_k(t)|^2\lesssim\langle k\rangle^{-20d}(C^+\sqrt{\delta})^n.
\end{equation}
\end{prop}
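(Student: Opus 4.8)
The plan is to prove the bound $\Eb|(\Jc_n)_k(t)|^2 \lesssim \langle k\rangle^{-20d}(C^+\sqrt\delta)^n$ by expanding $\Eb|(\Jc_n)_k(t)|^2 = \sum_{n(\Tc^+)=n(\Tc^-)=n}\Eb[(\Jc_{\Tc^+})_k(t)\overline{(\Jc_{\Tc^-})_k(t)}]$ and recognizing, via the computation in Section \ref{correlcal}, that this equals $\sum_\Qc \Kc_\Qc(t,t,k)$ where $\Qc$ ranges over all couples of scale $2n$ whose two component trees each have scale exactly $n$. So everything reduces to estimating the couple contributions $\Kc_\Qc(t,t,k)$ from \eqref{defkq}, summing over all such couples, and checking that the total is controlled by $(C^+\sqrt\delta)^n$ after accounting for the (factorially many) couples. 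The key asset here is that we are allowed to take $n$ as large as $N^3 \sim (\log L)^3$, so we have a lot of room: a gain of $L^{-r}$ on a "reduced" structure of size $r$ will beat any $r!$ as long as $r \lesssim \log L$.

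The main steps, in order: First, reduce the $L^2$ expectation to a sum over couples $\Kc_\Qc(t,t,k)$ as above, using \eqref{formulae}--\eqref{defkq}. Second, for each fixed couple $\Qc$, estimate $|\Kc_\Qc(t,t,k)|$ by bounding $\Bc_\Qc$ (the product of the $\Ac_{\Tc^\pm}$ coefficients) and carrying out the decoration sum $\sum_\Es \epsilon_\Es (\cdots) \prod^{(+)} n_{\mathrm{in}}(k_\lf)$. The $\Ac_\Tc$ coefficients are iterated time integrals of oscillatory exponentials $e^{\zeta\pi i\alpha_\rf t'}$, so they are bounded by $C^n$ in $L^\infty$ but, crucially, after inserting cutoffs in the $\Omega_\nf$ variables (the $\chi_0,\chi_\infty$ of Section \ref{notations}), one gets the improved "quasi-resonance" bound that each branching node effectively restricts $\Omega_\nf$ to a window of width $\sim (\delta L^2)^{-1}$. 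Third, the counting: the decoration sum over $\Zb_L^d$-valued leaf vectors subject to the linear constraints $k_\nf = k_{\nf_1}-k_{\nf_2}+k_{\nf_3}$ at each node and the pairing constraints $k_\lf = k_{\lf'}$, weighted by the Schwartz factor $n_{\mathrm{in}}$ and the $\Omega_\nf$-localizations, is estimated via the counting lemma (Lemma \ref{lem:counting}) node by node. The "worst-case" couple contributes $\sim (\delta/L^{d-1})^n \cdot (\delta L^2)^{?}\cdots$ which, when the scaling law $\alpha = L^{-1}$ (i.e. $\lambda^2 = L^{d-1}$, $\delta L^2 = T$) is plugged in, should balance to $(C^+\delta)^n$ for the leading couples — but this only works if the number of genuinely worst-case couples is $O(C^n)$ rather than $n!$.

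That last point is the main obstacle, and it is exactly where the deep content of the paper lives: the rigidity theorem of Section \ref{improvecount}, together with the analysis of regular couples (Sections \ref{domasymp}, \ref{regasymp}), irregular chains (Section \ref{irchaincancel}), and Type II molecular chains (Section \ref{l1coef}). Concretely, I expect the proof to proceed by performing the "surgery" described in the introduction: given an arbitrary couple $\Qc$ of scale $2n$, excise all regular sub-couples, irregular chains, and Type II molecular chains; estimate each excised piece by its (near-)saturated bound, which costs only $C^n$ total since there are only $O(C^n)$ such pieces; and on the remaining reduced structure of size $r$, invoke the rigidity theorem to gain an extra $L^{-\nu r}$ (for some $\nu = \nu(d) > 0$), which absorbs the $r! \leq C^r r^r$ coming from the number of such structures because $r \leq n \leq N^3 \ll L^{\nu/2}$ when $L$ is large. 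Since for a couple contributing to $\Eb|(\Jc_n)_k(t)|^2$ the two trees have scale exactly $n$, and each such couple's saturated estimate carries a $(\sqrt\delta)^{2n} = \delta^n$ improvement coming from the time-interval shortness (cf. \eqref{firstiterate2}), summing over the $O(C^n)$ near-saturated couples plus the exponentially-gaining remainder yields exactly $\langle k\rangle^{-20d}(C^+\sqrt\delta)^{2n}$, and in particular $(C^+\sqrt\delta)^n$. The $\langle k\rangle^{-20d}$ decay is harvested from the $n_{\mathrm{in}}(k_\lf)$ factors together with the constraint that the root decoration equals $k$: one distributes a small fraction of the Schwartz decay of $n_{\mathrm{in}}$ to pay for $\langle k\rangle^{-20d}$ via the triangle inequality along the tree, and this costs at most a $C^n$ factor which is harmless.
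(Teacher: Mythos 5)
Your plan is essentially the paper's own proof: one expands $\Eb|(\Jc_n)_k(t)|^2=\sum_\Qc\Kc_\Qc(t,t,k)$ over couples whose trees both have scale $n$, bounds the at most $C^n$ regular couples by $\langle k\rangle^{-20d}(C^+\delta)^n$ (Corollary \ref{countcouple1}, Proposition \ref{asymptotics1}, Remark \ref{timebound}), and disposes of the non-regular couples via the index machinery, where each term of index $r$ is bounded by $\langle k\rangle^{-20d}(C^+\delta^{1/4})^{2n}L^{-\nu r}$ while the number of such terms is at most $C^{2n}(Cr)!$, the sum over $r$ converging because $r\leq 2n\leq 2(\log L)^3$. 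The one refinement versus your wording is that for non-regular couples the paper does not estimate each excised piece individually --- a single couple containing a long small-gap irregular chain genuinely fails the near-saturated bound (cf. (\ref{badbound})) --- but instead first groups couples into congruence classes (Definition \ref{conggen}) and applies Proposition \ref{congsumest} to the class sums, which is where the cancellation of Section \ref{irchaincancel} enters.
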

\begin{prop}\label{mainprop2} Let $\Ls$ be defined as in (\ref{eqnbk1.5}), note that $\Ls^n$ is an $\Rb$-linear operator for $n\geq 0$. Define its kernels $(\Ls^n)_{k\ell}^\zeta(t,s)$ for $\zeta\in\{\pm\}$ by
\[(\Ls^nb)_k(t)=\sum_{\zeta\in\{\pm\}}\sum_\ell{\int_0^t} (\Ls^n)_{k\ell}^\zeta(t,s) b_\ell(s)^\zeta\,\mathrm{d}s.\]Then for each $1\leq n\leq N$ and $\zeta\in\{\pm\}$, we can decompose \begin{equation}\label{decomposem}(\Ls^n)_{k\ell}^\zeta=\sum_{n\leq m\leq N^3}(\Ls^n)_{k\ell}^{m,\zeta},\end{equation} such that for any $n\leq m\leq N^3$ and $k,\ell\in\Zb^d_L$ and $t,s\in[0,1]$ with $t>s$ we have
\begin{equation}\label{mainest2}\Eb|(\Ls^n)_{k\ell}^{m,\zeta}(t,s)|^2\lesssim \langle {k-\zeta\ell}\rangle^{-20d}(C^+\sqrt{\delta})^mL^{40d}.\end{equation}
\end{prop}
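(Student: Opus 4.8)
The plan is to expand $\Ls^n$ as a sum over \emph{chains}, reduce the second moment $\Eb|(\Ls^n)_{k\ell}^{m,\zeta}|^2$ to a sum over couples of the same type already analyzed in the body of the paper (exactly as $\Eb|(\Jc_n)_k|^2$ is treated for Proposition~\ref{mainprop1}), and then read off the two gains: the Schwartz decay $\langle k-\ell\rangle^{-20d}$ coming from the data weights on the leaves, and the bound $(C^+\sqrt\delta)^mL^{40d}$ coming from the couple estimates. First I would unwind the definition of $\Ls$ in (\ref{eqnbk1.5}): each application of $\Ls$ inserts one branching node at the position of the $b$-input, with two of its three children filled by data terms $\Jc_{\Tc}$ with $0\le n(\Tc)\le N$, and the third child becoming the new $b$-input (which may carry a conjugation, hence the sign $\zeta$). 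Thus $(\Ls^nb)_k(t)$ is a sum over ``$n$-chains'': a caterpillar of $n$ branching nodes running from the root (frequency $k$) to one distinguished leaf carrying $b_\ell(s)^\zeta$, with an ordered signed pair of trees of scale $\le N$ grafted at each chain node. Expanding each grafted $\Jc_{\Tc}$ by (\ref{formulajt}), extracting the coefficient of $b_\ell(s)^\zeta$, and using that $\Jc_\bullet$ is time-independent while $\Ic$ is a time integral, one obtains $(\Ls^n)_{k\ell}^\zeta(t,s)$ as a sum over decorations of such chains, with weight $\big(\tfrac{\delta}{2L^{d-1}}\big)^{b}$ where $b$ is the number of branching nodes, a product of $\epsilon$-factors, an oscillatory time coefficient of the same shape as $\Ac$ in (\ref{defcoefa}), and a product of $\sqrt{n_{\mathrm{in}}}\,\eta$ over the non-distinguished leaves. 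Grouping by the total number $m$ of branching nodes gives the decomposition (\ref{decomposem}); since the chain has $n\le N$ nodes and each of the $2n$ grafted trees has scale $\le N$, one has $n\le m\le n(1+2N)\le N^3$ once $L$ is large.

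\emph{Reduction to couples.} Multiplying a scale-$m$ chain by the complex conjugate of another scale-$m$ chain (both with root $k$ and distinguished leaf $\ell$) and applying the Wick/Isserlis formula of Lemma~\ref{isserlis0} to the $4m$ Gaussian leaves, exactly as in the derivation of (\ref{formulae})--(\ref{defkq}), expresses $\Eb|(\Ls^n)_{k\ell}^{m,\zeta}(t,s)|^2$ as a sum over couples $\Qc$ of scale $2m$ in the restricted subclass generated by pairs of chains, of a quantity structurally identical to $\Kc_\Qc(t,s,k)$ in (\ref{defkq}) — except that the distinguished pair of leaves (the two chain endpoints) is frozen at the value $\ell$ rather than summed over $\Zb_L^d$, and carries no $n_{\mathrm{in}}$ weight. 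The problem is thereby reduced to an estimate for a family of $\Kc_\Qc$-type sums carrying one extra external parameter $\ell$.

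\emph{The estimate.} Two features must then be extracted. First, the factor $\langle k-\ell\rangle^{-20d}$: in any chain the frequency at each caterpillar node differs from $k$ by a fixed signed sum of the root frequencies of the grafted subtrees, so $k-\ell$ is a signed sum of leaf variables, each of which carries a Schwartz weight $n_{\mathrm{in}}$; a standard discrete-convolution/weight-commutation argument with these weights yields any fixed polynomial decay in $\langle k-\ell\rangle$ at the cost of a constant $C^+$ (absorbing, when $\langle k-\ell\rangle$ is moderate, a $(C^+)^m$-type factor). Second, the factor $(C^+\sqrt\delta)^m$, up to the stated power of $L$: this is where one invokes the couple estimates forming the core of the paper — the treatment of regular couples, of irregular chains, of Type~II molecular chains, and the rigidity/counting theorem — which control the total contribution of all couples of a given scale $2m$ with no factorial loss in $m$; because our couples lie in the chain subclass and the two external legs $k,\ell$ are frozen rather than summed, one loses only a fixed power of $L$ relative to the sharp bound, which we bound crudely by $L^{40d}$. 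This loss is harmless since it is absorbed in the operator-norm estimates of Section~\ref{operatornorm} (where the relevant $n$ is $\sim\log L$, so that $(C^+\sqrt\delta)^m$ beats $L^{40d}$, while the small-$n$ terms of the Neumann series are split off separately). The $t,s$-dependence sits entirely in the oscillatory coefficients of $\Ac$/$\Bc$ and is handled in the $X^\kappa$-type norms as elsewhere in the paper.

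\emph{Main obstacle.} The crux is the second point above: suppressing the factorial-in-$m$ number of chain-couples of scale $2m$ down to $(C^+\sqrt\delta)^m$ requires the full combinatorial, analytic, and number-theoretic apparatus developed in the later sections, now carried out for the chain subclass, while keeping the $L$-loss to a fixed power uniformly in $n$ and $m$ and simultaneously retaining the $\langle k-\ell\rangle$ decay. By contrast, the expansion into chains, the Isserlis reduction to couples, and the time bookkeeping are routine given the set-up above.
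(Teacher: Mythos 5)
Your overall architecture matches the paper's: expand $\Ls^n$ over caterpillar/chain structures (the paper formalizes these as \emph{flower trees} with \emph{height} $n$ in Definition \ref{flowers}), apply Isserlis to get a sum over paired chains (the paper's \emph{flower couples}, Proposition \ref{L^nexpansion}), note that $k-\ell$ is a signed sum of Schwartz-weighted leaf frequencies so the $\langle k-\ell\rangle^{-20d}$ decay comes for free, and then re-run the couple machinery of Sections \ref{regasymp}--\ref{l1coef} at a cost of a fixed power of $L$. The identification of the range $n\leq m\leq n(1+2N)\leq N^3$ and the treatment of the frozen frequency $\ell$ ($\mathbf{1}_{k_\ff=\ell}$ replacing an $n_{\mathrm{in}}$ factor, costing only a polynomial in $L$) are also as in the paper.

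There is, however, a genuine gap in the treatment of the time variables. You describe the oscillatory coefficient as ``of the same shape as $\Ac$'' and claim the $(t,s)$-dependence is ``handled in the $X^\kappa$-type norms as elsewhere.'' This is not correct: extracting the kernel $(\Ls^n)_{k\ell}^\zeta(t,s)$ from $\Ls^n b$ does not just produce an $\Ac$-type integral over all $t_\nf$ — the Duhamel integral whose variable multiplies $b_\ell$ is \emph{not} integrated but frozen at $s$, i.e. the expression carries Dirac factors $\dirac(t_{\ff^p}-s)$ (see (\ref{defaltkq})). Removing one time integration destroys the $L^1$-integrability in the corresponding $\alpha_{\ff^p}$ variable, which is precisely what Proposition \ref{maincoef}, Lemma \ref{regchainlem8} and Proposition \ref{section8main} hinge on. The paper's point (3) of its proof handles this with nontrivial work: split according to whether $\ff^p$ lands in a regular couple/chain (re-examining the $K$-function of Section \ref{regchainest} via (\ref{oddexpression})) or in the remaining skeleton (exploiting the denominators in Lemma \ref{timeintlemma}), obtain a weaker $L^\infty_{\alpha_{\ff^p}}L^1_{\text{rest}}$ bound, and then convert it back to an $L^1$ bound at the cost of an $L^{10d}$ factor by noting that $\alpha_{\ff^p}=\delta L^2\Omega_{\ff^p}$ ranges over a set of at most $L^{5d}$ unit intervals. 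Your proposal lists the only differences from $\Kc_\Qc$ as the frozen frequency and the missing $n_{\mathrm{in}}$ weight; the frozen \emph{time} and the resulting loss of $\alpha$-integrability are the actual source of most of the technical work in the paper's proof of this proposition and of the $L^{40d}$ factor in (\ref{mainest2}), and your argument as written does not account for them. (A minor omitted point, in the same spirit: when re-running the irregular-chain cancellations of Section \ref{irchaincancel} in the flower-couple setting, one must check that congruence preserves the flower structure and the height and scale — this is point (1) of the paper's proof, and it needs a small argument splitting the irregular chain into on-stem and off-stem pieces.)
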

\begin{prop}\label{mainprop4} Recall the nonlinearity $\Kc(\phi_1,\phi_2,\phi_3)$ defined in (\ref{wke2}). Define
\begin{equation}\label{iterate}\Mc_0(t,k)=n_{\mathrm{in}}(k);\quad \Mc_n(t,k)=\delta\sum_{n_1+n_2+n_3=n-1}\int_0^t\Kc(\Mc_{n_1}(t'),\Mc_{n_2}(t'),\Mc_{n_3}(t'))(k)\,\mathrm{d}t',\end{equation} which form the Taylor expansion of the solution to (\ref{wke}), see Proposition \ref{wkelwp}, then for each $0\leq n\leq N^3$, $k\in\Zb_L^d$ and $t\in[0,1]$, we have that
\[\bigg|\sum_{n(\Qc)=2n}\Kc_\Qc(t,t,k)-\Mc_n(t,k)\bigg|\lesssim\langle k\rangle^{-20d} (C^+\sqrt{\delta})^n L^{-\nu},\] where the summation is taken over all couples $\Qc$ of scale $n$, and $\Kc_\Qc$ is defined in (\ref{defkq}). If $2n$ is replaced by $2n+1$, then the same result holds without the $\Mc_n(t,k)$ term.
\end{prop}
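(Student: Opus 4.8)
The plan is to match the Feynman-diagram expansion of the (NLS) correlation, term by term, against the Picard iterates of (WKE). Recall from \eqref{defkq} that $\Kc_\Qc(t,t,k)$ is a sum over $k$-decorations $\Es$ of the couple $\Qc$ of an explicit oscillatory integral weighted by $\epsilon_\Es$, $\Bc_\Qc$, and the leaf-values $n_{\mathrm{in}}$. The first step is the \emph{dichotomy}: partition the couples of scale $n$ into those that are \emph{regular} (built recursively out of the elementary ``figure-eight'' couple whose contribution reproduces the cubic structure of $\Kc$) and those that are not. For the non-regular couples one expects a gain $L^{-\nu}$ — this is ultimately a consequence of the counting/rigidity estimates proved later in the paper (the analysis in Sections \ref{improvecount}, \ref{irchaincancel}, \ref{l1coef}), but for the purposes of this proposition it enters only as a black-box bound $\sum_{\Qc\text{ non-regular}}|\Kc_\Qc(t,t,k)|\lesssim\langle k\rangle^{-20d}(C^+\sqrt\delta)^n L^{-\nu}$, together with the pairing count $\#\{\Qc:n(\Qc)=n\}\lesssim C^n$ after the regular sub-structures are quotiented out. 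The case $n(\Qc)=2n+1$ odd falls entirely into the non-regular (more precisely, non-leading) bucket, since an odd number of branching nodes cannot support a regular couple of the relevant type; this disposes of the last sentence of the statement.

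The heart of the argument is then the \emph{regular} couples. Here I would show that the sum $\sum_{\Qc\text{ regular},\,n(\Qc)=2n}\Kc_\Qc(t,t,k)$ is, up to an error $O(\langle k\rangle^{-20d}(C^+\sqrt\delta)^n L^{-\nu})$, equal to $\Mc_n(t,k)$. The mechanism is that each regular couple of scale $2n$ is assembled from $n$ copies of the elementary couple, organized along the recursive tree structure of \eqref{iterate}; when one computes the decoration sum for such a $\Qc$, the Riemann-sum over $k_1,k_2,k_3\in\Zb_L^d$ converges, as $L\to\infty$, to the continuum integral $\int_{(\Rb^d)^3}$ appearing in \eqref{wke2}, and the momentum/resonance delta constraints emerge from (i) the decoration relation $k=k_1-k_2+k_3$ at each branching node, which gives $\dirac(k_1-k_2+k_3-k)$, and (ii) the stationary-phase/large-$\delta L^2$ behavior of the coefficient $\Bc_\Qc$ built from $\Ac_{\Tc^\pm}$ in \eqref{defcoefa}, which localizes $\Omega_\nf$ to a window of width $(\delta L^2)^{-1}$ and produces $\dirac(\Omega_\nf)=\dirac(|k_1|_\beta^2-|k_2|_\beta^2+|k_3|_\beta^2-|k|_\beta^2)$ in the limit. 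The four sign patterns in \eqref{wke2} (the $+,-,+,-$ structure of $\Kc$) come from which leaf of the elementary couple is the ``gained'' one versus the ``lost'' ones, i.e. from the possible pairings of leaves, and the combinatorial bookkeeping of which regular couples feed which term $\Mc_{n_1}\Mc_{n_2}\Mc_{n_3}$ in \eqref{iterate} is exactly the recursion $n_1+n_2+n_3=n-1$. The factor $\delta$ in \eqref{iterate} matches the $\delta/(2L^{d-1})$ prefactor in \eqref{defkq} after accounting for the $L^{d}$ from converting each momentum sum to an integral and the combinatorial symmetry factors.

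The technical core — and the step I expect to be the main obstacle — is making the convergence of the decoration sum to the continuum integral \emph{quantitative}, with the explicit rate $L^{-\nu}$, uniformly in $n\leq N^3$ and in $k$. This requires: controlling the error in replacing the discrete $\Zb_L^d$-sums by integrals against a smooth kernel (using that $n_{\mathrm{in}}$ is Schwartz and that $\Ac_{\Tc}$, being built from the Gevrey-$2$ cutoffs $\chi_0$, has good decay in $\alpha[\Nc]$), and controlling the error in replacing the finite-width resonance window by an exact delta — both of which cost a power of $L^{-1}$ per branching node but must be summed against the $C^n$ regular couples and the $(C^+\sqrt\delta)^n$ size, so one needs the per-node gain to beat the per-node loss, which is where $\delta$ being chosen small (depending on $p,C^+$) and $n\leq N^3=\lfloor\log L\rfloor^3$ are used to absorb everything into a single $L^{-\nu}$ with $\nu=(100d)^{-1}$. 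A secondary subtlety is that the continuum integral in \eqref{wke2} involving the double delta is only conditionally convergent/needs the $\langle k\rangle$-localization and the near-$0$, near-$\infty$ behavior of $n_{\mathrm{in}}$; here I would lean on Proposition \ref{wkelwp} (local well-posedness of (WKE)) to know that $\Mc_n(t,k)$ is itself well-defined and Schwartz in $k$ with $\Mc_n=O((C^+\sqrt\delta)^n)$ — wait, more precisely $O((C^+\delta)^{?})$, but in any case controlled — so that the comparison is between two well-defined quantities of comparable size.
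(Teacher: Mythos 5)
Your overall skeleton is the right one and matches the paper's: the regular/non-regular dichotomy, the observation that odd scale falls entirely into the non-regular bucket (regular couples have even scale, so the odd-scale sum only sees the $L^{-\nu}$ gain), and the decision to treat the non-regular bound as a black box inherited from the counting/rigidity analysis. Your identification of the quantitative Riemann-sum/circle-method step as the technical core is also correct; that is Proposition \ref{approxnt} as applied in Proposition \ref{asymptotics1}.

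The genuine gap is in the mechanism you propose for the regular couples, and it would not survive being written out. You assert that ``each regular couple of scale $2n$ is assembled from $n$ copies of the elementary couple, organized along the recursive tree structure of \eqref{iterate},'' so that the sum over regular couples matches the Picard iterate $\Mc_n$ essentially term by term, with the bookkeeping being ``exactly the recursion $n_1+n_2+n_3=n-1$.'' This is false. Regular couples are generated by iterating two distinct operations (steps $\Ab$ and $\Bb$ of Definition \ref{defreg}); the resulting set of scale-$2n$ regular couples is much larger than the set of terms in \eqref{iterate}, and a vast majority of regular couples have no natural counterpart in $\Mc_n$. The equality is only true in aggregate, via three nontrivial cancellation and combinatorial inputs that your sketch omits entirely. (a) The asymptotic for $\Kc_\Qc$ (Proposition \ref{asymptotics1}) is a sum over subsets $Z\subset\Nc^{ch}$ of terms weighted by a coefficient $\Jc\widetilde{\Bc}_{\Qc,Z}(t,t)$; this coefficient \emph{vanishes identically} whenever the regular couple $\Qc$ is not \emph{dominant} (Proposition \ref{maincancel}), so most regular couples drop out of the sum. (b) Among dominant couples, the contributions with $Z\neq\varnothing$ cancel in aggregate due to the $\prod_{\nf\in Z}\zeta_\nf$ sign factors (Proposition \ref{nonemptyZ}). (c) The surviving $Z=\varnothing$ contributions are organized into equivalence classes of dominant couples, which biject with equivalence classes of encoded trees (Proposition \ref{bijection}) and thus with the multilinear integrals $\widetilde{\Mc}_\Tc(k)$ defining $\Mc_n$ (Propositions \ref{multiexp}, \ref{wkelwp}); the coefficient sums over each equivalence class reproduce exactly the time factors $g_\Tc(t)$ from the WKE Duhamel iteration, up to a factor $2^{2n}$ that must be matched against the $2^{-2n}$ coming from the $(\delta/(2L^{d-1}))^{2n}$ prefactor (Proposition \ref{emptyZ}). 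Without these three steps, the ``matching'' you describe is a heuristic, not a proof — you cannot read off the WKE iterate from an individual regular couple.
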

\section{Overview of the proof}\label{proofoverview}
\subsection{The main challenge} We will focus on the analysis of the correlations $\Kc_\Qc$, since they also control the sizes of $\Jc_\Tc$ and $\Jc_n$ in the ansatz (\ref{ansatz}) in view of (\ref{formulae}). Recall that we have divided the proof of Theorem \ref{main} into three sub-tasks: Proposition \ref{mainprop1}---to obtain upper bounds for $\Kc_\Qc$, Proposition \ref{mainprop2}---to control the $\Rb$-linear operator $\Ls$ appearing in (\ref{eqnbk}), and Proposition \ref{mainprop4}---to evaluate the leading contributions of $\Kc_\Qc$ and match them with the Taylor expansion of $n(\tau,k)$. To demonstrate the main challenge of the problem, let us compare the current situation with the \emph{subcritical} situation which was solved in \cite{DH}, i.e. when one restricts $t\leq L^{-\varepsilon}$ in these propositions.

In the subcritical situation, it can be shown that each term in the expansion in (\ref{ansatz}) gains a power $L^{-\varepsilon}$ compared to the previous one, with high probability, in particular we have
\begin{equation}\label{subcrit}|\Kc_\Qc(t,s,k)|\lesssim \langle k\rangle^{-20d}C_nL^{-n\varepsilon},\end{equation} for any couple $\Qc$ of scale $2n$. Here for simplicity we only consider couples of even scale; the case of odd scale is treated in the same way. Note that (\ref{subcrit}) becomes negligible when $n$ is sufficiently large depending on $\varepsilon$, so the expansion (\ref{ansatz}) can be done to a finite order $N$ \emph{independent of} $L$, and any constant factors one may lose that depends on $N$ will be negligible compared to $L$. Of course it is still highly nontrivial to analyze $\Kc_\Qc$ for $\Qc$ with large scale, but this can be done using the combinatorial structure of $\Qc$, see \cite{DH}.

In the current critical situation, however, the best estimate one can hope for is that 
\begin{equation}\label{naive}|\Kc_\Qc(t,s,k)|\lesssim \langle k\rangle^{-20d}(C^+\delta)^n\end{equation} for couples $\Qc$ of scale $2n$ (in reality we will have $C^+\sqrt{\delta}$ instead of $C^+\delta$ due to a technical reason, see Proposition \ref{section8main}, but this is not important here). This means that, in order for the remainder $b$ in (\ref{ansatz}) to behave significantly better than the main terms, the expansion has to be done at least to order $N\geq \frac{\log L}{\log (1/\delta)}$; in fact as in Section \ref{notations} we have set $N=\lfloor\log L\rfloor$. Therefore the order of expansion \emph{grows} with $L$, which brings the fundamental difficulty of the problem.

One consequence of the largeness of $N$ is that, in many parts of the proof, one is not allowed to lose $\log L$ type factors; on the contrary, for (\ref{subcrit}), any logarithmic factors are negligible. This means that one needs to make every single estimate as sharp as possible, which is a main source of technical difficulties appearing in the proofs below.

A much more significant challenge, which also suggests our main proof strategy, is as follows. For fixed $n$, it is well known that the number of ternary trees is at most $C^N$; however the number of \emph{couples} $\Qc$ of scale $2n$ will grow like $n!$, due to the possibilities of pairings between leaves. This factorial loss, though negligible in the subcritical case (\ref{subcrit}), easily overwhelms the $\delta^n$ gain coming from (\ref{naive}) and seems to completely destroy the convergence.

However, there is one crucial observation that allows us to avoid this fate. Namely, although the total number of couples of scale $2n$ grows factorially on $n$, \emph{the number of couples that actually saturate (\ref{naive}) is in fact bounded by $C^n$.} In other words, even though the whole problem is critical, the \emph{vast majority} of couples $\Qc$ are actually of ``sub-critical" nature and satisfy much better estimates than (\ref{naive}). This fact seems to be unique for the dispersive equation (\ref{nls}), and we have not found a counterpart for stochastic heat equations.

With this observation, it is now clear what we should do with the couples $\Qc$. We shall divide them into different classes, depending on whether they saturate the estimate (\ref{naive}), or nearly saturate it, or neither. This will be controlled by an \emph{index} $r=r(\Qc)$, which plays the central role in the proof. We explain this in more detail in Section \ref{classifyintro} below.
\subsection{Classification of couples} \label{classifyintro} The fundamental objects in our classification of couples are what we call \emph{regular couples}, see Definition \ref{defreg}. These couples have relatively simple structure, and can be constructed by repeating two basic steps (which we call steps $\Ab$ and $\Bb$, see Figures \ref{fig:regcplcst1} and \ref{fig:regcplcst2}) starting from the trivial couple $\times$. As a result, the number of regular couples of a fixed scale $2N$ is bounded by $C^N$ for some absolute constant $C$ (Corollary \ref{countcouple1}). Moreover, these couples are exactly the ones that (formally) saturate (\ref{naive}); in fact a subset of these couples, called the \emph{dominant couples} (Definition \ref{defstd}), constitute exactly the leading contributions in Proposition \ref{mainprop4}.

With the notion of regular couples, it is natural to define the index $r$ as the ``distance" of a given couple to the set of regular couples, roughly as follows:
\begin{multline}\label{indexintro}r(\Qc) = \textrm{ the remaining size of $\Qc$, after repeatedly \emph{reverting}} \\\textrm{the steps $\Ab$ and $\Bb$, until no longer possible.}\end{multline} Note that the actual definition of $r$, see (\ref{indexintro2}), is slightly different from (\ref{indexintro}), due to the presence of particular structures called the \emph{irregular chains} and \emph{Type II) molecular chains}, which will be discussed in Sections \ref{irrechanintro} and \ref{moleintro} below. Here we will temporarily ignore the difference, and note that a couple of index $2r$ is essentially a regular couple \emph{up to ``perturbations" of size $\leq 2r$}.

It is now intuitively clear that the number of couples of scale $2n$ and index $2r$ is at most $C^nr!$ instead of $n!$. This is because a couple of size $2r$ has at most $r!C^r$ possibilities, while reverting steps $\Ab$ and $\Bb$ at most $n$ times leads to at most $C^n$ possible choices (Corollary \ref{corcpl}). Therefore, it remains to show that a couple of scale $2n$ and index $2r$ satisfies the following improvement to (\ref{naive}), namely
\begin{equation}\label{naive2}|\Kc_\Qc(t,s,k)|\lesssim \langle k\rangle^{-20d}(C^+\delta)^nL^{-\nu r}
\end{equation} for some absolute constant $\nu>0$. In the rest of this section we will briefly explain why (\ref{naive2}) is intuitively plausible, and how we shall prove it.

First, recall the definition (\ref{defkq}) of $\Kc_\Qc$. It is easy to show that the function $\Bc_\Qc(t,s,\alpha[\Nc^*])$ is bounded by a product of factors of form $\langle\rho\rangle^{-1}$ where each $\rho$ is a suitable linear combination of the $\alpha_j$ variables for $j\in \Nc^*$; see for example \cite{DH}, Proposition 2.3. As such, for each fixed $(t,s)$, the function $\Bc_\Qc(t,s,\alpha[\Nc^*])$, as a function of $\alpha[\Nc^*]$, is almost $L^1$ integrable. Note that we do need to carefully distinguish between \emph{genuine} and \emph{almost} integrability (see Section \ref{timeintintro} below), but here we will temporarily ignore this and simply assume $\Bc_\Qc\in L^1$. Assuming also $n_{\mathrm{in}}$ is supported in the unit ball, then (\ref{defkq}) is controlled by the upper bound for the following \emph{counting problem}
\begin{equation}\label{countingintro}\big\{\Es=(k_\nf)_{\nf\in\Qc}:|k_\lf|\leq 1\,(\forall \lf\in \Lc^*),\ |\Omega_\nf-\alpha_\nf|\leq (\delta L^2)^{-1}\,(\forall\nf\in\Nc^*)\big\}
\end{equation} for $k$-decorations $\Es$, where $k$ is fixed, and $\alpha_\nf\in\Rb$ are fixed real numbers.

Accurately estimating the number of solutions to (\ref{countingintro}) is a major component of this work (see Section \ref{moleintro}); for demonstration we will use a naive dimension counting argument here (which may not be precise but usually provides the correct heuristics). For example, the decorated couple in Figure \ref{fig:deccpl} corresponds to the counting problem for $(a,b,c,d,e,l,m)\in\Zb_L^{7d}$ such that
\begin{equation}\label{countingintro1}
\left\{
\begin{aligned}l-e+m&=k,&|l|_\beta^2-|e|_\beta^2+|m|_\beta^2-|k|_\beta^2&=\alpha_1+O(\delta^{-1}L^{-2});\\
a-d+c&=l,&|a|_\beta^2-|d|_\beta^2+|c|_\beta^2-|l|_\beta^2&=\alpha_2+O(\delta^{-1}L^{-2});\\
d-b+e&=m,&|d|_\beta^2-|b|_\beta^2+|e|_\beta^2-|m|_\beta^2&=\alpha_3+O(\delta^{-1}L^{-2});\\
a-b+c&=k,&|a|_\beta^2-|b|_\beta^2+|c|_\beta^2-|k|_\beta^2&=\alpha_4+O(\delta^{-1}L^{-2}).
\end{aligned}
\right.
\end{equation} Thus dimension counting yields a possible upper bound, which is $L^{4d}(\delta^{-1}L^{-2})^3=\delta^{-3}L^{4d-6}$ (note that the last line of equations in (\ref{countingintro1}) follows from the first three).

Now, a key feature of \emph{regular} couples is that, all its branching nodes can be paired such that for any decoration $\Es$ and any two paired branching nodes $\nf$ and $\nf'$, one must have $\Omega_{\nf'}=\pm\Omega_\nf$ (see Proposition \ref{branchpair}), i.e. each variable $\Omega_\nf$ occurs \emph{twice} in the $\Bc_\Qc$ function, and in the counting problem. For example, the following decorated couple (Figure \ref{fig:regcplex}) which is regular, corresponds to the counting problem for $(a,b,c,d,e,m)\in\Zb_L^{6d}$ such that
\begin{equation}\label{countingintro2}
\left\{
\begin{aligned}m-e+d&=k,&|m|_\beta^2-|e|_\beta^2+|d|_\beta^2-|k|_\beta^2&=\alpha_1+O(\delta^{-1}L^{-2});\\
a-b+c&=k,&|a|_\beta^2-|b|_\beta^2+|c|_\beta^2-|k|_\beta^2&=\alpha_4+O(\delta^{-1}L^{-2}).
\end{aligned}
\right.
\end{equation} Thus dimension counting yields a possible upper bound, which is $L^{4d}(\delta^{-1}L^{-2})^2=\delta^{-2}L^{4d-4}$.
  \begin{figure}[h!]
  \includegraphics[scale=.5]{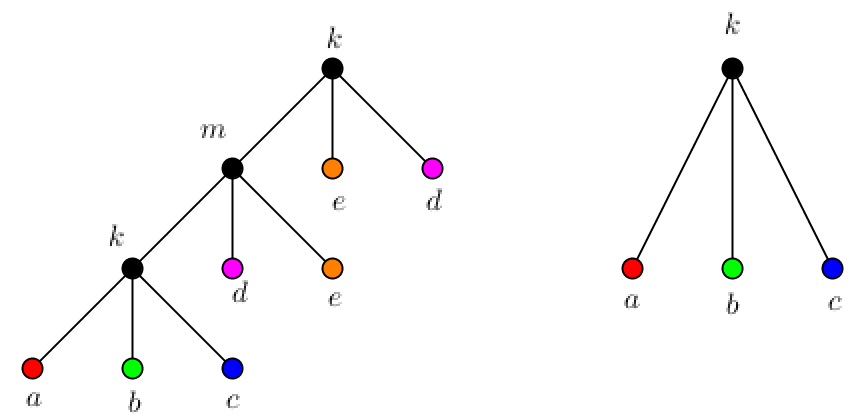}
  \caption{An example of a decorated regular couple. It satisfies $k=m-e+d$ and $k=a-b+c$ etc.}
  \label{fig:regcplex}
\end{figure}

It is clear that in both systems, the dimensions of the submanifolds determined by the \emph{linear} parts are the same (which is $4d$ here and $2nd$ if $\Qc$ has scale $2n$) The reason why the non-regular couple $\Qc$ in Figure \ref{fig:deccpl} enjoys better estimates than the regular couple in Figure \ref{fig:regcplex}, is that the corresponding system contains \emph{one more independent quadratic equation}, due to the fact that each $\Omega_\nf$ occurs twice for regular couples, but not for non-regular couples.

As such, it is natural to believe that $\Kc_\Qc$ for regular couples $\Qc$, which involve the least number of quadratic equations in the counting problem, will be the worst in terms of upper bounds and will saturate (\ref{naive}), while $\Kc_\Qc$ for non-regular couples $\Qc$ will enjoy better estimates. Moreover, if a couple $\Qc$ has ``distance" at least $2r$ to regular couples, i.e. it is obtained by making a size $2r$ perturbation to a regular couple, then it will contain at least $r$ extra quadratic equations in the corresponding counting problem, and thus satisfies the improved bound (\ref{naive2}).

A major part of this paper is to make the above heuristics rigorous. In addition, one has to calculate the asymptotics of $\Kc_\Qc$ for regular couples $\Qc$, and deal with the $\Rb$-linear operator $\Ls$. In the next section we start by considering regular couples.
\subsection{Regular couples}\label{regintro} Let $\Qc$ be a regular couple. Our goal is to calculate the asymptotics of $\Kc_\Qc$, then combine and match them with $\Mc_n(t,k)$ in Proposition \ref{mainprop4}; in this process we also obtain uniform bounds for $\Kc_\Qc$, as in (\ref{naive}), that lead to Proposition \ref{mainprop1}.

Note that in all previous works \cite{BGHS2,CG1,CG2,DH}, only the correlations $\Kc_\Qc$ for couples $\Qc$ \emph{up to scale $2$} are calculated, and they are matched with only the \emph{first order term} $\Mc_1(t,k)$ in the expansion of $n(\tau,k)$. In subcritical situations this is enough, as each term gains at least $L^{-\varepsilon}$ compared to the previous one; in the current work, however, it is necessary to calculate the correlations $\Kc_\Qc$ for couples $\Qc$ of \emph{any} scale. These correlations have much richer structure than $\Mc_n(t,k)$ which are obtained by simply iterating the nonlinearity (\ref{wke2}), so the fact that they still match the higher order iterations $\Mc_n(t,k)$ is quite remarkable.
\subsubsection{Approximation using circle method}\label{circleintro} The formal calculation of the asymptotics of $\Kc_\Qc$ is not difficult. In fact in the limit $L\to\infty$ the sum in (\ref{defkq}) can be viewed as a Riemann sum, which is then approximated by an integral, and we also have \begin{equation}\label{approxdirac}\Bc_\Qc(t,s,\delta L^2\Omega[\Nc^*])\approx (\delta L^2)^{-n}\int\Bc_\Qc\cdot \prod_{\nf} \dirac(\Omega_\nf),\end{equation} where the product is taken over all \emph{different} variables $\Omega_\nf$, and there are in total $n$ of them (half of the scale of $\Qc$). Thus heuristically we have (see Proposition \ref{asymptotics1} for the actual version)
\begin{equation}\label{approxintro}\Kc_\Qc(t,s,k)\approx 2^{-2n}\delta^n\zeta^*(\Qc)\int\Bc_\Qc\cdot\int\prod_\nf\dirac(\Omega_\nf)\prod_{\lf\in\Lc^*}^{(+)}n_{\mathrm{in}}(k_\lf)\,\mathrm{d}\sigma,\end{equation} where $\mathrm{d}\sigma$ is the surface measure for a suitable linear submanifold of $(k_\lf)$. Here note that the vectors involved in different variables $\Omega_\nf$ can be separated, for example for Figure \ref{fig:regcplex} and (\ref{countingintro2}), the two different $\Omega_\nf$ variables are
\[|m|_\beta^2-|e|_\beta^2+|d|_\beta^2-|k|_\beta^2=2\langle m-k,k-d\rangle_\beta\quad \textrm{and}\quad|a|_\beta^2-|b|_\beta^2+|c|_\beta^2-|k|_\beta^2=2\langle a-k,k-c\rangle_\beta,\] and the vectors they involve are $(m-k,k-d,a-k,k-c)$, which are independent variables. This is crucial for (\ref{approxdirac}) to be valid, as products like $\dirac(x\cdot y)\dirac(x\cdot z)$ etc. may not be well-defined in general.

\smallskip In order to make the approximation (\ref{approxintro}) rigorous, one first needs to perform a change of variables, so that the different $\Omega_\nf$ become $\langle x_j,y_j\rangle_\beta\,(1\leq j\leq n)$ for the new independent variables $(x_j,y_j)$. In the simple case $n=1$, we essentially need to prove
\[\sum_{x,y\in\Zb_L^d}\psi(x,y)\cdot\Bc(\delta L^2\langle x,y\rangle_\beta)\approx L^{2d-2}\delta^{-1}\int\Bc\cdot\int_{\Rb^{2d}}\psi(x,y)\cdot\dirac(\langle x,y\rangle_\beta)\mathrm{d}x\mathrm{d}y\] for a Schwartz function $\psi$ and an $L^1$ function $\Bc$, which was achieved in earlier works \cite{BGHS2,DH} etc. by applying the circle method and exploiting the genericity of $\beta$. The case of general $n$, which can be as large as $N=\lfloor\log L\rfloor$, follows from applying the circle method for the integration in each of the variables $(x_j,y_j)$, see Proposition \ref{approxnt}.

There is one main new ingredient, though, compared to previous works. Assuming $n_{\mathrm{in}}$ is supported in the unit ball, we know that each of the variables $(x_j,y_j)$ belongs to a ball of size at most $n$. If $n$ is independent of $L$, as in previous works, then any loss in terms of $n$ is negligible; however for $n\sim\log L$ this bound is not good enough, as a polynomial loss in $n$ for each variable $(x_j,y_j)$ will lead to a factorial net loss, which is not acceptable. The idea here is to make this restriction more precise, namely that each $(x_j,y_j)$ belongs to a ball of size $\lambda_j$ \emph{centered at some point determined by the previous $(x_\ell,y_\ell)$}, after fixing some strict partial ordering in $j$. Moreover individual $\lambda_j$ can be as large as $n$, but the product of all these $\lambda_j$ is bounded by $C^n$, which is then acceptable, see Lemma \ref{auxlem}. In addition, since each $(x_j,y_j)$ is supported in a ball not centered at the origin, one needs to apply a translation-invariant version of the circle method. This is mostly straightforward, but requires a new argument when dealing with major arcs, see Lemma \ref{NTSP}.
\subsubsection{Analysis of $\Bc_\Qc$}\label{timeintintro} In order to apply Proposition \ref{approxnt}, one needs to obtain $L^1$ bounds for the function $\Bc_\Qc=\Bc_\Qc(t,s,\alpha[\Nc^*])$ defined in (\ref{defcoefb}). Here the rough bound in \cite{DH}, Proposition 2.3 is not enough, as $\langle x\rangle^{-1}$ is not in $L^1$ and one cannot afford to lose $\log L$ type factors in the $L^1$ norm. Fortunately, since each variable $\Omega_\nf$ occurs twice in the function $\Bc_\Qc$, it in principle should also occur twice in the denominators, which allows one to recover $L^1$ boundedness, in view of the elementary inequality
\begin{equation}\label{elementary}\int_\Rb{\langle x-a\rangle}^{-1}{\langle x-b\rangle}^{-1}\,\mathrm{d}x\lesssim 1\end{equation} uniformly in $a$ and $b$.

To make the above heuristics precise, we will perform an inductive argument exploiting the structure of regular couples. First note that by induction, $\Bc_\Qc$ can be written as a multi-dimensional integral in the time variables $t_\nf$ in a domain $\Ec=\Ec_\Qc$ defined by $\Qc$, see (\ref{timedom}). Next, we apply the \emph{structure theorem} for regular couples, which is proved in Proposition \ref{structure1}, to construct $\Qc$ from a specific couple $\Qc_0$, by replacing each of its leaf pairs with a \emph{smaller} regular couple $\Qc_j$. We will assume this $\Qc_0$ is a so-called \emph{regular double chain}, see Definition \ref{defregchain}. Then, by considering the time domains $\Ec$ associated with $\Qc$, $\Qc_0$ and each $\Qc_j$, we can essentially express $\Bc_\Qc$ in terms of $\Bc_{\Qc_0}$ and $\Bc_{\Qc_j}$. Applying the induction hypothesis for each $\Bc_{\Qc_j}$, we can reduce the study of $\Bc_\Qc$ to that of $\Bc_{\Qc_0}$; since $\Qc_0$ has an explicit form, the corresponding function $\Bc_{\Qc_0}$ is also explicit and in fact equals the product of two functions of the form
\begin{equation}\label{regchainintro}K(t,\alpha_1,\cdots,\alpha_m)=\int_{t>t_1>\cdots >t_{2m}>0}e^{\pi i(\beta_1t_1+\cdots +\beta_{2m}t_{2m})}\,\mathrm{d}t_1\cdots\mathrm{d}t_{2m},
\end{equation} see (\ref{regchaink}). Here $t$ is replaced by $s$ in the other function, and $\{\beta_1,\cdots,\beta_{2m}\}$ is a permutation of $\{\pm\alpha_1,\cdots,\pm\alpha_{m}\}$ corresponding to a \emph{legal partition} (Definition \ref{deflegal}).

The analysis of the function $K$ is done in Section \ref{regchainest}, where we show that it is essentially $L^1$ in $(\alpha_1,\cdots,\alpha_m)$ for any $t$, except it may contain a few factors $1/\pi i\alpha_j\,(j\in Z)$ where $Z$ is a subset of $\{1,\cdots,m\}$, but then it will be $L^1$ in the remaining variables, see Lemma \ref{regchainlem8}. Using this result, we can proceed with the inductive step and finally prove Proposition \ref{maincoef}, which states that for each fixed $(t,s)$, the function $\Bc_\Qc(t,s,\alpha[\Nc^*])$ is the product of $\prod_{\nf\in Z}1/(\pi i\alpha_\nf)$ for some subset $Z$ of branching nodes, with an $L^1$ function of the remaining $\alpha_\nf$ variables. This then allows us to apply Proposition \ref{approxnt} and calculate the asymptotics of $\Kc_\Qc$ as in Section \ref{circleintro}. Note that the factors $\prod_{\nf\in Z}1/(\pi i\alpha_\nf)$ are not in $L^1$ but have the correct parity so that the circle method still applies, provided one treats the singularities using the Cauchy principal value.

Finally we need to calculate the integrals of (the integrable parts of) $\Bc_\Qc$, see (\ref{defintcal}). These values can again be calculated inductively; in fact we can identify a special class of regular couples, called \emph{dominant couples} (Definition \ref{defstd}), such that this integral vanishes for any non-dominant regular couple (Proposition \ref{maincancel}). For dominant couples, the above induction process yields a recurrence relation for the integrals $\Jc\Bc_\Qc$ of $\Bc_\Qc$. Such a recurrence relation then uniquely determines these integrals, which happen to be independent of $Z$. See Proposition \ref{regcplcal}.
\subsubsection{Combinatorics of leading terms} As in Sections \ref{circleintro} and \ref{timeintintro}, we are able to calculate the leading term of $\Kc_\Qc$ for each regular couple $\Qc$, and it just remains to put them altogether. Note that each of these leading terms has the form
\[(\Kc_\Qc)_{\mathrm{app}}(t,s,k)\sim\delta^n\sum_Z\prod_{\nf\in Z}\zeta_\nf\cdot\Jc\Bc_\Qc(t,s)\cdot \Mc_{\Qc,Z}^*(k),\] see (\ref{asymptotics1}). Here $Z$ is a subset of branching nodes, $\Jc\Bc_\Qc(t,s)$ is a function of $(t,s)$ only that is also independent of $Z$, and $\Mc_{\Qc,Z}^*(k)$ is an explicit multilinear integral expression of the initial data $n_{\mathrm{in}}$ depending on $\Qc$ and $Z$, see (\ref{defintegral}). Since $\Jc\Bc_\Qc$ vanishes for non-dominant couples we just need to consider dominant $\Qc$.

The natural idea is then to classify all these terms according to the form of the expression $\Mc_{\Qc,Z}^*$, and combine the coefficients $\Jc\Bc_\Qc(t,s)$. This leads to the definition of \emph{enhanced dominant couples} which depends on $Z$, and the notation of \emph{equivalence} between enhanced dominant couples which asserts that the forms of $\Mc_{\Qc,Z}^*$ are the same. See Definition \ref{equivcpl} and Proposition \ref{multiexp}. As such, we need to calculate the combinations of coefficients
\[\sum_{\Qs\in\Xs}\prod_{\nf\in Z}\zeta_\nf\cdot\Jc\Bc_\Qc(t,t)\] where the sum is taken over all enhanced dominant couples $(\Qc,Z)$ in a fixed equivalence class $\Xs$, and we restrict to $t=s$ as this is the case of interest in Proposition \ref{mainprop4}. It turns out, see Proposition \ref{nonemptyZ}, that for equivalence classes in which $Z\neq\varnothing$, the above combinations again vanish due to delicate cancellations involving the signs $\zeta_\nf$.

Finally, Proposition \ref{emptyZ} establishes that the combinations of coefficients corresponding to $Z=\varnothing$ exactly coincide with the coefficients occurring in $\Mc_n(t,k)$. As the corresponding multilinear expressions $\Mc_{\Qc,\varnothing}^*(k)$ also match precisely, see Propositions \ref{multiexp} and \ref{wkelwp}, this then completes the regular couple part of the proofs of Propositions \ref{mainprop1} and \ref{mainprop4}.
\subsection{Non-regular couples} We now turn to the non-regular couples. Compared to the regular couple case, here we only need to obtain upper bounds instead of asymptotics, but the structures of couples are much more complicated.

First, we reduce a general couple $\Qc$ by reverting the steps $\Ab$ and $\Bb$ as in (\ref{indexintro}) whenever possible. The result, say $\Qc_{sk}$, of these operations is called the \emph{skeleton} of $\Qc$ (Proposition \ref{reduceprocess}), and is \emph{prime} in the sense that it is not obtained from any other couple by performing $\Ab$ and $\Bb$. Now by Proposition \ref{structure2}, $\Qc$ can be obtained from $\Qc_{sk}$ by attaching regular sub-couples (as well as \emph{regular trees}, see Remark \ref{regtree}, which behave similarly). This allows us to express $\Kc_\Qc$ in terms of $\Kc_{\Qc_j}$ for these regular couples $\Qc_j$, and an expression similar to $\Kc_{\Qc_{sk}}$, see (\ref{bigformula2}).

Thanks to Section \ref{regintro} we have enough information about $\Kc_{\Qc_j}$; in particular they can be divided into a remainder term which gains an extra $L^{-\nu}$ power, and a leading term which satisfies (\ref{naive}) as well as differentiability in $k$ as in (\ref{holderbound}). For simplicity we only consider the leading terms below, which can be viewed effectively as $n_{\mathrm{in}}$ multiplied by a power of $C^+\delta$.
\subsubsection{Irregular chains}\label{irrechanintro} Now we have effectively reduced $\Kc_\Qc$ to $\Kc_{\Qc_{sk}}$. Since $\Qc_{sk}$ is a prime couple which does not have any regular sub-couple, it is tempting to guess that
\begin{equation}\label{easyguess}|\Kc_{\Qc_{sk}}(t,s,k)|\lesssim\langle k\rangle^{-20}(C^+\delta)^nL^{-\nu n}\end{equation} for constant $\nu>0$, where $2n$ is the scale of $\Qc_{sk}$. Clearly (\ref{easyguess}) would imply the desired (\ref{naive2}), in view of the definition (\ref{indexintro}), but unfortunately it is not true.

A main obstacle that prevents (\ref{easyguess}) is the so-called \emph{irregular chains} (we denote them by $\Hc$). These are chains of branching nodes, such that each one is the parent of the next one, and each one has a child leaf paired to a child of the next node, and a child leaf paired to a child of the previous node (see Figure \ref{fig:irrechanintro} below).
  \begin{figure}[h!]
  \includegraphics[scale=.5]{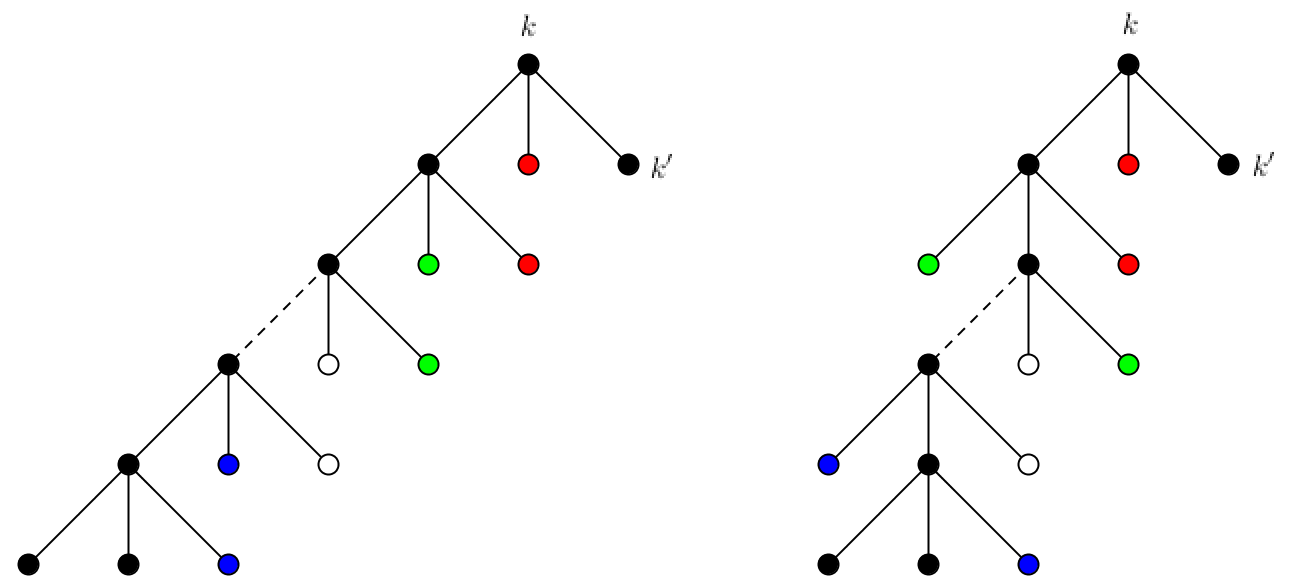}
  \caption{An example of an irregular chain, and another irregular chain congruent to it; see Section \ref{irchaincancel}. We also include the vectors $k$ and $k'$ in a decoration. Here a white leaf may be paired with a leaf in the omitted part.}
  \label{fig:irrechanintro}
\end{figure}

The irregular chains were already discussed in the earlier works \cite{CG2,DH}. In \cite{DH} it was noted that these chains create terms that diverge absolutely, which is a main challenge in reaching the sharp time scale for scaling laws $\alpha \sim L^{-\kappa}$ when $0<\kappa<1$. Here we are in the $\kappa=1$ case, and it can still be shown that if $\Qc_{sk}$ contains long irregular chains then $\Kc_{\Qc_{sk}}$ violates (\ref{easyguess}). More seriously, if the decoration in Figure \ref{fig:irrechanintro} satisfies $|k-k'|\sim L^{-1}$ (i.e. the \emph{small gap case} in Section \ref{sgcase}), then even the $\delta^n$ gain in (\ref{easyguess}) will be absent, and one can only hope for
\begin{equation}\label{badbound}|\Kc_{\Qc_{sk}}(t,s,k)|\lesssim\langle k\rangle^{-20}L^{-\nu}\end{equation} with a constant $\nu<1$ independent of $n$, which is clearly not sufficient.

Note, however, that such bad behavior is only for a \emph{single} irregular chain. In the small gap case, one can in fact group together \emph{different} irregular chains, such that the quantities $\Kc_\Qc$ for the corresponding couples $\Qc$ exhibit exact \emph{cancellations}. This leads to the definition of \emph{congruence} between different irregular chains and, by straightforward extensions, congruence between prime couples $\Qc_{sk}$ and general couples $\Qc$, see Definitions \ref{equivirrechain} and \ref{conggen}.

For two congruent irregular chains (or couples), there is a one-to-one correspondence between their sets of decorations, such that for any two decorations in correspondence, the values of $\zeta_\nf\Omega_\nf$ are exactly the same, see Proposition \ref{congdec}. The input functions in $\Kc_{\Qc_{sk}}$ for the two chains, which are either $n_{\mathrm{in}}$ or functions of similar form that come from regular sub-couples, differ only by a translation of length $|k-k'|\lesssim L^{-1}$, and the different signs
\[\zeta^*(\Qc)=\prod_{\nf}(i\zeta_\nf)\] for different chains in the same congruence class then leads to the desired cancellation, see (\ref{irrechainexp}) and (\ref{irrechainexp2}). {This} effectively improves the power $L^{-\nu}$ in (\ref{badbound}) to $L^{-q\nu}$ where $q$ is the length of the chain (with also the gain from other chains), which is then more than acceptable.

The above cancellation works only for the small gap case. For the complementary large gap case such cancellation is not available, but a direct calculation allows one to retain the $\delta^n$ gain in (\ref{easyguess}). It is still not possible, though, to achieve the negative powers of $L$ in (\ref{easyguess}), which means we need to modify the definition of $r$ in (\ref{indexintro}), see Section \ref{moleintro}. In either case the calculation involving irregular chains are done  similar to \cite{DH}, Section 3.4, using Poisson summation. See Sections \ref{sgcase} and \ref{lgcase}.

With the above analysis and by exploiting the cancellation in the small gap case, we can then reduce $\Kc_{\Qc_{sk}}$ to some expression similar to $\Kc_{\Qc_{sk}^\#}$, where $\Qc_{sk}^\#$ is the couple formed by deleting all irregular chains in $\Qc_{sk}$, see (\ref{section6fin2}). For simplicity we will denote it by $\Qc'$ below.
\subsubsection{Molecules}\label{moleintro} Now we proceed to analyze $\Qc'$, which is a prime couple and does not contain any irregular chains. At this point we will be able to accommodate logarithmic losses, so we may exploit the \emph{almost} integrability of $\Bc_{\Qc'}(t,s,\alpha[(\Nc')^*])$ in the $\alpha_\nf$ variables and reduce to the counting problem (\ref{countingintro}) described in Section \ref{classifyintro}.

In order to bound the number of solutions to (\ref{countingintro}), we notice that any such system, such as (\ref{countingintro1}) and (\ref{countingintro2}), consists of a number of quadruple equations of the form
\[a-b+c-d=0,\quad |a|_\beta^2-|b|_\beta^2+|c|_\beta^2-|d|_\beta^2=\alpha+O(\delta^{-1}L^{-2})\] which involves four vectors $(a,b,c,d)$.

The natural idea is to gradually reduce the size of the system by solving for the quadruples $(a,b,c,d)$ one at a time. Note that some quadruples will have nonempty intersection with others, hence by solving for one quadruple one may also decide some components of later quadruples. Therefore the \emph{order} in which we choose the quadruples is crucial, and we need to design a specific \emph{algorithm} depending on the structure of the couple $\Qc'$.

Before describing this algorithm, however, we need to make one shift in the point of view. Note that after solving for a quadruple and fixing some unknown vectors, we reduce to a smaller counting problem, but the new counting problem may not be coming from another couple (unless in special cases). Thus to validate the induction process, we need to shift to a structure more flexible than couples.

Note that each quadruple corresponds to a branching node and its three children in the couple $\Qc'$, and the only properties we need from $\Qc'$ are the pairwise intersections of these $4$-element subsets. We then define these $4$-element subsets as \emph{atoms} and their intersections as \emph{bonds}, to form a (non-simple) graph with maximum degree $4$, which we refer to as a \emph{molecule} (Definitions \ref{defmole0}, \ref{defmole}). Our counting problem for a couple then reduces to the counting problem for a molecule, where each unknown vector corresponds to a bond and each quadruple system corresponds to an atom; for example the system (\ref{countingintro1}) is represented by Figure \ref{fig:moleintro}. As such, solving for a quadruple corresponds to deleting an atom from the molecule, which simply results in a smaller molecule.
  \begin{figure}[h!]
  \includegraphics[scale=.5]{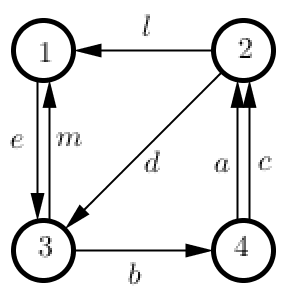}
  \caption{The molecule associated with (\ref{countingintro1}) and the couple in Figure \ref{fig:deccpl}. The arrows represent the signs of the corresponding vectors in the system, see Definition \ref{countingproblem} and Remark \ref{moledec}.}
  \label{fig:moleintro}
\end{figure}

We then design a particular \emph{molecule reduction algorithm}, by applying some specifically defined operations called \emph{steps} (Section \ref{oper}), following some particular rule (Section \ref{alg}). In each step we remove (or in some cases add) a finite number of bonds associated with some (at most $4$) atoms to reduce to a smaller molecule, and solve the local counting problem involving the corresponding quadruples. The upper bounds for such counting problems are provided by Lemma \ref{lem:counting}.

Note that the couple $\Qc'$ is prime, consequently the corresponding molecule $\Mb$ does not contain triple bonds. For such molecules, the application of the algorithm allows us to bound the number of solutions to (\ref{countingintro}) by (essentially)
\begin{equation}\label{gainintro}\Df\lesssim (\delta^{-1}L^{2d-2})^{n} L^{-\nu r_1},\end{equation}  where $2n$ is the scale of $\Qc;$, and $2r_1$ is the number of remaining atoms after removing (all copies of) the two specific structures---which we call \emph{type I and II (molecular) chains} (see Definition \ref{molechain})---from the molecule. This is proved in a \emph{rigidity theorem}, Proposition \ref{gain}, which is perhaps the single most important estimate in this paper.

Since the counting bound $\Df\lesssim(\delta^{-1}L^{2d-2})^{n}$ corresponds to the bound (\ref{naive}) for $\Kc_{\Qc'}$, we see from (\ref{gainintro}) that \emph{this $r_1$ should be defined as the index $r$, replacing the naive definition (\ref{indexintro}), to make (\ref{naive2}) valid.} More precisely, we redefine
\begin{multline}\label{indexintro2}r(\Qc)=\textrm{ the remaining size of $\Qc$, after reverting all steps $\Ab$ and $\Bb$, removing all irregular} \\\textrm{chains, and removing all type I and II chains in the resulting molecule.}
\end{multline} Although this $r$ is smaller than (\ref{indexintro}), we still have the upper bound $C^n(Cr)!$ for the number of couples with index $r$, because type I and II molecular chains and irregular chains are all explicit objects and inserting copies of them only leads to $C^n$ possibilities. Note also that a couple can be reconstructed from the corresponding molecule, again with at most $C^n$ possibilities (Proposition \ref{recover}).

The last piece of the puzzle is to guarantee the \emph{genuine} $L^1$ integrability of the $\Bc_{\Qc'}$ function in the variables \emph{associated with the type I and II chains}, as we can only afford losses of $(\log L)^{Cr}$ with the new definition (\ref{indexintro2}). As it turns out, type I chains in the molecule only come from irregular chains in the couple, which are already treated in Section \ref{irrechanintro}. As for type II chains, we can verify that each variable $\alpha_\nf$ associated with such chains again occurs twice in $\Bc_{\Qc'}$ (same as regular couples in Section \ref{timeintintro}), thus integrability can be proved in a similar manner. See Proposition \ref{section8main}.
\begin{rem} {Some concepts introduced in this work have also been discussed in earlier mathematical and physical literature such as \cite{ESY,LukSpohn}, under different names. For clarity we list some of the correspondences below:
\begin{itemize}
\item The trees, couples and molecules are different but equivalent ways to represent the standard Feynman diagrams in the literature (though the couples and molecules focus on different aspects of the structure, which is important for this paper);
\item The dominant couples, non-dominant regular couples and irregular chains are closely related to the leading diagrams, nested diagrams and necklace diagrams in earlier literature;
\item The $(1,1)$ mini-couples and mini-trees correspond to the gain and loss terms described in earlier literature;
\item The atomic counting bounds in Lemma \ref{lem:counting} is conceptually related to the crossing bounds in earlier literature; in particular the rigidity theorem, Proposition \ref{gain}, achieves the same ``gain per crossing" effect as in \cite{ESY}, but now in the \emph{nonlinear} setting.
\end{itemize}}
\end{rem}
\subsection{Operator $\Ls$, and the endgame}\label{remainder} We now discuss the $\Rb$-linear operator $\Ls$, which appears in the equation (\ref{eqnbk}) satisfied by the remainder $b$. Since $b$ will be assumed to have tiny norm in a high regularity space (Proposition \ref{finprop3}), the quadratic and cubic (in $b$) terms in (\ref{eqnbk}) are not a problem, and the only difficulty is the linear term $\Ls$.

Usually, to invert $1-\Ls$ one would like to construct a function space $\Xc$ and prove that $\Ls$ is a contraction mapping from $\Xc$ to itself. However in the current situation this seems to be problematic due to the critical nature of the problem. Indeed, in \cite{DH} the standard $X^{s,b}$ norm for $b>1/2$ is used, which certainly cannot be applied in the critical setting. One may try to use the critical $U^p$ and $V^p$ norms as in \cite{HTT}, but even they seem to be not precise enough; moreover they are $L^p$ based norms, while the classical $TT^*$ argument (see \cite{DH}, Section 3.3), which is the main tool in establishing norm bounds for random matrices or operators, works best in $L^2$.

In this paper we have found an interesting alternative to the above approach, which might be of independent interest. Namely, in order to invert $1-\Ls$ we do not really need $\Ls$ to \emph{have small norm from some space to itself}, all we need is that $\Ls$ \emph{has small spectral radius}\footnote{{This is well-known in the context of matrix analysis (see \cite{Davies}, Example 4.1.5), however we have not seen any prior example where it is applied to PDEs.}}. Note that the spectral radius of $\Ls$ is basically
\[\rho(\Ls)=\lim_{n\to\infty}\|\Ls^n\|^{1/n},\] where the norm can be chosen as the operator norm between any two reasonable spaces, and $\rho(\Ls)$ does \emph{not} really depend on any specific choice of norms. Therefore, the idea is to consider the powers $\Ls^n$, instead of $(\Ls\Ls^*)^n$ which depends on the specific choice of the Hilbert norm. This provides the motivation for Proposition \ref{mainprop2}.

Now, by (\ref{eqnbk1.5}), we can write $(\Ls b)_k(t)$ as an expression that is $\Rb$-linear in $b$ and $\Rb$-multilinear in the Gaussians; moreover this expression involves a summation over decorations of specific trees, which are obtained by attaching two sub-trees $\Tc_1$ and $\Tc_2$ to a single node. Repeating this $n$ times, we see that the kernels $(\Ls^n)_{k\ell}^\zeta(t,s)$ of $\Ls^n$, and the corresponding homogeneous components $(\Ls^n)_{k\ell}^{m,\zeta}(t,s)$, are given by expressions associated with specific trees (or more precisely a modified version of trees called \emph{flower trees}, see Definition \ref{flowers}), which have similar form as $\Jc_\Tc$ with only minor and manageable modifications, see (\ref{defkerLn}). In the same way, the correlations $\Eb|(\Ls^n)_{k\ell}^{m,\zeta}(t,s)|^2$ will have similar form as $\Kc_\Qc$ with minor modifications, see (\ref{defaltkq}). Therefore, the estimate for $\Ls^n$, as in Proposition \ref{mainprop2}, can be done without paying too much extra effort, by adapting the above proof for the estimates of $\Kc_\Qc$ and making only small changes. See Section \ref{operatornorm}.

Finally, to pass from Propositions \ref{mainprop1}--\ref{mainprop4} to Theorem \ref{main} we simply apply Lemma \ref{largedev}, exploiting the multilinear Gaussian form for $\Jc_\Tc$ to control the $L^p$ moments by $L^2$ moments for free. In controlling the operators $\Ls^n$ (Proposition \ref{finprop2}) one encounters a problem of reducing to finitely many values of $k$, which is more subtle than the similar problem occurring in \cite{DH}, but it still can be resolved by applying a refined version of Claim 3.7 in \cite{DH}. See Lemma \ref{finitelem}.
\subsection{The rest of this paper} In Section \ref{cplstructure} we examine the structure of trees and couples and prove some basic results that will be important in later proofs.

Then, Sections \ref{regasymp}--\ref{domasymp} are devoted to the analysis of regular couples. In Section \ref{regasymp} we study the integrability properties of the coefficients $\Bc_\Qc$, in Section \ref{numbertheory} we prove the number theoretic approximation lemma (Lemma \ref{approxnt}) and apply it to $\Kc_\Qc$, and in Section \ref{domasymp} we collect the asymptotics obtained in Section \ref{numbertheory} and match them with $\Mc_n(t,k)$.

Sections \ref{irchaincancel}--\ref{l1coef} are devoted to non-regular couples. In Section \ref{irchaincancel} we introduce the notion of irregular chains and exhibit the cancellation structure, in Section \ref{improvecount} we analyze the structure of the molecule obtained from the given couple $\Qc$ and use it to solve the counting problem associated with $\Kc_\Qc$, and in Section \ref{l1coef} we recover the $L^1$ integrability of $\Bc_\Qc$ in the type I and II chain variables, which finally allows us to prove Propositions \ref{mainprop1} and \ref{mainprop4}.

Finally, in Section \ref{operatornorm} we apply similar arguments as above to control the kernels of $\Ls^n$ and prove Proposition \ref{mainprop2}, and in Section \ref{endgame} we put everything together to prove Theorem \ref{main}.
\section{Structure of couples}\label{cplstructure} The central part in the proofs of Propositions \ref{mainprop1}--\ref{mainprop4} is the analysis of the correlations $\Kc_\Qc(t,s,k)$ for different couples $\Qc$, and superpositions thereof. Therefore the structure of couples will play a key role in the arguments. This will be analyzed in the current section.
\subsection{Regular couples}We start with the notion of \emph{regular couples}.
\begin{df}\label{defmini} A \emph{$(1,1)$-mini couple} is a couple formed by two ternary trees of scale $1$ with no siblings paired. It has two possibilities, shown in Figure \ref{fig:minicpl}. We assign the two-digit \emph{code} $00$ to the top one, and code $01$ to the bottom one in Figure \ref{fig:minicpl}.
  \begin{figure}[h!]
  \includegraphics[scale=.5]{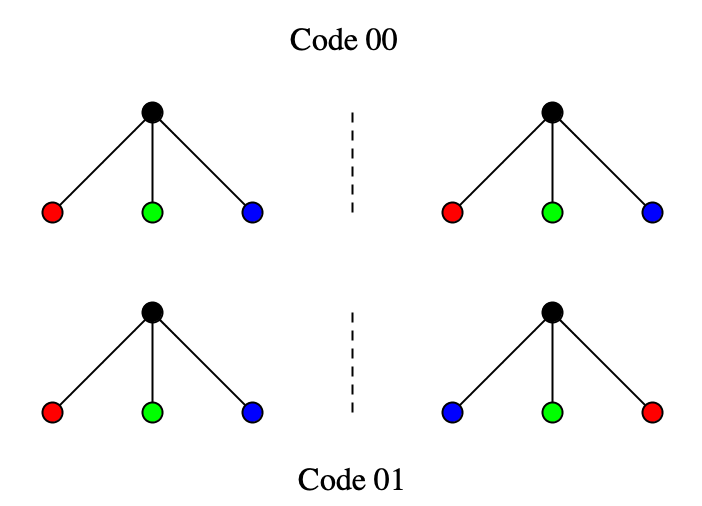}
  \caption{Two possibilities of $(1,1)$-mini couples (Definition \ref{defmini}).}
  \label{fig:minicpl}
\end{figure}

A \emph{mini tree} is a saturated paired tree of scale $2$ with no siblings paired. It has six possibilities, shown in Figure \ref{fig:minitree}; as in the figure we also assign them the two-digit codes in $\{10,\cdots,31\}$. We define a \emph{$(2,0)$-mini couple} to be the couple formed by a mini tree and a single node $\bullet$.
  \begin{figure}[h!]
  \includegraphics[scale=.5]{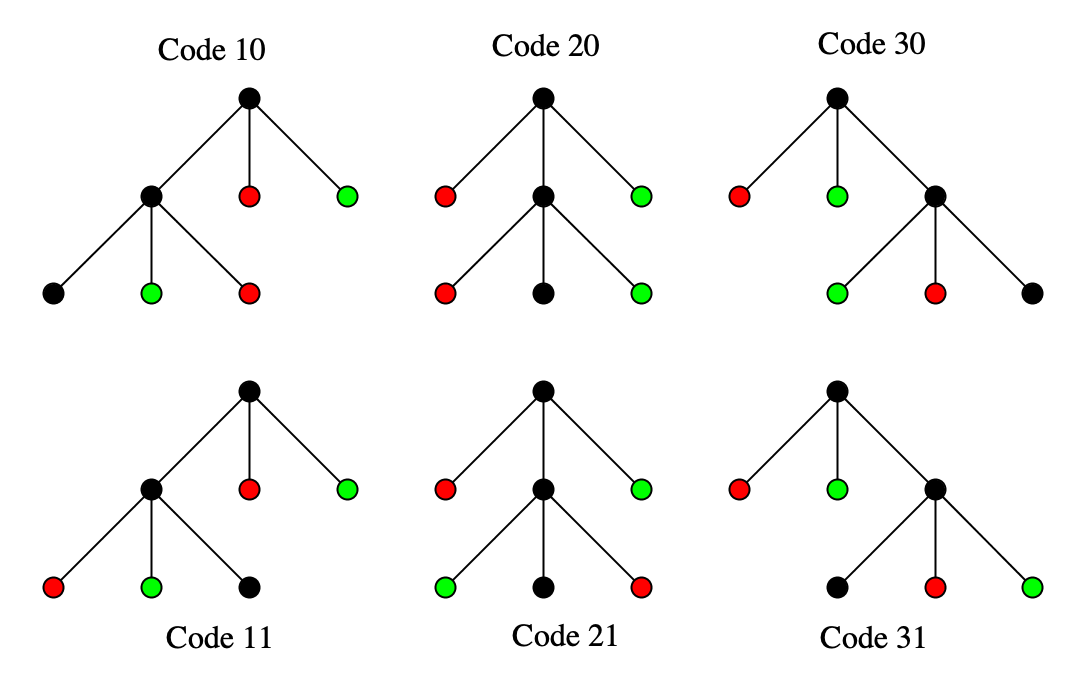}
  \caption{Six possibilities of mini trees (Definition \ref{defmini}).}
  \label{fig:minitree}
\end{figure}
\end{df}
\begin{df}\label{defreg} We define the \emph{regular couples} as follows. First the trivial couple $\times$ is regular. Suppose $\Qc$ is regular, then
\begin{enumerate}
\item The couple $\Qc_+$, formed by replacing a pair of leaves in $\Qc$ (which may or may not be in the same tree) with a $(1,1)$-mini couple, is regular (see Figure \ref{fig:regcplcst1}).
\item The couple $\Qc_+$, formed by replacing a node in $\Qc$ with a mini tree, is regular (see Figure \ref{fig:regcplcst2}).
\item All regular couples are of form (1) or (2).
\end{enumerate}
Note that the scale of a regular couple must be even. The operations described in (1) and (2) will be referred to as \emph{step $\Ab$} (acting at a pair of leaves) and \emph{step $\Bb$} (acting at a node) below.
  \begin{figure}[h!]
  \includegraphics[scale=.5]{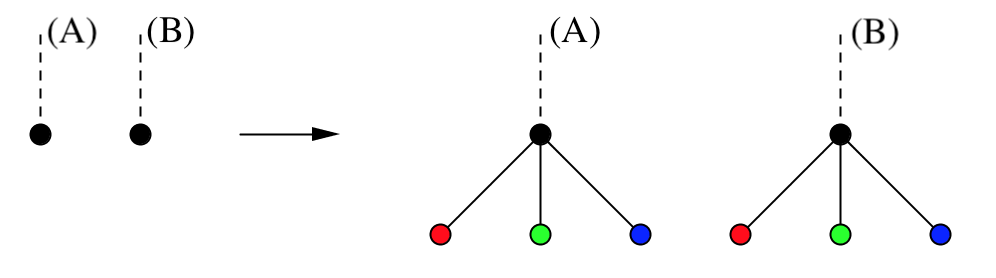}
  \caption{Step $\Ab$ of building regular couples (Definition \ref{defreg}). There are two possibilities depending on the mini couple. {The ends $A$ and $B$ represent the rest of the couple, which is unaffected by the step.}}
  \label{fig:regcplcst1}
\end{figure}
  \begin{figure}[h!]
  \includegraphics[scale=.5]{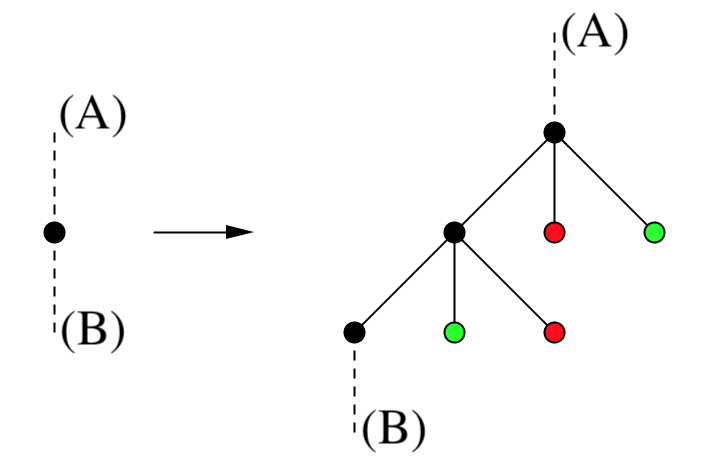}
  \caption{Step $\Bb$ of building regular couples (Definition \ref{defreg}). There are six possibilities depending on the mini tree. {The ends $A$ and $B$ represent the rest of the couple, which is unaffected by the step.}}
  \label{fig:regcplcst2}
\end{figure}
\end{df}
\begin{prop}\label{branchpair} Given any regular couple $\Qc$, there is a unique way to pair \emph{branching} nodes $\nf\in\Nc^*$ to each other, such that for any pair $\{\nf,\nf'\}$ and any decoration $\Es$ of $\Qc$ we have $\zeta_{\nf'}\Omega_{\nf'}=-\zeta_\nf\Omega_\nf$.
\end{prop}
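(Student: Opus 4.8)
The natural approach is to induct on the construction of $\Qc$ via the two building operations, steps $\Ab$ and $\Bb$, from Definition~\ref{defreg}. The base case is the trivial couple $\times$, which has $\Nc^*=\varnothing$, so the statement is vacuously true. For the inductive step, suppose $\Qc$ is obtained from a regular couple $\Qc'$ (for which we already have a branching-node pairing with the claimed property) by one application of step $\Ab$ or step $\Bb$. In the case of step $\Ab$, we replace a leaf pair $\{\lf,\lf'\}$ of $\Qc'$ with a $(1,1)$-mini couple, which introduces exactly two new branching nodes, one in each tree; I would pair these two new nodes with each other and keep the rest of the pairing from $\Qc'$. In the case of step $\Bb$, we replace a node $\nf_0$ of $\Qc'$ with a mini tree, which introduces two new branching nodes (the mini tree of scale $2$ has two branching nodes besides inheriting the old node $\nf_0$); again I would pair the two genuinely new nodes with each other, and for the node that plays the role of $\nf_0$ I would keep whatever it was paired with in $\Qc'$.

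The crux is then a local computation verifying $\zeta_{\nf'}\Omega_{\nf'}=-\zeta_{\nf}\Omega_{\nf}$ for each pair, plus a compatibility check that the decoration constraints at the replacement site do not disturb the already-established relations elsewhere. For the pairs inherited from $\Qc'$: any decoration $\Es$ of $\Qc$ restricts to a decoration of $\Qc'$ (one collapses the inserted mini couple/mini tree back to a leaf pair or a single node, using the decoration rule $\zeta_\nf k_\nf=\sum_i\zeta_{\nf_i}k_{\nf_i}$ at the new nodes to see the restriction is consistent), and for the inherited pairs the resonance factors $\Omega_\nf,\Omega_{\nf'}$ depend only on the vectors on nodes present in $\Qc'$, so the relation carries over verbatim. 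For the newly created pair $\{\nf,\nf'\}$, one writes $\Omega_\nf=2\langle k_{\nf_1}-k_\nf,\,k_\nf-k_{\nf_3}\rangle_\beta$ using \eqref{res} and checks, case by case over the two mini-couple codes ($00,01$) and the six mini-tree codes ($10,\dots,31$), that the decoration constraints force $k_{\nf_1}-k_\nf=\pm(k_{\nf'_1}-k_{\nf'})$ and $k_\nf-k_{\nf_3}=\mp(k_{\nf'}-k_{\nf'_3})$ (with the signs arranged so the product picks up exactly one minus), and that the sign bookkeeping $\zeta_\nf,\zeta_{\nf'}$ from Definition~\ref{deftree} produces the asserted $\zeta_{\nf'}\Omega_{\nf'}=-\zeta_\nf\Omega_\nf$. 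In the step-$\Ab$ case this is immediate because the two new branching nodes sit over leaf-paired children, so their three children vectors coincide in pairs up to sign; in the step-$\Bb$ case one uses that the mini tree is saturated (only the lone leaf is unpaired) so again the children vectors are forced to match in pairs.

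For uniqueness: I would argue that the pairing is rigid by a dimension/genericity argument. Given \emph{any} valid branching-node pairing with the stated property, restrict attention to the inserted mini couple or mini tree in the final construction step; since $\beta$ is generic, the relation $\zeta_{\nf'}\Omega_{\nf'}=-\zeta_\nf\Omega_\nf$ forced to hold for \emph{all} decorations $\Es$ is an identity of linear-in-the-free-leaf-vectors quadratic forms, and comparing which free leaf vectors actually appear in $\Omega_\nf$ shows that a new node can only be paired with another node sharing exactly the same set of incident leaf vectors (up to the sign structure) --- which pins down the partner. Peeling off the last construction step and inducting gives uniqueness globally. The main obstacle I anticipate is purely bookkeeping: correctly tracking the left/mid/right child distinction and the induced signs $(\zeta,-\zeta,\zeta)$ across all eight mini-structure codes so that the single sign flip lands correctly; there is no conceptual difficulty, but the case analysis must be done carefully, and one must be sure that inserting a mini structure at one site genuinely leaves the resonance factors at all other branching nodes unchanged as functions of the decoration (which follows since the new leaf vectors introduced are either paired internally or become the old leaf/node vector, contributing nothing new to distant $\Omega$'s).
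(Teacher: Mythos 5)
Your proposal follows essentially the same route as the paper: induct on the construction of the regular couple via steps $\Ab$ and $\Bb$, at each step pairing the two newly created branching nodes and verifying $\zeta_{\nf'}\Omega_{\nf'}=-\zeta_\nf\Omega_\nf$ by a local computation on the inserted mini couple or mini tree (the paper's proof is exactly this induction, with the verification left to a figure). Your extra sketch of uniqueness, by comparing which free leaf vectors enter each $\Omega_\nf$, goes slightly beyond what the paper writes down but is in the right spirit and does not change the approach.
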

\begin{proof} This is easily proved by induction. When $\Qc=\times$ there is nothing to prove. Suppose the result holds for $\Qc$, then let $\Qc_+$ be formed from $\Qc$ by step $\Ab$ or $\Bb$. In either case, we simply make the two new branching nodes into a pair (for step $\Ab$, these are the two roots of the $(1,1)$-mini couple which are also two leaves in $\Qc$; for step $\Bb$, these are the two branching nodes of the mini tree). See Figures \ref{fig:branpair} for a description of the corresponding decoration. {It is easy to verify that the pairings obtained this way does not depend on the order of applications of $\Ab$ and $\Bb$, hence the uniqueness.}
  \begin{figure}[h!]
  \includegraphics[scale=.5]{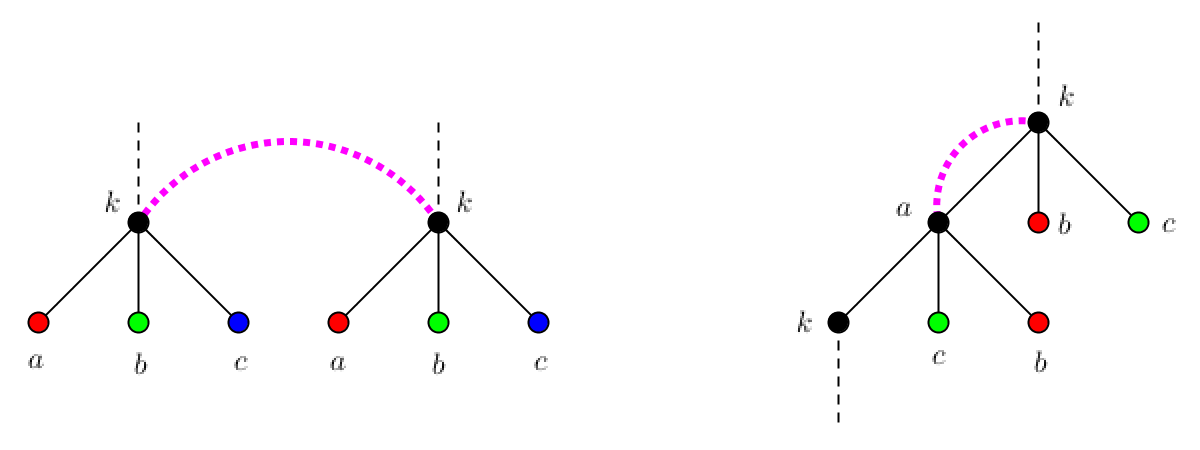}
  \caption{A new pair of branching nodes (connected by a pink dotted curve) formed by step $\Ab$ or $\Bb$; see Proposition \ref{branchpair}.}
  \label{fig:branpair}
\end{figure}
\end{proof}
\subsection{Structure of regular couples} We next analyze the structure of general regular couples.
\begin{df}\label{deflegal} Given $m\geq 0$, consider a partition $\Pc$ of $\{1,\cdots,2m\}$ into $m$ pairwise disjoint two-element subsets (or pairs). We say $\Pc$ is \emph{legal} if there do not exist $a<b<c<d$ such that $\{a,c\}\in\Pc$ and $\{b,d\}\in\Pc$. For example, when $m=3$, then $\Pc=\{\{1,2\},\{3,6\},\{4,5\}\}$ is legal, while $\Pc=\{\{1,6\},\{2,4\},\{3,5\}\}$ is not. We say $\Pc$ is \emph{dominant} if $\Pc=\{\{1,2\},\cdots,\{2m-1,2m\}\}$.
\end{df}
\begin{prop}\label{legalpair} (1) A legal partition can be obtained by inserting a pair of adjacent elements into a smaller legal partition. More precisely, $\Pc$ is legal if and only if either (i) $m=0$ and $\Pc=\varnothing$, or (ii) $m\geq 1$ and
\begin{multline*}\Pc=\big\{\{a,b\}:a<b<j,\,\{a,b\}\in\Pc_1\big\}\cup\big\{\{a,b+2\}:a<j\leq b,\,\{a,b\}\in\Pc_1\big\}\\\cup\big\{\{a+2,b+2\}:j\leq a<b,\,\{a,b\}\in\Pc_1\big\}\cup\big\{\{j,j+1\}\big\}\end{multline*} for some $1\leq j\leq 2m-1$ and some $\Pc_1$ associated with $m-1$ which is legal.

(2) Alternatively, a legal partition can be obtained by concatenating two smaller legal partitions, or enclosing a smaller legal partition in a new pair. More precisely, $\Pc$ is legal if either (i) $m=0$ and $\Pc=\varnothing$, or (ii) $m\geq 2$ and 
\[\Pc=\big\{\{a,b\}:a<b\leq 2k,\,\{a,b\}\in\Pc_1\big\}\cup\big\{\{a+2k,b+2k\}:a<b\leq 2(m-k),\,\{a,b\}\in\Pc_2\big\}\] for some $1\leq k\leq m-1$ and some $\Pc_1$ associated with $k$ and some $\Pc_2$ associated with $m-k$ which are legal, or (iii) $m\geq 1$ and 
\[\Pc=\big\{\{a+1,b+1\}:a<b\leq 2m-2,\,\{a,b\}\in\Pc_1\big\}\cup\big\{\{1,2m\}\big\}\] for some $\Pc_1$ associated with $m-1$ which is legal.
\end{prop}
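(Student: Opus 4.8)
The plan is to treat both parts as direct structural characterizations of non-crossing perfect matchings (which is exactly what ``legal'' partitions are, cf. Definition \ref{deflegal}), reducing everything to two elementary facts: \emph{(a)} every legal $\Pc$ with $m\ge 1$ contains an adjacent pair $\{j,j+1\}$; and \emph{(b)} if $1$ is paired with $c$ in a legal $\Pc$, then $\{1,\dots,c\}$ is a union of blocks of $\Pc$. I would first isolate the bookkeeping principle that makes all the index-shift formulas transparent: if $\phi$ is a strictly increasing bijection between finite subsets of $\Zb$, then $\phi$ carries crossing quadruples $a<b<c<d$ (with $\{a,c\},\{b,d\}$ blocks) to crossing quadruples in both directions, so $\Pc$ is legal iff $\phi(\Pc)$ is; every relabeling in the displayed formulas is of this type.

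For part (1) I would argue directly rather than by induction. For the ``if'' direction, the formula exhibits $\Pc$ as: relabel the blocks of a legal $\Pc_1$ on $\{1,\dots,2m-2\}$ by $x\mapsto x$ for $x<j$ and $x\mapsto x+2$ for $x\ge j$, then add the block $\{j,j+1\}$; one checks that a crossing quadruple of $\Pc$ cannot involve $\{j,j+1\}$ (adjacency blocks it from serving as the ``$\{a,c\}$'' block, since then $c=a+1$, and from serving as the ``$\{b,d\}$'' block, since then $b<c<b+1$), hence the crossing descends to one of $\Pc_1$, contradiction. For the ``only if'' direction I would establish fact (a): take $\{a,b\}\in\Pc$ with $b-a$ minimal; if $b-a\ge 2$, pick $c$ with $a<c<b$ and let $d$ be its partner, observe that $d<a$ produces the crossing $(d,a,c,b)$ and $d>b$ the crossing $(a,c,b,d)$, so $a<d<b$, whence $\{c,d\}$ is a block with $0<|c-d|<b-a$, contradicting minimality; thus $b=a+1$. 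Then $j:=a$, deletion of $\{j,j+1\}$, and relabeling by $x\mapsto x$ ($x<j$), $x\mapsto x-2$ ($x>j+1$) yields a legal $\Pc_1$ on $\{1,\dots,2m-2\}$ realizing the formula.

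For part (2) the ``if'' direction is a three-case check I would carry out as follows: in case (ii) no block of $\Pc$ straddles $2k$, and a ``mixed'' crossing with $\{a,c\}\subseteq\{1,\dots,2k\}$, $\{b,d\}\subseteq\{2k+1,\dots,2m\}$ would force $2k+1\le b<c\le 2k$ (the reversed assignment being symmetric), so any crossing lies entirely in the legal $\Pc_1$ or in the legal $\Pc_2$; in case (iii) the block $\{1,2m\}$ cannot take part in a crossing (``$\{a,c\}=\{1,2m\}$'' forces $c=2m$ against $c<d$, while ``$\{b,d\}=\{1,2m\}$'' forces $b=1$ against $a<b$), so a crossing descends to the legal $\Pc_1$ on $\{1,\dots,2m-2\}$. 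To also justify the phrase that every legal partition \emph{arises} this way, I would use fact (b): let $c$ be $1$'s partner; if $c=2m$ we are in case (iii) with $\Pc_1=\Pc|_{\{2,\dots,2m-1\}}$ relabeled down by $1$, while if $c<2m$ then every $x\in\{2,\dots,c-1\}$ is paired inside $\{2,\dots,c-1\}$ (else its block would cross $\{1,c\}$), forcing $c=2k$ even and $\{1,\dots,2k\}$ to be a union of blocks, which puts $\Pc$ in case (ii) with $\Pc_1=\Pc|_{\{1,\dots,2k\}}$ and $\Pc_2=\Pc|_{\{2k+1,\dots,2m\}}$ relabeled down by $2k$.

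The whole argument is bookkeeping; the one step with content is fact (a) — existence of an adjacent pair — which is nonetheless just the short minimal-gap computation above, so I do not anticipate a real obstacle. The only genuine care needed is to confirm that the index shifts in the displayed formulas are exactly the claimed increasing bijections and that the degenerate cases $m\in\{0,1\}$ (where $\Pc_1=\varnothing$) are consistent with the stated ranges of $j$ and $k$; this is routine.
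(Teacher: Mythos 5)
Your argument is correct, and it supplies the detail that the paper elides (the paper's entire proof is ``This is easily proved by induction''). You replace the implicit induction with a direct structural argument built on two standard facts about non-crossing perfect matchings: (a) every nonempty non-crossing matching contains an adjacent pair, proved by the minimal-gap computation; and (b) the partner $c$ of $1$ determines a prefix $\{1,\dots,c\}$ that is a union of blocks, since any block straddling it would cross $\{1,c\}$. Both facts are proved cleanly, the ``bookkeeping principle'' (strictly increasing relabelings preserve the non-crossing property in both directions) is exactly what justifies each index-shifted formula, and the crossing checks that $\{j,j+1\}$ and $\{1,2m\}$ cannot participate in a crossing are correct. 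Your verification that $c$ is even in the case-(ii) analysis, via (b), and that the ranges $1\le j\le 2m-1$ and $1\le k\le m-1$ are forced, closes the degenerate-case loose ends.

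One small remark worth recording: the paper states part (2) only in the ``if'' direction, but its application in Lemma~\ref{regchainlem7} implicitly uses the converse (every legal partition arises as a concatenation or an enclosure). You prove this converse explicitly using fact (b), which is the right thing to do; it is not superfluous content but rather what the later argument actually needs. The only difference from what an inductive write-up would look like is presentational: an induction on $m$ would still have to locate an adjacent pair (part (1)) or split at $1$'s partner (part (2)), so the combinatorial core is the same; you have simply unwound the recursion.
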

\begin{proof} This is easily proved by induction.
\end{proof}
\begin{df}\label{defregchain} A \emph{regular chain} is a saturated paired tree, obtained by repeatedly applying step $\Bb$ at either a branching node or the lone leaf, as described in Definition \ref{defreg}, starting from the trivial tree $\bullet$. A \emph{regular double chain} is a couple consisting of two regular chains (where, of course, the lone leaves of the two regular chains are paired). It can also be obtained by repeatedly applying step $\Bb$ at either a branching node or a lone leaf, starting from the trivial couple $\times$.
\end{df}
\begin{prop}\label{prop3.4} The scale of a regular chain $\Tc$ is always an even number $2m$. The $2m$ branching nodes are naturally ordered by parent-child relation; denote them by $\nf_j\,(1\leq j\leq 2m)$ from top to bottom. Then, see Figure \ref{fig:regchain}, $\Tc$ is associated with a \emph{legal} partition $\Pc$ of $\{1,\cdots,2m\}$, and a code in $\{10,\cdots,31\}$ (as in Definition \ref{defmini}) for each pair, such that (i) the lone leaf is a child of $\nf_{2m}$, and (ii) for any pair $\{a,b\}\in\Pc\,(a<b)$, the two children leaves of $\nf_a$ are paired with the two children leaves of $\nf_b$ respectively, and the exact positions (relative to $\nf_a$ and $\nf_b$) and pairings of these nodes are just like in the mini tree (in which the root represents $\nf_a$ and the other branching node represents $\nf_b$) having the code of $\{a,b\}$. We also define $\Tc$ to be \emph{dominant} if the partition $\Pc$ is dominant in the sense of Definition \ref{deflegal}.
\end{prop}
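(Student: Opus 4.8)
The plan is to prove this by induction on $m$, where $m$ denotes the number of applications of step $\Bb$ used to build the regular chain $\Tc$ from the trivial tree $\bullet$. Since step $\Bb$ replaces one node by a mini tree, and a mini tree has scale $2$, each application increases the scale by exactly $2$; hence the scale of the result is $2m$, which already gives the evenness claim. The base case $m=0$ is $\Tc=\bullet$, where the lone leaf is the root, $\Pc=\varnothing$, and there is nothing further to check. For $m\geq 1$ we must track precisely what step $\Bb$ does to the triple (ordered list of branching nodes, partition $\Pc$, codes).

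The key structural observation concerns the mini trees themselves. A mini tree has scale $2$, so its two branching nodes are the root $\mf_1$ and exactly one further node $\mf_2$, and $\mf_2$ must be a child of $\mf_1$ (there is no room for it to sit deeper). Of the five leaves, the two leaf-children of $\mf_1$ cannot be paired to one another (no siblings are paired), so each must be paired with a leaf-child of $\mf_2$; this uses two of the three children of $\mf_2$, and the remaining child of $\mf_2$ is therefore the lone leaf. Thus every mini tree carries the vertical chain $\mf_1\succ\mf_2\succ(\text{lone leaf})$, with the two pairs running from the remaining children of $\mf_1$ to those of $\mf_2$ in one of the patterns indexed by the code in $\{10,\dots,31\}$ from Definition~\ref{defmini}. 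In particular the lone leaf of a mini tree is always a child of its \emph{lower} branching node $\mf_2$.

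For the inductive step, assume the assertion for a regular chain $\Tc$ with branching nodes $\nf_1\succ\cdots\succ\nf_{2m}$, legal partition $\Pc$, lone leaf a child of $\nf_{2m}$, and the stated pairing/position data. Apply step $\Bb$. If it acts at the lone leaf, grafting a mini tree there makes $\mf_1$ a child of $\nf_{2m}$, $\mf_2$ a child of $\mf_1$, and the new lone leaf a child of $\mf_2$; the branching-node list becomes $\nf_1\succ\cdots\succ\nf_{2m}\succ\mf_1\succ\mf_2$ (still a single chain with the lone leaf below the new bottom node), and $\Pc$ gains the extra pair $\{2m+1,2m+2\}$, an adjacent pair appended at the end. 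If step $\Bb$ acts at a branching node $\nf_j$, the grafted mini tree is inserted \emph{above} $\nf_j$: $\mf_1$ takes the edge from the parent of $\nf_j$ (becoming the root when $j=1$), then $\mf_2$, and $\nf_j$ together with its subtree hangs at the lone-leaf slot of the mini tree; the branching-node list becomes $\nf_1\succ\cdots\succ\nf_{j-1}\succ\mf_1\succ\mf_2\succ\nf_j\succ\cdots\succ\nf_{2m}$, again a single chain, and after relabelling, $\Pc$ becomes the partition obtained by inserting the adjacent pair in positions $j,j+1$ and shifting every element $\geq j$ by $2$. In both cases the update to $\Pc$ is exactly the ``insert a pair of adjacent elements'' operation of Proposition~\ref{legalpair}(1), so legality is preserved by the induction hypothesis; we record the code of the chosen mini tree as the code of the new pair. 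Crucially, since a regular chain is built by applying step $\Bb$ \emph{only} at branching nodes or at the lone leaf, no \emph{paired} leaf is ever touched once created: for each old pair $\{a,b\}\in\Pc$ the children of $\nf_a$ and of $\nf_b$, their pairings, and their positions relative to $\nf_a,\nf_b$ remain exactly those fixed at the creation of that pair, i.e. those dictated by its code, which is property (ii); property (i) was checked above in each case. This closes the induction, and the assertion about dominant chains is then immediate from the definition.

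The whole argument is a bookkeeping induction with no substantive analytic or combinatorial difficulty. The only points that genuinely require care are pinning down the meaning of ``replacing a node by a mini tree'' and verifying that it corresponds precisely to the adjacent-pair insertion of Proposition~\ref{legalpair}(1); the observation from the second paragraph---that the lone leaf of a mini tree is forced to be the child of its lower branching node $\mf_2$---is the linchpin that makes both variants of step $\Bb$ preserve the single-chain structure of the branching nodes.
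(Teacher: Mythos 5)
Your proof is correct and follows essentially the same route as the paper: an induction on the number of applications of step $\Bb$, identifying each such application (at a branching node or at the lone leaf) with the insertion of an adjacent pair into the partition as in Proposition \ref{legalpair} (1). The paper states this almost without detail, whereas you spell out the bookkeeping (structure of a mini tree, the two cases of step $\Bb$, preservation of the single-chain structure), which is a faithful elaboration of the same argument.
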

\begin{proof} This is a direct consequence of Proposition \ref{legalpair} (1) and Definition \ref{defregchain}, because the trivial tree corresponds to $m=0$ and $\Pc=\varnothing$, and applying step $\Bb$ at a branching node or lone leaf, i.e. replacing it with a mini tree, just corresponds to inserting a pair of adjacent elements into $\Pc$.
\end{proof}
  \begin{figure}[h!]
  \includegraphics[scale=.5]{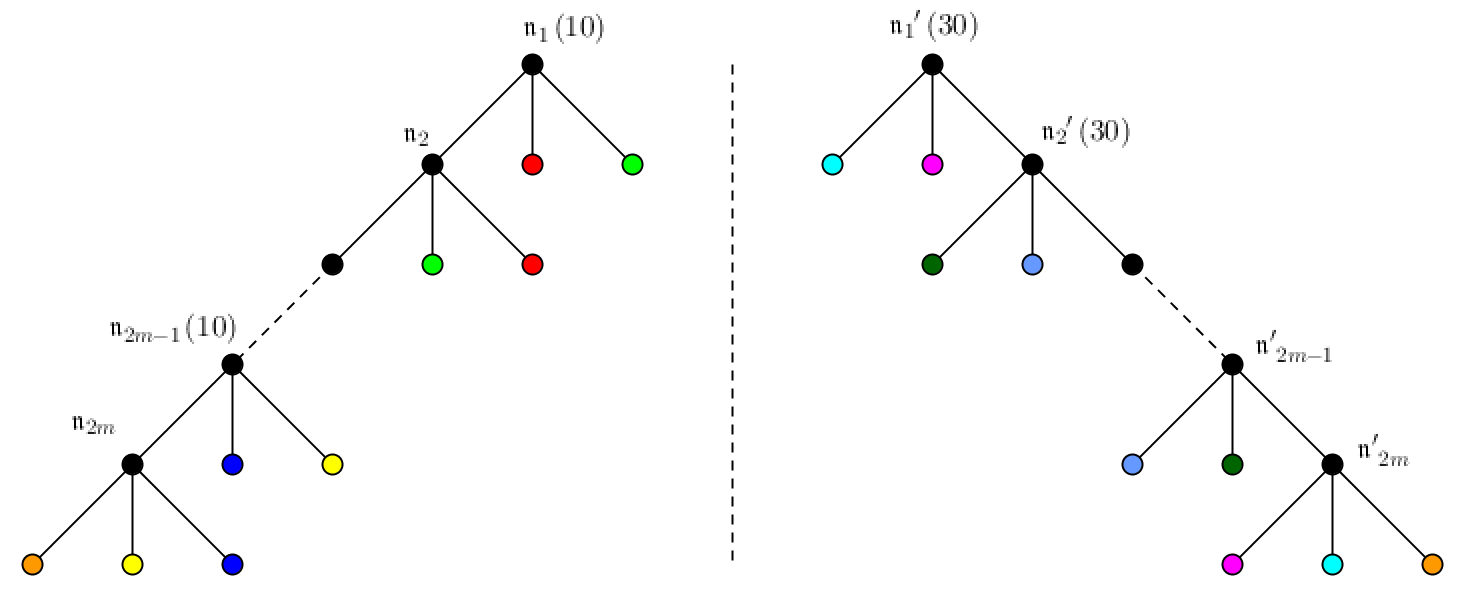}
  \caption{A regular double chain (as described in Proposition \ref{prop3.4}). The lone leaves are colored in orange. The left chain is dominant; the right chain is not, as the partition $\Pc'$ contains $\{1,2m\}$ and $\{2,2m-1\}$. The code of each mini tree is indicated beside the node $\nf_a$ and $\nf_a'$ as in Proposition \ref{prop3.4}.}
  \label{fig:regchain}
\end{figure}
 The following proposition describes (inductively) the structure of all regular couples.
\begin{prop}[Structure theorem for regular couples]\label{structure1} For any nontrivial regular couple $\Qc\neq \times$, there exists a regular couple $\Qc_0\neq\times$ which is either a $(1,1)$-mini couple or a regular double chain, such that $\Qc$ is formed by replacing each pair of leaves in $\Qc_0$ with a regular couple. Clearly each such couple has scale strictly smaller than that of $\Qc$, see Figure \ref{fig:regcpl}.
\end{prop}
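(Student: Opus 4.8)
The plan is to argue by induction on the scale $n(\Qc)$, which is always an even number for regular couples. For the base case $n(\Qc)=2$, Definition \ref{defreg}(3) says $\Qc$ is obtained from the trivial couple $\times$ by a single step: a step $\Ab$ produces a $(1,1)$-mini couple, and a step $\Bb$ produces a $(2,0)$-mini couple (a mini tree paired with $\bullet$). In either case we take $\Qc_0=\Qc$ and replace every leaf pair of $\Qc_0$ by the trivial couple $\times$. Here and below we view a $(2,0)$-mini couple as a (degenerate) regular double chain, namely a regular chain of scale $2$ together with the scale-$0$ regular chain $\bullet$, which is consistent with the inductive characterization in Definition \ref{defregchain}.

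For the inductive step, assume $n(\Qc)\geq 4$. By Definition \ref{defreg}(3) we write $\Qc=(\Qc')_+$, where $\Qc'$ is a regular couple with $n(\Qc')=n(\Qc)-2$, and $\Qc$ is obtained from $\Qc'$ by a step $\Ab$ at some leaf pair of $\Qc'$ or a step $\Bb$ at some node of $\Qc'$. Since $n(\Qc')\geq 2$, the induction hypothesis applies to $\Qc'$: there is a base couple $\Qc_0'\neq\times$, a $(1,1)$-mini couple or a regular double chain, together with regular couples $\{\Qc_j'\}$ glued at all leaf pairs of $\Qc_0'$, whose union is $\Qc'$. The crux is to locate the site of the last step relative to this decomposition. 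For the bookkeeping one first notes that every leaf pair of $\Qc'$ is either an internal pair of a single nontrivial $\Qc_j'$ or a leaf pair of $\Qc_0'$ carrying the trivial piece $\times$ (so the site of a step $\Ab$ never straddles two glued pieces), and that every node of $\Qc'$ either lies in a nontrivial $\Qc_j'$ — possibly being one of its two tree-roots — or is a branching node of $\Qc_0'$, or is a leaf sitting at a leaf pair of $\Qc_0'$ carrying $\times$; all of this is immediate from how the tree and the pairing of $\Qc'$ decompose under gluing.

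This produces three cases. First, if the site of the step lies in a nontrivial $\Qc_j'$ (including at a tree-root of $\Qc_j'$), the step only modifies $\Qc_j'$, and the new piece $(\Qc_j')_+$ is again regular by Definition \ref{defreg}(1) or (2); we keep $\Qc_0:=\Qc_0'$. Second, if the site is a leaf pair, or a leaf, of $\Qc_0'$ carrying $\times$, then performing the step merely replaces that $\times$ by a $(1,1)$-mini couple (for step $\Ab$) or a $(2,0)$-mini couple (for step $\Bb$), both regular, so again $\Qc_0:=\Qc_0'$ works. Third, the step is a step $\Bb$ at a branching node of $\Qc_0'$ itself. If $\Qc_0'$ is a regular double chain, then replacing one of its branching nodes by a mini tree yields another regular double chain by the inductive characterization in Definition \ref{defregchain}, and we take the modified couple (with $\times$ glued at the two new leaf pairs) to be $\Qc_0$. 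If $\Qc_0'$ is a $(1,1)$-mini couple, the branching node must be one of its two roots, say $\rf^+$; here we reorganize, and a direct comparison of the two trees shows that $\Qc$ coincides with the $(2,0)$-mini couple $(\Tc_{\mathrm{mini}},\bullet)$ in which the regular couple $\Qc'$ is glued at the lone-leaf pair and $\times$ is glued at the two remaining leaf pairs. Since $\Qc'$ is regular and $n(\Qc')=n(\Qc)-2<n(\Qc)$, this is of the required form, and in all cases $\Qc_0\neq\times$ while every glued couple has scale $\leq n(\Qc)-2<n(\Qc)$, closing the induction.

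The main obstacle is this last sub-case: a step $\Bb$ carried out \emph{at the top} of a $(1,1)$-mini couple does not preserve the type of the base couple, and one must recognize that it instead produces a new chain-type base couple $\Qc_0$ with the entire previous couple $\Qc'$ reattached below its lone leaf, and then check directly that the two resulting couples (trees, signs, and pairings) genuinely agree — a short but delicate point about how a step reshuffles the root structure. The only other thing requiring care is the structural bookkeeping preceding the case split; the remaining verifications — that $\Qc_0\neq\times$, that the two codes of a $(1,1)$-mini couple and the six codes of a mini tree exhaust the possibilities, and the scale accounting — are routine.
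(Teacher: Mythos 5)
Your proof is correct and follows essentially the same route as the paper's: an induction along the construction by steps $\Ab$ and $\Bb$, with a case split on whether the last step acts inside a glued regular couple, at a trivially-glued leaf pair, or at the base couple $\Qc_0'$ itself, and with the key observation (the paper's case (3)) that a step $\Bb$ at a root of a $(1,1)$-mini base couple is absorbed by switching to a $(2,0)$-mini couple with the entire previous couple reattached at the lone-leaf pair. The only differences are presentational (you decompose $\Qc=(\Qc')_+$ rather than verifying the statement for $\Qc_+$, and you split off the trivially-glued sites as a separate case, which the paper folds into its cases (1)--(2)).
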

\begin{proof} In the base case $n(\Qc)=2$, so $\Qc$ is either a $(1,1)$-mini couple or a $(2,0)$-mini couple (which is a regular double chain), so the result is true. Suppose the result is true for $\Qc$, with associated $\Qc_0$ and the leaf-pairs in $\Qc_0$ replaced by regular couples $\Qc_j\,(1\leq j\leq n)$. Let $\Qc_+$ be obtained from $\Qc$ by step $\Ab$ or $\Bb$ in Definition \ref{defreg}. Then:

(1) If $\Qc_0$ is any couple and one applies step $\Ab$, then this step $\Ab$ must be applied at a leaf-pair belonging to some regular couple $\Qc_i\,(i\geq 1)$. In this case the same $\Qc_0$ works for $\Qc_+$, the regular couples $\Qc_j\,(1\leq j\neq i)$ also remain the same, and the regular couple $\Qc_i$ is replaced by $\Ab\Qc_i$.

(2) If $\Qc_0$ is any couple and one applies step $\Bb$ at a node which belongs to some $\Qc_i\,(i\geq 1)$, then the same result holds as in (1) except that $\Qc_i$ is now replaced by $\Bb\Qc_i$.

(3) If we are not in case (1) or (2), and $\Qc_0$ is a $(1,1)$-mini couple, then the node where one applies step $\Bb$ must be one of the roots. In this case for $\Qc_+$ we may replace $\Qc_0$ by $\Qc_1$ which is a $(2,0)$-mini couple. Two leaf pairs in $\Qc_1$ remain leaf-pairs (note that a leaf pair can be viewed as the trivial couple), and the third leaf-pair in $\Qc_1$ is replaced by $\Qc$.

(4) If we are not in case (1) or (2), and $\Qc_0$ is a regular double chain, then the node where one applies step $\Bb$ must be a branching node of $\Qc_0$. In this case for $\Qc_+$ we may replace $\Qc_0$ by $\Bb\Qc_0$. The regular couples $\Qc_j\,(j\geq 1)$ remain the same, while the two new leaf-pairs in $\Bb\Qc_0$ (which do not belong to $\Qc_0$) remain leaf-pairs.

In any case we have verified the result for $\Qc_+$, which completes the inductive proof due to Definition \ref{defreg}.
\end{proof}
\begin{cor}\label{countcouple1} The number of regular couples of scale $n$ is at most $C^n$.
\end{cor}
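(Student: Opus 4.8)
The plan is to circumvent the factorial loss that appears if one counts the step-by-step constructions of Definition \ref{defreg}: writing $a_n$ for the number of regular couples of scale $n$, building each such $\Qc$ by $\lfloor n/2\rfloor$ applications of steps $\Ab,\Bb$ and noting the $j$-th step can be applied at $O(j)$ locations only yields $a_n\le C^n\lfloor n/2\rfloor!$, which is too lossy. Instead I would exploit the \emph{recursive} decomposition in the structure theorem (Proposition \ref{structure1}), whose elementary building blocks turn out to be only exponentially many.

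First I would bound the number $D_m$ of regular double chains of scale $2m$. By Proposition \ref{prop3.4}, a regular chain of scale $2m$ is determined by a legal partition $\Pc$ of $\{1,\dots,2m\}$ together with one of the $6$ codes of Definition \ref{defmini} for each of its $m$ pairs. Since a legal partition is exactly a non-crossing perfect matching, there are $\mathrm{Cat}_m\le 4^m$ of them, hence at most $24^m$ regular chains of scale $2m$; as a regular double chain is a pair of regular chains whose scales sum to $2m$, one gets $D_m\le\sum_{p+q=m}24^p24^q\le B^m$ for an absolute constant $B$. This Catalan (rather than factorial) count is the heart of the matter.

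Next I would turn Proposition \ref{structure1} into a recursion. Every regular couple of scale $n\ge 2$ arises — in at least one way — from a base $\Qc_0\ne\times$, either one of the two $(1,1)$-mini couples or a regular double chain of scale $2m_0$ with $1\le m_0\le n/2$, by replacing each of the $2m_0+1$ leaf pairs of $\Qc_0$ with a regular couple, the scales of these replacing couples summing to $n-2m_0$. Counting assemblies (overcounting is harmless for an upper bound) gives, for $n\ge 2$,
\[
a_n \;\le\; 2\!\!\sum_{i_1+i_2+i_3=n-2}\!\! a_{i_1}a_{i_2}a_{i_3} \;+\; \sum_{m_0\ge 1} D_{m_0}\!\!\!\sum_{i_1+\cdots+i_{2m_0+1}=n-2m_0}\!\!\! a_{i_1}\cdots a_{i_{2m_0+1}},
\]
together with $a_0=1$ and $a_1=0$.

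Finally, with $D_{m_0}\le B^{m_0}$, I would conclude $a_n\le C^n$ by a majorant argument: one proves $S_N:=\sum_{n=0}^N a_n x_0^n\le 2$ for all $N$ by strong induction on $N$, for $x_0>0$ fixed small. For $N\ge 2$, multiplying the displayed bound by $x_0^n$ and summing over $2\le n\le N$, the cubic term contributes $\le 2x_0^2 S_{N-2}^3\le 16x_0^2$ and the double-chain term contributes $\le\sum_{m_0\ge 1}B^{m_0}x_0^{2m_0}S_{N-2m_0}^{\,2m_0+1}\le 2\sum_{m_0\ge 1}(4Bx_0^2)^{m_0}=\tfrac{8Bx_0^2}{1-4Bx_0^2}$, both by the inductive hypothesis at indices $<N$ (and $4Bx_0^2<1$); choosing $x_0$ so small that $16x_0^2+\tfrac{8Bx_0^2}{1-4Bx_0^2}\le 1$ closes the induction, the cases $N=0,1$ being trivial since $a_1=0$. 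Letting $N\to\infty$ gives $a_nx_0^n\le 2$, i.e.\ $a_n\le C^n$ with $C=2/x_0$. I expect the main obstacle to be the first step — extracting the exponential Catalan bound for regular double chains from Proposition \ref{prop3.4} — together with the realization that one must feed the recursive structure theorem, not the sequential construction of Definition \ref{defreg}, into the generating-function estimate; the remaining steps are routine.
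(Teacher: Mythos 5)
Your proof is correct and follows essentially the same strategy as the paper's: invoke the structure theorem (Proposition \ref{structure1}) to obtain a convolution-type recursion for $a_n$, and control the number of base couples $\Qc_0$ via the Catalan bound on legal partitions from Proposition \ref{prop3.4}. The only difference is cosmetic — the paper closes the recursion by observing that the generating function satisfies an algebraic equation with an analytic branch near $z=0$, whereas you close it by a direct strong-induction majorant $S_N\le 2$; both are routine once the recursion and the exponential chain count are in hand.
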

\begin{proof} Let the number of regular couples of scale $n$ be $A_n$, then $A_0=1$. By Proposition \ref{structure1}, any regular couple $\Qc$ of scale $n\geq 1$ can be expressed in terms of a couple $\Qc_0$ (say of scale $1\leq m\leq n$) and regular couples $\Qc_j\,(1\leq j\leq m+1)$ of scale $n_j$ such that $n_1+\cdots +n_{m+1}=n-m$. Notice that $\Qc_0$ has at most $5^m$ choices, since $m=2m_1$ must be even, and the number of choices for the legal partition $\Pc$ in Proposition \ref{prop3.4} is the Catalan number $\binom{2m_1}{m_1}/(m_1+1)< 4^{m_1}$, and that $\Qc_0$ has $6^{m_1}$ choices, due to the codes in $\{10,\cdots,31\}$, once $\Pc$ is fixed (there are two possibilities of $(1,1)$-mini couples for $m_1=1$ but this does not affect the result), {leading to $24^{m_1}<5^m$.} This implies that
\[A_n\leq \sum_{m=1}^n5^m\sum_{n_1+\cdots +n_{m+1}=n-m}A_{n_1}\cdots A_{n_{m+1}}.\] Let $B_n$ be such that $B_0=1$ and equality holds in the above inequality for $B_n$, then $A_n\leq B_n$. Moreover the generating function $f(z)=\sum_{n\geq 0}B_nz^n$ satisfies that
\[
\begin{split}f(z)&=1+\sum_{n\geq 1}\sum_{1\leq m\leq n}(5z)^m\sum_{n_1+\cdots +n_{m+1}=n-m}B_{n_1}\cdots B_{n_{m+1}}z^{n-m}\\
&=1+\sum_{m=1}^\infty(5z)^m(f(z))^{m+1}=1+\frac{5z(f(z))^2}{1-5zf(z)}.
\end{split}\] Note that for $|z|\ll 1$ the equation \[f=1+\frac{5zf^2}{1-5zf}\Leftrightarrow f=1-5zf+10zf^2 \] has unique solution near $f=1$ which is an analytic function of $z$, we conclude that $B_n\leq C^n$ for some absolute constant $C$ (for example $C=100$), hence $A_n\leq C^n$.
\end{proof}
  \begin{figure}[h!]
  \includegraphics[scale=.5]{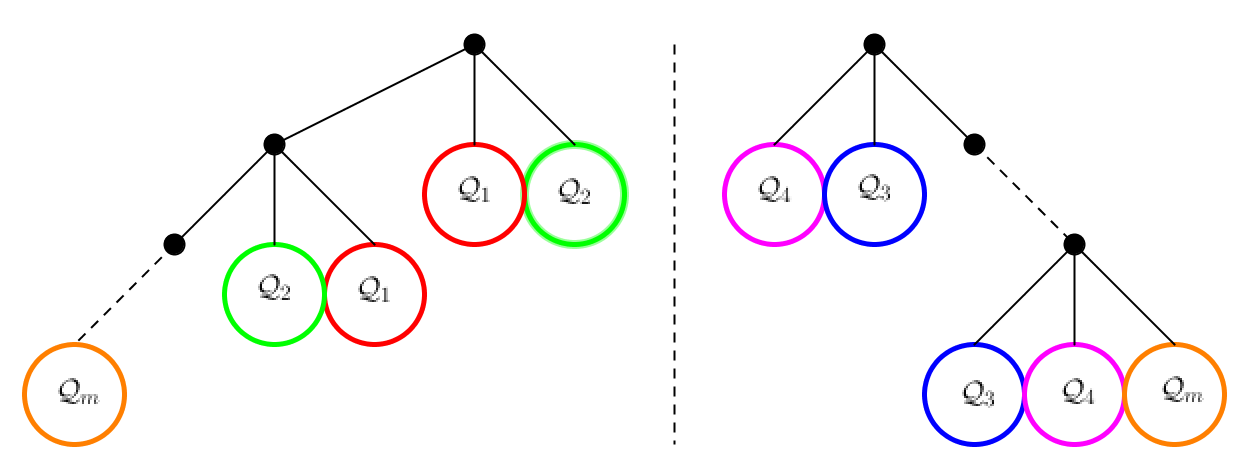}
  \caption{A regular couple with structure as described in Proposition \ref{structure1}. Here and below two circles of same color represent a regular couple $\Qc_j$. If we require $\Qc_m$ to have type 1, as in Proposition \ref{structure1.5}, then this representation is unique.}
  \label{fig:regcpl}
\end{figure}
Note that in Proposition \ref{structure1}, the choice of $\Qc_0$ may not be unique; however we can recover uniqueness under some extra assumptions.
\begin{prop}\label{structure1.5} For any regular couple $\Qc\neq\times$, we say it has \emph{type 1} if $\Qc_0$ is a $(1,1)$-mini couple in Proposition \ref{structure1}, and has \emph{type 2} if $\Qc_0$ is a regular double chain. Now, if $\Qc$ has type 2, then the choice of $\Qc_0$, as well as the whole representation, is unique, if we require that the regular couple replacing the pair of lone leaves in $\Qc_0$ is trivial or has type 1 (see Figure \ref{fig:regcpl}).
\end{prop}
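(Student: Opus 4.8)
The plan is to pin down the copy of $\Qc_0$ sitting inside $\Qc$; once this is done the regular sub-couples replacing its leaf-pairs are forced (each is the maximal sub-couple of $\Qc$ hanging off the corresponding leaf-pair), so uniqueness of $\Qc_0$ gives uniqueness of the whole representation. For existence of a constrained representation, start from any representation of $\Qc$ with $\Qc_0$ a regular double chain (available since $\Qc$ has type $2$, by Proposition \ref{structure1}); if the regular couple $\Qc_*$ replacing the pair of lone leaves has type $2$, write $\Qc_*$ through its own regular double chain and \emph{concatenate} that chain onto $\Qc_0$ at the lone-leaf position. The result is again a regular double chain since the concatenation of two legal partitions is legal (Proposition \ref{legalpair}(2)(ii)), and it is strictly larger because $\Qc_*$ is nontrivial; iterating terminates at a representation whose lone-leaf sub-couple is trivial or of type $1$.

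For uniqueness, call a regular double chain $\Qc_0'\neq\times$ a \emph{prefix} of $\Qc$ if $\Qc$ is obtained from $\Qc_0'$ by replacing its leaf-pairs with regular couples as in Proposition \ref{structure1}. The first step is to show that the (finite) set of prefixes of $\Qc$ is totally ordered under ``$\Qc_0''$ is a concatenation at the lone leaf of $\Qc_0'$ with a smaller double chain,'' and hence has a unique maximal element $\Qc_0^{\max}$. This is done by tracing the two spines: the branching nodes of the $+$-chain start at $\rf^+$, and at a spine node $\nf_j$ exactly two of its three children carry regular sub-couples whose opposite roots lie at children of the $\Pc^\pm$-partner $\nf_b$ of $\nf_j$; since $b>j$ this partner lies in the subtree of the third child, so $\nf_{j+1}$ is forced as the unique child of $\nf_j$ whose subtree contains those opposite roots, and likewise the pairing $\{j,b\}$ is read off directly from $\Qc$. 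Thus any two prefixes agree on the spine nodes and on the partition as far down as both extend, so one is a concatenation-prefix of the other. The second step is that the $\Qc_0^{\max}$-representation is constrained---if its lone-leaf sub-couple had type $2$ we could concatenate once more, contradicting maximality---while any constrained representation using a prefix $\Qc_0'\subsetneq\Qc_0^{\max}$ would have, as its lone-leaf sub-couple, a couple containing the nontrivial double chain $\Qc_0^{\max}\ominus\Qc_0'$ at its top, hence of type $2$, contradicting the constraint (here one uses that types $1$ and $2$ are mutually exclusive for nontrivial regular couples). Therefore every constrained representation uses $\Qc_0=\Qc_0^{\max}$.

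The main obstacle is the combinatorial core: verifying that the spine and partition are determined from the top and that prefixes are totally ordered. This amounts to matching the non-crossing (nesting/concatenation) structure of legal partitions from Proposition \ref{legalpair}(2) against the four ways steps $\Ab$ and $\Bb$ interact with a double-chain representation, exactly as in the case analysis in the proof of Proposition \ref{structure1}; and to establishing the mutual exclusivity of types $1$ and $2$ for nontrivial regular couples, which is itself proved by induction by inspecting the structure just below $\rf^+$ and $\rf^-$ (a type-$1$ couple connects each child of $\rf^+$ to a child of $\rf^-$ by a single sub-couple, whereas a type-$2$ couple has a genuine spine descending from $\rf^+$, and a short argument shows these cannot both occur). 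Finally, the degenerate cases where one chain of $\Qc_0$ is trivial, or $\Qc_*$ is trivial---so that $\rf^+$ or $\rf^-$ need not be a spine node---have to be isolated and checked by hand, but in each such case $\Qc$ has a transparent shape that fixes the representation directly.
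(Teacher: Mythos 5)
Your overall strategy---trace the spine of the double chain inside $\Qc$ from the roots down, and argue that the constrained representation is the maximal one---matches the paper's proof in spirit, and the existence half (iterated concatenation via Proposition~\ref{legalpair}(2)) is fine. But the specific criterion you use to identify the next spine node has a genuine gap. You assert that at a spine node $\nf_j$ the $\Pc^\pm$-partner $\nf_b$ satisfies $b>j$, so that $\nf_b$ lies in the subtree of the third child and ``the unique child whose subtree contains those opposite roots'' picks out $\nf_{j+1}$. This is false whenever $j$ is the \emph{larger} element of its pair $\{a,j\}\in\Pc^\pm$ (so $a<j$): then the partner $\nf_a$ is an ancestor of $\nf_j$ on the spine, the opposite roots of the two regular sub-couples hanging off $\nf_j$ sit at wing positions of that ancestor, and they lie in the subtree of no child of $\nf_j$. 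Your criterion then returns no candidate, and since the prefix total-order argument rests on this spine identification, the gap propagates to the uniqueness claim.

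The paper avoids this with a single sign-free criterion (property $\mathtt{L}$): a node has $\mathtt{L}$ iff some descendant leaf is paired cross-tree. The regular sub-couples replacing wing-leaf pairs pair entirely within one tree, so the two non-spine children of a spine node never have $\mathtt{L}$, while the spine child always does (it leads to the lone-leaf pair, which eventually produces a cross-tree pairing inside $\Qc_{lp}$). Thus exactly one child has $\mathtt{L}$ at each level, regardless of whether the $\Pc^\pm$-partner lies above or below, and this also gives the type-$1$/type-$2$ dichotomy in one line (for a $(1,1)$-mini couple every child of each root has $\mathtt{L}$; for a nontrivial double chain they do not), whereas your sketch of mutual exclusivity is left vague. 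Once the spine is pinned down this way, the constrained $\Qc_0$ is exactly its maximal extension, so the full prefix-poset machinery is unnecessary. Your argument can be repaired either by substituting property $\mathtt{L}$, or by adding a symmetric case when $j$ is the max of its pair, but as written it does not go through.
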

\begin{proof} First, the type is well-defined, because if $\Qc_0$ is a $(1,1)$-mini couple, then for each child $\nf$ of the root of each tree, \emph{at least one} of its descendant leaves is paired with a leaf in the other tree (we shall call this property $\mathtt{L}$ in the proof below). However this is not true if $\Qc_0$ is a regular double chain.

Now suppose $\Qc$ has type 2. The roots of the chains of $\Qc_0$ are the roots of trees in $\Qc$. For each root, \emph{only one} of its three children nodes has property $\mathtt{L}$, and this must be the next branching node in $\Qc_0$. In the same way, all the subsequent branching nodes (and lone leaves) in $\Qc_0$ can be uniquely determined. The pairing structure of leaves in $\Qc_0$ is also uniquely determined by the pairing structure of $\Qc$. Moreover, the regular couple replacing the pair of lone leaves in $\Qc_0$ does \emph{not} have type 2 (i.e. it is either trivial of has type 1), if and only if neither of the chains in $\Qc_0$ can be further extended by the above process (i.e. by selecting the \emph{unique} child which has property $\mathtt{L}$). Thus the choice of $\Qc_0$ is unique. Once $\Qc_0$ is fixed, it is easy to see that the regular couples $\Qc_j$ in $\Qc$ replacing the leaf pairs in $\Qc_0$ are also uniquely determined. This completes the proof.
\end{proof}
\subsubsection{Relevant notations} For later use, let us introduce some notations related to regular couples with structure as in Proposition \ref{structure1}.
\begin{df}\label{defchoice} Given a regular couple $\Qc$, recall that the branching nodes in $\Nc^*$ are paired as in Proposition \ref{branchpair}. We shall fix a choice of $\Nc^{ch}\subset \Nc^*$ (here $ch$ means ``choice''), which contains exactly one branching node in each pair, as follows. First if $\Qc=\times$ then $\Nc^{ch}$ contains the single root of $+$ sign. If $\Qc\neq\times$, let $\Qc_0$ be uniquely fixed as in Propositions \ref{structure1} and \ref{structure1.5}.

\emph{Case 1}. If $\Qc$ has type 1, then $\Qc_0$ is a $(1,1)$-mini couple. Let $\Qc_j\,(1\leq j\leq 3)$ be the regular couples in $\Qc$ replacing the leaf-pairs of $\Qc_0$, counted from left to right in the tree whose root has $+$ sign (i.e. in the order \emph{red, green, blue} in Figure \ref{fig:minicpl}, assuming the left tree has $+$ root). Then we have $\Nc^*=\Nc_1^*\cup\Nc_2^*\cup\Nc_3^*\cup\{\rf,\rf'\}$, where $\rf$ and $\rf'$ are the two root nodes which are also paired; in particular define $\Nc^{ch}=\Nc_1^{ch}\cup\Nc_2^{ch}\cup\Nc_3^{ch}\cup\{\rf\}$, where $\rf$ is the root with $+$ sign.

\emph{Case 2}. If $\Qc$ has type 2, then $\Qc_0$ is a (nontrivial) regular double chain, which is formed by two regular chains $\Tc^+$ and $\Tc^-$ of scales $2m^+$ and $2m^-$ respectively. Let the branching nodes of $\Tc^\pm$ be $\nf_1^\pm,\cdots,\nf_{2m^\pm}^\pm$ from top to bottom, and let the legal partition of $\{1,\cdots,2m^\pm\}$ associated with $\Tc^\pm$ be $\Pc^\pm$ (see Proposition \ref{prop3.4}). Let the pair of lone leaves in $\Qc_0$ (which is a pair between a child leaf of $\nf_{2m^+}^+$ and a child leaf of $\nf_{2m^-}^-$) be replaced by a regular couple $\Qc_{lp}$ (which is trivial or has type 1; here $lp$ means ``lone pair''). If we list the pairs $\{a,b\}\in\Pc^+\,(a<b)$ in the increasing order of $a$, then the $j$-th pair $\{a,b\}$, where $1\leq j\leq m^+$, corresponds to a branching node pair $\{\nf_a^+,\nf_b^+\}$ in the sense of Proposition \ref{branchpair}. This also corresponds to a mini tree in Figure \ref{fig:minitree} (in which the root represents $\nf_a^+$ and the other branching node represents $\nf_b^+$) and two leaf-pairs in $\Qc_0$, see Proposition \ref{prop3.4}. We define the regular couple in $\Qc$ replacing the pair of \emph{red} leaves in Figure \ref{fig:minitree} by $\Qc_{j,+,1}$, and define the regular couple in $\Qc$ replacing the pair of \emph{green} leaves in Figure \ref{fig:minitree} by $\Qc_{j,+,2}$ (see Figure \ref{fig:notation} for an example). The same is done for the other regular chain $\Tc^-$. Then we have
\begin{equation}\label{newn*}\Nc^*=\bigg(\bigcup_{j,\epsilon,\iota}\Nc_{j,\epsilon,\iota}^*\bigg)\cup\Nc_{lp}^*\cup \big\{\nf_1^+,\cdots,\nf_{2m^+}^+\big\}\cup\big\{\nf_1^-,\cdots,\nf_{2m^-}^-\big\}
\end{equation} and then define
\begin{equation}\label{newnch}\Nc^{ch}=\bigg(\bigcup_{j,\epsilon,\iota}\Nc_{j,\epsilon,\iota}^{ch}\bigg)\cup\Nc_{lp}^{ch}\cup\big\{\nf_a^+:a<b\big\}\cup\big\{\nf_a^-:a<b\big\}.
\end{equation} Here in (\ref{newn*}) and (\ref{newnch}), the couples $\Qc_{j,\epsilon,\iota}$, where $\epsilon\in\{\pm\}$ and $\iota\in\{1,2\}$, are the ones described above, and $\Nc_{j,\epsilon,\iota}^*$ (and $\Nc_{j,\epsilon,\iota}^{ch}$) are defined correspondingly; similarly for $\Qc_{lp}$, $\Nc_{lp}^*$ and $\Nc_{lp}^{ch}$. Moreover in (\ref{newnch}) the $\nf_a^\pm$ are the nodes chosen above, such that $a<b$ for the pair $\{a,b\}\in\Pc^\pm$.
  \begin{figure}[h!]
  \includegraphics[scale=.5]{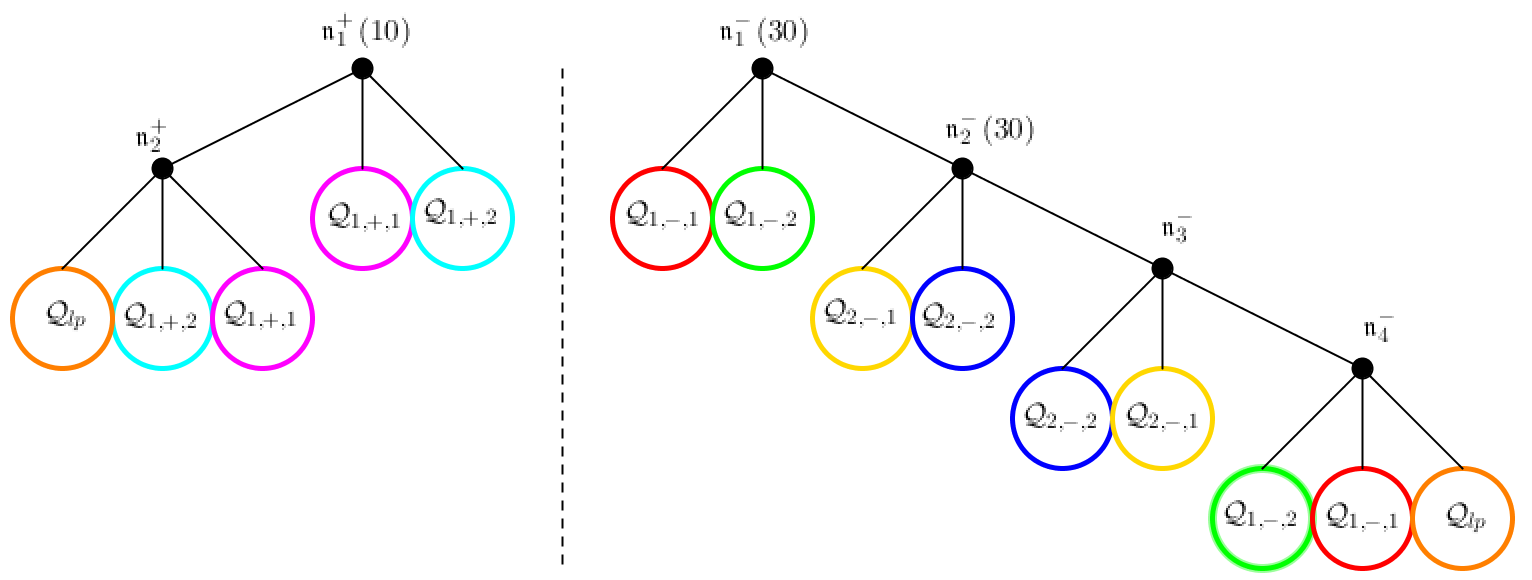}
  \caption{An example of the notations in Definition \ref{defchoice}. Here $m^+=1$ and $m^-=2$, $\Pc^+=\{\{1,2\}\}$ and $\Pc^-=\{\{1,4\},\{2,3\}\}$, and $\Qc_{lp}$ has type 1. The code of each mini tree is indicated beside the node $\nf_a^\pm$ as in Definition \ref{defchoice}.}
  \label{fig:notation}
\end{figure}
\end{df}
\subsection{Structure of general couples} We now turn to the structure of arbitrary couples.
\begin{df}\label{defsub} A \emph{prime} couple is a couple that cannot be formed from any other couple by applying steps $\Ab$ or $\Bb$ as in Definition \ref{defreg}. For example the couple in Figure \ref{fig:cpl} is prime.
\end{df}
\begin{prop}\label{reduceprocess} For any couple $\Qc$, there exists a unique prime couple $\Qc_{sk}$ such that $\Qc$ is obtained from $\Qc_{sk}$ by performing the operations in Definition \ref{defreg}; moreover $\Qc$ is regular if and only if $\Qc_{sk}=\times$. We call this $\Qc_{sk}$ the \emph{skeleton} of $\Qc$.
\end{prop}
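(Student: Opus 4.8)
The plan is to regard the reversions of steps $\Ab$ and $\Bb$ as a rewriting system on couples, and to obtain existence by termination and uniqueness by confluence (the standard Noetherian/Newman mechanism). Define a one-step reduction $\Qc\rightsquigarrow\Qc'$ to mean that $\Qc'$ is obtained from $\Qc$ by contracting one \emph{redex}: either an embedded $(1,1)$-mini couple (two branching nodes of opposite sign, each with three leaf children, whose six children leaves are paired three-to-three across) collapsed back to a single pair of leaves, or an embedded mini tree (a branching node $\mf$ with a branching child $\mf'$, the other two children of $\mf$ leaves, two children of $\mf'$ leaves and the third the ``lone-leaf slot'', the four ordinary leaves paired among themselves in one of the six allowed patterns) collapsed back to a single node carrying the pairing, or the subtree, formerly sitting at the lone-leaf slot. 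By the very definitions of steps $\Ab,\Bb$, one has $\Qc\rightsquigarrow\Qc'$ exactly when $\Qc$ is obtained from $\Qc'$ by one application of step $\Ab$ or $\Bb$; in particular a couple is prime if and only if it is $\rightsquigarrow$-irreducible, and $\times$ (having no branching node) is prime.

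Existence of a skeleton is then immediate: each reduction lowers the scale by $2$, so every reduction sequence terminates, necessarily at a prime couple. Moreover, if $\Qc$ is built from a prime couple $\Qc'$ by a finite sequence of steps $\Ab,\Bb$, then undoing those steps in reverse order exhibits a reduction sequence $\Qc\rightsquigarrow^{*}\Qc'$ ending at the irreducible couple $\Qc'$; conversely any $\rightsquigarrow$-normal form of $\Qc$ arises this way. Hence the prime couples from which $\Qc$ can be built are \emph{exactly} the $\rightsquigarrow$-normal forms of $\Qc$, and the uniqueness assertion is equivalent to confluence of $\rightsquigarrow$. Since $\rightsquigarrow$ is terminating, by Newman's lemma it suffices to prove \emph{local} confluence: whenever $\Qc\rightsquigarrow\Qc_1$ and $\Qc\rightsquigarrow\Qc_2$ by contracting two distinct redexes $R_1\ne R_2$, the couples $\Qc_1,\Qc_2$ admit a common reduct $\Qc_3$. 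Granting this, $\Qc_{sk}$ is well defined, and the ``moreover'' follows at once: $\Qc$ is regular $\iff$ $\Qc$ is built from $\times$ by steps $\Ab,\Bb$ $\iff$ $\times$ is a prime couple from which $\Qc$ is built $\iff$ $\Qc_{sk}=\times$.

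For local confluence I would run a finite case analysis on the relative position of the node sets of $R_1$ and $R_2$ — each consists of at most two branching nodes together with their leaf children and the lone-leaf attachment point. If $R_1$ and $R_2$ are node-disjoint and no leaf touched by $R_1$ is paired to a leaf touched by $R_2$, the two contractions act on disjoint regions, commute, and $\Qc_3$ is obtained by performing both. The remaining cases are the genuine overlaps: a branching node of $R_1$ coinciding with one of $R_2$ (for instance the lower branching node of one mini tree being the upper branching node of another, or a root of a mini couple lying inside a mini tree); the leaf-pair produced by contracting a mini-couple redex coinciding with one of the internal pairs of a mini-tree redex; or a leaf of one redex paired to a leaf of the other. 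In each configuration one checks directly — the structures are small and rigid, being mini couples and mini trees in one of finitely many explicit shapes — that after contracting $R_1$ the image of $R_2$ is still a redex, or has become a redex of the same type at the same location, whose contraction (possibly followed by contracting one further redex created in the process) yields the same couple as the symmetric procedure starting from $R_2$. This overlapping-redex bookkeeping is the one place where real work is needed; the termination argument, the ``moreover'', and the disjoint case are all formal.
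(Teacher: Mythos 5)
Your proposal is correct and takes essentially the same approach as the paper: existence by termination of the reversion process, and uniqueness via commutation of the inverse operations combined with induction on the scale (a Newman-type argument). The only real difference is that the paper observes that two distinct redexes ($(1,1)$-mini sub-couples or mini trees) can never genuinely overlap — their internal leaf-pairing requirements force them to be disjoint and non-interacting — so the overlapping-redex case analysis you defer is in fact vacuous and the two contractions commute exactly, which is why the paper dismisses this step as easily verified.
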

\begin{proof} Recall the steps $\Ab$ and $\Bb$ defined in Definition \ref{defreg}, and denote the corresponding inverse operations by $\overline{\Ab}$ (where a $(1,1)$-mini sub-couple collapses to a leaf-pair) and $\overline{\Bb}$ (where a mini tree collapses to a single node). Note that it is possible that a $(1,1)$-mini sub-couple or a mini tree appears only after an operation $\overline{\Ab}$ or $\overline{\Bb}$, allowing for further operations that are not possible before this operation.

Now, starting from a couple $\Qc$, we may repeatedly apply $\overline{\Ab}$ and $\overline{\Bb}$ whenever possible until obtaining a couple $\Qc_{sk}$ where no more operation can be done. This $\Qc_{sk}$ will then be prime and satisfies the requirement. Now we need to prove the uniqueness of $\Qc_{sk}$. We first make a simple observation: if $\mathtt{D}_1$ and $\mathtt{D}_2$ each represents a $(1,1)$-mini sub-couple or a mini tree in $\Qc$, and let $\overline{\Db}_1$ and $\overline{\Db}_2$ be the corresponding inverse operations ($\overline{\Ab}$ or $\overline{\Bb}$) performed at $\mathtt{D}_1$ and $\mathtt{D}_2$ respectively, then the operations $\overline{\Db}_1$ and $\overline{\Db}_2$ commute. This can be easily verified using the definition of these inverse operations, as they are easily seen not to affect each other.

Now we can prove the uniqueness of $\Qc_{sk}$. In fact, the base case $\Qc=\times$ is obvious; suppose $\Qc_{sk}$ is unique for all $\Qc$ with smaller scale, then starting with any $\Qc$, we look for $(1,1)$-mini sub-couples and mini trees in $\Qc$. If there is none then $\Qc$ is already prime and we are done. Suppose there is at least one of them, then for each one, say $\mathtt{D}$, if the first inverse operation (say $\overline{\Db}$) is performed at $\mathtt{D}$, then the resulting $(\overline{\Db}\Qc)_{sk}$ is uniquely fixed (but may depend on $\mathtt{D}$), by applying the induction hypothesis for the smaller couple $\overline{\Db}\mathcal{Q}$. Now, let $\mathtt{D}_1$ and $\mathtt{D}_2$ be arbitrary, and let $\overline{\Db}_1$ and $\overline{\Db}_2$ be corresponding inverse operations, and let $\Qc_1=\overline{\Db}_1\overline{\Db}_2\Qc=\overline{\Db}_2\overline{\Db}_1\Qc$, then we must have $(\overline{\Db}_1\Qc)_{sk}=(\overline{\Db}_2\Qc)_{sk}=(\Qc_1)_{sk}$. This proves the uniqueness of $\Qc_{sk}$. Clearly by definition, $\Qc$ is regular if and only if $\Qc_{sk}=\times$.
\end{proof}
\begin{prop}[Structure theorem for general couples]\label{structure2} Let $\Qc$ be any couple with skeleton $\Qc_{sk}$. Then, see Figures \ref{fig:gencpl} and \ref{fig:blacksq}, $\Qc$ can be obtained from $\Qc_{sk}$ by (i) first replacing each branching node with a regular chain, and then (ii) replacing each pair of leaves in the resulting couple with a regular couple. This representation (i.e. the chain (i) and the couple in (ii) at each position) is also unique.
\end{prop}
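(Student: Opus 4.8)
The plan is to prove existence and uniqueness together, by induction on the scale $n(\Qc)$. By Proposition~\ref{reduceprocess} every couple with skeleton $\Qc_{sk}$ is built from $\Qc_{sk}$ by a finite sequence of steps $\Ab$ and $\Bb$, each raising the scale by~$2$; so one may equivalently induct on the length of such a sequence. The base case $\Qc=\Qc_{sk}$ is trivial: take every chain and every regular couple in the representation to be trivial, so that steps (i) and (ii) do nothing.

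For the existence half of the inductive step, write $\Qc=\Ab\widetilde\Qc$ or $\Qc=\Bb\widetilde\Qc$ with $\widetilde\Qc$ of smaller scale. Since $\Qc_{sk}$ is prime, Proposition~\ref{reduceprocess} forces $\widetilde\Qc$ to have the same skeleton $\Qc_{sk}$, so by induction $\widetilde\Qc$ carries the asserted representation: an intermediate couple obtained from $\Qc_{sk}$ by replacing each branching node $\nf$ with a regular chain $\Cf_\nf$, followed by regular couples $\Qc_j$ replacing the leaf pairs of that intermediate couple. The decisive structural point---which must be recorded carefully---is that this representation \emph{partitions} $\widetilde\Qc$ into its pieces: every leaf of $\widetilde\Qc$, and every leaf-pairing, is internal to a single $\Qc_j$, while the branching nodes of $\widetilde\Qc$ split as the disjoint union of the embedded branching nodes of $\Qc_{sk}$ (the ``grafting points'' at which the children of the original skeleton nodes reattach), the branching nodes of the chains $\Cf_\nf$, and the branching nodes of the $\Qc_j$. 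A step $\Ab$ acts at one leaf pair and a step $\Bb$ at one node, hence inside exactly one of these pieces, and keeps that piece of the correct type: a step $\Ab$ or $\Bb$ internal to some $\Qc_j$ sends it to $\Ab\Qc_j$ or $\Bb\Qc_j$, still regular by Definition~\ref{defreg}; a step $\Bb$ at a branching node of a chain $\Cf_\nf$, or at its grafting point (the ``lone leaf'' of $\Cf_\nf$ before reattachment), sends $\Cf_\nf$ to a longer regular chain by Definition~\ref{defregchain}. In every case the modified data is a valid representation of $\Qc$, which closes the induction.

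For uniqueness, the skeleton is already unique (Proposition~\ref{reduceprocess}), and the commutation-of-reversions argument used there also shows that the contraction of $\Qc$ onto $\Qc_{sk}$ is canonical: which branching nodes of $\Qc$ survive the contraction, and how the collapsed ones are distributed between ``skeleton paths'' and ``side branches'', is independent of the order of the inverse steps $\overline\Ab,\overline\Bb$. In any valid representation the surviving nodes are precisely the grafting points, the collapsed nodes on skeleton paths are precisely the branching nodes of the chains $\Cf_\nf$, and the collapsed nodes in side branches are precisely the branching nodes of the $\Qc_j$. Hence the intermediate couple, and with it each chain $\Cf_\nf$, is determined by $\Qc$ alone (and pinned down by its legal partition and codes through Proposition~\ref{prop3.4}), and then each $\Qc_j$ is the sub-couple of $\Qc$ hanging at the corresponding leaf-pair position (pinned down through Proposition~\ref{structure1.5}). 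Equivalently, one may perform all reversions internal to the $\Qc_j$'s before any reversion shortening a chain---legitimate by the commutativity just invoked---so the intermediate couple reached midway is forced.

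The main obstacle is exactly the bookkeeping in the middle paragraphs: showing that the chain-pieces and regular-couple-pieces really do partition the branching nodes and leaves of a couple, that no step $\Ab$, $\Bb$ or inverse step ever straddles two pieces, and that leaf-pairings never cross pieces. The one genuine subtlety is to verify (using primeness of $\Qc_{sk}$) that a regular chain, being built only by steps $\Bb$, never hosts a collapsible $(1,1)$-mini sub-couple, so that every $\overline\Ab$ reversion is automatically internal to some $\Qc_j$; once this and the disjointness of the node sets are in place, both halves of the statement reduce to the closure properties of Definitions~\ref{defreg} and~\ref{defregchain} together with Propositions~\ref{reduceprocess}, \ref{structure1.5} and~\ref{prop3.4}.
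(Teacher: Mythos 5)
Your existence argument follows the paper's proof exactly: induct on the number of $\Ab/\Bb$ steps, observe that each step lands inside a single piece of the representation and sends a regular couple to $\Ab\Qc_i$ or $\Bb\Qc_i$, or a regular chain $\Tc^\circ_i$ to $\Bb\Tc^\circ_i$, and note that all pieces remain of the correct type. That half is fine.

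The uniqueness half has a genuine gap. You claim that ``the commutation-of-reversions argument used there also shows that the contraction of $\Qc$ onto $\Qc_{sk}$ is canonical: which branching nodes of $\Qc$ survive the contraction, and how the collapsed ones are distributed between skeleton paths and side branches, is independent of the order of the inverse steps.'' But commutativity (local confluence plus termination) only gives uniqueness of the terminal object $\Qc_{sk}$; it says nothing about the uniqueness of intermediate states along the reduction, nor about any canonical ``role-assignment'' of nodes. The fallback you offer---perform all reversions internal to the $\Qc_j$'s first, then shrink the chains, so the intermediate couple $\Qc_{int}$ is the midpoint---is circular: one needs the representation in hand to know which reversions are internal to the $\Qc_j$'s.

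What actually pins $\Qc_{int}$ down, and what the paper establishes, is the stronger structural fact that \emph{$\Qc_{int}$ contains no nontrivial regular sub-couple}, i.e.\ no two subtrees rooted at two nodes of $\Qc_{int}$ form a nontrivial regular couple. Granting this, any regular sub-couple of $\Qc$ must sit entirely inside one of the $\Qc_j$'s, so the $\Qc_j$'s are exactly the maximal regular sub-couples of $\Qc$; $\Qc_{int}$ is then recovered canonically by collapsing every maximal regular sub-couple of $\Qc$ to a leaf pair, an order-independent operation by the commutativity of Proposition~\ref{reduceprocess}. Once $\Qc_{int}$ is unique, the chains $\Cf_\nf$ and the couples $\Qc_j$ are forced. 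The subtlety you flag at the end (that a regular chain never hosts a collapsible $(1,1)$-mini sub-couple) is only a special case; the needed statement is about $\Qc_{int}$ globally, including structures that straddle a grafted chain and the surrounding skeleton, and your observation alone does not deliver it.
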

\begin{proof} By Proposition \ref{reduceprocess}, $\Qc$ can be obtained from $\Qc_{sk}$ by applying steps $\Ab$ and $\Bb$. We induct on the scale of $\Qc$. The base case $\Qc=\Qc_{sk}$ is obvious by definition. Suppose the result is true for $\Qc$, and let $\Qc_+$ be obtained from $\Qc$ by applying $\Ab$ or $\Bb$. We know that $\Qc$ is obtained from $\Qc_{sk}$ by (i) first replacing each branching node with a regular chain, say $\Tc_j^\circ\,(1\leq j\leq n)$, resulting in a couple $\Qc_{int}$, and then (ii) replacing each leaf-pair in $\Qc_{int}$ by a regular couple, say $\Qc_j\,(1\leq j\leq m)$. Then:

(1) If one applies step $\Ab$, then this step $\Ab$ must be applied at a leaf-pair belonging to some regular couple $\Qc_i\,(1\leq i\leq m)$. In this case the $\Tc_j^\circ\,(1\leq j\leq n)$ remain the same for $\Qc_+$, the regular couples $\Qc_j\,(j\neq i)$ also remain the same, and the regular couple $\Qc_i$ is replaced by $\Ab\Qc_i$.

(2) If one applies step $\Bb$ at a node which belongs to some $\Qc_i\,(1\leq i\leq m)$, then the same result holds as in (1) except that $\Qc_i$ is now replaced by $\Bb\Qc_i$.

(3) If we are not in case (1) or (2), then the node where one applies step $\Bb$ must be a branching node of $\Qc_{int}$, hence it must be a branching node or the lone leaf of some regular chain $\Tc_i^\circ\,(1\leq i\leq n)$. In this case the regular chains $\Tc_j^\circ\,(j\neq i)$ remain the same for $\Qc_+$, and the regular couples $\Qc_j\,(1\leq j\leq m)$ also remain the same. The regular chain $\Tc_i^\circ$ is replaced by $\Bb\Tc_i^\circ$, while the two new leaf-pairs in $\Bb\Tc_i^\circ$ (which do not belong to $\Tc_i^\circ$) remain leaf-pairs. In any case we have verified the result for $\Qc_+$, which proves existence.

Now to prove uniqueness of the representation, let $\Qc_{int}$ be the couple formed after performing step (i). Given $\Qc_{sk}$, clearly $\Qc_{int}$ uniquely determines the regular chains in step (i) replacing the branching nodes in $\Qc_{sk}$, so it suffices to show that $\Qc$ uniquely determines $\Qc_{int}$ (once $\Qc_{int}$ is given, it is also clear that $\Qc$ uniquely determines the regular couples in step (ii) replacing the leaf pairs in $\Qc_{int}$). However we can show, via a case-by-case argument, that $\Qc_{int}$ contains no nontrivial regular sub-couple (i.e. no two subtrees rooted at two nodes in $\Qc_{int}$ form a nontrivial regular couple). Since $\Qc$ is formed from $\Qc_{int}$ by replacing each leaf pair with a regular couple, we see that $\Qc_{int}$ can be reconstructed by collapsing each \emph{maximal regular sub-couple} (under inclusion) in $\Qc$ to a leaf pair (because any regular sub-couple of $\Qc$ must be a sub-couple of one of the regular couples in $\Qc$ replacing a leaf pair in $\Qc_{int}$). Clearly, this collapsing process is commutative as explained in Proposition \ref{reduceprocess}, hence the resulting couple $\Qc_{int}$ is unique. This completes the proof.
\end{proof}
\begin{rem}\label{regtree} We will call a saturated paired tree, which is a regular chain with each leaf pair replaced by a regular couple, a ``regular tree". Thus in Proposition \ref{structure2}, $\Qc$ can also be formed from $\Qc_{sk}$ by replacing each branching node with a regular tree and each leaf pair with a regular couple; see Figures \ref{fig:gencpl} and \ref{fig:blacksq}. This representation is also unique.
\end{rem}
  \begin{figure}[h!]
  \includegraphics[scale=.5]{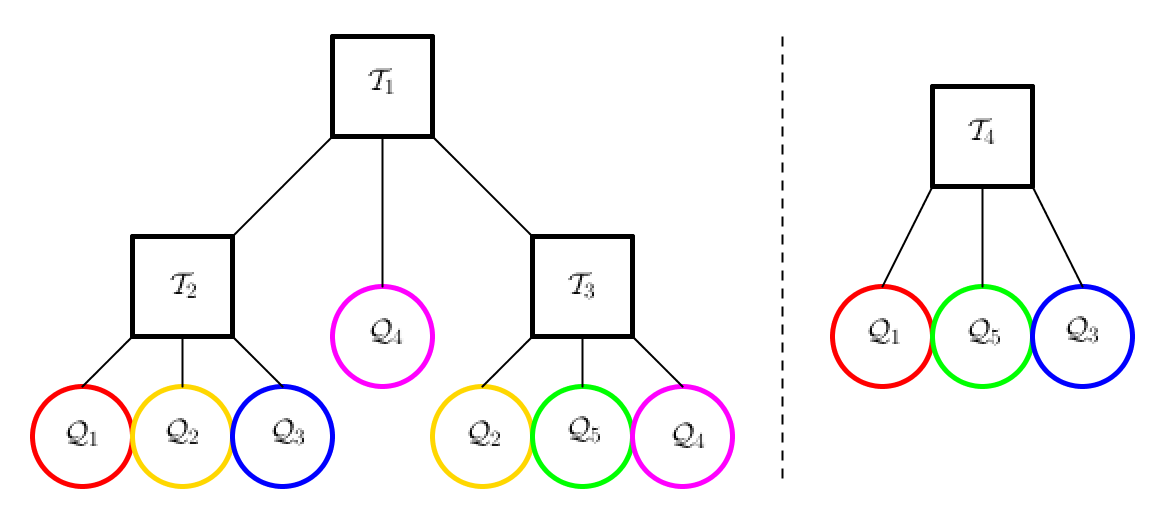}
  \caption{An example of a couple, with structure as described in Proposition \ref{structure2}, whose skeleton is the couple in Figure \ref{fig:cpl}. Here a black square represents a regular tree.}
  \label{fig:gencpl}
\end{figure}
  \begin{figure}[h!]
  \includegraphics[scale=.5]{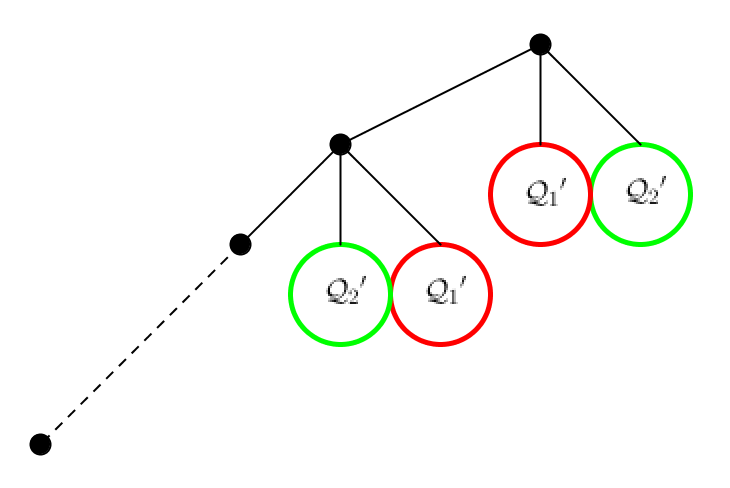}
  \caption{An example of a regular tree in Figure \ref{fig:gencpl}, as defined in Remark \ref{regtree}. The bottom black node is the lone leaf.}
  \label{fig:blacksq}
\end{figure}
\begin{cor}\label{corcpl} Fix any $\mathcal{Q}_{sk}$, the number of couples $\Qc$ with skeleton $\Qc_{sk}$ such that $n(\Qc)\leq n$ is at most $C^n$.
\end{cor}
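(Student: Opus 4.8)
The plan is to deduce this from the structure theorem for general couples, Proposition \ref{structure2} (in the ``regular tree'' reformulation of Remark \ref{regtree}), together with the exponential bound on the number of regular couples, Corollary \ref{countcouple1}, after which everything reduces to an elementary count of integer tuples. First I would dispose of the trivial range of parameters: since $\Qc$ is obtained from $\Qc_{sk}$ by applying steps $\Ab$ and $\Bb$, each of which strictly increases the scale, one always has $n(\Qc)\geq n(\Qc_{sk})$, so if $n_0:=n(\Qc_{sk})>n$ there is nothing to count. Thus we may assume $n_0\leq n$; and being a prime couple of scale $n_0$, the skeleton $\Qc_{sk}$ has exactly $n_0$ branching nodes and $n_0+1$ leaf pairs (Definition \ref{defcouple}).

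Next I would invoke the existence and uniqueness statements in Proposition \ref{structure2} and Remark \ref{regtree}: couples $\Qc$ with skeleton $\Qc_{sk}$ are in bijection with tuples $(\Ts_1,\dots,\Ts_{n_0};\Qc_1,\dots,\Qc_{n_0+1})$, where $\Ts_i$ is a regular tree assigned to the $i$-th branching node of $\Qc_{sk}$ and $\Qc_j$ is a regular couple assigned to the $j$-th leaf pair of $\Qc_{sk}$. Tracking the scale through this construction (each application of $\Ab$ or $\Bb$ adds $2$ to the scale, and the pieces are built by such steps starting from $\Qc_{sk}$) gives
\[
n(\Qc)=n_0+\sum_{i=1}^{n_0} n(\Ts_i)+\sum_{j=1}^{n_0+1} n(\Qc_j),
\]
so the couples with $n(\Qc)\leq n$ correspond precisely to the tuples whose scales $s_i:=n(\Ts_i)$ and $t_j:=n(\Qc_j)$ satisfy $\sum_i s_i+\sum_j t_j\leq n-n_0$.

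It then remains to count such tuples. I would first observe that a regular tree of scale $s$, paired with the trivial tree $\bullet$, is itself a regular couple of scale $s$ (build it from $\times$ by applying step $\Bb$ inside one tree to form the underlying regular chain, then step $\Ab$ to fill its leaf pairs), and the regular tree is recovered as the nontrivial component of that couple; hence by Corollary \ref{countcouple1} there are at most $C^{s}$ regular trees of scale $s$, and, directly from Corollary \ref{countcouple1}, at most $C^{t}$ regular couples of scale $t$. Therefore, for each fixed choice of scales with $\sum_i s_i+\sum_j t_j\leq n-n_0$, the number of corresponding tuples of objects is at most $\prod_i C^{s_i}\cdot\prod_j C^{t_j}=C^{\sum_i s_i+\sum_j t_j}\leq C^{\,n-n_0}\leq C^{\,n}$. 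On the other hand, the number of admissible nonnegative integer scale-tuples $(s_1,\dots,s_{n_0},t_1,\dots,t_{n_0+1})$ (i.e. $2n_0+1$ variables with sum at most $n-n_0$) is $\binom{n+n_0+1}{2n_0+1}\leq 2^{\,n+n_0+1}\leq 2^{\,2n+1}$, using $n_0\leq n$. Multiplying the two bounds, the total number of couples $\Qc$ with skeleton $\Qc_{sk}$ and $n(\Qc)\leq n$ is at most $2^{\,2n+1}C^{\,n}\leq C'^{\,n}$ for a suitable absolute constant $C'$ (the case $n=0$ being trivial, as $\Qc_{sk}$ is then the only such couple), which is the assertion.

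I do not expect a genuine obstacle here; the result is a bookkeeping consequence of the structure theorem. The two points that need care are (i) the scale accounting — each of the $2n_0+1$ regular pieces contributes its \emph{full} scale on top of $n(\Qc_{sk})$, so that $n(\Qc)\leq n$ simultaneously forces $n_0\leq n$ and caps the ``budget'' $\sum_i s_i+\sum_j t_j$ by $n-n_0$, which in turn keeps both the number of scale-tuples and the number of fillings exponential in $n$ rather than factorial — and (ii) keeping the final constant $C'$ independent of the chosen skeleton $\Qc_{sk}$, which works precisely because the only way $\Qc_{sk}$ enters the estimate is through $n_0$, and $n_0\leq n$.
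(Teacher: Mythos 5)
Your proof is correct and takes essentially the same route as the paper: both rest on Proposition \ref{structure2}, an exponential bound on the number of regular pieces of each fixed scale, and an elementary exponential bound on the number of admissible scale-tuples. The paper organizes this as a two-stage count through the intermediate couple $\Qc_{int}$ (first regular chains at branching nodes, then regular couples at all leaf pairs of $\Qc_{int}$), whereas you use the Remark \ref{regtree} reformulation to flatten everything into a single tuple of $2n_0+1$ pieces of fixed length; your injection $\Ts\mapsto(\Ts,\bullet)$ to deduce the regular-tree count from Corollary \ref{countcouple1} is a tidy shortcut the paper does not explicitly record, relying instead (implicitly) on the Catalan-number count of regular chains from the proof of that corollary.
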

\begin{proof} Let the couple formed after performing step (i) in the statement of Proposition \ref{structure2} be $\Qc_{int}$. If $n(\Qc_{sk})=m$, then $\Qc_{int}$ is determined by $m$ regular chains of total scale at most $n$, so the number of choices for $\Qc_{int}$ is at most
\[\sum_{n_1+\cdots +n_m\leq n}C_0^{n_1}\cdots C_0^{n_m}\leq (2C_0)^n\] for some constant $C_0$. For each fixed $\Qc_{int}$, let $n(\Qc_{int})=r$, then $\Qc$ is formed from $\Qc_{int}$ by performing step (ii) in the statement of Proposition \ref{structure2}, so it is determined by $r+1$ regular couples of total scale at most $n$, so the number of choices for $\Qc$ is at most \[\sum_{n_1+\cdots +n_{r+1}\leq n}C_1^{n_1}\cdots C_1^{n_{r+1}}\leq (2C_1)^n\] for some other constant $C_1$. Therefore the total umber of choices for $\Qc$ is at most $(4C_0C_1)^n$.
\end{proof}
\subsection{Dominant couples} We will identify a subclass of regular couples, namely the \emph{dominant couples}, which give rise to the nonzero leading terms.
\begin{df}\label{defstd} We define a regular couple $\Qc$ to be \emph{dominant} inductively as follows. First the trivial couple $\times$ is dominant. Suppose $\Qc\neq\times$, let $\Qc_0$ be uniquely determined by Propositions \ref{structure1} and \ref{structure1.5}, and let $\Qc_j\,(j\geq 1)$ be the regular couples in $\Qc$ replacing leaf pairs in $\Qc_0$. Then we define $\Qc$ to be dominant, if (i) $\Qc_0$ is either a $(1,1)$-mini couple or a regular double chain formed by two \emph{dominant} regular chains, and (ii) each regular couple $\Qc_j$ is dominant.
\end{df}
  \begin{figure}[h!]
  \includegraphics[scale=.45]{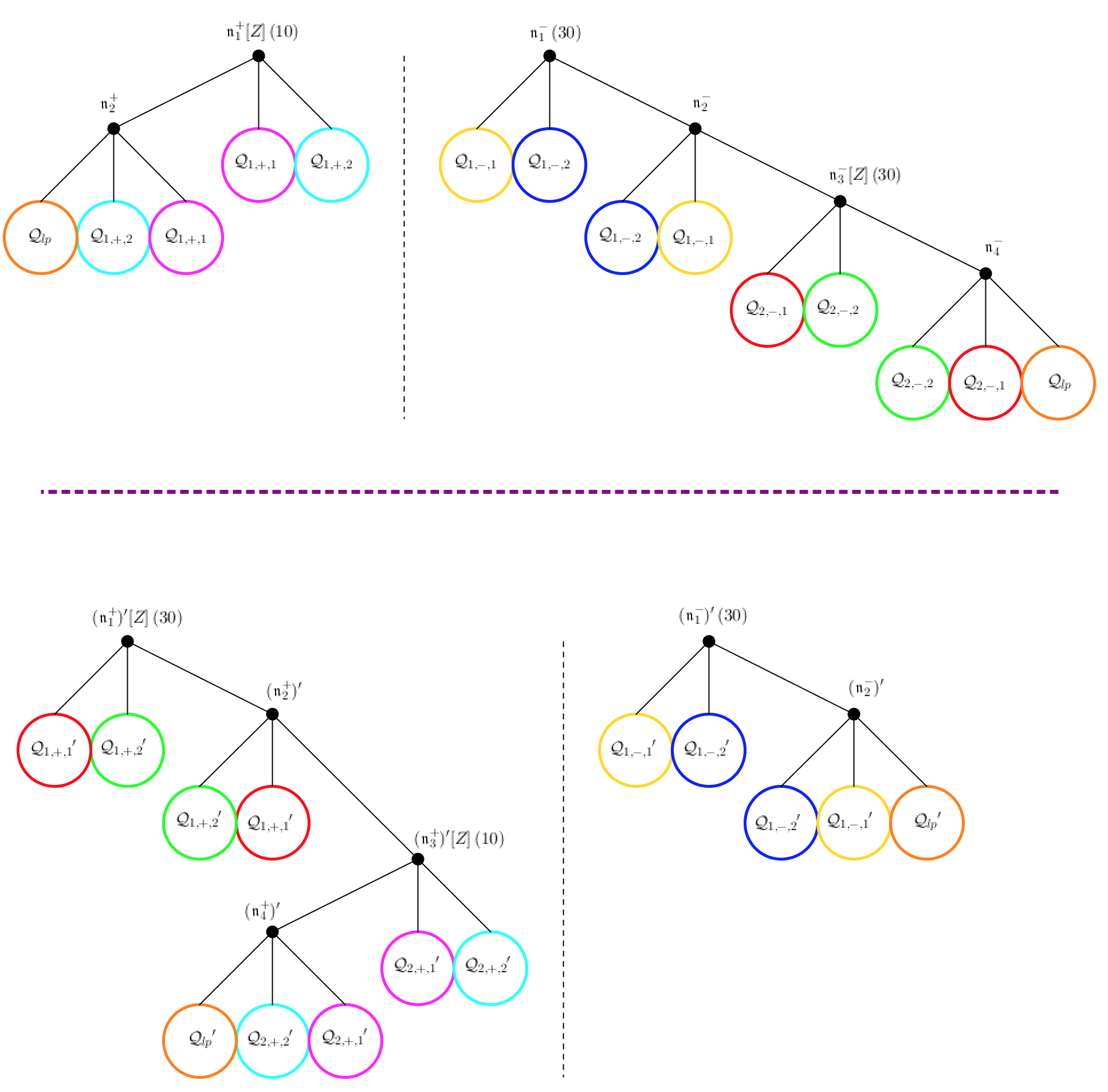}
  \caption{An example of two equivalent dominant couples with $(m^+,m^-)=(1,2)$ and $((m^+)',(m^-)')=(2,1)$. Here we assume that (i) couples represented by the same color are equivalent (with the corresponding $Z$ sets, which are omitted), and (ii) the symbol $[Z]$ means the value of $j$ corresponding to this branching node belongs to the suitable $Z^\pm$ or $(Z^\pm)'$ set. The code of each mini tree is indicated beside the node $\nf_a^\pm$ and $(\nf_a^\pm)'$, and $\Qc_{lp}$ and $\Qc_{lp}'$ have type 1.}
  \label{fig:equivdomcpl}
\end{figure}
\subsubsection{An equivalence relation} Given a dominant couple $\Qc$, recall that $\Nc^*$ is the set of branching nodes, and $\Nc^{ch}\subset \Nc^*$ is defined in Definition \ref{defchoice}. Let $Z$ be a \emph{special} subset of $\Nc^{ch}$, which will be defined inductively in Definition \ref{equivcpl} below; we call $\Qs:=(\Qc,Z)$ an \emph{enhanced dominant couple}, and when $Z=\varnothing$, we will also denote $\Qs=(\Qc,\varnothing)$ just by $\Qc$ for convenience.
\begin{df}\label{equivcpl} We inductively define special subsets $Z\subset\Nc^{ch}$, and an equivalence relation $\sim$ between enhanced dominant couples $\Qs:=(\Qc,Z)$, as follows. First $\varnothing$ is a special subset and the enhanced trivial couple $(\times,\varnothing)$ is only equivalent to itself, moreover two enhanced dominant couples where the $\Qc$ have different types are never equivalent.

Next, if $\Qs=(\Qc,Z)$ and $\Qs'=(\Qc',Z')$, where $\Qc$ and $\Qc'$ have type $1$ (recall the definition of type in Proposition \ref{structure1.5}), then we have $\Nc^{ch}=\Nc_1^{ch}\cup\Nc_2^{ch}\cup\Nc_3^{ch}\cup\{\rf\}$ where $\rf$ is the root with $+$ sign, see Definition \ref{defchoice}. Then $Z$ is special if and only if $Z=Z_1\cup Z_2\cup Z_3$ (i.e. $\rf$ is \emph{not} in $Z$) where $Z_j\subset \Nc_j^{ch}$ is special, and similarly for $\Qc'$. Let $\Qs_j=(\Qc_j,Z_j)$, we define $\Qs\sim\Qs'$ if and only if $\Qs_j\sim\Qs_j'$ for $1\leq j\leq 3$.

Now let $\Qs$ and $\Qs'$ be as before, but suppose $\Qc$ and $\Qc'$ have type $2$. Let $\Qc_0$ be associated with $\Qc$ as in Proposition \ref{structure1.5}, and similarly for $\Qc'$ (same for the other objects appearing below). Suppose the two regular chains of $\Qc_0$ have scale $2m^+$ and $2m^-$ respectively, and let the branching nodes in $\Qc_0$ be $\nf_a^\pm(1\leq a\leq 2m^\pm)$, where $\nf_{2j-1}^\pm$ is paired with $\nf_{2j}^\pm$ for $1\leq j\leq m^\pm$, see Figure \ref{fig:equivdomcpl}. We will use the notations in Definition \ref{defchoice}, and note that $\Qc$ is dominant and $\Qc_{lp}$ is trivial or has type 1. Recall that 
\begin{equation}\label{unionch}\Nc^{ch}=\bigg(\bigcup_{j,\epsilon,\iota}\Nc_{j,\epsilon,\iota}^{ch}\bigg)\cup\Nc_{lp}^{ch}\cup\big\{\nf_{2j-1}^+:1\leq j\leq m^+\big\}\cup\big\{\nf_{2j-1}^-:1\leq j\leq m^-\big\}
\end{equation} as in (\ref{newnch}); then $Z$ is special if and only if
\begin{equation}\label{unionz}Z=\bigg(\bigcup_{j,\epsilon,\iota}Z_{j,\epsilon,\iota}\bigg)\cup Z_{lp}\cup\big\{\nf_{2j-1}^+:j\in Z^+\big\}\cup\big\{\nf_{2j-1}^-:j\in Z^-\big\}
\end{equation} for some special subsets $Z_{j,\epsilon,\iota}\subset \Nc_{j,\epsilon,\iota}^{ch}$ and $Z_{lp}\subset\Nc_{lp}^{ch}$, and some subsets $Z^\pm\subset \{1,\cdots,m^\pm\}$. Similar representations are defined for $\Qs'$. For $\epsilon\in\{\pm\}$ and each $1\leq j\leq m^\epsilon$, consider the tuple $(\mathtt{I}_{j,\epsilon},\mathtt{c}_{j,\epsilon},\Xs_{j,\epsilon,1},\Xs_{j,\epsilon,2})$. Here $\mathtt{I}_{j,\epsilon}=1$ if $j\in Z^\epsilon$ and $\mathtt{I}_{j,\epsilon}=0$ otherwise, $\mathtt{c}_{j,\epsilon}\in\{1,2,3\}$ is the \emph{first digit of} the code of the mini tree associated with the pair $\{2j-1,2j\}\in \Pc^\epsilon$ (see Definition \ref{defchoice}; this code is also the code assigned for the pair $\{2j-1,2j\}\in \Pc^\epsilon$ as in Proposition \ref{prop3.4}). Moreover $\Xs_{j,\epsilon,\iota}$ is the equivalence class of the enhanced dominant couple $\Qs_{j,\epsilon,\iota}=(\Qc_{j,\epsilon,\iota},Z_{j,\epsilon,\iota})$ for $\iota\in\{1,2\}$, and let $\Ys$ be the equivalence class of the enhanced dominant couple $\Qs_{lp}=(\Qc_{lp},Z_{lp})$.

We now define $\Qs\sim\Qs'$, if and only if (i) $m^++m^-=(m^+)'+(m^-)'$, and (ii) the tuples coming from $\Qc_0$ (there are total $m^++m^-$ of them) form \emph{a permutation of} the corresponding tuples coming from $\Qc_0'$ (there are total $(m^+)'+(m^-)'$ of them), and (iii) $\Ys=\Ys'$. Finally, note that if $\Qs=(\Qc,Z)$ and $\Qs'=(\Qc',Z')$ are equivalent then $n(\Qc)=n(\Qc')$ and $|Z|=|Z'|$. When $\Qs\sim\Qs'$ with $Z=Z'=\varnothing$, we also say that $\Qc\sim\Qc'$.
\end{df}
\subsection{Encoded trees}\label{encodedtree} Let $\Tc$ be a tree, we will assign to each of its \emph{branching} nodes $\nf\in\Nc$ a \emph{code} $\mathtt{c}=\mathtt{c}_\nf\in\{0,1,2,3\}$ to form an \emph{encoded tree}.

Given a encoded tree $\Tc$, define the \emph{canonical path} to be the unique path $\gamma$ starting from the root $\rf$ and ending at either a leaf or a branching node with code $0$, such that any non-terminal node $\nf\in\gamma$ is a branching node with code $\mathtt{c}_\nf\in\{1,2,3\}$, and the next node $\nf'$ in $\gamma$ is the $\mathtt{c}_\nf$-th child of $\nf$ counting from left to right.
\begin{df}\label{codechain} An \emph{encoded chain} is an encoded tree whose canonical path $\gamma$ ends at a leaf, and for any non-terminal node $\nf\in\gamma$, the two children of $\nf$ other than $\nf'$ are both leaves, where $\nf'$ is the next node in {$\gamma$}. We call the endpoint of $\gamma$, which is a leaf, the \emph{tail leaf} of $\Tc$.
\end{df}
\begin{prop}\label{structure+} Given an encoded tree $\Tc\neq \bullet$, we say $\Tc$ has \emph{type 1} if its root $\rf$ has code $\mathtt{c}_\rf=0$, otherwise we say it has \emph{type 2}. Now, for any type 2 encoded tree $\Tc$, there is a unique encoded chain $\Tc_0\neq\bullet$, such that $\Tc$ is obtained from $\Tc_0$ by replacing each leaf with an encoded tree, and that the tail leaf is replaced by either $\bullet$ (i.e. remains a leaf) or an encoded tree of type 1.
\end{prop}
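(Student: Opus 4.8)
The plan is to recognize the canonical path $\gamma$ of $\Tc$ itself as the skeleton encoded chain $\Tc_0$: the encoded subtrees hanging off $\gamma$ get collapsed to leaves to form $\Tc_0$, and re-expanding them recovers $\Tc$. The restriction that the tail leaf be replaced by $\bullet$ or a type-$1$ tree will be precisely the mechanism forcing $\gamma$, and hence $\Tc_0$, to stop where it does, which is what yields uniqueness. This is the encoded-tree analogue of Proposition \ref{structure1.5}, but easier, since here the codes directly prescribe which child the canonical path follows, so no ``property $\mathtt{L}$''-type analysis is needed.

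Concretely, first I would fix notation: write the canonical path of Definition \ref{encodedtree} as $\gamma=(\nf_0,\dots,\nf_\ell)$ with $\nf_0=\rf$; for $i<\ell$ the node $\nf_i$ is branching with code $\mathtt{c}_{\nf_i}\in\{1,2,3\}$ and $\nf_{i+1}$ is its $\mathtt{c}_{\nf_i}$-th child, while $\nf_\ell$ is a leaf or a branching node with code $0$. Since $\Tc$ has type $2$ its root has code in $\{1,2,3\}$, so $\ell\geq 1$. For each $i\leq\ell-1$ let $S_{i,1},S_{i,2}$ be the two encoded subtrees of $\Tc$ rooted at the children of $\nf_i$ other than $\nf_{i+1}$, taken in left-to-right order, and if $\nf_\ell$ has code $0$ let $T^{\ast}$ be the encoded subtree of $\Tc$ rooted at $\nf_\ell$, which has type $1$. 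For existence I would then let $\Tc_0$ be obtained from $\Tc$ by collapsing each $S_{i,j}$ to a single leaf, and, in the code-$0$ case, also collapsing $T^{\ast}$ to a single leaf. One checks directly from Definitions \ref{encodedtree} and \ref{codechain} that $\gamma$ is still the canonical path of $\Tc_0$, that it now ends at a leaf, that every child off $\gamma$ is a leaf, and that $\Tc_0$ has $\ell\geq 1$ branching nodes so that $\Tc_0\neq\bullet$ is an encoded chain; re-expanding the collapsed side leaves back to the $S_{i,j}$, and the tail leaf to $\bullet$ (leaf case) or to $T^{\ast}$ (code-$0$ case, where $T^{\ast}$ is type $1$), recovers $\Tc$, giving the required decomposition.

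For uniqueness, suppose $\Tc$ also decomposes via an encoded chain $\Tc_0'\neq\bullet$ with canonical path $\gamma'=(\nf_0',\dots,\nf_{\ell'-1}',\tf')$, $\ell'\geq 1$, whose tail leaf $\tf'$ is replaced by $\bullet$ or by a type-$1$ tree. Replacing leaves changes neither branching nodes nor their codes, so the canonical path of $\Tc$ first follows $\nf_0',\dots,\nf_{\ell'-1}'$ and then reaches the node occupying the slot of $\tf'$, which is either a leaf or a branching node of code $0$; hence the canonical path of $\Tc$ terminates precisely there, forcing $\gamma=\gamma'$, and in particular $\ell=\ell'$, $\nf_i=\nf_i'$, and $\nf_\ell$ is the node at the slot of $\tf'$. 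Since $\Tc_0'$ is a chain, its children off $\gamma'$ and its tail leaf are leaves, so their replacements are forced to be the subtrees of $\Tc$ read off at the $\nf_i$ and at $\nf_\ell$, namely the $S_{i,j}$ and $\bullet$ or $T^{\ast}$; collapsing these recovers $\Tc_0'=\Tc_0$. The one point needing care, and the only place the type-$1$-or-trivial hypothesis on the tail is used, is this final termination claim: were the tail leaf of $\Tc_0'$ allowed to be replaced by a type-$2$ tree, the canonical path of $\Tc$ would run past the slot of $\tf'$ and the decomposition would fail to be unique. Everything else is routine bookkeeping with Definitions \ref{encodedtree} and \ref{codechain}.
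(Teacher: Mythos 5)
Your proof is correct and takes essentially the same route as the paper: identify the canonical path of $\Tc$, observe that any admissible $\Tc_0$ must share this canonical path (with the trivial-or-type-1 condition on the tail replacement forcing the path to terminate at the tail slot), and obtain $\Tc_0$ uniquely by collapsing the side subtrees (and the code-$0$ terminal subtree) to leaves. Your write-up merely spells out in more detail the existence and uniqueness checks that the paper treats as immediate from the definitions.
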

\begin{proof} This is straightforward from the definition. In fact $\Tc$ has type 1 if and only if $\mathtt{c}_\rf=0$ for the root $\rf$; suppose $\mathtt{c}_\rf\in\{1,2,3\}$, then $\Tc_0$ must have the same canonical path $\gamma$ as $\Tc$. Thus $\Tc_0$ must be the encoded tree formed by selecting each node in $\gamma$ and collapsing the subtree rooted at this node to a leaf. Such $\Tc_0$ is clearly unique, and is nontrivial when $\mathtt{c}_\rf\in\{1,2,3\}$.
\end{proof}
\begin{df}\label{equcodetree} We define the equivalence relation between encoded trees as follows. First the trivial tree $\bullet$ is only equivalent to itself, and encoded trees of different type are not equivalent. Now suppose $\Tc$ and $\Tc'$ are two encoded trees of type 1, then define $\Tc\sim \Tc'$ if and only if $\Tc_j\sim \Tc_j'$ for $1\leq j\leq 3$, where $\Tc_j$ and $\Tc_j'$ are the subtrees of $\Tc$ and $\Tc'$ respectively, from left to right.

Now suppose $\Tc$ and $\Tc'$ are two encoded trees of type 2, then by Proposition \ref{structure+} there exists a unique encoded chain $\Tc_0$ such that $\Tc$ is formed by replacing each leaf of $\Tc_0$ with an encoded tree, and the same holds for $\Tc'$. Let the branching nodes of $\Tc_0$ from top to bottom be $\nf_j\,(1\leq j\leq m)$. For each $1\leq j\leq m$, let $\Tc_{j,1}$ and $\Tc_{j,2}$ be the two encoded trees that replace the two children of $\nf_j$ other than $\nf_{j+1}$ (or the tail leaf), counted from left to right; moreover let $\Tc_{ta}$ be the encoded tree replacing the tail leaf, which is either trivial or has type 1. Consider the triples $(\mathtt{c}_j,\Zs_{j,1},\Zs_{j,2})$ for each $j$, where $\mathtt{c}_j$ is {the code} of $\nf_j$, and $\Zs_{j,\iota}$ is the equivalence class of $\Tc_{j,\iota}$ for $\iota\in\{1,2\}$. Then the encoded trees $\Tc$ and $\Tc'$ are equivalent, if and only if (i) $m=m'$ where $m'$ is defined similarly for $\Tc'$, (ii) the triples $(\mathtt{c}_j,\Zs_{j,1},\Zs_{j,2})$ form a \emph{permutation} of the corresponding triples coming from $\Tc_0'$, and (iii) $\Tc_{ta}$ is equivalent to $\Tc_{ta}'$.
\end{df}
\subsubsection{Dominant couples and encoded trees} Given any dominant couple $\Qc$, we can inductively define a unique encoded tree $\Tc$ associated to $\Qc$, as follows.
\begin{df}\label{cpl-tree} Let $\Qc$ be a dominant couple, we define the encoded tree $\Tc$ associated with $\Qc$ as follows. First if $\Qc=\times$ then define $\Tc=\bullet$. Suppose $\Qc$ has type 1, then let $\Qc_j\,(1\leq j\leq 3)$ be defined as in Definition \ref{defchoice}, then define $\Tc$ to be the encoded tree such that the root has code $0$, and the three subtrees are $\Tc_j\,(1\leq j\leq 3)$ which are associated with $\Qc_j$, from left to right.

Now suppose $\Qc$ is a dominant couple of type 2. Let the relevant notations like $\Qc_{j,\epsilon,\iota}$ and $\Qc_{lp}$ be as in Definition \ref{defchoice}. Let $m=m^++m^-$, consider the triples $(\mathtt{c}_{j,\epsilon},\Qc_{j,\epsilon,1},\Qc_{j,\epsilon,2})$ for $\epsilon\in\{\pm\}$ and $1\leq j\leq m^\epsilon$, where $\mathtt{c}_{j,\epsilon}$ is the first digit of the code of the mini-tree associated with the pair $\{2j-1,2j\}\in\Pc^\epsilon$ as in Definition \ref{equivcpl}; we rearrange them putting the $\epsilon=+$ triples before the $\epsilon=-$ ones, and in increasing order of $j$ for fixed sign. Let the rearranged tuples be $(\mathtt{c}_i,\Qc_{i,1},\Qc_{i,2})$ for $1\leq i\leq m$, then $\Tc$ is defined as follows. First let $\Tc_0$ be the encoded chain which has $m$ branching nodes $\nf_i\,(1\leq i\leq m)$ from top to bottom with code $\mathtt{c}_i$, then for each $i$, replace the two children leaves of $\nf_i$ other than $\nf_{i+1}$ (or the tail leaf) with $\Tc_{i,1}$ and $\Tc_{i,2}$ which are the encoded trees associated to $\Qc_{i,1}$ and $\Qc_{i,2}$ by induction hypothesis. Finally the tail leaf is replaced by $\Tc_{ta}$ which is the encoded tree associated with $\Qc_{lp}$.
\end{df}
\begin{prop}\label{bijection} The mapping from dominant couples $\Qc$ to encoded trees $\Tc$, as defined in Definition \ref{cpl-tree}, is surjective. Moreover two dominant couples $\Qc$ and $\Qc'$ are equivalent in the sense of Definition \ref{equivcpl}, if and only if the associated encoded trees $\Tc$ and $\Tc'$ are equivalent in the sense of Definition \ref{equcodetree}. In particular this mapping induces a bijection between the equivalence classes of dominant couples and equivalence classes of encoded trees.
\end{prop}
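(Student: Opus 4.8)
The plan is to induct on the scale $n(\Qc)$ (equivalently, on the number of branching nodes of the encoded tree), running the two recursive structure theorems in parallel: Proposition \ref{structure1.5} decomposes a dominant couple $\Qc\neq\times$ uniquely into a skeleton $\Qc_0$ (a $(1,1)$-mini couple or a regular double chain) with regular --- here in fact dominant, since $\Qc$ is --- sub-couples at its leaf-pairs, while Proposition \ref{structure+} decomposes a type $2$ encoded tree uniquely into an encoded chain $\Tc_0$ with encoded trees at its leaves. The first thing to record is that the map of Definition \ref{cpl-tree} is \emph{type-preserving}: a type $1$ dominant couple (where $\Qc_0$ is a $(1,1)$-mini couple) is sent to a tree whose root has code $0$, hence of type $1$; a type $2$ dominant couple is sent to a tree whose root is the first branching node $\nf_1$ of a \emph{nontrivial} encoded chain, hence carries a code $\mathtt{c}_1\in\{1,2,3\}$ and is of type $2$; and $\times\mapsto\bullet$. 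This dichotomy organizes the rest.

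For \emph{surjectivity} I would, given an encoded tree $\Tc$, construct a preimage recursively. If $\Tc=\bullet$, take $\Qc=\times$. If $\Tc$ has type $1$ with subtrees $\Tc_1,\Tc_2,\Tc_3$, use the inductive hypothesis to pick dominant couples $\Qc_j$ with encoded tree $\Tc_j$ and form the type $1$ dominant couple with $\Qc_0$ a $(1,1)$-mini couple and the $\Qc_j$ at its three leaf-pairs; Definition \ref{cpl-tree} then returns root code $0$ and subtrees $\Tc_j$, i.e. $\Tc$. If $\Tc$ has type $2$, write it via Proposition \ref{structure+} as an encoded chain $\Tc_0$ with branching nodes $\nf_1,\dots,\nf_m$ of codes $\mathtt{c}_1,\dots,\mathtt{c}_m$, side-subtrees $\Tc_{i,1},\Tc_{i,2}$, and tail subtree $\Tc_{ta}$; then take $\Qc_0$ to be the regular double chain with $\Tc^+$ the \emph{dominant} regular chain of scale $2m$ (partition $\{\{1,2\},\dots,\{2m-1,2m\}\}$, dominant by Definition \ref{deflegal}), $\Tc^-=\bullet$, the $j$-th mini tree given any code with first digit $\mathtt{c}_j$, the leaf-pairs filled by inductively chosen dominant couples with encoded trees $\Tc_{i,1},\Tc_{i,2}$, and the lone pair by one with encoded tree $\Tc_{ta}$. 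This $\Qc$ is dominant by Definition \ref{defstd} (the trivial chain being dominant), and since the ``$+$ before $-$, increasing $j$'' rearrangement in Definition \ref{cpl-tree} with $m^-=0$ returns the triples in their original order $j=1,\dots,m$, the encoded tree of $\Qc$ is exactly $\Tc$. Other admissible splits $(m^+,m^-)$ would serve equally well; the map is genuinely many-to-one on couples --- the two $(1,1)$-mini couples, the second digit of a mini-tree code, and the split of a double chain all wash out --- so we only need \emph{some} preimage.

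For the \emph{equivalence correspondence}, I would show by the same induction that $\Qc\sim\Qc'$ in the sense of Definition \ref{equivcpl} with $Z=Z'=\varnothing$ if and only if $\Tc\sim\Tc'$ in the sense of Definition \ref{equcodetree}. The base case, and the cases where exactly one couple is trivial or the two have different types, follow at once from type-preservation. In the type $1$ case both sides reduce to ``$\Qc_j\sim\Qc_j'$ for $j=1,2,3$'' resp. ``$\Tc_j\sim\Tc_j'$ for $j=1,2,3$'', and the inductive hypothesis closes it. In the type $2$ case, since $Z=\varnothing$ all indicators $\mathtt{I}_{j,\epsilon}$ in Definition \ref{equivcpl} vanish, so the couple equivalence reduces to: $m^++m^-=(m^+)'+(m^-)'$, the triples $(\mathtt{c}_{j,\epsilon},\Xs_{j,\epsilon,1},\Xs_{j,\epsilon,2})$ form a permutation of the primed ones, and the equivalence class of $\Qc_{lp}$ equals that of $\Qc_{lp}'$; the tree equivalence reads $m=m'$, the triples $(\mathtt{c}_i,\Zs_{i,1},\Zs_{i,2})$ form a permutation of the primed ones, and $\Tc_{ta}\sim\Tc_{ta}'$. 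Because Definition \ref{cpl-tree} uses precisely the first digit of the $j$-th mini-tree code as the code of the $j$-th chain node, and because the inductive hypothesis already identifies equivalence classes of the smaller sub-couples $\Qc_{j,\epsilon,\iota}$ and $\Qc_{lp}$ with those of the corresponding smaller encoded trees, these two lists of conditions are the same. The forward implication then says the map descends to equivalence classes, the reverse says the descended map is injective, and the surjectivity above says it is onto --- hence a bijection of equivalence classes.

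The hard part will be the type $2$ bookkeeping: verifying that the split of the branching nodes of $\Qc_0$ between its two chains, the adjacent pairing $\{2j-1,2j\}$ forced by dominance (via Proposition \ref{prop3.4}), the two digits of each mini-tree code (only the first of which the map remembers), the lone-pair/tail-leaf matching, and the particular rearrangement built into Definition \ref{cpl-tree} are all simultaneously consistent with the encoded-chain decomposition of Proposition \ref{structure+}. None of these steps is deep on its own, but this is where every definition involved has to be reconciled at once, and it is also exactly the place where one sees why injectivity can only hold after quotienting by $\sim$.
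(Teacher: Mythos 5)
Your proposal is correct and follows essentially the same route as the paper: induction on the scale using the parallel structure theorems (Propositions \ref{structure1.5} and \ref{structure+}), surjectivity by reverting the construction of Definition \ref{cpl-tree} (your explicit choice $m^-=0$ with a dominant chain is one valid way to do this), and the equivalence correspondence from the observation that with $Z=\varnothing$ all indicators $\mathtt{I}_{j,\pm}$ vanish, so the permutation conditions on triples match on both sides. Your write-up is simply a more detailed version of the paper's argument.
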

\begin{proof} The mapping is surjective because for any $\Tc$ one can always construct $\Qc$ by reverting the construction in Definition \ref{cpl-tree}, following the same induction process using Proposition \ref{structure+}. Now recall that equivalence between dominant couples $\Qc$ is defined as a special case in Definition \ref{equivcpl} with $Z=\varnothing$, thus in Definition \ref{equivcpl} for type 2 (type 1 is similar), the first component $\mathtt{I}_{j,\pm}$ of the tuple $(\mathtt{I}_{j,\pm},\mathtt{c}_{j,\pm},\Xs_{j,\pm,1},\Xs_{j,\pm,2})$ is always $0$. Therefore, part (ii) of the equivalence relation between $\Qc$ and $\Qc'$ can be described as the triples $(\mathtt{c}_{j,\pm},\Xs_{j,\pm,1},\Xs_{j,\pm,2})$ coming from $\Qc$ being a permutation of the triples coming from $\Qc'$ (as well as other similar conditions). By induction hypothesis, this is equivalent to the triples $(\mathtt{c}_i,\Zs_{i,1},\Zs_{i,2})$ coming from $\Qc$ being a permutation of the triples coming from $\Qc'$, where $\Zs_{i,\iota}$ is the equivalence class of the encoded tree associated to $\Qc_{i,\iota}$, and the triples $(\mathtt{c}_i,\Qc_{i,1},\Qc_{i,2})$ are rearranged from the triples $(\mathtt{c}_{j,\epsilon},\Qc_{j,\epsilon,1},\Qc_{j,\epsilon,2})$ as in Definition \ref{cpl-tree}. Then, using Definition \ref{equcodetree}, we see that this is equivalent to part (ii) of the equivalence relation between $\Tc$ and $\Tc'$. Similarly the other parts also match, therefore $\Qc$ being equivalent to $\Qc'$ is equivalent to $\Tc$ being equivalent to $\Tc'$.
\end{proof}
\subsubsection{A summary}\label{summary} We will be using the equivalence relations between enhanced dominant couples $\Qs=(\Qc,Z)$ (say $\sim_1$), between dominant couples $\Qc$ (say $\sim_2$), and between encoded trees (say $\sim_3$). Clearly $\sim_2$ is a special case of $\sim_1$, and Proposition \ref{bijection} establishes a bijection between equivalence classes under $\sim_2$ and equivalence classes under $\sim_3$. By abusing notation, below we will use the notation $\Xs$ (and similarly $\Ys$ etc.) to denote an equivalence class in each of these cases; the precise meaning will be clear from the context. For later use, we list a few easily verified facts about these equivalence classes below.

(1) We know that equivalent (enhanced) dominant couples and encoded trees must have the same scale and $|Z|$, and the same type. The bijection in Proposition \ref{bijection} also preserves the type; moreover, if $\Qc$ has scale $2n$, then the associated $\Tc$ has scale $n$. If $\Xs$ denotes the equivalence class for both objects, we will define the \emph{half-scale} of $\Xs$ to be $n$.

(2) If the net sign $\zeta^*(\Qc)$ of a dominant couple $\Qc$ is defined by (\ref{defzetaq}), and we define the net sign $\zeta^*(\Tc)$ of an encoded tree $\Tc$ by
\begin{equation}\label{defzetat}\zeta^*(\Tc)=\prod_{\nf\in\Nc}(-1)^{\mathtt{c}_\nf}\end{equation}where $\mathtt{c}_\nf$ is the code of $\nf$, then these signs are preserved under equivalence (including $\sim_1$), and also under the bijection in Proposition \ref{bijection} (so $\zeta^*(\Qc)=\zeta^*(\Tc)$ if $\Tc$ is associated to $\Qc$).

(3) Let $\Xs$ be an equivalence class of enhanced dominant couples. Then, if $\Xs$ has type 1, it can be uniquely determined (bijectively) by an \emph{ordered} triple $(\Xs_1,\Xs_2,\Xs_3)$ of equivalence classes of enhanced dominant couples. If $\Xs$ has type 2, it can be uniquely determined (bijectively) by the following objects:
\begin{itemize}
\item A positive integer $m\geq 1$;
\item An \emph{unordered} collection of (ordered) tuples $(\mathtt{I}_j,\mathtt{c}_j,\Xs_{j,1},\Xs_{j,2})$ for $1\leq j\leq m$, where each $\mathtt{I}_j\in\{0,1\}$, each $\mathtt{c}_j\in\{1,2,3\}$ and each $\Xs_{j,1}$ and $\Xs_{j,2}$ is an equivalence class of enhanced dominant couples;
\item An equivalence class $\Ys$ of enhanced dominant couples that is trivial or has \emph{type 1}.
\end{itemize}

(4) Let $\Xs$ be an equivalence class of dominant couples (with $Z=\varnothing$) or encoded trees. Then the same description in (3) is valid, except that the tuple $(\mathtt{I}_j,\mathtt{c}_j,\Xs_{j,1},\Xs_{j,2})$ should be replaced by the triple $(\mathtt{c}_j,\Xs_{j,1},\Xs_{j,2})$.

\section{Regular couples I: the $\Ac$ and $\Bc$ coefficients}\label{regasymp} We start with the analysis of $\Kc_\Qc$ associated to the regular couples $\Qc$, which will occupy up to Section \ref{domasymp}. The first step is to obtain suitable estimates for the coefficients $\Bc_\Qc$ occurring in (\ref{defkq}), which is based on $\Ac_\Tc$ occurring in (\ref{formulajt}).
\subsection{Properties of the coefficients $\Bc_\Qc$}\label{propertiesB}
Recall the coefficients $\Ac_\Tc=\Ac_\Tc(t,\alpha[\Nc])$ and $\Bc_\Qc=\Bc_\Qc(t,s,\alpha[\Nc^*])$ defined in (\ref{defcoefa}) and (\ref{defcoefb}). By induction, we can also write
\begin{equation}\Ac_\Tc(t,\alpha[\Nc])=\int_\Dc\prod_{\nf\in\Nc}e^{\zeta_\nf \pi i\alpha_\nf t_\nf}\,\mathrm{d}t_\nf,
\label{defcoefa2}
\end{equation} where the domain
\begin{equation}\label{timedom0}\Dc=\big\{t[\Nc]:0<t_{\nf'}<t_\nf<t,\mathrm{\ whenever\ }\nf'\mathrm{\ is\ a\ child\ node\ of\ }\nf\big\},\end{equation} and similarly
\begin{equation}\label{defcoefb2}\Bc_\Qc(t,s,\alpha[\Nc^*])=\int_\Ec\prod_{\nf\in\Nc^*}e^{\zeta_\nf \pi i\alpha_\nf t_\nf}\,\mathrm{d}t_\nf,
\end{equation} where the domain
\begin{multline}\label{timedom}\Ec=\big\{t[\Nc^*]:0<t_{\nf'}<t_\nf,\mathrm{\ whenever\ }\nf'\mathrm{\ is\ a\ child\ node\ of\ }\nf;\\t_\nf<t\mathrm{\ whenever\ }\nf\in\Nc^+\mathrm{\ and\ }t_\nf<s\mathrm{\ whenever\ }\nf\in\Nc^-\big\}.
\end{multline}Now suppose $\Qc$ is a \emph{regular} couple. {If we fix the pairing of branching nodes as in Proposition \ref{branchpair}, then for any decoration $\Es$ of $\Qc$, we must have $\zeta_{\nf'}\Omega_{\nf'}=-\zeta_\nf\Omega_\nf$ for any pair $\{\nf,\nf'\}$ of branching nodes.} Let $\Nc^{ch}$ be defined as in Definition \ref{defchoice}, then we may define $\widetilde{\Bc}_\Qc=\widetilde{\Bc}_\Qc(t,s,\alpha[\Nc^{ch}])$ by
\begin{equation}\label{deftildeb}\widetilde{\Bc}_\Qc(t,s,\alpha[\Nc^{ch}])=\Bc_\Qc(t,s,\alpha[\Nc^*]),\end{equation} assuming that $\alpha[\Nc^*\backslash\Nc^{ch}]$ is defined such that $\zeta_{\nf'}\alpha_{\nf'}=-\zeta_\nf\alpha_\nf$ for each pair $\{\nf,\nf'\}$.
\subsubsection{Structure of $\widetilde{\Bc}_\Qc$}\label{recursive} For a regular couple $\Qc\neq\times$, let $\Qc_0$ be uniquely determined by Propositions \ref{structure1} and \ref{structure1.5}, which is either a $(1,1)$-mini couple, or a nontrivial regular double chain, such that $\Qc$ is obtained from $\Qc_0$ by replacing each leaf-pair with a regular couple. We will use the notations of Definition \ref{defchoice}, including for example $\Pc^\pm$, $m^\pm$ and $\Qc_{j,\epsilon,\iota}$, $\Qc_{lp}$ and $\Nc_{j,\epsilon,\iota}^{ch}$, $\Nc_{lp}^{ch}$ etc.

\emph{Case 1}. If $\Qc$ has type 1, then by (\ref{defcoefb2}) and (\ref{deftildeb}), we deduce that
\begin{equation}\label{newexp1}\widetilde{\Bc}_\Qc(t,s,\alpha[\Nc^{ch}])=\int_0^t\int_0^s e^{\pi i\alpha_\rf(t_1-s_1)}\prod_{j=1}^3\widetilde{\Bc}_{\Qc_j}(t_1,s_1,\alpha[\Nc_j^{ch}])\,\mathrm{d}t_1\mathrm{d}s_1.
\end{equation} We remark that in (\ref{newexp1}), the variables $(t_1,s_1)$ appearing in $\widetilde{\Bc}_{\Qc_j}$ may be replaced by $(s_1,t_1)$ for some $j$, depending on the signs of the leaves of $\Qc_0$.

\emph{Case 2}. Suppose $\Qc$ has type 2. For each $1\leq j\leq m^\pm$, let $\{a,b\}$ be the $j$-th pair in $\Pc^\pm$ where $a<b$, then $\nf_a^+\in\Nc^{ch}$; we shall rename $\alpha_{\nf_a^+}:=\alpha_j^+$, and define $\beta_a^+:=\zeta_{\nf_a^+}\alpha_{\nf_a^+}=\epsilon_j^+\alpha_j^+$ where $\epsilon_j^+=\zeta_{\nf_a^+}\in\{\pm\}$ and $\beta_b^+:=\zeta_{\nf_b^+}\alpha_{\nf_b^+}=-\epsilon_j^+\alpha_j^+$. The same is done for the other regular chain $\Tc^-$. Then, by these definitions, and (\ref{defcoefb2}) and (\ref{deftildeb}), we deduce that
\begin{equation}\label{newexp2}
\begin{aligned}\widetilde{\Bc}_\Qc(t,s,\alpha[\Nc^{ch}])&=\int_{t>t_1>\cdots >t_{2m^+}>0}e^{\pi i(\beta_1^+t_1+\cdots +\beta_{2m^+}^+t_{2m^+})}\prod_{j=1}^{m^+}\prod_{\iota=1}^2\widetilde{\Bc}_{\Qc_{j,+,\iota}}\big(t_a,t_b,\alpha[\Nc_{j,+,\iota}^{ch}]\big)\\&\times\int_{s>s_1>\cdots >s_{2m^-}>0}e^{\pi i(\beta_1^-s_1+\cdots +\beta_{2m^-}^-s_{2m^-})}\prod_{j=1}^{m^-}\prod_{\iota=1}^2\widetilde{\Bc}_{\Qc_{j,-,\iota}}\big(s_a,s_b,\alpha[\Nc_{j,-,\iota}^{ch}]\big)\\&\times\widetilde{\Bc}_{\Qc_{lp}}\big(t_{2m^+},s_{2m^-},\alpha[\Qc_{lp}^{ch}]\big)\prod_{j=1}^{m^+}\mathrm{d}t_j\prod_{j=1}^{m^-}\mathrm{d}s_j.
\end{aligned}
\end{equation} As in \emph{Case 1}, we remark that in some factors $(t_a,t_b)$ may be replaced by $(t_b,t_a)$, and similarly for $(s_a,s_b)$ and $(t_{2m^+},s_{2m^-})$, depending on the signs of the relevant leaves. Note also that $\Qc_{lp}$ is trivial (in which case $\widetilde{\Bc}_{\Qc_{lp}}\equiv 1$) or has type 1; this is not needed here, but will be useful later.
\subsubsection{Estimates for $\widetilde{\Bc}_\Qc$} The goal of this section is to prove the following
\begin{prop}\label{maincoef} Let $\Qc$ be a regular couple of scale $2n$. Then, the function \[\widetilde{\Bc}_\Qc\big(t,s,\alpha[\Nc^{ch}]\big)\] is the sum of at most $2^{n}$ terms. For each term there exists a subset $Z\subset\Nc^{ch}$, such that this term has form
\begin{equation}\label{maincoef1}
\prod_{\nf\in Z}\frac{\chi_\infty(\alpha_\nf)}{\zeta_\nf\pi i\alpha_\nf}\cdot\int_{\Rb^2}\Cc\big(\lambda_1,\lambda_2,\alpha[\Nc^{ch}\backslash Z]\big)e^{\pi i(\lambda_1t+\lambda_2s)}\,\mathrm{d}\lambda_1\mathrm{d}\lambda_2
\end{equation} for $t,s\in[0,1]$, where $\chi_\infty$ is as in Section \ref{notations}. In (\ref{maincoef1}) the function $\Cc$ satisfies the estimate
\begin{equation}\label{maincoef2}\int \langle \lambda_1\rangle^{1/4}\langle \lambda_2\rangle^{1/4}\big|\partial_\alpha^\rho\Cc\big(\lambda_1,\lambda_2,\alpha[\Nc^{ch}\backslash Z]\big)\big|\,\mathrm{d}\alpha[\Nc^{ch}\backslash Z]\mathrm{d}\lambda_1\mathrm{d}\lambda_2\leq C^n(2|\rho|)!
\end{equation} for any multi-index $\rho$, and we also have the weighted estimate
\begin{equation}\label{maincoef2.5}\int \langle \lambda_1\rangle^{1/8}\langle \lambda_2\rangle^{1/8}\cdot\max_{\nf\in \Nc^{ch}\backslash Z}\langle \alpha_\nf\rangle^{1/8}\big|\Cc\big(\lambda_1,\lambda_2,\alpha[\Nc^{ch}\backslash Z]\big)\big|\,\mathrm{d}\alpha[\Nc^{ch}\backslash Z]\mathrm{d}\lambda_1\mathrm{d}\lambda_2\leq C^n.
\end{equation}We will denote the $(\lambda_1,\lambda_2)$ integral in (\ref{maincoef1}) by $\widetilde{\Bc}_{\Qc,Z}=\widetilde{\Bc}_{\Qc,Z}(t,s,\alpha[\Nc^{ch}\backslash Z])$, so we have \begin{equation}\label{maincoef3}\widetilde{\Bc}_\Qc(t,s,\alpha[\Nc^{ch}])=\sum_{Z\subset\Nc^{ch}}\prod_{\nf\in Z}\frac{\chi_\infty(\alpha_\nf)}{\zeta_\nf\pi i\alpha_\nf}\cdot \widetilde{\Bc}_{\Qc,Z}(t,s,\alpha[\Nc^{ch}\backslash Z]).\end{equation}
\end{prop}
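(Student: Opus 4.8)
The plan is to induct on the scale $2n$ of the regular couple $\Qc$, using the structure theorem (Proposition \ref{structure1}) together with the explicit recursive formulas (\ref{newexp1}) and (\ref{newexp2}) for $\widetilde{\Bc}_\Qc$. In the base case $\Qc = \times$ we have $\widetilde{\Bc}_\times \equiv 1$, which fits the stated form with $Z = \varnothing$ and $\Cc = \dirac(\lambda_1)\dirac(\lambda_2)$ (interpreted suitably, or one simply treats $n=1$ as the base case using the two mini-couple / six mini-tree possibilities, where everything is an explicit elementary integral). For the inductive step, I would treat the two cases of Proposition \ref{structure1.5} separately.

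For \emph{Case 1} (type 1, $\Qc_0$ a $(1,1)$-mini couple), formula (\ref{newexp1}) expresses $\widetilde{\Bc}_\Qc$ as a double time-integral over $[0,t]\times[0,s]$ of $e^{\pi i \alpha_\rf(t_1 - s_1)}$ times the product $\prod_{j=1}^3 \widetilde{\Bc}_{\Qc_j}(t_1,s_1,\cdot)$. By the induction hypothesis each $\widetilde{\Bc}_{\Qc_j}$ is a sum of $\le 2^{n_j}$ terms of the prescribed form, so the product is a sum of $\le 2^{n_1+n_2+n_3} = 2^{n-1}$ terms (times $2$ for the choice of whether $\rf \in Z$ or not, giving $2^n$). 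I would substitute the Fourier representations $\widetilde{\Bc}_{\Qc_j,Z_j} = \int \Cc_j(\lambda_1^{(j)},\lambda_2^{(j)},\cdot)e^{\pi i(\lambda_1^{(j)}t_1 + \lambda_2^{(j)}s_1)}$, carry out the $t_1$ and $s_1$ integrals explicitly — each produces a factor like $(e^{\pi i \Lambda t}-1)/(\pi i \Lambda)$ where $\Lambda$ is a sum of the $\lambda$'s and possibly $\pm\alpha_\rf$ — and then decide: if $\rf \in Z$ we peel off $\chi_\infty(\alpha_\rf)/(\zeta_\rf \pi i \alpha_\rf)$ and the rest (including the $\chi_0(\alpha_\rf)$ piece of the $t_1,s_1$ integration) is absorbed into the new $\Cc$; if $\rf \notin Z$, everything is absorbed into $\Cc$. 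The new $\Cc$ is then a convolution-type expression in the $\lambda$ variables built from the $\Cc_j$'s and the elementary kernels from the time integrals; the weighted $L^1$ bounds (\ref{maincoef2}) and (\ref{maincoef2.5}) follow from the corresponding bounds on the $\Cc_j$ because convolution is bounded on the relevant weighted $L^1$ spaces and the extra kernels are integrable against the $\langle\lambda\rangle^{1/4}$ weights — here the key elementary estimate is $\int \langle \lambda - a\rangle^{-1}\langle\lambda-b\rangle^{-1}\,d\lambda \lesssim 1$, cf.\ (\ref{elementary}), which is exactly what lets one absorb a $1/\Lambda$ from the time integral into an $L^1$ function. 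The $(2|\rho|)!$ growth in the $\alpha$-derivatives comes from differentiating the (Gevrey-$2$) cutoffs $\chi_\infty,\chi_0$ and is stable under the finitely-many operations per step.

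For \emph{Case 2} (type 2, $\Qc_0$ a regular double chain of scales $2m^+, 2m^-$), I would use formula (\ref{newexp2}), which factors $\widetilde{\Bc}_\Qc$ as a product of a $t$-integral over the simplex $t > t_1 > \dots > t_{2m^+} > 0$, an $s$-integral over $s > s_1 > \dots > s_{2m^-} > 0$, and the factor $\widetilde{\Bc}_{\Qc_{lp}}(t_{2m^+},s_{2m^-},\cdot)$ linking the two lone leaves. The crucial structural point is that the exponent in each simplex integral is $\pi i \sum \beta_a^\pm t_a$ where $\{\beta_1^\pm,\dots,\beta_{2m^\pm}^\pm\}$ is a signed permutation of $\{\pm \alpha_1^\pm,\dots,\pm\alpha_{m^\pm}^\pm\}$ corresponding to a \emph{legal} partition — i.e.\ each independent variable $\alpha_j^\pm$ appears \emph{twice}, with opposite signs $\epsilon_j^\pm$ and $-\epsilon_j^\pm$, at positions $a<b$. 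This is precisely the function $K(t,\alpha_1,\dots,\alpha_m)$ of (\ref{regchainintro}), and I would invoke the analysis of $K$ in Section \ref{regchainest} (culminating in Lemma \ref{regchainlem8}): $K$ is $L^1$ in the $\alpha_j$'s except for finitely many factors $1/(\pi i \alpha_j)$ ($j$ in some set $Z^\pm$), after which it is $L^1$ in the rest, with appropriate $\langle\lambda\rangle^{1/4}$-weighted bounds on its Fourier transform in $t$. Inserting the induction hypothesis for each $\widetilde{\Bc}_{\Qc_{j,\epsilon,\iota}}$ and $\widetilde{\Bc}_{\Qc_{lp}}$ (which live at the "inner" times $t_a, t_b$, $s_a, s_b$, $(t_{2m^+}, s_{2m^-})$), and doing all the simplex integrations, I collect: the $1/(\pi i \alpha_\nf)$ factors contributed by $Z^\pm$ (the $\nf_a^\pm$ with $a<b$) together with those inherited from the sub-couples form the new set $Z$, matching (\ref{newnch}); everything else is an explicit Fourier-integral in $\lambda_1, \lambda_2$ whose coefficient $\Cc$ satisfies (\ref{maincoef2})–(\ref{maincoef2.5}) by the same convolution-estimate bookkeeping as in Case 1, using now the $K$-estimates in place of the elementary double-time integrals. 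Counting terms: each of the $\le 2^{n-m}$ combinations from the sub-couples, times the $\le 2^{m}$ choices of $Z^+ \cup Z^-$ inside the chain (at most one binary choice per pair), gives the claimed $\le 2^n$.

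\textbf{Main obstacle.} The genuinely hard part is not the inductive bookkeeping but the analysis of the single-chain function $K$ in (\ref{regchainintro}) — establishing that, over the ordered simplex, the double-appearance of each $\alpha_j$ (as $+\alpha_j$ at position $a$ and $-\alpha_j$ at position $b>a$) actually produces $L^1$ integrability in $\alpha_j$ rather than a logarithmically divergent $\int \langle\alpha_j\rangle^{-1}$, \emph{except} for a controlled set of honest $1/\alpha_j$ singularities of the correct parity, and doing so with $\langle\lambda\rangle$-weighted Fourier bounds and constants that are only $C^n$ (no $\log L$, no factorial loss beyond $(2|\rho|)!$). This is exactly the content deferred to Lemma \ref{regchainlem8}, and it rests on a careful combinatorial case analysis of legal partitions (Proposition \ref{legalpair}) together with the elementary bound (\ref{elementary}); once that lemma is in hand, the induction above assembles $\widetilde{\Bc}_\Qc$ with essentially mechanical estimates. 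A secondary technical point to watch is that the cutoffs must be taken in Gevrey class $2$ so that the $(2|\rho|)!$ bound in (\ref{maincoef2}) survives differentiation through every step of the recursion; since the number of sub-couples and hence the number of multiplications/convolutions per inductive step is bounded by an absolute constant, the $C^n$ growth of constants and the $2^n$ term count are both stable.
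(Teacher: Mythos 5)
Your proposal follows essentially the same inductive route as the paper: induct on scale using the structure theorem and the recursions (\ref{newexp1})--(\ref{newexp2}), with Lemma \ref{regchainlem8} as the key technical input for the regular-chain integrals in Case 2. That identification of the bottleneck is exactly right.

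One place where the proposal misreads the structure is Case 1 (type-1 couple, $\Qc_0$ a $(1,1)$-mini couple). You offer the option of placing $\rf\in Z$ and ``peeling off'' $\chi_\infty(\alpha_\rf)/(\zeta_\rf\pi i\alpha_\rf)$. This does not occur and would not go through cleanly: after carrying out the $(t_1,s_1)$ integral, the $\alpha_\rf$-dependence is through $\frac{1}{\pi i(\alpha_\rf+\lambda^*)}\cdot\frac{1}{\pi i(-\alpha_\rf+\lambda^{**})}$, which is already $L^1$ in $\alpha_\rf$ uniformly in $(\lambda^*,\lambda^{**})$ by the elementary inequality (\ref{elementary}); the shifted denominators do not factor as $\chi_\infty(\alpha_\rf)/\alpha_\rf$ times something tame, and forcing such a factorization would need a translation argument of the type the paper uses only in Case 2 (via (\ref{differenceloss}), where some $j$ is moved from $Z^\pm$ to $W^\pm$). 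The paper therefore simply sets $Z=Z_1\cup Z_2\cup Z_3$ (so $\rf\notin Z$) and absorbs the whole product into $\Cc$. This is not merely cosmetic: Definition \ref{equivcpl} defines ``special'' $Z$ for type-1 couples precisely by the condition $\rf\notin Z$, and the later cancellation arguments (Propositions \ref{regcplcal}, \ref{nonemptyZ}, \ref{emptyZ}) require that the $Z$'s produced by this proposition be special. Your term count also inflates Case 1 from $2^{n-1}$ to $2^n$ by the spurious extra binary choice at $\rf$; the paper's $2^n$ bound is saturated only in Case 2 (one binary choice per pair $\{2j-1,2j\}$ in the regular double chain, plus the sub-couples). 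With that correction, the rest of your plan --- the convolution bookkeeping for (\ref{maincoef2}), the Gevrey-$2$ cutoffs plus Lemma \ref{combineq} for the $(2|\rho|)!$ growth, and the weight-propagation for (\ref{maincoef2.5}) --- matches what the paper carries out, although the latter requires a separate pass of the induction tracking the node realizing $\max_{\nf}\langle\alpha_\nf\rangle$ rather than a single convolution estimate.
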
 The proof of Proposition \ref{maincoef} is done by induction, using the recursive description in (\ref{newexp1}) and (\ref{newexp2}). Clearly the hardest case is \emph{Case 2}, where $\Qc_0$ is a regular double chain. Therefore, before proving Proposition \ref{maincoef} in Section \ref{proofmaincoef} below, we first need to analyze the expressions associated with regular chains. This will be done in Section \ref{regchainest}.
\subsection{Regular chain estimates}\label{regchainest} Let $\Pc$ be a legal partition of $\{1,\cdots,2m\}$. As in Section \ref{recursive}, we list the pairs $\{a,b\}\in\Pc\,(a<b)$ in the increasing order of $a$. If the $j$-th pair is $\{a,b\}$, we define $\beta_a=\epsilon_j\alpha_j$ and $\beta_b=-\epsilon_j\alpha_j$, where $1\leq j\leq m$ and $\epsilon_j\in\{\pm\}$. For this section, we also introduce the parameters $\lambda_a\,(1\leq a\leq 2m)$ and $\lambda_0$, and define $\mu_j=\lambda_a+\lambda_b$ if the $j$-th pair is $\{a,b\}$. Define now
\begin{equation}\label{regchaink}K(t,\alpha_1,\cdots,\alpha_m,\lambda_0,(\lambda_a)_{1\leq a\leq 2m}):=\int_{t>t_1>\cdots >t_{2m}>0}e^{\pi i[(\beta_1+\lambda_1)t_1+\cdots +(\beta_{2m}+\lambda_{2m})t_{2m}]+\pi i\lambda_0t_{2m}}\,\mathrm{d}t_1\cdots\mathrm{d}t_{2m}.\end{equation} If we define the operator
\[I_\beta f(t)=\int_0^t e^{\pi i\beta s}f(s)\,\mathrm{d}s,\] then we have
\begin{equation}\label{kusingI}K(t,\alpha_1,\cdots,\alpha_m,\lambda_0,(\lambda_a)_{1\leq a\leq 2m})=I_{\beta_1+\lambda_1}\cdots I_{\beta_{2m}+\lambda_{2m}}(e^{\pi i\lambda_0s})(t).\end{equation} By definition, if we replace $\alpha_j$ by $\widetilde{\alpha_j}=\alpha_j+\epsilon_j\lambda_a$, where $\{a,b\}\,(a<b)$ is the $j$-the pair in $\Pc$, and replace $\lambda_a$ by $\widetilde{\lambda_a}=0$ and replace $\lambda_b$ by $\widetilde{\lambda_b}=\lambda_a+\lambda_b=\mu_j$, it is easily seen that
\begin{equation}\label{translationk}K(t,\alpha_1,\cdots,\alpha_m,\lambda_0,\lambda_1,\cdots,\lambda_{2m})=K(t,\widetilde{\alpha_1},\cdots,\widetilde{\alpha_m},\lambda_0,\widetilde{\lambda_1},\cdots,\widetilde{\lambda_{2m}}).\end{equation} Therefore, in this section we will assume $\lambda_a=0$ and $\lambda_b=\mu_j$ for the $j$-th pair $\{a,b\}\,(a<b)$.

For the purpose of Section \ref{classjr} below, we also define the operators
\[J_{\alpha;\gamma_1,\gamma_2}f(t)=\int_0^t e^{\pi i\alpha(t-s)}e^{\pi i(\gamma_1t+\gamma_2s)}f(s)\,\mathrm{d}s\] and 
\[R_{\alpha,\beta;\gamma_1,\gamma_2,\gamma_3}f(t)=\int_0^t \frac{\chi_\infty(\alpha+\gamma_3)}{\alpha+\gamma_3}e^{\pi i\beta(t-s)}e^{\pi i(\gamma_1t+\gamma_2s)}f(s)\,\mathrm{d}s.\] Given variables $(\alpha_a,\cdots,\alpha_b)$, we define a \emph{{bundle}} to be any linear combination $y_a\alpha_a+\cdots+y_b\alpha_b$ where $y_j\in\{-1,0,1\}$. Moreover, below we always view the operators as mapping functions on $[0,1]$ to functions on $[0,1]$.
\subsubsection{Class $J$ and $R$ operators}\label{classjr}
\begin{df}\label{defclassjr}Let $E$ be a finite set of positive integers, and $A\subset E$. We define an operator $\Jc=\Jc_{\alpha[A],\mu[E]}$, which depends on the variables $\alpha[A]$ and $\mu[E]$,  to have class $J$ (and norm $\|\Jc\|=1$), if we have
\begin{equation}\label{typeJ0}\Jc_{\alpha[A],\mu[E]}=\int m(\alpha[A],\mu[E],\gamma_1,\gamma_2)J_{\ell;\gamma_1,\gamma_2}\,\mathrm{d}\gamma_1\,\mathrm{d}\gamma_2,\end{equation} where $\ell$ is a {bundle} of $\alpha[A]$, and $m=m(\alpha[A],\mu[E],\gamma_1,\gamma_2)$ is a function such that
\begin{equation}\label{typeJ2}{\int \bigg(1+\sum_{j\in A}|\alpha_j|+|\gamma_1|\bigg)^{1/4}\,|\partial_\alpha^\rho m(\alpha[A],\mu[E],\gamma_1,\gamma_2)|\,\mathrm{d}\alpha[A]\mathrm{d}\gamma_1\mathrm{d}\gamma_2\leq(2|\rho|)!},\end{equation} for all $\mu[E]$ and multi-index $\rho$. Note that the weight on the left hand side of (\ref{typeJ2}) does not involve $|\gamma_2|$.

We also define an operator $\Rc=\Rc_{\alpha[A],\mu[E]}$, which again depends on the variables $\alpha[A]$ and $\mu[E]$, to have class $R$ (and norm $\|\Rc\|=1$), if $1\in A$ (called the \emph{special index}), and we have
\begin{equation}\label{typeR0}\Rc_{\alpha[A],\mu[E]}=\int m(\alpha[A\backslash\{1\}],\mu[E],\gamma_1,\gamma_2,\gamma_3)R_{\ell_1+\epsilon\alpha_1,\ell_2+\epsilon\alpha_1;\gamma_1,\gamma_2,\gamma_3}\,\mathrm{d}\gamma_1\mathrm{d}\gamma_2\mathrm{d}\gamma_3\end{equation} where $\epsilon\in\{\pm\}$, $\ell_1$ and $\ell_2$ are two {bundle}s of $\alpha[A\backslash\{1\}]$, and $m=m(\alpha[A\backslash\{1\}],\mu[E],\gamma_1,\gamma_2,\gamma_3)$ is a function such that
\begin{equation}\label{typeR2}{\int\bigg(1+\sum_{j\in A\backslash \{1\}}|\alpha_j|+|\gamma_1|+|\gamma_3|\bigg)^{1/4}| \partial_\alpha^\rho m(\alpha[A\backslash\{1\}],\mu[E],\gamma_1,\gamma_2,\gamma_3)|\,\mathrm{d}\alpha[A\backslash\{1\}]\mathrm{d}\gamma_1\mathrm{d}\gamma_2\mathrm{d}\gamma_3\leq(2|\rho|)!},\end{equation} for all $\mu[E]$ and multi-index $\rho$. Note that $m=m(\alpha[A\backslash\{1\}],\mu[E],\gamma_1,\gamma_2,\gamma_3)$ does not depend on $\alpha_1$, and the weight on the left hand side of (\ref{typeR2}) also does not involve $|\gamma_2|$.

More generally, we also define an operator {$\Jc$} to have class $J$ (or $R$) if it can be written as a linear combination {(say $\Jc=\sum_\ell \alpha_\ell\Jc_\ell$)} of operators {$\Jc_\ell$} satisfying (\ref{typeJ0})--(\ref{typeJ2}) (or (\ref{typeR0})--(\ref{typeR2})) for different choices of $\ell$ (or $(\ell_1,\ell_2)$); define the norm $\|\Jc\|$ and $\|\Rc\|$ to be the infimum of {$\sum_\ell|\alpha_\ell|$ over all such representations}. Below we will study compositions of class $J$ and $R$ operators, and compositions of them with other explicit operators; when doing so we always understand that the variables $\alpha_j$ and $\mu_j$ involved in different operators are different.
\end{df}
\subsubsection{Compositions of class $J$ and $R$ operators}
\begin{lem}\label{regchainlem1} The composition of two class $J$ operators is of class $J$, and the norms satisfy that $\|\Jc^{(1)}\Jc^{(2)}\|\leq C\|\Jc^{(1)}\|\cdot\|\Jc^{(2)}\|$ (the same will be true for subsequent lemmas).
\end{lem}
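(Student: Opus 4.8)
The plan is to reduce to monomials and compute directly, exploiting the factorization
\[J_{\alpha;\gamma_1,\gamma_2}=M_{\alpha+\gamma_1}\,P_{\gamma_2-\alpha},\qquad M_\gamma f(t)=e^{\pi i\gamma t}f(t),\quad P_\beta f(t)=\int_0^t e^{\pi i\beta s}f(s)\,\mathrm{d}s .\]
Since $P_\beta M_\gamma=P_{\beta+\gamma}$ and, by Fubini, $P_\beta P_{\beta'}=\tfrac1{\pi i\beta}\big(M_\beta P_{\beta'}-P_{\beta+\beta'}\big)$ for $\beta\neq0$, the two building blocks (with $\ell_1,\ell_2$ clusters of the disjoint sets $\alpha[A_1],\alpha[A_2]$) compose as
\[J_{\ell_1;\gamma_1,\gamma_2}\,J_{\ell_2;\gamma_1',\gamma_2'}=\frac{1}{\pi i\omega}\Big(J_{\ell_2;\,\gamma_1+\gamma_2+\gamma_1',\,\gamma_2'}-J_{\ell_1;\,\gamma_1,\,\gamma_2+\gamma_1'+\gamma_2'}\Big),\qquad\omega:=-\ell_1+\ell_2+\gamma_2+\gamma_1'.\]
By the $\Rb$-bilinearity built into Definition \ref{defclassjr} it suffices to integrate this identity against $m^{(1)}(\alpha[A_1],\mu[E_1],\gamma_1,\gamma_2)\,m^{(2)}(\alpha[A_2],\mu[E_2],\gamma_1',\gamma_2')$ and to show the result has class $J$, in the variables $\alpha[A_1\cup A_2]$ and $\mu[E_1\cup E_2]$, with norm $\le C$; homogeneity in $\|\Jc^{(1)}\|,\|\Jc^{(2)}\|$ then yields the general bound. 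I would then split $1=\chi_0(\omega)+\chi_\infty(\omega)$ and treat the two ranges separately.

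On $\mathrm{supp}\,\chi_\infty$ the two terms are already of the required shape; for the first one takes output ("weighted") frequency $\Gamma_1=\gamma_1+\gamma_2+\gamma_1'$ and input ("free") frequency $\Gamma_2=\gamma_2'$, and records it as $\int\widetilde m\,J_{\ell_2;\Gamma_1,\Gamma_2}\,\mathrm{d}\Gamma_1\mathrm{d}\Gamma_2$. The point is to parametrize the fibre $\{\Gamma_1=\gamma_1+\gamma_2+\gamma_1'\}$ by $(\gamma_1,\gamma_2)$, so that $\omega=\Gamma_1-\gamma_1-\ell_1+\ell_2$ is \emph{independent of $\gamma_2$}; the bounded factor $\chi_\infty(\omega)/(\pi i\omega)$ is then integrated against $m^{(1)}$ precisely in its free variable $\gamma_2$, where $m^{(1)}$ is $L^1$ but carries no weight. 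To verify \eqref{typeJ2} one uses $\Gamma_1=\gamma_1+\omega+\ell_1-\ell_2$ together with subadditivity $\langle a+b+c\rangle^{1/4}\lesssim\langle a\rangle^{1/4}+\langle b\rangle^{1/4}+\langle c\rangle^{1/4}$: the $\langle\gamma_1\rangle^{1/4}$ and $\langle\textstyle\sum|\alpha|\rangle^{1/4}$ summands are absorbed into the weights of $m^{(1)},m^{(2)}$, while $\langle\omega\rangle^{1/4}\cdot\chi_\infty(\omega)/|\omega|=\chi_\infty(\omega)\langle\omega\rangle^{-3/4}\le1$ is harmless. Distributing $\partial_\alpha^\rho$ by the Leibniz rule over the three factors $m^{(1)},m^{(2)},\chi_\infty(\omega)/\omega$ — noting $\omega$ depends on $\alpha$ only through the cluster $-\ell_1+\ell_2$, whose derivatives are $O(1)$, and that $\chi_\infty$ is Gevrey-$2$ — gives the bound $(2|\rho|)!$ up to an absolute constant, because $(2|\rho_1|)!(2|\rho_2|)!(2|\rho_3|)!$ times the Leibniz multinomial is $\lesssim(2|\rho|)!$; this is precisely where the slack of the doubling in "$2|\rho|$" is used. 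The second term is identical with $\ell_1,\ell_2$ interchanged.

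The delicate range is $\mathrm{supp}\,\chi_0$, where $1/\omega$ is singular. Here I would not separate the two terms but observe that their combination is
\[\chi_0(\omega)\,\frac{e^{\pi i\omega t}-e^{\pi i\omega u}}{\pi i\omega}=\chi_0(\omega)\int_u^t e^{\pi i\omega s}\,\mathrm{d}s ,\]
a \emph{smooth, bounded} function of $(t,u)\in[0,1]^2$ — the apparent pole at $\omega=0$ is cancelled by the vanishing of the numerator — with all $(t,u)$-derivatives $O(1)$ uniformly in $\omega$. Extending it to $\Rb^2$ with Gevrey-$2$ cutoffs and Fourier-expanding, $\chi_0(\omega)\int_u^t e^{\pi i\omega s}\mathrm{d}s=\int_{\Rb^2}h(\omega,\xi_1,\xi_2)e^{\pi i(\xi_1t+\xi_2u)}\,\mathrm{d}\xi_1\mathrm{d}\xi_2$ with $h$ rapidly decaying in $(\xi_1,\xi_2)$ uniformly on $\mathrm{supp}\,\chi_0$; multiplying by the leftover exponential $e^{\pi i(\ell_1+\gamma_1)t}e^{\pi i(\gamma_2'-\ell_2)u}$ and changing variables $\Gamma_1=\xi_1+\gamma_1$, $\Gamma_2=\xi_2+\gamma_2'$ exhibits this contribution as $\int\widetilde m\,J_{0;\Gamma_1,\Gamma_2}\,\mathrm{d}\Gamma_1\mathrm{d}\Gamma_2$, the zero cluster being admissible. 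Then \eqref{typeJ2} follows as before: $\langle\Gamma_1\rangle^{1/4}\le\langle\gamma_1\rangle^{1/4}+\langle\xi_1\rangle^{1/4}$ splits between the $m^{(1)}$-weight and the fast decay of $h$ in $\xi_1$, and the Leibniz/Gevrey bookkeeping is the same, now over $m^{(1)},m^{(2)},h(\omega,\cdot,\cdot)$ with $\omega$ again $\alpha$-dependent only through $-\ell_1+\ell_2$. Adding the $\chi_0$ and $\chi_\infty$ contributions produces a class $J$ operator of norm $\le C$, and this scheme (reduce to monomials, factor into $M$ and $P$, split major/minor arc in the resonance $\omega$, Leibniz with the doubling slack) is what will be reused for the subsequent composition lemmas.

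I expect the main obstacle to be the near-resonant ($\chi_0$) piece: one must resist treating $\chi_0(\omega)/(\pi i\omega)$ as a "bounded error" — it is not of class $J$ on its own — and instead use the cancellation in the numerator together with a Fourier expansion in the time variables to produce a legitimate class-$J$ representation, all while keeping the new \emph{weighted} frequency $\Gamma_1$ controlled by the weighted frequencies of the inputs plus rapidly-decaying corrections. The accompanying subtlety is, in the minor-arc piece, to choose the coordinates on the fibre of $\Gamma_1$ so that $\omega$ is paired with the \emph{unweighted} integration variable $\gamma_2$ — which is exactly the reason the definition of class $J$ keeps the input frequency out of the weight.
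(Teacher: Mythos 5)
Your proposal is correct and follows essentially the same route as the paper's proof: the explicit composition identity you derive via the $M,P$ factorization is exactly the paper's direct Fubini computation, the $\chi_0/\chi_\infty$ split in the resonance $\omega=\ell_2-\ell_1+\gamma_2+\gamma_1'$ is precisely the paper's $J'/J''$ decomposition, the weight is absorbed the same way (the $\omega$-part eaten by the denominator, the rest by the input weights and the $\alpha$-sums), and the Leibniz/Gevrey bookkeeping is the paper's Lemma \ref{combineq}. The only cosmetic deviation is the near-resonant piece, where the paper Fourier-expands $\chi_0(\omega)\int_0^{t-s}e^{\pi i\omega u}\,\mathrm{d}u$ in the single variable $t-s$ and keeps the cluster $\ell_1$, whereas you expand in $(t,u)$ and use the zero cluster; note your new frequencies should then be $\Gamma_1=\xi_1+\gamma_1+\ell_1$ and $\Gamma_2=\xi_2+\gamma_2'-\ell_2$ so that the leftover cluster exponentials are actually absorbed, which is harmless since clusters are bounded by the $\alpha$-weights.
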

\begin{proof} Let $\Jc_{\alpha[A],\mu[E]}^{(1)}$ and $\Jc_{\alpha[B],\mu[F]}^{(2)}$ be of class $J$; we may assume that each satisfies (\ref{typeJ0})--(\ref{typeJ2}). Let $\Jc^{(3)}$ be their composition, which is an operator depending on the variables $(\alpha[A\cup B],\mu[E\cup F])$, of form
\[\Jc_{\alpha[A\cup B],\mu[E\cup F]}^{(3)}=\int m^{(1)}(\alpha[A],\mu[E],\gamma_1,\gamma_2)m^{(2)}(\alpha[B],\mu[F],\gamma_3,\gamma_4)J_{\ell_1;\gamma_1,\gamma_2}J_{\ell_2;\gamma_3,\gamma_4}\,\mathrm{d}\gamma_1\mathrm{d}\gamma_2\mathrm{d}\gamma_3\mathrm{d}\gamma_4,\] where $\ell_1$ is a {bundle} of $\alpha[A]$, and $\ell_2$ is a {bundle} of $\alpha[B]$.

Now look at the operator $J:=J_{\ell_1;\gamma_1,\gamma_2}J_{\ell_2;\gamma_3,\gamma_4}$, we have
\[\begin{aligned}Jf(t)&=\int_0^t e^{\pi i\ell_1(t-z)+\pi i\gamma_1t+\pi i\gamma_2z}\,\mathrm{d}z\int_0^z e^{\pi i\ell_2(z-s)+\pi i\gamma_3z+\pi i\gamma_4s}f(s)\,\mathrm{d}s\\
&=\int_0^tf(s)\,\mathrm{d}s\int_0^{t-s}e^{\pi i\ell_1(t-s-u)+\pi i\ell_2u+\pi i(\gamma_1t+\gamma_4s)+\pi i(\gamma_2+\gamma_3)(s+u)}\,\mathrm{d}u\\
&=\int_0^te^{\pi i\ell_1(t-s)+\pi i\gamma_1t+\pi i(\gamma_2+\gamma_3+\gamma_4)s}f(s)\,\mathrm{d}s\int_0^{t-s}e^{\pi i(\ell_2-\ell_1+\gamma_2+\gamma_3)u}\,\mathrm{d}u.
\end{aligned} 
\] We decompose $J=J'+J''$ where in $J'$ we multiply the kernel by $\chi_0(\ell_2-\ell_1+\gamma_2+\gamma_3)$ and in $J''$ we multiply by $\chi_\infty(\ell_2-\ell_1+\gamma_2+\gamma_3)$.

To deal with $J'$, notice that $\chi_0(\gamma)\int_0^v e^{\pi i\gamma u}\,\mathrm{d}u$ equals a compactly supported Gevrey $2$ function in $\gamma$ and $v$ for $v\in[0,1]$ {(which can be explicitly written down, say by multiplying by $\chi_0(v-1/2)$)}, so we may rewrite
\[
\begin{aligned}J'f(t)&=\int_0^te^{\pi i\ell_1(t-s)+\pi i\gamma_1t+\pi i(\gamma_2+\gamma_3+\gamma_4)s}f(s)\,\mathrm{d}s\int_\Rb M(\ell_2-\ell_1+\gamma_2+\gamma_3,\sigma)e^{\pi i\sigma(t-s)}\,\mathrm{d}\sigma
\\&=\int_\Rb M(\ell_2-\ell_1+\gamma_2+\gamma_3,\sigma)\,\mathrm{d}\sigma\int_0^te^{\pi i\ell_1(t-s)+\pi i(\gamma_1+\sigma)t+\pi i(\gamma_2+\gamma_3+\gamma_4-\sigma)s}f(s)\,\mathrm{d}s
\end{aligned}\] where $M$ is a fixed decaying Gevrey $2$ function in two real variables. {We refer to \cite{Rod93} for basic properties of Gevrey functions.} Therefore, the contribution of $J'$ to $\Jc^{(3)}$ equals
\begin{multline*}\int m^{(1)}(\alpha[A],\mu[E],\gamma_1,\gamma_2)m^{(2)}(\alpha[B],\mu[F],\gamma_3,\gamma_4)\\\times M(\ell_2-\ell_1+\gamma_2+\gamma_3,\sigma)J_{\ell_1;\gamma_1+\sigma,\gamma_2+\gamma_3+\gamma_4-\sigma}\,\mathrm{d}\gamma_1\mathrm{d}\gamma_2\mathrm{d}\gamma_3\mathrm{d}\gamma_4\mathrm{d}\sigma.\end{multline*}
 Note that $\ell_1$ is also a {bundle} of $\alpha[A\cup B]$, we can choose (note that the $\gamma_j$ associated with the composition $m^{(3)}$ are called $\gamma_j'$; this will be assumed for subsequent lemmas as well)
\begin{multline*}m^{(3)}(\alpha[A\cup B],\mu[E\cup F],\gamma_1',\gamma_2')=\int_{\gamma_1+\sigma={\gamma_1'}}\int_{\gamma_2+\gamma_3+\gamma_4=\sigma+{\gamma_2'}}m^{(1)}(\alpha[A],\mu[E],\gamma_1,\gamma_2)\\\times m^{(2)}(\alpha[B],\mu[F],\gamma_3,\gamma_4)M(\ell_2-\ell_1+\gamma_2+\gamma_3,\sigma)\,\mathrm{d}\gamma_2\mathrm{d}\gamma_3\mathrm{d}\gamma_1.\end{multline*} which takes care of the contribution of $J'$. Then, if we do not take derivatives and do not count the weight in (\ref{typeJ2}), the norm for $m^{(3)}$ is easily bounded using the corresponding norms for $m^{(1)}$ and $m^{(2)}$. If we do not take derivatives but include the weight
\[\bigg(1+\sum_{j\in A\cup B}|\alpha_j|+|{\gamma_1'}|\bigg)^{1/4}\] in (\ref{typeJ2}), we may decompose it into three parts $(1+\sum_{j\in A}|\alpha_j|)^{1/4}$, $(\sum_{j\in B}|\alpha_j|)^{1/4}$ and $|{\gamma_1'}|^{1/4}$. The first two parts can be estimated using the corresponding norms for $m^{(1)}$ or $m^{(2)}$, while for $|{\gamma_1'}|^{1/4}$ we may use $|{\gamma_1'}|\leq|\gamma_1|+|\sigma|$, together with the corresponding norms for $m^{(1)}$ and the decay in $\sigma$.

Next consider the higher order derivative estimates. The argument will be the same for the subsequent lemmas, so we will not repeat this later. Note that $m^{(3)}$ is a trilinear expression of the functions $m^{(1)}$, $m^{(2)}$ and $M$; in subsequent lemmas we may have higher degrees of multilinearity, but they will never exceed $9$. Now by Leibniz rule we have
\begin{multline*}\partial_\alpha^\rho m^{(3)}(\alpha[A\cup B],\mu[E\cup F],{\gamma_1'},{\gamma_2'})=\sum_{\rho^1+\rho^2+\rho^3=\rho}\frac{\rho!}{(\rho^1)!(\rho^2)!(\rho^3)!}\int_{\gamma_1+\sigma={\gamma_1'}}\int_{\gamma_2+\gamma_3+\gamma_4=\sigma+{\gamma_2'}}\\\partial_\alpha^{\rho^1}m^{(1)}(\alpha[A],\mu[E],\gamma_1,\gamma_2)\cdot \partial_\alpha^{\rho^2}m^{(2)}(\alpha[B],\mu[F],\gamma_3,\gamma_4)\partial_\alpha^{\rho^3}M(\ell_2-\ell_1+\gamma_2+\gamma_3,\sigma)\,\mathrm{d}\gamma_2\mathrm{d}\gamma_3\mathrm{d}\gamma_1,\end{multline*} so the norm of $\partial_\alpha^\rho m^{(3)}$ can be bounded in the same way as above, but using the norms of $\partial_\alpha^{\rho^1}m^{(1)}$, $\partial_\alpha^{\rho^2}m^{(2)}$ and $\partial_\alpha^{\rho^3}M$. Compared to the versions without the derivatives, we now have extra factors $(2|\rho^1|)!$, $(2|\rho^2|)!$ and $(2|\rho^3|)!$ in view of (\ref{typeJ2}) and the fact that $M$ is Gevrey $2$. Therefore it suffices to show that
\[\sum_{\rho^1+\rho^2+\rho^3=\rho}\frac{\rho!}{(\rho^1)!(\rho^2)!(\rho^3)!}(2|\rho^1|)!(2|\rho^2|)!(2|\rho^3|)!\leq C(2|\rho|)!,
\] which follows from Lemma \ref{combineq}.

Now for $J''$, we continue to calculate
\[
\begin{aligned}J''f(t)&=\int_0^te^{\pi i\ell_1(t-s)+\pi i\gamma_1t+\pi i(\gamma_2+\gamma_3+\gamma_4)s}f(s)\cdot\frac{\chi_\infty(\ell_2-\ell_1+\gamma_2+\gamma_3)}{\pi i(\ell_2-\ell_1+\gamma_2+\gamma_3)}(e^{\pi i(\ell_2-\ell_1+\gamma_2+\gamma_3)(t-s)}-1)\,\mathrm{d}s\\
&=\frac{\chi_\infty(\ell_2-\ell_1+\gamma_2+\gamma_3)}{{\pi}i(\ell_2-\ell_1+\gamma_2+\gamma_3)}\int_0^t(e^{\pi i\ell_2(t-s)+\pi i(\gamma_1+\gamma_2+\gamma_3)t+\pi i\gamma_4s}-e^{\pi i\ell_1(t-s)+\pi i\gamma_1t+\pi i(\gamma_2+\gamma_3+\gamma_4)s})f(s)\,\mathrm{d}s.
\end{aligned}\] Note that both $\ell_1$ and $\ell_2$ are {bundle}s of $(\alpha_1,\cdots,\alpha_B)$, just like the above, we can choose either 
\begin{multline}\label{formula1}m^{(3)}(\alpha[A\cup B],\mu[E\cup F],{\gamma_1'},{\gamma_2'})=\int_{\gamma_1+\gamma_2+\gamma_3={\gamma_1'}}m^{(1)}(\alpha[A],\mu[E],\gamma_1,\gamma_2)\\\times m^{(2)}(\alpha[B],\mu[F],\gamma_3,{\gamma_2'})\frac{\chi_\infty(\ell_2-\ell_1+\gamma_2+\gamma_3)}{\pi i(\ell_2-\ell_1+\gamma_2+\gamma_3)}\,\mathrm{d}\gamma_2\mathrm{d}\gamma_3,\quad \mathrm{or}\quad\end{multline}
\begin{multline}\label{formula2}m^{(3)}(\alpha[A\cup B],\mu[E\cup F],{\gamma_1'},{\gamma_2'})=\int_{\gamma_2+\gamma_3+\gamma_4={\gamma_2'}}m^{(1)}(\alpha[A],\mu[E],{\gamma_1'},\gamma_2)\\\times m^{(2)}(\alpha[B],\mu[F],\gamma_3,\gamma_4)\frac{\chi_\infty(\ell_2-\ell_1+\gamma_2+\gamma_3)}{\pi i(\ell_2-\ell_1+\gamma_2+\gamma_3)}\,\mathrm{d}\gamma_2\mathrm{d}\gamma_3,\end{multline} which settles the contribution of $J''$ if we do not take derivatives and do not count the weight. As for the weight, notice that for (\ref{formula1}) we need to use $|\gamma_1'|\leq|\gamma_1|+|\gamma_2|+|\gamma_3|$, which seemingly involves $\gamma_2$; however in view of the denominator $\ell_2-\ell_1+\gamma_2+\gamma_3$ in (\ref{formula1}), we may replace $|\gamma_2|^{1/4}$ by either $|\ell_2-\ell_1+\gamma_2+\gamma_3|^{1/4}$, which is estimated using this denominator, or $|\ell_2-\ell_1+\gamma_3|^{1/4}$, which is estimated using the corresponding norms for $m^{(1)}$ and $m^{(2)}$. Similarly, one can treat \eqref{formula2}. The higher order derivatives can be treated in the same way as $J'$ above.
\end{proof}
\begin{lem}\label{regchainlem2} The composition of a class $J$ operator and a class $R$ operator is of class $J$.
\end{lem}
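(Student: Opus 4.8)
The plan is to mimic the proof of Lemma \ref{regchainlem1}, reducing to the composition of single elementary operators and carefully tracking the special index of the class $R$ factor. The key preliminary observation is that the class $R$ operator is not really a new object: since the factor $\chi_\infty(\alpha+\gamma_3)/(\alpha+\gamma_3)$ in $R_{\alpha,\beta;\gamma_1,\gamma_2,\gamma_3}$ is a scalar (it does not involve the time variables), one has the identity
\[R_{\alpha,\beta;\gamma_1,\gamma_2,\gamma_3}=\frac{\chi_\infty(\alpha+\gamma_3)}{\alpha+\gamma_3}\,J_{\beta;\gamma_1,\gamma_2}\]
as operators on functions on $[0,1]$. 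Writing a class $J$ operator as $\int m^{(1)}(\alpha[A],\mu[E],\gamma_1,\gamma_2)J_{\ell;\gamma_1,\gamma_2}$ and a class $R$ operator (with special index $1$) as $\int m^{(2)}(\alpha[B\backslash\{1\}],\mu[F],\gamma_1,\gamma_2,\gamma_3)\,\frac{\chi_\infty(\ell_1+\epsilon\alpha_1+\gamma_3)}{\ell_1+\epsilon\alpha_1+\gamma_3}\,J_{\ell_2+\epsilon\alpha_1;\gamma_1,\gamma_2}$, the composition in either order becomes a superposition, against the kernels $m^{(1)},m^{(2)}$ and the frozen denominator $g:=\chi_\infty(\ell_1+\epsilon\alpha_1+\gamma_3)/(\ell_1+\epsilon\alpha_1+\gamma_3)$, of products of two elementary $J$ operators, one of whose clusters is $\ell$ (which does not involve $\alpha_1$) and the other of which is $\ell_2+\epsilon\alpha_1$. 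One then applies verbatim the computation $J_{\ell';\cdot}J_{\ell'';\cdot}=J'+J''$ from the proof of Lemma \ref{regchainlem1}: $J'$ is a superposition of elementary $J$ operators with the outer cluster and a compactly supported Gevrey-$2$ kernel factor $M$, while $J''$ is a superposition of elementary $J$ operators with the outer or the inner cluster, each carrying one new denominator $\chi_\infty(\ell''-\ell'+\gamma_2+\gamma_3)/(\ell''-\ell'+\gamma_2+\gamma_3)$.

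The point is to verify that $\alpha_1$ is integrated out, so that the outcome is genuinely of class $J$ (no special index; cluster a genuine $\{-1,0,1\}$-cluster; kernel $L^1$ against the weight of exponent $1/4$). The structural fact used is that $\alpha_1$ enters only through, and always with the same coefficient $\epsilon\in\{\pm\}$ as in: (i) the frozen denominator $g$; (ii) the cluster $\ell_2+\epsilon\alpha_1$ inherited from $\Rc$; and (iii) exactly one factor produced by composing $\Jc$ with the $J_{\ell_2+\epsilon\alpha_1;\cdot}$ piece of $\Rc$ — namely the argument of $M$ in $J'$, or the new denominator in $J''$ in the case the relevant cluster is $\ell_2+\epsilon\alpha_1$. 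In each resulting term one splits the cluster as $\ell_2+\epsilon\alpha_1$, keeps $\ell_2$ (a genuine cluster of the surviving variables) as part of the new cluster, moves the piece $\epsilon\alpha_1$ into the $t$- and $s$-side phase exponents using $e^{\pi i\epsilon\alpha_1(t-s)}=e^{\pi i\epsilon\alpha_1 t}e^{-\pi i\epsilon\alpha_1 s}$, and then changes variables from $\alpha_1$ to the argument of the factor in (iii). After this substitution $\alpha_1$ has disappeared from the cluster, and the residual $\alpha_1$-integral carries the product of $g$ with the factor in (iii), two functions of the new integration variable: in the $J'$ case this product is compactly supported (since $M$ is), while in the $J''$ case it has the form $(\chi_\infty/|\cdot|)(\chi_\infty/|\cdot|)$ in a shared variable, hence integrable by the elementary inequality $\int_\Rb\langle x-a\rangle^{-1}\langle x-b\rangle^{-1}\,\mathrm{d}x\les 1$ (uniformly in $a,b$, the $\chi_\infty$'s removing the endpoint singularities). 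Either way $L^1$ integrability is recovered and the surviving cluster is a genuine cluster of $\alpha[A\cup(B\backslash\{1\})]$.

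The remaining bounds \eqref{typeJ2}, together with their $\partial_\alpha^\rho$ versions, are obtained exactly as in the proof of Lemma \ref{regchainlem1}: the new kernel is a multilinear expression of degree at most $9$ in $m^{(1)},m^{(2)},M$ and the denominator factors; the $1/4$-weight on the new $t$-side variable is distributed via $|\gamma_1^{\mathrm{new}}|^{1/4}\le |\gamma_1|^{1/4}+|\sigma|^{1/4}+\cdots$ onto pieces controlled by the weights in \eqref{typeJ2}/\eqref{typeR2}, by the decay of $M$, or by the subcriticality of the exponent $1/4$ (so that $\int|x|^{1/4}\langle x\rangle^{-1}\langle x-c\rangle^{-1}\,\mathrm{d}x\les 1$ even in the presence of the two shared denominators); and the factorial bookkeeping is handled by Leibniz's rule together with Lemma \ref{combineq}. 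The main obstacle I anticipate lies in step (iii): after the change of variables that removes $\alpha_1$ one must check, in each of the cases (outer vs.\ inner cluster, $J'$ vs.\ $J''$, and for both orderings $\Jc\Rc$ and $\Rc\Jc$), that the linear combinations landing in the new cluster remain genuine $\{-1,0,1\}$-clusters of the surviving $\alpha$-variables while those landing in the $t$-side phase do not spoil the $1/4$-weight estimate.
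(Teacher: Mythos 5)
Your proposal runs along essentially the same lines as the paper's proof: reduce to compositions of the elementary operators, apply the $J'+J''$ splitting from Lemma \ref{regchainlem1}, and recover integrability in $\alpha_1$ by pairing the frozen denominator of the class $R$ factor with the single new factor produced by the composition --- the Gevrey kernel $M$ in the $J'$ case (this is the bound \eqref{uniformbound0}) and the new denominator in the $J''$ case (via $\int_\Rb\langle x-a\rangle^{-1}\langle x-b\rangle^{-1}\,\mathrm{d}x\lesssim 1$) --- with the weights and the $\partial_\alpha^\rho$ derivatives handled exactly as in Lemma \ref{regchainlem1} together with Lemma \ref{combineq}, and with both orderings $\Jc\Rc$ and $\Rc\Jc$ treated symmetrically. (Minor point: in the $\Rc\Jc$ ordering the new factor carries $-\epsilon\alpha_1$ rather than $+\epsilon\alpha_1$, which is irrelevant for the integrability.)

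One correction to your framing, which does not affect the estimates but should be fixed before writing this up: nothing is ``integrated out''. The composition is an operator depending on all of $\alpha[A\cup B]$, including $\alpha_1$, and Definition \ref{defclassjr} allows the cluster of a class $J$ operator to contain $\alpha_1$; for instance $\ell_3+\epsilon\alpha_1$ is a perfectly genuine $\{-1,0,1\}$-cluster of $\alpha[A\cup B]$, and the paper simply keeps it as the cluster in the relevant $J''$ term. What membership in class $J$ demands is that the new kernel $m^{(3)}$ --- which still depends on $\alpha_1$ through the frozen denominator and the new factor --- satisfy the weighted $L^1$ bound \eqref{typeJ2}, with the integration $\mathrm{d}\alpha[A\cup B]$ including $\alpha_1$ and the weight including $|\alpha_1|^{1/4}$; that is precisely what your pairing argument delivers, and your ``residual $\alpha_1$-integral'' must be read only as part of this norm verification, never as an integration performed in constructing the operator (which would change the operator, since the composition genuinely depends on $\alpha_1$). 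Consequently your extra step of shifting $\epsilon\alpha_1$ out of the cluster into the phases is unnecessary; it is harmless provided $\alpha_1$ remains a variable of the kernel and you re-absorb the resulting $|\alpha_1|^{1/4}$ piece of the weight into one of the two denominators (or into the weighted norm of $m^{(2)}$) as you indicate, but the assertion that the final cluster must be a cluster of $\alpha[A\cup(B\backslash\{1\})]$ is not required by, and should be dropped from, the conclusion.
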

\begin{proof} Let $\Jc_{\alpha[A],\mu[E]}^{(1)}$ be of class $J$ and $\Rc_{\alpha[B],\mu[F]}^{(2)}$ be of class $R$ with special index $1\in B$. We first consider $\Jc^{(3)}=\Jc^{(1)}\Rc^{(2)}$. Similar to Lemma \ref{regchainlem1}, we only need to look at $J:=J_{\ell_1;\gamma_1,\gamma_2}R_{\ell_2+\epsilon\alpha_{1},\ell_3+\epsilon\alpha_{1};\gamma_3,\gamma_4,\gamma_5}$, where $\ell_1$ is a {bundle} of $\alpha[A]$, $\ell_2$ and $\ell_3$ are two {bundle}s of $\alpha[B\backslash\{1\}]$. We have
\[
\begin{aligned}Jf(t)&=\int_0^te^{\pi i\ell_1(t-z)+\pi i\gamma_1t+\pi i\gamma_2z}\,\mathrm{d}z\int_0^z\frac{\chi_\infty(\ell_2+\epsilon\alpha_{1}+\gamma_5)}{\ell_2+\epsilon\alpha_{1}+\gamma_5}e^{\pi i(\ell_3+\epsilon\alpha_{1})(z-s)}e^{\pi i(\gamma_3z+\gamma_4s)}f(s)\,\mathrm{d}s\\
&=\frac{\chi_\infty(\ell_2+\epsilon\alpha_{1}+\gamma_5)}{\ell_2+\epsilon\alpha_{1}+\gamma_5}\int_0^tf(s)\,\mathrm{d}s\int_0^{t-s}e^{\pi i\ell_1(t-s-u)+\pi i(\ell_3+\epsilon\alpha_{1})u+\pi i\gamma_1t+\pi i(\gamma_2+\gamma_3+\gamma_4)s+\pi i(\gamma_2+\gamma_3)u}\,\mathrm{d}u\\
&=\frac{\chi_\infty(\ell_2+\epsilon\alpha_{1}+\gamma_5)}{\ell_2+\epsilon\alpha_{1}+\gamma_5}\int_0^t e^{\pi i\ell_1(t-s)+\pi i\gamma_1t+\pi i(\gamma_2+\gamma_3+\gamma_4)s}f(s)\,\mathrm{d}s\int_0^{t-s}e^{\pi i(\ell_3-\ell_1+\epsilon\alpha_{1}+\gamma_2+\gamma_3)u}\,\mathrm{d}u.
\end{aligned}\] Again decompose $J=J'+J''$ where for $J'$ and $J''$ we multiply by $\chi_0(\ell_3-\ell_1+\epsilon\alpha_{1}+\gamma_2+\gamma_3)$ and $\chi_\infty(\ell_3-\ell_1+\epsilon\alpha_{1}+\gamma_2+\gamma_3)$ respectively, then as in Lemma \ref{regchainlem1}, the contribution of the $J'$ term to $\Jc^{(3)}$ will be
\begin{multline*}\int m^{(1)}(\alpha[A],\mu[E],\gamma_1,\gamma_2)m^{(2)}(\alpha[B\backslash\{1\}],\mu[F],\gamma_3,\gamma_4,\gamma_5)\\\times M(\ell_3-\ell_1+\epsilon\alpha_{1}+\gamma_2+\gamma_3,\sigma)\frac{\chi_\infty(\ell_2+\epsilon\alpha_{1}+\gamma_5)}{\ell_2+\epsilon\alpha_{1}+\gamma_5}J_{\ell_1;\gamma_1+\sigma,\gamma_2+\gamma_3+\gamma_4-\sigma}\,\mathrm{d}\gamma_1\mathrm{d}\gamma_2\mathrm{d}\gamma_3\mathrm{d}\gamma_4\mathrm{d}\gamma_5\mathrm{d}\sigma,
\end{multline*} which can be rewritten in the form of $\Jc_{\alpha[A\cup B],\mu[E\cup F]}^{(3)}$ with
\begin{multline*}m^{(3)}(\alpha[A\cup B],\mu[E\cup F],{\gamma_1'},{\gamma_2'})=\int_{\gamma_1+\sigma={\gamma_1'}}\int_{\gamma_2+\gamma_3+\gamma_4=\sigma+{\gamma_2'}}m^{(1)}(\alpha[A],\mu[E],\gamma_1,\gamma_2)\\\times m^{(2)}(\alpha[B\backslash\{1\}],\mu[F],\gamma_3,\gamma_4,\gamma_5)M(\ell_3-\ell_1+\epsilon\alpha_{1}+\gamma_2+\gamma_3,\sigma)\frac{\chi_\infty(\ell_2+\epsilon\alpha_{1}+\gamma_5)}{\ell_2+\epsilon\alpha_{1}+\gamma_5}\,\mathrm{d}\gamma_2\mathrm{d}\gamma_3\mathrm{d}\gamma_1\mathrm{d}\gamma_5.\end{multline*} This $m^{(3)}$ can be controlled if we do not take derivatives and do not count the weight, using the fact that
\begin{equation}\label{uniformbound0}\int_\Rb|M(\zeta_1+\epsilon\alpha_{1},\sigma)|\cdot\bigg|\frac{\chi_\infty(\zeta_2+\epsilon\alpha_{1})}{\zeta_2+\epsilon\alpha_{1}}\bigg|\,\mathrm{d}\alpha_{1}\leq C\langle \sigma\rangle^{-10}\end{equation} uniformly in $(\sigma,\zeta_1,\zeta_2)$. As for the weight, we can again decompose it into different parts; compared to Lemma \ref{regchainlem1}, the new part that needs consideration is $|\alpha_{1}|^{1/4}$. But we may replace it by either $|\ell_2+\epsilon\alpha_{1}+\gamma_5|^{1/4}$, which does not affect (\ref{uniformbound0}), or $|\ell_2+\gamma_5|^{1/4}$, which can be estimated using the weighted norm for $m^{(1)}$ or $m^{(2)}$. The higher order derivatives are also treated in the same way as in Lemma \ref{regchainlem1}.

As for $J''$, similarly we have
\begin{multline}\label{formulaj''}
J''f(t)=\frac{\chi_\infty(\ell_2+\epsilon\alpha_{1}+\gamma_5)}{\ell_2+\epsilon\alpha_{1}+\gamma_5}\cdot\frac{\chi_\infty(\ell_3-\ell_1+\epsilon\alpha_{1}+\gamma_2+\gamma_3)}{\pi i(\ell_3-\ell_1+\epsilon\alpha_{1}+\gamma_2+\gamma_3)}\\\times\int_0^t (e^{\pi i({\ell_3}+\epsilon\alpha_{1})(t-s)+\pi i(\gamma_1+\gamma_2+\gamma_3)t+\pi i\gamma_4s}-e^{\pi i\ell_1(t-s)+\pi i\gamma_1t+\pi i(\gamma_2+\gamma_3+\gamma_4)s})f(s)\,\mathrm{d}s,
\end{multline} so similar arguments as above imply that the corresponding contribution is of class $J$, where we have used the fact that both ${\ell_3}+\epsilon\alpha_{1}$ and $\ell_1$ are {bundle}s of $\alpha[A\cup B]$, and that
\[\int_\Rb\bigg|\frac{\chi_\infty(\zeta_1+\epsilon\alpha_{1})}{\zeta_1+\epsilon\alpha_{1}}\bigg|\cdot\bigg|\frac{\chi_\infty(\zeta_2+\epsilon\alpha_{1})}{\zeta_2+\epsilon\alpha_{1}}\bigg|\,\mathrm{d}\alpha_{1}\leq C\] uniformly in $(\zeta_1,\zeta_2)$. The part $|\alpha_1|^{1/4}$ of the weight can be treated in the same way as above, while the part $|\gamma_2|^{1/4}$ of the weight (which is part of $|\gamma_1'|^{1/4}$ in one of the two terms in (\ref{formulaj''})) can be replaced by either $|\ell_3-\ell_1+\epsilon\alpha_{1}+\gamma_2+\gamma_3|^{1/4}$ or $|\ell_3-\ell_1+\epsilon\alpha_{1}+\gamma_3|^{1/4}$ and treated in the same way either as above or as $|\alpha_1|^{1/4}$.

Now we look at $\Rc^{(2)}\Jc^{(1)}$. The proof is similar, where we now have $J:=R_{{\ell_2}+\epsilon\alpha_{1},{\ell_3}+\epsilon\alpha_{1};\gamma_3,\gamma_4,\gamma_5}J_{\ell_1;\gamma_1,\gamma_2}$. Similar calculations yield that
\[
\begin{aligned}Jf(t)&=\int_0^t\frac{\chi_\infty({\ell_2}+\epsilon\alpha_{1}+\gamma_5)}{{\ell_2}+\epsilon\alpha_{1}+\gamma_5}e^{\pi i({\ell_3}+\epsilon\alpha_{1})(t-z)+\pi i\gamma_3t+\pi i\gamma_4z}\,\mathrm{d}z\int_0^ze^{\pi i\ell_1(z-s)}e^{\pi i(\gamma_1z+\gamma_2s)}f(s)\,\mathrm{d}s\\
&=\frac{\chi_\infty({\ell_2}+\epsilon\alpha_{1}+\gamma_5)}{{\ell_2}+\epsilon\alpha_{1}+\gamma_5}\int_0^tf(s)\,\mathrm{d}s\int_0^{t-s}e^{\pi i({\ell_3}+\epsilon\alpha_{1})(t-s-u)+\pi i\ell_1u+\pi i\gamma_3t+\pi i(\gamma_4+\gamma_1+\gamma_2)s+\pi i(\gamma_4+\gamma_1)u}\,\mathrm{d}u\\
&=\frac{\chi_\infty({\ell_2}+\epsilon\alpha_{1}+\gamma_5)}{{\ell_2}+\epsilon\alpha_{1}+\gamma_5}\int_0^t e^{\pi i({\ell_3}+\epsilon\alpha_{1})(t-s)+\pi i\gamma_3t+\pi i(\gamma_4+\gamma_1+\gamma_2)s}f(s)\,\mathrm{d}s\int_0^{t-s}e^{\pi i(\ell_1-\ell_3-\epsilon\alpha_{1}+\gamma_4+\gamma_1)u}\,\mathrm{d}u.
\end{aligned}\] We proceed in basically the same way as for $\Jc^{(1)}\Rc^{(2)}$, except that (i) the roles of $\ell_1$ and ${\ell_3}+\epsilon\alpha_{1}$ are switched, but both are still {bundle}s of $\alpha[A\cup B]$; (ii) we now need to deal with the weight $|\gamma_4|^{1/4}$, but also $\gamma_4$ will be a part of the denominator so this will not affect the proof.
\end{proof}
\begin{lem}\label{regchainlem3} The composition of two class $R$ operators is of class $J$.
\end{lem}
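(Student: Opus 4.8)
The plan is to reduce Lemma \ref{regchainlem3} to the computation already carried out for Lemma \ref{regchainlem1}, plus a genuinely new two-variable integrability estimate. The starting observation is that a class $R$ operator factorizes: since the weight $\frac{\chi_\infty(\alpha+\gamma_3)}{\alpha+\gamma_3}$ in $R_{\alpha,\beta;\gamma_1,\gamma_2,\gamma_3}$ is a scalar (independent of $t$ and $s$), one has $R_{\alpha,\beta;\gamma_1,\gamma_2,\gamma_3}=\frac{\chi_\infty(\alpha+\gamma_3)}{\alpha+\gamma_3}\,J_{\beta;\gamma_1,\gamma_2}$. Thus, writing $\Rc^{(1)}_{\alpha[A],\mu[E]}$ and $\Rc^{(2)}_{\alpha[B],\mu[F]}$ for two class $R$ operators with special indices $1\in A$ and $1\in B$ (distinct variables, so I denote them $\alpha_1$ and $\alpha_1'$), it suffices to analyze operators of the form $\frac{\chi_\infty(\ell_1+\epsilon_1\alpha_1+\gamma_3)}{\ell_1+\epsilon_1\alpha_1+\gamma_3}\cdot\frac{\chi_\infty(\ell_3+\epsilon_2\alpha_1'+\gamma_6)}{\ell_3+\epsilon_2\alpha_1'+\gamma_6}\cdot J_{\ell_2+\epsilon_1\alpha_1;\gamma_1,\gamma_2}J_{\ell_4+\epsilon_2\alpha_1';\gamma_4,\gamma_5}$, where $\ell_1,\ell_2$ are clusters of $\alpha[A\setminus\{1\}]$ and $\ell_3,\ell_4$ are clusters of $\alpha[B\setminus\{1\}]$, and $\epsilon_1,\epsilon_2\in\{\pm\}$. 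The inner composition $J_{\ell_2+\epsilon_1\alpha_1;\gamma_1,\gamma_2}J_{\ell_4+\epsilon_2\alpha_1';\gamma_4,\gamma_5}$ is precisely of the type treated in Lemma \ref{regchainlem1}: the same change of variables $u=z-s$ writes it as $J'+J''$, the decomposition being governed by $\chi_0$, resp.\ $\chi_\infty$, of the combination $\Theta:=\ell_4-\ell_2+\epsilon_2\alpha_1'-\epsilon_1\alpha_1+\gamma_2+\gamma_4$, which crucially depends on \emph{both} special indices. For the $\chi_0$ part one writes $\chi_0(\Theta)\int_0^{t-s}e^{\pi i\Theta u}\,\mathrm{d}u=\int_\Rb M(\Theta,\sigma)e^{\pi i\sigma(t-s)}\,\mathrm{d}\sigma$ with $M$ a fixed rapidly decaying Gevrey $2$ function, yielding a class $J$ form operator with phase cluster $\ell_2+\epsilon_1\alpha_1$ (a cluster of $\alpha[A\cup B]$); for the $\chi_\infty$ part one writes $\chi_\infty(\Theta)\int_0^{t-s}e^{\pi i\Theta u}\,\mathrm{d}u=\frac{\chi_\infty(\Theta)}{\pi i\Theta}\bigl(e^{\pi i\Theta(t-s)}-1\bigr)$, producing two class $J$ form terms with phase clusters $\ell_4+\epsilon_2\alpha_1'$ and $\ell_2+\epsilon_1\alpha_1$.

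The heart of the argument is then to verify the weighted $L^1$ bound \eqref{typeJ2} for the resulting kernel $m^{(3)}$, which must now be integrable in the \emph{two} special variables $\alpha_1,\alpha_1'$ simultaneously. In the $J'$ term, $m^{(3)}$ depends on $(\alpha_1,\alpha_1')$ only through $M(\Theta,\sigma)\cdot\frac{\chi_\infty(\ell_1+\epsilon_1\alpha_1+\gamma_3)}{\ell_1+\epsilon_1\alpha_1+\gamma_3}\cdot\frac{\chi_\infty(\ell_3+\epsilon_2\alpha_1'+\gamma_6)}{\ell_3+\epsilon_2\alpha_1'+\gamma_6}$; since $\Theta$ contains $-\epsilon_1\alpha_1+\epsilon_2\alpha_1'$ and $M$ decays rapidly, the elementary bound
\begin{equation*}
\int_{\Rb^2}\langle c_0+c_1\alpha_1+c_2\alpha_1'\rangle^{-10}\,\frac{\chi_\infty(\alpha_1-a)}{|\alpha_1-a|}\,\frac{\chi_\infty(\alpha_1'-b)}{|\alpha_1'-b|}\,\mathrm{d}\alpha_1\,\mathrm{d}\alpha_1'\lesssim\langle c_0\rangle^{-1/2}
\end{equation*}
(uniform in $a,b$, for $c_1,c_2\in\{\pm1\}$) integrates out both special indices, with decay of $M$ in $\sigma$ left over for the $\sigma$-integration. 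In the $J''$ term $m^{(3)}$ carries three factors singular in $(\alpha_1,\alpha_1')$ --- namely $\frac{\chi_\infty(\Theta)}{\Theta}$ together with the two denominators from the class $R$ weights --- and the relevant input is the two-variable analogue of \eqref{elementary},
\begin{equation*}
\int_{\Rb^2}\frac{\chi_\infty(c_0+c_1\alpha_1+c_2\alpha_1')}{|c_0+c_1\alpha_1+c_2\alpha_1'|}\,\frac{\chi_\infty(\alpha_1-a)}{|\alpha_1-a|}\,\frac{\chi_\infty(\alpha_1'-b)}{|\alpha_1'-b|}\,\mathrm{d}\alpha_1\,\mathrm{d}\alpha_1'\lesssim1,
\end{equation*}
obtained by iterating \eqref{elementary}: the inner integral in $\alpha_1$ is $O(\langle\cdot\rangle^{-1/2})$ (the logarithmic factor being tamed by the $\chi_\infty$ cutoffs), and this decay in the surviving variable controls the remaining singular factor upon integrating in $\alpha_1'$. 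The $\tfrac14$-weights $|\alpha_1|^{1/4},|\alpha_1'|^{1/4}$ on the left of \eqref{typeJ2} are handled as in Lemma \ref{regchainlem2}: split $|\alpha_1|^{1/4}\lesssim|\ell_1+\epsilon_1\alpha_1+\gamma_3|^{1/4}+|\ell_1+\gamma_3|^{1/4}$ (and likewise for $\alpha_1'$), the first summand being absorbed against the corresponding singular denominator (the lost decay supplied by $M$ or by the third singular factor) and the second, a cluster of non-special variables, being controlled by the weighted norms \eqref{typeR2} of $m^{(1)}$ and $m^{(2)}$. Since none of $\Theta$, $\ell_1+\epsilon_1\alpha_1+\gamma_3$, $\ell_3+\epsilon_2\alpha_1'+\gamma_6$ involves the $\mu$-variables, all these estimates are uniform in $\mu[E\cup F]$, as \eqref{typeJ2} demands.

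The higher-order derivative estimates for $\partial_\alpha^\rho m^{(3)}$ are obtained exactly as in Lemma \ref{regchainlem1}: $m^{(3)}$ is a multilinear expression of bounded degree ($\le 9$) in $m^{(1)}$, $m^{(2)}$, $M$ and the two fixed singular factors (whose derivatives obey Gevrey $2$-type bounds since $\frac{d^k}{dx^k}\tfrac{\chi_\infty(x)}{x}=O((2k)!\langle x\rangle^{-1})$), so Leibniz's rule together with Lemma \ref{combineq} turns the product of the individual $(2|\rho^i|)!$ bounds into $C^n(2|\rho|)!$. Finally, the reversed composition $\Rc^{(2)}\Rc^{(1)}$ is treated identically after relabeling, since the scalar weights commute past everything and $J_{\ell_4+\epsilon_2\alpha_1';\gamma_4,\gamma_5}J_{\ell_2+\epsilon_1\alpha_1;\gamma_1,\gamma_2}$ has the same structure. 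I expect the main obstacle to be precisely the simultaneous integrability in the two special indices in the $J''$ term --- establishing the second displayed estimate above with a uniform constant and no logarithmic loss in the $\mu$-variables --- while everything else is a routine adaptation of Lemmas \ref{regchainlem1} and \ref{regchainlem2}.
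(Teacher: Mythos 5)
Your proposal is correct and follows essentially the same route as the paper: after peeling off the two scalar weights $\frac{\chi_\infty(\ell_1+\epsilon_1\alpha_1+\gamma_3)}{\ell_1+\epsilon_1\alpha_1+\gamma_3}$, $\frac{\chi_\infty(\ell_3+\epsilon_2\alpha_1'+\gamma_6)}{\ell_3+\epsilon_2\alpha_1'+\gamma_6}$, the paper performs the same $u=z-s$ computation, the same $\chi_0/\chi_\infty$ split in $\Theta=\ell_4-\ell_2+\epsilon_2\alpha_1'-\epsilon_1\alpha_1+\gamma_2+\gamma_4$, and closes with exactly your two simultaneous-integrability bounds in $(\alpha_1,\alpha_1')$ and the same Leibniz/Lemma \ref{combineq} treatment of derivatives. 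The only point you gloss over is the $|\gamma_2|^{1/4}$ contribution to the new $|\gamma_1'|^{1/4}$ weight in the $J''$ term with cluster $\ell_4+\epsilon_2\alpha_1'$ (not controlled by \eqref{typeR2}, which carries no $|\gamma_2|$ weight); as in the paper, it is absorbed by the denominator $\Theta$ itself, with the two-variable integral remaining bounded when one denominator is weakened to the $3/4$ power.
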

\begin{proof} Let $\Rc_{\alpha[A],\mu[E]}^{(1)}$ and $\Rc_{\alpha[B],\mu[F]}^{(2)}$ be of class $R$, with special indices $1\in A$ and $2\in B$. Again we first consider the operator $J:=R_{{\ell_1}+\epsilon_1\alpha_{1},{\ell_2}+\epsilon_1\alpha_{1};\gamma_1,\gamma_2,\gamma_3}R_{{\ell_3}+\epsilon_2\alpha_{2},{\ell_4}+\epsilon_2\alpha_{2};\gamma_4,\gamma_5,\gamma_6}$, where $\ell_1$ and $\ell_2$ are {bundle}s of $\alpha[A\backslash\{1\}]$, and $\ell_3$ and $\ell_4$ are {bundle}s of $\alpha[B\backslash\{2\}]$. We have
\[
\begin{aligned}Jf(t)&=\int_0^t\frac{\chi_\infty({\ell_1}+\epsilon_1\alpha_1+\gamma_3)}{{\ell_1}+\epsilon_1\alpha_1+\gamma_3}e^{\pi i({\ell_2}+\epsilon_1\alpha_1)(t-z)+\pi i\gamma_1t+\pi i\gamma_2z}\,\mathrm{d}z\\&\hspace{0.5cm}\times\int_0^z\frac{\chi_\infty({\ell_3}+\epsilon_2\alpha_{2}+\gamma_6)}{{\ell_3}+\epsilon_2\alpha_{2}+\gamma_6}e^{\pi i({\ell_4}+\epsilon_2\alpha_{2})(z-s)+\pi i\gamma_4z+\pi i\gamma_5s}f(s)\,\mathrm{d}s\\
&=\frac{\chi_\infty({\ell_1}+\epsilon_1\alpha_1+\gamma_3)}{{\ell_1}+\epsilon_1\alpha_1+\gamma_3}\frac{\chi_\infty({\ell_3}+\epsilon_2\alpha_{2}+\gamma_6)}{{\ell_3}+\epsilon_2\alpha_{2}+\gamma_6}\int_0^tf(s)\,\mathrm{d}s
\\&\hspace{0.5cm}\times\int_0^{t-s}e^{\pi i({\ell_2}+\epsilon_1\alpha_1)(t-s-u)+\pi i({\ell_4}+\epsilon_2\alpha_{2})u+\pi i\gamma_1t+\pi i(\gamma_2+\gamma_4+\gamma_5)s+\pi i(\gamma_2+\gamma_4)u}\,\mathrm{d}u\\
&=\frac{\chi_\infty({\ell_1}+\epsilon_1\alpha_1+\gamma_3)}{{\ell_1}+\epsilon_1\alpha_1+\gamma_3}\frac{\chi_\infty({\ell_3}+\epsilon_2\alpha_{2}+\gamma_6)}{{\ell_3}+\epsilon_2\alpha_{2}+\gamma_6}\int_0^t e^{\pi i({\ell_2}+\epsilon_1\alpha_1)(t-s)+\pi i\gamma_1t+\pi i(\gamma_2+\gamma_4+\gamma_5)s}f(s)\,\mathrm{d}s\\&\hspace{0.5cm}\times\int_0^{t-s}e^{\pi i(\ell_4-\ell_2+\epsilon_2\alpha_{2}-\epsilon_1{\alpha_1}+\gamma_2+\gamma_4)u}\,\mathrm{d}u.
\end{aligned}
\] Now, we make the decomposition again by multiplying \[\chi_0(\ell_4-\ell_2+\epsilon_2\alpha_{2}-\epsilon_1{\alpha_1}+\gamma_2+\gamma_4)\quad\textrm{or}\quad\chi_\infty(\ell_4-\ell_2+\epsilon_2\alpha_{2}-\epsilon_1{\alpha_1}+\gamma_2+\gamma_4),\] and denote the resulting terms by $J'$ and $J''$. Then repeating the same arguments before we can show that the contribution of both $J'$ and $J''$ are class $J$ operators. The key points here are that (i) both ${\ell_2}+\epsilon_1\alpha_1$ and ${\ell_4}+\epsilon_2\alpha_{2}$ are {bundle}s of $\alpha[A]$ and $\alpha[ B]$ respectively, and that (ii) the bound
\[\int_{\Rb^2}|M(\zeta_1+\epsilon_2\alpha_{2}-\epsilon_1\alpha_1,\sigma)|\cdot\bigg|\frac{\chi_\infty(\zeta_2+\epsilon_1\alpha_1)}{\zeta_2+\epsilon_1\alpha_1}\bigg|\cdot\bigg|\frac{\chi_\infty(\zeta_3+\epsilon_2\alpha_{2})}{\zeta_3+\epsilon_2\alpha_{2}}\bigg|\,\mathrm{d}\alpha_1\mathrm{d}\alpha_{2}\leq C\langle \sigma\rangle^{-10}\] holds uniformly in $(\sigma,\zeta_1,\zeta_2,\zeta_3)$, and similarly
\[\int_{\Rb^2}\bigg|\frac{\chi_\infty(\zeta_1+\epsilon_2\alpha_{2}-\epsilon_1\alpha_1)}{\zeta_1+\epsilon_2\alpha_{2}-\epsilon_1\alpha_1}\bigg|\cdot\bigg|\frac{\chi_\infty(\zeta_2+\epsilon_1\alpha_1)}{\zeta_2+\epsilon_1\alpha_1}\bigg|\cdot\bigg|\frac{\chi_\infty(\zeta_3+\epsilon_2\alpha_{2})}{\zeta_3+\epsilon_2\alpha_{2}}\bigg|\,\mathrm{d}\alpha_1\mathrm{d}\alpha_{2}\leq C\] holds uniformly in $(\zeta_1,\zeta_2,\zeta_3)$, which follow from elementary calculus. The parts of the weight that need consideration are (i) $|\alpha_1|^{1/4}$ and $|\alpha_2|^{1/4}$, which can be treated using the denominators $\ell_1+\epsilon_1\alpha_1+\gamma_3$ and $\ell_3+\epsilon_2\alpha_2+\gamma_6$ respectively, and {(ii)} $|\gamma_2|^{1/4}$ (which is part of $|\gamma_1'|^{1/4}$ in one of the terms), which can be treated using the denominator $\ell_4-\ell_2+\epsilon_2\alpha_{2}-\epsilon_1{\alpha_1}+\gamma_2+\gamma_4$.
\end{proof}
\begin{lem}\label{regchainlem4} Suppose $\Jc$ is an operator of class $J$, then the operator $I_{\epsilon\alpha_1}\Jc I_{\mu_1-\epsilon\alpha_1}$, where $\epsilon\in\{\pm\}$, can be decomposed into an operator of class $J$ and an operator of class $R$ (with special index $1$).
\end{lem}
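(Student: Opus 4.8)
The plan is to follow the pattern of Lemmas~\ref{regchainlem1}--\ref{regchainlem3}. By $\Rb$-linearity it suffices to treat one atomic block: write $\Jc=\int m(\alpha[A],\mu[E],\gamma_1,\gamma_2)\,J_{\ell;\gamma_1,\gamma_2}\,\mathrm d\gamma_1\mathrm d\gamma_2$ with $\ell$ a cluster of $\alpha[A]$, set $A'=A\cup\{1\}$, $E'=E\cup\{1\}$, and decompose the single operator $I_{\epsilon\alpha_1}\,J_{\ell;\gamma_1,\gamma_2}\,I_{\mu_1-\epsilon\alpha_1}$. Expanding it as an iterated time-integral over $0<u<v<z<t$ ($z$ for the outer $I_{\epsilon\alpha_1}$, $v$ for $J_{\ell;\gamma_1,\gamma_2}$, $u$ for $I_{\mu_1-\epsilon\alpha_1}$) and reordering, this operator sends $f$ to
\[
\int_0^t e^{\pi i(\mu_1-\epsilon\alpha_1)u}f(u)\,\mathrm du\int_u^t e^{\pi i(\gamma_2-\ell)v}\,\mathrm dv\int_v^t e^{\pi i(\epsilon\alpha_1+\ell+\gamma_1)z}\,\mathrm dz.
\]
I would carry out the $z$-integral first, then the $v$-integral, splitting each resulting antiderivative $\tfrac{e^{\pi i\kappa t}-e^{\pi i\kappa v}}{\pi i\kappa}$ by $\chi_0(\kappa)+\chi_\infty(\kappa)$ exactly as in Lemmas~\ref{regchainlem1}--\ref{regchainlem3}: on $\chi_0$ the truncated antiderivative is a compactly supported Gevrey~$2$ function, hence equals $\int M(\kappa,\sigma)e^{\pi i\sigma(\cdot)}\mathrm d\sigma$ for rapidly decaying $M$; on $\chi_\infty$ one keeps the bounded factor $\chi_\infty(\kappa)/\pi i\kappa$ together with its two boundary terms $e^{\pi i\kappa t}$ and $-e^{\pi i\kappa v}$.

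The crucial point, and the reason for doing the $z$-integral \emph{before} the $v$-integral, is that the $z$-integral produces the divisor $\kappa=\epsilon\alpha_1+\ell+\gamma_1$, and folding its boundary term $-e^{\pi i\kappa v}$ into the $v$-integral converts the latter's phase $\gamma_2-\ell$ into $\kappa'=\epsilon\alpha_1+\gamma_1+\gamma_2$, which \emph{again} contains $\epsilon\alpha_1$. Since the $\alpha_1$-free phases recombine so as to cancel $\epsilon\alpha_1$ only once \emph{both} boundary terms have been chosen, after the two decompositions every resulting term has $\alpha_1$ appearing solely through (i) factors $\chi_\infty(\mathrm{cl}+\epsilon\alpha_1+\gamma)/(\mathrm{cl}+\epsilon\alpha_1+\gamma)$, (ii) mollifiers $M(\mathrm{cl}+\epsilon\alpha_1+\gamma,\sigma)$, or (iii) an oscillation $e^{\pi i(\mathrm{cl}+\epsilon\alpha_1)(t-s)}$ (here $\mathrm{cl}$ is always a cluster of $\alpha[A]$, possibly $0$); moreover \emph{any term carrying no oscillation of type (iii) carries two factors of type (i)/(ii)}, one from $\kappa$ and one from $\kappa'$. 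With this the classification is immediate. A term of type (iii), using the $R$-slots and one type-(i) factor, is exactly $\int m'(\alpha[A\setminus\{1\}],\mu[E'],\gamma_1,\gamma_2,\gamma_3)\,R_{\ell_1+\epsilon\alpha_1,\,\ell_2+\epsilon\alpha_1;\,\gamma_1,\gamma_2,\gamma_3}\,\mathrm d\gamma$ with $\ell_1,\ell_2$ clusters of $\alpha[A]$ (one may take $\ell_2=0$) and $m'$ \emph{independent of $\alpha_1$}: this is class $R$ with special index $1$. Every other term has $\alpha_1$ appearing only in an $L^1_{\alpha_1}$, rapidly decaying way — a product of two $\chi_\infty/(\cdot)$ factors, or one such times a decaying $M$ — hence equals $\int m''(\alpha[A'],\mu[E'],\gamma_1,\gamma_2)\,J_{\ell';\gamma_1,\gamma_2}\,\mathrm d\gamma_1\mathrm d\gamma_2$ for a suitable cluster $\ell'\in\{0,\ell,\epsilon\alpha_1\}$ of $\alpha[A']$, i.e.\ class $J$. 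Organized any other way one meets a term with a single bare $\chi_\infty(\cdot+\epsilon\alpha_1)/(\cdot+\epsilon\alpha_1)$ and no compensating oscillation, fitting neither class; it becomes admissible only upon exploiting the near-cancellation $\tfrac1\kappa-\tfrac1{\kappa'}=\tfrac{\kappa'-\kappa}{\kappa\kappa'}$ against a sibling term — which the chosen ordering builds in automatically, and this is the main obstacle I expect in the proof.

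Finally one checks the weighted derivative bounds \eqref{typeJ2}, \eqref{typeR2} for $m''$ and $m'$, which proceeds verbatim as in Lemmas~\ref{regchainlem1}--\ref{regchainlem3}: these kernels are multilinear of degree at most $9$ in $m$, the mollifiers $M$ and the bounded factors $\chi_\infty/(\cdot)$, so Leibniz' rule plus the combinatorial inequality of Lemma~\ref{combineq} reduces the $\partial_\alpha^\rho$ estimates to $\rho=0$, which uses the uniform bounds $\int_\Rb|M(\zeta_1+\epsilon\alpha_1,\sigma)|\,|\tfrac{\chi_\infty(\zeta_2+\epsilon\alpha_1)}{\zeta_2+\epsilon\alpha_1}|\,\mathrm d\alpha_1\les\langle\sigma\rangle^{-10}$ and $\int_\Rb|\tfrac{\chi_\infty(\zeta_1+\epsilon\alpha_1)}{\zeta_1+\epsilon\alpha_1}|\,|\tfrac{\chi_\infty(\zeta_2+\epsilon\alpha_1)}{\zeta_2+\epsilon\alpha_1}|\,\mathrm d\alpha_1\les1$ already employed in Lemmas~\ref{regchainlem2}--\ref{regchainlem3}; the weight is handled by the splitting $|\alpha_1|^{1/4}\le|\mathrm{cl}+\epsilon\alpha_1+\gamma|^{1/4}+|\mathrm{cl}+\gamma|^{1/4}$, the first piece absorbed by a divisor (with room to spare, $\chi_\infty(x)|x|^{-3/4}$ still being $L^1$ against a second $\chi_\infty/(\cdot)$ or a decaying $M$), the second by $|\mathrm{cl}|\les1+\sum_{j\in A}|\alpha_j|$ and the weight already on $m$.
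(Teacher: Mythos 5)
Your proof follows essentially the same route as the paper's: expand $I_{\epsilon\alpha_1}\Jc I_{\mu_1-\epsilon\alpha_1}$ as a nested time-integral, integrate the innermost variable (the one with phase $\ell+\epsilon\alpha_1+\gamma_1$) first so that its boundary term shifts the next phase to $\gamma_1+\gamma_2+\epsilon\alpha_1$, perform the double $\chi_0/\chi_\infty$ split, and then classify the pieces by whether $\alpha_1$ is left only in $L^1_{\alpha_1}$-integrable factors (class $J$) or appears once as a bare $\chi_\infty/(\cdot)$ together with an oscillation $e^{\pi i(\mathrm{cl}+\epsilon\alpha_1)(t-s)}$ (class $R$). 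Your organizing observation — that a term with no $\alpha_1$-oscillation necessarily carries \emph{two} $\alpha_1$-decaying factors, while a term with one such factor must carry the oscillation — is a clean restatement of why the paper's terms $\mathrm{I}$--$\mathrm{IV}$ land in class $J$ and $\mathrm{V}$--$\mathrm{VI}$ in class $R$, and the weight/derivative bookkeeping via Lemma~\ref{combineq} and the splitting $|\alpha_1|^{1/4}\le|\mathrm{cl}+\epsilon\alpha_1+\gamma|^{1/4}+|\mathrm{cl}+\gamma|^{1/4}$ is exactly what the paper does. Two small caveats: the parenthetical ``one may take $\ell_2=0$'' is not quite right — in the class-$R$ term the oscillation is $e^{\pi i(\ell+\epsilon\alpha_1+\gamma_1+\sigma)(t-s)}$ and the natural choice is $\ell_2=\ell$, since absorbing the cluster $\ell$ into $\gamma_1'$ would make the support of $m'$ depend on $\alpha[A\setminus\{1\}]$ in a way that needlessly complicates the weight (4.22); and the emphasis on the ordering being ``crucial'' slightly overstates the point, since the inner integral is forced by the nesting and both ways of attributing boundary terms to the two divisors lead to admissible decompositions.
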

\begin{proof} Let $\Jc=\Jc_{\alpha[A],\mu[E]}^{(1)}$ be of class $J$, where we assume $1\not\in E$. Again we first consider the operator $X:=I_{\epsilon\alpha_1}J_{\ell;\gamma_1,\gamma_2}I_{\mu_1-\epsilon\alpha_1}$, where $\ell$ is a {bundle} of $\alpha[A]$. Then
\[
\begin{aligned}Xf(t)&=\int_0^t e^{\epsilon\pi i\alpha_1 z}\,\mathrm{d}z\int_0^z e^{\pi i\ell(z-v)+\pi i\gamma_1z+\pi i\gamma_2v}\,\mathrm{d}v\int_0^v e^{-\epsilon\pi i\alpha_1 s+\pi i\mu_1 s}f(s)\,\mathrm{d}s\\
&=\int_0^te^{\pi i(\gamma_1+\gamma_2+\mu_1)s}f(s)\,\mathrm{d}s\int_0^{t-s}e^{\pi i(\gamma_1+\gamma_2+\epsilon\alpha_1)u}\,\mathrm{d}u\int_0^{t-s-u}e^{\pi i(\ell+\epsilon\alpha_1+\gamma_1)w}\,\mathrm{d}w.
\end{aligned}\] {Our estimates will be uniform in $\mu_1$ due to the only position it appears, and the fact that the left hand side of (\ref{typeJ2}) does not involve $\gamma_2$.} When $s$ is fixed, by making the $(\chi_0,\chi_\infty)$ decomposition twice, we can reduce the inner $(u,w)$ integral to $6$ different terms, namely:
\[
\begin{aligned}\mathrm{I}&:=\int_{\Rb^2}M(\gamma_1+\gamma_2+\epsilon\alpha_1-\sigma_1,\sigma_2)M(\ell+\epsilon\alpha_1+\gamma_1,\sigma_1)\cdot e^{\pi i(\sigma_1+\sigma_2)(t-s)}\,\mathrm{d}\sigma_1\mathrm{d}\sigma_2,\\
\mathrm{II}&:=\int_{\Rb}M(\ell+\epsilon\alpha_1+\gamma_1,\sigma)\cdot\frac{\chi_\infty(\gamma_1+\gamma_2+\epsilon\alpha_1-\sigma)}{\pi i(\gamma_1+\gamma_2+\epsilon\alpha_1-\sigma)}(e^{\pi i(\gamma_1+\gamma_2+\epsilon\alpha_1)(t-s)}-e^{\pi i\sigma(t-s)}),\\
\mathrm{III}&:={-}\int_\Rb M(\gamma_1+\gamma_2+\epsilon\alpha_1,\sigma)\frac{\chi_\infty(\ell+\epsilon\alpha_1+\gamma_1)}{\pi i(\ell+\epsilon\alpha_1+\gamma_1)}\cdot e^{\pi i\sigma(t-s)}\,\mathrm{d}\sigma,\\
\mathrm{IV}&:={-}\frac{\chi_\infty(\ell+\epsilon\alpha_1+\gamma_1)}{\pi i(\ell+\epsilon\alpha_1+\gamma_1)}\cdot\frac{\chi_\infty(\gamma_1+\gamma_2+\epsilon\alpha_1)}{\pi i(\gamma_1+\gamma_2+\epsilon\alpha_1)}(e^{\pi i(\gamma_1+\gamma_2+\epsilon\alpha_1)(t-s)}-1),\\
\mathrm{V}&:=\int_\Rb M(\gamma_2-\ell,\sigma)\frac{\chi_\infty(\ell+\epsilon\alpha_1+\gamma_1)}{\pi i(\ell+\epsilon\alpha_1+\gamma_1)} e^{\pi i(\ell+\epsilon\alpha_1+\gamma_1+\sigma)(t-s)}\,\mathrm{d}\sigma,\\
\mathrm{VI}&:=\frac{\chi_\infty(\ell+\epsilon\alpha_1+\gamma_1)}{\pi i(\ell+\epsilon\alpha_1+\gamma_1)}\cdot\frac{\chi_\infty(\gamma_2-\ell)}{\pi i(\gamma_2-\ell)}(e^{\pi i(\gamma_1+\gamma_2+\epsilon\alpha_1)(t-s)}-e^{\pi i(\ell+\epsilon\alpha_1+\gamma_1)(t-s)}).
\end{aligned}
\] Now, the terms $\mathrm{I}\sim\mathrm{IV}$ will give rise to class $J$ operators; for example, consider the term $\mathrm{IV}$ where we choose the term $e^{\pi i(\gamma_1+\gamma_2+\epsilon\alpha_1)(t-s)}$ from the last parenthesis, then we may express the contribution of this term as a class $J$ operator with coefficient
\begin{multline*}m^{(2)}(\alpha[A\cup\{1\}],\mu[E\cup\{1\}],{\gamma_1'},{\gamma_2'})=\dirac ({\gamma_2'}-\mu_1)\\\times\int_{\gamma_1+\gamma_2={\gamma_1'}}\frac{\chi_\infty(\ell+\epsilon\alpha_1+\gamma_1)}{\pi i(\ell+\epsilon\alpha_1+\gamma_1)}\cdot\frac{\chi_\infty(\gamma_1+\gamma_2+\epsilon\alpha_1)}{\pi i(\gamma_1+\gamma_2+\epsilon\alpha_1)}m^{(1)}(\alpha[A],\mu[E],\gamma_1,\gamma_2)\,\mathrm{d}\gamma_1.\end{multline*} Here $m^{(1)}$ is the coefficient associated with $\Jc^{(1)}$, and the Dirac $\dirac$ function can be removed by using the fact that
\begin{equation}\label{extraid}e^{\pi i\mu_1 s}=\int_\Rb M(\sigma)e^{\pi i(\mu_1+\sigma)s}\,\mathrm{d}\sigma\end{equation} for some {analytic} function $M$ and all $s\in[0,1]$, which allows to replace $\dirac({\gamma_2'}-\mu_1)$ by $M({\gamma_2'}-\mu_1)$; moreover the integral in $\alpha_1$ is uniformly bounded given the other variables.

Finally, terms $\mathrm{V}\sim\mathrm{VI}$ lead to class $R$ operators. For example, consider the term $\mathrm{VI}$ where we choose the term $e^{\pi i(\ell+\epsilon\alpha_1+\gamma_1)(t-s)}$ from the last parenthesis, then we may express the contribution of this term as a class $R$ operator with $\ell_1$ and $\ell_2$ replaced by $\ell$, and the coefficient
\[m^{(2)}(\alpha[A],\mu[E],\gamma_1',\gamma_2',\gamma_3')=\frac{1}{\pi i}\dirac(\gamma_1'-\gamma_3')\cdot\frac{\chi_\infty(\gamma_2'-\mu_1-\ell)}{\pi i(\gamma_2'-\mu_1-\ell)}m^{(1)}(\alpha[A],\mu[E],\gamma_1',\gamma_2'-\mu_1).\] The Dirac $\dirac$ function can again be removed using (\ref{extraid}), and in all cases (both for $\mathrm{I}\sim\mathrm{IV}$ and $\mathrm{V}\sim\mathrm{VI}$) the weight can be treated in the same way as before, using the denominator $\ell+\epsilon\alpha_1+\gamma_1$ to estimate $|\alpha_1|^{1/4}$, and using the denominator $\gamma_1+\gamma_2+\epsilon\alpha_1$ or $\gamma_2-\ell$ to estimate $|\gamma_2|^{1/4}$ (which may appear as part of $|\gamma_1'|^{1/4}$ for some terms).
\end{proof}
\begin{lem}\label{regchainlem5} Suppose $\Rc$ is an operator of class $R$, then the operator $I_{\epsilon\alpha_1}\Rc I_{\mu_1-\epsilon\alpha_1}$, where $\epsilon\in\{\pm\}$, can be decomposed into an operator of class $J$ and an operator of class $R$ (with special index $1$).
\end{lem}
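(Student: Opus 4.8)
The plan is to mirror, essentially line by line, the proof of Lemma~\ref{regchainlem4}, the only new feature being the single extra denominator factor that a class $R$ operator carries. By $\Rb$-linearity (and since the coefficient integral is already part of the definitions of class $J$ and $R$), it suffices to treat one building block. Writing $\Rc$ of class $R$ with special index $2$ in the form
\[\Rc_{\alpha[A],\mu[E]}=\int m^{(1)}\big(\alpha[A\backslash\{2\}],\mu[E],\gamma_1,\gamma_2,\gamma_3\big)\,R_{\ell_2+\epsilon'\alpha_2,\,\ell_3+\epsilon'\alpha_2;\,\gamma_1,\gamma_2,\gamma_3}\,\mathrm{d}\gamma_1\mathrm{d}\gamma_2\mathrm{d}\gamma_3,\]
where $\epsilon'\in\{\pm\}$, $\ell_2,\ell_3$ are clusters of $\alpha[A\backslash\{2\}]$, and $1\notin A\cup E$, I would analyze the operator $X_0:=I_{\epsilon\alpha_1}\,R_{\ell_2+\epsilon'\alpha_2,\ell_3+\epsilon'\alpha_2;\gamma_1,\gamma_2,\gamma_3}\,I_{\mu_1-\epsilon\alpha_1}$, which depends on $(\alpha[A\cup\{1\}],\mu[E\cup\{1\}])$; exactly as in Lemma~\ref{regchainlem4}, all estimates will be uniform in $\mu_1$, since $\mu_1$ enters only through one $s$-exponential and the weights in \eqref{typeJ2}, \eqref{typeR2} do not involve the $\gamma_2$-variable.

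First I would carry out the (routine) computation of $X_0f(t)$: expanding the three nested Duhamel integrals, reordering so that the argument $s$ of $f$ is outermost, and substituting $u,w$ for the two inner time differences yields
\[X_0f(t)=\frac{\chi_\infty(\ell_2+\epsilon'\alpha_2+\gamma_3)}{\ell_2+\epsilon'\alpha_2+\gamma_3}\int_0^t e^{\pi i(\gamma_1+\gamma_2+\mu_1)s}f(s)\,\mathrm{d}s\int_0^{t-s}e^{\pi i(\gamma_1+\gamma_2+\epsilon\alpha_1)u}\,\mathrm{d}u\int_0^{t-s-u}e^{\pi i(\ell_3+\epsilon'\alpha_2+\epsilon\alpha_1+\gamma_1)w}\,\mathrm{d}w.\]
Up to the explicit prefactor, the inner $(u,w)$-integral is precisely the one appearing in the proof of Lemma~\ref{regchainlem4} with the cluster $\ell$ there replaced by $\ell_3+\epsilon'\alpha_2$. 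I would then apply the $(\chi_0,\chi_\infty)$-decomposition twice, as in that proof, reducing the $(u,w)$-integral to the six terms $\mathrm{I}$--$\mathrm{VI}$, each now multiplied by the prefactor $\chi_\infty(\ell_2+\epsilon'\alpha_2+\gamma_3)/(\ell_2+\epsilon'\alpha_2+\gamma_3)$, and distribute them: $\mathrm{I}$--$\mathrm{IV}$ into a class $J$ operator, $\mathrm{V}$--$\mathrm{VI}$ into a class $R$ operator with special index $1$. For the class $R$ terms the explicit $R$-kernel denominator is taken to be $\chi_\infty\big((\ell_3+\epsilon'\alpha_2)+\epsilon\alpha_1+\gamma_1\big)/(\,\cdot\,)$, which has the required form with $\ell_1':=\ell_3+\epsilon'\alpha_2$ a legitimate cluster of $\alpha[(A\cup\{1\})\backslash\{1\}]$; since $\alpha_1$ then occurs only in this kernel, the new coefficient $m^{(3)}$ is automatically independent of the new special index, as required. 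The remaining singular factors---the prefactor, the factor $\chi_\infty(\gamma_2-\ell_3-\epsilon'\alpha_2)/(\,\cdot\,)$ in term $\mathrm{VI}$, the Schwartz factor $M$ in term $\mathrm{V}$, and everything arising in $\mathrm{I}$--$\mathrm{IV}$---get absorbed into $m^{(3)}$, together with the $m^{(1)}$-integral; any Dirac deltas in $\mu_1$ or in the $\gamma$-variables that surface while matching kernels are removed using \eqref{extraid}, exactly as in Lemma~\ref{regchainlem4}.

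The final step is to check the norm bounds \eqref{typeJ2}, \eqref{typeR2} for $m^{(3)}$. The unweighted, no-derivative estimate follows from the uniform bounds on integrals of products of $\chi_\infty/(\,\cdot\,)$-type kernels already established in Lemmas~\ref{regchainlem2}--\ref{regchainlem3}; the weights $\langle\alpha_1\rangle^{1/4}$ and $\langle\alpha_j\rangle^{1/4}$ for an old index $j$ are handled, as in Lemmas~\ref{regchainlem2}--\ref{regchainlem4}, by splitting each against an appropriate denominator (killing one piece) and against the weighted norm of $m^{(1)}$ or of $M$ (controlling the other); and the higher-order $\partial_\alpha^\rho$ estimates follow from the Leibniz rule and Lemma~\ref{combineq}, the multilinearity degree staying bounded, as in Lemma~\ref{regchainlem1}. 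The step I expect to be the main obstacle is the weighted integrability in the \emph{old} special variable $\alpha_2$: its denominator $\chi_\infty(\ell_2+\epsilon'\alpha_2+\gamma_3)/(\ell_2+\epsilon'\alpha_2+\gamma_3)$ has been moved into $m^{(3)}$ but is only $\sim|\alpha_2|^{-1}$, so $\langle\alpha_2\rangle^{1/4}$ times it is not integrable by itself. The resolution, which I would verify case by case across the six terms, is that $\alpha_2$ always comes paired with a \emph{second} $\alpha_2$-singularity---for term $\mathrm{VI}$ the factor $\chi_\infty(\gamma_2-\ell_3-\epsilon'\alpha_2)/(\,\cdot\,)$, and for $\mathrm{I}$--$\mathrm{IV}$ one of the $\chi_\infty/(\,\cdot\,)$-factors produced by the decomposition---or with a rapidly decaying factor $M(\gamma_2-\ell_3-\epsilon'\alpha_2,\sigma)$ (term $\mathrm{V}$), so that $m^{(3)}$ decays like $|\alpha_2|^{-2}$ or faster in $\alpha_2$ and the $\langle\alpha_2\rangle^{1/4}$-weighted $L^1$ bound is recovered; this is exactly the mechanism used for the composition of two class $R$ operators in Lemma~\ref{regchainlem3}.
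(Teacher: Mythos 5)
Your proposal is correct and takes essentially the same route as the paper's proof: reuse the six-term $(\chi_0,\chi_\infty)$ decomposition from Lemma~\ref{regchainlem4} with $\ell$ replaced by $\ell_3+\epsilon'\alpha_2$, carry along the extra prefactor $\chi_\infty(\ell_2+\epsilon'\alpha_2+\gamma_3)/(\ell_2+\epsilon'\alpha_2+\gamma_3)$, assign $\mathrm{I}$--$\mathrm{IV}$ to class $J$ and $\mathrm{V}$--$\mathrm{VI}$ to class $R$ with special index $1$, and recover $\alpha_2$-integrability (with the $\langle\alpha_2\rangle^{1/4}$ weight) by pairing the prefactor denominator with a second $\alpha_2$-dependent factor in each term. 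Your identification of the two pairing denominators in the terms $\mathrm{V}$/$\mathrm{VI}$ matches the paper's explicit bound exactly, modulo notation.
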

\begin{proof} Let $\Rc=\Rc_{\alpha[A],\mu[E]}^{(1)}$ be of class $R$ with special index $2$, where $1\not\in E$. Consider $X=I_{\epsilon\alpha_1} R_{\ell_1+\epsilon_2\alpha_2,\ell_2+\epsilon_2\alpha_2;\gamma_1,\gamma_2,\gamma_3}I_{\mu_1-\epsilon\alpha_1}$, where $\ell_1$ and $\ell_2$ are {bundle}s of $\alpha[A\backslash\{2\}]$. {Our estimates will be uniform in $\mu_1$ as in Lemma \ref{regchainlem4}}. We proceed in basically the same way as in Lemma \ref{regchainlem4}, and obtain the same expressions $\mathrm{I}\sim\mathrm{VI}$, except that here $\ell$ is replaced by $\ell_2+\epsilon_2\alpha_2$, and that we have an extra factor $\frac{\chi_\infty(\ell_1+\epsilon_2\alpha_2+\gamma_3)}{\ell_1+\epsilon_2\alpha_2+\gamma_3}$. The terms $\mathrm{I}\sim\mathrm{IV}$ still give rise to class $J$ operators, since the factors in the coefficients that depend on $(\alpha_1,\alpha_2)$, which are bounded by
\[\frac{1}{\langle\gamma_1+\gamma_2+\epsilon\alpha_1\rangle}\cdot\frac{1}{\langle\ell_2+\epsilon\alpha_1+\epsilon_2\alpha_2+\gamma_1\rangle}\cdot\frac{1}{\langle\ell_1+\epsilon_2\alpha_2+\gamma_3\rangle},\] are integrable in $(\alpha_1,\alpha_2)$ uniformly in the other variables.

As for terms $\mathrm{V}\sim\mathrm{VI}$, they will give rise to class $R$ operators with special index 1. For this we only need the integrability in $\alpha_2$ (uniformly in the other variables) of the factors in the coefficients that depend on $\alpha_2$, which follows from the upper bound \[\frac{1}{\langle\ell_1+\epsilon_2\alpha_2+\gamma_3\rangle}\cdot\frac{1}{\langle\gamma_2-\epsilon_2\alpha_2-\ell_2\rangle}.\] Again, in all cases the weight can be treated in the same way as before, where for terms $\mathrm{I}\sim\mathrm{IV}$ we use the denominator $\ell_1+\epsilon_2\alpha_2+\gamma_3$ to estimate $|\alpha_2|^{1/4}$, use the denominator $\ell_2+\epsilon\alpha_1+\epsilon_2\alpha_2+\gamma_1$ to estimate $|\alpha_1|^{1/4}$, and use the denominator $\gamma_1+\gamma_2+\epsilon\alpha_1$ to estimate $|\gamma_2|^{1/4}$ which may appear as part of $|\gamma_1'|^{1/4}$ for some terms. For terms $\mathrm{V}\sim\mathrm{VI}$ we use the denominator $\ell_1+\epsilon_2\alpha_2+\gamma_3$ to estimate $|\alpha_2|^{1/4}$, and use the denominator $\gamma_2-\epsilon_2\alpha_2-\ell_2$ to estimate $|\gamma_2|^{1/4}$.
\end{proof}
\begin{lem}\label{regchainlem6}The operator $I_{\epsilon\alpha_1}I_{\mu_1-\epsilon\alpha_1}$, where $\epsilon\in\{\pm\}$, can be decomposed as 
\[I_{\epsilon\alpha_1}I_{\mu_1-\epsilon\alpha_1}=\frac{\chi_\infty(\alpha_1)}{\epsilon\pi i\alpha_1}\Jc_{\alpha[\ ],\mu_1}+\Jc_{\alpha_1,\mu_1}+\Rc_{\alpha_1,\mu_1},\] where $\Jc_{(\cdots)}$ and $\Rc_{(\cdots)}$ are of class $J$ and $R$ (with special index $1$) respectively, and $\alpha[\ ]$ indicates the corresponding set $A=\varnothing$ in (\ref{typeJ0}).
\end{lem}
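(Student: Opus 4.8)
The plan is to compute $I_{\epsilon\alpha_1}I_{\mu_1-\epsilon\alpha_1}$ in closed form and then sort the resulting pieces into the three advertised normal forms. Since $\epsilon\alpha_1+(\mu_1-\epsilon\alpha_1)=\mu_1$, writing $I_{\epsilon\alpha_1}I_{\mu_1-\epsilon\alpha_1}f(t)=\int_{0<s<z<t}e^{\pi i\epsilon\alpha_1 z+\pi i(\mu_1-\epsilon\alpha_1)s}f(s)\,\mathrm{d}s\,\mathrm{d}z$ and substituting $z=s+u$ yields the clean identity
\[I_{\epsilon\alpha_1}I_{\mu_1-\epsilon\alpha_1}f(t)=\int_0^t e^{\pi i\mu_1 s}f(s)\,\mathrm{d}s\int_0^{t-s}e^{\pi i\epsilon\alpha_1 u}\,\mathrm{d}u.\]
I would then split the inner integral by inserting $1=\chi_0(\alpha_1)+\chi_\infty(\alpha_1)$ (recall $\chi_0,\chi_\infty$ depend only on $|\alpha_1|$, so $\chi_0(\epsilon\alpha_1)=\chi_0(\alpha_1)$, etc.), producing two contributions.

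For the $\chi_0$ part, the function $\chi_0(\alpha_1)\int_0^v e^{\pi i\epsilon\alpha_1 u}\,\mathrm{d}u$ is a compactly supported Gevrey-$2$ function of $(\alpha_1,v)$ for $v\in[0,1]$ (after multiplying by a Gevrey-$2$ cutoff in $v$), so exactly as in the proof of Lemma \ref{regchainlem1} it can be written as $\int_\Rb M(\epsilon\alpha_1,\sigma)e^{\pi i\sigma v}\,\mathrm{d}\sigma$ for a fixed rapidly decaying Gevrey-$2$ kernel $M$. Substituting $v=t-s$ and rewriting $e^{\pi i\mu_1 s}$ through the identity (\ref{extraid}), this contribution becomes $\int M(\epsilon\alpha_1,\gamma_1)M(\gamma_2-\mu_1)\,J_{0;\gamma_1,\gamma_2}\,\mathrm{d}\gamma_1\mathrm{d}\gamma_2$ with the zero cluster $\ell=0$ of $\alpha_1$; this is a class $J$ operator $\Jc_{\alpha_1,\mu_1}$, and the bound (\ref{typeJ2}) (with its $(2|\rho|)!$ derivative growth and uniformity in $\mu_1$) is immediate from the Gevrey-$2$ decay of the kernels, the weight $|\gamma_1|^{1/4}$ being harmless and the weight never touching $\gamma_2$.

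For the $\chi_\infty$ part I would evaluate $\int_0^{t-s}e^{\pi i\epsilon\alpha_1 u}\,\mathrm{d}u=(e^{\pi i\epsilon\alpha_1(t-s)}-1)/(\epsilon\pi i\alpha_1)$, so the contribution is $\tfrac{\chi_\infty(\alpha_1)}{\epsilon\pi i\alpha_1}\int_0^t e^{\pi i\mu_1 s}f(s)\big(e^{\pi i\epsilon\alpha_1(t-s)}-1\big)\,\mathrm{d}s$, which splits into two terms. The $e^{\pi i\epsilon\alpha_1(t-s)}$ term equals $\tfrac1{\pi i}R_{\epsilon\alpha_1,\epsilon\alpha_1;0,\mu_1,0}f(t)$ (using $\chi_\infty(\epsilon\alpha_1)=\chi_\infty(\alpha_1)$ and $1/\epsilon=\epsilon$); introducing the phase variables $\gamma_1$ (multiplying $t$) and $\gamma_2$ (multiplying $s$) via (\ref{extraid}) as in Lemmas \ref{regchainlem4}--\ref{regchainlem5}, and keeping the singularity slot at its value $\gamma_3=0$, this is a class $R$ operator $\Rc_{\alpha_1,\mu_1}$ with special index $1$ and trivial clusters $\ell_1=\ell_2=0$, whose coefficient bound (\ref{typeR2}) is trivial since the coefficient carries no $\alpha_1$-dependence. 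The remaining $-1$ term equals $\tfrac{\chi_\infty(\alpha_1)}{\epsilon\pi i\alpha_1}\big({-}J_{0;0,\mu_1}f(t)\big)$, and $-J_{0;0,\mu_1}$, rewritten through (\ref{extraid}) into the form $-\int M(\gamma_1)M(\gamma_2-\mu_1)J_{0;\gamma_1,\gamma_2}\,\mathrm{d}\gamma_1\mathrm{d}\gamma_2$, is a class $J$ operator with $A=\varnothing$, i.e. $\Jc_{\alpha[\ ],\mu_1}$ (the sign absorbed into the coefficient). Summing the three pieces gives the claimed decomposition.

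I do not anticipate a genuine obstacle: this is the simplest lemma in the series (the ``no inner operator'' base case underlying Lemmas \ref{regchainlem4}--\ref{regchainlem5}), and all the analytic inputs --- Fubini, the elementary closed form for $\int_0^{t-s}e^{\pi i\epsilon\alpha_1 u}\,\mathrm{d}u$, and the removal of Dirac masses via (\ref{extraid}) --- are already used verbatim elsewhere in Section \ref{regchainest}. The only thing requiring care is the bookkeeping: presenting each of the three summands in precisely the normal form of Definition \ref{defclassjr} with the correct index set $A$ ($\{1\}$ for the last two, $\varnothing$ for the first), the correct special index for the $R$-piece, and the trivial clusters, and verifying the weighted kernel bounds uniformly in $\mu_1$ (which is automatic, since $\mu_1$ enters only through translates of rapidly decaying kernels and the weights in (\ref{typeJ2}), (\ref{typeR2}) do not see the $s$-phase variable).
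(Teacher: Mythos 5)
Your computation is exactly the paper's: the Fubini identity giving $\int_0^t e^{\pi i\mu_1 s}f(s)\,\mathrm{d}s\int_0^{t-s}e^{\pi i\epsilon\alpha_1 u}\,\mathrm{d}u$, the $\chi_0/\chi_\infty$ split of the inner integral, and the resulting three pieces coincide with the paper's terms $\mathrm{I}$, $\mathrm{II}$, $\mathrm{III}$, and your identification of the $\chi_0$ piece as $\Jc_{\alpha_1,\mu_1}$ and of the ``$-1$'' piece as $\frac{\chi_\infty(\alpha_1)}{\epsilon\pi i\alpha_1}\Jc_{\alpha[\ ],\mu_1}$, using (\ref{extraid}) to generate the $\gamma_1,\gamma_2$ integrations, is the same as in the paper.

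The one place where your write-up falls short of Definition \ref{defclassjr} is the $R$-piece. You keep ``the singularity slot at its value $\gamma_3=0$'', i.e. your kernel carries a Dirac mass in $\gamma_3$, whereas (\ref{typeR0})--(\ref{typeR2}) require the kernel $m$ to be a genuine function of $\gamma_3$ with the weighted $L^1$ bound; and, unlike for $\gamma_1,\gamma_2$, this cannot be repaired by (\ref{extraid}), because $\gamma_3$ enters through the denominator $\chi_\infty(\ell_1+\epsilon\alpha_1+\gamma_3)/(\ell_1+\epsilon\alpha_1+\gamma_3)$ rather than through a phase. The paper's proof devotes its last step precisely to this point: write $\frac{\chi_\infty(\epsilon\alpha_1)}{\epsilon\pi i\alpha_1}=\int_\Rb M(\sigma)\frac{\chi_\infty(\epsilon\alpha_1)}{\epsilon\pi i\alpha_1}\,\mathrm{d}\sigma$ for a compactly supported Gevrey-$2$ function $M$ with unit integral, replace $\frac{\chi_\infty(\epsilon\alpha_1)}{\epsilon\pi i\alpha_1}$ by $\frac{\chi_\infty(\sigma+\epsilon\alpha_1)}{\pi i(\sigma+\epsilon\alpha_1)}$ so that $\sigma$ plays the role of the $\gamma_3$ integration, and absorb the difference into $\Jc_{\alpha_1,\mu_1}$ using the bound $\int_\Rb\big|\frac{\chi_\infty(\epsilon\alpha_1)}{\epsilon\pi i\alpha_1}-\frac{\chi_\infty(\sigma+\epsilon\alpha_1)}{\pi i(\sigma+\epsilon\alpha_1)}\big|\,\mathrm{d}\alpha_1\leq C$ for $|\sigma|\leq 1$, which shows the error is itself of class $J$. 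Adding this step (and, cosmetically, correcting your class-$J$ kernel for the $\chi_0$ piece: the second factor should be of the form $M(\gamma_2+\gamma_1-\mu_1)$ rather than $M(\gamma_2-\mu_1)$, since the two phase slots are coupled through $t-s$) makes your argument complete and identical to the paper's.
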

\begin{proof} We directly calculate $X=I_{\epsilon\alpha_1}I_{\mu_1-\epsilon\alpha_1}$ such that
\[Xf(t)=\int_0^t e^{\epsilon\pi i\alpha_1 z}\,\mathrm{d}z\int_0^z e^{\pi i(\mu_1-\epsilon\alpha_1)s}f(s)\,\mathrm{d}s=\int_0^t e^{\pi i\mu_1 s}f(s)\,\mathrm{d}s\int_0^{t-s}e^{\epsilon\pi i\alpha_1 u}\,\mathrm{d}u,\] {note that the estimates are again uniform in $\mu_1$.} Now $X$ can be decomposed into three terms,
\[
\begin{aligned}\mathrm{I}&:=\int_\Rb M(\epsilon\alpha_1,\sigma)J_{0;\sigma,\mu_1-\sigma}\,\mathrm{d}\sigma,\\
\mathrm{II}&:={-}\frac{\chi_\infty(\epsilon\alpha_1)}{\epsilon\pi i\alpha_1}J_{0;0,\mu_1},\\
\mathrm{III}&:=R_{\epsilon\alpha_1,\epsilon\alpha_1;0,\mu_1,0}.
\end{aligned}
\] Clearly $\mathrm{I}=\Jc_{\alpha_1,\mu_1}$ has class $J$, as we can always convolve by a decaying analytic function as in \eqref{extraid}; likewise $\mathrm{II}=\frac{\chi_\infty(\alpha_1)}{\epsilon\pi i\alpha_1}\cdot\Jc_{\alpha[\ ],\mu_1}$ with the operator also having class $\Jc$. Now looking at $\mathrm{III}$, we can introduce the integration in $(\gamma_1,\gamma_2)$ by convolution; to see it is of class $R$, which involves $\gamma_3$ integration, we simply rewrite
\[\frac{\chi_\infty(\epsilon\alpha_1)}{\epsilon \pi i\alpha_1}=\int_\Rb M(\sigma)\frac{\chi_\infty(\epsilon\alpha_1)}{\epsilon \pi i\alpha_1}\,\mathrm{d}\sigma\] with some fixed compactly supported Gevrey $2$ function $M$; then we replace $\frac{\chi_\infty(\epsilon\alpha_1)}{\epsilon\pi i\alpha_1}$ by $\frac{\chi_\infty(\sigma+\epsilon\alpha_1)}{\pi i(\sigma+\epsilon\alpha_1)}$ to produce the $\gamma_3$ integral (which is just the $\sigma$ integral here), noticing that
\[\int_\Rb\bigg|\frac{\chi_\infty(\epsilon\alpha_1)}{\epsilon\pi i\alpha_1}-\frac{\chi_\infty(\sigma+\epsilon\alpha_1)}{\pi i(\sigma+\epsilon\alpha_1)}\bigg|\,\mathrm{d}\alpha_1\leq C\] for $|\sigma|\leq1 $, so the error term introduced in this way is of form $\Jc_{\alpha_1,\mu_1}$ which has class $J$.
\end{proof}
\subsubsection{Regular chain expressions}
\begin{lem}\label{regchainlem7} Let $\Pc$ be a legal partition of $\{1,\cdots,2m\}$, where $m\geq 1$, and consider the operator \[I:=X_0I_{\beta_1+\lambda_1}X_1I_{\beta_2+\lambda_2}\cdots X_{2m-1}I_{\beta_{2m}+\lambda_{2m}}X_{2m},\] again assume $\lambda_a=0$ and $\lambda_b=\mu_j$ for the $j$-th pair $\{a,b\}\,(a<b)$. Suppose that each $X_a\,(0\leq a\leq 2m)$ is either of class $J$ or $R$, or $X_a=\mathrm{Id}$, such that $X_a\neq\mathrm{Id}$ if $\{a,a+1\}\in\Pc$. Then $I$ can be decomposed into an operator of class $J$ and an operator of class $R$ (with special index 1). Moreover the norms of these operators are at most \[C^m\prod_{a=0}^{2m}\|X_a\|,\] with $\|X_a\|=1$ if $X_a=\mathrm{Id}$.
\end{lem}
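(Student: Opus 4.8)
The plan is to argue by induction on $m$, peeling off one pair of the legal partition $\Pc$ at a time and feeding the result into the composition lemmas \ref{regchainlem1}--\ref{regchainlem5}. I will repeatedly use three ``absorption'' facts that are immediate from those lemmas: (a) by Lemmas \ref{regchainlem1}--\ref{regchainlem3}, the composition of any two operators each of class $J$ or $R$ is of class $J$; (b) composing with $\mathrm{Id}$ does nothing; (c) by Lemmas \ref{regchainlem4}--\ref{regchainlem5}, if $Y$ is of class $J$ or $R$ in variables not involving $\alpha_1,\mu_1$, then $I_{\epsilon\alpha_1}\,Y\,I_{\mu_1-\epsilon\alpha_1}$ is of class $J$ plus one of class $R$ with special index $1$. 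Note that the hypothesis ``$X_a\neq\mathrm{Id}$ whenever $\{a,a+1\}\in\Pc$'' is exactly what prevents us from ever having to invoke Lemma \ref{regchainlem6}, whose output contains the extra singular factor $\chi_\infty(\alpha_1)/(\epsilon\pi i\alpha_1)$. In the base case $m=1$ we have $\Pc=\{\{1,2\}\}$, hence $\beta_1+\lambda_1=\epsilon\alpha_1$ and $\beta_2+\lambda_2=\mu_1-\epsilon\alpha_1$, so $I=X_0\,(I_{\epsilon\alpha_1}X_1I_{\mu_1-\epsilon\alpha_1})\,X_2$ with $X_1\neq\mathrm{Id}$; applying (c) to the middle block (legitimate since $X_1$ has class $J$ or $R$) and then composing on both sides with $X_0,X_2$ using (a),(b) gives the decomposition, the class $R$ summand surviving exactly when $X_0=X_2=\mathrm{Id}$ and then carrying special index $1$.

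For the inductive step $m\geq 2$ I split according to Proposition \ref{legalpair}(2). If $\Pc$ is a nontrivial concatenation of a legal $\Pc_1$ on $\{1,\dots,2k\}$ and a legal $\Pc_2$ on $\{2k+1,\dots,2m\}$ (so $1\leq k\leq m-1$), then no pair of $\Pc$ joins a position $\leq 2k$ to a position $\geq 2k+1$; in particular $\{2k,2k+1\}\notin\Pc$, so $X_{2k}$ is unconstrained, and we may write $I=I^{(1)}\circ X_{2k}\circ I^{(2)}$ where $I^{(1)}$ is the $\Pc_1$-operator with its trailing $X$ set to $\mathrm{Id}$ and $I^{(2)}$ is the $\Pc_2$-operator with its leading $X$ set to $\mathrm{Id}$. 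These boundary replacements are allowed because $\{0,1\}$ and $\{2m,2m+1\}$ can never be pairs, and one checks directly that $I^{(1)},I^{(2)}$ still satisfy the hypotheses of the lemma for $\Pc_1,\Pc_2$ (the ``not $\mathrm{Id}$ at adjacent pairs'' condition is inherited, since the adjacent pairs of $\Pc_1$ resp.\ $\Pc_2$ are adjacent pairs of $\Pc$). By induction $I^{(1)}$ and $I^{(2)}$ are each of class $J$ plus class $R$, and then (a),(b) force the whole composition $I$ to be of class $J$, which is the claimed form with vanishing class $R$ part. If instead $\Pc$ is of the enclosing type, i.e.\ $\Pc$ consists of $\{1,2m\}$ together with a copy of a legal $\Pc_1$ on $\{1,\dots,2m-2\}$ with all elements shifted up by $1$, then the pair through the element $1$ is $\{1,2m\}$, so it carries $\alpha_1$ and $I=X_0\,I_{\epsilon_1\alpha_1}\,(X_1 I_{\beta_2+\lambda_2}X_2\cdots X_{2m-1})\,I_{\mu_1-\epsilon_1\alpha_1}\,X_{2m}$. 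The inner factor is precisely the $\Pc_1$-operator after the index shift $a\mapsto a-1$, $j\mapsto j-1$ dictated by Proposition \ref{legalpair}(2)(iii), and its $X$'s inherit the hypothesis from those of $\Pc$; by induction it equals an operator of class $J$ plus one of class $R$. Applying (c) with $\alpha_1$ in the role of the special variable, and then composing on the two sides with $X_0,X_{2m}$ (unconstrained, as above) using (a),(b), yields the decomposition, the class $R$ summand surviving exactly when $X_0=X_{2m}=\mathrm{Id}$ and then having special index $1$ as required.

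For the norm bound, each peeling step invokes Lemmas \ref{regchainlem1}--\ref{regchainlem5} a bounded number of times (at most four or five), each contributing an absolute multiplicative constant, while the $X$'s replaced by $\mathrm{Id}$ contribute the factor $\|\mathrm{Id}\|=1$; after the $m$ steps the total is at most $C^m\prod_{a=0}^{2m}\|X_a\|$. I expect the main obstacle here to be purely bookkeeping rather than analytic: all the delicate estimates live in Lemmas \ref{regchainlem1}--\ref{regchainlem5}, and the work in this lemma is to verify that each of the two peeling operations produces an operator still of exactly the hypothesized form --- compatible index relabellings, preservation of the ``$X_a\neq\mathrm{Id}$ at adjacent pairs'' condition, and correct tracking of the class $R$ summand so that, whenever it is not absorbed into class $J$, its special index is the one attached to the pair through the element $1$, i.e.\ the index $1$.
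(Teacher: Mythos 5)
Your proposal is correct and is essentially the proof in the paper: both argue by induction on $m$, using the structure theorem for legal partitions (Proposition \ref{legalpair}(2)) to split into the concatenation case (handled by Lemmas \ref{regchainlem1}--\ref{regchainlem3}) and the enclosing case (handled by Lemmas \ref{regchainlem4}--\ref{regchainlem5}), with the base case following from Lemmas \ref{regchainlem4}--\ref{regchainlem5} since the hypothesis on $X_1$ avoids Lemma \ref{regchainlem6}. The paper first normalizes $X_0=X_{2m}=\mathrm{Id}$ via Lemmas \ref{regchainlem1}--\ref{regchainlem3} and, to make the norm induction close cleanly, replaces $C^m$ by $C^{2m-1}$ in the inductive statement — a small bookkeeping point your sketch glosses over but which does not affect the final bound.
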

\begin{proof} First note that we may always assume $X_0=X_{2m}=\mathrm{Id}$ in view of Lemmas \ref{regchainlem1}--\ref{regchainlem3}. Now we induct on $m$ (for convenience of induction we may replace the power $C^m$ by $C^{2m-1}$). When $m=1$ we may assume $I=I_{\epsilon\alpha_1}\widetilde{I} I_{\mu_1-\epsilon\alpha_1}$ where $\widetilde{I}$ has class $J$ or $R$, so the result follows from Lemmas \ref{regchainlem4}--\ref{regchainlem5}. Suppose the result is true for $m'<m$, and consider any legal partition $\Pc$ of $\{1,\cdots,2m\}$. We know that $\Pc$ is formed either by concatenating two smaller legal partitions $\Pc'$ and $\Pc''$, or by enclosing a legal partition $\Pc'$ into the pair $\{1,2m\}$. In the first case we have $I=I'I''$ where $I'$ and $I''$ are the operators corresponding to $\Pc'$ and $\Pc''$ respectively (with obvious choices of the $X_a$'s), so the result follows from Lemmas \ref{regchainlem1}--\ref{regchainlem3}. In the second case we have $I=I_{\epsilon\alpha_1} I'I_{\mu_1-\epsilon\alpha_1}$ where $I'$ is the operator corresponding to $\Pc'$ (with obvious choices of the $X_a$'s), so the result follows from Lemmas \ref{regchainlem4}--\ref{regchainlem5}.
\end{proof}
\begin{lem}\label{regchainlem8} Consider now the operator $I=I_{\beta_1+\lambda_1}\cdots I_{\beta_{2m}+\lambda_{2m}}$ with $\lambda_a$ satisfying the same conditions as in Lemma \ref{regchainlem7}. Then, $I$ is a sum of at most $2^{m}$ terms: for each term there exists a set $Z\subset\{1,\cdots,m\}$, such that this term has form
\[\prod_{j\in Z}\frac{\chi_\infty(\alpha_j)}{\epsilon_j\pi i\alpha_j}\cdot \widetilde{I},\] where $\widetilde{I}$ is another operator that depends only on the variables $(\mu_1,\cdots,\mu_m)$ and $\alpha[W]$ with $W=\{1,\cdots,m\}\backslash Z$, and has either class $J$ or class $R$, with norm $\|\widetilde{I}\|\leq C^m$.
\end{lem}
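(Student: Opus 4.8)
The plan is to prove the statement by induction on $m$, mirroring the structure of the proof of Lemma \ref{regchainlem7} but with Lemma \ref{regchainlem6} playing the role of the base case: it is precisely at a length-two sub-chain $I_{\epsilon_j\alpha_j}I_{\mu_j-\epsilon_j\alpha_j}$ that a non-integrable factor $\chi_\infty(\alpha_j)/(\epsilon_j\pi i\alpha_j)$ — the only source of the set $Z$ — can appear. For $m=1$ one has $\Pc=\{\{1,2\}\}$, so $\beta_1+\lambda_1=\epsilon_1\alpha_1$ and $\beta_2+\lambda_2=\mu_1-\epsilon_1\alpha_1$, and $I=I_{\epsilon_1\alpha_1}I_{\mu_1-\epsilon_1\alpha_1}$ is decomposed by Lemma \ref{regchainlem6} into one term of the form $(\chi_\infty(\alpha_1)/(\epsilon_1\pi i\alpha_1))\,\widetilde I$ with $\widetilde I$ of class $J$ depending only on $\mu_1$ (so $Z=\{1\}$), and two terms of class $J$ and $R$ depending on $(\alpha_1,\mu_1)$ (so $Z=\varnothing$); this is exactly the asserted form.

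For $m\ge 2$ I would invoke Proposition \ref{legalpair}(2): every legal partition $\Pc$ of $\{1,\dots,2m\}$ is either a concatenation of a legal $\Pc'$ on $\{1,\dots,2k\}$ and a legal $\Pc''$ on $\{2k+1,\dots,2m\}$ for some $1\le k\le m-1$, or the enclosure of a legal $\Pc_1$ on $\{2,\dots,2m-1\}$ by the pair $\{1,2m\}$. (That every legal $\Pc$ with $m\ge 1$ falls into one of these cases is the ``only if'' companion of Proposition \ref{legalpair}(2): element $1$ is paired to some even $2k$, legality forbids any pair straddling position $2k$, and then $2k=2m$ is the enclosure case while $2k<2m$ is the concatenation case.) In the concatenation case, ordering the pairs by their smallest element shows the first $k$ pair-labels belong to $\Pc'$ and the last $m-k$ to $\Pc''$, so $I$ factors as a composition $I^{(a)}I^{(b)}$ of the operators attached to $\Pc'$ and $\Pc''$ in the relabelled variables. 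Applying the inductive hypothesis to each and multiplying out, every resulting term is $\prod_{j\in Z_a\sqcup Z_b}\big(\chi_\infty(\alpha_j)/(\epsilon_j\pi i\alpha_j)\big)\cdot\widetilde I^{(a)}\widetilde I^{(b)}$, and by Lemmas \ref{regchainlem1}--\ref{regchainlem3} the composition $\widetilde I^{(a)}\widetilde I^{(b)}$ of two operators of class $J$ or $R$ is again of class $J$ with $\|\widetilde I^{(a)}\widetilde I^{(b)}\|\le C\,\|\widetilde I^{(a)}\|\,\|\widetilde I^{(b)}\|$; carrying $C^{2m-1}$ as the inductive constant as in the proof of Lemma \ref{regchainlem7}, this closes this case.

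In the enclosure case $\{1,2m\}$ is the first pair, so $\beta_1+\lambda_1=\epsilon_1\alpha_1$ and $\beta_{2m}+\lambda_{2m}=\mu_1-\epsilon_1\alpha_1$, whence $I=I_{\epsilon_1\alpha_1}\,[I_{\beta_2+\lambda_2}\cdots I_{\beta_{2m-1}+\lambda_{2m-1}}]\,I_{\mu_1-\epsilon_1\alpha_1}$ and the bracketed middle factor is exactly the operator attached to $\Pc_1$ in the relabelled variables. By the inductive hypothesis the middle factor is a sum of terms $\prod_{j\in Z_1}\big(\chi_\infty(\alpha_j)/(\epsilon_j\pi i\alpha_j)\big)\cdot\widetilde I_{\mathrm{mid}}$ with $\widetilde I_{\mathrm{mid}}$ of class $J$ or $R$; then $I_{\epsilon_1\alpha_1}\widetilde I_{\mathrm{mid}}I_{\mu_1-\epsilon_1\alpha_1}$ decomposes into an operator of class $J$ and one of class $R$ (both with special index $1$, norm $\le C\,\|\widetilde I_{\mathrm{mid}}\|$) by Lemma \ref{regchainlem4} when $\widetilde I_{\mathrm{mid}}$ has class $J$ and by Lemma \ref{regchainlem5} when it has class $R$, the singular factors $\prod_{j\in Z_1}(\cdots)$ being merely carried along (so $\alpha_1$ is never put into $Z$ in this step). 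This again gives the desired form, and completes the induction.

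There is no new analytic content beyond Lemmas \ref{regchainlem1}--\ref{regchainlem6}; the remaining work is combinatorial. For the count one notes that each step either multiplies or doubles the number of terms from the sub-partitions (starting from $3$ at $m=1$), so grouping the final terms by their set $Z$ — of which there are at most $2^m$ — and using that classes $J$ and $R$ are each closed under finite sums collapses everything to at most $2^m$ terms. The main obstacle, as I see it, is purely the bookkeeping: one must verify that the two moves of Proposition \ref{legalpair}(2) correspond cleanly to the factorizations $I=I^{(a)}I^{(b)}$ and $I=I_{\epsilon_1\alpha_1}(\cdots)I_{\mu_1-\epsilon_1\alpha_1}$ under the relabelling of both the position indices $a\in\{1,\dots,2m\}$ and the pair-labels $j\in\{1,\dots,m\}$ (hence of $\alpha_j,\mu_j,\epsilon_j$), and in particular that the convention ``$\lambda_a=0$ at the smaller index of a pair, $\lambda_b=\mu_j$ at the larger'' survives the relabelling — which it does because the relabellings are order-preserving both on $\{1,\dots,2m\}$ and on the list of pairs.
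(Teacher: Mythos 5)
Your proof is correct, but it takes a genuinely different route from the paper's. The paper's argument is a one-shot reduction: it identifies all adjacent pairs $\{a,a+1\}\in\Pc$, applies Lemma \ref{regchainlem6} to each of them simultaneously (treating the selection of the singular factor as a binary choice that determines $Z$), and then observes that what remains is precisely an operator of the form treated in Lemma \ref{regchainlem7} for the sub-partition $\Pc'$ obtained by deleting the adjacent pairs, with the non-singular pieces from Lemma \ref{regchainlem6} absorbed into the $X_a$'s. You instead run a fresh induction on $m$ using the concatenation/enclosure decomposition of Proposition \ref{legalpair}(2) — which is exactly the induction underlying Lemma \ref{regchainlem7}'s own proof — so in effect you re-run that induction carrying the singular factors along rather than invoking Lemma \ref{regchainlem7} as a black box. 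Both arguments draw on the same ingredient lemmas (\ref{regchainlem1}--\ref{regchainlem6}); the paper's version is shorter and makes it visible that $Z$ is always a subset of the adjacent pairs of $\Pc$, while yours is logically self-contained modulo Lemma \ref{regchainlem6} and localizes the use of Lemma \ref{regchainlem6} to the $m=1$ leaves of the recursion. Two small remarks: after grouping by $Z$ the count is really at most $2\cdot 2^m$ (one class-$J$ and one class-$R$ piece per $Z$), but the paper's count has the same slack and nothing downstream needs the precise base; and your appeal to a converse of Proposition \ref{legalpair}(2), which the paper states only as a sufficient condition, is correctly argued via the partner of the element $1$ and is implicitly the same observation used in proving Lemma \ref{regchainlem7}.
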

\begin{proof} We first consider all adjacent pairs $\{a,a+1\}\in\Pc$. For such pairs we have a factor in $I$ that is $I_{\epsilon_j\alpha_j} I_{\mu_j-\epsilon_j\alpha_j}$ for some $j$, so by Lemma \ref{regchainlem6} we can decompose it into $\frac{\chi_\infty(\alpha_j)}{\epsilon_j\pi i\alpha_j}\Jc_{\alpha[\ ],\mu_j}+\Jc_{\alpha_j,\mu_j}+\Rc_{\alpha_j,\mu_j}$. Now if we select the term $\frac{\chi_\infty(\alpha_j)}{\epsilon_j \pi i\alpha_j}\Jc_{\alpha[\ ],\mu_j}$ we already get one factor $\frac{\chi_\infty(\alpha_j)}{\epsilon_j\pi i\alpha_j}$, while the remaining part of the operator will no long depend on $\alpha_j$; if we select $\Jc_{\alpha_j,\mu_j}$ or $\Rc_{\alpha_j,\mu_j}$ we will leave it as is, and note that in any case our operator has class $J$ or class $R$.

Now, after removing all adjacent pairs $\{a,a+1\}$ we can reduce $\Pc$ to a smaller legal partition $\Pc'$. Then, apart from the possible $\frac{\chi_\infty(\alpha_j)}{\epsilon_\pi i\alpha_j}$ factors, the remaining part of the operator, denoted by $\widetilde{I}$, will be of form $I$ described in Lemma \ref{regchainlem7}. Note that if $\Pc'=\varnothing$, then $\widetilde{I}$ is already a composition of at most $m$ class $J$ or $R$ operators, so the result follows directly from Lemmas \ref{regchainlem1}--\ref{regchainlem3}. If $\Pc'\neq\varnothing$, applying Lemma \ref{regchainlem7} then yields that $\widetilde{I}$ has class $J$ or $R$, and that the norm
\[\|\widetilde{I}\|\leq C^{m'}\prod_{a=0}^{2m'}\|X_a\|.\] Here we assume that (after relabeling form smallest to largest) $\Pc'$ is a legal partition of $\{1,\cdots,2m'\}$, and each $X_a$ is in fact a composition of class $J$ and $R$ operators appearing in Lemma \ref{regchainlem6}, and the number $n_a$ of such operators is the number of adjacent pairs in $\Pc$ between the elements $a$ and $a+1$ (again after relabeling) of $\Pc'$ (note that $n_a\geq 1$ if $\{a,a+1\}\in\Pc'$). Therefore $\|X_a\|\leq C^{n_a}$ by iterating Lemmas \ref{regchainlem1}--\ref{regchainlem3}, and since $n_0+\cdots +n_{2m'}=m-m'$, we conclude that $\|\widetilde{I}\|\leq C^m$, which completes the proof.
\end{proof}

\subsection{Proof of Proposition \ref{maincoef}}\label{proofmaincoef} In this section we prove Proposition \ref{maincoef}. The proof is done by induction on the scale of $\Qc$, and Lemma \ref{regchainlem8} plays a key role in the inductive step.
\begin{proof}[Proof of Proposition \ref{maincoef}] We induct on $n$. The base case $n=0$ is trivial as $\widetilde{\Bc}_{\times}(t,s)\equiv 1$. Suppose the result is true for regular couples of smaller scales, and consider a regular couple $\Qc$ of scale $2n$. By the discussion in Section \ref{recursive}, we know that $\widetilde{\Bc}_\Qc$ can be expressed as in either (\ref{newexp1}) or (\ref{newexp2}), such that the regular couples appearing on the right hand sides all have scale strictly less than $2n$.

\emph{Case 1}. Suppose we have (\ref{newexp1}), then by induction hypothesis we have
\begin{multline}\label{newexp3}\widetilde{\Bc}_\Qc(t,s,\alpha[\Nc^{ch}])={\sum_{Z_1,Z_2,Z_3}}\prod_{j=1}^3\prod_{\nf\in Z_j}\frac{\chi_\infty(\alpha_\nf)}{\zeta_\nf\pi i\alpha_\nf}\int_{\Rb^6}\prod_{j=1}^3\Cc_j(\lambda_{2j-1},\lambda_{2j},\alpha[\Nc_j^{ch}\backslash Z_j])\prod_{j=1}^6\mathrm{d}\lambda_j\\\times\int_0^t\int_0^s e^{\pi i\alpha_\rf(t_1-s_1)}e^{\pi i(\lambda^* t_1+\lambda^{**}s_1)}\,\mathrm{d}t_1\mathrm{d}s_1.\end{multline} Here in (\ref{newexp3}), each $Z_j$ is a subset of $\Nc_j^{ch}$, each $\Cc_j=\Cc_j(\lambda_{2j-1},\lambda_{2j},\alpha[\Nc_j^{ch}\backslash Z_j])$ is a function satisfying (\ref{maincoef2}), and $(\lambda^*,\lambda^{**})=(\lambda_1+\lambda_3+\lambda_5,\lambda_2+\lambda_4+\lambda_6)$. {Note that $\Cc_j$ is either the function $\Cc$ associated with $\Qc_j$ and $Z_j$ as in (\ref{maincoef1}), or is the same function with the variables $(\lambda_{2j-1},\lambda_{2j})$ replaced by $(\lambda_{2j},\lambda_{2j-1})$. The same is true in \emph{Case 2} below.}

By integrability in $(\lambda_1,\cdots,\lambda_6)$, we may fix the choices of these parameters, and also exploit the weight $\langle \lambda_1\rangle^{1/4}\cdots\langle \lambda_6\rangle^{1/4}$ from (\ref{maincoef2}) {if needed}. We then explicitly calculate the expression
\begin{equation}\label{regcplexpl}\int_0^t\int_0^s e^{\pi i\alpha_\rf(t_1-s_1)}e^{\pi i(\lambda^* t_1+\lambda^{**}s_1)}\,\mathrm{d}t_1\mathrm{d}s_1=\frac{e^{\pi i(\alpha_\rf+\lambda^*)t}-1}{\pi i(\alpha_\rf+\lambda^*)}\cdot\frac{e^{\pi i(-\alpha_\rf+\lambda^{**})s}-1}{\pi i(-\alpha_\rf+\lambda^{**})}.\end{equation} By inserting the cutoffs $\chi_0(\alpha_\rf+\lambda^*)$ or $\chi_\infty(\alpha_\rf+\lambda^*)$, and $\chi_0(-\alpha_\rf+\lambda^{**})$ or $\chi_\infty(-\alpha_\rf+\lambda^{**})$ as in Section \ref{classjr}, we can easily show that the above expression, as a function of $(t,s,\alpha_\rf)$, can be written in the form
\begin{equation}\label{regcplex1}\int_{\Rb^2}\Cc'(\lambda_1',\lambda_2',\alpha_\rf)e^{\pi i(\lambda_1't+\lambda_2's)}\,\mathrm{d}\lambda_1'\mathrm{d}\lambda_2',\end{equation} where $\Cc'$ is such that
\begin{equation}\label{regcplex2}{\int \langle \lambda_1'\rangle^{1/4}\langle \lambda_2'\rangle^{1/4}|\partial_{\alpha_\rf}^\rho\Cc'(\lambda_1',\lambda_2',\alpha_\rf)|\,\mathrm{d}\lambda_1'\mathrm{d}\lambda_2'\mathrm{d}\alpha_\rf\leq C(2|\rho|)!}\end{equation} uniformly in the choices of $\lambda_j$. For example, if we insert the cutoffs $\chi_\infty(\alpha_\rf+\lambda^*)$ and $\chi_\infty(-\alpha_\rf+\lambda^{**})$, and choose the terms $e^{\pi i(\alpha_\rf+\lambda^*)t}$ and $e^{\pi i(-\alpha_\rf+\lambda^{**})s}$ from the numerators in (\ref{regcplexpl}), then by using (\ref{extraid}) we can write
\[\Cc'(\lambda_1',\lambda_2',\alpha_\rf)=M(\lambda_1'-\alpha_\rf-\lambda^*)M(\lambda_2'+\alpha_\rf-\lambda^{**})\cdot\frac{\chi_\infty(\alpha_\rf+\lambda^{*})}{\pi i(\alpha_\rf+\lambda^{*})}\cdot\frac{\chi_\infty(-\alpha_\rf+\lambda^{**})}{\pi i(-\alpha_\rf+\lambda^{**})},\] for which (\ref{regcplex2}) is easily verified, noticing also that $\chi_\infty$ is Gevrey $2$. The other terms can be treated similarly.

Now, for the regular couple $\Qc$ and the associated $\widetilde{\Bc}_\Qc$, we may choose $Z=Z_1\cup Z_2\cup Z_3$. Using (\ref{newexp3}), and rewriting (\ref{regcplexpl}) as the form (\ref{regcplex1}), we can then easily prove that it has the form (\ref{maincoef1}) and satisfies (\ref{maincoef2}); in fact, assume $\Qc_j$ has scale $n_j$, then the induction hypothesis together with (\ref{regcplex2}) bounds the {left hand side of (\ref{maincoef2})} without derivatives by
\[C^{n_1}\cdot C^{n_2}\cdot C^{n_3}\cdot C=C^{n_1+n_2+n_3+1}=C^n.\] As for the higher order derivatives estimate, notice that the whole expression (\ref{newexp3}) is a linear combination of terms that are \emph{factorized} as a product of functions of $\alpha[\Nc_j^{ch}]$ for $1\leq j\leq 3$ and a function of $\alpha_\rf$. For any multi-index $\rho$, suppose we want to control {the $\partial_\alpha^\rho$ derivative of the relevant quantity}, and let the multi-index of derivatives falling on each of the above four sets of variables be $\rho_j\,(1\leq j\leq 4)$, then by induction hypothesis and (\ref{regcplex2}), the {left hand side of (\ref{maincoef2})} is at most
\[{C^n(2|\rho_1|)!\cdots (2|\rho_4|)!\leq C^n(2|\rho|)!},\] which is what we need.

\emph{Case 2}. Suppose we have (\ref{newexp2}), again by induction hypothesis we have
\begin{equation}\label{newexp4}
\begin{aligned}
\widetilde{\Bc}_\Qc(t,s,\alpha[\Nc^{ch}])&={\sum_{(Z_{j,\epsilon,\iota})}}\prod_{\epsilon\in\{\pm\}}\prod_{j=1}^{m^\epsilon}\prod_{\iota=1}^2\prod_{\nf\in Z_{j,\epsilon,\iota}}\frac{\chi_\infty(\alpha_\nf)}{\zeta_\nf\pi i\alpha_\nf}\cdot\prod_{\nf\in Z_{lp}}\frac{\chi_\infty(\alpha_\nf)}{\zeta_\nf\pi i\alpha_\nf}\\&\times\int\prod_{\epsilon\in\{\pm\}}\prod_{j=1}^{m^\epsilon}\prod_{\iota=1}^2\Cc_{j,\epsilon,\iota}\big(\lambda_{a,\epsilon,\iota},\lambda_{b,\epsilon,\iota},\alpha[\Nc_{j,\epsilon,\iota}^{ch}\backslash Z_{j,\epsilon,\iota}]\big)\,\mathrm{d}\lambda_{a,\epsilon,\iota}\lambda_{b,\epsilon,\iota}\\&\times\int\Cc_{lp}(\lambda_{lp,+},\lambda_{lp,-},\alpha[\Nc_{lp}^{ch}\backslash Z_{lp}])\,\mathrm{d}\lambda_{lp,+}\mathrm{d}\lambda_{lp,-}\\
&\times\int_{t>t_1>\cdots >t_{2m^+}>0}e^{\pi i[(\beta_1^++\lambda_1^+)t_1+\cdots +(\beta_{2m^+}^++\lambda_{2m^+}^+)t_{2m^+}]+\pi i\lambda_0^+t_{2m^+}}\,\mathrm{d}t_1\cdots\mathrm{d}t_{2m^+}\\
&\times\int_{s>s_1>\cdots >s_{2m^-}>0}e^{\pi i[(\beta_1^-+\lambda_1^-)s_1+\cdots +(\beta_{2m^-}^-+\lambda_{2m^-}^-)s_{2m^-}]+\pi i\lambda_0^-s_{2m^-}}\,\mathrm{d}s_1\cdots\mathrm{d}s_{2m^-}
\end{aligned}
\end{equation} Here in (\ref{newexp4}) each $Z_{j,\epsilon,\iota}$ is a subset of $\Nc_{j,\epsilon,\iota}^{ch}$ and $Z_{lp}$ is a subset of $\Nc_{lp}^{ch}$, each $\Cc_{j,\epsilon,\iota}$ and $\Cc_{lp}$ is a function satisfying (\ref{maincoef2}), and $\lambda_a^\pm=\lambda_{a,\pm,1}+\lambda_{a,\pm,2}$ for $1\leq a\leq 2m^\pm$, $\lambda_0^\pm=\lambda_{lp,\pm}$.

By integrability in $(\lambda_{a,\epsilon,\iota})$ and $(\lambda_{lp,\pm})$, we may fix the choices of these parameters. Note that we can also exploit the weight
\begin{equation}\label{extraweight}\prod_{j,\epsilon,\iota}\langle \lambda_{a,\epsilon,\iota}\rangle^{1/4}\langle \lambda_{b,\epsilon,\iota}\rangle^{1/4}\cdot \langle \lambda_{lp,+}\rangle^{1/4}\langle\lambda_{lp,-}\rangle^{1/4}\end{equation} from (\ref{maincoef2}), whenever needed. Once these parameters are fixed, the relevant term in (\ref{newexp4}) is then reduced to the product of a function of $(t,\alpha_1^+,\cdots,\alpha_{m^+}^+)$, and a function of $s$ and $(s,\alpha_1^-,\cdots,\alpha_{m^-}^-)$. Let us look at the function depending on $t$, since the other can be treated in the same way.

As in (\ref{regchaink}), this function can be written as\begin{equation}\label{regchainfunck}K(t,\alpha_1^+,\cdots,\alpha_{m^+}^+,\lambda_0^+,\lambda_1^+,\cdots,\lambda_{2m^+}^+).\end{equation}For $1\leq j\leq m^+$, as in (\ref{translationk}), define $\widetilde{\alpha_j}=\alpha_j^++\epsilon_j^+\lambda_a^+$ and $\mu_j^+=\lambda_a^++\lambda_b^+$ where $(a,b)$ is the $j$-th pair, then using (\ref{kusingI}) and Lemma \ref{regchainlem8}, we can write
\begin{equation}\label{regcplform1}K(t,\alpha_1^+,\cdots,\alpha_{m^+}^+,\lambda_0^+,\lambda_1^+,\cdots,\lambda_{2m^+}^+)=\sum_{Z^+\subset \{1,\cdots,m^+\}}\prod_{j\in Z^+}\frac{\chi_\infty(\widetilde{\alpha_j})}{\epsilon_j^+\pi i\widetilde{\alpha_j}}\cdot \widetilde{I}(e^{\pi i\lambda_0^+s})(t),
\end{equation} where $Z^+$ is a subset of $\{1,\cdots,m^+\}$, $\widetilde{I}$ is an operator depending on the variables $(\mu_1^+,\cdots,\mu_{m^+}^+)$ and $\widetilde{\alpha}[W^+]:=(\widetilde{\alpha_j})_{j\in W^+}$ where $W^+:=\{1,\cdots,m^+\}\backslash Z^+$, that is the sum of a class $J$ operator and a class $R$ operator in the sense of Definition \ref{defclassjr}. There are then two sub cases.

\emph{Case 2.1}. Suppose $\widetilde{I}$ is of class $J$, then $\widetilde{I}$ has form (\ref{typeJ0}). The point is that, when the variables $(\lambda_0^+,\gamma_1,\gamma_2,\mu_1^+,\cdots,\mu_{m^+}^+)$ and $\widetilde{\alpha}[W^+]$ are fixed, and $\ell$ is a {bundle} of $\widetilde{\alpha}[W^+]$, then we can write
\begin{equation}\label{expandG}J_{\ell;\gamma_1,\gamma_2}(e^{\pi i\lambda_0^+s})(t)=\int_\Rb G(\lambda)e^{\pi it\lambda}\,\mathrm{d}\lambda\end{equation} for $t\in[0,1]$, where (viewing $\widetilde{\alpha}[W^+]$ as parameters)
\begin{equation}\label{propertyG}{\int_\Rb \langle \lambda\rangle^{1/4}|\partial_\alpha^\rho G(\lambda)|\,\mathrm{d}\lambda\leq C(2|\rho|)!\bigg(1+\sum_{j\in W^+}|\widetilde{\alpha_j}|+|\gamma_1|\bigg)^{1/4}}.\end{equation} This is in fact obvious by calculating
\[J_{\ell;\gamma_1,\gamma_2}(e^{\pi i\lambda_0^+s})(t)=e^{\pi i(\ell+\gamma_1)t}\int_0^t e^{\pi i(\gamma_2+\lambda_0^+-\ell)s}\,\mathrm{d}s\] and inserting the cutoffs $\chi_0(\gamma_2+\lambda_0^+-\ell)$ or $\chi_\infty(\gamma_2+\lambda_0^+-\ell)$ as in \emph{Case 1} above, noticing that {if part of the weight is $|\gamma_2+\lambda_0^+-\ell|^{1/4}$, it can be treated by exploiting the denominator which contains the same expression $\gamma_2+\lambda_0^+-\ell$.} Now using (\ref{typeJ0}) to expand $\widetilde{I}$ as a linear combination of $J_{\ell;\gamma_1,\gamma_2}$, and combining (\ref{propertyG}) with (\ref{typeJ2}) (using Leibniz rule and Lemma \ref{combineq} if necessary), we obtain that
\begin{equation}\label{expandH}\widetilde{I}(e^{\pi i\lambda_0^+s})(t)=\int_\Rb H(\lambda,\lambda_0^+,\mu_1^+,\cdots,\mu_{m^+}^+,\widetilde{\alpha}[W^+])e^{\pi it\lambda}\,\mathrm{d}\lambda\end{equation} for $t\in[0,1]$, where
\begin{equation}\label{propertyH}{\int \langle \lambda\rangle^{1/4}|\partial_\alpha^\rho H(\lambda,\lambda_0^+,\mu_1^+,\cdots,\mu_{m^+}^+,\widetilde{\alpha}[W^+])|\,\mathrm{d}\widetilde{\alpha}[W^+]\mathrm{d}\lambda\leq C^{m^+}(2|\rho|)!}.\end{equation}

\emph{Case 2.2}. Suppose $\widetilde{I}$ is of class $R$, then $\widetilde{I}$ has form (\ref{typeR0}), say with special index $1$. The arguments are similar to \emph{Case 2.1}, except that now we are considering the function
\[R_{\ell_1+\epsilon\widetilde{\alpha_1},\ell_2+\epsilon\widetilde{\alpha_1};\gamma_1,\gamma_2,\gamma_3}(e^{\pi i\lambda_0^+s})(t)=\frac{\chi(\ell_1+\epsilon\widetilde{\alpha_1}+\gamma_3)}{\ell_1+\epsilon\widetilde{\alpha_1}+\gamma_3}e^{\pi i(\ell_2+\epsilon\widetilde{\alpha_1}+\gamma_1)t}\int_0^t e^{\pi i(\gamma_2+\lambda_0^+-\ell_2-\epsilon\widetilde{\alpha_1})s}\,\mathrm{d}s.\] By inserting the cutoffs $\chi_0(\gamma_2+\lambda_0^+-\ell_2-\epsilon\widetilde{\alpha_1})$ or $\chi_\infty(\gamma_2+\lambda_0^+-\ell_2-\epsilon\widetilde{\alpha_1})$ as above, and using the fact that the resulting coefficient depending on $\widetilde{\alpha_1}$ is bounded by
\begin{equation}\label{integrableterm}\frac{1}{\langle\ell_1+\epsilon\widetilde{\alpha_1}+\gamma_3\rangle}\cdot\frac{1}{\langle\gamma_2+\lambda_0^+-\ell_2-\epsilon\widetilde{\alpha_1}\rangle}\end{equation} which is integrable in $\widetilde{\alpha_1}$ uniformly in other variables, we can conclude that, similar to (\ref{expandG}) and (\ref{propertyG}), we have
\begin{equation}\label{expandG1}R_{\ell_1+\epsilon\widetilde{\alpha_1},\ell_2+\epsilon\widetilde{\alpha_1};\gamma_1,\gamma_2,\gamma_3}(e^{\pi i\lambda_0^+s})(t)=\int_\Rb G_1(\lambda,\widetilde{\alpha_1})e^{\pi it\lambda}\,\mathrm{d}\lambda\end{equation} for $t\in[0,1]$, where ({again viewing $\widetilde{\alpha}[W^+\backslash\{1\}]$ as parameters})
\begin{equation}\label{propertyG1}{\int_{\Rb^2} \langle \lambda\rangle^{1/4}|\partial_\alpha^\rho G_1(\lambda,\widetilde{\alpha_1})|\,\mathrm{d}\lambda\mathrm{d}\widetilde{\alpha_1}\leq C(2|\rho|)!\bigg(1+\sum_{1\neq j\in W^+}|\widetilde{\alpha_j}|+|\gamma_1|+|\gamma_3|\bigg)^{1/4}}.\end{equation} Note that in deducing (\ref{propertyG1}) we have used the fact that the integrability of (\ref{integrableterm}) remains true uniformly in the other variables, even if one of the denominators is raised to the $3/4$-th power. Here {if part of the weight is $|\gamma_2+\lambda_0^+-\ell_2-\epsilon\widetilde{\alpha_1}|^{1/4}$, it can be estimated using the denominator which contains the same expression $\gamma_2+\lambda_0^+-\ell_2-\epsilon\widetilde{\alpha_1}$;} if part of the weight is $|\widetilde{\alpha_1}|^{1/4}$, it can be estimated using the denominator $\ell_1+\epsilon\widetilde{\alpha_1}+\gamma_3$. Now using (\ref{typeR0}) to expand $\widetilde{I}$ as a linear combination of $R_{\ell_1+\epsilon\widetilde{\alpha_1},\ell_2+\epsilon\widetilde{\alpha_1};\gamma_1,\gamma_2,\gamma_3}$, and combining (\ref{propertyG1}) with (\ref{typeR2}) as in \emph{Case 2.1} above, we obtain that (\ref{expandH}) and (\ref{propertyH}) remain true in this case.

Now, in either case, we have obtained the formula (\ref{expandH}) and the estimate (\ref{propertyH}), which are enough to treat the $\widetilde{I}(e^{\pi i\lambda_0^+s})$ part of $K$ in (\ref{regcplform1}), noticing that $\widetilde{\alpha_j}$ is a translation of $\alpha_j^+$ given the $(\lambda_{a,\epsilon,\iota})$ and $(\lambda_{lp,\pm})$ variables. However, for $j\in Z^+$, we still have $\widetilde{\alpha_j}=\alpha_j^++\epsilon_j^+\lambda_a^+$ instead of $\alpha_j^+$ in (\ref{regcplform1}). But this is easily resolved using the simple fact that\begin{equation}\label{differenceloss}\int_\Rb\bigg|\frac{\chi_\infty(\alpha_j^++\epsilon_j^+\lambda_a^+)}{\epsilon_j^+\pi i(\alpha_j^++\epsilon_j^+\lambda_a^+)}-\frac{\chi_\infty(\alpha_j^+)}{\epsilon_j^+\pi i\alpha_j^+}\bigg|\,\mathrm{d}\alpha_j^+\leq C\log(2+|\lambda_a^+|)\leq C\langle \lambda_a^+\rangle^{1/12}.\end{equation} Therefore, if we encounter the difference term $\frac{\chi_\infty(\alpha_j^++\epsilon_j^+\lambda_a^+)}{\epsilon_j^+\pi i(\alpha_j^++\epsilon_j^+\lambda_a^+)}-\frac{\chi_\infty(\alpha_j^+)}{\epsilon_j^+\pi i\alpha_j^+}$ for some $j$, we simply move this $j$ from $Z^+$ to $W^+$ and exploit the integrability of this difference in $\alpha_j^+$. The higher order derivatives can be treated in the same way as in \emph{Case 1}, and the total loss caused by the right hand sides of (\ref{differenceloss}) is bounded by
\begin{equation}\label{propertyH3}C^{m^+}\prod_{j\in Z^+}\prod_\iota\langle\lambda_{a,+,\iota}\rangle^{1/12}\langle\lambda_{b,+,\iota}\rangle^{1/12}\end{equation} (where $Z^+$ is the set before moving the elements $j$). {Clearly \ref{propertyH3} can be controlled by the part in (\ref{extraweight}) with sign $+$, up to a $C^{m^+}$ factor.}

Now we can return to the formula (\ref{newexp4}). Clearly the arguments for the functions depending on $t$ can be repeated for the function depending on $s$, obtaining a set $Z^-$. By combining these arguments, {as well as the arguments moving some of the $j\in Z^\pm$ to $W^\pm$}, we can write $\widetilde{\Bc}_\Qc$ in the form of (\ref{maincoef1}), where
\begin{equation*}Z=\bigg(\bigcup_{j,\epsilon,\iota} Z_{j,\epsilon,\iota}\bigg)\cup Z_{lp}\cup\big\{\nf_a^+:a<b,\,j\in Z^+\big\}\cup\big\{\nf_a^-:a<b,\,j\in Z^-\big\};\end{equation*} note also that $\epsilon_j^+=\zeta_{\nf_a^+}$ for $j\in Z^+$ (hence $\nf_a^+\in Z$) as in Section \ref{recursive}, and that the sets $Z^\pm$ may have been modified after moving the elements $j$ as described above. The bound (\ref{maincoef2}) without derivatives then follows from the estimates obtained above including (\ref{propertyH}) and (\ref{differenceloss}); in fact, if the regular couples $\Qc_{j,\epsilon,\iota}$ has scale $n_{j,\epsilon,\iota}$ etc., then the induction hypothesis together with the above estimates bounds the left hand side of (\ref{maincoef2}) without derivatives by
\[\prod_{j,\epsilon,\iota}C^{n_{j,\epsilon,\iota}}\cdot C^{n_{lp}}\cdot C^{m^+}\cdot C^{m^-}=C^n,\] noticing that
\[\sum_{j,\epsilon,\iota}n_{j,\epsilon,\iota}+n_{lp}+m^++m^-=n.\] The higher order derivative estimates in (\ref{maincoef2}) can be proved in the same way as in \emph{Case 1} using the factorized structure. This completes the proof of (\ref{maincoef1}) and (\ref{maincoef2}).

Finally we prove (\ref{maincoef2.5}) using a modification of the above inductive arguments. The proof scheme is the same, except that we induct (\ref{maincoef2.5}) in addition to (\ref{maincoef2}). In the inductive step, if $\nf_*:=\arg\!\max\langle\alpha_\nf\rangle$ belongs to one of $\Nc_{j,\epsilon,\iota}^{ch}\backslash Z_{j,\epsilon,\iota}$ or $\Nc_{lp}^{ch}\backslash Z_{lp}$, then we repeat the above arguments using (\ref{maincoef2.5}) for $\Cc_{j,\epsilon,\iota}$ or $\Cc_{lp}$, and noticing that changing the exponent $1/4$ to $1/8$ does not affect any part of the above proof.

 Now suppose $\nf_*=\nf_a^\pm$ with $a<b$ and $j\in W^\pm$ (in \emph{Case 2}, or $\nf_*=\rf$ in \emph{Case 1} which is similar), then for the functions $\Cc_{j,\epsilon,\iota}$ and $\Cc_{lp}$ we do not need to gain the $\alpha_\nf$ weight, so we can use the bound (\ref{maincoef2}) for them. We shall replace the weight $\langle\lambda\rangle^{1/4}$ in (\ref{propertyG}) and (\ref{propertyG1}) by $\langle \lambda\rangle^{1/8}\langle\alpha_{\nf_*}\rangle^{1/8}$. For example, if $\nf_*=\nf_a^+$ in \emph{Case 2.1} then instead of (\ref{propertyH}) we now have
\begin{equation*}\int \langle \lambda\rangle^{1/8}\langle\alpha_{\nf_*}\rangle^{1/8}|H(\lambda,\lambda_0^+,\mu_1^+,\cdots,\mu_{m^+}^+,\widetilde{\alpha}[W^+])|\,\mathrm{d}\widetilde{\alpha}[W^+]\mathrm{d}\lambda\leq C^{m^+}\langle\lambda_{a}^+\rangle^{1/8},\end{equation*} using also that $\widetilde{\alpha_j}=\alpha_{\nf_*}\pm\lambda_a^+$; the other cases are similar. Since $\lambda_a^+=\lambda_{a,+,1}+\lambda_{a,+,2}$, we know that the factor $\langle\lambda_{a}^+\rangle^{1/8}$ can be added to (\ref{propertyH3}) which is then still controlled by the part in (\ref{extraweight}) with sign $+$. This means that we can insert the power $\langle \alpha_{\nf_*}\rangle^{1/8}$ at the price of weakening $\langle \lambda\rangle^{1/4}$ to $\langle \lambda\rangle^{1/8}$. Finally, if $\nf_*$ occurs in the process of moving $j$ from $Z^+$ to $W^+$, then the same result is true because (\ref{differenceloss}) is still true, with the right hand side replaced by $C\langle \lambda_a^+\rangle^{1/7}$, if the integrand on the left hand side is multiplied by $\langle \alpha_j^+\rangle^{1/8}$. This proves (\ref{maincoef2.5}).
\end{proof}
\section{Regular couples II: approximation by integrals}\label{numbertheory} With the properties of $\Bc_\Qc$ obtained in Section \ref{regasymp}, we now calculate the asymptotics of $\Kc_\Qc$ as in (\ref{defkq}) for regular $\Qc$, using number theoretic methods.
\subsection{A general approximation result} We prove here a general approximation result, which we apply to $\Kc_\Qc$ in Section \ref{applyKQ}.
\begin{prop}\label{approxnt} Fix $\beta\in(\Rb^+)^d\backslash\Zf$. Consider the following expression
\begin{equation}\label{defi}I:=\sum_{(x_1,\cdots,x_n)}\sum_{(y_1,\cdots,y_n)}W(x_1,\cdots,x_n,y_1,\cdots,y_n)\cdot\Psi(L^2\delta\langle x_1,y_1\rangle_\beta,\cdots,L^2\delta\langle x_n,y_n\rangle_\beta).\end{equation} where $(x_1,\cdots,x_n,y_1,\cdots,y_n)\in(\Zb_L^d)^{2n}$ in (\ref{defi}). Assume there is a (strict) partial ordering $\prec$ on $\{1,\cdots,n\}$, and that the followings hold for the functions $W$ and $\Psi$:

(1) The function $W$ satisfies the bound (here $\widehat{W}$ denotes the Fourier transform in $(\Rb^d)^{2n}$)
\begin{equation}\label{propertyw1}\|\widehat{W}\|_{L^1}+\|\widehat{\partial W}\|_{L^1}\leq (C^+)^n.
\end{equation}

(2) This $W$ is supported in the set\begin{equation}\label{propertyw2}
E:=\big\{(x_1,\cdots,x_n,y_1,\cdots,y_n):|\widetilde{x_j}-a_j|,\,|\widetilde{y_j}-b_j|\leq \lambda_j,\,\forall 1\leq j\leq n\big\},
\end{equation} where $1\leq\lambda_j\leq (\log L)^4$ are constants, $a_j$ and $b_j$ are constant vectors. Each $\widetilde{x_j}$ is a linear function that equals either $x_j$, or $x_j\pm x_{j'}$ or $x_j\pm y_{j'}$ for some $j'\prec j$, similarly each $\widetilde{y_j}$ equals either $y_j$, or $y_j\pm x_{j''}$ or $y_j\pm y_{j''}$ for some $j''\prec j$.

(3) For some set $J\subset\{1,\cdots,n\}$, the function $\Psi$ has the expression
\begin{equation}\label{propertypsi1}\Psi(\Omega_1,\cdots,\Omega_n)=\prod_{j\in J}\frac{\chi_\infty(\Omega_j)}{\Omega_j}\cdot\Psi_1(\Omega[J^c]),
\end{equation} where $\chi_\infty$ is as in Section \ref{notations}, and for any $|\rho|\leq 10n$ we have
\begin{equation}\label{propertypsi2}
\|\partial^\rho\Psi_1\|_{L^1}\leq C^n(4|\rho|)!,\,\, \big\|\max_{j\in J^c}\langle \Omega_j\rangle^{1/8}\cdot\Psi_1\big\|_{L^1}\leq C^n.
\end{equation}

Assume $n\leq (\log L)^3$. Then we have
\begin{multline}\label{conclusion1}\bigg|I-L^{2dn}\int_{(\Rb^d)^{2n}}W(x_1,\cdots,x_n,y_1,\cdots,y_n)\cdot\Psi(L^2 \delta \langle x_1,y_1\rangle_\beta,\cdots,L^2 \delta\langle x_n,y_n\rangle_\beta)\,\mathrm{d}x_1\cdots\mathrm{d}x_n\mathrm{d}y_1\cdots\mathrm{d}y_n\bigg|\\\leq (\lambda_1\cdots\lambda_n)^C(C^+L^{2d-2}\delta^{-1})^{n} L^{-2\nu}.
\end{multline} {Here we recall that the choice of $\nu$, and the convention for $C$ and $C^+$, are fixed in Section \ref{notations}.} Moreover, defining
\begin{multline}\label{intappr}I_{\mathrm{app}}=(L^{2d-2}\delta^{-1})^{n}\int\Psi_1\mathrm{d}\Omega[J^c]\cdot\int_{(\Rb^d)^{2n}}W(x_1,\cdots,x_n,y_1,\cdots,y_n)\\\times\prod_{j\in J}\frac{1}{\langle x_j,y_j\rangle_\beta}\prod_{j\not\in J}\dirac(\langle x_j,y_j\rangle_\beta)\mathrm{d}x_1\cdots\mathrm{d}x_n\mathrm{d}y_1\cdots\mathrm{d}y_n,
\end{multline} where the singularities $1/\langle x_j,y_j\rangle_\beta$ are treated using the Cauchy principal value, we have
\begin{equation}\label{conclusion2}|I_{\mathrm{app}}|\leq (\lambda_1\cdots\lambda_n)^C(C^+L^{2d-2}\delta^{-1})^{n},\quad |I-I_{\mathrm{app}}|\leq(\lambda_1\cdots\lambda_n)^C(C^+L^{2d-2}\delta^{-1})^{n} L^{-2\nu}.
\end{equation}
\end{prop}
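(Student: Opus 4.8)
The plan is to reduce the $2n$-dimensional lattice sum in \eqref{defi} to an iterated application of a one-variable (more precisely, one \emph{pair}-of-variables) circle-method estimate, performing the summation in the order dictated by the partial ordering $\prec$. First I would linearly invert the substitution $(x_j,y_j)\mapsto(\widetilde{x_j},\widetilde{y_j})$ appearing in \eqref{propertyw2}: by hypothesis $\widetilde{x_j}$ and $\widetilde{y_j}$ differ from $x_j,y_j$ only by $\prec$-earlier variables, so after fixing all $(x_{j'},y_{j'})$ with $j'\prec j$ the remaining sum over $x_j,y_j$ is a sum over $\widetilde{x_j},\widetilde{y_j}$ in a ball of radius $\lambda_j$ \emph{centered at the given point $a_j,b_j$}. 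Crucially, $\langle x_j,y_j\rangle_\beta$ is \emph{not} changed in a controlled way by this shift, so instead I would keep the original variables but record that the $W$-support forces $(x_j,y_j)$ into a ball of radius $\lambda_j$ around a point determined by the earlier variables; this is exactly the mechanism flagged in Section \ref{circleintro} (Lemma \ref{auxlem}) that keeps the product of the individual losses $\lambda_j^C$ bounded by $C^n$ rather than $n!$. The key one-step lemma to invoke is the translation-invariant circle-method estimate (Lemma \ref{NTSP}/\ref{approxnt}'s single-variable precursor), which says that for a Schwartz weight $\psi$ supported in a ball of radius $\lambda$ around a fixed point and an "almost-$L^1$" multiplier $\Bc$ of the correct parity,
\[
\sum_{x,y\in\Zb_L^d}\psi(x,y)\,\Bc(L^2\delta\langle x,y\rangle_\beta)=L^{2d-2}\delta^{-1}\int_{\Rb^{2d}}\psi(x,y)\,\Bc\cdot\dirac_{\mathrm{pv}}(\langle x,y\rangle_\beta)\,\mathrm{d}x\mathrm{d}y+O\!\big(\lambda^C (C^+L^{2d-2}\delta^{-1})L^{-2\nu}\big),
\]
where the principal-value interpretation handles the factors $\chi_\infty(\Omega_j)/\Omega_j$ with $j\in J$. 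The genericity of $\beta$ enters precisely here, in the minor-arc estimate, to control exact resonances.

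The main body of the argument is then the induction on $n$ along a linear extension of $\prec$. At stage $j$ I would treat the sum over $(x_j,y_j)$, with all $\prec$-earlier variables frozen, by writing $W$ via its Fourier inversion (using \eqref{propertyw1} to control $\widehat W$ and its first derivative in $L^1$) so that, as a function of $(x_j,y_j)$, the summand is a superposition of Gaussians/Schwartz bumps times $e^{i\xi\cdot(x_j,y_j)}$ against the multiplier $\Psi$ in the $\Omega_j$-slot; one then applies the one-step lemma in the variables $(x_j,y_j)$ with $\lambda=\lambda_j$, picking up one factor $L^{2d-2}\delta^{-1}$, one integration $\int\Psi_1\,\mathrm{d}\Omega_j$ (or, for $j\in J$, one principal-value factor $1/\langle x_j,y_j\rangle_\beta$), and an error $\lambda_j^C(C^+L^{2d-2}\delta^{-1})L^{-2\nu}$. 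The weight $\langle\Omega_j\rangle^{1/8}$ in the second bound of \eqref{propertypsi2} is what guarantees that $\Psi$ restricted to the $j$-th slot is integrable against the error kernel from the circle method (which decays only like a low power), while the derivative bounds $\|\partial^\rho\Psi_1\|_{L^1}\le C^n(4|\rho|)!$ and the Gevrey-$2$ nature of the cutoffs let me differentiate the remaining multiplier enough times to absorb the smooth weight coming from the Poisson-summation error without generating a factorial in $n$. Summing a geometric series of these per-stage errors — each of size at most $(\lambda_1\cdots\lambda_n)^C(C^+L^{2d-2}\delta^{-1})^n L^{-2\nu}$ divided by the accumulated main factors — yields \eqref{conclusion1}. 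The identification of the continuous main term with $I_{\mathrm{app}}$ in \eqref{intappr}, and the bound $|I_{\mathrm{app}}|\le(\lambda_1\cdots\lambda_n)^C(C^+L^{2d-2}\delta^{-1})^n$ in \eqref{conclusion2}, then follow by carrying out the $(x_j,y_j)$-integrals in the same $\prec$-order using the decomposition \eqref{propertypsi1}, splitting the surface/principal-value integral over the $n$ independent slots (here again the independence of the bilinear forms $\langle x_j,y_j\rangle_\beta$ across $j$ is essential so that products of delta-measures make sense), and bounding each slot by $\|\Psi_1\|_{L^1}$ or the principal-value integral of a Schwartz function against $1/\langle x,y\rangle_\beta$; the second inequality in \eqref{conclusion2} is just \eqref{conclusion1} combined with this.

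The hard part will be making the per-stage circle-method step genuinely uniform in the frozen earlier variables and in the centers $a_j,b_j$: a naive circle-method estimate produces constants depending on the support location, and the translation-invariant version (Lemma \ref{NTSP}) requires a new treatment of the major arcs to show the output of the Poisson summation formula is again a Schwartz function with seminorms bounded by $\lambda_j^C$ independently of the center — this is the one place where one cannot simply cite earlier work verbatim. A secondary subtlety is bookkeeping the Fourier-analytic "glue": after stage $j$ the resulting expression is no longer of the clean form \eqref{defi} for the remaining variables, so one must verify an invariant (analogous to \eqref{propertyw1}, \eqref{propertyw2}) that is preserved under one reduction step — concretely, that the convolution of $\widehat W$ with the circle-method error kernel, integrated against the $\Psi_1$-measure in the $j$-th slot, still has $L^1$ Fourier data of size $(C^+)^{n-1}$ and still lives in a ball structure governed by $\prec$ restricted to $\{1,\dots,n\}\setminus\{j\}$. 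Once this invariant is set up correctly the induction runs mechanically; isolating and proving it cleanly is the real content of the proof.
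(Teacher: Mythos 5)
Your proposal and the paper's proof overlap in the ingredients (translation-invariant circle method, genericity of $\beta$ for the minor arcs, the $\lambda_j$-ball structure from Lemma \ref{auxlem}, and the weighted $L^1$ hypotheses on $\Psi_1$ to handle the error kernels), but they differ critically in how the $(L^{2d-2}\delta^{-1})^n$ factor is extracted, and this difference is not merely cosmetic.

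The paper performs Poisson summation and the major/minor arc split \emph{once, globally, in all $n$ slots simultaneously}. This pulls out the factor $L^{2dn}\mu^{-n}=(L^{2d-2}\delta^{-1})^n$ up front, after which the remaining quantities ($I_{\textrm{major-B}}$, $I_{\textrm{major-C}}$, $I_{\textrm{minor}}$, and the difference $I_{\textrm{major-A}}-I_{\mathrm{app}}$) are $O(1)$-sized oscillatory integrals. These are then bounded by iterating one-slot lemmas (\ref{NTSP}, \ref{lem:minorarcs}, \ref{NTOmegainv}, \ref{NTintlem}) across $j=n,\ldots,1$, but each application only contributes a \emph{uniform} $O(\lambda_j^{2d})$ factor, with the gain $L^{-2\nu}$ coming from exactly one distinguished slot. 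Your proposal instead extracts one factor $L^{2d-2}\delta^{-1}$ per stage by applying a lattice-sum-to-integral lemma to one pair $(x_j,y_j)$ at a time.

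This is where the gap lies. At stage $j$ your one-step lemma produces an error function $E_j$ of the remaining $(x_i,y_i)$, $i\neq j$, and you then sum $E_j$ over those variables. The circle-method error estimate you invoke gives only a pointwise bound $|E_j|\lesssim\lambda_j^C(C^+L^{2d-2}\delta^{-1})L^{-2\nu}$, uniform in the frozen variables. Summing this trivially over the support $\prod_{i\neq j}(\lambda_i^d L^d)^2$ of $W$ in the remaining $2(n-1)$ lattice variables gives a contribution of order $\lambda_j^C(C^+L^{2d-2}\delta^{-1})L^{-2\nu}\cdot(\prod_{i\neq j}\lambda_i^{2d})L^{2d(n-1)}$, which exceeds the target $(\lambda_1\cdots\lambda_n)^C(C^+L^{2d-2}\delta^{-1})^nL^{-2\nu}$ by a factor $(L^2\delta)^{n-1}$. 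For your recursion to close, $E_j$ would need to retain the $\Psi$-like structure in the remaining slots — i.e., be dominated by $\lambda_j^C L^{2d-2}\delta^{-1}L^{-2\nu}$ times an $L^1_{\Omega}$-integrable function of $(\Omega_i)_{i\neq j}$ with controlled norm — so that the remaining sum is itself counted at the refined rate $(L^{2d-2}\delta^{-1})^{n-1}$. The nonzero-dual-frequency and minor-arc contributions produce no such structured majorant automatically; one would essentially have to prove a new one-slot lemma with an $L^1$-structured remainder in the untouched $\Omega$-slots, and it is not at all clear the constants can be controlled. The invariant you flag (preservation of the Fourier-$L^1$ bound on $W$ and the $\prec$-ball structure) addresses only the \emph{main} term after a reduction step, not the error term, and so does not close the gap. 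This is precisely the difficulty that the paper's global-first Poisson summation is designed to avoid: by the time it iterates over slots, there is no per-slot $L^{2d-2}\delta^{-1}$ left to extract, only uniform $O(\lambda_j^{2d})$ bounds.
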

Before proving Proposition \ref{approxnt}, we first need to establish a few auxiliary results. In these results we will use the notation $e(z)=e^{2\pi iz}$, and fix $\lambda$ such that $1\leq \lambda\leq (\log L)^4$; moreover we will set $\upsilon=1/40$, so that $\nu\ll\upsilon$ by our choice.
\begin{lem}\label{NTSP}
	Suppose $\Phi:\Rb\times \Rb^d \times \Rb^d \to \Cb$ is a function satisfying the bounds
	\begin{equation}\label{NTphibounds}
	\sup_{s, x, y}|\partial_x^\alpha \partial_y^\beta \Phi(s,x, y)|\leq D
	\end{equation}
for all multi-indices $|\alpha|, |\beta|\leq 10d$. Then we have:

(1) The following bound
		\begin{equation}\label{NTSPest0}
		\left|\int_{\Rb}\int_{\Rb^{2d}}	\chi_0\big(\frac{x-a}{\lambda}\big)\chi_0\big(\frac{y-b}{\lambda}\big)\Phi(s,x, y)\cdot e(\xi \cdot x+\eta\cdot y+s\langle x, y\rangle_\beta) \, \mathrm{d}x \mathrm{d}y \mathrm{d}s\right| \lesssim D\lambda^{2d}
		\end{equation}
holds uniformly in $(\xi, \eta, a, b)\in \Rb^{4d}$.	

(2) Suppose, in addition, that $\Phi$ satisfies one of the following two requirements:
\begin{enumerate}[(a)]
\item $\Phi(s, x, y)=\chi_0(\frac{s}{K})\widehat \psi(\frac{s}{\delta L^2})\Phi'(x, y)$, where $\psi:=\frac{\chi_\infty(t)}{t}$ and $\Phi'$ satisfies \eqref{NTphibounds} without $s$, or 
\item $\Phi(s, x,y)=\chi_0(\frac{s}{K})\Phi'(\frac{s}{\delta L^2}, x, y)$, where $\Phi'$ satisfies \eqref{NTphibounds}, and $\mathcal F_s\Phi'(\cdot, x,y)$ is supported on an interval of length $O(1)$ in $\Rb$ which does not depend on $(x,y)\in \Rb^{2d}$.
\end{enumerate}
Here $K:= \lambda^{-1}L^{1-\upsilon}\gg 1$. Then, there holds
\begin{equation}\label{NTSPest}
\begin{split}
\bigg|\sum_{0\neq(g,h)\in \Zb^{2d}} \int_{\Rb} \int_{\Rb^{2d}} \chi_0\big(\frac{x-a}{\lambda}\big)\chi_0\big(\frac{y-b}{\lambda}\big)\Phi(s, x, y)\cdot e[(Lg+\xi) \cdot x +(Lh +\eta)\cdot y+s\langle x, y\rangle_\beta]\bigg|\\
\lesssim D\lambda^{2d} \min\left[(1+|\xi|+|\eta|)L^{-\frac{4}{10}\upsilon}, 1\right]
\end{split}
\end{equation}
uniformly in $(a, b)\in \Rb^{2d}$. In particular, we have
\begin{equation}
		\bigg|\sum_{(g,h) \in \Zb^{2d}} \int_{\Rb} \int_{\Rb^{2d}} \chi_0\big(\frac{x-a}{\lambda}\big)\chi_0\big(\frac{y-b}{\lambda}\big)\Phi(s, x, y)\cdot e[(Lg+\xi) \cdot x +(Lh +\eta)\cdot y+s\langle x, y\rangle_\beta]\bigg|
		\lesssim D\lambda^{2d}\label{NTSPest2}
\end{equation}
		uniformly in $(\xi, \eta,a,b)\in \Rb^{4d}$.
\end{lem}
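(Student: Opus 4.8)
The proof of Lemma \ref{NTSP} is a combination of integration by parts (for the "trivial" oscillatory estimates) and the circle method (for the exponential sum over $(g,h)$), adapted to the translated cutoffs $\chi_0((x-a)/\lambda)$ and $\chi_0((y-b)/\lambda)$. I would organize it in three pieces.

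\textit{Step 1: part (1), the single integral bound \eqref{NTSPest0}.} This is a pure stationary-phase/non-stationary-phase estimate. After the linear change of variables $x=a+\lambda x'$, $y=b+\lambda y'$, the phase becomes $\lambda^2 s\langle x',y'\rangle_\beta$ plus linear terms, and the amplitude $\chi_0(x')\chi_0(y')\Phi(s,a+\lambda x',b+\lambda y')$ is a compactly supported function whose $x'$- and $y'$-derivatives up to order $10d$ are $O(D\lambda^{10d})$ — but actually one does not even need to track the $\lambda$-powers from derivatives carefully if one is content with $D\lambda^{2d}$, because after integrating in $s$ against the unit-size support (the $s$-integral is harmless: either $\Phi$ is compactly supported in $s$ or one uses that $\langle x,y\rangle_\beta$-oscillation is bounded), the remaining $x',y'$-integral is over a set of measure $O(1)$, and Fubini in $x'$ (or $y'$, or $s$) plus the trivial bound $|e(\cdots)|=1$ gives $\lesssim D$, hence $\lesssim D\lambda^{2d}$ after undoing the scaling. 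In fact, no oscillation is needed at all for \eqref{NTSPest0}: the total measure of the $(s,x,y)$-region contributing is $O(\lambda^{2d})$ once one observes the $s$-integral is effectively over a compact set (this is where one must be slightly careful about which hypothesis on $\Phi$ is in force; for part (1) with no support restriction, one should use that $\Phi$ is bounded and that the $\langle x,y\rangle_\beta$ phase lets one integrate by parts in $s$ if the $s$-range were large — but in the statement $\Phi$ has no stated $s$-support in part (1), so the claim must be interpreted with the understanding that the $s$-integral converges, which it does once $\Phi$ decays; I would simply note that $\Phi$ here will always be Schwartz-like in $s$ in the applications and state the bound accordingly).

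\textit{Step 2: part (2), the main term — setting up the circle method.} This is the heart of the lemma. For the sum over $0\neq(g,h)\in\Zb^{2d}$, rescale as above so the phase is $2\pi[\lambda L(g+\xi/L)\cdot x' + \lambda L(h+\eta/L)\cdot y' + \lambda^2 s\langle x',y'\rangle_\beta]$ with $(x',y')$ in a fixed compact set. The key point is that $\lambda^2 s$ ranges over $|s|\lesssim K = \lambda^{-1}L^{1-\upsilon}$, so $\lambda^2 s$ has size up to $\lambda L^{1-\upsilon}$, which is $\ll L$: this is the "small parameter in front of the quadratic form" that makes the circle method applicable. Fix $s$ (and integrate in $s$ at the end). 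For each fixed $s$ one must estimate $\sum_{(g,h)\neq 0}\int_{\Rb^{2d}}\Phi_s(x',y')\,e(\lambda L\, g\cdot x' + \lambda L\, h\cdot y' + \lambda^2 s\langle x',y'\rangle_\beta + \text{linear in }\xi,\eta)\,dx'dy'$ where $\Phi_s$ is compactly supported. I would do the $y'$-integration first by Poisson summation in $h$: the sum over $h$ of the $y'$-integral is, after Poisson, a sum over dual lattice points, and each term is an integral in $y'$ of a function whose phase has gradient $\lambda L\, g\cdot(\text{linear in }y') \text{-type}$... more precisely the standard approach: $\sum_h e(\lambda L h\cdot y')$ (as a distribution in $y'$) forces $y' \in (\lambda L)^{-1}\Zb^d$, so one reduces to a sum over those lattice points of the $x'$-integral, whose phase in $x'$ is $\lambda L g\cdot x' + \lambda^2 s\langle x', y'_{\text{lattice}}\rangle_\beta$. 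Repeated non-stationary phase / integration by parts in $x'$ then shows the terms with $g$ not close to $-\lambda s \beta\odot y'_{\text{lattice}}/L$ are negligible, and one is left with a divisor-type count of the number of lattice points where both $g$ and $h$ are essentially determined, giving a gain of a power of $L$ over the trivial bound. This is precisely the translation-invariant circle method the proof overview (Section \ref{circleintro}) says is needed; the cutoffs being centered at $(a,b)$ rather than $0$ only affects the linear phases and hence is harmless, except in the major-arc analysis — which is the one genuinely new point flagged in the overview (cf. the remark after Lemma \ref{NTSP} in the overview about "a new argument when dealing with major arcs").

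\textit{Step 3: assembling the bounds, the $\min[\cdot,1]$ and the special cases (a),(b).} The factor $\min[(1+|\xi|+|\eta|)L^{-4\upsilon/10}, 1]$ comes from two estimates: the trivial bound $\lesssim D\lambda^{2d}$ (Step 1 applied to each of the finitely many non-negligible $(g,h)$, or directly) gives the "$1$" alternative; and the circle-method gain gives the $L^{-4\upsilon/10}$, with the $(1+|\xi|+|\eta|)$ loss arising because shifting the linear phase by $(\xi,\eta)$ moves the relevant lattice points and can bring in up to $O(1+|\xi|+|\eta|)$ of them into the "major arc". The exponent $4\upsilon/10$ (rather than, say, $\upsilon$) is the price of the genericity Diophantine condition on $\beta$ (Lemma \ref{genericity}) used to control the minor arcs — this is where $\beta\notin\Zf$ enters. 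Hypotheses (a) and (b) are what guarantee the $s$-integral (over $|s|\lesssim K$) is benign: in case (a), $\widehat\psi(s/\delta L^2)$ with $\psi = \chi_\infty(t)/t$ is bounded and the extra cutoff $\chi_0(s/K)$ makes the $s$-support compact of size $K$, and the gain must beat a $\log$-type loss from $\psi$; in case (b), the Fourier support restriction on $\Phi'(\cdot,x,y)$ means $\Phi'(s/\delta L^2, x, y)$ varies slowly in $s$ on the scale $\delta L^2 \gg K$, so $\Phi_s$ is essentially constant in $s$ over the support and can be pulled out. Finally \eqref{NTSPest2} follows from \eqref{NTSPest} for the $(g,h)\neq 0$ part plus \eqref{NTSPest0} for the single $(g,h)=0$ term.

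\textbf{Main obstacle.} The hard part is the major-arc analysis in Step 2 with the \emph{off-center} cutoffs: one must show that translating the Gaussian-type test function from being centered at the origin to being centered at $(a,b)$ does not destroy the cancellation, and in particular that the number of "major arc" lattice points one picks up is controlled by $(1+|\xi|+|\eta|)$ and not by $\lambda$ or by $|a|,|b|$. This requires a version of the circle-method/Poisson-summation argument that is genuinely translation-invariant in $(x,y)$; the overview explicitly says this "requires a new argument when dealing with major arcs" (the reference to Lemma \ref{NTSP} there). Everything else — the minor-arc bounds via the Diophantine genericity of $\beta$, the $s$-integration, the reduction via rescaling — is a careful but essentially standard adaptation of the single-variable circle method estimates from \cite{BGHS2, DH}.
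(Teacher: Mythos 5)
There is a genuine gap, in fact two. For part (1), your Step 1 concludes that no oscillation is needed and that the statement should be reinterpreted (or weakened) because $\Phi$ carries no stated decay in $s$; this misses the actual mechanism. For each fixed $s$ one applies stationary phase (or explicit Gaussian integration) in the $(x,y)$ variables: the quadratic phase $s\langle x,y\rangle_\beta$ has a nondegenerate Hessian of size $|s|$ in the $2d$ integration variables, so the $(x,y)$-integral is $\lesssim D\lambda^{2d}\langle s\rangle^{-d}$, and since $d\geq 3$ this is integrable over $s\in\Rb$, giving \eqref{NTSPest0} exactly as stated, uniformly in $(\xi,\eta,a,b)$, with no hypothesis on the $s$-behaviour of $\Phi$ beyond \eqref{NTphibounds}. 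Your alternative of integrating by parts in $s$ would produce uncontrolled factors $1/\langle x,y\rangle_\beta$ and is not the right move; the uniform bound does not need to be "stated accordingly" for Schwartz-in-$s$ amplitudes.

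For part (2), your Step 2 is misdirected: this lemma operates entirely in the regime $|s|\leq K=\lambda^{-1}L^{1-\upsilon}$, i.e. the major arcs, where Gauss sums exhibit no cancellation, so undoing the Poisson summation in $(g,h)$ and invoking Gauss-sum/divisor counts and the genericity of $\beta$ cannot produce the gain; that machinery is the content of the separate minor-arc estimate (Lemma \ref{lem:minorarcs}), and in fact the Diophantine condition $\beta\notin\Zf$ is not used in the proof of Lemma \ref{NTSP} at all — the exponent $\tfrac{4}{10}\upsilon$ has nothing to do with it. The actual argument splits on $J=\max(|a|,|b|)$. If $J\leq\lambda L^{\upsilon/2}$, then $|s(\beta^1y^1,\dots,\beta^dy^d)|\leq L/10$ on the support, so nonstationary phase in $x$ or $y$ kills every $(g,h)$ except at most one with $|Lg+\xi|+|Lh+\eta|<L/5$, and such an exceptional nonzero $(g,h)$ exists only when $|\xi|+|\eta|\gtrsim L$; this already yields \eqref{NTSPest}. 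The genuinely new case $J\geq\lambda L^{\upsilon/2}$ — which your plan does not engage with — proceeds by first restricting to $|\nabla_{x,y}\varphi|\lesssim L^{1-2\upsilon}$, then using hypotheses (a)/(b) to integrate by parts repeatedly in $s$ (each step gaining $L^{-\upsilon/10}$, the boundary term from the jump of $\widehat\psi$ being controlled by Lemma \ref{NTOmegainv}) so as to localize $\Omega=\langle x,y\rangle_\beta$ (or $\Omega-\tfrac{p}{\delta L^2}$) to a window of width $\sim JL^{-1+\frac{21}{10}\upsilon}$, and finally by combining a coarea bound for the $(x,y)$-measure of that window (using $|\nabla_{x,y}\Omega|\sim J$) with a measure estimate for the set of $s$ such that $s\beta^1y^1/L$ lies in an $L^{-2\upsilon}$-neighborhood of the shifted lattice $\xi^1/L+\Zb$; this is what produces $L^{-\frac{4}{10}\upsilon}$. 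Your reading of (a)/(b) as merely making the $s$-integral "benign" (pulling out a slowly varying factor) misses their essential role — they are precisely what licenses the $s$-integrations by parts and the $\Omega$-localization — so the proposal as written does not contain the key mechanism behind \eqref{NTSPest}.
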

\begin{proof} (1) By translating $x$ and $y$, it is enough to consider the case $a=b=0$. The result then follows by an application of the stationary phase lemma (or direct integration of the Gaussian phase) to bound the integral in $(x,y)$ by $\lambda^{2d}\langle s\rangle^{-d}$ which is integrable.

(2) Let us denote the left hand side of \eqref{NTSPest} without absolute value by $M(\xi, \eta)$ and also $\Omega=\langle x, y\rangle_\beta$. Here, we split the discussion into two cases depending on the size of $a$ and $b$:

\emph{Case 1: if $\max(|a|,|b|)\leq \lambda L^{\upsilon/2}$.} Here, we argue via a stationary phase analysis for the phase function $\varphi(x, y)=(Lg+\xi) \cdot x +(Lh +\eta)\cdot y+s\Omega$. Noting that 
\begin{equation*}
\nabla_{x}\varphi=L g+\xi+s(\beta^1 y^1, \ldots, \beta^d y^d), \quad \nabla_{y}\varphi=L h+\eta+s(\beta^1 x^1, \ldots, \beta^d x^d),
\end{equation*}
and using our assumption on the $s$ support of $\Phi$, we can bound the norms of $s(\beta^1 y^1, \ldots, \beta^d y^d)$ and $s(\beta^1 x^1, \ldots, \beta^d x^d)$ by $L/10$ if $L$ is large enough. If $|Lg+\xi|+|Lh+\eta|\geq L/5$ (which happens for all but one value of $(g,h)$), we integrate by parts at most $2d$ times in $x$ or $y$ in the $\mathrm{d}x\mathrm{d}y$ integral, and gain a denominator that is bounded below by $(L+ |Lg+\xi|+|Lh+\eta|)^{2d}$. For the only remaining value of $(g,h)$, we use a stationary phase estimate similar to part (1). In the end we get 
\begin{align*}
|M(\xi, \eta)|\lesssim & D\lambda^{2d} \sum_{(g,h) \neq 0}
\int_{|s|\leq \lambda^{-1}L^{1-\upsilon}}  \left[(L+ |Lg+\xi|+|Lh+\eta|)^{-2d}+ \langle s\rangle^{-d}\mathbf 1_{|Lg+\xi|+|Lh+\eta|< L/5}\right]\\
\lesssim & D\lambda^{2d}  \bigg[L^{-\upsilon}+\sum_{(g,h) \neq 0} \mathbf 1_{|Lg+\xi|+|Lh+\eta|< L/5}\bigg]\\
\lesssim &  D\lambda^{2d} \left(L^{-\upsilon}+\mathbf 1_{|\xi|+|\eta|\gtrsim L}\right)\lesssim D\lambda^{2d} \min\left(1,(1+|\xi|+|\eta|)L^{-\upsilon}\right),
\end{align*} as needed.

\emph{Case 2: if $\max(|a|,|b|)\geq \lambda L^{\upsilon/2}$.} Here the analysis is slightly more delicate, and we will obtain the estimate
\begin{equation}\label{NTcase2est}
|M(\xi, \eta)|\lesssim D\lambda^{2d}\min\left[1,(1+|\xi|+|\eta|)L^{-\frac{4}{10}\upsilon}\right]
\end{equation}
uniformly in $( a,b)$, which finishes the proof of the lemma.

We start by splitting the integral appearing in $M(\xi, \eta)$ into two parts: one with $|\nabla_x \varphi|+|\nabla_y \varphi| \leq L^{1-2\upsilon}$ giving a contribution $M_1(\xi, \eta)$ and the complementary region giving a contribution $M_1'(\xi, \eta)$. More precisely, define
\begin{align}\label{NTdefofM1}
M_1(\xi, \eta)=&\sum_{0\neq (g,h)\in\Zb^{2d}} \int_{\Rb} \int_{\Rb^{2d}} \chi_0\big(\frac{x-a}{\lambda}\big)\chi_0\big(\frac{y-b}{\lambda}\big)\Phi(s, x, y)\chi_0\left(\frac{\nabla_x \varphi}{L^{1-2\upsilon}}\right)\chi_0\left(\frac{\nabla_y \varphi}{L^{1-2\upsilon}}\right)\\\nonumber
&\qquad \qquad \qquad \times e[(Lg+\xi) \cdot x +(Lh +\eta)\cdot y+s \Omega]\,\mathrm{d}x\mathrm{d}y\mathrm{d}s,\\
M_1'(\xi, \eta)=&M(\xi, \eta)-M_1(\xi, \eta).\nonumber
\end{align}
The contribution of $M_1'(\xi, \eta)$ can be bounded easily by integrating by parts $10d$ times in either $x$ or $y$ (depending on whether $|\nabla_x \varphi|$ or $|\nabla_y \varphi|$ is $\gtrsim L^{1-2\upsilon}$) and estimated by $(\lambda^{-1}L^{1-\upsilon})\lambda^{2d}L^{-9d}$ which is more than acceptable. As such, we reduce to obtaining the bound \eqref{NTcase2est} for $M_1(\xi, \eta)$.

\medskip 

Next, we would like to localize in $\Omega$. For this let $J:={\max(|a|,|b|)}\geq \lambda L^{\upsilon/2}$. Here, the analysis is different depending on whether we make the assumption (a) or (b) on $\Phi(s, x, y)$. Under assumption (a), we note that $\widehat \psi(s)$ is odd and fastly decaying at infinity, and has a jump discontinuity at 0; all its derivatives are also uniformly bounded and decaying, except at 0 where they are not defined. We would like to integrate by parts in $s$ once, in the region when $|\Omega|\gtrsim JL^{-1+\frac{21}{10}\upsilon}$. Such integration by parts produces a new $s$-integrand which has the same form and is bounded as
$$
\lesssim |\Omega|^{-1}\left(\delta^{-1}L^{-2}+\frac{1}{K}+\frac{J}{L^{1-2\upsilon}}\right){\lesssim}|\Omega|^{-1}\left(\frac{\lambda}{ L^{1-\upsilon}}+\frac{JL^\upsilon}{L^{1-\upsilon}}\right)\lesssim \frac{JL^{2\upsilon}}{|\Omega|L}\lesssim L^{-\upsilon/10},
$$
and a boundary term (due to the discontinuity of $\widehat{\psi}(s)$ at 0) that gives a contribution of 
\begin{multline*}
	\sum_{0\neq(g,h)\in \Zb^{2d}} \chi_0\left(\frac{Lg+\xi}{L^{1-2\upsilon}}\right)\chi_0\left(\frac{Lh+\eta}{L^{1-2\upsilon}}\right)\int_{{|\Omega|\gtrsim JL^{-1+21\upsilon/10}}}\Omega^{-1}{\chi_\infty(\delta L^2\Omega)} \chi_0\big(\frac{x-a}{\lambda}\big)\chi_0\big(\frac{y-b}{\lambda}\big)\\\times\Phi'(x, y)
	\cdot e[(Lg+\xi) \cdot x +(Lh +\eta)\cdot y]\,\mathrm{d}x\mathrm{d}y
\end{multline*}
{(note $\chi_\infty(\delta L^2\Omega)=1$ with the given lower bound of $\Omega$)}, which can be estimated using Lemma \ref{NTOmegainv} below by 
$$
\lesssim D\lambda^{2d}\sum_{(g,h) \neq 0} \mathbf 1_{|Lg+\xi|+|Lh+\eta|< L/10}\lesssim D\lambda^{2d} \min\left(1, (1+|\xi|+|\eta|)L^{-\upsilon}\right),
$$
as needed. Repeating this integration by parts {$C\upsilon^{-1}$} times, we are reduced to obtaining the bound \eqref{NTcase2est} for
\begin{align*}
M_2(\xi, \eta)=&\sum_{0\neq(g,h)\in \Zb^{2d}} \int_{\Rb} \int_{\Rb^{2d}} \chi_0\big(\frac{x-a}{\lambda}\big)\chi_0\big(\frac{y-b}{\lambda}\big) \Phi(s, x, y)\chi_0\left(\frac{\nabla_x \varphi}{L^{1-2\upsilon}}\right)\chi_0\left(\frac{\nabla_y \varphi}{L^{1-2\upsilon}}\right)\chi_0\left(\frac{\Omega}{JL^{-1+\frac{21\upsilon}{10}}}\right)\\
&\qquad \qquad \qquad \times e[(Lg+\xi) \cdot x +(Lh +\eta)\cdot y+s\cdot \Omega]\,\mathrm{d}x\mathrm{d}y\mathrm{d}s.
\end{align*}

Now, under assumption (b) on $\Phi(s, x, y)$, we have, for some $p \in \Rb$, $\Phi'(s, x, y)=e^{-2\pi isp}\Phi''(s,x,y)$ where $\mathcal F_s \Phi''(\cdot, x,y)$ is supported in (say) $[-1,1]$ for any $(x,y)\in \Rb^{2d}$. As before, we split $M_1(\xi, \eta)$ in two parts depending on the size of $|\Omega-\frac{p}{\delta L^2}|$. Let $\widetilde M_2(\xi, \eta)$ denote the contribution of the region where $|\Omega-\frac{p}{\delta L^2}|\leq JL^{-1+\frac{21}{10}\upsilon}$ and $\widetilde M_2'(\xi, \eta)$ the contribution of the complementary region, namely
\begin{align*}
\widetilde M_2(\xi, \eta)=&\sum_{0\neq(g,h)\in \Zb^{2d}} \int_{\Rb} \int_{\Rb^{2d}} \chi_0\big(\frac{x-a}{\lambda}\big)\chi_0\big(\frac{y-b}{\lambda}\big) \chi_0\big(\frac{s}{K}\big)\Phi''\big(\frac{s}{\delta L^2}, x, y\big)\chi_0\bigg(\frac{\nabla_x \varphi}{L^{1-2\upsilon}}\bigg)\chi_0\bigg(\frac{\nabla_y \varphi}{L^{1-2\upsilon}}\bigg)\\
&\qquad \qquad \qquad \chi_0\bigg(\frac{\Omega-\frac{p}{\delta L^2}}{JL^{-1+\frac{21\upsilon}{10}}}\bigg)e\big[(Lg+\xi) \cdot x +(Lh +\eta)\cdot y+s\big(\Omega-\frac{p}{\delta L^2}\big)\big]\,\mathrm{d}x\mathrm{d}y\mathrm{d}s,\\
\widetilde M_2'(\xi, \eta)=&M_1(\xi, \eta)-\widetilde{M_2}(\xi, \eta).
\end{align*}

The contribution of $\widetilde M_2'$ can be bounded by integrating sufficiently many times in $s$: Each integration by parts produces a factor of $(\Omega-\frac{p}{\delta L^2})^{-1}$ at the expense of having an $s$ derivative fall on either $\chi_0(s/K)$ or $\Phi''(s/(\delta L^2), x,y)$ (giving a factor bounded by $D(\delta L^2)^{-1}$ given the Fourier support assumption on $\Phi''$) or the $\nabla \varphi$ factors in the other spatial cutoffs (which gives a factor bounded by $\frac{JL^\upsilon}{L^{1-\upsilon}}$). In effect, the net gain of this integration by part step is 
$$
\lesssim \big|\Omega-\frac{p}{\delta L^2}\big|^{-1}\left(\frac{1}{\delta L^2}+\frac{1}{K}+\frac{J}{L^{1-2\upsilon}}\right)
\lesssim \frac{JL^{2\upsilon}}{|\Omega-\frac{p}{\delta L^2}|L}\lesssim L^{-\upsilon/10},
$$
using the lower bound on $|\Omega-\frac{p}{\delta L^2}|$ in $\widetilde M_2'$. As such, we can integrate by parts {$C\upsilon^{-1}$} times in $s$ to obtain that the contribution of $\widetilde M_2'$ is acceptable as well.

\medskip

As such, in both cases (a) and (b), we are left with the contribution of $M_2(\xi, \eta)$ and $\widetilde M_2(\xi, \eta)$ respectively, which we will estimate in the same way (thanks to the bounds $ |\Phi'|,|\Phi''|\lesssim D$) by
\begin{align*}
D\int_{\Rb^{2d}} \chi_0\big(\frac{x-a}{\lambda}\big)\chi_0\big(\frac{y-b}{\lambda}\big)\chi_0\bigg(\frac{\Omega-\frac{p}{\delta L^2}}{JL^{-1+\frac{21\upsilon}{10}}}\bigg)\int_{\Rb}\chi_0\big(\frac{s}{K}\big)\sum_{0\neq (g,h)\in\Zb^{2d}} \chi_0\left(\frac{\nabla_x \varphi}{L^{1-2\upsilon}}\right)\chi_0\bigg(\frac{\nabla_y \varphi}{L^{1-2\upsilon}}\bigg)\mathrm{d}s\mathrm{d}x\mathrm{d}y.
\end{align*}

Now notice that the volume of the set of $(x,y)$ satisfying $|x-a|\lesssim\lambda,|y-b|\lesssim\lambda$ and {$|\Omega-\frac{p}{\delta L^2}|\lesssim JL^{-1+\frac{21\upsilon}{10}}$} is bounded by $\lambda^{2d-1}L^{-1+\frac{21\upsilon}{10}}$ since $|\nabla_{x,y} \Omega|\sim J$ (using the coarea formula or change of variables). Furthermore, for fixed $(\xi, \eta, x, y)$, when $(g,h)$ varies, we claim that the measure of the set $S(x, y)$ on which the $s$ integrand is supported is bounded by $L^{1-2\upsilon}\cdot \frac{(1+JL^{-\upsilon})}{J}$. To see this, suppose without loss of generality that $|y^1|\sim J$ for example, then $s\in S(x,y)$ implies that 
$$
\bigg|\frac{s\beta^1y^1}{L} +g^1 +\frac{\xi^1}{L}\bigg|\lesssim L^{-2\upsilon} \Rightarrow \bigg\{\frac{s\beta^1y^1}{L} +\frac{\xi^1}{L}\bigg\}\leq L^{-2\upsilon},
$$
where $\{\cdot\}$ denotes the distance to the nearest integer. Since also $|\frac{s\beta^1y^1}{L}|\lesssim JKL^{-1}{\lesssim}JL^{-\upsilon}$, we conclude that $\frac{s\beta^1y^1}{L}$ belongs to an interval of length $\lesssim JL^{-\upsilon}$, intersected by the $L^{-2\upsilon}$ neighborhood of the lattice $\frac{\xi_1}{L}+\Zb$. Thus $\frac{sy^1}{L}$ belongs to a set of measure at most $(1+JL^{-\upsilon})L^{-2\upsilon}$ and hence
$$
|S(x,y)|\lesssim \frac{L}{J}(1+JL^{-\upsilon})L^{-2\upsilon},
$$
as claimed. With this estimate in hand, we can bound both $M_2$ and $\widetilde M_2$ by
\begin{align*}
\lesssim &D\int_{\Rb^{2d}} \chi_0\big(\frac{x-a}{\lambda}\big)\chi_0\big(\frac{y-b}{\lambda}\big)\chi_0\bigg(\frac{\Omega-\frac{p}{\delta L^2}}{JL^{-1+\frac{21\upsilon}{10}}}\bigg)\int_{ S(x,y)} \sum_{0\neq(g,h)\in \Zb^{2d}} \chi_0\bigg(\frac{\nabla_x \varphi}{L^{1-2\upsilon}}\bigg)\chi_0\bigg(\frac{\nabla_y \varphi}{L^{1-2\upsilon}}\bigg)\mathrm{d}s\mathrm{d}x\mathrm{d}y\\
\lesssim&D\int_{\Rb^{2d}} \chi_0\big(\frac{x-a}{\lambda}\big)\chi_0\big(\frac{y-b}{\lambda}\big)\chi_0\bigg(\frac{\Omega-\frac{p}{\delta L^2}}{JL^{-1+\frac{21\upsilon}{10}}}\bigg) |S(x,y)| \,\mathrm{d}x\mathrm{d}y\\
\lesssim&D\lambda^{2d-1}L^{-1+\frac{21\upsilon}{10}} \times \frac{L}{J}{(1+JL^{-\upsilon})}L^{-2\upsilon}=D\lambda^{2d-1}L^{\upsilon/10}{(J^{-1}+L^{-\upsilon})}\lesssim D\lambda^{2d-1}L^{-\frac{4\upsilon}{10}}.
\end{align*}

This finishes the proof of \eqref{NTcase2est}, and hence that of \eqref{NTSPest}.
\end{proof}
\begin{lem}\label{NTOmegainv} Suppose that $\Theta=\Theta(x,y)$ satisfies \eqref{NTphibounds} without $s$.

(1) The following bound holds
$$
\left|\int_{\Rb^{2d}}\Omega^{-1}\chi_\infty(\mu\Omega) \chi_0\big(\frac{x-a}{\lambda}\big)\chi_0\big(\frac{y-b}{\lambda}\big)\Theta(x, y)
e(\xi \cdot x +\eta\cdot y)\,\mathrm{d}x\mathrm{d}y\right|\lesssim D\lambda^{2d},
$$	
uniformly in $\mu>1$ and $(a, b,\xi, \eta)\in \Rb^{4d}$. { In addition, the following limit of principal value type} 
$$
\lim_{\mu \to \infty}\int_{\Rb^{2d}}\Omega^{-1}\chi_\infty(\mu\Omega) \chi_0\big(\frac{x-a}{\lambda}\big)\chi_0\big(\frac{y-b}{\lambda}\big)\Theta(x, y)
e(\xi \cdot x +\eta\cdot y)\,\mathrm{d}x\mathrm{d}y
$$
exists and is $\lesssim D\lambda^{2d}$ uniformly in $(a, b,\xi, \eta)\in \Rb^{4d}$.

(2) The following estimate holds for the difference
$$
\left|\int_{\Rb^{2d}}\Omega^{-1}\chi_0(\mu\Omega) \chi_0\big(\frac{x-a}{\lambda}\big)\chi_0\big(\frac{y-b}{\lambda}\big)\Theta(x, y)
e(\xi \cdot x +\eta\cdot y)\,\mathrm{d}x\mathrm{d}y\right|\lesssim D\lambda^{2d}\mu^{-1}(1+|\xi|+|\eta|),
$$ uniformly in $(a, b)$.
\end{lem}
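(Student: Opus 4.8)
The plan is to pass, via the co--area (layer--cake) formula, to a one--dimensional integral in the single variable $\omega=\Omega=\langle x,y\rangle_\beta$, and then exploit the \emph{oddness} of the kernels $\chi_\infty(\mu\omega)/\omega$ and $\chi_0(\mu\omega)/\omega$ together with a symmetry of the zero set $\{\Omega=0\}$. Concretely, writing $F(x,y)=\chi_0(\frac{x-a}{\lambda})\chi_0(\frac{y-b}{\lambda})\Theta(x,y)e(\xi\cdot x+\eta\cdot y)$ and
\[
\Phi(\omega)\ :=\ \int_{\{\Omega=\omega\}\cap\mathrm{supp}\,F}\frac{F}{|\nabla\Omega|}\,\mathrm{d}\mathcal H^{2d-1},
\qquad |\nabla\Omega|^2=\textstyle\sum_j(\beta^j)^2\big((x^j)^2+(y^j)^2\big)\sim \rho^2,\ \ \rho^2:=|x|^2+|y|^2,
\]
the integral in (1) equals $\int_\Rb \frac{\chi_\infty(\mu\omega)}{\omega}\Phi(\omega)\,\mathrm{d}\omega$, and since the kernel is odd this equals $2\int_0^\infty\frac{\chi_\infty(\mu\omega)}{\omega}\,\Phi_{\mathrm{odd}}(\omega)\,\mathrm{d}\omega$ with $\Phi_{\mathrm{odd}}(\omega)=\tfrac12(\Phi(\omega)-\Phi(-\omega))$. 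The same reduction applies to (2) with $\chi_0$ in place of $\chi_\infty$, where now the $\omega$--integral is supported on $|\omega|\lesssim\mu^{-1}$.

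\textbf{Key structural input.} The crucial point that prevents a logarithmic loss in $\mu$ (which would otherwise come from $\int_{1/\mu}^{1}\mathrm{d}\omega/\omega$) is that \emph{$\Phi_{\mathrm{odd}}(0)=0$}. Indeed, the reflection $\sigma(x,y)=(x,-y)$ is an isometry of $\Rb^{2d}$ that maps $\{\Omega=\omega\}$ to $\{\Omega=-\omega\}$ and preserves $|\nabla\Omega|$ and $\mathcal H^{2d-1}$; in the limit $\omega\to0$ one gets $\Phi(0^+)=\int_{\{\Omega=0\}}\frac{F\circ\sigma}{|\nabla\Omega|}=\Phi(0^-)$ (all integrals converging absolutely since for $d\ge 3$ we have $1/|\nabla\Omega|\sim1/\rho$ integrable on the cone $\{\Omega=0\}$ near the vertex, because $|F(x,y)-F(x,-y)|\lesssim D\rho$ and $\mathcal H^{2d-1}(\{\Omega=0\}\cap\{\rho<r\})\sim r^{2d-1}$). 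Thus $\Phi_{\mathrm{odd}}$ vanishes at the origin. One then proves the quantitative estimates: (i) $\|\Phi_{\mathrm{odd}}\|_\infty\lesssim D\lambda^{2d-1}/\langle \mathrm{dist}(0,\mathrm{supp}\,F)\rangle$ and $\Phi$ supported in an $\omega$--interval of length $\lesssim\lambda(\lambda+|a|+|b|)$, which handles the piece $|\omega|>1$ (where $\chi_\infty(\mu\omega)=1$) by $\int_{|\omega|>1}|\Phi|/|\omega|\lesssim D\lambda^{2d}$; and (ii) near $\omega=0$, $|\Phi_{\mathrm{odd}}(\omega)|\lesssim D\lambda^{2d}\min(1,|\omega|^{\kappa})$ for some $\kappa>0$, which, combined with $\Phi_{\mathrm{odd}}(0)=0$ and $\log\mu/\mu\le 1$, controls the piece $|\omega|\le1$ uniformly in $\mu$.

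\textbf{The main obstacle.} The hard part is estimate (ii) in a way that is \emph{uniform in $\xi,\eta$}, as required in (1). The naive route --- writing $\Phi_{\mathrm{odd}}(\omega)=\int_0^\omega\Phi_{\mathrm{odd}}'$ and differentiating the fiber integral --- forces a derivative onto the oscillatory factor $e(\xi\cdot x+\eta\cdot y)$ and loses a factor $1+|\xi|+|\eta|$. To avoid this one must instead compare the fibers $\{\Omega=\omega\}$ and $\{\Omega=0\}$ \emph{without differentiating the phase}, treating the contribution of $e(\xi\cdot x+\eta\cdot y)$ by its own (non)stationary--phase behaviour along the level surfaces (equivalently, after peeling off the amplitude $\chi_0\chi_0\Theta$ into a rapidly convergent Fourier series, estimating the resulting pure--exponential model integrals). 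This oscillatory analysis on the level sets, uniform over the degeneration of $\nabla\Omega$ near $x=y=0$, is where the real work lies; it is carried out by a dyadic decomposition in $\rho$ and in $|\omega|$, always integrating by parts transverse to $\nabla\Omega$ so as not to touch the phase, and using that $\Omega$ is harmonic ($\Delta\Omega=0$) so the geometric terms produced by the IBP are mild.

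\textbf{Part (2) and the limit.} For (2) the situation is easier: since $\chi_0(\mu\Omega)$ localizes to $|\omega|\lesssim\mu^{-1}$, only the behaviour of $\Phi_{\mathrm{odd}}$ on $|\omega|\le\mu^{-1}$ enters, and there the \emph{crude} Lipschitz bound $|\Phi_{\mathrm{odd}}(\omega)|\lesssim D\lambda^{2d}(1+|\xi|+|\eta|)\,|\omega|$ (which one is now allowed to lose $|\xi|+|\eta|$ in, obtained by differentiating the fiber integral once and using $|F(x,y)-F(x,-y)|\lesssim D(1+|\xi|+|\eta|)\rho$ together with $\Delta\Omega=0$) gives $\int_0^{1/\mu}|\Phi_{\mathrm{odd}}(\omega)|/\omega\,\mathrm{d}\omega\lesssim D\lambda^{2d}(1+|\xi|+|\eta|)\mu^{-1}$, as claimed. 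Finally, the existence of the limit in (1) as $\mu\to\infty$ follows from dominated convergence using the uniform bounds just obtained, the pointwise limit of the integrand being $\mathrm{p.v.}\frac1\Omega$, and its value is then $\lesssim D\lambda^{2d}$ by the same estimate.
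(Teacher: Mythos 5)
Your reduction via the coarea formula, the use of the oddness of $\chi_\infty(\mu\omega)/\omega$ to pass to $\Phi_{\mathrm{odd}}$, and your treatment of part (2) and of the $\mu\to\infty$ limit (where the crude Lipschitz bound losing $1+|\xi|+|\eta|$ is indeed permissible) are all sound and consistent with what is actually needed. However, there is a genuine gap at precisely the point you yourself flag as ``where the real work lies'': the claimed bound $|\Phi_{\mathrm{odd}}(\omega)|\lesssim D\lambda^{2d}\min(1,|\omega|^{\kappa})$ \emph{uniformly in} $(\xi,\eta)$ is never proved. It is the entire content of part (1) --- without it the $|\omega|\le 1$ piece gives only $D\lambda^{2d}(1+|\xi|+|\eta|)\mu^{-1}\log\mu$ or, via the naive fiber comparison, a loss of $1+|\xi|+|\eta|$, which is exactly what the lemma forbids. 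Your proposed remedy (``dyadic decomposition in $\rho$ and $|\omega|$, integrating by parts transverse to $\nabla\Omega$, using $\Delta\Omega=0$'') is a plan, not an argument: it does not address the degeneration of $\nabla\Omega$ at the vertex of the cone, the stationary points of $\xi\cdot x+\eta\cdot y$ on the level sets (where transverse integration by parts gains nothing), or how the resulting stationary-phase contributions, whose phase value moves with $\omega$ at rate $\sim|(\xi,\eta)|/\rho$, are to be summed uniformly. As written, part (1) is not established.

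For comparison, the paper obtains the uniformity by a much more concrete one-dimensional mechanism that you may want to adopt: after localizing to $|\Omega|\lesssim1$ and to the region $|x^1|\sim\max(|x|,|y|)$, one changes the single variable $y^1\mapsto u=\Omega$ (Jacobian $|\beta^1x^1|^{-1}$), so the $\eta$-dependence of the $u$-integrand is the explicit factor $e\big(\eta^1 u/(\beta^1x^1)\big)$. Folding $u\mapsto -u$ against the odd kernel then produces, up to errors of size $D\,|u|\big(\tfrac1{|x^1|}+\tfrac1{|x^1|^2}\big)$ which are trivially summable against $\chi_\infty(\mu u)\chi_0(u)/u$, a main term containing $\sin\big(2\pi\eta^1u/(\beta^1x^1)\big)$; rescaling $u$ by $\beta^1|x^1|/|\eta^1|$ and performing one integration by parts on $u\ge 5$ (the region $u\le 5$ being bounded by $|\sin(2\pi u)/u|\le C$) gives a bound independent of $\eta^1$ and of $\mu$, essentially the uniform boundedness of truncations of $\int\sin u\,\mathrm{d}u/u$. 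This bypasses all stationary-phase analysis on the hypersurfaces $\{\Omega=\omega\}$ and is the step your write-up is missing.
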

\begin{proof} (1) It is enough to consider the region $|\Omega|\lesssim 1$. Also, without loss of generality we only need to consider the region with $|x^1|\sim \max (|x|, |y|)$. Recall that $\chi_0$ is extended to $\Rb^d$ as in Section \ref{notations}; by abusing notation we may also use some $\chi_0$ of different support. Let $x=(x^1,x')$, and $\beta=(\beta^1,\beta')$ etc., it is enough to consider
	\begin{align*}
	N(\mu, \xi, \eta):=&\int_{\Rb^{2d}}\Omega^{-1}\chi_\infty(\mu\Omega)\chi_0(\Omega) \chi_0\big(\frac{x-a}{\lambda}\big)\chi_0\big(\frac{y-b}{\lambda}\big)\Theta(x, y)\chi_0\big(\frac{x'}{|x^1|}\big)\chi_0\big(\frac{y}{|x^1|}\big)
	e(\xi \cdot x +\eta\cdot y)\mathrm{d}x\mathrm{d}y\\
	=&\int_{\Rb^d}\chi_0\big(\frac{x-a}{\lambda}\big)\chi_0\big(\frac{x'}{|x^1|}\big)e(\xi\cdot x)\mathrm{d}x\int_{\Rb^d}\Omega^{-1}\chi_\infty(\mu\Omega)\chi_0(\Omega) \chi_0\big(\frac{y-b}{\lambda}\big)\Theta(x, y)\chi_0\big(\frac{y}{|x^1|}\big)
	e(\eta\cdot y)\mathrm{d}y.
	\end{align*} We change variables in the $y^1$ integral by setting $u=\Omega=\beta^1x^1 y^1+\langle x', y'\rangle_{\beta'}$, and write
	\begin{align}
	N(\mu, \xi, \eta)=&\int_{\Rb^d}|\beta^1x^1|^{-1}\chi_0\big(\frac{x-a}{\lambda}\big)\chi_0\big(\frac{x'}{|x^1|}\big)e(\xi\cdot x)\mathrm{d}x\int_{\Rb^{d-1}}\chi_0\big(\frac{y'-b'}{\lambda}\big)e\bigg[\eta'\cdot y' - \frac{\eta^1\langle x', y'\rangle_{\beta'}}{\beta^1 x^1}\bigg]\mathrm{d}y'\nonumber\\\label{NTdefNmuxi}
	&\qquad \times\int_{\Rb}\frac{\chi_\infty(\mu u)\chi_0(u)}{u}\chi_0\bigg(\frac{u-\langle x', y'\rangle_{\beta'}-\beta^1 x^1 b^1}{\beta^1 x^1\lambda}\bigg) \widetilde \Theta(x, u, y')
	e\big( \frac{\eta^1u}{\beta^1 x^1}\big)\mathrm{d}u.
	\end{align}
	where $\widetilde \Theta(x, u, y')=\Theta(x,y)\chi_0(\frac{y}{|x^1|})$ evaluated at $y^1=\frac{u-\langle x', y'\rangle_{\beta'}}{\beta^1 x^1}$.

	Now, we write the integral over $u$ as 
\[\int_{0}^\infty\frac{\chi_\infty(\mu u)\chi_0(u)}{u}\Delta(u, y',x)\,\mathrm{d}u,\] where
	\begin{align*}
	\Delta(u, y',x)&=\chi_0\bigg(\frac{u-\langle x', y'\rangle_{\beta'}-\beta^1 x^1 b^1}{\beta^1 x^1\lambda}\bigg) \widetilde \Theta(x, u,y')
	e\big(\frac{\eta^1u}{\beta^1x^1}\big)\\
	&\qquad\qquad\qquad\qquad\qquad\qquad-\chi_0\bigg(\frac{-u-\langle x', y'\rangle_{\beta'}-\beta^1 x^1 b^1}{\beta^1 x^1\lambda}\bigg) \widetilde\Theta(x, -u, y)e\big(\frac{-\eta^1u}{\beta^1 x^1}\big)\\
	&=2i\chi_0\bigg(\frac{u-\langle x', y'\rangle_{\beta'}-\beta^1 x^1 b^1}{\beta^1 x^1\lambda}\bigg) \widetilde \Theta(x, u,y')
	\sin\big(2\pi\frac{\eta^1u}{\beta^1 x^1}\big)+D\cdot O\bigg(\frac{|u|}{\beta^1 |x^1|}+\frac{|u|}{\beta^1 |x^1|^2}\bigg).
	\end{align*}
	
	Noting that the above integral is actually supported on the interval $[(2\mu)^{-1}, 2]$, the contribution of the terms $O(\frac{u}{\beta^1 |x^1|}+\frac{u}{\beta^1 |x^1|^2})$ is acceptable. Hence, we are left with bounding 	
	\begin{align*}
	&\int_{0}^\infty\chi_\infty(\mu u)\chi_0(u)\chi_0\bigg(\frac{u-\langle x', y'\rangle_{\beta'}-\beta^1 x^1 b^1}{\beta^1 x^1\lambda}\bigg) \widetilde \Theta(x, u,y')
	\sin\big( 2\pi\frac{\eta^1u}{\beta^1 x^1}\big)\frac{\mathrm{d}u}{u}\\
	&=\pm \int_{0}^\infty\chi_\infty\big(\frac{\mu \beta^1 |x^1|u}{|\eta^1|}\big)\chi_0\big(\frac{ \beta^1 |x^1|u}{|\eta^1|}\big)\chi_0\big(\frac{ u-Q}{\lambda |\eta^1|}\big) \widetilde \Theta\big(x, \frac{ \beta^1 x^1u}{\eta^1},y'\big)
	\sin( 2\pi u)\frac{\mathrm{d}u}{u}
	\end{align*}
	for some $Q$ depending on $x,y', \beta, \lambda$. The integral for $0\leq u\leq 5$ is bounded by $O(1)$, so we may use a smooth cutoff $\varphi(u)$ to restrict to $u\geq 5$. Then we integrate by parts in $u$ once. If the derivative falls on $1/u$ or $\varphi(u)$ the resulting contribution is bounded by $O(1)$. If the derivative falls on anything but the first factor, then the new integrand is bounded by $\frac{\beta^1|x^1|}{|\eta^1|}(1+\frac{1}{|x^1|}+\frac{1}{|x^1|^2})$, so the resulting contribution is bounded by $1+\frac{1}{|x^1|}+\frac{1}{|x^1|^2}$ even without using the $1/u$ factor, as $u$ belongs to an interval of length $\frac{|\eta^1|}{ \beta^1 |x^1|}$ due to the second factor. If the derivative falls on the first factor, then in the new integrand we will have $\frac{\mu\beta^1|x^1|}{|\eta^1|}$ instead of $\frac{\beta^1|x^1|}{|\eta^1|}$, but $u$ also belongs to a smaller interval of length $\frac{|\eta^1|}{ \mu\beta^1 |x^1|}$, so the conclusion will be the same.
	
	This leads to an acceptable contribution to $N(\mu, \xi, \eta)$ in \eqref{NTdefNmuxi}, and gives a bound that is uniform in $\mu, \xi, \eta$. The statement about the $\lim_{\mu \to \infty}$ follows directly from the above argument and dominated convergence. 
	
	(2) Arguing exactly as above, it is enough to bound 
	\begin{align*}
	\int_{0}^\infty\frac{\chi_0(\mu u)}{u}\Delta(u, y',x)du&\lesssim \int_{0}^\infty\frac{\chi_0(\mu u)}{u} u\left(\frac{1}{\beta^1 |x^1|}+\frac{1}{\beta^1 |x^1|^2 }+\frac{|\eta^1|}{\beta^1 |x^1|}\right) \, du\\
	&\lesssim  \mu^{-1}\left(\frac{1}{\beta^1 |x^1| }+\frac{1}{\beta^1 |x^1|^2 }+\frac{|\eta^1|}{\beta^1 |x^1|}\right)
	\end{align*}
	which gives the needed bound when substituted in \eqref{NTdefNmuxi}.
\end{proof}	
\begin{lem}\label{lem:minorarcs}
	Suppose that $\Phi(s,x,y): \Rb\times \Rb^d \times \Rb^d\to \Cb$ is a function satisfying (\ref{NTphibounds}).

(1) If $\Phi$ is supported on $|s|< L^2$, then the following bound holds uniformly in $(a, b, \xi, \eta)\in \Rb^{4d}$:
\begin{equation}\label{NTlem21}
\int_{\Rb}\bigg|\sum_{(x, y)\in \Zb^{2d}_L} \Phi(s, x, y) \chi_0\big(\frac{x-a}{\lambda}\big)\chi_0\big(\frac{y-b}{\lambda}\big)e(x\cdot \xi+y\cdot \eta+s \langle x, y\rangle_\beta)\bigg|\mathrm{d}s \lesssim D \lambda^{4d} L^{2d}.
\end{equation}

(2) If $\Phi(s,x,y)$ is supported on the set $|s|\gtrsim L^{1-\upsilon}$, then the following improved estimate holds uniformly in $(a, b, \xi,\eta)\in\Rb^{4d}$:
\begin{equation}\label{NTlem22}
\int_{\Rb}\big\langle\frac{s}{\delta L^2}\big\rangle^{-2}\bigg|\sum_{(x, y)\in \Zb^{2d}_L} \Phi(s, x, y) \chi_0\big(\frac{x-a}{\lambda}\big)\chi_0\big(\frac{y-b}{\lambda}\big)e(x\cdot \xi+y\cdot \eta+s \langle x, y\rangle_\beta)\bigg|\mathrm{d}s \lesssim D \lambda^{4d} L^{2d-\upsilon}.
\end{equation}	
\end{lem}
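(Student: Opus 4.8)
\emph{Overall strategy.} For fixed $s$ I would estimate the exponential sum
$S(s)=\sum_{(x,y)\in\Zb_L^{2d}}\Phi(s,x,y)\chi_0\big(\tfrac{x-a}{\lambda}\big)\chi_0\big(\tfrac{y-b}{\lambda}\big)e(x\cdot\xi+y\cdot\eta+s\langle x,y\rangle_\beta)$
pointwise in $s$, by two different mechanisms according to the size of $|s|$, and then integrate in $s$. Writing $M=\lambda L$, the sum runs over $\sim M^d$ points of $(L^{-1}\Zb)^d$ in each of $x,y$, so the trivial bound is $|S(s)|\lesssim DM^{2d}$; the point of both estimates is that, integrated against $\mathrm{d}s$ over $|s|\lesssim L^2$, one does far better.

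\emph{Regime $|s|\lesssim L/\lambda$ (tame bilinear phase).} Here I apply Poisson summation in the pair $(x,y)$, writing $S(s)=L^{2d}\sum_{(g,h)\in\Zb^{2d}}I_{g,h}(s)$, where $I_{g,h}(s)$ is the oscillatory integral with phase $(Lg+\xi)\cdot x+(Lh+\eta)\cdot y+s\langle x,y\rangle_\beta$ — exactly the object estimated in Lemma \ref{NTSP}. Its proof in fact bounds the $(x,y)$-integral by $D\lambda^{2d}\langle s\rangle^{-d}$, so $|I_{g,h}(s)|\lesssim D\lambda^{2d}\langle s\rangle^{-d}$ for the $O((1+|s|\lambda/L)^{2d})$ values of $(g,h)$ whose critical point (given by $Lh^j+\eta^j+s\beta^jx_c^j=0$, $Lg^j+\xi^j+s\beta^jy_c^j=0$) lies in the support; the remaining $(g,h)$ give a rapidly convergent tail, handled by repeated integration by parts in $x$ or $y$ (clean because $\nabla_x\varphi$ is independent of $x$, $\nabla_y\varphi$ of $y$), gaining factors $(1+|Lg^j+\xi^j+s\beta^jb^j|)^{-10d}$ or $(1+|Lh^j+\eta^j+s\beta^ja^j|)^{-10d}$. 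Since $|s|\lesssim L/\lambda$ keeps $(1+|s|\lambda/L)^{2d}\lesssim1$, this yields $|S(s)|\lesssim L^{2d}D\lambda^{2d}\langle s\rangle^{-d}$, and $\int\langle s\rangle^{-d}\,\mathrm{d}s\lesssim1$ (using $d\geq 3$) gives a contribution $\lesssim D\lambda^{2d}L^{2d}$, acceptable for (1). In (2) this whole range is essentially excluded by $|s|\gtrsim L^{1-\upsilon}$, and the leftover sub-range $L^{1-\upsilon}\lesssim|s|\lesssim L/\lambda$ contributes $\lesssim D\lambda^{2d}L^{2d-(d-1)(1-\upsilon)}\lesssim DL^{2d-\upsilon}$ for $d\geq3$.

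\emph{Regime $L/\lambda\lesssim|s|\lesssim L^2$ (bilinear Weyl sum and genericity).} Setting $m=Lx$, $n=Ly\in\Zb^d$ and $\gamma^j=s\beta^j/L^2$, the phase becomes $\sum_j(\gamma^jm^jn^j+\text{linear})$, which factors across coordinates; since the amplitude is a product of one-dimensional cutoffs at scale $\lambda$ times a factor bounded by $D$ with bounded derivatives, Abel summation reduces $|S(s)|$ to $D$ times a product over $j=1,\dots,d$ of one-dimensional bilinear sums $\big|\sum_{m,n}e(\gamma^jmn+\text{linear})\big|$ over subintervals of length $\lesssim M$. Each of these is $\lesssim\sigma(\gamma^j(s))$ with $\sigma(\gamma)\lesssim\big(\min(M^2,M^2/q^*(\gamma))+M\big)\log M$, where $q^*(\gamma)$ is the least $q\geq1$ with $\|q\gamma\|\leq1/M$ ($\|\cdot\|$ the distance to $\Zb$). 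The crux is then to integrate $\prod_j\sigma(\gamma^j(s))$ over $|s|\in[L/\lambda,L^2]$: a direct Hölder estimate in $s$ loses a power of $L$, because it ignores that the $\gamma^j(s)$ cannot simultaneously be close to small-denominator rationals except on a very thin set of $s$. This is exactly where the genericity hypothesis $\beta\notin\Zf$ is used — equivalently, this is the major-/minor-arc analysis of the $s$-integral, with minor arcs controlled by Weyl's inequality and major arcs of small total measure: over this range the curve $s\mapsto(s\beta^1/L^2,\dots,s\beta^d/L^2)$ winds $\sim L^2$ times around the torus in the direction $(\beta^1,\dots,\beta^d)$, and the Diophantine conditions defining $\Zf$ make it quantitatively equidistributed, giving $\int_{L/\lambda\le|s|\le L^2}\prod_j\sigma(\gamma^j(s))\,\mathrm{d}s\lesssim L^2\big(\int_0^1\sigma(\gamma)\,\mathrm{d}\gamma\big)^d$. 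Using the elementary bound $|\{\gamma\in[0,1]:q^*(\gamma)\sim Q\}|\lesssim Q/M$ one gets $\int_0^1\sigma\lesssim M\log^2 M$, hence $\int_{L/\lambda\le|s|\le L^2}|S(s)|\,\mathrm{d}s\lesssim DM^dL^2\log^{2d}M=D\lambda^dL^{d+2}\log^{2d}L$.

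\emph{Conclusion and main obstacle.} Since $d\geq3$, $L^{d+2}\leq L^{2d-1}$, so the bilinear-regime contribution is $\lesssim D\lambda^{4d}L^{2d-1}\log^{2d}L\leq D\lambda^{4d}L^{2d}$, and in fact $\leq D\lambda^{4d}L^{2d-\upsilon}$ for any fixed $\upsilon<1$. Adding the tame-regime contribution proves (1); for (2), the weight $\langle s/(\delta L^2)\rangle^{-2}$ restricts the integral to $|s|\lesssim L^2$ while the hypothesis removes $|s|\lesssim L^{1-\upsilon}$, so only ranges already obeying the stronger bound survive, proving (2). The main obstacle is the $s$-integration in the bilinear regime: genericity of $\beta$ is essential, and carrying out the quantitative equidistribution (or the major-arc analysis) uniformly in $(a,b,\xi,\eta)$ and across the whole dyadic range $[L/\lambda,L^2]$ is what requires the Diophantine input of Lemma \ref{genericity}; the rest is Poisson summation and stationary phase as above.
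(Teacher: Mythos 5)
Your decomposition into a ``tame'' regime $|s|\lesssim L/\lambda$ (handled by Poisson summation and stationary phase, as in Lemma \ref{NTSP}) and a ``bilinear Weyl sum'' regime $L/\lambda\lesssim |s|\lesssim L^2$ is a natural starting point, and the tame-regime estimate is essentially fine. But there is a genuine gap in the bilinear regime, at the step
\[
\int_{L/\lambda\le|s|\le L^2}\prod_{j=1}^d\sigma(\gamma^j(s))\,\mathrm{d}s\;\lesssim\;L^2\Big(\int_0^1\sigma(\gamma)\,\mathrm{d}\gamma\Big)^d.
\]
This is justified by appealing to the curve $s\mapsto(s\beta^1/L^2,\dots,s\beta^d/L^2)$ ``winding $\sim L^2$ times around the torus.'' It does not: over the full range $|s|\leq L^2$, the coordinate $\gamma^j(s)=s\beta^j/L^2$ traverses only an interval of length $\sim 2\beta^j=O(1)$, so the curve makes only $O(1)$ circuits, and moreover the $\gamma^j(s)$ are all scalar multiples of each other (a straight line of direction $\beta$). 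There is no independence or quantitative equidistribution to exploit at the scale $1/M^2$ where $\sigma$ peaks; the $d$ factors are perfectly correlated. The decoupling inequality is therefore unsupported, and in fact is the crux of the whole estimate.

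What actually works for part (1) requires no genericity at all: after the change of variables $p=L(x+y)$, $q=L(x-y)$, the bilinear phase $\langle x,y\rangle_\beta$ becomes a difference of quadratic forms, the $(x,y)$-sum factorizes into one-dimensional quadratic Gauss sums $G_{g^j},G_{h^j}$, and H\"older in $s$ together with the $L^4/L^6$ bounds on Gauss sums (Lemma \ref{hua}, with $2d\geq 6$) gives \eqref{NTlem21} directly. This is also the reason the genericity hypothesis $\beta\notin\Zf$ plays no role in (1) --- contrary to your claim that it does. For part (2), where an extra $L^{-\upsilon}$ must be extracted, the correct use of genericity is quite different from equidistribution of the curve: one examines just two of the $2d$ Gauss sums (corresponding to $\beta^1$ and $\beta^2$), approximates $sL^{-2}$ and $\beta^2 sL^{-2}$ simultaneously by rationals $a^j/q^j$ via Dirichlet, and uses the Diophantine lower bound \eqref{generic1} on $|\beta^2 a^1 q^2-(mq^1q^2+a^2 q^1)|$ to show the two cannot both be saturated; this is the mechanism that delivers the gain, and it operates coordinate-by-coordinate on a single pair of Gauss sums, not on the ``winding number'' of the full curve. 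You would need to supply this Diophantine argument (or some equivalent refinement) for (2); the appeal to equidistribution cannot be repaired as stated.
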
	

\begin{proof} Recall that, $\Omega(x,y)=\sum_{j=1}^d\beta^j x^j y^j$ where $x^j, y^j\in \Zb_L$. We make the change of variables 
	$$
	L^{-1}p^j={x^j+y^j}, \qquad L^{-1}q^j={x^j-y^j}, \qquad p^j\equiv q^j \pmod 2.
	$$
The sum in $(x^j,y^j)\in \Zb_L^{2}$ then becomes the linear combination of four sums, which are taken over $(p^j, q^j)\in \Zb^{2}$, or $(p^j, q^j)\in 2\Zb\times\Zb$, or $(p^j, q^j)\in \Zb\times 2\Zb$, or $(p^j, q^j)\in (2\Zb)^2$. We will only consider the first sum, and it will be obvious from the proof that the other sums are estimated similarly. Define \[ \Upsilon (s,z, w)=\Phi\big(s, \frac{z+w}{2},\frac{z-w}{2}\big)\chi_0\big(\frac{z+w-2a}{2\lambda}\big)\chi_0\big(\frac{z-w-2b}{2\lambda}\big),\] which has all derivatives in $(z,w)$ up to order $10d$ uniformly bounded, and is supported in the set $\{g^j\leq Lz^j\leq g^j+2\lambda L,\,h^j\leq Lw^j\leq h^j+2\lambda L\}$, where $(g^j,h^j)\in\Zb^2$ are determined by $(a,b)$.

Now, by possibly redefining $(s,\xi,\eta)$, we need to show that the function 
	\begin{align*}
	 B(\xi, \eta)&=\int_{\Rb} \bigg|\sum_{(p, q)\in \Zb^{2d}} \Upsilon\big(s; pL^{-1}, qL^{-1}\big)e\big[sL^{-2}(|p|_\beta^2-|q|_\beta^2)+p\cdot  \xi+y\cdot \eta\big]\bigg| \, \mathrm{d}s\\
	&=\int_{\Rb} \bigg|\sum_{(p, q)\in \Zb^{2d}} \Upsilon\big(s,pL^{-1}, qL^{-1}\big)\prod_{j=1}^d e\big[sL^{-2}\beta^j (p^j)^2 +p^j \xi^j]\cdot e[-sL^{-2}\beta^j (q^j)^2 +q^j \eta^j\big]\bigg| \, \mathrm{d}s
	\end{align*}
	satisfies the bounds in \eqref{NTlem21} when $\Upsilon$ is supported on $|s|<L^2$, and that the corresponding integral with $\langle s/\delta L^2\rangle^{-2}$ (which we denote by $\widetilde{B}(\xi,\eta)$) satisfies \eqref{NTlem22} when $\Upsilon$ is supported on $|s|\gtrsim L^{1-\upsilon}$. Note that in the above sum we must have $p^j\in[g^j,g^j+20\lambda L]$ and $q^j\in[h^j,h^j+20\lambda L]$.

Recall the Gauss sums $G_h(s,r,n)$ and $G_h(s,r,x)$ defined in (\ref{huagausssum}). Notice that since $\partial_x G_h(s, r; x)=\sum_{p\in \Nb}e(s (h+p)^2+r(h+p)) \dirac(x-p)$, we can write
	\begin{align*}
	 B(\xi, \eta)&=\int_{\Rb} \bigg|\int_{(u, v)\in \Rb_+^{2d}} \Upsilon\big(s, (u+g)L^{-1}, (v+h)L^{-1}\big)\prod_{j=1}^d \partial_{u^j}G_{g^j}(\beta^j sL^{-2}, \xi^j , u^j) \\&\qquad\qquad\qquad\qquad\qquad\qquad\qquad\qquad\qquad\qquad\qquad\qquad\,\,\times\partial_{v^j}G_{h^j}(-\beta^j sL^{-2}, \eta^j, v^j) \, \mathrm{d}u \mathrm{d}v\bigg| \, \mathrm{d}s\\
	&\leq L^{-2d}\int_{\Rb}\int_{(u, v)\in \Rb_+^{2d}} \big| D^\alpha\Upsilon\big(s,(u+g)L^{-1}, (v+h)L^{-1}\big)\big|\\&\qquad\qquad\qquad\qquad\qquad\qquad\qquad\qquad\times\prod_{j=1}^d\big|G_{g^j}(\beta^j sL^{-2}, \xi^j , u^j) G_{h^j}(-\beta^j sL^{-2}, \eta^j, v^j)\big| \, \mathrm{d}u\mathrm{d}v\mathrm{d}s,
	\end{align*}
	where $g=(g^1,\cdots,g^d)$ etc., and $D^\alpha \Upsilon$ is obtained from $ \Upsilon$ by taking one derivative in each of the variables $u_j, v_j$ (and hence has the same support properties).
	
(1) We first note that if $\Phi$, and hence $\Upsilon$, is supported on the set $|s|<L^2$, then we have the bound (upon rescaling in $s$ by $L^2$)
	\begin{align*}
	|B(\xi, \eta)|\lesssim  DL^{-2d+2}\int_{(u, v)\in \Rb_+^{2d}}\chi_0\big(\frac{u}{100\lambda L}\big)\chi_0\big(\frac{v}{100\lambda L}\big)\int_{\Rb} \chi_0(s) \prod_{j=1}^d \left|G_{g^j}(\beta^j  s, \xi^j , u^j) G_{h^j}(-\beta^j  s, \eta^j, v^j)\right|\mathrm{d}u\mathrm{d}v\mathrm{d}s.
	\end{align*}
Now, we use Lemma \ref{hua} (and that $2d\geq 6$) to get the needed bound, namely
\begin{align*}
|B(\xi, \eta)|\lesssim& DL^{-2d+2}\int_{(u, v)\in \Rb_+^{2d}}\chi_0\big(\frac{u}{100\lambda L}\big)\chi_0(\frac{v}{100\lambda L}) \prod_{j=1}^d (u^j)^{1-1/d}(v^j)^{1-1/d} \, \mathrm{d}u\mathrm{d}v\\
\lesssim& D L^{-2d+2} (\lambda L)^{2d+2d-2}\lesssim D\lambda^{4d}L^{2d}.
\end{align*}

(2) To obtain the improved bound in \eqref{NTlem22} for $\widetilde{B}(\xi,\eta)$, we argue a bit differently. Without loss of generality, we can assume $\beta^1=1$ and $\beta^2\in[1,2]$. We start by writing the product of Gauss sums in $\widetilde B(\xi, \eta)$ as
\begin{align*}
\prod_{j=1}^d G_{g^j}(\beta^j sL^{-2}, \xi^j , u^j) G_{h^j}(-\beta^j sL^{-2}, \eta^j, v^j)&=\prod_{j=1}^2G_{g^j}(\beta^jsL^{-2}, \xi^j, u^j) \prod^* G_{g^j}(\cdots) G_{h^j}(\cdots)\\
&=\Mc(s, \xi, \eta, u) \prod^* G_{g^j}(\cdots) G_{h^j}(\cdots)
\end{align*}
where $\Mc(s, \xi, \eta, u)=G_{g^1}(sL^{-2}, \xi^1 , u^1)G_{g^2}(\beta^2sL^{-2}, \xi^2 , u^2)$ and $\prod^*$ is the product of the remaining $2d-2$ Gauss sums. We claim that, for $L^{1-\upsilon} \lesssim |s|\leq L^{2+5\upsilon}$, the following estimate holds for $\Mc$:
\begin{equation}\label{McestimateNT}
\sup_{s, \xi, \eta, u}|\Mc(s, \xi, \eta, u)|\lesssim (\lambda L)^{2-10\upsilon}.
\end{equation}
	
	Before proving this claim, let us see why it implies \eqref{NTlem22}. Using (\ref{McestimateNT}), we have
	\begin{align*}
	| \widetilde{B}(\xi,\eta)|
	\lesssim& D L^{-2d+2}\int_{(u, v)\in \Rb_+^{2d}} \chi_0\big(\frac{u}{100\lambda L}\big)\chi_0\big(\frac{v}{100\lambda L}\big)\bigg((\lambda L)^{2-10\upsilon}\int_{L^{-1-\upsilon} \lesssim |s|\leq L^{5\upsilon}} \prod^* |G_{g^j}(\beta^j s, \xi^j , u^j)|\\\times&| G_{h^j}(-\beta^j s, \eta^j, v^j)|\,\mathrm{d}s+\int_{|s|\geq L^{5\upsilon}} \langle \delta^{-1}s\rangle^{-2}\prod_{j=1}^d \left|G_{g^j}(\beta^j s, \xi^j , u^j) G_{h^j}(-\beta^j s, \eta^j, v^j)\right|\mathrm{d}s\bigg)\mathrm{d}u\mathrm{d}v.
	\end{align*}
Splitting the $s$ region into intervals of length 1, and using Lemma \ref{hua} again on each subinterval we obtain that 
\begin{align*}
	| \widetilde{B}(\xi,\eta)|& \lesssim DL^2 L^{-2d}\int_{(u, v)\in \Rb_+^{2d}} \chi_0\big(\frac{u}{100\lambda L}\big)\chi_0\big(\frac{v}{100\lambda L}\big)
	\\&\qquad\times\bigg[(\lambda L)^{2-10\upsilon}L^{6\upsilon}{(\lambda L)^{2d-4}}+\sum_{|k|\geq L^{5\upsilon}} \langle \delta^{-1}k\rangle^{-2}{(\lambda L)^{2d-2}}\bigg]\mathrm{d}u\mathrm{d}v\lesssim D \lambda^{4d} L^{2d-\upsilon}.
	\end{align*}
	
Thus, it remains to prove \eqref{McestimateNT}. We may assume without loss of generality that $L^{-1-\upsilon}\lesssim sL^{-2}\leq L^{5\upsilon}$ (since $G(-s, r;n)=\overline {G(s,-r; n )}$ and the estimates we shall use for $G$ are independent of $r$). To bound the Gauss sums, we write 
	$$
	sL^{-2}=n+\tau^1, \qquad \beta^2 sL^{-2}=m +\tau^2;\qquad n, m \in \Nb\cup \{0\}, \tau^1, \tau^2 \in [0,1),
	$$
	and use Dirichlet's approximation to find, for $j\in\{1, 2\}$, integers $0\leq a^j < q^j\leq u^j$ such that $(a^j, q^j)=1$ and 
	$$
	\bigg|\tau_j -\frac{a^j}{q^j}\bigg|<\frac{1}{q^j u^j}, \qquad j=1,2.
	$$
	By periodicity of sum $G(s, r,x)$ in $s$ and the Gauss lemma for such sums we have
	\begin{equation}\label{GaussLem}
	\left|G_{g^1}(sL^{-2}, \xi^1,u^1)G_{g^2}(\beta^2sL^{-2}, \xi^2,u^2)\right| \leq \frac{u^1 u^2{\log L}}{\sqrt{q^1q^2}(1+u^1|\tau^1-\frac{a^1}{q^1}|^{1/2})(1+u^2|\tau^2-\frac{a^2}{q^2}|^{1/2})}.
	\end{equation}
	
	We start by dealing with the case $n=0$ (i.e. when $ sL^{-2}=\tau^1<1$). Note that this implies $m \in \{0, 1\}$. Here we will use the fact that $L^{-1-\upsilon}\lesssim \tau^1<1 $. First note that if $a^1=0$, then we have
	$$\left|G_{g^1}(sL^{-2}, \xi^1 , u^1)G_{g^2}(\beta^2sL^{-2}, \xi^2 , u^2)\right| \leq \frac{u^1 u^2{\log L}}{1+u^1|\tau^1|^{1/2}}\lesssim \frac{u^2{\log L}}{|\tau^1|^{1/2}} \lesssim \lambda L L^{\frac{1+\upsilon}{2}}{\log L}\ll \lambda L^{7/4},$$
	which satisfies \eqref{McestimateNT}. Therefore, we now consider the case $a^1\neq 0$. Computing
	\begin{align*}
	q^1 q^2 \big(\beta^2\big|\tau^1 -\frac{a^1}{q^1}\big| +\big|\tau^2 -\frac{a^2}{q^2}\big|\big)=& q^1 q^2 \big(\big|\beta^2 sL^{-2} -\frac{\beta^2a^1}{q^1}\big| +\big|\beta^2 sL^{-2}-m-\frac{a^2}{q^2}\big|\big)\geq q^1 q^2 \big| \frac{\beta^2a^1}{q^1}-m-\frac{a^2}{q^2}\big|\\
	=&|\beta^2 a^1 q^2-(mq^1 q^2+a^2q^1)|\gtrsim \frac{\log^{-4}(2q^1q^2)}{a^1q^2+mq^1q^2+a^2q^1}\gtrsim \frac{1}{(q^1 q^2)^{1.01}},
	\end{align*}
	where we have used the diophantine condition (\ref{generic1}) and the fact that $0<a^j<q^j$ and $m \in \{0,1\}$. Therefore, we obtain
	$$\max\big(\big|\tau^1 -\frac{a^1}{q^1}\big|,\big|\tau^2 -\frac{a^2}{q^2}\big|\big)\gtrsim \frac{1}{(q^1q^2)^{2.01}},
	$$
	which when plugged into \eqref{GaussLem} gives the bound
	\begin{equation}\label{Mcestaux1}
	\lesssim \frac{u^1 u^2{\log L}}{\sqrt{q^1q^2}[1+\min(u^1, u^2)\frac{1}{(q^1 q^2)^{1.01}}]}\lesssim \min\bigg(\frac{u^1 u^2}{\sqrt{q^1q^2}}, \max(u^1, u^2) (q^1 q^2)^{0.51}\bigg){\log L}\lesssim (\lambda L)^{\frac{7}{4}},
	\end{equation}
	since $q^j \leq u^j \lesssim \lambda L$, which is better than \eqref{McestimateNT}. 
	
	It remains to consider the case when $1\leq n \leq L^{5\upsilon}$. Here, we argue similar to the above, to obtain
	\begin{align*}
	q^1 q^2 \big(\beta^2\big|\tau^1 -\frac{a^1}{q^1}\big| +|\tau^2 -\frac{a^2}{q^2}|\big)&= q^1 q^2 \big(\big|\beta^2 sL^{-2}-\beta^2 n -\beta^2\frac{a^1}{q^1}\big| +\big|\beta^2 sL^{-2}-m-\frac{a^2}{q^2}\big|\big)
	\\&\geq q^1 q^2 \big| \beta^2\frac{a^1}{q^1}+\beta^2 n-m-\frac{a^2}{q^2}\big|=|\beta_2 (nq^1 q^2+a^1 q^2)-(mq^1 q^2+a^2q^1)|\\&\gtrsim \frac{\log^{-4}(2nq^1q^2)}{nq^1q^2+a^1q^2+mq^1q^2+a^2q^1}\gtrsim \frac{1}{(nq^1 q^2)^{1.01}}.
	\end{align*}
	Since $n \leq L^{5\upsilon}$, we can repeat the same estimates as above and obtain the needed bound (notice the room in \eqref{Mcestaux1} compared to needed bound in \eqref{McestimateNT}).
\end{proof}
\begin{lem}\label{NTintlem}
Suppose that $\Phi(x,y)$ is a function satisfying \eqref{NTphibounds} without $s$. Let $\Omega(x,y)=\langle x, y\rangle_\beta$.

(1) Suppose {$\mu\geq 1$ and} $\psi$ is a function such that $\|\psi\|_{L^1(\Rb)}\leq D$, then
	\begin{equation}\label{NTintlem1}
	\left|{\mu}\int_{\Rb^{2d}}\psi(\mu \Omega) \chi_0\big(\frac{x-a}{\lambda}\big)\chi_0\big(\frac{y-b}{\lambda}\big)\Phi(x,y) e(x\cdot\xi+y\cdot \eta) \, \mathrm{d}x\mathrm{d}y\right|\lesssim D \lambda^{2d}
	\end{equation}
	uniformly in $(a, b, \xi, \eta)\in \Rb^{4d}$. The same holds if $\psi(\mu\Omega)\Phi(x,y)$ is replaced by $\Psi(\mu\Omega,x,y)$ where $\Psi=\Psi(u,x,y)$ satisfies $\big\|\sup_{x,y}|\partial_x^\alpha\partial_y^\beta\Psi|\big\|_{L_u^1}\leq D$ for all multi-indices $|\alpha|,|\beta|\leq 10d$.
	
(2) Suppose further that $\| \langle y\rangle^{\frac{1}{8}} \psi\|_{L^1(\Rb)}\leq D$, then 
	\begin{align}
	&\left|{\mu}\int_{\Rb^{2d}}\psi(\mu \Omega) \chi_0\big(\frac{x-a}{\lambda}\big)\chi_0\big(\frac{y-b}{\lambda}\big) \Phi(x,y)e(x\cdot\xi+y\cdot \eta) \, \mathrm{d}x\mathrm{d}y-\right.\label{NTintlem2}\\
	&\qquad \qquad \left.\left(\int \psi\right)\int_{\Rb^{2d}}\dirac(\Omega) \chi_0\big(\frac{x-a}{\lambda}\big)\chi_0\big(\frac{y-b}{\lambda}\big)\Phi(x,y) e(x\cdot\xi+y\cdot \eta) \,\mathrm{d}x\mathrm{d}y\right|\lesssim D \lambda^{2d}\mu^{-\frac{1}{9}}(1+|\xi|+|\eta|),\nonumber
	\end{align}
		uniformly in $(a, b)\in \Rb^{2d}$.
		\end{lem}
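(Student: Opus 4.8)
The key point is that, once $\Omega=\langle x,y\rangle_\beta$ is turned into a genuine integration variable, the rescaled profile $v\mapsto\psi(\mu v)$ behaves like a fixed multiple — namely $\mu^{-1}\int_\Rb\psi$ — of an approximate identity concentrated at $\Omega=0$; this is what produces the size bound in part (1) and the $\dirac(\Omega)$ main term in part (2). It is the soft, non-arithmetic analogue of Lemma \ref{NTOmegainv}, and I would run the same reduction.

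First, insert a partition of unity subordinate to the regions $\{|x|\ge|y|,\ |x^j|\gtrsim|x|\}$ and $\{|y|>|x|,\ |y^j|\gtrsim|y|\}$ for $1\le j\le d$, together with the ``origin'' region $\{|x|+|y|\lesssim1\}$. On a piece of the first type one has $|\nabla_y\Omega|\sim|x|\gtrsim1$, so change variables replacing $y^j$ by $u:=\Omega$, with Jacobian $|\beta^jx^j|^{-1}\lesssim1$; the integral becomes $\int|\beta^jx^j|^{-1}\big(\int_\Rb\psi(\mu u)\,\widetilde\Phi(x,u,y')\,\mathrm du\big)\,\mathrm dx\,\mathrm dy'$, where $\widetilde\Phi$ keeps the derivative bound $D$ (it has absorbed the two cutoffs and the phase $e(x\cdot\xi+y\cdot\eta)$), is supported for $u$ in an interval of length $\lesssim|\beta^jx^j|\lambda$, and — since $y^j=(u-\langle x',y'\rangle_{\beta'})/(\beta^jx^j)$ — oscillates in $u$ at frequency $O(|\eta^j|/|x^j|)=O(1+|\eta|)$ there; the remaining $x,y'$ integrations run over $\lambda$-balls of total volume $\lesssim\lambda^{2d-1}$. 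For part (1) it then suffices to bound the inner $u$-integral crudely by $\|\psi(\mu\cdot)\|_{L^1}\|\widetilde\Phi\|_\infty\lesssim D\mu^{-1}$ (and, for the $\Psi(\mu\Omega,x,y)$ variant, by $\mu^{-1}\|\sup_{x,y}|\Psi(\cdot,x,y)|\|_{L^1_u}\le D\mu^{-1}$, which is exactly the content of the hypothesis on $\Psi$; its derivative bounds merely license the change of variables).

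For part (2) I would split $\widetilde\Phi(x,u,y')=\widetilde\Phi(x,0,y')+\big(\widetilde\Phi(x,u,y')-\widetilde\Phi(x,0,y')\big)$. After integrating the $x,y'$ variables, the first summand rebuilds precisely the $\mu^{-1}\big(\int\psi\big)\int\dirac(\Omega)\Phi\,e(\cdots)$ term of the lemma, up to a tail $\mu^{-1}\int_{|v|\gtrsim\mu|\beta^jx^j|\lambda}|\psi(v)|\,\mathrm dv\lesssim D\mu^{-1}(\mu|\beta^jx^j|\lambda)^{-1/8}$ coming from the finite $u$-range of $\widetilde\Phi$ and controlled by the hypothesis $\|\langle\cdot\rangle^{1/8}\psi\|_{L^1}\le D$ — the same tail estimate also disposes of the regime in which $|\langle a,b\rangle_\beta|$ is so large that $\{\Omega=0\}$ misses the support and the $\dirac$-term vanishes. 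The second summand is the genuine error: bounding the Taylor remainder by $\min\!\big(2D,\,D(1+|\eta|)|u|\big)$ and splitting the $v=\mu u$ integral at $|v|\sim\mu(1+|\eta|)^{-1}$, the $1/8$-weighted $L^1$ bound on $\psi$ yields $\lesssim D^2(1+|\eta|)^{1/8}\mu^{-1/8}$ per inner integral, hence a net gain of $\mu^{-1/8}$ over the main term; since $\lambda\le(\log L)^4$ is much smaller than $\mu^{1/72}$ and $(1+|\eta|)^{1/8}\le1+|\xi|+|\eta|$, the leftover $\lambda$-powers are absorbed and one lands on the asserted $\mu^{-1/9}(1+|\xi|+|\eta|)$ bound.

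Finally, the origin region $\{|x|+|y|\lesssim1\}$, where $\Omega$ and $\nabla\Omega$ both degenerate and no coordinate is uniformly large, I would handle by a further dyadic decomposition into shells $|x|+|y|\sim2^{-k}$, $k\ge0$: on each shell some coordinate has size $\sim2^{-k}$, the same change of variables applies with Jacobian $\sim2^k$, the $u$-cutoff shrinks to $|u|\lesssim2^{-2k}$, and the shell has volume $\sim2^{-2dk}$, so the per-shell contribution is $\lesssim D^2\mu^{-1}2^{-(2d-2)k}$, whose sum over $k$ converges because $d\ge3$; for part (2), the shells with $2^{-2k}\ll\mu^{-1}$ already contribute $\ll\mu^{-1-1/9}$ and are absorbed into the error, while on the shallower shells the argument above applies (the oscillation frequency $\sim|\eta^j|2^k$ now carries an extra $2^k$, dominated by the geometric decay of the shell sum). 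I expect this origin region, together with the purely bookkeeping chore of keeping every $\lambda$-power uniform in $(a,b)$ and in $(\xi,\eta)$, to be the only real — and still fairly mild — difficulty; everything else is the soft change-of-variables reduction described above.
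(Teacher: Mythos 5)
Your argument is correct and is essentially the paper's own proof: localize by a partition of unity to the region where one coordinate (say $x^1$) is comparable to $\max(|x|,|y|)$, substitute $u=\Omega$ for the corresponding $y$-coordinate, bound the $u$-integral by the $L^1$ norm of $\psi$ (resp. of $\sup_{x,y}|\Psi|$) for part (1), and for part (2) freeze the non-$\psi$ part of the integrand at $u=0$ to recover the $\dirac(\Omega)$ term and control the difference by a Lipschitz-in-$u$ bound of size $D(1+|\eta|)$ for small $u$ together with the $\langle\cdot\rangle^{1/8}$-weighted tail of $\psi$ for large $u$. The only deviations are organizational and immaterial: the paper keeps the Jacobian $|\beta^1 x^1|^{-1}$ throughout (its singularity being absorbed by the accompanying volume factors, using $d\geq 3$) instead of your separate dyadic-shell treatment of the region $|x|+|y|\lesssim 1$, and it splits the $u$-integral at $|u|=\mu^{-1/9}$ rather than at $|u|\sim(1+|\eta|)^{-1}$, which avoids any need to absorb extra $\lambda$-powers via $\lambda\ll\mu^{1/72}$.
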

\begin{proof} Using a smooth partition of unity, it is enough to consider the region when $|x^1|\sim \max(|x|, |y|)$ (other regions are treated symmetrically). In this case, we do the same change of variables as in the proof of Lemma \ref{NTOmegainv}, replacing the variable $y^1$ by $u=\Omega$ to write the corresponding integral in \eqref{NTintlem1} as
 	\begin{align*}
 	&\int_{\Rb^d} |\beta^1x^1|^{-1}\chi_0\big(\frac{x-a}{\lambda}\big) \chi_0\big(\frac{x'}{|x^1|}\big)e(x\cdot \xi) \int_{\Rb^{d-1}}\chi_0\big(\frac{y'-b'}{\lambda}\big)\chi_0\big(\frac{y'}{|x^1|}\big)e(y'\cdot \eta')\,\mathrm{d}y'\mathrm{d}x\\
 	&\qquad\qquad\qquad\qquad \int_{\Rb} {\mu}\psi(\mu u) \chi_0\bigg(\frac{u-\langle x', y'\rangle_{\beta'} -\beta^1 x^1 b^1}{\beta^1 x^1 \lambda}\bigg)\widetilde\Phi(x, u, y')e\bigg(\frac{u\eta^1 -\langle x', y'\rangle_{\beta'}\eta^1}{\beta^1 x^1}\bigg)\mathrm{d}u,
 	\end{align*}
 	where $\widetilde\Phi(x, u, y')=\Phi(x, y^1, y')\chi_0(\frac{y}{|x^1|})$ evaluated at $y^1=\frac{u-\langle x', y'\rangle_{\beta'}}{\beta^1 x^1}$. 
 	From this one can directly obtain \eqref{NTintlem1}, as well as the extension with $\psi\cdot\Phi$ replaced by $\Psi$. To obtain \eqref{NTintlem2}, we look at the difference  
 		\begin{align*}
&\int_{\Rb} {\mu}\psi(\mu u) \chi_0\bigg(\frac{u-\langle x', y'\rangle_{\beta'} -\beta^1 x^1 b^1}{\beta^1 x^1 \lambda}\bigg)\widetilde\Phi(x, u, y')e\bigg(\frac{u\eta^1 -\langle x', y'\rangle_{\beta'}\eta^1}{\beta^1 x^1}\bigg)\mathrm{d}u\\
&\qquad\qquad\qquad\qquad\qquad- \int_{\Rb} {\mu}\psi(\mu u) \chi_0\bigg(\frac{-\langle x', y'\rangle_{\beta'} -\beta^1 x^1 b^1}{\beta^1 x^1 \lambda}\bigg)\widetilde\Phi(x, 0, y')e\big(\frac{-\langle x', y'\rangle_{\beta'}\eta^1}{\beta^1 x^1}\big)\mathrm{d}u,
\end{align*}
which can be written as $\int_{\Rb} {\mu}\psi(\mu u) \widetilde \Delta(u;x, y')\mathrm{d}u$, where
\begin{align*}
\widetilde \Delta(u;x,y')&=\chi_0\bigg(\frac{u-\langle x', y'\rangle_{\beta'} -\beta^1 x^1 b^1}{\beta^1 x^1 \lambda}\bigg)\widetilde\Phi(x, u, y')e\bigg(\frac{u\eta^1 -\langle x', y'\rangle_{\beta'}\eta^1}{\beta^1 x^1}\bigg)\\
 &\qquad \qquad\qquad\qquad-\chi_0\bigg(\frac{-\langle x', y'\rangle_{\beta'} -\beta^1 x^1 b^1}{\beta^1 x^1 \lambda}\bigg)\widetilde\Phi(x, 0, y')e\big(\frac{-\langle x', y'\rangle_{\beta'}\eta^1}{\beta^1 x^1}\big).
 	\end{align*}
It is easy to see that $|\widetilde \Delta|\lesssim D\cdot \min\big[1, u\cdot\big(\frac{|\eta^1|}{\beta^1 |x^1|}+\frac{1}{\beta^1|x^1|\min(1, |x^1|)}\big)\big]$. Using this, we split the $u$ integral into two regions. If $|u|\leq \mu^{-1/9}$, we can use the second of the two bounds on $\widetilde \Delta$ to obtain a contribution $D\mu^{-1/9}|\eta^1|\lambda^{2d}$ to \eqref{NTintlem2}; if $u\geq \mu^{-1/9}$, we can use the first bound and the weighted norm $\|\langle y\rangle^{1/8}\psi\|_{L^1}$ to obtain a contribution $\mu^{-1/9}D\lambda^{2d}$ to \eqref{NTintlem2}. This finishes the proof.
\end{proof}
With the help of Lemmas \ref{NTSP}--\ref{NTintlem}, we can now prove Proposition \ref{approxnt}.
\begin{proof}[Proof of Proposition \ref{approxnt}]
We start with some simplifying notation and reductions. Set $\vx:=(x_1, \ldots, x_n)\in\Rb^{dn}$, $\vy:=(y_1, \ldots, y_n)\in \Rb^{dn}$, $\vs:=(s_1, \ldots, s_n)\in \Rb^n$, $\vOmega=(\Omega_1, \ldots, \Omega_n)\in \Rb^n$ where $\Omega_j=\langle x_j, y_j \rangle_\beta$, and $\mu=L^2\delta$. We will also use the notations like $\vx_{\leq j}:=(x_1, \ldots, x_j)\in\Rb^{dj}$ and similarly for the other variables $\vy$ and $\vOmega$. 

Write $W(x,y)=W(\vx,\vy)\Upsilon(\vx, \vy)$ where $\Upsilon$ is a smooth function supported in the set described in \eqref{propertyw2}, namely
\begin{equation}\label{NTdefofUpsilon}
\Upsilon(\vx, \vy)=\prod_{j=1}^n \chi_0\big(\frac{\widetilde{x_j}-a_j}{\lambda_j}\big)\chi_0\big(\frac{\widetilde{y_j}-b_j}{\lambda_j}\big)
\end{equation}
where $\widetilde{x_j}$ and $\widetilde{y_j}$ are as in \eqref{propertyw2}, and we may use a different $\chi_0$ as said in Section \ref{notations}. Also note that we can assume (by rearranging the indices) that if $j'\prec j$ then $j'< j$. 

Set $K:=(10d\max{\lambda_j})^{-1}L^{1-\upsilon}$, and write (with $\widehat{\Psi}$ being the Fourier transform on $\Rb^n$)
\begin{align*}
S&=\sum_{(\vx, \vy)\in \Zb_L^{2dn}} W(\vx, \vy)\Psi(\mu \vOmega)=\int_{\Rb^n}\mu^{-n} \widehat\Psi(\mu^{-1}\vs)\bigg[\sum_{(\vx, \vy)\in \Zb_L^{2dn}} W(\vx, \vy)e(\vs\cdot \vOmega)\bigg]\, \mathrm{d}\vs\\
&=\int_{\Rb^n}\mu^{-n} \widehat\Psi(\mu^{-1}\vs)\prod_{j=1}^n\chi_0\big(\frac{s_j}{K}\big)\left[\ldots \right] \, d\vs +\int_{\Rb^n}\mu^{-n} \widehat\Psi(\mu^{-1}\vs)\bigg(1-\prod_{j=1}^n\chi_0\big(\frac{s_j}{K}\big)\bigg)\left[\ldots\right]  \, \mathrm{d}\vs\\
&=:I_{\textrm{major}}+I_{\textrm{minor}}.
\end{align*}

\medskip

\noindent{ $\bullet$ \bf Major arc contribution:} By Poisson summation, there holds
\begin{align*}
I_{\textrm{major}}&=\int_{\Rb^n}\mu^{-n} \widehat\Psi(\mu^{-1}\vs)\prod_{j=1}^n\chi_0\big(\frac{s_j}{K}\big)\bigg[\sum_{(\vg, \vh)\in \Zb^{2dn}}\int_{\Rb^{2dn}} W\big(\frac{\vx}{L}, \frac{\vy}{L}\big)e(\vg \cdot \vx +\vh \cdot \vy+L^{-2}\vs \cdot \vOmega)\,\mathrm{d}\vx \mathrm{d}\vy\bigg]\mathrm{d}\vs\\
&=L^{2dn}\mu^{-n}\int_{\Rb^n} \widehat\Psi(\mu^{-1}\vs)\prod_{j=1}^n\chi_0\big(\frac{s_j}{K}\big)\bigg[\sum_{(\vg, \vh)\in \Zb^{2dn}}\int_{\Rb^{2dn}} W({\vx}, {\vy})e(L\vg \cdot \vx +L\vh \cdot \vy+\vs\cdot \Omega)\,\mathrm{d}\vx \mathrm{d}\vy\bigg] \mathrm{d}\vs\\
&=L^{2dn}\mu^{-n}\left(\int_{\Rb^n} \widehat\Psi(\mu^{-1}\vs)\bigg[\int_{\Rb^{2dn}} W({\vx}, {\vy})e(\vs\cdot \Omega)\,\mathrm{d}\vx \mathrm{d}\vy\right]\mathrm{d}\vs\\
&\qquad -\int_{\Rb^n} \widehat\Psi(\mu^{-1}\vs)\bigg(1-\prod_{j=1}^n\chi_0\big(\frac{s_j}{K}\big)\bigg)\left[\int_{\Rb^{2dn}} W({\vx}, {\vy})e(\vs\cdot \Omega)\,\mathrm{d}\vx\mathrm{d}\vy\right]\mathrm{d}\vs\\
&\qquad+\int_{\Rb^n} \widehat\Psi(\mu^{-1}\vs)\prod_{j=1}^n\chi_0\big(\frac{s_j}{K}\big)\bigg[\sum_{0\neq(\vg, \vh)\in \Zb^{2dn}}\int_{\Rb^{2dn}} W({\vx}, {\vy})e(L\vg \cdot \vx +L\vh \cdot \vy+\vs\cdot \Omega)\,\mathrm{d}\vx\mathrm{d}\vy\bigg]\mathrm{d}\vs\bigg)\\
&=: I_{\textrm{major-A}}+I_{\textrm{major-B}}+I_{\textrm{major-C}}.
\end{align*}
Noticing that $I_{\textrm{major-A}}$ is nothing but the integral in \eqref{conclusion1}, it remains to show that $I_{\textrm{major-B}},I_{\textrm{major-C}}$ and $I_{\textrm{minor}}$ can all be bounded by the right hand side of \eqref{conclusion1}.

To bound $I_{\textrm{major-B}}$, we use the following bound:

\begin{equation}\label{stphasebd}
\bigg|\int_{\Rb^{2dn}} W(\vx, \vy) e(\vs\cdot \Omega) \,\mathrm{d}\vx \mathrm{d}\vy\bigg|\leq \frac{C^n(\lambda_1\ldots \lambda_n)^{2d}}{\langle s_1\rangle^d\ldots \langle s_n\rangle^d}\|\widehat W\|_{L^1(\Rb^{2dn})}.
\end{equation} This bound is obtained by writing 
$$
\int_{\Rb^{2dn}} W(\vx, \vy) e(\vs\cdot \Omega) \,\mathrm{d}\vx \mathrm{d}\vy=\int_{\Rb^{2dn}}\widehat W(\vxi, \veta) \left[\int_{\Rb^{2dn}} \Upsilon(\vx, \vy)e(\vxi\cdot \vx +\veta \cdot \vy+\vs\cdot \Omega) \,\mathrm{d}\vx \mathrm{d}\vy\right] \mathrm{d}\vxi\mathrm{d}\veta,
$$
and applying stationary phase (when $|s_j|\geq 1$) in the inner integral (or using the Fourier transform of the Gaussian since the phase is essentially the difference of two Gaussians). Using this bound, we can estimate 
\begin{align*}
I_{\textrm{major-B}}&\leq (C^+)^n (\lambda_1\ldots \lambda_n)^{2d}L^{2dn}\mu^{-n} \sum_{j=1}^n \int_{|s_j|\geq K} \langle s_j \rangle^{-d} \,\mathrm{d}s_j \prod_{k\neq j }\int_{\Rb} \langle s_k \rangle^{-d}\,\mathrm{d}s_k\\
&\leq (C^+)^n (\lambda_1\ldots \lambda_n)^{2d} L^{2dn}\mu^{-n} K^{-(d-1)}\lesssim (C^+)^n (\lambda_1\ldots \lambda_n)^d L^{2dn}\mu^{-n} L^{-\upsilon}.
\end{align*}

Moving to $I_{\textrm{major-C}}$, we write 
\begin{align*}
I_{\textrm{major-C}}=&L^{2dn}\mu^{-n} \int_{\Rb^{2dn}} \widehat W(\vxi, \veta) H(\vxi, \veta)  d\vxi d\veta,\\
H(\vxi, \veta):=&\sum_{0\neq(\vg, \vh)\in \Zb^{2dn}} \int_{\Rb^n} \widehat\Psi(\mu^{-1}\vs)\prod_{j=1}^n\chi_0\big(\frac{s_j}{K}\big)\int_{\Rb^{2dn}} \Upsilon({\vx}, {\vy})e[(L\vg+\vxi) \cdot \vx +(L\vh +\veta)\cdot \vy+\vs\cdot \vOmega] \,\mathrm{d}\vx \mathrm{d}\vy\mathrm{d}\vs.
\end{align*}

Recalling the form of $\Psi$ in \eqref{propertypsi1}, it will be enough to show that 
\begin{equation}\label{HvetaEst}
|H(\vxi, \veta)|\leq (C^+)^n(1+|\xi|+|\eta|)(\lambda_1\cdots\lambda_n)^C L^{-\frac{4}{10}\upsilon}\|\Psi_1\|_{L^1}.
\end{equation}

For each $j\notin J$, we use a partition of unity of $\Rb^{n-|J|}$ subordinate to cubes of size 1 in order to write:
$$
\Psi_1(\Omega[J^c])=\sum_{\kappa \in \Zb^{n-|J|}} \Psi_1^{(\kappa)}(\Omega[J^c]).
$$
where each $\Psi_1^{(\kappa)}$ is supported in a unit cube of $\Rb^{n-|J|}$. Since $\Psi_1\in L^1(\Rb^{n-|J|})$, it is enough to obtain the bound \eqref{HvetaEst} with $\Psi^1$ replaced by $\Psi_1^{(\kappa)}$. In what follows, we will omit the superscript $(\kappa)$ and just assume that $\Psi_1$ is supported on a unit cube of $\Rb^{n-|J|}$.

\medskip

Since the sum is over $(\vg, \vh)\neq 0$, let $1\leq \ell \leq n$ be the largest integer such that $(g_\ell, h_\ell)\neq 0$. It is enough to estimate the contribution for each fixed $1\leq \ell\leq n$ since polynomial losses in \eqref{HvetaEst} can be absorbed by modifying the $(C^+)^n$ factor.  As such, by abusing notation, we may assume in the definition of $H(\vxi, \veta)$ above that for some fixed $1\leq \ell \leq n$, the sum in $H(\vxi, \veta)$ is over $(\vg, \vh)_{<\ell} \in \Zb^{2d(\ell-1)}$, $(g_\ell, h_\ell)\in \Zb^{2d}\setminus \{0\}$, and $(\vg, \vh)_{>\ell}=0$. Hence,
\begin{align*}
H(\vxi, \veta):=&\sum_{(\vg, \vh)_{<\ell}\in (\Zb^{2d})^{\ell-1}} \int_{\Rb^{\ell-1}}\mathrm{d}\vs_{<\ell} \prod_{j=1}^{\ell-1}\chi_0\big(\frac{s_j}{K}\big)\int_{\Rb^{2d(\ell-1)}}\mathrm{d}\vx_{<\ell} \mathrm{d}\vy_{<\ell} \Upsilon_{<\ell}({\vx}_{<\ell}, {\vy}_{<\ell})\\
&\quad F_{<\ell}(\vs_{<\ell}, \vx_{<\ell}, \vy_{<\ell})\cdot e[(L\vg_{<\ell}+\vxi_{<\ell}) \cdot \vx_{<\ell} +(L\vh_{<\ell} +\veta_{<\ell})\cdot \vy_{<\ell}+\vs_{<\ell}\cdot \vOmega_{<\ell}],
\end{align*}
where 
\begin{equation}\label{NTlemUpsilonell}
\Upsilon_{<\ell}(\vx_{<\ell}, \vy_{<\ell})=\prod_{j=1}^{\ell-1} \chi_0\big(\frac{\widetilde{x_j}-a_j}{\lambda_j}\big)\chi_0\big(\frac{\widetilde{y_j}-b_j}{\lambda_j}\big),
\end{equation}
\begin{align*}
F_{<\ell}(\vs_{<\ell}, \vx_{<\ell}, \vy_{<\ell}):=&\sum_{0\neq(g_\ell, h_\ell)\in \Zb^{2d}} \int_{\Rb}\mathrm{d}s_\ell \, \int_{\Rb^{2d}}\mathrm{d}x_\ell \mathrm{d}y_\ell \cdot\chi_0\big(\frac{\widetilde{x_\ell}-a_\ell}{\lambda_\ell}\big)\chi_0\big(\frac{\widetilde{y_\ell}-b_\ell}{\lambda_\ell}\big)\chi_0\big(\frac{s_\ell}{K}\big)G_{\leq \ell}(\vs_{\leq \ell}, \vx_{\leq \ell}, \vy_{\leq \ell})\\
&\qquad e[(Lc_\ell+\xi_\ell) \cdot x_\ell +(Ld_\ell +\eta_\ell)\cdot y_\ell+s_\ell\cdot \Omega_\ell],
\end{align*}
and
\begin{align*}
G_{\leq \ell}(\vs_{\leq \ell}, \vx_{\leq \ell}, \vy_{\leq \ell}):=&\int_{\Rb^{n-\ell}}\mathrm{d}\vs_{>\ell} \prod_{j=\ell+1}^{n}\chi_0\big(\frac{s_j}{K}\big)\widehat\Psi(\mu^{-1}\vs)\int_{\Rb^{2d(n-\ell)}}\mathrm{d}\vx_{>\ell} \mathrm{d}\vy_{>\ell} \prod_{j=\ell+1}^{n} \chi_0\big(\frac{\widetilde{x_j}-a_j}{\lambda_j}\big)\chi_0\big(\frac{\widetilde{y_j}-b_j}{\lambda_j}\big)\\
&\quad e(\vxi_{>\ell}\cdot \vx_{>\ell} +\veta_{>\ell}\cdot \vy_{>\ell}+\vs_{>\ell}\cdot \vOmega_{>\ell}).
\end{align*}

Notice that $G_{\leq \ell}(\vs_{\leq \ell}, \vx_{\leq \ell}, \vy_{\leq \ell})$ only depends on the variables 
$(\vx_{\leq \ell}, \vy_{\leq \ell})$ through the possible occurrences of these variables in $\widetilde{x_j}$ and $\widetilde{y_j}$ in the $\chi_0$ factors when $j \geq \ell+1$.

We start by bounding $G_{\leq \ell}$ by applying Lemma \ref{NTSP} $(n-\ell)$ times starting with the last integration variables $(s_n, x_n , y_n)$. Indeed, by induction, one can show that after integrating in $(s_{k+1}, x_{k+1}, y_{k+1})$ for some $\ell\leq k \leq n-1$, we end up with an expression of the form given in \eqref{NTSPest0}, with $a\in a_k+\{0,\pm x_{k'},\pm y_{k'}\}$ and $b\in b_k+\{0,\pm x_{k''},\pm y_{k''}\}$ (cf. \eqref{propertyw2}) and $\Phi(s_k, x_k, y_k)$ given by 
\begin{align*}
\Phi&=\chi_0\big(\frac{s_k}{K}\big)\int_{\Rb^{n-k}}\mathrm{d}s_{>k}\int_{\Rb^{2d(n-k)}}\prod_{j=k+1}^{n}\chi_0\big(\frac{s_j}{K}\big)\widehat\Psi(\mu^{-1}\vs)\int_{\Rb^{2d(n-\ell)}}\mathrm{d}\vx_{>k} \mathrm{d}\vy_{>k} \prod_{j=k+1}^{n} \chi_0\big(\frac{\widetilde{x_j}-a_j}{\lambda_j}\big)\chi_0\big(\frac{\widetilde{y_j}-b_j}{\lambda_j}\big)\\
&\qquad \qquad \qquad \qquad e(\vxi_{>k}\cdot \vx_{>k} +\veta_{>k}\cdot \vy_{>k}+\vs_{>k}\cdot \vOmega_{>k})
\end{align*}
which satisfies the bound in \eqref{NTphibounds} with 
$$
D\leq (C^+)^{n-k}(\lambda_{k+1}\ldots \lambda_n)^{2d}\|\Psi_1\|_{L^1},
$$ uniformly in the parameters $(\vs_{<k},\vx_{<k},\vy_{<k})$ and together with all derivatives in the parameters $(\vx_{<k},\vy_{<k})$. Note that, if we differentiate $\Phi$ in $x_k$ and $y_k$ at most $10d$ times in (\ref{NTphibounds}), these derivatives may fall on some of the $\chi_0$ factors; however even if we do this at every step of induction, each single $\chi_0$ factor will be differentiated at most $20d$ times in total, because $\widetilde x_j$ (and similarly $\widetilde{y_j}$) depends only on $x_j$ and at most one other variable.

This gives that $\chi_0(\frac{s_\ell}{K})G_{\leq \ell}(\vs_{\leq \ell}, \vx_{\leq \ell}, \vy_{\leq \ell})$ satisfies the conditions of part (2) of Lemma \ref{NTSP} in the $(s_\ell, x_\ell, y_\ell)$ integration (with condition 2(a) holding if $\ell\in J$ and 2(b) if $\ell\notin J$). Thus, for any multi-indices $\valpha_{<\ell},\vbeta_{<\ell}$ satisfying $|\alpha_k|,|\beta_k|\leq 10d$ for each $1\leq k<\ell$, it holds that
$$
\sup_{\vs_{<\ell}} \left|\partial_{\vx_{<\ell}}^{\valpha_{<\ell}} \partial_{\vy_{<\ell}}^{\vbeta_{<\ell}} F_{<\ell}(\vs_{<\ell}, \vx_{<\ell}, \vy_{<\ell})\right|\leq (1+|\xi_\ell|+|\eta_\ell|)L^{-\frac{4\upsilon}{10}}(C^+)^{n-\ell+1}(\lambda_{\ell}\ldots \lambda_n)^{2d}\|\Psi_1\|_{L^1}
$$
which allows us to start applying estimate \eqref{NTSPest2} inductively starting with the $(s_{\ell-1},x_{\ell-1},y_{\ell-1})$ integral all the way to the integral over $(s_1, x_1, y_1)$ giving the desired bound in \eqref{HvetaEst}.

\medskip

\noindent{ $\bullet$ \bf Minor arc contribution:} Now we move to bound the contribution of the minor arc. This can be written as a sum of $2^n-1$ terms of the following form: For any set $F\subset \{1, \ldots, n\}$ such that $|F|=f\geq 1$, we consider
\begin{align*}
I_{\textrm{minor}}^F:=&\mu^{-n}\int_{\Rb^n}\prod_{j \in F}\chi_\infty\big(\frac{s_j}{K}\big)\prod_{j\notin F } \chi_0\big(\frac{s_j}{K}\big) \widehat\Psi(\mu^{-1}\vs)\bigg[\sum_{(\vx, \vy)\in \Zb_L^{2dn}} W(\vx, \vy)e(\vs\cdot \vOmega)\bigg] \mathrm{d}\vs\\
&=\mu^{-n} \int_{\Rb^{2dn}} \widehat{W}(\vxi, \veta) B(\vxi, \veta)\,\mathrm{d}\vxi \mathrm{d}\veta\\
B(\vxi, \veta):=& \int_{\Rb^n}\prod_{j \in F}\chi_\infty\big(\frac{s_j}{K}\big)\prod_{j\notin F } \chi_0\big(\frac{s_j}{K}\big)\widehat\Psi(\mu^{-1}\vs)\bigg[\sum_{(\vx, \vy)\in \Zb_L^{2dn}} \Upsilon(\vx, \vy)e(\vs\cdot \vOmega+\vx\cdot \vxi+\vy\cdot \veta)\bigg]\mathrm{d}\vs,
\end{align*}
where $\Upsilon$ is as defined in \eqref{NTdefofUpsilon}. We shall show that: 
\begin{equation}\label{NTminorest}
|B(\vxi, \veta)|\lesssim (C^+)^n L^{2dn-\upsilon f}(\lambda_1\ldots \lambda_n)^C \bigg\|\prod_{j\in F}\langle \sigma_j \rangle^2 \widehat \Psi(\vsigma)\bigg\|_{L^\infty},
\end{equation}
uniformly in $\vxi$, $\veta$, $a_j$, and $b_j$ ($1\leq j \leq n$). Once this estimate is established, we use the bound \[\bigg\|\prod_{j\in F}\langle \sigma_j \rangle^2 \widehat \Psi(\vsigma)\bigg\|_{L^\infty}\leq C^n (8f)!\] by \eqref{propertypsi1}--\eqref{propertypsi2} and that $1\leq f \leq n\leq (\log L)^3$, to conclude 
that $|B(\vxi, \veta)|$ can be bounded by the right hand side of \eqref{conclusion1} as needed. 

To prove \eqref{NTminorest}, we can bound \[\widehat \Psi(\mu^{-1}\vs)\leq \prod_{j\in F} \big\langle \frac{s_j}{\delta L^2}\big\rangle^{-2}\cdot\bigg\|\prod_{j\in F}\langle \sigma_j \rangle^2 \widehat \Psi(\vsigma)\bigg\|_{L^\infty}.\] Afterwards, we apply $n$ times Lemma \ref{lem:minorarcs}, going backwards in $n$, using part (1) for $j \notin F$ and part (2) for $j \in F$. Each application gives a factor of $\lambda_j^{4d}L^{2d}$ for $j \notin F$ and $\lambda_j^{4d}L^{2d-\upsilon}$ for $j\in F$, which gives (\ref{NTminorest}) and finishes the proof of \eqref{conclusion1}. 

\medskip

\noindent{ $\bullet$ \bf Deducing \eqref{conclusion2} from \eqref{conclusion1}:} We again start by writing 
\begin{align}
\mu^n\int_{\Rb^{2dn}} W(\vx, \vy)\Psi(\mu \vOmega)\mathrm{d}\vx\mathrm{d}\vy=\int_{\Rb^{2dn}} \widehat W(\vxi, \veta)\bigg[\mu^n\int_{\Rb^{2dn}}\Psi(\mu \vOmega)\Upsilon(\vx, \vy) e(\vx\cdot \vxi+\vy\cdot \veta)d\vx d\vy\bigg]\mathrm{d}\vxi\mathrm{d}\veta, \label{NTfinalint}
\end{align}
where $\Upsilon$ is defined in \eqref{NTdefofUpsilon} and recall that we have rearranged indices so if $j'\prec j$ then $j'< j$. Next, we start applying part (2) of either Lemma \ref{NTOmegainv} or Lemma \ref{NTintlem} for the $\mathrm{d}x_j\mathrm{d}y_j$ integral (depending on whether $j \in J$ or not) backwards in $n$ starting with the $\mathrm{d}x_n\mathrm{d}y_n$ integral. At the first application, we replace either $\frac{\chi_\infty(\mu \Omega_n)}{\Omega_n}$ by $\mathrm{p.v.}\frac{1}{\Omega_n}$ or $\mu \Psi(\mu\vOmega_{<n}, \mu\Omega_n)$ by $\left[\int_{\Rb}\Psi(\mu\vOmega_{<n},\omega_n) d\omega_n\right]\cdot\dirac(\langle x_n, y_n\rangle_\beta)$ plus an additive error term that can be bound by $\lambda^{2d}L^{-\frac{1}{6}}(1+|\xi_n|+|\eta_n|)$ uniformly in $(a_n, b_n, \vx_{<n}, \vy_{<n}, \vxi_{<n}, \veta_{<n})$. The contribution of this additive error term to \eqref{NTfinalint} can be bounded by repeatedly applying part (1) of either Lemma \ref{NTOmegainv} or Lemma \ref{NTintlem} (using the more general form of part (1) of Lemma \ref{NTintlem} if needed) and gives a total contribution 
\begin{equation}\label{NTadditiveerror}
\leq (C^+)^n(\lambda_1 \ldots \lambda_n)^{2d} L^{-\frac{1}{6}}\int_{\Rb^{2dn}}|\widehat W(\vxi, \veta)|(1+|\xi_n|+|\eta_n|) \mathrm{d}\vxi \mathrm{d}\veta\leq (C^+)^n(\lambda_1 \ldots \lambda_n)^{2d} L^{-\frac{1}{6}}\left(\|\widehat W\|_{L^1}+\|\widehat{\partial W}\|_{L^1}\right).
\end{equation}

This leaves us only with the main part contribution which corresponds to replacing \eqref{NTfinalint} by
\begin{align*}
&\int_{\Rb^{2dn}} \widehat W(\vxi, \veta)\bigg[\mu^{n-1}\int_{\Rb^{2d(n-1)}}\Psi_{<n}(\mu \vOmega_{<n})\Upsilon_{<n}(\vx, \vy)\Gamma_{n-1}(\vx_{<n}, \vy_{<n}, \xi_n, \eta_n) \\
&\qquad \qquad \qquad \qquad \qquad \qquad \qquad \qquad e(\vx_{<n}\cdot \vxi_{<n}+\vy_{<n}\cdot \veta_{<n})\,\mathrm{d}\vx_{<n}\mathrm{d}\vy_{<n}\bigg]\mathrm{d}\vxi\mathrm{d}\veta,
\end{align*} where $\Upsilon_{<n}$ is as in \eqref{NTlemUpsilonell}, 
\[
\Psi_{<n}(\Omega_1, \ldots, \Omega_{n-1})=
\left\{
\begin{aligned}
&\int_\Rb \Psi(\Omega_1, \ldots, \Omega_{n-1}, \omega_n)\,\mathrm{d}\omega_n, &\mathrm{if\ }n \notin J,\\
&\prod_{j\in J\setminus\{n\}}\frac{\chi_\infty(\Omega_j)}{\Omega_j}\cdot\Psi_1(\Omega[J^c]), &\mathrm{if\ }n \in J,
\end{aligned}
\right.
\] and
\[
\Gamma_{n-1}=
\left\{\begin{aligned}
&\int_{\Rb^d\times \Rb^d} \dirac(\langle x_n, y_n\rangle_\beta)\chi_0\big(\frac{\widetilde{x_n}-a_n}{\lambda_n}\big)\chi_0\big(\frac{\widetilde{y_n}-b_n}{\lambda_n}\big) e(x_n \cdot \xi_n+y_n \cdot \eta_n )\,\mathrm{d}x_n\mathrm{d}y_n, &\mathrm{if\ }n \notin J,\\
\mathrm{p.v.}&\int_{\Rb^d\times \Rb^d}\frac{1}{\langle x_n, y_n\rangle_\beta}\chi_0\big(\frac{\widetilde{x_n}-a_n}{\lambda_n}\big)\chi_0\big(\frac{\widetilde{y_n}-b_n}{\lambda_n}\big) e(x_n \cdot \xi_n+y_n \cdot \eta_n )\, \mathrm{d}x_n \mathrm{d}y_n, &\mathrm{if\ } n \in J.
\end{aligned}
\right.
\]This allows to repeat the above argument $n-1$ times, each time producing an additive error term bounded by \eqref{NTadditiveerror}, until finally \eqref{NTfinalint} is replaced by $I_{\mathrm{app}}$ in \eqref{intappr}. This allows us to bound $I-I_{\mathrm{app}}$ as in (\ref{conclusion2}), but the bound of $I_{\mathrm{app}}$ follows from the same arguments, so the proof of Proposition \ref{approxnt} is complete.
\end{proof}
\subsection{Asymptotics of $\Kc_{\Qc}$ for regular couples $\Qc$}\label{applyKQ} Using Proposition \ref{maincoef} and Proposition \ref{approxnt}, we can calculate the leading term in the asymptotic expression for the correlation $\Kc_\Qc(t,s,k)$ defined in (\ref{defkq}), as well as upper bounds for the error term.
\begin{lem}\label{auxlem} Let $\Tc$ be a tree of scale $n$. For any node $\nf\in\Tc$ define $\mu_\nf$ to be the number of leaves in the subtree rooted at $\nf$. Then, for any $\nf\in\Nc$, consider the values of $\mu_\mf$ where $\mf$ is a child of $\nf$, and let the \emph{second maximum} of these values be $\mu_\nf^\circ$. Then we have
\begin{equation}\label{auxineq}\prod_{\nf\in\Nc}\mu_\nf^\circ\leq \frac{3^n}{2n+1}.\end{equation}
\end{lem}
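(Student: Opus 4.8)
The plan is to prove the equivalent (and slightly cleaner) inequality
\[
\mu_\rf\cdot\prod_{\nf\in\Nc}\mu_\nf^\circ\le 3^{n}
\]
by induction on $n=n(\Tc)$, peeling off the root; here $\rf$ is the root of $\Tc$ and $\mu_\rf=2n+1$ is the number of leaves, so this is exactly \eqref{auxineq}. The base case $n=0$ is trivial: then $\Nc=\varnothing$, the product is the empty product $1$, and $\mu_\rf=1=3^0$.

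For the inductive step, let $\Tc$ have scale $n\ge 1$, with root $\rf$ and left, mid, right subtrees $\Tc_1,\Tc_2,\Tc_3$ of scales $n_1,n_2,n_3$ satisfying $n_1+n_2+n_3=n-1$. Write $\rf_i$ for the root of $\Tc_i$, $\Nc_i$ for its set of branching nodes, and $L_i:=\mu_{\rf_i}=2n_i+1$ for its number of leaves; then $L_1+L_2+L_3=2n+1=\mu_\rf$. The set of branching nodes of $\Tc$ is the disjoint union $\{\rf\}\sqcup\Nc_1\sqcup\Nc_2\sqcup\Nc_3$, and for any $\nf\in\Nc_i$ its children inside $\Tc$ are exactly its children inside $\Tc_i$, so $\mu_\nf^\circ$ is computed identically in both trees. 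Hence
\[
\prod_{\nf\in\Nc}\mu_\nf^\circ=\mu_\rf^\circ\cdot\prod_{i=1}^{3}\prod_{\nf\in\Nc_i}\mu_\nf^\circ .
\]
After relabeling we may assume $L_1\ge L_2\ge L_3\ge 1$, so that $\mu_\rf^\circ=L_2$ (the second maximum of $L_1,L_2,L_3$). Applying the inductive hypothesis to each $\Tc_i$ in the form $\prod_{\nf\in\Nc_i}\mu_\nf^\circ\le 3^{n_i}/L_i$, we get
\[
\mu_\rf\cdot\prod_{\nf\in\Nc}\mu_\nf^\circ
\le (L_1+L_2+L_3)\cdot L_2\cdot\prod_{i=1}^{3}\frac{3^{n_i}}{L_i}
=3^{\,n-1}\cdot\frac{L_1+L_2+L_3}{L_1L_3}.
\]

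Thus the whole statement reduces to the elementary inequality $L_1+L_2+L_3\le 3L_1L_3$, which holds because $L_1\ge L_2\ge L_3\ge 1$ gives $L_1+L_2+L_3\le 3L_1\le 3L_1L_3$. This closes the induction and yields $\mu_\rf\prod_{\nf\in\Nc}\mu_\nf^\circ\le 3^n$, i.e. \eqref{auxineq}. There is essentially no obstacle here: the only points requiring a bit of care are the bookkeeping that the branching nodes (and the values $\mu_\nf^\circ$) of $\Tc$ split cleanly over the three subtrees plus the root, and that after the relabeling the second maximum at the root is precisely $L_2$; the arithmetic core is the trivial three-term inequality above.
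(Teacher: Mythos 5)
Your proof is correct and is essentially the paper's argument: both induct on the scale, split the product over the root and the three subtrees, apply the inductive hypothesis in the form $\prod_{\nf\in\Nc_i}\mu_\nf^\circ\le 3^{n_i}/(2n_i+1)$, and close with the observation that $2n+1\le 3\max_i(2n_i+1)$ while $\mu_\rf^\circ$ is the second maximum of the $2n_i+1$. Your reformulation as $\mu_\rf\prod_{\nf\in\Nc}\mu_\nf^\circ\le 3^n$ is only a cosmetic repackaging of the same induction.
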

\begin{proof} We prove by induction. If $n=0$ the result is obvious. Suppose the result holds for smaller $n$, for any tree $\Tc$, let the subtrees be $\Tc_1$, $\Tc_2$ and $\Tc_3$ from left to right, with scale $n_1$, $n_2$ and $n_3$. If the root of $\Tc$ is $\rf$ and root of $\Tc_j$ is $\rf_j$, then by induction hypothesis we know that
\begin{equation}\prod_{\nf\in\Nc}\mu_\nf^\circ=\mu_\rf^\circ\cdot\prod_{j=1}^3\prod_{\nf\in\Nc_j}\mu_\nf^\circ\leq \frac{3^{n_1+n_2+n_3}}{(2n_1+1)(2n_2+1)(2n_3+1)}\mu_\rf^\circ\leq\frac{3^n}{2n+1}.\end{equation} In the last inequality we have used that $n=n_1+n_2+n_3+1$, which also implies $2n+1\leq 3\cdot\max(2n_1+1,2n_2+1,2n_3+1)$, and that $\mu_\rf^\circ$ equals the second maximum of $2n_j+1\,(1\leq j\leq 3)$. This completes the proof.
\end{proof}
\begin{prop}\label{asymptotics1} Let $\Qc$ be a regular couple of scale $2n$ where $n\leq N^3$, then we have $\Kc_\Qc(t,s,k)=\sum_Z\Kc_{\Qc,Z}(t,s,k)$, where $Z\subset\Nc^{ch}$ is the set that appears in Proposition \ref{maincoef}, and
\[\Kc_{\Qc,Z}(t,s,k)=2^{-2n}\delta^n\zeta^*(\Qc)\prod_{\nf\in Z}\frac{1}{\zeta_\nf \pi i}\cdot\int\widetilde{\Bc}_{\Qc,Z}\big(t,s,\alpha[\Nc^{ch}\backslash Z]\big)\,\mathrm{d}\alpha[\Nc^{ch}\backslash Z]\cdot\Mc_{\Qc,Z}^*(k)+\Rs,\] where the error term $\Rs$ satisfies $\|\Rs\|_{X_{\mathrm{loc}}^{40d}}\lesssim (C^+\delta)^n L^{-2\nu}$. The expression $\Mc_{\Qc,Z}^*(k)$ is defined by 
\begin{equation}\label{defintegral}\Mc_{\Qc,Z}^*(k)=\int_{\Sigma}{\prod_{\lf\in\Lc^*}^{(+)}n_{\mathrm{in}}(k_\lf)}\cdot\prod_{\nf\in\Nc^{ch}\backslash Z}\dirac(\Omega_{\nf})\prod_{\nf\in Z}\frac{1}{\Omega_{\nf}}\,\mathrm{d}\sigma.\end{equation} Here $k_\nf\in\Rb^d$ for each node $\nf$, and $\Sigma$ denotes the linear submanifold defined by the equations $k_{\rf^\pm}=k$ and $k_\nf=k_{\nf_1}-k_{\nf_2}+k_{\nf_3}$ for each branching node $\nf$ (where $\nf_1$, $\nf_2$ and $\nf_3$ are children nodes of $\nf$ from left to right), and $k_\lf=k_{\lf'}$ for each pair of leaves $\{\lf,\lf'\}$. {If we choose all the leaves of sign $+$ and list them as $\lf_1,\cdots,\lf_{2n+1}$}, then there is a linear bijection (up to a permutation of indices) from $\Sigma$ to some hyperplane $\{(k_{\lf_1},\cdots,k_{\lf_{2n+1}}):\pm k_{\lf_{2m+1}}\cdots\pm k_{\lf_{2n+1}}=k\}$ where $0\leq m\leq n$. The measure $\mathrm{d}\sigma$ is then defined by $\mathrm{d}\sigma=\mathrm{d}k_{\lf_1}\cdots\mathrm{d}k_{\lf_{2n}}$. {The product $\prod_{\lf\in\Lc^*}^{(+)}$ is taken over all $\lf\in\{\lf_1,\cdots,\lf_{2n+1}\}$,} and $\Omega_\nf=\Omega(k_{\nf_1},k_{\nf_2},k_{\nf_3},k_\nf)$. The singularities $1/\Omega_\nf$ are treated using the Cauchy principal value.
\end{prop}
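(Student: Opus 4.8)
The plan is to recognize $\Kc_{\Qc}$, for a regular couple $\Qc$, as an instance of the general counting sum \eqref{defi} and to read off its asymptotics from Proposition \ref{approxnt}. First I would start from the definition \eqref{defkq} of $\Kc_\Qc$ and use Proposition \ref{branchpair}: on the support of any decoration the resonance factors of paired branching nodes satisfy $\zeta_{\nf'}\Omega_{\nf'}=-\zeta_\nf\Omega_\nf$, so $\Bc_\Qc$ may be replaced by the function $\widetilde{\Bc}_\Qc$ of the $n$ variables $\alpha[\Nc^{ch}]$. Inserting the decomposition $\widetilde\Bc_\Qc=\sum_{Z\subset\Nc^{ch}}\prod_{\nf\in Z}\frac{\chi_\infty(\alpha_\nf)}{\zeta_\nf\pi i\alpha_\nf}\widetilde\Bc_{\Qc,Z}$ from Proposition \ref{maincoef} and pulling out the constants $\prod_{\nf\in Z}(\zeta_\nf\pi i)^{-1}$ gives $\Kc_\Qc=\sum_Z\Kc_{\Qc,Z}$ with
\[
\Kc_{\Qc,Z}(t,s,k)=\Big(\tfrac{\delta}{2L^{d-1}}\Big)^{2n}\zeta^*(\Qc)\prod_{\nf\in Z}\tfrac{1}{\zeta_\nf\pi i}\sum_\Es\epsilon_\Es\prod_{\nf\in Z}\tfrac{\chi_\infty(\delta L^2\Omega_\nf)}{\delta L^2\Omega_\nf}\,\widetilde\Bc_{\Qc,Z}\big(t,s,\delta L^2\Omega[\Nc^{ch}\backslash Z]\big)\prod_{\lf}^{(+)}n_{\mathrm{in}}(k_\lf),
\]
the sum running over $k$-decorations, equivalently over the $2n$ free $+$-leaf parameters of $\Sigma$ in \eqref{defintegral}. (The factor $\epsilon_\Es$ equals $1$ off a union of lower-dimensional diagonal subsets, whose contribution carries extra powers of $L^{-1}$ and will be swept into the error term, so I ignore it in this sketch.)

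The decisive step is structural: for each regular couple $\Qc$ I would construct, by induction on $n(\Qc)$ using the structure theorem for regular couples (Proposition \ref{structure1}), a \emph{unimodular} integer change of variables sending the $2n$ free $+$-leaf vectors to $n$ pairs $(x_j,y_j)\in(\Zb_L^d)^2$ — indexed by the chosen branching nodes $\nf_j\in\Nc^{ch}$ — such that $\Omega_{\nf_j}=2\langle x_j,y_j\rangle_\beta$ (recall \eqref{res}). Peeling $\Qc$ from the top through its representation by $\Qc_0$ and the smaller regular couples replacing the leaf pairs of $\Qc_0$, each branching node of $\Qc_0$ contributes a fresh pair built from difference vectors at that node, and the recursion is applied in each sub-couple. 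Taking $n_{\mathrm{in}}$ (essentially) supported in the unit ball, and choosing an appropriate strict partial order $\prec$ on $\{1,\dots,n\}$ refining the parent–child relation, one arranges that each $x_j,y_j$, or a fixed linear combination of it with one earlier variable, lies in a ball of radius $\ell_j$ about a fixed point, where $\ell_j$ is at most a constant times $\mu_{\nf_j}^\circ$, the second-largest leaf-count among the children subtrees of $\nf_j$. Lemma \ref{auxlem}, applied to each of the two trees $\Tc^\pm$, then yields $\prod_j\ell_j\le C^n$; and since $n\le N^3$ we also have $1\le\ell_j\le(\log L)^4$ for $L$ large. After this, $\Kc_{\Qc,Z}$ is, up to the explicit prefactor above, exactly of the form \eqref{defi}, \emph{provided Proposition \ref{approxnt} is applied with the parameter $\delta$ replaced by $2\delta$}, so that the $2$ in $\Omega_{\nf_j}=2\langle x_j,y_j\rangle_\beta$ is absorbed and the argument $\delta L^2\Omega_{\nf_j}$ of $\widetilde\Bc_{\Qc,Z}$ equals $L^2(2\delta)\langle x_j,y_j\rangle_\beta$. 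One then checks the hypotheses: $W\defeq\prod_\lf^{(+)}n_{\mathrm{in}}(k_\lf)$ satisfies \eqref{propertyw1} with the constant $(C^+)^n$ replaced by $(C^+)^n\langle k\rangle^{-40d}$ (Schwartz decay of $n_{\mathrm{in}}$ together with $\sum_\lf^{(+)}\pm k_\lf=k$ and pigeonholing, the $n\le\log L$ factors being harmless); the support condition \eqref{propertyw2} is what was just arranged; and $\Psi$ has the form \eqref{propertypsi1} with $J=Z$ and $\Psi_1=\widetilde\Bc_{\Qc,Z}(t,s,\cdot)$, whose bounds \eqref{propertypsi2} follow from \eqref{maincoef2}–\eqref{maincoef2.5} upon integrating $(\lambda_1,\lambda_2)$ out of \eqref{maincoef1}.

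With this in place Proposition \ref{approxnt} applies, and it remains to identify the main and error terms. Undoing the change of variables in $I_{\mathrm{app}}$ from \eqref{intappr}: unimodularity matches $\mathrm{d}x\,\mathrm{d}y$ with the surface measure $\mathrm{d}\sigma$ on $\Sigma$, while $\dirac(\langle x_j,y_j\rangle_\beta)$ and $\mathrm{p.v.}\,\langle x_j,y_j\rangle_\beta^{-1}$ turn into $\dirac(\Omega_{\nf_j})$ and $\mathrm{p.v.}\,\Omega_{\nf_j}^{-1}$ at the cost of a factor $2^{-n}$, which exactly cancels the extra $2^{-n}$ coming from running Proposition \ref{approxnt} with $2\delta$ in place of $\delta$; combined with $(\delta/2L^{d-1})^{2n}(L^{2d-2}\delta^{-1})^n=2^{-2n}\delta^n$, this reproduces precisely $2^{-2n}\delta^n\zeta^*(\Qc)\prod_{\nf\in Z}(\zeta_\nf\pi i)^{-1}\big(\int\widetilde\Bc_{\Qc,Z}\,\mathrm{d}\alpha[\Nc^{ch}\backslash Z]\big)\Mc_{\Qc,Z}^*(k)$ with $\Mc^*_{\Qc,Z}$ as in \eqref{defintegral}. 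For the error, \eqref{conclusion2} gives $(\ell_1\cdots\ell_n)^C(C^+L^{2d-2}\delta^{-1})^nL^{-2\nu}$; multiplying by the prefactor $(\delta/2L^{d-1})^{2n}$ and absorbing $(\ell_1\cdots\ell_n)^C\le C^n$ yields $|\Rs(t,s,k)|\lesssim\langle k\rangle^{-40d}(C^+\delta)^nL^{-2\nu}$ uniformly in $(t,s)\in[0,1]^2$. To upgrade this to the $X^{40d}_{\mathrm{loc}}$ norm I would pass to the time–Fourier side: by \eqref{maincoef1}, $\widehat\Rs(\lambda_1,\lambda_2,k)$ is the Proposition \ref{approxnt} error attached to the weight function $\Cc(\lambda_1,\lambda_2,\cdot)$, so applying \eqref{conclusion2} fiberwise in $(\lambda_1,\lambda_2)$ and integrating the resulting bound against $(\langle\lambda_1\rangle+\langle\lambda_2\rangle)^{1/9}$ — which is dominated by the weights $\langle\lambda_1\rangle^{1/4}\langle\lambda_2\rangle^{1/4}$ and $\langle\lambda_1\rangle^{1/8}\langle\lambda_2\rangle^{1/8}$ of \eqref{maincoef2}–\eqref{maincoef2.5} — gives $\|\Rs\|_{X^{40d}_{\mathrm{loc}}}\lesssim(C^+\delta)^nL^{-2\nu}$.

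I expect the main obstacle to be the structural construction of the second paragraph: choosing the variables $(x_j,y_j)$, the partial order $\prec$, and the centerings so that each $\widetilde{x_j},\widetilde{y_j}$ meets the one-earlier-variable constraint of \eqref{propertyw2} with radii $\ell_j$ for which Lemma \ref{auxlem} forces $\prod_j\ell_j\le C^n$. This is where the combinatorics of regular couples (Propositions \ref{branchpair} and \ref{structure1}) is essential — it is precisely what keeps the underlying counting problem ``subcritical'' — whereas every remaining step is either a direct appeal to an already-established estimate (Propositions \ref{maincoef} and \ref{approxnt}, Lemma \ref{auxlem}) or routine bookkeeping of constants, cutoffs, and diagonal corrections.
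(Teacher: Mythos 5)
Your proposal follows the paper's proof essentially line for line: replace $\Bc_\Qc$ by $\widetilde\Bc_\Qc$ via Proposition \ref{branchpair}, decompose by Proposition \ref{maincoef}, pass to the volume-preserving change of variables $x_\nf=k_{\nf_1}-k_\nf$, $y_\nf=k_\nf-k_{\nf_3}$ with $\Omega_\nf=2\langle x_\nf,y_\nf\rangle_\beta$, invoke Lemma \ref{auxlem} to get $\prod_j\lambda_j\le C^n$ for the support condition, apply Proposition \ref{approxnt} fiberwise in the $(\lambda_1,\lambda_2)$ variables of \eqref{maincoef1} to get the $X^{40d}_{\mathrm{loc}}$ bound on the error, and sweep the $\epsilon_\Es\neq 1$ diagonals into $\Rs$. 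The only slip is cosmetic: the change from $\dirac(\langle x_j,y_j\rangle_\beta),\,\langle x_j,y_j\rangle_\beta^{-1}$ to $\dirac(\Omega_{\nf_j}),\,\Omega_{\nf_j}^{-1}$ costs a factor $2^{n}$, not $2^{-n}$, which is precisely what cancels the $2^{-n}$ from running Proposition \ref{approxnt} with $2\delta$; your final prefactor $2^{-2n}\delta^n$ is nonetheless correct.
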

\begin{proof} We start with the summation in (\ref{defkq}). Since for any decoration we must have $\zeta_{\nf'}\Omega_{\nf'}=-\zeta_\nf \Omega_\nf$ for any branching node pair $\{\nf,\nf'\}$ as in Proposition \ref{branchpair}, in (\ref{defkq}) we can replace the factor $\Bc_\Qc(t,s,\delta L^2\cdot\Omega[\Nc^*])$  by $\widetilde{\Bc}_\Qc(t,s,\delta L^2\cdot\Omega[\Nc^{ch}])$. Then, by Proposition \ref{maincoef}, we may write (\ref{defkq}) as a sum in $Z$ of terms
\begin{multline}\label{defkqz}\Kc_{\Qc,Z}(t,s,k)=\bigg(\frac{\delta}{2L^{d-1}}\bigg)^{2n}\zeta^*(\Qc)\prod_{\nf\in Z}\frac{1}{\zeta_\nf \pi i}\cdot\sum_{\Es}\epsilon_\Es\prod_{\nf\in Z}\frac{\chi_\infty(\delta L^2\Omega_\nf)}{\delta L^2\Omega_\nf}\\\times\widetilde{\Bc}_{\Qc,Z}(t,s,\delta L^2\Omega[\Nc^{ch}\backslash Z])\cdot{\prod_{\lf\in\Lc^*}^{(+)}n_{\mathrm{in}}(k_\lf)}.\end{multline} To analyze $\Kc_{\Qc,Z}$, we can use the formula (\ref{maincoef1}) to write $\widetilde{\Bc}_{\Qc,Z}$ as an integral in $(\lambda_1,\lambda_2)$, and apply Proposition \ref{approxnt} for fixed $(\lambda_1,\lambda_2)$. For simplicity of presentation we will not explicitly show this step below, but notice that this allows us to estimate the error term in $L_{\lambda_1,\lambda_2}^1$ type norms such as $X^\kappa$. We carefully note here that the bound (\ref{maincoef2}) involves different choices of $\rho$; however for Proposition \ref{approxnt}  we only need (\ref{maincoef2}) for $|\rho|\leq 10n$, so this leads to at most $C^n$ loss, since the number of such multi-indices $\rho$ is at most $C^n$.

Before applying Proposition \ref{approxnt}, we need a few preparation steps. First, for any $\nf\in\Nc^{ch}$ we define $x_\nf=k_{\nf_1}-k_\nf$ and $y_{\nf}=k_\nf-k_{\nf_3}$, so we have $\Omega_\nf=2\langle x_\nf,y_\nf\rangle$ by (\ref{res}). It is easy to check by induction that $(x_\nf,y_\nf)$, where $\nf\in\Nc^{ch}$ (there are $n$ such nodes $\nf$), are free variables and uniquely determine a point on $\Sigma$, and the linear mapping
\begin{equation}\label{substitution}(x_\nf,y_\nf)_{\nf\in\Nc^{ch}}\leftrightarrow (k_{\lf_1},\cdots, k_{\lf_{2n}})\end{equation} is volume preserving and preserves the lattice $(\Zb_L^d)^{2n}$. Therefore, we can rewrite the sum in (\ref{defkqz}) as 
\begin{equation}\label{newsum1}\sum_{(x_\nf,y_\nf):\nf\in\Nc^{ch}}\epsilon\cdot\prod_{\nf\in Z}\frac{\chi_\infty(2\delta L^2\langle x_\nf,y_\nf\rangle_\beta)}{2\delta L^2\langle x_\nf,y_\nf\rangle_\beta}\cdot\widetilde{\Bc}_{\Qc,Z}(t,s,2\delta L^2\langle x_\nf,y_\nf\rangle_\beta:\nf\in\Nc^{ch}\backslash Z)\cdot W(x[\Nc^{ch}],y[\Nc^{ch}]),\end{equation} where $\epsilon=\epsilon_\Es$ and
\begin{equation}\label{newsum1coef}W(x[\Nc^{ch}],y[\Nc^{ch}])=\prod_{j=1}^{2n}n_{\mathrm{in}}(k_{\lf_j})\cdot n_{\mathrm{in}}(\pm k\pm k_{\lf_{2m+1}}\cdots\pm k_{\lf_{2n}}).\end{equation}Next we will replace the $\epsilon$ in (\ref{newsum1}) by 1; the difference caused will be an error term that can be handled in the same way as the main term, and will be left to the end. Then, we decompose (\ref{newsum1coef}) into functions supported in $|k_{\lf_j}-a_j^*|\leq 1$, where $a_j^*\in\Zb_L^d$ for $1\leq j\leq 2n$, and $|\pm k\pm k_{\lf_{2m+1}}\cdots\pm k_{\lf_{2n}}-a_{2n+1}^*|\leq 1$, using a partition of unity. Since $n_{\mathrm{in}}$ is Schwartz, for such a term we can freely gain the decay factors $\prod_{j=1}^{2n+1}\langle a_j^*\rangle^{-80d}$; this easily allows us to sum in $(a_j^*)$, and also addresses the weight $\langle k\rangle^{40d}$ in the $X^{40d} $ norm, as $|k|\leq (2n+1)\max_j|a_j^*|$.

Now we can apply Proposition \ref{approxnt}. First (\ref{propertyw1}) is true, because it is true if $W$ is regarded as a function of $(k_{\lf_j})_{1\leq j\leq 2n}$. Moreover the change of variables (\ref{substitution}) is volume preserving, so it also preserves the Fourier $L^1$ norm, and similarly the Fourier $L^1$ norm with one derivative gets amplified by at most $O(n)$ under this change of variables. Second, the function $\Psi$ here clearly satisfies (\ref{propertypsi1})--(\ref{propertypsi2}) due to Proposition \ref{maincoef}, so we only need to verify the support condition (\ref{propertyw2}).

Since the condition (\ref{propertyw2}) allows for translation, we may assume $a_j^*=0$ in the previous reduction. Then we have $|k_\lf|\leq 1$ for any leaf $\lf$. For any $\nf\in\Nc^{ch}$, let $\nf'$ be the branching node paired with $\nf$, and let $d(\nf)$ be the maximum depth, counting from the root node(s), of $\nf$ and $\nf'$. Define the partial order $\prec$ such that $\nf_1\prec\nf_2$ if and only if $d(\nf_1)>d(\nf_2)$. Now for any $\nf\in\Nc^{ch}$, consider the variable $x_\nf$ (the other one $y_{\nf}$ is the same). We may assume $d(\nf)$ equals the depth of $\nf$, since otherwise we have $x_{\nf}\in\{\pm x_{\nf'},\pm y_{\nf'}\}$ and we can perform the same argument for $\nf'$. Let $\nf_j\,(1\leq j\leq 3)$ be the children nodes of $\nf$, then $x_\nf=k_{\nf_1}-k_\nf$. Using the notations in Lemma \ref{auxlem}, if $\mu_{\nf_1}=\max(\mu_{\nf_1},\mu_{\nf_2},\mu_{\nf_3})$, then
\[|x_\nf|=|k_{\nf_2}-k_{\nf_3}|\leq 2\max(\mu_{\nf_2},\mu_{\nf_3})=2\mu_{\nf}^\circ.\] Suppose now $\max(\mu_{\nf_1},\mu_{\nf_2},\mu_{\nf_3})$ is not $\mu_{\nf_1}$, say it is $\mu_{\nf_2}$ (the case of $\mu_{\nf_3}$ being similar), then $\nf_2$ is not a leaf. Let its children be $\nf_{21}$, $\nf_{22}$ and $\nf_{23}$ from left to right, then consider $\max(\mu_{\nf_{21}},\mu_{\nf_{22}},\mu_{\nf_{23}})$; we assume this maximum is not $\mu_{\nf_{21}}$ (otherwise it is not $\mu_{\nf_{23}}$ and we can argue similarly replacing $x_{\nf_2}$ by $-y_{\nf_2}$), then
\[|x_\nf+x_{\nf_2}|=|{k_{\nf_{21}}-k_{\nf_3}}|\leq\mu_{\nf_{21}}+\mu_{\nf_3}\leq \mu_{\nf_2}^\circ+\mu_\nf^\circ.\]Moreover, since $\nf_2$ is a child of $\nf$, by definition we know that either $\nf_2\prec\nf$ (if $\nf_2\in\Nc^{ch}$) or $\nf_2'\prec\nf$ (if $\nf_2$ is paired with some $\nf_2'\in\Nc^{ch}$, note also that $x_{\nf_2}\in\{\pm x_{\nf_2'},\pm y_{\nf_2'}\}$). Summarizing, in any case we get (\ref{propertyw2}) with $\lambda_\nf=2\max\{\mu_{\nf}^\circ,\mu_{\nf_j}^\circ\}$ where $\nf_j$ is a child of $\nf$ that is not a leaf. Note that by (\ref{auxineq}) we also have
\[\prod_{\nf\in\Nc^{ch}}\lambda_\nf\leq C^{n+1}.\]  By translation, the same bound is true for any $(a_j^*)$, with suitable choices of $(a_j)$ and $(b_j)$ in (\ref{propertyw2}).

With all the preparations, we can apply Proposition \ref{approxnt} to get
\begin{multline}\label{defint2}(\ref{newsum1})=(L^{2d-2}\delta^{-1})^n\int\widetilde{\Bc}_{\Qc,Z}\big(t,s,\alpha[\Nc^{ch}\backslash Z]\big)\,\mathrm{d}\alpha[\Nc^{ch}\backslash Z]\cdot\int W(x[\Nc^{ch}],y[\Nc^{ch}])\\\times\prod_{\nf\in Z}\frac{1}{2\langle x_\nf,y_\nf\rangle_\beta}\prod_{\nf\in\Nc^{ch}\backslash Z}\dirac(2\langle x_\nf,y_\nf\rangle_\beta)\,\mathrm{d}x[\Nc^{ch}]\mathrm{d}y[\Nc^{ch}]+\Rs_0\end{multline} with $\|\Rs_0\|_{X_{\mathrm{loc}}^{40d}}\lesssim (C^+L^{2d-2}\delta^{-1})^nL^{-2\nu}$. Note that in the $X^{40d}$ norm we are taking supremum in $k$ for fixed $(\lambda_1,\lambda_2)$, which is allowed because the bounds obtained by applying Proposition \ref{approxnt} are uniform in $k$. Then, reversing the change of variables (\ref{substitution}), we can rewrite the $\mathrm{d}x[\Nc^{ch}]\mathrm{d}y[\Nc^{ch}]$ integral in (\ref{defint2}) as $\mathrm{d}\sigma$ integral in (\ref{defintegral}), so this integral becomes $\Mc_{\Qc,Z}^*(k)$, noticing also that $2\langle x_\nf,y_\nf\rangle_\beta=\Omega_\nf$. This already proves the desired result, provided we replace the $\epsilon_\Es$ factor by 1.

Finally, consider the case when $\epsilon_\Es\neq 1$. By definition (\ref{defcoef}) we know that $\epsilon_\Es\neq 1$ only when some $x_\nf=0$ or $y_\nf=0$ (or both). If this happens, say $x_\nf=0$, then $\Omega_\nf=0$. Also $y_\nf\in\Zb_L^d$ satisfies $|y_{\nf}|\leq C n\leq C(\log L)^3$ up to translation, so it has at most $L^d(\log L)^{3d}$ choices. Then in the summation (\ref{newsum1}) we may first fix $(x_\nf,y_\nf)$ which has at most $L^d(\log L)^{3d}$ choices, then treat the remaining sum in the same way as above. We can easily verify (for example by using Sobolev embedding) that the assumptions of Proposition \ref{approxnt} are preserved upon fixing some of the variables $x_\nf$, $y_\nf$ or $\Omega_\nf$. Since the summation in $(x_\nf,y_\nf)$ only gives $L^d(\log L)^{3d}\leq L^{2d-2}\delta\cdot L^{-1/2}$, we can see that the bound satisfied by any such difference term will put it in the remainder term $\Rs$.
\end{proof}
\begin{rem}\label{holder} The main term $\Mc_{\Qc,Z}^*(k)$ defined by (\ref{defintegral}) satisfies the bound
\begin{equation}\label{holderbound}\sup_{|\rho|\leq 40d}|\partial^\rho\Mc_{\Qc,Z}^*(k)|\lesssim (C^+)^n\langle k\rangle^{-40d}.
\end{equation} In fact, if without derivatives, this bound follows the same argument as in the proof of Proposition \ref{approxnt} (the decay in $k$ can be included using that $n_{\mathrm{in}}$ is Schwartz as above). Suppose one takes a $\partial_k$ derivative in (\ref{defintegral}), then since the $\mathrm{d}\sigma$ integral can be rewritten as $\mathrm{d}x[\Nc^{ch}]\mathrm{d}y[\Nc^{ch}]$, the corresponding result will have the same form as (\ref{defintegral}), except that one of the input functions $n_{\mathrm{in}}$ is replaced by its partial derivative. Iterating this fact we can obtain control for $\partial^\rho\Mc_{\Qc,Z}^*$ for $|\rho|\leq 40d$.
\end{rem}
\begin{rem}\label{timebound} The integral
\begin{equation}\label{defintcal}\Jc\widetilde{\Bc}_{\Qc,Z}(t,s):=\int\widetilde{\Bc}_{\Qc,Z}(t,s,\alpha[\Nc^{ch}\backslash Z])\,\mathrm{d}\alpha[\Nc^{ch}\backslash Z]
\end{equation} will be studied in detail in Section \ref{domasymp}. For now we just note that it satisfies the simple bound $\|\Jc\widetilde{\Bc}_{\Qc,Z}\|_{X_{\mathrm{loc}}}\lesssim (C^+)^n$, which easily follows from (\ref{maincoef2}). This, together with Proposition \ref{asymptotics1} and (\ref{holderbound}), implies that $\|\Kc_\Qc(t,s,k)\|_{X_{\mathrm{loc}}^{40d}}\lesssim (C^+\delta)^n$ for each regular couples $\Qc$ of scale $2n$.
\end{rem}
We conclude this section with a similar asymptotic formula for regular trees.
\begin{prop}\label{varregtree} Let $\Tc$ be a regular tree of scale $2n$ with lone leaf $\lf_*$. Let $\Nc$ be the set of branching nodes, and $\Lc$ the set of leaves. Define the function (slightly different from (\ref{defcoefa2})--(\ref{defcoefb2}))
\begin{equation}\label{varat}\Ac_\Tc^*(t,s,\alpha[\Nc]):=\int_\Dc\prod_{\nf\in\Nc}e^{\zeta_\nf\pi i\alpha_\nf t_\nf}\,\mathrm{d}t_\nf,
\end{equation} where the domain
\[\Dc=\big\{t[\Nc]:t_{(\lf_*)^p}>s;\,\,0<t_{\nf'}<t_\nf<t,\mathrm{\ whenever\ }\nf'\mathrm{\ is\ a\ child\ node\ of\ }\nf\big\},\] with $(\lf_*)^p$ being the parent of $\lf_*$. For $t>s$, consider the expression
\begin{equation}\label{varkt}\Kc_\Tc^*(t,s,k)=\bigg(\frac{\delta}{2L^{d-1}}\bigg)^{2n}\widetilde{\zeta}(\Tc)\sum_{\Ds}\epsilon_\Ds\cdot\Ac_\Tc^*(t,s,\delta L^2\cdot\Omega[\Nc])\cdot{\prod_{\lf\in\Lc\backslash\{\lf_*\}}^{(+)}n_{\mathrm{in}}(k_\lf)}.
\end{equation} Here the sum is taken over all $k$-decorations $\Ds$ of the regular tree $\Tc$, $\widetilde{\zeta}(\Tc)$ is defined similar to (\ref{defzetaq}) but with $\Nc^*$ replaced by $\Nc$, {and the product is taken over $\lf\in\Lc\backslash\{\lf_*\}$ that has sign $+$.} Then, we can decompose $\Kc_\Tc^*=(\Kc_\Tc^*)_{\mathrm{app}}+\Rs^*$, where $(\Kc_\Tc^*)_{\mathrm{app}}(t,s,k)$ is the sum of at most $2^n$ terms each having form $\delta^n\cdot \Jc\Ac^*(t,s)\cdot \Mc^*(k)$, and we have the bounds
\begin{equation}\label{vardecomp}\|\Jc\Ac^*\|_{X_{\mathrm{loc}}}\lesssim (C^+)^n,\quad \sup_{|\rho|\leq 40d}|\partial^\rho\Mc^*(k)|\lesssim (C^+)^n,\quad \|\Rs^*\|_{X_{\mathrm{loc}}^0}\lesssim (C^+\delta)^nL^{-2\nu};
\end{equation}
\end{prop}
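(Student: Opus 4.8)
\textbf{Proof proposal for Proposition \ref{varregtree}.}

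The plan is to mirror the proof of Proposition \ref{asymptotics1} for regular couples, taking advantage of the fact that a regular tree is, structurally, ``half'' of a regular double chain with each leaf pair replaced by a regular couple (Remark \ref{regtree}). First I would record that, since $\Tc$ is a regular tree, its branching nodes $\Nc$ can still be paired in the sense of Proposition \ref{branchpair}: indeed the regular-chain skeleton pairs its branching nodes as in Proposition \ref{prop3.4}, and each attached regular couple pairs its own branching nodes by Proposition \ref{branchpair}; so for any decoration $\Ds$ of $\Tc$ and any pair $\{\nf,\nf'\}$ we have $\zeta_{\nf'}\Omega_{\nf'}=-\zeta_\nf\Omega_\nf$. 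Fix a choice set $\Nc^{ch}\subset\Nc$ containing one node per pair (analogously to Definition \ref{defchoice}), so there are exactly $n$ of them. As in \eqref{deftildeb}, define $\widetilde{\Ac}^*_\Tc(t,s,\alpha[\Nc^{ch}])$ by substituting the constraint $\zeta_{\nf'}\alpha_{\nf'}=-\zeta_\nf\alpha_\nf$ into $\Ac^*_\Tc$. The first genuine step is to establish the analogue of Proposition \ref{maincoef} for $\widetilde{\Ac}^*_\Tc$: it is a sum of at most $2^n$ terms, each of the form $\prod_{\nf\in Z}\frac{\chi_\infty(\alpha_\nf)}{\zeta_\nf\pi i\alpha_\nf}$ times an $L^1$-type function $\widetilde{\Ac}^*_{\Tc,Z}$ of the remaining variables satisfying the derivative and weighted bounds \eqref{maincoef2}--\eqref{maincoef2.5} (now with a single free time variable on one side and the constraint $t>s$ controlling the other). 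This is proved by the same induction as Proposition \ref{maincoef}: the structure theorem for general couples specializes to give a recursion for $\Tc$ in terms of a regular chain skeleton $\Tc_0$ and smaller regular couples $\Qc_j$, the chain part is handled by Lemma \ref{regchainlem8} (the $J/R$ operator calculus of Section \ref{regchainest} applies verbatim), and the extra constraint $t_{(\lf_*)^p}>s$ only modifies the innermost time integration on the lone-leaf branch, which one absorbs by the same $(\chi_0,\chi_\infty)$-splitting used throughout Section \ref{regasymp}.

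The second step is the number-theoretic passage from sum to integral. Writing $x_\nf=k_{\nf_1}-k_\nf$ and $y_\nf=k_\nf-k_{\nf_3}$ for $\nf\in\Nc^{ch}$ gives $n$ free pairs of lattice vectors parametrizing the $k$-decorations, with $\Omega_\nf=2\langle x_\nf,y_\nf\rangle_\beta$; the partial order $\prec$ and the support bounds $\lambda_\nf$ are constructed exactly as in the proof of Proposition \ref{asymptotics1} using Lemma \ref{auxlem}, so $\prod_\nf\lambda_\nf\leq C^{n+1}$. Decomposing the Schwartz input $\prod_{\lf}n_{\mathrm{in}}(k_\lf)$ by a partition of unity to gain the decay $\prod_j\langle a_j^*\rangle^{-80d}$ and replacing $\epsilon_\Ds$ by $1$ up to a negligible error (the $x_\nf=0$ or $y_\nf=0$ contributions are $\lesssim L^d(\log L)^{3d}$, hence $L^{2d-2}\delta\cdot L^{-1/2}$), we apply Proposition \ref{approxnt} for each fixed $(\lambda_1,\lambda_2)$ coming from the integral representation \eqref{maincoef1} of $\widetilde{\Ac}^*_{\Tc,Z}$. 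This yields $\Kc^*_\Tc=\sum_Z\Kc^*_{\Tc,Z}$ with
\[
\Kc^*_{\Tc,Z}(t,s,k)=2^{-2n}\delta^n\widetilde\zeta(\Tc)\prod_{\nf\in Z}\frac{1}{\zeta_\nf\pi i}\cdot\Jc\widetilde{\Ac}^*_{\Tc,Z}(t,s)\cdot\Mc^*_{\Tc,Z}(k)+\Rs^*,
\]
where $\Jc\widetilde{\Ac}^*_{\Tc,Z}(t,s)=\int\widetilde{\Ac}^*_{\Tc,Z}(t,s,\alpha[\Nc^{ch}\backslash Z])\,\mathrm{d}\alpha[\Nc^{ch}\backslash Z]$, the multilinear expression $\Mc^*_{\Tc,Z}(k)$ is defined by the integral over the linear submanifold $\Sigma$ exactly as in \eqref{defintegral} (with the lone leaf $\lf_*$ excluded from the product of $n_{\mathrm{in}}$'s and from the volume form), and $\|\Rs^*\|_{X^0_{\mathrm{loc}}}\lesssim(C^+L^{2d-2}\delta^{-1})^n L^{-2\nu}\cdot(2L^{-(d-1)})^{2n}=(C^+\delta)^n L^{-2\nu}$. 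Identifying $\Jc\Ac^*:=\prod_{\nf\in Z}(\zeta_\nf\pi i)^{-1}\cdot 2^{-2n}\Jc\widetilde{\Ac}^*_{\Tc,Z}$ (a function of $(t,s)$ only) and $\Mc^*:=\Mc^*_{\Tc,Z}$ gives the claimed decomposition $(\Kc^*_\Tc)_{\mathrm{app}}=\sum_Z\delta^n\Jc\Ac^*(t,s)\Mc^*(k)$ with at most $2^n$ terms. The bound $\|\Jc\Ac^*\|_{X_{\mathrm{loc}}}\lesssim(C^+)^n$ follows from \eqref{maincoef2} as in Remark \ref{timebound}, and $\sup_{|\rho|\leq 40d}|\partial^\rho\Mc^*(k)|\lesssim(C^+)^n$ follows from the argument of Remark \ref{holder}: differentiating \eqref{defintegral} in $k$ reproduces an expression of the same shape with one $n_{\mathrm{in}}$ replaced by a derivative, and iterating bounds all derivatives up to order $40d$.

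The main obstacle I anticipate is not in the number theory (Proposition \ref{approxnt} is a black box here) but in the bookkeeping for the modified time domain: the constraint $t_{(\lf_*)^p}>s$ breaks the clean factorization into a $t$-function and an $s$-function that was available for regular couples in \eqref{newexp2}, so the inductive proof of the analogue of Proposition \ref{maincoef} must carry a two-variable kernel along the lone-leaf spine and verify that the $J/R$ operator calculus of Section \ref{regchainest} still closes. Concretely, one must check that inserting a lower cutoff $t_{(\lf_*)^p}>s$ at the bottom of the regular chain $\Tc_0$ only amounts to composing the chain operator with one extra elementary integration operator of the type already handled by Lemmas \ref{regchainlem4}--\ref{regchainlem6}, so that the class-$J$/class-$R$ dichotomy and the norm bound $C^n$ are preserved; once this is confirmed, the remaining steps are the same routine as in Sections \ref{numbertheory}--\ref{applyKQ} with $2n+1$ leaves in place of $2n+1$ leaves of a scale-$n$ couple and with $\lf_*$ playing the role of the lone leaf.
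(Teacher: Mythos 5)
Your proposal follows the paper's own approach: view $\Qc=(\Tc,\bullet)$ as a regular couple, pair the branching nodes via Proposition \ref{branchpair} and Proposition \ref{prop3.4}, prove the analogue of Proposition \ref{maincoef} for $\widetilde{\Ac}^*_\Tc$, and then apply Proposition \ref{approxnt} as in Proposition \ref{asymptotics1}. You also correctly identify the two places where the conclusion is weaker than for couples (the $k$-weight in the second bound and the $X^0_{\mathrm{loc}}$ norm in the third), and you locate the main obstacle at the loss of $(t,s)$-factorization caused by $t_{(\lf_*)^p}>s$.

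One technical point the paper makes explicit and you do not: because $t$ and $s$ now sit on the \emph{same} regular chain rather than on two separate chains, the product weight $\langle\lambda_1\rangle^{1/4}\langle\lambda_2\rangle^{1/4}$ in \eqref{maincoef2} degrades to the sum weight $(\langle\lambda_1\rangle+\langle\lambda_2\rangle)^{1/4}$ (and similarly in \eqref{maincoef2.5}). This weaker weight is still enough for the $X_{\mathrm{loc}}$ and $X^0_{\mathrm{loc}}$ bounds in \eqref{vardecomp}, but it is the concrete manifestation of the obstacle you flag, and it is worth naming because it is what forces the third inequality to be stated with the unweighted norm. Relatedly, your description of the lone-leaf constraint as ``composing the chain operator with one extra elementary integration operator'' is slightly imprecise: what actually changes is that the $K$-function from \eqref{regchaink} now integrates over $t>t_1>\cdots>t_{2m}>s$ rather than $>0$, i.e.\ the chain operator is applied to an input supported on $[s,1]$, which is what introduces the coupled $(t,s)$-dependence and the weaker weight; but the $J/R$ calculus does still close, as you claim.
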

\begin{proof} Note that $\Qc=(\Tc,\bullet)$ is a regular couple of scale $2n$. A $k$-decoration $\Ds$ can be viewed as a $k$-decoration of $\Qc$, and we always have $k_{\lf_*}=k$. We can pair the branching nodes of $\Tc$ as in Proposition \ref{branchpair}, such that $\zeta_{\nf'}\Omega_{\nf'}=-\eta_\nf\Omega_\nf$, and define $\Nc^{ch}$ as in Definition \ref{defchoice}, so in particular $\Ac_\Tc^*(t,s,\delta L^2\Omega[\Nc])=\widetilde{\Ac}_\Tc(t,s,\delta L^2\Omega[\Nc^{ch}])$ is a function of $t,s$ and $\Omega[\Nc^{ch}]$ only.

Since $\Tc$ is formed from a regular chain by replacing each leaf pair with a regular couple, by using Proposition \ref{maincoef} for these regular couples, and analyzing the regular chain similar to Section \ref{regchainest}, we can show that $\widetilde{\Ac}_\Tc(t,s,\alpha[\Nc^{ch}])$ has form (\ref{maincoef1}) that satisfies (\ref{maincoef2})--(\ref{maincoef2.5}), for $t>s$, with some choice of $Z\subset \Nc^{ch}$. Here the weights $\langle \lambda_1\rangle^{\frac{1}{4}}\langle\lambda_2\rangle^{\frac{1}{4}}$ and $\langle \lambda_1\rangle^{\frac{1}{8}}\langle\lambda_2\rangle^{\frac{1}{8}}$ in (\ref{maincoef2}) and (\ref{maincoef2.5}) will be replaced by the weaker ones $(\langle \lambda_1\rangle+\langle\lambda_2)\rangle^{\frac{1}{4}}$ and $(\langle \lambda_1\rangle+\langle\lambda_2)\rangle^{\frac{1}{8}}$, but they still suffice to prove the desired $X_{\mathrm{loc}}$ and $X_{\mathrm{loc}}^0$ bounds. Moreover, the product $\prod_{\lf\in\Lc\backslash\{\lf_*\}}^{(+)}$ in (\ref{varkt}), compared to the product {$\prod_{\lf\in\Lc^*}^{(+)}$} in (\ref{defkqz}), only misses one factor $n_{\mathrm{in}}(k)$. Therefore, we can define the approximation $(\Kc_\Tc^*)_{\mathrm{app}}$ similar to Proposition \ref{asymptotics1} and prove (\ref{vardecomp}) using Proposition \ref{approxnt}, similar to the proof of Proposition \ref{asymptotics1}. Here, due to the absence of the $n_{\mathrm{in}}(k)$ factor, we can no longer control the weight $\langle k\rangle^{40d}$, so the second inequality in (\ref{vardecomp}) does not have the same weight as (\ref{holderbound}), and the third inequality only involves the $X_{\mathrm{loc}}^0$ norm instead of $X_{\mathrm{loc}}^{40d}$. Other than these, the proof is basically the same so we omit the details.
\end{proof}
\section{Regular couples III: full asymptotics}\label{domasymp} In this section we further analyze the asymptotics obtained in Proposition \ref{asymptotics1}. Clearly the main goal is to evaluate the integral (\ref{defintcal}). Like Proposition \ref{maincoef}, this will be done by inducting on the scale of $\Qc$, so the operators and $K$ functions associated with regular chains, which are studied in Section \ref{regchainest}, will also play a key role here. Once this is done, we will combine these terms in Section \ref{combine} to calculate the full asymptotics.
\subsection{Regular chain calculations} For any function $F=F(\alpha[W])$ we define 	
\begin{equation}\label{gausspv}[\mathrm{G}]\int F=\lim_{\theta\to 0}\int F(\alpha[W])\prod_{j\in W}e^{-\pi\theta^2\alpha_j^2}\,\mathrm{d}\alpha_j,
\end{equation} if the limit exists. This can be seen as a Gaussian version of principal value integral; clearly if $F\in L^1$ then $[\mathrm{G}]\int$ coincides with the usual Lebesgue integration.
\begin{lem}\label{regchaincancel1} Let \[I:=X_0I_{\beta_1+\lambda_1}X_1I_{\beta_2+\lambda_2}\cdots X_{2m-1}I_{\beta_{2m}+\lambda_{2m}}X_{2m}\] be as in Lemma \ref{regchainlem7}, for a legal partition $\Pc$ of $\{1,\cdots,2m\}$ with $m\geq 1$; in particular it depends on $(\alpha_1,\cdots,\alpha_m)$ and $(\mu_1,\cdots,\mu_m)$, and also on the $\alpha[A]$ and $\mu[E]$ variables appearing in the $X_a\,(0\leq a\leq 2m)$ operators. Denote the collection of all these $\alpha_j$ variables by $\alpha[W]$. For any $\lambda_*$, consider the expression $K=I(e^{\pi i\lambda_*s})(t)$. If we fix $(\lambda_*,t)$ and all the $\mu_j$ variables, and view $K$ as a function of $\alpha[W]$, then $K\in L^1$ and
\begin{equation}\int K(\alpha[W])\,\mathrm{d}\alpha[W]=0.
\end{equation}
\end{lem}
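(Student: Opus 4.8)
The plan is to prove $K\in L^1(\mathrm{d}\alpha[W])$ together with the vanishing of its integral by induction on $m$ (and on the total number of operators among the $X_a$), using the recursive structure of legal partitions from Proposition \ref{legalpair} and the calculus developed in Lemmas \ref{regchainlem1}--\ref{regchainlem6}. The key observation driving the cancellation is an elementary identity for the building block $I_{\epsilon\alpha}I_{\mu-\epsilon\alpha}$: as computed in Lemma \ref{regchainlem6},
\[
(I_{\epsilon\alpha}I_{\mu-\epsilon\alpha}f)(t)=\int_0^t e^{\pi i\mu s}f(s)\int_0^{t-s}e^{\epsilon\pi i\alpha u}\,\mathrm{d}u\,\mathrm{d}s,
\]
and integrating in $\alpha$ over $\Rb$ gives a Dirac mass $\delta(u)$ against the inner integral, hence kills the inner $u$-integral entirely: $\int_\Rb (I_{\epsilon\alpha}I_{\mu-\epsilon\alpha}f)(t)\,\mathrm{d}\alpha = 0$ (the inner integral is over $u\in(0,t-s)$, a set not containing $0$ in its interior, up to the boundary contribution which has measure zero). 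Thus whenever the variable $\alpha_j$ corresponding to an \emph{adjacent} pair $\{a,a+1\}\in\Pc$ can be integrated out first, the whole expression vanishes.

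First I would handle the base case $m=1$. Here $\Pc=\{\{1,2\}\}$ and $I=I_{\epsilon_1\alpha_1}\widetilde I\,I_{\mu_1-\epsilon_1\alpha_1}$ where $\widetilde I$ is either $\mathrm{Id}$ or a class $J$/$R$ operator (depending on whether $X_1=\mathrm{Id}$), and $X_0=X_{2m}=\mathrm{Id}$ may be assumed by Lemmas \ref{regchainlem1}--\ref{regchainlem3}. If $\widetilde I=\mathrm{Id}$ this is exactly the building block above and we are done. If $\widetilde I$ is of class $J$ or $R$, I would conjugate: write $\widetilde I$ in the form \eqref{typeJ0} or \eqref{typeR0}, push the $\alpha_1$ integral through the $\mathrm{d}\gamma$ integrals defining $\widetilde I$, and reduce to showing $\int_\Rb (I_{\epsilon_1\alpha_1}J_{\ell;\gamma_1,\gamma_2}I_{\mu_1-\epsilon_1\alpha_1}f)(t)\,\mathrm{d}\alpha_1=0$ (and similarly with $R$ in place of $J$), where $\ell$ is a cluster of the \emph{other} $\alpha$ variables, hence independent of $\alpha_1$. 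A direct computation (essentially the one in Lemma \ref{regchainlem4}, but now integrating in $\alpha_1$ rather than decomposing) shows that the $\mathrm{d}\alpha_1$ integral again produces a $\delta(u)$ in the innermost time variable and the expression vanishes; the $L^1$ bound in $\alpha_1$ follows from the decay built into \eqref{typeJ2}/\eqref{typeR2} and Lemma \ref{regchainlem4}'s bookkeeping, and the remaining variables $\alpha[W\setminus\{1\}]$ are integrable by the class $J$/$R$ norm estimates applied after the $\alpha_1$ integral is taken.

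For the inductive step I would invoke Proposition \ref{legalpair}(2): a legal partition $\Pc$ of $\{1,\dots,2m\}$ either (ii) concatenates two smaller legal partitions $\Pc_1$ on $\{1,\dots,2k\}$ and $\Pc_2$ on the rest, or (iii) encloses a smaller legal partition $\Pc_1$ in the pair $\{1,2m\}$. In case (iii), $I=I_{\epsilon_1\alpha_1}I'I_{\mu_1-\epsilon_1\alpha_1}$ where $I'$ corresponds to $\Pc_1$; by Lemma \ref{regchainlem7} $I'$ decomposes as a sum of a class $J$ and a class $R$ operator (depending on the $\alpha$-variables of $\Pc_1$, none of which is $\alpha_1$), so the $m=1$ argument applies verbatim to conclude the $\alpha_1$-integral vanishes, and $L^1$-integrability in the remaining variables comes from the $m=1$ analysis plus the class $J/R$ norm bounds. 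In case (ii), $I=I'I''$ with $I',I''$ corresponding to $\Pc_1,\Pc_2$; here I would integrate out one full block of variables (say all the $\alpha$'s appearing in $I''$) first. Since $I''$ itself corresponds to a legal partition with a strictly smaller $m$, by the induction hypothesis $\int (I''g)(z)\,\mathrm{d}\alpha[\text{block of }I'']=0$ for any input $g$ and any $z$ — and crucially this holds \emph{pointwise in $z$} and uniformly, so applying $I'$ (a bounded operator built from Duhamel integrals) afterwards and then integrating its own variables preserves the vanishing; the $L^1$ control again comes from the two induction hypotheses combined with Lemmas \ref{regchainlem1}--\ref{regchainlem3}. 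One subtlety: in both cases the $X_a$ operators sitting at the splice points must be distributed consistently between the sub-problems (``with obvious choices of the $X_a$'s'' as in Lemma \ref{regchainlem7}), and the hypothesis $X_a\neq\mathrm{Id}$ when $\{a,a+1\}\in\Pc$ is inherited correctly by the sub-partitions.

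I expect the main obstacle to be making the ``integrate out one block first, then vanish'' step fully rigorous in case (ii): one must justify interchanging the $\mathrm{d}\alpha$ integrations with the nested time integrals and with the $\mathrm{d}\gamma$ integrals defining the class $J/R$ operators, which requires the absolute (genuine $L^1$, not merely principal-value) integrability that the class $J$ and $R$ norm definitions \eqref{typeJ2}--\eqref{typeR2} are precisely engineered to provide. Once Fubini is licensed, the algebraic cancellation is forced by the $\int_\Rb e^{\pi i\alpha u}\,\mathrm{d}\alpha=2\delta(u)$ identity localized at $u=0$, which lies on the boundary of every relevant time simplex and hence contributes nothing. A secondary point to be careful about is that the decomposition of $I'$ or $I''$ via Lemma \ref{regchainlem7} produces \emph{sums} of class $J$ and class $R$ operators, so the vanishing must be checked term by term, but each term is of exactly the form already treated.
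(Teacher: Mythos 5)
Your stated ``key observation driving the cancellation'' --- that $\int_\Rb(I_{\epsilon\alpha}I_{\mu-\epsilon\alpha}f)(t)\,\mathrm{d}\alpha=0$ because the nascent $\delta(u)$ is centered on the boundary of $(0,t-s)$ and hence ``contributes nothing'' --- is false, and it is the wrong mechanism. In the Gaussian regularization (which is what makes ``$\int e^{\pi i\alpha u}\,\mathrm{d}\alpha=2\delta(u)$'' precise here), $\lim_{\theta\to 0}\theta^{-1}\int_0^\tau e^{-\pi u^2/4\theta^2}\,\mathrm{d}u$ is a nonzero constant (cf.\ (\ref{approxid})): a $\delta$-function centered on the endpoint of a half-line delivers half its mass, not none. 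Indeed Lemma~\ref{regchaincal} computes exactly this quantity (all pairs adjacent, nothing interposed) and obtains a strictly nonzero answer --- this nonvanishing is precisely what makes the dominant couples reproduce the kinetic coefficients. If your principle were correct, Lemma~\ref{regchaincal} would give zero too. The actual mechanism is the hypothesis $X_a\neq\mathrm{Id}$ for $\{a,a+1\}\in\Pc$, inherited from Lemma~\ref{regchainlem7}: this forces a nontrivial operator $Y_1$ (with bounded kernel) between $I_{\epsilon\alpha_1}$ and $I_{\mu_1-\epsilon\alpha_1}$, hence an extra time variable $w$ with $s<w<v$, and the $w$-integral supplies the factor $v-s$ that kills the concentrating Gaussian:
\[\theta^{-1}\int_{t>v>w>s>0}e^{-\pi(v-s)^2/4\theta^2}\,\mathrm{d}v\,\mathrm{d}w\,\mathrm{d}s=\theta^{-1}\int_{t>v>s>0}(v-s)\,e^{-\pi(v-s)^2/4\theta^2}\,\mathrm{d}v\,\mathrm{d}s=O(\theta)\to 0.\]
Without $Y_1$ the factor $(v-s)$ is absent and the limit is nonzero. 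Your conclusion in the case $\widetilde I$ of class $J$ or $R$ happens to be true (there the phase involves $u+w$ and the cap $\{u,w>0,\,u+w<\eta\}$ has area $O(\eta^2)$), but your stated rationale misattributes why, and the ``building-block identity'' you put at the center of the argument is the exact statement that fails.

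You also take a more roundabout route than the paper. You induct on $m$ via Proposition~\ref{legalpair}(2), with the concatenation case requiring Fubini interchanges across nested compositions of class $J/R$ operators --- a subtlety you flag but do not resolve. The paper avoids induction: once $K\in L^1$ is known (from Lemma~\ref{regchainlem7} and Proposition~\ref{maincoef}), Fubini already reduces the statement to a \emph{one-variable} computation for a.e.\ frozen $\alpha^*[W_1]$; and once all $\alpha_j$ with $j\neq 1$ are frozen, every operator other than the two sandwiching Duhamel integrals collapses to a fixed bounded kernel, so one can write $K(\alpha_1)=Y_0I_{\epsilon\alpha_1}Y_1I_{\mu_1-\epsilon\alpha_1}G(t)$ outright and evaluate the Gaussian-regularized $\alpha_1$-integral as displayed above. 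No legal-partition recursion, no class $J/R$ decomposition, and no operator-level Fubini are needed for the vanishing step.
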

\begin{proof} By Lemma \ref{regchainlem7} we know that $I$ is a sum of an operator of class $J$ and an operator of class $R$. By repeating the proof of Proposition \ref{maincoef}, we know that $K\in L^1$. Let $W_1=W\backslash\{1\}$, we will fix $\alpha_j=\alpha_j^*$ for $j\in W_1$, and view $K=K(\alpha_1)$ as a function of $\alpha_1$. Clearly for a.e. $\alpha^*[W_1]:=(\alpha_j^*)_{j\in W_1}$ we have $K(\alpha_1)\in L^1$, so it suffices to prove that
\begin{equation}\label{intvanish}[\mathrm{G}]\int K(\alpha_1)\,\mathrm{d}\alpha_1=0\end{equation} holds for each $\alpha^*[W_1]$. Now once $\alpha^*[W_1]$ is fixed, we can simply write
\[K(\alpha_1)=Y_0I_{\epsilon\alpha_1}Y_1I_{\mu_1-\epsilon\alpha_1}G(t),\] where $Y_1$ have bounded kernel, i.e. $Y_1f(t)=\int_0^t Y_1(t,s)f(s)\mathrm{d}s$ with $Y_1\in L^\infty$, $Y_0$ is either $\mathrm{Id}$ or has bounded kernel, and $G$ is a bounded function. This gives that
\[K(\alpha_1)=\int_{t>u>v>w>s>0}Y_0(t,u)e^{\epsilon \pi i\alpha_1v}Y_1(v,w)e^{\pi i(\mu_1-\epsilon\alpha_1)s}G(s)\,\mathrm{d}u\mathrm{d}v\mathrm{d}w\mathrm{d}s;\] here $Y_0(t,s)$ may be replaced by $\dirac(t-s)$. We calculate
\[\int_\Rb e^{-\pi\theta^2\alpha_1^2} K(\alpha_1)\,\mathrm{d}\alpha_1=\int_{t>u>v>w>s>0}Y_0(t,u)Y_1(v,w)e^{\pi i\mu_1s}G(s)\,\mathrm{d}u\mathrm{d}v\mathrm{d}w\mathrm{d}s\int_\Rb e^{\epsilon\pi i(v-s)\alpha_1-\pi\theta^2\alpha_1^2}\mathrm{d}\alpha_1.\] The last integral in $\alpha_1$ can be calculated explicitly and equals $\theta^{-1}e^{-\pi(v-s)^2/4\theta^2}$, hence
\begin{multline*}\bigg|\int_\Rb e^{-\pi\theta^2\alpha_1^2} K(\alpha_1)\,\mathrm{d}\alpha_1\bigg|\lesssim\theta^{-1}\int_{t>v>w>s>0}e^{-\frac{\pi(v-s)^2}{4\theta^2}}\,\mathrm{d}v\mathrm{d}w\mathrm{d}s\\=\theta^{-1}\int_{t>v>s>0}e^{-\frac{\pi(v-s)^2}{4\theta^2}}(v-s)\,\mathrm{d}v\mathrm{d}s\to 0,\end{multline*} which proves (\ref{intvanish}).
\end{proof}
\begin{lem}\label{regchaincancel2} Let \[I:=I_{\beta_1+\lambda_1}I_{\beta_2+\lambda_2}\cdots I_{\beta_{2m}+\lambda_{2m}}\] be as in Lemma \ref{regchainlem8}, for a legal partition $\Pc$ of $\{1,\cdots,2m\}$ that is \emph{not} dominant, and let $K=I(e^{\pi i\lambda_0s})(t)$. We decompose $I$ into $\prod_{j\in Z}\frac{\chi_\infty(\alpha_j)}{\epsilon_j\pi i\alpha_j}\cdot\widetilde{I}$ as in Lemma \ref{regchainlem8}, where $Z\subset\{1,\cdots,m\}$ and $\widetilde{I}$ depends only on the variables $(\mu_1,\cdots,\mu_m)$ and $\alpha[W]$ with $W=\{1,\cdots,m\}\backslash Z$, and define $\widetilde{K}=\widetilde{I}(e^{\pi i\lambda_0s})(t)$. Then, for any choice of $Z$, if we fix $(t,\lambda_0,\mu_1,\cdots,\mu_m)$ and view $\widetilde{K}$ as a function of $\alpha[W]$, then $\widetilde{K}\in L^1$ and
\begin{equation}\label{regchaincancelref}\int \widetilde{K}(\alpha[W])\,\mathrm{d}\alpha[W]=0.
\end{equation}
\end{lem}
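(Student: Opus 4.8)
The statement is a generalization of Lemma~\ref{regchaincancel1}: there we proved the vanishing of the integral for the operator $I$ \emph{with all boundary operators $X_a$ present}, assuming only $m\ge 1$; here the operator $\widetilde I$ arises from stripping off the singular factors $\chi_\infty(\alpha_j)/(\epsilon_j\pi i\alpha_j)$ coming from the adjacent pairs $\{a,a+1\}\in\Pc$, and the hypothesis is that $\Pc$ is \emph{not dominant}. The key point to exploit is exactly the non-dominance: since $\Pc$ is legal but not dominant, by Proposition~\ref{legalpair} it must either be a concatenation of two smaller legal partitions one of which is itself nontrivial and has a pair that is \emph{not} adjacent, or it must enclose a smaller legal partition inside the outermost pair $\{1,2m\}$ with that inner partition nontrivial. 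In either decomposition there is a pair $\{a,b\}\in\Pc$ with $b>a+1$ (a ``long'' pair), and after removing all the adjacent pairs (which is exactly what passing from $I$ to $\widetilde I$ does) the reduced partition $\Pc'$ is still legal and still nontrivial, so the operator $\widetilde I$ has the form $X_0 I_{\beta_1'+\lambda_1'}X_1\cdots X_{2m'-1}I_{\beta_{2m'}'+\lambda_{2m'}'}X_{2m'}$ of Lemma~\ref{regchainlem7} with $m'\ge 1$ and with \emph{nontrivial} boundary operators $X_a$ of class $J$ or $R$ at the positions dictated by the adjacent pairs that were removed.

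First I would make the above reduction precise: that $\widetilde K$ is, up to fixing the parameters, of the exact form $K=I(e^{\pi i\lambda_*s})(t)$ treated in Lemma~\ref{regchaincancel1}, where now the ``$I$'' is the $\widetilde I$ built from the nontrivial legal partition $\Pc'$. This uses only the bookkeeping already set up in Lemmas~\ref{regchainlem6}--\ref{regchainlem8}: each removed adjacent pair $\{a,a+1\}$ contributes, after selecting the non-singular terms, a class $J$ or class $R$ operator to some $X_a$, and Lemmas~\ref{regchainlem1}--\ref{regchainlem3} assemble compositions of such operators again into class $J$ or $R$ operators. Since $\Pc'\ne\varnothing$, there is at least one genuine $I_{\epsilon_j\alpha_j}\cdots I_{\mu_j-\epsilon_j\alpha_j}$ sandwich in $\widetilde I$, i.e. $\widetilde K$ depends nontrivially on at least one variable $\alpha_j$, $j\in W$, and that variable plays exactly the role of ``$\alpha_1$'' in Lemma~\ref{regchaincancel1}. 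Integrability of $\widetilde K$ in $\alpha[W]$ is inherited from the proof of Proposition~\ref{maincoef} (or directly from Lemma~\ref{regchainlem8}, which says $\widetilde I$ has class $J$ or $R$, hence $\widetilde K\in L^1$ after the Gevrey-smoothing in the remaining variables as in Lemma~\ref{regchaincancel1}).

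The heart is then a verbatim rerun of the argument of Lemma~\ref{regchaincancel1}: fix $\alpha_j^*$ for $j\in W\setminus\{j_0\}$ where $j_0$ is a ``sandwich'' index, write $\widetilde K(\alpha_{j_0})=Y_0 I_{\epsilon_{j_0}\alpha_{j_0}}Y_1 I_{\mu_{j_0}-\epsilon_{j_0}\alpha_{j_0}}G(t)$ with $Y_1$ having bounded kernel and $Y_0$ either the identity or having bounded kernel, expand as a time-ordered integral, and observe that the Gaussian-regularized $\alpha_{j_0}$-integral produces the factor $\theta^{-1}e^{-\pi(v-s)^2/4\theta^2}$, which forces $v\to s$ and kills the integral as $\theta\to0$. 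This gives $[\mathrm G]\int \widetilde K\,\mathrm d\alpha_{j_0}=0$ for a.e. fixed values of the other $\alpha_j$, and since $\widetilde K\in L^1$ the Gaussian principal value coincides with the Lebesgue integral, so by Fubini $\int\widetilde K\,\mathrm d\alpha[W]=0$.

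\textbf{Main obstacle.} The only real subtlety is establishing that non-dominance of $\Pc$ survives the passage $I\rightsquigarrow\widetilde I$, i.e. that after deleting adjacent pairs the reduced legal partition $\Pc'$ is still nonempty, equivalently that $\Pc$ is not built solely from adjacent pairs. This is where I would invoke Proposition~\ref{legalpair}(2): a dominant partition is precisely $\{\{1,2\},\dots,\{2m-1,2m\}\}$, so a legal $\Pc$ that is not dominant must contain, somewhere in its recursive construction, either an enclosure step $\{1,2m\}$ around a nonempty legal partition or a concatenation with at least one nontrivial non-adjacent factor; in both cases deleting all adjacent pairs leaves a nonempty legal partition, so $m'\ge 1$. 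The remaining work — checking that the boundary operators $X_a$ are genuinely of class $J$ or $R$ with bounded kernels, and that $Y_1$ in the final step has bounded kernel — is routine given Lemmas~\ref{regchainlem1}--\ref{regchainlem8}, so I would not belabor it.
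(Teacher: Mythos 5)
Your argument is correct and takes essentially the same route as the paper: strip the adjacent pairs as in Lemma \ref{regchainlem8}, observe that non-dominance forces the reduced legal partition $\Pc'$ to be nonempty so that $\widetilde{I}$ has the form of Lemma \ref{regchainlem7} with $m'\geq 1$, and then conclude by the Gaussian-regularization cancellation of Lemma \ref{regchaincancel1} (which the paper cites as a black box rather than re-running). The only cosmetic remark is that the combinatorial point needs no appeal to Proposition \ref{legalpair}: if every pair of $\Pc$ were adjacent, then $\Pc$ would be forced to equal $\{\{1,2\},\cdots,\{2m-1,2m\}\}$, i.e. dominant, so a non-dominant $\Pc$ automatically leaves at least one pair after the reduction.
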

\begin{proof} This is a direct consequence of Lemma \ref{regchaincancel1}. Namely, if we carry out the construction process of $\widetilde{I}$ in the proof of Lemma \ref{regchainlem8}, then this $\widetilde{I}$ will have the form described in Lemma \ref{regchainlem7}; moreover as $\Pc$ is not dominant, there will be at least one pair left after removing all adjacent pairs (which corresponds to $m\geq 1$ in Lemma \ref{regchainlem7} and Lemma \ref{regchaincancel1}), so Lemma \ref{regchaincancel1} will be applicable.
\end{proof}
\begin{lem}\label{regchaincal} Let \[I:=I_{\beta_1+\lambda_1}I_{\beta_2+\lambda_2}\cdots I_{\beta_{2m}+\lambda_{2m}}\] be as in Lemma \ref{regchainlem8}, where $\Pc=\{\{1,2\},\cdots,\{2m-1,2m\}\}$ is the dominant partition in the sense of Definition \ref{deflegal}. Then we have $\beta_{2j-1}=\epsilon_j\alpha_j$ and $\beta_{2j}=-\epsilon_j\alpha_j$ where $\epsilon_j\in\{\pm\}$ for $1\leq j\leq m$. Given also $\lambda_0$, define $\widetilde{I}$ and $\widetilde{K}$ associated with $Z\subset\{1,\cdots,m\}$ as in Lemma \ref{regchaincancel2}. Then for any $Z$ we have $\widetilde{K}\in L^1$, and
\begin{equation}\label{regchaincalref}\int \widetilde{K}(\alpha[W])\,\mathrm{d}\alpha[W]=\int_{t>t_1>\cdots>t_m>0}e^{\pi i(\mu_1t_1+\cdots +\mu_mt_m)+\pi i\lambda_0t_m}\,\mathrm{d}t_1,\cdots\mathrm{d}t_m.
\end{equation}
\end{lem}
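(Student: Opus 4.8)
\textbf{Proof plan for Lemma \ref{regchaincal}.}
The plan is to compute the left-hand side of \eqref{regchaincalref} directly by exploiting the factorized structure of the dominant partition: since $\Pc=\{\{1,2\},\cdots,\{2m-1,2m\}\}$ consists entirely of adjacent pairs, the operator $I$ is literally a composition of the $m$ blocks $I_{\epsilon_j\alpha_j+\lambda_{2j-1}}I_{-\epsilon_j\alpha_j+\lambda_{2j}}$, and after the standard translation of variables (as in \eqref{translationk}) that sends $\lambda_{2j-1}\mapsto 0$ and $\lambda_{2j}\mapsto\mu_j$, each block becomes $I_{\epsilon_j\alpha_j}I_{\mu_j-\epsilon_j\alpha_j}$. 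By Lemma \ref{regchainlem6}, each such block decomposes as $\frac{\chi_\infty(\alpha_j)}{\epsilon_j\pi i\alpha_j}\Jc_{\alpha[\ ],\mu_j}+\Jc_{\alpha_j,\mu_j}+\Rc_{\alpha_j,\mu_j}$, and the set $Z$ in Lemma \ref{regchaincancel2} simply records which blocks contributed their first term. First I would therefore write $\widetilde K$ as the corresponding $m$-fold composition (applied to $e^{\pi i\lambda_0 s}$, evaluated at $t$), where the block at index $j$ is either $\Jc_{\alpha[\ ],\mu_j}$ (if $j\in Z$) or $\Jc_{\alpha_j,\mu_j}+\Rc_{\alpha_j,\mu_j}$ (if $j\in W$). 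The $L^1$ membership in $\alpha[W]$ is already guaranteed by the proof of Lemma \ref{regchaincancel2} (equivalently by the class $J$/$R$ bounds), so the real content is the evaluation of the integral.

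The key computational device is that integrating a class $J$ or class $R$ operator in its own $\alpha$-variable collapses it to a clean explicit operator. Concretely I would prove the one-variable identity: for $\epsilon\in\{\pm\}$,
\begin{equation*}
\int_{\Rb}\Big(\Jc_{\alpha_j,\mu_j}+\Rc_{\alpha_j,\mu_j}\Big)\,\mathrm{d}\alpha_j \;=\; \Jc_{\alpha[\ ],\mu_j}\quad\text{as operators on }[0,1],
\end{equation*}
together with the trivial fact that $\Jc_{\alpha[\ ],\mu_j}$ does not depend on $\alpha_j$. Both sides should be checked by going back to the definitions: $\Jc_{\alpha[\ ],\mu_j}$ applied to a function is, after unwinding \eqref{typeJ0} and the convolution trick \eqref{extraid}, just $f\mapsto\big(t\mapsto\int_0^t e^{\pi i\mu_j s}f(s)\,\mathrm{d}s\big)$ (modulo the harmless convolution by a decaying analytic function built into the definition), i.e. the block $I_{\epsilon_j\alpha_j}I_{\mu_j-\epsilon_j\alpha_j}$ integrated out in $\alpha_j$ by the Gaussian principal value \eqref{gausspv} — which is exactly the content of Lemma \ref{regchaincancel1}/\ref{regchaincancel2} specialized to a single adjacent pair (where the "vanishing" there becomes instead the explicit surviving term here because $\Pc$ restricted to one pair \emph{is} dominant). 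Then, integrating $\widetilde K$ successively over each $\alpha_j$, $j\in W$, and using that the blocks for $j\in Z$ are already $\alpha$-free, reduces the whole product to the composition $\Jc_{\alpha[\ ],\mu_1}\cdots\Jc_{\alpha[\ ],\mu_m}(e^{\pi i\lambda_0 s})(t)$.

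The final step is to identify this composition with the simplex integral on the right-hand side of \eqref{regchaincalref}. Since $\Jc_{\alpha[\ ],\mu_j}$ acts (up to the built-in analytic convolution) as $f\mapsto\big(t\mapsto\int_0^t e^{\pi i\mu_j s}f(s)\,\mathrm{d}s\big)$, iterating these and applying the result to $e^{\pi i\lambda_0 s}$ produces precisely
\begin{equation*}
\int_{t>t_1>\cdots>t_m>0}e^{\pi i(\mu_1 t_1+\cdots+\mu_m t_m)+\pi i\lambda_0 t_m}\,\mathrm{d}t_1\cdots\mathrm{d}t_m,
\end{equation*}
matching $\Ac$-type expressions such as \eqref{defcoefa2}. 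Here one must be slightly careful about the ordering: $I_{\beta_{2j-1}+\lambda_{2j-1}}$ precedes $I_{\beta_{2j}+\lambda_{2j}}$ inside $I$, and the blocks are composed from $j=1$ to $j=m$, so after collapsing each pair the outermost integration variable is $t_1$ and the innermost is $t_m$, which is the correct orientation of the simplex; I would spell this out by induction on $m$. The main obstacle I anticipate is bookkeeping rather than conceptual: one must verify that the convolution-by-analytic-function ambiguities baked into the definitions of class $J$ operators (the factors $M$ appearing whenever a Dirac $\dirac$ is replaced using \eqref{extraid}) genuinely integrate to $1$ and hence do not alter the value of the integral — this is where I would be most careful, checking that each such $M$ has total mass one and that the interchange of the $\alpha[W]$-integration with the (absolutely convergent, by \eqref{typeJ2}) $\gamma$- and $\sigma$-integrations is legitimate, e.g. via Fubini using the $L^1$ bounds already established. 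Once that is in place the identity \eqref{regchaincalref} follows, and the independence of the answer on the choice of $Z$ is automatic since the right-hand side contains no $Z$.
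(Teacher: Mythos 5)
Your proposal is correct and follows essentially the same route as the paper: both rest on the decomposition of each block $I_{\epsilon_j\alpha_j}I_{\mu_j-\epsilon_j\alpha_j}$ from Lemma \ref{regchainlem6}, the vanishing of the odd factors $\chi_\infty(\alpha_j)/(\epsilon_j\pi i\alpha_j)$ under the symmetric (Gaussian principal value) integration \eqref{gausspv}, the approximate-identity computation \eqref{approxid}, and the class $J$/$R$ bounds to upgrade the principal value to a genuine $L^1$ integral. The only difference is organizational: you integrate out the $\alpha_j$ block by block via a per-block identity, while the paper completes $\widetilde I$ to the full product $K^*$, discards the odd difference terms at once, and evaluates $[\mathrm{G}]\int K^*$ over the whole simplex in a single step.
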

\begin{proof} For $1\leq j\leq m$, by Lemma \ref{regchainlem6} we decompose
\[I_{\epsilon_j\alpha_j}I_{\mu_j-\epsilon_j\alpha_j}=\frac{\chi_\infty(\alpha_j)}{\epsilon_j \pi i\alpha_j}I_{\mu_j}+\Jc_{\alpha_j,\mu_j}+\Rc_{\alpha_j,\mu_j},\] therefore $\widetilde{I}$ is the composition of $m$ operators, where the $j$-th operator is $I_{\mu_j}$ if $j\in Z$, and is $\Jc_{\alpha_j,\mu_j}+\Rc_{\alpha_j,\mu_j}$ if $j\in W$. Thus $\widetilde{I}$ is of class $J$ or $R$, so $\widetilde{K}\in L^1$.

Now, let $I^*$ be the operator where $\Jc_{\alpha_j,\mu_j}+\Rc_{\alpha_j,\mu_j}$ is replaced by $I_{\epsilon_j\alpha_j}I_{\mu_j-\epsilon_j\alpha_j}$ for each $j\in W$ in $\widetilde{I}$, and define $K^*$ accordingly. Then $I^*$ can be expanded into finitely many terms, one of them being $\widetilde{I}$; the other terms have form
\[\prod_{j\in Z_1}\frac{\chi_\infty(\alpha_j)}{\epsilon_j\pi i\alpha_j}\cdot I^{**}\] for some $\varnothing\neq Z_1\subset W$, where $I^{**}$ depends on $(\mu_1,\cdots,\mu_m)$ and $\alpha[W\backslash Z_1]$, and has class $J$ or $R$; define $K^{**}$ accordingly. By the factorized structure and symmetry, we trivially have
\[[\mathrm{G}]\int\prod_{j\in Z_1}\frac{\chi_\infty(\alpha_j)}{\epsilon_j\pi i\alpha_j}\cdot K^{**}(\alpha[W\backslash Z_1])\,\mathrm{d}\alpha[W]=0\] for any fixed $(t,\lambda_0,\mu_1,\cdots,\mu_m)$, noticing also $K^{**}\in L^1$. Therefore, to calculate $\int \widetilde{K}(\alpha[W])\mathrm{d}\alpha[W]$, it suffices to calculate $[\mathrm{G}]\int K^*(\alpha[W])$; by switching signs we may assume $\epsilon_j=1$. Now $K^{*}$ can be written in the following form (with $t_0=t$)
\[K^{*}=\int_{t>t_1>\cdots>t_m>0}e^{\pi i(\mu_1t_1+\cdots +\mu_mt_m)+\pi i\lambda_0t_m}\,\mathrm{d}t_1\cdots\mathrm{d}t_m\cdot\prod_{j\in W}\int_{t_j<s_j<t_{j-1}}e^{\pi i\alpha_j(s_j-t_j)}\,\mathrm{d}s_j;\] therefore, for $\theta>0$, we have
\begin{multline*}\int K^{*}(\alpha[W])\prod_{j\in W}e^{-\pi\theta^2\alpha_j^2}\,\mathrm{d}\alpha_j\\=\int_{t>t_1>\cdots>t_m>0}e^{\pi i(\mu_1t_1+\cdots +\mu_mt_m)+\pi i\lambda_0t_m}\,\mathrm{d}t_1\cdots\mathrm{d}t_m\cdot\prod_{j\in W}\int_{t_j<s_j<t_{j-1}}\theta^{-1}e^{-\frac{\pi(s_j-t_j)^2}{4\theta^2}}\,\mathrm{d}s_j.
\end{multline*} For each fixed $(t_1,\cdots,t_m)$, the integral in $s_j$ is uniformly bounded; moreover for any $\tau>0$ we have
\begin{equation}\label{approxid}\lim_{\theta\to 0}\int_{0<\eta<\tau}\theta^{-1}e^{-\frac{\pi\eta^2}{4\theta^2}}\,\mathrm{d}\eta=2\int_{\xi>0}e^{-\pi\xi^2}\,\mathrm{d}\xi=1,\end{equation} so (\ref{regchaincalref}) follows.
\end{proof}
\subsection{Non-dominant couples} For any regular but non-dominant couples, the leading term in the asymptotics obtained in Proposition \ref{asymptotics1} simply vanishes.
\begin{prop}\label{maincancel} Let $\Qc$ be a regular couple that is not dominant, then for any $Z\subset\Nc^{ch}$ that appears in Proposition \ref{maincoef} and any $(t,s)$ we have (recall (\ref{defintcal}) {for} definition)
\begin{equation}\label{maincancel1}\Jc\widetilde{\Bc}_{\Qc,Z}(t,s)=0.
\end{equation}
\end{prop}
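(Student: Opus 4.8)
The plan is to prove \eqref{maincancel1} by induction on the scale of $\Qc$, exploiting the recursive description of $\widetilde{\Bc}_\Qc$ from Section \ref{recursive}, together with the vanishing statements in Lemmas \ref{regchaincancel1}, \ref{regchaincancel2} and \ref{regchaincal}. Recall that by Proposition \ref{structure1.5}, a non-trivial regular couple $\Qc$ has a uniquely determined $\Qc_0$ which is either a $(1,1)$-mini couple (type 1) or a non-trivial regular double chain (type 2), and each leaf pair of $\Qc_0$ is replaced by a smaller regular couple. The crucial observation is that $\Qc$ is \emph{dominant} if and only if (i) in the type 2 case, $\Qc_0$ is a regular double chain built from two \emph{dominant} regular chains (i.e.\ with dominant legal partitions $\Pc^\pm$), \emph{and} (ii) all the sub-couples $\Qc_j$ replacing the leaf pairs are dominant (Definition \ref{defstd}). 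Hence non-dominance of $\Qc$ propagates either to a non-dominance of some $\Qc_j$, or (in type 2) to a non-dominant partition $\Pc^+$ or $\Pc^-$ in $\Qc_0$.

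First I would handle \emph{Case 1} (type 1). Here \eqref{newexp1} expresses $\widetilde{\Bc}_\Qc$ as a double time integral over $(t_1,s_1)$ of $e^{\pi i\alpha_\rf(t_1-s_1)}\prod_{j=1}^3\widetilde{\Bc}_{\Qc_j}(t_1,s_1,\alpha[\Nc_j^{ch}])$, and correspondingly (as in the proof of Proposition \ref{asymptotics1}) $\widetilde{\Bc}_{\Qc,Z}$ is built from the $\widetilde{\Bc}_{\Qc_j,Z_j}$ and the explicit $\rho$-node factor. If $\Qc$ is non-dominant, then some $\Qc_i$ is non-dominant; integrating first in $\alpha[\Nc_i^{ch}\setminus Z_i]$ and applying the induction hypothesis $\Jc\widetilde{\Bc}_{\Qc_i,Z_i}(t_1,s_1)=0$ (for every fixed $(t_1,s_1)$, which is exactly the form of the inductive statement) kills the whole expression by the factorized structure — the remaining integrals over the other variables and over $(t_1,s_1)$ are harmless. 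This is the easy case. Note I should be a little careful that the root factor $\rf\in Z$ or $\rf\notin Z$ does not interfere: in either sub-case the $\alpha_\rf$ dependence is separated from $\alpha[\Nc_i^{ch}]$, so the factorization argument still applies.

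The main work is \emph{Case 2} (type 2). Using \eqref{newexp2}, after the same manipulations as in the proof of Proposition \ref{asymptotics1} — expanding each $\widetilde{\Bc}_{\Qc_{j,\epsilon,\iota}}$ and $\widetilde{\Bc}_{\Qc_{lp}}$ via Proposition \ref{maincoef}, fixing the auxiliary $\lambda$ parameters, and recognizing the $t$-dependent and $s$-dependent factors as the regular-chain functions $K(t,\alpha_1^+,\dots,\alpha_{m^+}^+,\lambda_0^+,\lambda_1^+,\dots)$ and its $s$-analog of \eqref{regchaink} — the integral $\Jc\widetilde{\Bc}_{\Qc,Z}$ factors into: (a) a product of $\Jc\widetilde{\Bc}_{\Qc_{j,\epsilon,\iota},Z_{j,\epsilon,\iota}}$ and $\Jc\widetilde{\Bc}_{\Qc_{lp},Z_{lp}}$ type integrals, and (b) the integrals $\int\widetilde{K}(\alpha[W^\pm])\,d\alpha[W^\pm]$ over the chain variables, where $\widetilde{K}$ is the operator-applied-to-exponential from Lemma \ref{regchaincancel2}/\ref{regchaincal}. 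If some sub-couple $\Qc_j$ or $\Qc_{lp}$ is non-dominant, part (a) vanishes by the induction hypothesis, exactly as in Case 1. Otherwise, since $\Qc$ is non-dominant, at least one of $\Pc^+$, $\Pc^-$ is a non-dominant legal partition; then Lemma \ref{regchaincancel2} (which is tailored precisely for non-dominant $\Pc$) gives $\int\widetilde{K}(\alpha[W^+])\,d\alpha[W^+]=0$ (or the $-$ version), and the factorized structure finishes the job. I would write this out carefully keeping track of the translation identities $\widetilde{\alpha_j}=\alpha_j^++\epsilon_j^+\lambda_a^+$ and the moving of indices between $Z^\pm$ and $W^\pm$ (as in the proof of Proposition \ref{maincoef}), but these only amount to harmless integrable reparametrizations that do not affect the vanishing.

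The main obstacle I anticipate is bookkeeping rather than a genuine new idea: one must verify that in \emph{every} decomposition of $\widetilde{\Bc}_{\Qc,Z}$ coming from Proposition \ref{maincoef} — including the terms where chain indices have been moved from $Z$ to $W$ via the difference estimate \eqref{differenceloss} — the vanishing integral from the induction hypothesis or from Lemma \ref{regchaincancel2} still appears as an isolated factor (i.e.\ the relevant variables are genuinely separated from everything else in a product). This requires checking the factorized structure survives the expansion of the regular-chain operators into class $J$/$R$ pieces; here one uses that $\widetilde{K}$ in Lemma \ref{regchaincancel2} is defined to depend only on $(\mu_1,\dots,\mu_m)$ and $\alpha[W]$, with the chain variables $\alpha[W^\pm]$ disjoint from the $\alpha$-variables of the attached sub-couples, so the needed separation is automatic once one fixes all the $\lambda$ and $\mu$ parameters. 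Given that infrastructure, the proof is a routine induction, so I would present it concisely, citing Lemmas \ref{regchaincancel1}--\ref{regchaincal} for the two vanishing mechanisms and the structure theorems for the factorization.
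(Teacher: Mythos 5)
Your proposal is correct and follows essentially the same route as the paper: induction on the scale, with non-dominance traced either to a non-dominant sub-couple (handled by the induction hypothesis via the factorized dependence on $\alpha[\Nc_j^{ch}]$) or to a non-dominant chain in $\Qc_0$ (handled by Lemma \ref{regchaincancel2}), and with the $Z^\pm\leftrightarrow W^\pm$ modification dismissed exactly as in the paper via the factorized structure and the cancellation identity \eqref{additionvanish}. The only difference is organizational (you case-split on the type of $\Qc$ first, the paper on whether some $\Qc_j$ is non-dominant), which does not change the argument.
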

\begin{proof} We induct on the scale of $\Qc$. Suppose (\ref{maincancel1}) is true for all regular couples with smaller scales (the base case will follow in the same way), and consider a regular couple $\Qc$ of scale $2n$. As in Section \ref{recursive}, let $\Qc$ be obtained from $\Qc_0$ and by replacing the leaf-pairs with regular couples $\Qc_j\,(j\geq 1)$ with $n(\Qc_j)<2n$.

\emph{Case 1}. If $\Qc_j$ is non-dominant for some $j\geq 1$, then by (\ref{newexp1}) and (\ref{newexp2}), we know that the only way in which $\widetilde{\Bc}_\Qc$ depends on the variables $\alpha[\Nc_j^{ch}]$ is via $\widetilde{\Bc}_{\Qc_j}$, thus we can write
\[\widetilde{\Bc}_\Qc(t,s,\alpha[\Nc^{ch}])=\int_{\Rb^2}\Wc(t,s,t',s',\alpha[\Nc^{ch}\backslash\Nc_j^{ch}])\cdot\widetilde{\Bc}_{\Qc_j}(t',s',\alpha[\Nc_j^{ch}])\,\mathrm{d}t'\mathrm{d}s'\] for some kernel $\Wc$. For any $Z\subset\Nc^{ch}$, let $Z_1=Z\cap\Nc_j^{ch}$ and $Z_2=Z\backslash\Nc_j^{ch}$, then the component
\[\prod_{\nf\in Z}\frac{\chi_\infty(\alpha_\nf)}{\zeta_\nf\pi i\alpha_\nf}\cdot\widetilde{\Bc}_{\Qc,Z}(t,s,\alpha[\Nc^{ch}\backslash Z])\] of $\widetilde{\Bc}_\Qc$ must come from the components
\[\prod_{\nf\in Z_1}\frac{\chi_\infty(\alpha_\nf)}{\zeta_\nf\pi i\alpha_\nf}\cdot\widetilde{\Bc}_{\Qc_j,Z_1}(t,s,\alpha[\Nc_j^{ch}\backslash Z_1])\quad\mathrm{and}\quad\prod_{\nf\in Z_2}\frac{\chi_\infty(\alpha_\nf)}{\zeta_\nf\pi i\alpha_\nf}\cdot\Wc_{Z_2}(t,s,t',s',\alpha[(\Nc^{ch}\backslash\Nc_j^{ch})\backslash Z_2])\] of $\widetilde{\Bc}_{\Qc_j}$ and $\Wc$ respectively (with at most a $\pm$ sign), where $\Wc_{Z_2}$ is a suitable kernel, and that we must have
\begin{equation*}\widetilde{\Bc}_{\Qc,Z}(t,s,\alpha[\Nc^{ch}\backslash Z])=\int_{\Rb^2}\Wc_{Z_2}(t,s,t',s',\alpha[(\Nc^{ch}\backslash\Nc_j^{ch})\backslash Z_2])\cdot\widetilde{\Bc}_{\Qc_j,Z_1}(t',s',\alpha[\Nc_j^{ch}\backslash Z_1])\,\mathrm{d}t'\mathrm{d}s'.\end{equation*} This, together with the induction hypothesis (\ref{maincancel1}) for $\widetilde{\Bc}_{\Qc_j,Z_1}$, clearly implies that (\ref{maincancel1}) also holds for $\widetilde{\Bc}_{\Qc,Z}$.

\emph{Case 2}. If $\Qc_j$ is dominant for each $j\geq 1$, since $\Qc$ is not dominant, by Definition \ref{defstd}, we know that $\Qc_0$ must be a regular double chain with at least one of the regular chains being non-dominant, say $\Tc^+$ is non-dominant. Following the proof of Proposition \ref{maincoef} in Section \ref{proofmaincoef}, we see that the only way in which $\widetilde{\Bc}_{\Qc}$ depends on the variables $(\alpha_1^+,\cdots,\alpha_{m^+}^+)$ is via the function $K=K(t,\alpha_1^+,\cdots,\alpha_{m^+}^+,\lambda_0^+,\lambda_1^+,\cdots,\lambda_{2m^+}^+)$ in (\ref{regchainfunck}); in the same way as in \emph{Case 1} above, we have
\begin{multline}\widetilde{\Bc}_{\Qc,Z}(t,s,\alpha[\Nc^{ch}\backslash Z])=\int \widetilde{K}_{Z^+}\big(t,\lambda_0^+,\mu_1^+,\cdots,\mu_{m^+}^+,\widetilde{\alpha}[W^+]\big)\\\times \Wc\big(s,\lambda_0^+,\lambda_1^+,\cdots,\lambda_{2m^+}^+,\alpha[W_1]\big)\,\mathrm{d}\lambda_0^+\prod_{j\not\in Z^+}\mathrm{d}\lambda_a^+\mathrm{d}\lambda_b^+.
\end{multline} Here we assume that $Z^+$ is the subset of $\{1,\cdots,m^+\}$ appearing in (\ref{regcplform1}), $W^+=\{1,\cdots,m^+\}\backslash Z^+$, and $Z\subset \Nc^{ch}$ is determined (among other things) by $Z^+$, $W_1=(\Nc^{ch}\backslash Z)\backslash \{\nf_a^+:a<b\}$ where the set $\{\nf_a^+:a<b\}$ is the one appearing on the right hand side of (\ref{newnch}), $\widetilde{K}_{Z^+}$ is the function $\widetilde{I}(e^{\pi i\lambda_0^+s})(t)$ appearing on the right hand side of (\ref{regcplform1}), and $\Wc$ is some function. Note that $\mu_j^+=\lambda_a^++\lambda_b^+$ and $\widetilde{\alpha_j}=\alpha_j^++\epsilon_j^+\lambda_a^+$ is a translation of $\alpha_j^+$, it will suffice to prove that for any fixed $(t,\lambda_0^+,\mu_1^+,\cdots,\mu_{m^+}^+)$, we must have
\begin{equation}\label{nondomcancel}\int \widetilde{K}_{Z^+}(\widetilde{\alpha}[W^+])\,\mathrm{d}\widetilde{\alpha}[W^+]=0.\end{equation} However, this $\widetilde{K}_{Z^+}$ is just the function $\widetilde{K}$ defined in Lemma \ref{regchaincancel2}, so (\ref{nondomcancel}) follows directly from (\ref{regchaincancelref}). Note that, should any modification procedure described in the proof of Proposition \ref{maincoef} be needed, where some $j\in Z^+$ is moved to $W^+$ and $\widetilde{\alpha_j}=\alpha_j^++\epsilon_j^+ \lambda_a^+$ is replaced by $\alpha_j^+$ in the factor $\frac{\chi_\infty(\widetilde{\alpha_j})}{\epsilon_j^+\pi i\widetilde{\alpha_j}}$, this would not affect the above equality due to the factorized structure. In addition we also have
\begin{equation}\label{additionvanish}\int_\Rb\bigg(\frac{\chi_\infty(\alpha_j^++\epsilon_j^+\lambda_a^+)}{\epsilon_j^+\pi i(\alpha_j^++\epsilon_j^+\lambda_a^+)}-\frac{\chi_\infty(\alpha_j^+)}{\epsilon_j^+\pi i\alpha_j^+}\bigg)\,\mathrm{d}\alpha_j^+=0.\end{equation} This completes the inductive proof.
\end{proof}
\subsection{Dominant couples} For a dominant couple $\Qc$, the corresponding leading term, which contains the integral of $\widetilde{\Bc}_{\Qc,Z}$, will be nonzero due to Lemma \ref{regchaincal}. Moreover, in this situation it is easy to check that any set $Z$ that appears in Proposition \ref{maincoef} must be \emph{special} as in Definition \ref{equivcpl}.
\begin{prop}\label{regcplcal} Let $\Qc$ be a dominant couple and $Z\subset\Qc^{ch}$ be special. Then the function $\Jc\widetilde{\Bc}_{\Qc,Z}(t,s)$ defined in (\ref{defintcal}) is independent of $Z$ and may be denoted $\Jc\widetilde{\Bc}_{\Qc}(t,s)$. Moreover, these functions satisfy some explicit recurrence relation, described as follows. First $\Jc\widetilde{\Bc}_\Qc(t,s)\equiv 1$ for the trivial couple.

Suppose $\Qc$ has type $1$, then it is formed from the $(1,1)$-mini couple by replacing its three leaf pairs by dominant couples $\Qc_j\,(1\leq j\leq 3)$. Then we have
\begin{equation}
\label{recurtype1}
\Jc\widetilde{\Bc}_{\Qc}(t,s)=2\int_0^{\min(t,s)}\prod_{j=1}^3\Jc\widetilde{\Bc}_{\Qc_j}(\tau,\tau)\,\mathrm{d}\tau.
\end{equation} In particular $\Jc\widetilde{\Bc}_{\Qc}=\Jc\widetilde{\Bc}_{\Qc}(\min(t,s))$ is a function of $\min(t,s)$ for type $1$ dominant couples $\Qc$.

Suppose $\Qc$ has type $2$, then $\Qc$ is formed from a regular double chain $\Qc_0$, which consists of two dominant regular chains, by replacing each leaf pair in $\Qc_0$ with a dominant couple. Using the notations in Definition \ref{defchoice}, we now have that the $j$-th pair in $\Pc^\pm$ is $\{2j-1,2j\}$, and that $\Qc_{lp}$ is trivial or has type 1. Then we have
\begin{multline}
\label{recurtype2}
\Jc\widetilde{\Bc}_{\Qc}(t,s)=\int_{t>t_1>\cdots>t_{m^+}>0}\int_{s>s_1>\cdots >s_{m^-}>0}\prod_{j=1}^{m^+}\Jc\widetilde{\Bc}_{\Qc_{j,+,1}}(t_j,t_j)\Jc\widetilde{\Bc}_{\Qc_{j,+,2}}(t_j,t_j)\\\times\prod_{j=1}^{m^-}\Jc\widetilde{\Bc}_{\Qc_{j,-,1}}(s_j,s_j)\Jc\widetilde{\Bc}_{\Qc_{j,-,2}}(s_j,s_j)\cdot\Jc\widetilde{\Bc}_{\Qc_{lp}}(\min(t_{m^+},s_{m^-}))\prod_{j=1}^{m^+}\mathrm{d}t_j\prod_{j=1}^{m^-}\mathrm{d}s_j.
\end{multline} Here we understand that $t_0=t$ and $s_0=s$.
\end{prop}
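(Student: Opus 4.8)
\textbf{Proof plan for Proposition \ref{regcplcal}.}

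The plan is to induct on the scale $n(\Qc)$, mirroring the structure of the proofs of Propositions \ref{maincoef} and \ref{maincancel}, and to use the regular chain calculation Lemma \ref{regchaincal} as the computational engine for the type-2 case. The base case $\Qc=\times$ is trivial since $\widetilde{\Bc}_\times\equiv 1$ and the only (empty) $Z$ gives $\Jc\widetilde{\Bc}_{\times,\varnothing}\equiv 1$. For the inductive step I would split according to the type of $\Qc$ as given by Propositions \ref{structure1} and \ref{structure1.5}, and in each case first establish the $Z$-independence and then derive the recurrence; the two facts are proved together because the recurrence relation visibly has no $Z$-dependence once one knows the sub-couple integrals $\Jc\widetilde{\Bc}_{\Qc_j}$ are $Z$-independent by induction.

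\emph{Type 1.} Here $\Qc$ is built from a $(1,1)$-mini couple with leaf pairs replaced by dominant couples $\Qc_1,\Qc_2,\Qc_3$, and $\widetilde{\Bc}_\Qc$ is given by the formula \eqref{newexp1}. I would insert the decomposition \eqref{maincoef3} of each $\widetilde{\Bc}_{\Qc_j}$ into its $Z_j$-pieces, so that a special $Z\subset\Nc^{ch}$ (necessarily with $\rf\notin Z$, since the root resonance factor $\Omega_\rf$ only occurs in the explicit exponential $e^{\pi i\alpha_\rf(t_1-s_1)}$ of \eqref{newexp1} and cannot produce a $1/\alpha_\rf$ factor — this is where one sees all $Z$ arising from Proposition \ref{maincoef} are automatically special) decomposes as $Z=Z_1\cup Z_2\cup Z_3$. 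Then $\Jc\widetilde{\Bc}_{\Qc,Z}$ is obtained by integrating \eqref{newexp1} over $\alpha[\Nc^{ch}\setminus Z]$; integrating first in $\alpha_\rf$ forces $t_1=s_1$ by the standard $\int e^{\pi i\alpha_\rf(t_1-s_1)}\,\mathrm{d}\alpha_\rf = 2\dirac(t_1-s_1)$ identity (used in the Gaussian-regularized / principal value sense as in \eqref{approxid}), and integrating the remaining variables factorizes over the three sub-couples, producing $\prod_{j=1}^3\Jc\widetilde{\Bc}_{\Qc_j,Z_j}(t_1,t_1)$. By the induction hypothesis each factor equals $\Jc\widetilde{\Bc}_{\Qc_j}(t_1,t_1)$, independent of $Z_j$; this gives both the $Z$-independence and exactly \eqref{recurtype1}, and the fact that it depends only on $\min(t,s)$ since the $t_1$-integral runs over $0<t_1<\min(t,s)$.

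\emph{Type 2.} This is the substantive case. Now $\Qc$ is built from a regular double chain $\Qc_0$ (two dominant regular chains, so each associated legal partition is the dominant one $\{\{2j-1,2j\}\}$) with leaf pairs replaced by dominant couples $\Qc_{j,\pm,\iota}$ and $\Qc_{lp}$, and $\widetilde{\Bc}_\Qc$ has the form \eqref{newexp2}, equivalently \eqref{newexp4} after inserting the sub-couple decompositions. The function of $t$ factor is the regular chain expression $K(t,\alpha_1^+,\dots,\alpha_{m^+}^+,\lambda_0^+,\dots)$ of \eqref{regchainfunck}, and similarly for $s$. I would integrate $\Jc\widetilde{\Bc}_{\Qc,Z}$ over $\alpha[\Nc^{ch}\setminus Z]$ in stages: first the variables internal to each $\Qc_{j,\pm,\iota}$ and $\Qc_{lp}$ (giving $\Jc\widetilde{\Bc}$ of the respective sub-couples by induction, hence $Z$-independent), then the chain variables $\widetilde{\alpha}[W^\pm]$. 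For the latter, apply Lemma \ref{regchaincal} with the dominant partition: \eqref{regchaincalref} says that for \emph{any} $Z^+\subset\{1,\dots,m^+\}$ the integral of $\widetilde{K}_{Z^+}$ over $\widetilde{\alpha}[W^+]$ equals the same ordered-simplex time integral $\int_{t>t_1>\cdots>t_{m^+}>0}e^{\pi i(\mu_1^+ t_1+\cdots+\mu_{m^+}^+ t_{m^+})+\pi i\lambda_0^+ t_{m^+}}\,\mathrm{d}t$, manifestly independent of $Z^+$. Tracking how $\mu_j^+=\lambda_{a}^++\lambda_b^+$ feeds back into the sub-couple factors — each $\mu_j^+$ being the total Fourier variable of the pair of sub-couples $\Qc_{j,+,1},\Qc_{j,+,2}$ attached at the $j$-th mini tree — one sees that integrating out the $\lambda$'s collapses $\Jc\widetilde{\Bc}_{\Qc_{j,+,1}}$ and $\Jc\widetilde{\Bc}_{\Qc_{j,+,2}}$ to a common time argument $t_j$, giving the $\prod_{j=1}^{m^+}\Jc\widetilde{\Bc}_{\Qc_{j,+,1}}(t_j,t_j)\Jc\widetilde{\Bc}_{\Qc_{j,+,2}}(t_j,t_j)$ in \eqref{recurtype2}; the $\lambda_0^\pm=\lambda_{lp,\pm}$ variables similarly force the $\Qc_{lp}$ factor to be evaluated at $\min(t_{m^+},s_{m^-})$, using that $\Qc_{lp}$ is trivial or type 1 and hence (by the type-1 case already proved) depends only on the minimum of its two time arguments. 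I also need to absorb the harmless modification steps from the proof of Proposition \ref{maincoef} where some $j$ is moved from $Z^+$ to $W^+$ and $\widetilde{\alpha_j}=\alpha_j^+ + \epsilon_j^+\lambda_a^+$ replaces $\alpha_j^+$; these do not affect any integral because $\int_\Rb\big(\frac{\chi_\infty(\alpha+c)}{\alpha+c}-\frac{\chi_\infty(\alpha)}{\alpha}\big)\,\mathrm{d}\alpha=0$ (cf. \eqref{additionvanish}) and the structure is factorized. Assembling these pieces yields \eqref{recurtype2} and its $Z$-independence.

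\textbf{Main obstacle.} The genuinely delicate step is the type-2 bookkeeping: keeping precise track of how the auxiliary parameters $\lambda_{a}^\pm$, $\mu_j^\pm$, $\lambda_0^\pm$ introduced in the decomposition \eqref{newexp4} are distributed between the regular-chain factor $K$ and the sub-couple factors $\widetilde{\Bc}_{\Qc_{j,\pm,\iota}}$, $\widetilde{\Bc}_{\Qc_{lp}}$, so that after integrating the $\lambda$'s (which are the Fourier duals of the time variables $t_j$, $s_j$) the common time arguments $t_j$ and the minimum $\min(t_{m^+},s_{m^-})$ emerge correctly. Everything else is a routine combination of Lemma \ref{regchaincal}, the Dirac-delta collapse $\int e^{\pi i\alpha(t-s)}\,\mathrm{d}\alpha=2\dirac(t-s)$, and the induction hypothesis; the regularization issues (Cauchy principal value vs. Gaussian principal value $[\mathrm{G}]\int$) are already handled by Lemmas \ref{regchaincancel1}--\ref{regchaincal} and carry over verbatim.
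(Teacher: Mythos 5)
Your proposal is correct and follows essentially the same route as the paper: induction on scale, the Gaussian-regularized delta identity to collapse $t_1=s_1$ in the type-1 case (this is precisely the computation \eqref{integralK}), and Lemma \ref{regchaincal} combined with the Fourier identity \eqref{fourierexpand} and the modification/vanishing observation \eqref{additionvanish} in the type-2 case. One small inaccuracy in an aside: the claim that the explicit exponential in \eqref{newexp1} ``cannot produce a $1/\alpha_\rf$ factor'' is not quite right — evaluating the $(t_1,s_1)$ integral as in \eqref{regcplexpl} does produce factors $1/(\alpha_\rf+\lambda^*)$ and $1/(-\alpha_\rf+\lambda^{**})$, but their product is integrable in $\alpha_\rf$, which is why $\rf$ is not assigned to $Z$ in Proposition \ref{maincoef}; the conclusion ($\rf\notin Z$) is correct and the remark is inessential to the argument.
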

\begin{proof} We induct on the scale of $\Qc$. The base case $\Qc=\times$ is obvious. Now suppose the result is true for dominant couples of scale smaller than $n(\Qc)$, it suffices to prove for $\Qc$ and any $Z\subset \Nc^{ch}$ that $\Jc\widetilde{\Bc}_{\Qc,Z}(t,s)$ is given by (\ref{recurtype1}) if $\Qc$ has type $1$ and by (\ref{recurtype2}) if $\Qc$ has type $2$.

\emph{Case 1}. Assume $\Qc_0$ is a $(1,1)$-mini couple, and that the three leaf pairs are replaced by $\Qc_j\,(1\leq j\leq 3)$ in $\Qc$. Then by (\ref{newexp3}) we have
\begin{multline}\label{newexp5}\widetilde{\Bc}_{\Qc,Z}(t,s,\alpha[\Nc^{ch}\backslash Z])=\int_{\Rb^6}\prod_{j=1}^3\Cc_j(\lambda_{2j-1},\lambda_{2j},\alpha[\Nc_j^{ch}\backslash Z_j])\prod_{j=1}^6\mathrm{d}\lambda_j\\\times\int_0^t\int_0^s e^{\pi i\alpha_\rf(t_1-s_1)}e^{\pi i(\lambda^* t_1+\lambda^{**}s_1)}\,\mathrm{d}t_1\mathrm{d}s_1\end{multline} where $(\lambda^*,\lambda^{**})=(\lambda_1+\lambda_3+\lambda_5,\lambda_2+\lambda_4+\lambda_6)$. Here $\Cc_j$ are the functions defined in (\ref{maincoef1}) associated with the couple $\Qc_j$; note that some $\Cc_j$ may actually be the functions defined in (\ref{maincoef1}) \emph{after switching the variables $\lambda_{2j-1}$ and $\lambda_{2j}$}, but this will not affect the final result as will be clear later.

When $(\lambda_1,\cdots,\lambda_6)$ are fixed, we know that the $(t_1,s_1)$ integral in (\ref{newexp5}) gives an $L^1$ function $K(\alpha_{\rf})$, and we can calculate that
\begin{equation}\label{integralK}
\begin{aligned}\int K(\alpha_\rf)\,\mathrm{d}\alpha_\rf&=[\mathrm{G}]\int K(\alpha_\rf)\,\mathrm{d}\alpha_\rf=\lim_{\theta\to 0}\int K(\alpha_\rf)e^{-\pi\theta\alpha_\rf^2}\,\mathrm{d}\alpha_\rf\\
&=\lim_{\theta\to 0}\int_0^t\int_0^s e^{\pi i(\lambda^*t_1+\lambda^{**}s_1)}\,\mathrm{d}t_1\mathrm{d}s_1\int_\Rb e^{\pi i(t_1-s_1)\alpha_\rf-\pi\theta\alpha_\rf^2}\,\mathrm{d}\alpha_\rf\\
&=\lim_{\theta\to 0}\theta^{-1}\int_0^t\int_0^se^{\pi i(\lambda^*t_1+\lambda^{**}s_1)}e^{-\frac{\pi(t_1-s_1)^2}{4\theta^2}}\,\mathrm{d}t_1\mathrm{d}s_1.
\end{aligned}
\end{equation} Like in (\ref{approxid}), with fixed $t_1$, the $s_1$ integral tends to $0$ if $t_1>s$, and to $2e^{\pi i\lambda^{**}t_1}$ if $t_1<s$. This gives
\[\int K(\alpha_\rf)\,\mathrm{d}\alpha_\rf=2\int_0^{\min(t,s)}e^{\pi i(\lambda_1+\cdots+\lambda_6)t_1}\,\mathrm{d}t_1.\] Now we plug this into (\ref{newexp5}), and integrate in $(\lambda_1,\cdots,\lambda_6)$. Note that by definition (cf. (\ref{maincoef1}))
\begin{equation}\label{fourierexpand}\int_{\Rb^2}\Cc_j(\lambda_{2j-1},\lambda_{2j},\alpha[\Nc_j^{ch}\backslash Z_j])e^{\pi i(\lambda_{2j-1}+\lambda_{2j})t_1}\,\mathrm{d}\lambda_{2j-1}\mathrm{d}\lambda_{2j}=\widetilde{\Bc}_{\Qc_j,Z_j}(t_1,t_1,\alpha[\Nc_j^{ch}\backslash Z_j]),\end{equation}and this expression does not change even if the variables $\lambda_{2j-1}$ and $\lambda_{2j}$ are switched. Thus
\[\int\widetilde{\Bc}_{\Qc,Z}(t,s,\alpha[\Nc^{ch}\backslash Z])\,\mathrm{d}\alpha_\rf=2\int_0^{\min(t,s)}\prod_{j=1}^3\widetilde{\Bc}_{\Qc_j,Z_j}(t_1,t_1,\alpha[\Nc_j^{ch}\backslash Z_j])\,\mathrm{d}t_1,\]so after integrating in $\alpha[\Nc_j^{ch}\backslash Z_j]\,(1\leq j\leq 3)$ and applying the induction hypothesis we get (\ref{recurtype1}).

\emph{Case 2}. Now assume $\Qc_0$ is a regular double chain. We fix $\Qc$ and $Z$, and define the relevant variables and objects, such as $\alpha_j^\pm$, $Z_{j,\epsilon,\iota}$, $Z^\pm$ and others, in the same way as in Section \ref{recursive} and the proof of Proposition \ref{maincoef} in Section \ref{proofmaincoef}. As in the proof of Proposition \ref{maincancel}, we may neglect any modification procedure described in the proof of Proposition \ref{maincoef}, where some $j\in Z^+$ is moved to $W^+$ and $\widetilde{\alpha_j}=\alpha_j^++\epsilon_j^+ \lambda_a^+$ is replaced by $\alpha_j^+$ in the factor $\frac{\chi_\infty(\widetilde{\alpha_j})}{\epsilon_j^+\pi i\widetilde{\alpha_j}}$, because the difference produced will contribute $0$ to $\Jc\widetilde{\Bc}_{\Qc,Z}$ upon integrating in all the $\alpha_j$ variables, thanks to (\ref{additionvanish}). Therefore, we may omit the $\frac{\chi_\infty(\widetilde{\alpha_j})}{\epsilon_j^+\pi i\widetilde{\alpha_j}}$ factors and focus on the function $\widetilde{\Bc}_{\Qc,Z}(t,s,\alpha[\Nc^{ch}\backslash Z])$. By (\ref{newexp4}) we have \begin{equation}\label{newexp6}
\begin{aligned}
\widetilde{\Bc}_{\Qc,Z}(t,s,\alpha[\Nc^{ch}\backslash Z])&=\int\prod_{\epsilon\in\{\pm\}}\prod_{j=1}^{m^\epsilon}\prod_{\iota=1}^2\Cc_{j,\epsilon,\iota}\big(\lambda_{2j-1,\epsilon,\iota},\lambda_{2j,\epsilon,\iota},\alpha[\Nc_{j,\epsilon,\iota}^{ch}\backslash Z_{j,\epsilon,\iota}]\big)\,\mathrm{d}\lambda_{2j-1,\epsilon,\iota}\lambda_{2j,\epsilon,\iota}\\&\times\int\Cc_{lp}(\lambda_{lp,+},\lambda_{lp,-},\alpha[\Nc_{lp}^{ch}\backslash Z_{lp}])\,\mathrm{d}\lambda_{lp,+}\mathrm{d}\lambda_{lp,-}\\
&\times\widetilde{K}^+(t,\lambda_0^+,\mu_1^+,\cdots,\mu_{m^+}^+,\widetilde{\alpha}^+[W^+])\widetilde{K}^-(s,\lambda_0^-,\mu_1^-,\cdots,\mu_{m^-}^-,\widetilde{\alpha}^-[W^-]).
\end{aligned}
\end{equation} Here $Z_{j,\epsilon,\iota}$ and $Z_{lp}$ are subsets of $\Nc_{j,\epsilon,\iota}^{ch}$ and $\Nc_{lp}^{ch}$ respectively, and $\Cc_{j,\epsilon,\iota}$ and $\Cc_{lp}$ are the functions defined in (\ref{maincoef1}) associated with the couples $\Qc_{j,\epsilon,\iota}$ and $\Qc_{lp}$; again note that the order of the two $\lambda$ variables involved in each $\Cc$ function may be switched, but this will not affect the final result. Moreover, $\widetilde{K}^\pm$, which depends on $t$ (or $s$), $\lambda_0^\pm$ and $(\mu_1^\pm,\cdots,\mu_{m^\pm}^\pm)$ and $\widetilde{\alpha}^\pm[W^\pm]$, are the functions defined in Lemma \ref{regchaincancel2} and Lemma \ref{regchaincal}; here we have $\widetilde{\alpha_j}^\pm=\alpha_j^\pm+\epsilon_j^\pm\lambda_{2j-1}^\pm$ (where $\epsilon_j^\pm=\zeta_{\nf_{2j-1}^\pm}$) and $\mu_j^\pm=\lambda_{2j-1}^\pm+\lambda_{2j}^\pm$ for $1\leq j\leq m^\pm$, and $\lambda_a^\pm=\lambda_{a,\pm,1}+\lambda_{a,\pm,2}$ for $1\leq a\leq 2m^\pm$, and $\lambda_0^\pm=\lambda_{lp,\pm}$. Now, by applying Lemma \ref{regchaincal} to the functions $\widetilde{K}^\pm$, and using the equality (\ref{fourierexpand}) with $(\Qc_j,Z_j)$ replaced by $(\Qc_{j,\epsilon,\iota},Z_{j,\epsilon,\iota})$ and $(\Qc_{lp},Z_{lp})$ to integrate over the the $(\lambda_{a,\epsilon,\iota})$ and $(\lambda_{lp,\pm})$ variables, we get that
\begin{multline}
\label{recurtype2pre}
\int\widetilde{\Bc}_{\Qc,Z}(t,s,\alpha[\Nc^{ch}\backslash Z])\,\mathrm{d}\alpha^+[W^+]\mathrm{d}\alpha^-[W^-]=\int_{t>t_1>\cdots>t_{m^+}>0}\int_{s>s_1>\cdots >s_{m^-}>0}\prod_{j=1}^{m^+}\mathrm{d}t_j\prod_{j=1}^{m^-}\mathrm{d}s_j\\\times\prod_{j=1}^{m^+}\widetilde{\Bc}_{\Qc_{j,+,1}}(t_j,t_j,\alpha[\Nc_{j,+,1}^{ch}\backslash Z_{j,+,1}])\widetilde{\Bc}_{\Qc_{j,+,2}}(t_j,t_j,\alpha[\Nc_{j,+,2}^{ch}\backslash Z_{j,+,2}])\widetilde{\Bc}_{\Qc_{lp}}(t_{m^+},s_{m^-},\alpha[\Nc_{lp}^{ch}\backslash Z_{lp}])\\\times\prod_{j=1}^{m^-}\widetilde{\Bc}_{\Qc_{j,-,1}}(s_j,s_j,\alpha[\Nc_{j,-,1}^{ch}\backslash Z_{j,-,1}])\widetilde{\Bc}_{\Qc_{j,-,2}}(s_j,s_j,\alpha[\Nc_{j,-,2}^{ch}\backslash Z_{j,-,2}]).
\end{multline} Now by integrating over $\alpha[\Nc_{j,\epsilon,\iota}^{ch}\backslash Z_{j,\epsilon,\iota}]$ and $\alpha[\Nc_{lp}^{ch}\backslash Z_{lp}]$ and applying the induction hypothesis, and noticing that $\Jc\widetilde{\Bc}_{\Qc_{lp}}(t,s)=\Jc\widetilde{\Bc}_{\Qc_{lp}}(\min(t,s))$ because $\Qc_{lp}$ is trivial or has type $1$, we obtain (\ref{recurtype2}). Note that in the factor $\widetilde{\Bc}_{\Qc_{lp}}$ in (\ref{recurtype2pre}) the variables $t_{m^+}$ and $s_{m^-}$ may be switched, but this has no effect on the final result due to the symmetry of $\Jc\widetilde{\Bc}_{\Qc_{lp}}(t,s)$ in $t$ and $s$.
\end{proof}
\subsection{Combinatorics of enhanced dominant couples}\label{combine} Finally we put everything together to obtain the full asymptotics. We use $\Qs=(\Qc,Z)$ to denote enhanced dominant couples, where $Z$ is s special subset of $\Nc^{ch}$. 
\begin{prop}\label{domasymp1} We have
\begin{equation}\label{reducedom}\sum_{\substack{n(\Qc)=2n\\\Qc\,\,\mathrm{regular}}}\Kc_\Qc(t,t,k)=\sum_{\Qs}2^{-2n}\delta^n\zeta^*(\Qc)\prod_{\nf\in Z}\frac{1}{\zeta_{\nf}\pi i}\cdot\Jc\widetilde{\Bc}_\Qc(t,t)\cdot \Mc_\Qs^*(k)+\Rs,\end{equation} where $\|\Rs\|_{X_{\mathrm{loc}}^{40d}}\lesssim (C^+\delta)^nL^{-2\nu}$. Here in (\ref{reducedom}), the first summation is taken over all \emph{regular} couples $\Qc$ of scale $2n$, and the second summation is taken over all \emph{enhanced dominant} couples $\Qs=(\Qc,Z)$ of scale $2n$. The quantity $\Mc_\Qs^*(k)=\Mc_{\Qc,Z}^*(k)$ is defined as in (\ref{defintegral}).
\end{prop}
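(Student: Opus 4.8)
\textbf{Proof proposal for Proposition \ref{domasymp1}.}

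The plan is to start from the decomposition of $\Kc_\Qc$ for regular couples already obtained in Proposition \ref{asymptotics1}, and then carry out three reductions. First I would write, for each regular couple $\Qc$ of scale $2n$,
\[
\Kc_\Qc(t,t,k)=\sum_{Z\subset\Nc^{ch}}\Kc_{\Qc,Z}(t,t,k),
\]
and recall from Proposition \ref{asymptotics1} that each $\Kc_{\Qc,Z}$ equals an explicit leading term, built from $\Jc\widetilde{\Bc}_{\Qc,Z}$ and $\Mc_{\Qc,Z}^*$, plus an error $\Rs$ with $\|\Rs\|_{X_{\mathrm{loc}}^{40d}}\lesssim (C^+\delta)^n L^{-2\nu}$. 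Since there are at most $C^n$ regular couples of scale $2n$ (Corollary \ref{countcouple1}) and at most $2^n$ subsets $Z$ arising in Proposition \ref{maincoef}, the total error is still bounded by $(C^+\delta)^n L^{-2\nu}$ after adjusting $C^+$; so it suffices to show the leading terms add up to the right-hand side of \eqref{reducedom}. The second reduction is to discard non-dominant regular couples: by Proposition \ref{maincancel}, $\Jc\widetilde{\Bc}_{\Qc,Z}(t,t)=0$ whenever $\Qc$ is regular but not dominant, for every admissible $Z$, so those leading terms vanish identically and only dominant couples survive. The third reduction is to observe that for a dominant couple, any $Z$ appearing in Proposition \ref{maincoef} is automatically \emph{special} in the sense of Definition \ref{equivcpl} — this is the remark preceding Proposition \ref{regcplcal}, and I would include a short verification of it by tracing the inductive construction of $Z$ in the proof of Proposition \ref{maincoef} against the inductive definition of special sets (Definition \ref{equivcpl}): in \emph{Case 1} the root is never placed in $Z$ and each piece $Z_j\subset\Nc_j^{ch}$ is special by induction, while in \emph{Case 2} the sets $Z^\pm\subset\{1,\dots,m^\pm\}$ produced by Lemma \ref{regchainlem8} are exactly the data parametrizing special sets via \eqref{unionz}. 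Hence the sum over $(\Qc,Z)$ with $\Qc$ dominant and $Z$ special is precisely the sum over all enhanced dominant couples $\Qs$.

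Putting these together, $\sum_{n(\Qc)=2n,\ \Qc\text{ reg}}\Kc_\Qc(t,t,k)$ equals $\sum_{\Qs}$ of the leading term of $\Kc_{\Qc,Z}$ at $s=t$, plus the acceptable error. It then remains only to rewrite that leading term in the stated form. By Proposition \ref{asymptotics1},
\[
\Kc_{\Qc,Z}(t,t,k)=2^{-2n}\delta^n\zeta^*(\Qc)\prod_{\nf\in Z}\frac{1}{\zeta_\nf\pi i}\cdot\Jc\widetilde{\Bc}_{\Qc,Z}(t,t)\cdot\Mc_{\Qc,Z}^*(k)+\Rs,
\]
and by Proposition \ref{regcplcal} the coefficient $\Jc\widetilde{\Bc}_{\Qc,Z}(t,t)$ is independent of the special set $Z$ and equals $\Jc\widetilde{\Bc}_\Qc(t,t)$; also $\Mc_\Qs^*(k)=\Mc_{\Qc,Z}^*(k)$ by definition. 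Substituting gives exactly the summand on the right of \eqref{reducedom}. Finally, to justify interchanging the (finite) sums and collecting the error, I note that each individual $\Rs$ obeys $\|\Rs\|_{X_{\mathrm{loc}}^{40d}}\lesssim(C^+\delta)^n L^{-2\nu}$ and the number of summands $(\Qc,Z)$ is at most $C^n$, and $(C^+)^n$ is absorbed into the constant $C^+$ in the statement (the bound is for fixed $n\le N^3$, so there is no issue of summing over $n$ here).

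The only genuinely non-routine point is the claim that every $Z$ produced by Proposition \ref{maincoef} is special when $\Qc$ is dominant; everything else is bookkeeping plus direct citation. I expect that verification to be the main obstacle, because it requires matching the two separate inductions — the one generating $Z$ inside the proof of Proposition \ref{maincoef} (via the $\chi_\infty(\alpha_j)/(\epsilon_j\pi i\alpha_j)$ factors peeled off in Lemma \ref{regchainlem8} and the recursive formulas \eqref{newexp3}, \eqref{newexp4}) and the definition of special sets in Definition \ref{equivcpl} — node by node, and in particular checking in \emph{Case 2} that the elements of $Z$ lying among the chain nodes $\nf_a^\pm$ always occur with $a<b$ (i.e. at the ``choice'' node of a pair) and are indexed precisely by a subset $Z^\pm\subset\{1,\dots,m^\pm\}$, as \eqref{unionz} demands. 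This is exactly the compatibility already implicit in the proof of Proposition \ref{maincoef}, where the final $Z$ is written in the displayed form $Z=(\bigcup Z_{j,\epsilon,\iota})\cup Z_{lp}\cup\{\nf_a^+:a<b,\ j\in Z^+\}\cup\{\nf_a^-:a<b,\ j\in Z^-\}$; so the verification amounts to observing that this displayed form is literally an instance of \eqref{unionz}, and invoking the induction hypothesis that each $Z_{j,\epsilon,\iota}$ and $Z_{lp}$ is special. With that in hand the proposition follows.
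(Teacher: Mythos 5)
Your proposal is correct and follows essentially the same route as the paper, which simply cites Propositions \ref{asymptotics1}, \ref{maincancel}, and \ref{regcplcal} together with the $C^n$ bound on the number of $(\Qc,Z)$ pairs; the one point you flag as non-routine (that every $Z$ from Proposition \ref{maincoef} is special when $\Qc$ is dominant) is the same fact the paper dispatches in the remark preceding Proposition \ref{regcplcal}, and your verification of it via matching the displayed form of $Z$ in the proof of Proposition \ref{maincoef} against \eqref{unionz} is the intended argument.
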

\begin{proof} This follows from combining Propositions \ref{asymptotics1}, \ref{maincancel} and \ref{regcplcal}. Note that the number of choices for $(\Qc,Z)$ is at most $C^n$, so the accumulate error term $\Rs$ still satisfies the same bound as in Proposition \ref{asymptotics1}.
\end{proof}
\begin{prop}\label{multiexp} Let $\Qs=(\Qc,Z)$ be an enhanced dominant couple. Let $\Mc_{\Qs}^*(k)=\Mc_{\Qc,Z}^*(k)$ be defined as in (\ref{defintegral}). Then, the expression $\Mc_\Qs^*(k)$ depends only on the equivalence class $\Xs$ of $\Qs$, so we can denote it by $\Mc_\Xs^*(k)$. Moreover, it satisfies the recurrence relation described as follows. If $\Xs$ is an equivalence class of type 1, then it is uniquely determined by $(\Xs_1,\Xs_2,\Xs_3)$, see Section \ref{summary}. In this case we have
\begin{equation}\label{inductm01}\Mc_\Xs^*(k)=\int_{(\Rb^d)^3}\prod_{j=1}^3\Mc_{\Xs_j}^*(k_j)\dirac(k_1-k_2+k_3-k)\dirac(|k_1|_\beta^2-|k_2|_\beta^2+|k_3|_\beta^2-|k|_\beta^2)\,\mathrm{d}k_1\mathrm{d}k_2\mathrm{d}k_3.\end{equation} Next, if $\Xs$ is an equivalence class of type 2, then it is uniquely determined by $m\geq 1$, the tuples $(\mathtt{I}_j,\mathtt{c}_j,\Xs_{j,1},\Xs_{j,2})$ for $1\leq j\leq m$, and $\Ys$ trivial or of type 1,  see Section \ref{summary}. Then we have
\begin{equation}\label{inductm02}\Mc_\Xs^*(k)=\Mc_\Ys^*(k)\cdot\prod_{j=1}^m\Mc_{(j)}^*(k),\end{equation} where for each $1\leq j\leq m$, if $(\mathtt{I}_j,\mathtt{c}_j)=(0,1)$ we have
\begin{equation}\label{inductm03}\Mc_{(j)}^*(k)=\int_{(\Rb^d)^3}\Mc_{\Xs_{j,1}}^*(k_2)\Mc_{\Xs_{j,2}}^*(k_3)\cdot\dirac(k_1-k_2+k_3-k)\dirac(|k_1|_\beta^2-|k_2|_\beta^2+|k_3|_\beta^2-|k|_\beta^2)\,\mathrm{d}k_1\mathrm{d}k_2\mathrm{d}k_3;\end{equation}if $(\mathtt{I}_j,\mathtt{c}_j)=(0,2)$ we have
\begin{equation}\label{inductm04}\Mc_{(j)}^*(k)=\int_{(\Rb^d)^3}\Mc_{\Xs_{j,1}}^*(k_1)\Mc_{\Xs_{j,2}}^*(k_3)\cdot\dirac(k_1-k_2+k_3-k)\dirac(|k_1|_\beta^2-|k_2|_\beta^2+|k_3|_\beta^2-|k|_\beta^2)\,\mathrm{d}k_1\mathrm{d}k_2\mathrm{d}k_3;\end{equation}if $(\mathtt{I}_j,\mathtt{c}_j)=(0,3)$ we have
\begin{equation}\label{inductm05}\Mc_{(j)}^*(k)=\int_{(\Rb^d)^3}\Mc_{\Xs_{j,1}}^*(k_1)\Mc_{\Xs_{j,2}}^*(k_2)\cdot\dirac(k_1-k_2+k_3-k)\dirac(|k_1|_\beta^2-|k_2|_\beta^2+|k_3|_\beta^2-|k|_\beta^2)\,\mathrm{d}k_1\mathrm{d}k_2\mathrm{d}k_3.\end{equation} If $\mathtt{I}_j=1$ then the corresponding formulas are the same as above, except that the factor \[\dirac(|k_1|_\beta^2-|k_2|_\beta^2+|k_3|_\beta^2-|k|_\beta^2)\] should be replaced by\[\frac{1}{|k_1|_\beta^2-|k_2|_\beta^2+|k_3|_\beta^2-|k|_\beta^2}.\]
\end{prop}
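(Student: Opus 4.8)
\textbf{Proof proposal for Proposition \ref{multiexp}.}

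The plan is to proceed by induction on the scale $n(\Qc)$ of the enhanced dominant couple $\Qs = (\Qc,Z)$, tracking in parallel the two pieces of data that enter $\Mc^*_{\Qs}(k) = \Mc^*_{\Qc,Z}(k)$: the linear submanifold $\Sigma$ together with its surface measure $\mathrm{d}\sigma$ over which one integrates in \eqref{defintegral}, and the product of $\dirac(\Omega_\nf)$ and $1/\Omega_\nf$ factors indexed by $\Nc^{ch}\setminus Z$ and $Z$ respectively. The base case $\Qc = \times$ gives $\Mc^*_{\Qs}(k) = n_{\mathrm{in}}(k)$, which is manifestly a function of the (trivial) equivalence class. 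For the inductive step, I would invoke Proposition \ref{structure1.5} to write $\Qc$ uniquely in terms of $\Qc_0$ — a $(1,1)$-mini couple (type 1) or a regular double chain (type 2) — with its leaf-pairs replaced by strictly smaller dominant couples, and I would use the decomposition of $Z$ as a special subset recorded in Definition \ref{equivcpl}, i.e. $Z = \bigcup Z_j$ (type 1) or $Z = (\bigcup Z_{j,\epsilon,\iota})\cup Z_{lp}\cup\{\nf^\pm_{2j-1}: j\in Z^\pm\}$ (type 2). The key structural fact to exploit is that the decoration variables of $\Qc$ split: the leaf-vectors decorating each sub-couple $\Qc_j$ are constrained only through the value $k_\nf$ at the node where $\Qc_j$ is attached, so the integral \eqref{defintegral} factorizes as an iterated integral, with the ``outer'' integration running over the decoration of $\Qc_0$ alone.

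The type 1 case is the cleanest: there $\Nc^{ch} = \Nc^{ch}_1\cup\Nc^{ch}_2\cup\Nc^{ch}_3\cup\{\rf\}$ with $\rf\notin Z$ (since $Z$ is special), so the root node $\rf$ always contributes a genuine delta $\dirac(\Omega_\rf) = \dirac(|k_1|_\beta^2 - |k_2|_\beta^2 + |k_3|_\beta^2 - |k|_\beta^2)$ where $k_j = k_{(\rf_j)}$ are the roots of the three sub-couples, together with the linear constraint $k_1 - k_2 + k_3 = k$ from the decoration rule. Peeling off this outermost integration and recognizing the remaining integral over each sub-decoration as $\Mc^*_{\Qc_j,Z_j}(k_j)$ gives \eqref{inductm01} once one applies the induction hypothesis to identify $\Mc^*_{\Qc_j,Z_j}$ with $\Mc^*_{\Xs_j}$; the independence of the equivalence class follows because \eqref{inductm01} depends on $\Qs$ only through $(\Xs_1,\Xs_2,\Xs_3)$. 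For type 2, one peels off the $m^+ + m^-$ branching-node pairs of the two regular chains of $\Qc_0$: each pair $\{\nf^\epsilon_{2j-1},\nf^\epsilon_{2j}\}$ contributes, by Proposition \ref{branchpair}, a single variable $\Omega_{\nf^\epsilon_{2j-1}}$ which appears either as $\dirac(\Omega_{\nf^\epsilon_{2j-1}})$ (if $j\notin Z^\epsilon$, i.e. $\mathtt{I}_j = 0$) or as a principal-value factor $1/\Omega_{\nf^\epsilon_{2j-1}}$ (if $j\in Z^\epsilon$, i.e. $\mathtt{I}_j = 1$), and the first digit $\mathtt{c}_{j,\epsilon}$ of the associated mini-tree code dictates which of the three children of $\nf^\epsilon_{2j-1}$ carries the ``continuation'' of the chain and which two carry the attached sub-couples $\Qc_{j,\epsilon,1},\Qc_{j,\epsilon,2}$ — this is exactly the distinction between formulas \eqref{inductm03}, \eqref{inductm04}, \eqref{inductm05}. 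The lone pair of $\Qc_0$ contributes the factor $\Mc^*_{\Ys}(k)$. A short check, using that the leaf-vectors of distinct sub-couples and distinct chain levels are independent (so the relevant products of delta and principal-value distributions are well-defined, just as in the discussion around \eqref{approxintro}), shows the whole integral multiplies out into \eqref{inductm02}, and the invariance under the equivalence relation $\sim$ of Definition \ref{equivcpl} follows since the right-hand side of \eqref{inductm02} is symmetric under permuting the tuples $(\mathtt{I}_j,\mathtt{c}_j,\Xs_{j,1},\Xs_{j,2})$ and depends only on the classes $\Xs_{j,\iota},\Ys$.

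The main obstacle I anticipate is bookkeeping rather than analysis: one must verify carefully that, when the outer decoration variables of $\Qc_0$ are fixed, the inner integral over the decoration of a sub-couple $\Qc_j$ attached at a node $\nf$ with fixed $k_\nf$ is \emph{literally} the defining integral \eqref{defintegral} for $\Mc^*_{\Qc_j,Z_j}(k_\nf)$ — this requires matching the linear bijection $\Sigma \to$ hyperplane and the surface measure $\mathrm{d}\sigma$ of $\Qc$ against those of $\Qc_0$ and of each $\Qc_j$ (i.e. that $\mathrm{d}\sigma$ disintegrates correctly, which is where the volume-preserving change of variables $(x_\nf,y_\nf)_{\nf\in\Nc^{ch}}\leftrightarrow(k_{\lf_1},\dots,k_{\lf_{2n}})$ from the proof of Proposition \ref{asymptotics1} is used), and that the code conventions of Definitions \ref{defmini}, \ref{prop3.4}, \ref{defchoice} are applied consistently so that the positions of the two sub-couples relative to the continuation node come out as stated in \eqref{inductm03}--\eqref{inductm05}. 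A secondary point needing care is the principal-value interpretation of the $1/\Omega_\nf$ factors for $\nf$ with the index belonging to $Z^\pm$: one should confirm that these occur with vectors independent of all the delta constraints, so that the iterated principal-value integral is well-defined and the factorization \eqref{inductm02} is legitimate; this is the same parity/independence mechanism already used in Proposition \ref{approxnt} and Proposition \ref{asymptotics1}, so it transfers without new ideas.
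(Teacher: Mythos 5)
Your proposal is correct and follows essentially the same route as the paper: induction on the structure given by Propositions \ref{structure1}--\ref{structure1.5}, factorization of the integrand into the $n_{\mathrm{in}}$, $\dirac(\Omega_\nf)$ and $1/\Omega_\nf$ factors of the sub-couples times the new $\dirac(\Omega)$ or $1/\Omega$ factor at each node of $\Qc_0$ (according to $\mathtt{I}_j$ and $\mathtt{c}_j$), with the only substantive check being the disintegration of the measure $\mathrm{d}\sigma$. The paper performs exactly the verification you flag, by writing $\mathrm{d}\sigma_\Qc=\dirac\big(\sum_{\lf\in\Yc}(\pm k_\lf)-k\big)\,\mathrm{d}k[\Xc]$ with one leaf chosen from each pair (as in Proposition \ref{asymptotics1}) and checking the recurrence of this measure directly for type 1 and for type 2 with $m=1$, the general case following by iteration.
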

\begin{proof} We prove by induction. The integral (\ref{defintegral}) has two parts: the measure $\mathrm{d}\sigma$, and the integrand
\[\Is(\Qc)=\prod_{\lf\in\Lc^*}^{(+)}n_{\mathrm{in}}(k_\lf)\cdot\prod_{\nf\in\Nc^{ch}\backslash Z}\dirac(\Omega_{\nf})\prod_{\nf\in Z}\frac{1}{\Omega_{\nf}}.\] It is easy to see that if $\Qc$ is formed by the smaller couples $\Qc_j$ as in Definition \ref{defchoice}, then $\Is(\Qc)$ is equal to the product of $\Is(\Qc_j)$, multiplied by the product of the $\dirac(\Omega)$ (if the corresponding $\mathtt{I}_j=0$) or $1/\Omega$ (if $\mathtt{I}_j=1$) factors appearing in (\ref{inductm01})--(\ref{inductm05}), where $\Omega=|k_1|_\beta^2-|k_2|_\beta^2+|k_3|_\beta^2-|k|_\beta^2$. Therefore, to verify the recurrence relation we just need to consider the measure part $\mathrm{d}\sigma$.

Recall the linear submanifold $\Sigma$ and the definition of $\mathrm{d}\sigma$ in (\ref{defintegral}), which we shall denote by $\mathrm{d}\sigma_\Qc$ here. If we choose one leaf from each leaf pair to form a set $\Xc$ (the exact choice can be arbitrary and does not affect the formula), then, as described in Proposition \ref{asymptotics1}, there is a set $\Yc\subset\Xc$ of odd cardinality, such that
\[\mathrm{d}\sigma_\Qc=\dirac\bigg(\sum_{\lf\in\Yc}(\pm k_\lf)-k\bigg)\,\mathrm{d}k[\Xc].\]

Now, suppose $\Qc$ has type 1, which is composed of three dominant couples $\Qc_j\,(1\leq j\leq 3)$; let $(\Xc_j,\Yc_j)$ be associated with $\Qc_j$, then we have $\Xc=\Xc_1\cup\Xc_2\cup\Xc_3$ and $\Yc=\Yc_1\cup\Yc_2\cup\Yc_3$. Then
\[
\begin{aligned}\dirac(k_1-k_2+k_3-k)\mathrm{d}k_1\mathrm{d}k_2\mathrm{d}k_3\prod_{j=1}^3\mathrm{d}\sigma_{\Qc_j}&=\dirac(k_1-k_2+k_3-k)\prod_{j=1}^3\bigg[\dirac\bigg(\sum_{\lf\in\Yc_j}(\pm k_\lf)-k_j\bigg)\mathrm{d}k[\Xc_j]\bigg]\mathrm{d}k_1\mathrm{d}k_2\mathrm{d}k_3\\
&=\dirac\bigg(\sum_{\lf\in\Yc}(\pm k_\lf)-k\bigg)\,\mathrm{d}k[\Xc]=\mathrm{d}\sigma_\Qc.
\end{aligned}\] This can be verified, for example, by integrating any function against the measures.

Suppose $\Qc$ has type 2, we will only consider the case $m=1$, since the general case follows from iteration. Using the notations of Definitions \ref{defchoice} and \ref{equivcpl}, suppose $(m^+,m^-)=(1,0)$ and $\mathtt{c}_1=1$ (the other cases are similar), and denote $(\Qc_{lp}, \Qc_{1,+,1},\Qc_{1,+,2})=(\Qc_1,\Qc_2,\Qc_3)$, then $\Xc=\Xc_1\cup\Xc_2\cup\Xc_3$ and $\Yc=\Yc_1$, hence
\[\begin{aligned}\mathrm{d}k_2\mathrm{d}k_3\prod_{j=1}^3\mathrm{d}\sigma_{\Qc_j}&=\prod_{j=2}^3\bigg[\dirac\bigg(\sum_{\lf\in\Yc_j}(\pm k_\lf)-k_j\bigg)\mathrm{d}k[\Xc_j]\bigg]\mathrm{d}k_2\mathrm{d}k_3\cdot \dirac\bigg(\sum_{\lf\in\Yc_1}(\pm k_\lf)-k\bigg)\mathrm{d}k[\Xc_1]\\
&=\dirac\bigg(\sum_{\lf\in\Yc}(\pm k_\lf)-k\bigg)\,\mathrm{d}k[\Xc]=\mathrm{d}\sigma_\Qc.
\end{aligned}\] Therefore the measure $\mathrm{d}\sigma_\Qc$ satisfies the desired recurrence relation, so the result is proved.
\end{proof}
\begin{prop}\label{nonemptyZ} Let $\Xs$ be an equivalence class of enhanced dominant couples such that for $\Qs=(\Qc,Z)\in\Xs$ we have $Z\neq\varnothing$. Then we have
\begin{equation}
\label{nonemptyZ1}\sum_{\Qs=(\Qc,Z)\in\Xs}\bigg(\prod_{\nf\in Z}\frac{1}{\zeta_\nf\pi i}\bigg)\cdot \Jc\widetilde{\Bc}_\Qc(t,t)=0.
\end{equation}
\end{prop}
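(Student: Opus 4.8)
\textbf{Proof proposal for Proposition \ref{nonemptyZ}.}

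The plan is to induct on the scale $n(\Qc)$, exploiting the recursive structure of $\Jc\widetilde{\Bc}_\Qc$ given in Proposition \ref{regcplcal} together with the definition of the equivalence relation in Definition \ref{equivcpl}. Since $\Xs$ is an equivalence class with $Z\neq\varnothing$ for its members, and since $Z$ is built up from the special subsets $Z_{j,\epsilon,\iota}$, $Z_{lp}$ and the index sets $Z^\pm$ (resp.\ $Z_1,Z_2,Z_3$ in type 1), the condition $Z\neq\varnothing$ forces at least one of these pieces to be nonempty across all of $\Xs$. First I would handle the type 1 case: here $\Xs$ is determined by the ordered triple $(\Xs_1,\Xs_2,\Xs_3)$ (Section \ref{summary}(3)), with $\rf\notin Z$ always, so $Z=Z_1\cup Z_2\cup Z_3$ and $\prod_{\nf\in Z}\frac{1}{\zeta_\nf\pi i}=\prod_{j}\prod_{\nf\in Z_j}\frac{1}{\zeta_\nf\pi i}$. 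By the recurrence \eqref{recurtype1}, $\Jc\widetilde{\Bc}_\Qc(t,t)=2\int_0^t\prod_{j=1}^3\Jc\widetilde{\Bc}_{\Qc_j}(\tau,\tau)\,\mathrm{d}\tau$, which factorizes over the three sub-couples. Summing over all $\Qs\in\Xs$ then factors as a product of three sums of the form $\sum_{\Qs_j\in\Xs_j}\big(\prod_{\nf\in Z_j}\frac{1}{\zeta_\nf\pi i}\big)\Jc\widetilde{\Bc}_{\Qc_j}(\tau,\tau)$, and since some $\Xs_j$ must have all its members carrying $Z_j\neq\varnothing$, the induction hypothesis makes the corresponding factor vanish, hence the whole product vanishes after integration in $\tau$.

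For the type 2 case, $\Xs$ is determined by $m$, the unordered multiset of tuples $(\mathtt{I}_j,\mathtt{c}_j,\Xs_{j,1},\Xs_{j,2})$, and the class $\Ys$ of $\Qs_{lp}$. Using \eqref{recurtype2}, $\Jc\widetilde{\Bc}_\Qc(t,t)$ is a nested time integral whose integrand factorizes over the $2m+1$ sub-couples $\Qc_{j,\epsilon,\iota}$ and $\Qc_{lp}$, and the sign product splits as $\prod_{\nf\in Z}\frac{1}{\zeta_\nf\pi i}=\prod_{j,\epsilon,\iota}\prod_{\nf\in Z_{j,\epsilon,\iota}}\frac{1}{\zeta_\nf\pi i}\cdot\prod_{\nf\in Z_{lp}}\frac{1}{\zeta_\nf\pi i}\cdot\prod_{j\in Z^+}\frac{1}{\zeta_{\nf_{2j-1}^+}\pi i}\cdot\prod_{j\in Z^-}\frac{1}{\zeta_{\nf_{2j-1}^-}\pi i}$. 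If $Z\neq\varnothing$ is forced by one of the sub-couple pieces $Z_{j,\epsilon,\iota}$ or $Z_{lp}$ being nonempty for all members of $\Xs$, the same factorization-plus-induction argument applies as in type 1. The genuinely new case is when $Z\neq\varnothing$ is forced only because some $Z^\pm\neq\varnothing$, i.e.\ some $\mathtt{I}_j=1$ for all members. Here the cancellation comes from the fact that the branching nodes $\nf_{2j-1}^+$ and $\nf_{2j}^+$ are paired with opposite signs $\zeta_{\nf_{2j-1}^+}=-\zeta_{\nf_{2j}^+}$, and more importantly that $\mathtt{I}_j\in\{0,1\}$ is a free binary label in the tuple: for a fixed equivalence class with $\mathtt{I}_j=1$ fixed, one needs to track how the sign $\zeta_{\nf_{2j-1}^\pm}$ (which depends on the embedding of the $j$-th mini tree into the chain, and ultimately on $\epsilon_j^\pm=\zeta_{\nf_a^\pm}$) varies as one runs over the couples $\Qc$ in $\Xs$, while $\Jc\widetilde{\Bc}_\Qc(t,t)$ stays unchanged because by \eqref{recurtype2} it does not see these signs at all --- it only sees the scalar data $\Jc\widetilde{\Bc}_{\Qc_{j,\epsilon,\iota}}$ and $\Jc\widetilde{\Bc}_{\Qc_{lp}}$.

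Thus the heart of the argument is a \emph{sign-counting lemma}: among the couples $\Qc$ representing a fixed equivalence class $\Xs$ (with some $\mathtt{I}_{j_0}=1$), the assignments of the sign $\zeta_{\nf_{2j_0-1}^\pm}\in\{\pm\}$ split evenly, so that $\sum_{\Qc}\prod_{\nf\in Z}\frac{1}{\zeta_\nf\pi i}$ contains a factor $\sum_{\zeta_{\nf_{2j_0-1}}\in\{\pm\}}\frac{1}{\zeta_{\nf_{2j_0-1}}\pi i}=\frac{1}{\pi i}-\frac{1}{\pi i}=0$ while everything else is held fixed. Concretely, I would argue that flipping the sign of the relevant root node of a mini tree (equivalently, changing which of the two legal-partition slots $\{2j_0-1,2j_0\}$ plays the role of ``$\nf_a$'' versus ``$\nf_b$'', or using the two possibilities of $(1,1)$-mini couples / the mirror codes among $\{10,\dots,31\}$) produces another couple in the same equivalence class with the same $\Jc\widetilde{\Bc}$ value but with $\zeta_{\nf_{2j_0-1}}$ flipped; pairing these up gives the cancellation. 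The main obstacle I anticipate is making this sign-flip involution precise and verifying that it genuinely preserves the equivalence class (Definition \ref{equivcpl}) and the value $\Jc\widetilde{\Bc}_\Qc(t,t)$ --- in particular checking that the code $\mathtt{c}_j$, which is only the \emph{first digit} of the mini-tree code, does not already pin down the sign, so that both sign choices occur with equal multiplicity; this is a delicate bookkeeping exercise in the combinatorics of Definitions \ref{defchoice}, \ref{equivcpl} and Proposition \ref{prop3.4}, but it is purely combinatorial and should go through once the right involution is identified. The induction closes because every sub-couple appearing has strictly smaller scale.
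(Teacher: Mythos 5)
Your proposal correctly identifies the induction framework, the factorization of $\Jc\widetilde{\Bc}_\Qc$ over sub-couples, and the reduction to the case where $Z\neq\varnothing$ is forced by some $\mathtt{I}_{j_0}=1$ (and, after applying the induction hypothesis, one may indeed assume all $Z_{j,\epsilon,\iota}$ and $Z_{lp}$ are empty, so $\Xs$ has type~2). That much matches the paper. But the cancellation mechanism you propose --- a sign-flip involution on the couples $\Qc\in\Xs$ that preserves $\Jc\widetilde{\Bc}_\Qc(t,t)$ while flipping $\zeta_{\nf_{2j_0-1}}$ --- does not exist among the moves you list, and cannot exist in the form you want.

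The key structural fact, which the paper records and which your proposal treats as an unresolved ``delicate bookkeeping exercise,'' is that in a dominant regular chain the lone leaf of each mini tree always has the \emph{same} sign as the root of that mini tree, regardless of the code. Iterating this, every node $\nf_{2j-1}^\epsilon$ has $\zeta_{\nf_{2j-1}^\epsilon}=\epsilon$, i.e.\ the sign of a $Z$-node is rigidly determined by which chain $\Tc^+$ or $\Tc^-$ it sits in. Consequently, none of your proposed local moves (swapping the roles of the two slots $\{2j_0-1,2j_0\}$ --- which in any case is not a free choice since $a<b$ is fixed; toggling the second digit of the mini-tree code; replacing a $(1,1)$-mini couple by its alternative) changes $\zeta_{\nf_{2j_0-1}}$, so ``both sign choices occur with equal multiplicity'' is simply false within a fixed chain. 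The only way to flip the sign is to move the tuple from $\Tc^+$ to $\Tc^-$, but this changes $(m^+,m^-)$, hence the shape of the iterated integral in~\eqref{recurtype2}, and therefore does \emph{not} preserve $\Jc\widetilde{\Bc}_\Qc(t,t)$. Your assertion that ``$\Jc\widetilde{\Bc}_\Qc(t,t)$ stays unchanged because by~\eqref{recurtype2} it does not see these signs at all'' conflates two different things: it is sign-blind, but it is not chain-distribution-blind.

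What actually makes the argument work is a further decomposition that your proposal does not contain. At $t=s$ one writes the integral over the product of simplices $\{t>t_1>\cdots>t_{m^+}>0\}\times\{t>s_1>\cdots>s_{m^-}>0\}$ in~\eqref{recurtype2} as a sum over interleavings, i.e.\ over subsets $B\subset\{1,\dots,m\}$ specifying which positions in the merged chain $t>u_1>\cdots>u_m>0$ are $s$-slots. Together with the sum over permutations of the tuples, this converts $\sum_{\Qc\in\Xs}\prod_{\nf\in Z}\zeta_\nf\cdot\Jc\widetilde{\Bc}_\Qc(t,t)$ into a sum over pairs (tuple-arrangement $\Cs$, subset $B$), in which, for each fixed $\Cs$, the factor carried by $B$ is $\prod_{j\in B}(-1)^{\mathtt{I}_j'}$. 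Summing over $B$ gives $\prod_{j=1}^m\bigl(1+(-1)^{\mathtt{I}_j'}\bigr)$, which vanishes once some $\mathtt{I}_j'=1$. So the cancellation happens \emph{after} the simplicial decomposition, across contributions coming from different $\Qc$'s, not between pairs of couples with identical $\Jc\widetilde{\Bc}$ values as you propose. Without this decomposition step, the argument does not close.
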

\begin{proof} First $|Z|$ is constant for all $\Qs\in\Xs$, so we may replace the product in (\ref{nonemptyZ1}) by $\prod_{\nf\in Z}\zeta_\nf$. Denote this reduced sum by $\Gc_\Xs(t)$. We prove (\ref{nonemptyZ1}) by induction. The base case is simple. Suppose (\ref{nonemptyZ1}) is true for $\Xs$ of smaller half-scale. Let $\Xs$ be composed from smaller equivalence classes $\Xs_j$ as in Section \ref{summary}, then by definition of equivalence, the summation over $\Qs=(\Qc,Z)\in\Xs$ must contain (among other things) a sub summation over $\Qs_j=(\Qc_j,Z_j)\in\Xs_j$, so in particular $\Gc_\Xs$ equals a multilinear expression of the quantities $\Gc_{\Xs_j}$ (see also (\ref{splitformula0}) below). Therefore, by induction hypothesis, we may assume that $Z_j=\varnothing$ for each $\Xs_j$, and $Z\neq\varnothing$. In particular, $\Xs$ must have type 2. By the structure of dominant double chains, it is easy to see that $\zeta_\nf=\epsilon$ if $\nf\in Z\cap \Tc^\epsilon$ with $\epsilon\in\{\pm\}$. Let $m\geq 1$, the tuples $(\mathtt{I}_j,\mathtt{c}_j,\Xs_{j,1},\Xs_{j,2})$, and $\Ys$ be fixed as in Section \ref{summary}.

If $\Qs\in\Xs$, then $\Qs_{lp}\in\Ys$, and we may decompose $m=m^++m^-$, such that the tuples $(\mathtt{I}_{j,\epsilon},\mathtt{c}_{j,\epsilon},\Xs_{j,\epsilon,1},\Xs_{j,\epsilon,2})$ where $\epsilon\in\{\pm\}$, $1\leq j\leq m^\epsilon$ and $\Xs_{j,\epsilon,\iota}$ is the equivalence class of $\Qc_{j,\epsilon,\iota}$, form a permutation of $(\mathtt{I}_j,\mathtt{c}_j,\Xs_{j,1},\Xs_{j,2})$ where $1\leq j\leq m$. Moreover, since $\mathtt{c}_{j,\epsilon}$ are just the \emph{first digits} of the codes of the mini trees appearing in the structure of $\Qc$, the corresponding \emph{second digits} can be arbitrary (and $\Bc_\Qc$ does not depend on this second digit) which results in a $2^m$ factor. Apart from this, we apply Proposition \ref{regcplcal} and sum over all possible $\Qc$'s---which means summing over all permutations of the tuples and then summing over all possible $\Qc_{j,\epsilon,\iota}$ and $\Qc_{lp}$---to get
\begin{equation} \label{splitformula0}
\begin{aligned}
\Gc_\Xs(t)&=2^m\sum_{m^++m^-=m}\sum_{(\As_1,\cdots, \As_{m^+},\Bs_1,\cdots,\Bs_{m^-})}\int_{t>t_1>\cdots >t_{m^+}>0}\int_{t>s_1>\cdots>s_{m^-}>0}\prod_{j=1}^{m^-}(-1)^{\mathtt{I}_{j,-}'}\\&\times\prod_{j=1}^{m^+}\Ms(\As_j)(t_j)\prod_{j=1}^{m^-}\Ms(\Bs_j)(s_j)\cdot \Gc_{\Ys}(\min(t_{m^+},s_{m^-}))\,\mathrm{d}t_1\cdots\mathrm{d}t_{m^+}\mathrm{d}s_1\cdots\mathrm{d}s_{m^-}.
\end{aligned}
\end{equation} Here in (\ref{splitformula0}) the summation is taken over all permutations $(\As_1,\cdots, \As_{m^+},\Bs_1,\cdots,\Bs_{m^-})$ of the tuples $(\mathtt{I}_j,\mathtt{c}_j,\Xs_{j,1},\Xs_{j,2})$. Moreover $\mathtt{I}_{j,-}'$ represents the first component of $\Bs_j$, the function $\Ms(\As_j)$ is $\Gc_{\Xs_{j,+,1}'}\cdot\Gc_{\Xs_{j,+,2}'}$ where $(\Xs_{j,+,1}',\Xs_{j,+,2}')$ represents the last two components of $\As_j$, and $\Ms(\Bs_j)$ is defined similarly.

Now fix $m^+$ and $m^-$, and consider the $m$ variables $t_1,\cdots,t_{m^+},s_1,\cdots,s_{m-}\in[0,t]$. If we fix a total ordering to these variables, then under the assumptions $t_1>\cdots>t_{m^+}$ and $s_1>\cdots >s_{m^-}$, each total ordering can be uniquely represented by a partition $(A,B)$ of $\{1,\cdots,m\}$ into an $m^+$ element subset $A$ and an $m^-$ element subset $B$. Once this total ordering is fixed, we may rearrange these variables as $t>u_1>\cdots >u_m>0$, then this term on the right hand side of (\ref{splitformula0}) becomes
\begin{equation}\label{splitformula1}\sum_{(\Cs_1,\cdots,\Cs_m)}\int_{t>u_1>\cdots>u_m>0}\prod_{j\in B}(-1)^{\mathtt{I}_j'}\prod_{j=1}^m\Ms(\Cs_j)(u_j)\cdot \Gc_\Ys(u_m)\,\mathrm{d}u_1\cdots\mathrm{d}u_m,\end{equation} where the summation is taken over all permutations $(\Cs_1,\cdots,\Cs_m)$ of the tuples $(\mathtt{I}_j,\mathtt{c}_j,\Xs_{j,1},\Xs_{j,2})$, $\mathtt{I}_j'$ represents the first component of $\Cs_j$, and the function $\Ms(\Cs_j)$ is $\Gc_{\Xs_{j,1}'}\cdot\Gc_{\Xs_{j,2}'}$ where $(\Xs_{j,1}',\Xs_{j,2}')$ represents the last two components of $\Cs_j$. After summing over $(A,B)$ and $(m^+,m^-)$, we obtain that
\begin{equation}\label{splitformula2}\Gc_\Xs(t)=2^m\sum_{(\Cs_1,\cdots,\Cs_m)}\int_{t>u_1>\cdots>u_m>0}\prod_{j=1}^m\Ms(\Cs_j)(u_j)\cdot \Gc_\Ys(u_m)\,\mathrm{d}u_1\cdots\mathrm{d}u_m\cdot\bigg[\sum_B\prod_{j\in B}(-1)^{\mathtt{I}_j'}\bigg],\end{equation} where the inner summation is taken over \emph{all} subsets $B\subset\{1,\cdots,m\}$. Since $Z\neq\varnothing$, we know that at least one $1\leq j\leq m$ is such that $\mathtt{I}_j'=1$, which implies that \[\sum_{B\subset\{1,\cdots,m\}}\prod_{j\in B}(-1)^{\mathtt{I}_j'}=\prod_{j=1}^m(1+(-1)^{\mathtt{I}_j'})=0,\] {where we understand the product is $1$ if $B=\varnothing$}. Therefore $\Gc_\Xs(t)=0$ and the proof is complete.
\end{proof}
\subsubsection{Expansions of the solution to (\ref{wke})} Now we can match the nonzero leading correlations, which come from the (enhanced) dominant couples with $Z=\varnothing$, with the terms in the Taylor expansion of the solution to (\ref{wke}).
\begin{prop}\label{wkelwp} Let $\delta$ be small enough depending on $n_{\mathrm{in}}$. Then the equation (\ref{wke}) has a unique solution $n=n(t,k)$ for $t\in[0,\delta]$. The solution has a convergent Taylor expansion
\begin{equation}\label{taylorexp}n(\delta t,k)=\sum_{n=0}^\infty \Mc_n(t,k),\quad |\Mc_n(t,k)|\lesssim(C^+\delta)^n\end{equation} for $t\in[0,1]$, where $\Mc_n(t,k)$ is defined by (\ref{iterate}). This $\Mc_n(k)$ can be expanded as 
\begin{equation}\label{expandmnk}\Mc_n(t,k)=\delta^n\sum_{n(\Tc)=n}\zeta^*(\Tc)\cdot g_\Tc(t)\cdot \widetilde{\Mc}_\Tc(k),\end{equation} where the summation is taken over all \emph{encoded} trees of scale $n$. The sign $\zeta^*(\Tc)$ is defined in (\ref{defzetat}), the function $g_\Tc(t)$ is defined inductively by
\begin{equation}\label{inductg}g_{\bullet}(t)=1,\quad g_\Tc(t)=\int_0^t g_{\Tc_1}(t')g_{\Tc_2}(t')g_{\Tc_3}(t')\,\mathrm{d}t',\end{equation} and the expression $\widetilde{\Mc}_\Tc(k)$ is defined inductively as follows. First if $\Tc=\bullet$ then define $\widetilde{\Mc}_\bullet(k)=n_{\mathrm{in}}(k)$. Now let $(\Tc_1,\Tc_2,\Tc_3)$ be the three subtrees of $\Tc$ from left to right. Then, if $\mathtt{c}_\rf=0$ where $\rf$ is the root of $\Tc$, we define
\begin{equation}\label{inductm1}\widetilde{\Mc}_\Tc(k)=\int_{(\Rb^d)^3}\widetilde{\Mc}_{\Tc_1}(k_1)\widetilde{\Mc}_{\Tc_2}(k_2)\widetilde{\Mc}_{\Tc_3}(k_3)\dirac(k_1-k_2+k_3-k)\dirac(|k_1|_\beta^2-|k_2|_\beta^2+|k_3|_\beta^2-|k|_\beta^2)\,\mathrm{d}k_1\mathrm{d}k_2\mathrm{d}k_3.\end{equation} If $\mathtt{c}_\rf=1$, we define
\begin{equation}\label{inductm2}\widetilde{\Mc}_\Tc(k)=\int_{(\Rb^d)^3}\widetilde{\Mc}_{\Tc_1}(k)\widetilde{\Mc}_{\Tc_2}(k_2)\widetilde{\Mc}_{\Tc_3}(k_3)\dirac(k_1-k_2+k_3-k)\dirac(|k_1|_\beta^2-|k_2|_\beta^2+|k_3|_\beta^2-|k|_\beta^2)\,\mathrm{d}k_1\mathrm{d}k_2\mathrm{d}k_3.\end{equation}  If $\mathtt{c}_\rf=2$ we define
\begin{equation}\label{inductm3}\widetilde{\Mc}_\Tc(k)=\int_{(\Rb^d)^3}\widetilde{\Mc}_{\Tc_1}(k_1)\widetilde{\Mc}_{\Tc_2}(k)\widetilde{\Mc}_{\Tc_3}(k_3)\dirac(k_1-k_2+k_3-k)\dirac(|k_1|_\beta^2-|k_2|_\beta^2+|k_3|_\beta^2-|k|_\beta^2)\,\mathrm{d}k_1\mathrm{d}k_2\mathrm{d}k_3.\end{equation} If $\mathtt{c}_\rf=3$ we define
\begin{equation}\label{inductm4}\widetilde{\Mc}_\Tc(k)=\int_{(\Rb^d)^3}\widetilde{\Mc}_{\Tc_1}(k_1)\widetilde{\Mc}_{\Tc_2}(k_2)\widetilde{\Mc}_{\Tc_3}(k)\dirac(k_1-k_2+k_3-k)\dirac(|k_1|_\beta^2-|k_2|_\beta^2+|k_3|_\beta^2-|k|_\beta^2)\,\mathrm{d}k_1\mathrm{d}k_2\mathrm{d}k_3.\end{equation}

The expression $\widetilde{\Mc}_\Tc(k)$ depends only on the equivalence class of $\Tc$, so we may denote it by $\widetilde{\Mc}_\Xs(k)$. For any $\Xs$, if $\Xs$ has type 1 and is determined by $(\Xs_1,\Xs_2,\Xs_3)$ as above, then we have
\begin{equation}\label{inductm5}\widetilde{\Mc}_\Xs(k)=\int_{(\Rb^d)^3}\prod_{j=1}^3\widetilde{\Mc}_{\Xs_j}(k_j)\cdot\dirac(k_1-k_2+k_3-k)\dirac(|k_1|_\beta^2-|k_2|_\beta^2+|k_3|_\beta^2-|k|_\beta^2)\,\mathrm{d}k_1\mathrm{d}k_2\mathrm{d}k_3.\end{equation} If $\Xs$ has type 2 and is determined by a positive integer $m$, triples $(\mathtt{c}_j,\Xs_{j,1},\Xs_{j,2})$ where $1\leq j\leq m$, and $\Ys$, then we have
\begin{equation}\label{inductm6}\widetilde{\Mc}_\Xs(k)=\widetilde{\Mc}_\Ys(k)\cdot\prod_{j=1}^m\widetilde{\Mc}_{(j)}(k),\end{equation} where for each $1\leq j\leq m$, if $\mathtt{c}_j=1$ we have
\begin{equation}\label{inductm7}\widetilde{\Mc}_{(j)}(k)=\int_{(\Rb^d)^3}\widetilde{\Mc}_{\Xs_{j,1}}(k_2)\widetilde{\Mc}_{\Xs_{j,2}}(k_3)\cdot\dirac(k_1-k_2+k_3-k)\dirac(|k_1|_\beta^2-|k_2|_\beta^2+|k_3|_\beta^2-|k|_\beta^2)\,\mathrm{d}k_1\mathrm{d}k_2\mathrm{d}k_3;\end{equation}if $\mathtt{c}_j=2$ we have
\begin{equation}\label{inductm8}\widetilde{\Mc}_{(j)}(k)=\int_{(\Rb^d)^3}\widetilde{\Mc}_{\Xs_{j,1}}(k_1)\widetilde{\Mc}_{\Xs_{j,2}}(k_3)\cdot\dirac(k_1-k_2+k_3-k)\dirac(|k_1|_\beta^2-|k_2|_\beta^2+|k_3|_\beta^2-|k|_\beta^2)\,\mathrm{d}k_1\mathrm{d}k_2\mathrm{d}k_3;\end{equation}if $\mathtt{c}_j=3$ we have
\begin{equation}\label{inductm9}\widetilde{\Mc}_{(j)}(k)=\int_{(\Rb^d)^3}\widetilde{\Mc}_{\Xs_{j,1}}(k_1)\widetilde{\Mc}_{\Xs_{j,2}}(k_2)\cdot\dirac(k_1-k_2+k_3-k)\dirac(|k_1|_\beta^2-|k_2|_\beta^2+|k_3|_\beta^2-|k|_\beta^2)\,\mathrm{d}k_1\mathrm{d}k_2\mathrm{d}k_3.\end{equation} Moreover, for any equivalence class $\Xs$ of dominant couples or encoded trees, we have $\widetilde{\Mc}_\Xs(k)=\Mc_\Xs^*(k)$.
\end{prop}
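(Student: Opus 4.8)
\textbf{Proof proposal for Proposition \ref{wkelwp}.}

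The plan is to prove the statements in the order they are listed, since each relies on the previous ones. First I would establish the local existence and the Taylor expansion \eqref{taylorexp}: this is a standard Picard iteration argument. Since $n_{\mathrm{in}}$ is Schwartz, one checks that the trilinear operator $\Kc$ in \eqref{wke2} maps the weighted $L^\infty$ space $\{\phi:\sup_k\langle k\rangle^{40d}|\phi(k)|<\infty\}$ (or any similar space with enough polynomial decay) continuously into itself, with a bound that is polynomial in the decay seminorms; the resonant delta measures are handled by the usual reduction to counting/integration on the paraboloid, exactly as in Proposition \ref{approxnt} in the continuous (integral) setting, and in fact much more easily since there is no lattice here. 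Then the iterates $\Mc_n$ defined in \eqref{iterate} are precisely the Picard iterates of the integral equation $n(\delta t)=n_{\mathrm{in}}+\delta\int_0^t\Kc(n,n,n)$, and a Gronwall/geometric-series estimate gives $|\Mc_n(t,k)|\lesssim(C^+\delta)^n\langle k\rangle^{-40d}$ once $\delta$ is small enough depending on $n_{\mathrm{in}}$, yielding convergence of \eqref{taylorexp} on $[0,1]$ and uniqueness.

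Next I would prove the tree expansion \eqref{expandmnk}. This is a purely combinatorial bookkeeping: substitute $n=\sum\Mc_n$ into the fixed-point equation, expand $\Kc(\sum\Mc_{n_1},\sum\Mc_{n_2},\sum\Mc_{n_3})$, and collect by total scale $n=n_1+n_2+n_3+1$. Each branching of the nonlinearity $\Kc$ produces the four terms in \eqref{wke2}, which I would label by a code $\mathtt{c}_\rf\in\{0,1,2,3\}$ according to which of the four monomials $\phi_1\phi_2\phi_3$, $-\phi(k)\phi_2\phi_3$, $+\phi_1\phi(k)\phi_3$, $-\phi_1\phi_2\phi(k)$ is chosen; the sign $(-1)^{\mathtt{c}_\nf}$ then matches \eqref{defzetat}, the time integral produces the recursion \eqref{inductg} for $g_\Tc$, and the momentum/resonance structure produces exactly \eqref{inductm1}--\eqref{inductm4} for $\widetilde{\Mc}_\Tc$. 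Induction on $n$ closes this. The invariance of $\widetilde{\Mc}_\Tc$ under the equivalence relation of Definition \ref{equcodetree}, and the resulting recursions \eqref{inductm5}--\eqref{inductm9}, then follow by another induction: for type 1 one reads off \eqref{inductm5} directly from \eqref{inductm1}; for type 2 one uses Proposition \ref{structure+} to peel off the canonical encoded chain $\Tc_0$, notes that $\widetilde{\Mc}_\Tc$ factorizes over the chain (each code $\mathtt{c}_j\in\{1,2,3\}$ forcing the lone-leaf momentum to propagate down the chain while the two side-subtrees contribute the factors in \eqref{inductm7}--\eqref{inductm9}), and checks that permuting the triples $(\mathtt{c}_j,\Zs_{j,1},\Zs_{j,2})$ leaves the product unchanged — which is clear since \eqref{inductm6} is a genuine product.

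The last and most delicate claim is $\widetilde{\Mc}_\Xs(k)=\Mc_\Xs^*(k)$ for equivalence classes of dominant couples (equivalently, via Proposition \ref{bijection}, of encoded trees). Here I would induct on the half-scale, comparing the recursion for $\widetilde{\Mc}_\Xs$ in \eqref{inductm5}--\eqref{inductm9} with the recursion for $\Mc_\Xs^*$ in Proposition \ref{multiexp}, \eqref{inductm01}--\eqref{inductm05}. The crucial point is that the two recursions have \emph{identical shape} exactly when $Z=\varnothing$, i.e. all $\mathtt{I}_j=0$: the type-1 formulas \eqref{inductm5} and \eqref{inductm01} coincide verbatim, and the type-2 product \eqref{inductm6} together with \eqref{inductm7}--\eqref{inductm9} matches \eqref{inductm02} together with \eqref{inductm03}--\eqref{inductm05} (with $\mathtt{I}_j=0$) once one observes that the $\Ys$-factor in \eqref{inductm6} is the image of the $\Mc_\Ys^*$-factor in \eqref{inductm02} under the induction hypothesis, and that the codes $\mathtt{c}_j\in\{1,2,3\}$ correspond to the same three contractions in both settings. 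The bijection of Proposition \ref{bijection} guarantees that the combinatorial data indexing the two recursions are the same, so the two inductions run in lockstep and terminate at the common base case $\widetilde{\Mc}_\bullet=n_{\mathrm{in}}=\Mc_\times^*$. The main obstacle I anticipate is not any single estimate but the careful matching of conventions — which leaf is selected from each pair, how the measure $\mathrm{d}\sigma_\Qc$ decomposes (already handled in the proof of Proposition \ref{multiexp}), and verifying that the code $\mathtt{c}_\rf$ of the encoded tree built from a dominant couple in Definition \ref{cpl-tree} really does select the same momentum contraction that appears in $\Is(\Qc)$; this requires tracking the roles of the ``left/mid/right'' children through steps $\Ab$ and $\Bb$, but it is bookkeeping rather than analysis.
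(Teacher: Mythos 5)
Your proposal is correct and follows essentially the same strategy as the paper: expand the Taylor coefficients $\Mc_n$ over encoded trees by matching the four codes $\mathtt{c}_\rf$ with the four terms of the collision kernel \eqref{wke2}, deduce the equivalence-invariance from the permutation-invariance of the type-2 product \eqref{inductm6}, and match $\widetilde{\Mc}_\Xs$ with $\Mc_\Xs^*$ by observing that the two recurrences coincide verbatim once all $\mathtt{I}_j=0$. The one organizational difference is that the paper proves the convergence bound $|\Mc_n|\lesssim(C^+\delta)^n$ \emph{after} establishing the tree expansion, reading it off from $|\widetilde{\Mc}_\Tc|\lesssim(C^+)^n\langle k\rangle^{-40d}$ (Remark \ref{holder}) times the $C^n$ count of encoded trees, whereas you propose a stand-alone Picard/Gronwall bound up front; note that a naive geometric ansatz $|\Mc_n|\le AB^n$ does not close against the cubic nonlinearity (the $A^3$ and the $\sim n^2$ triples overshoot), so this step must be run as a majorant generating-function estimate -- which, unwound, is the very same tree count the paper uses, so the reordering buys nothing and the paper's sequencing is the cleaner one.
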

\begin{proof} This follows from direct calculation. First, let $\Mc_n(t,k)$ be defined by (\ref{iterate}), then the formula (\ref{expandmnk}) follows from induction. Here one notes that (i) the four cases in the recurrence relation (\ref{inductm1})--(\ref{inductm4}) defining $\widetilde{\Mc}_\Tc(k)$ exactly correspond to iterating the four different terms in the nonlinearity (\ref{wke2}), (ii) the recurrence definition (\ref{inductg}) of $g_\Tc(t)$ corresponds to applying the Duhamel formula for (\ref{wke}), and (iii) the sign $\zeta^*(\Tc)$ is uniquely determined by iterating the signs of the four terms in (\ref{wke2}).

Next, with the inductive definition (\ref{inductm1})--(\ref{inductm4}) of $\widetilde{\Mc}_\Tc(k)$, it is easy to see that (\ref{inductm5})--(\ref{inductm9}) hold. In fact, (\ref{inductm5}) is just (\ref{inductm1}), and (\ref{inductm6}) for general $m$ follows from iterating the $m=1$ case, while the three possibilities (\ref{inductm7})--(\ref{inductm9}) are just (\ref{inductm2})--(\ref{inductm4}). Since the expression (\ref{inductm6}) is invariant under permuting the different indices $1\leq j\leq m$, we can inductively prove that $\widetilde{\Mc}_\Tc(k)$ does not change if $\Tc$ is replaced by an equivalent encoded tree, so we can replace $\widetilde{\Mc}_\Tc$ by $\widetilde{\Mc}_\Xs$.

Next, let $\Xs$ be an equivalence class of dominant couples or encoded trees. For dominant couples $\Qc$ we assume $Z=\varnothing$, so in particular all the $\mathtt{I}_j$ variables (as in Section \ref{summary}) appearing in the inductive step will be $0$. As a result, the recurrence relations (\ref{inductm01})--(\ref{inductm05}) for $\Mc_{\Xs}^*(k)$ do not contain any $1/\Omega$ factor (only $\dirac(\Omega)$), and thus coincide with (\ref{inductm1})--(\ref{inductm5}). This shows $\widetilde{\Mc}_\Xs(k)=\Mc_\Xs^*(k)$. Finally, as in Remark \ref{holder} we have $|\widetilde{\Mc}_\Tc(k)|\lesssim (C^+)^n\langle k\rangle^{-40d}$ if $\Tc$ has scale $n$. Since the number of encoded trees of scale $n$ is at most $C^n$, and $g_\Tc(t)$ is homogeneous in $t$ and can easily be bounded in some smooth norm, we see that $\|\Mc_n(t,k)\|_{X_{\mathrm{loc}}^{40d}}\lesssim (C^+\delta)^n$, which proves the convergence of (\ref{taylorexp}).
\end{proof}
\begin{prop}\label{emptyZ} Let $\Xs$ be as in Proposition \ref{nonemptyZ}, but assume $Z=\varnothing$ for $\Qs=(\Qc,Z)\in\Xs$; for simplicity we write $\Qs=(\Qc,\varnothing)$ simply as $\Qc$. Then for any equivalence class $\Xs$ of half-scale $n$ we have
\begin{equation}\label{emptyZ1}\sum_{\Qc\in\Xs} \Jc\widetilde{\Bc}_\Qc(t,t)=2^{2n}\sum_{\Tc\in\Xs}g_\Tc(t).
\end{equation} 
\end{prop}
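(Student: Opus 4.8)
\textbf{Proof proposal for Proposition \ref{emptyZ}.}

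The plan is to prove \eqref{emptyZ1} by induction on the half-scale $n$, using the bijection (up to equivalence) between dominant couples and encoded trees established in Proposition \ref{bijection}, together with the recurrence relations for $\Jc\widetilde{\Bc}_\Qc$ from Proposition \ref{regcplcal} and for $g_\Tc$ from Proposition \ref{wkelwp}. The base case $n=0$ is immediate since both sides equal $1$ for the trivial couple/tree. For the inductive step, fix an equivalence class $\Xs$ of half-scale $n\geq 1$; we split according to whether $\Xs$ has type 1 or type 2, following the structural descriptions in Section \ref{summary}(3)--(4). Throughout, I keep in mind that the relevant sums are all finite (at most $C^n$ terms), so no convergence issue arises.

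First I would handle \textbf{type 1}. Here $\Xs$ is determined bijectively by an ordered triple $(\Xs_1,\Xs_2,\Xs_3)$ of equivalence classes (of smaller half-scale $n_1,n_2,n_3$ with $n_1+n_2+n_3=n-1$). Summing \eqref{recurtype1} over $\Qc\in\Xs$ and noting that a type-1 dominant couple in $\Xs$ corresponds exactly to an independent choice of $\Qc_j\in\Xs_j$, I get
\begin{equation}\label{emptytype1a}
\sum_{\Qc\in\Xs}\Jc\widetilde{\Bc}_\Qc(t,t)=2\int_0^t\prod_{j=1}^3\bigg(\sum_{\Qc_j\in\Xs_j}\Jc\widetilde{\Bc}_{\Qc_j}(\tau,\tau)\bigg)\,\mathrm{d}\tau=2\int_0^t\prod_{j=1}^3\bigg(2^{2n_j}\sum_{\Tc_j\in\Xs_j}g_{\Tc_j}(\tau)\bigg)\,\mathrm{d}\tau,
\end{equation}
using the induction hypothesis on each $\Xs_j$. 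On the other side, an encoded tree $\Tc\in\Xs$ of type 1 has root code $0$ and is determined by the ordered triple of its subtrees $\Tc_j\in\Xs_j$ (by Definition \ref{equcodetree}), so by \eqref{inductg}, $\sum_{\Tc\in\Xs}g_\Tc(t)=\int_0^t\prod_{j=1}^3\big(\sum_{\Tc_j\in\Xs_j}g_{\Tc_j}(\tau)\big)\,\mathrm{d}\tau$. Since $2\cdot 2^{2n_1}2^{2n_2}2^{2n_3}=2^{1+2(n-1)}=2^{2n-1}\ne 2^{2n}$, I need to be careful: the factor of $2$ in \eqref{recurtype1} combined with the three $2^{2n_j}$ does \emph{not} directly give $2^{2n}$. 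This is where the definition of type-1 couples matters — a $(1,1)$-mini couple comes in the two possibilities of Definition \ref{defmini} (codes $00$ and $01$), and a dominant couple of type 1 built on it accounts for \emph{both}; reexamining Proposition \ref{structure1.5} and Definition \ref{defchoice}, the equivalence class $\Xs$ of type 1 corresponds to $2$ dominant couples for each choice of $(\Qc_1,\Qc_2,\Qc_3)$ (one for each mini-couple code, which does not affect $\widetilde{\Bc}$), yielding the missing factor of $2$ and hence the correct $2^{2n}$. I should double-check this bookkeeping against Definition \ref{cpl-tree} to make sure the bijection to encoded trees is consistent with this doubling.

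Next, \textbf{type 2}, which is the main obstacle. Here $\Xs$ is determined by $m\geq 1$, an \emph{unordered} collection of triples $(\mathtt{c}_j,\Xs_{j,1},\Xs_{j,2})$ (since $Z=\varnothing$, all $\mathtt{I}_j=0$), and an equivalence class $\Ys$ that is trivial or type 1. I would sum \eqref{recurtype2} over $\Qc\in\Xs$, exactly mirroring the combinatorial manipulation carried out in the proof of Proposition \ref{nonemptyZ}: split $m=m^++m^-$, sum over all ways of distributing the triples into the two chains and over the internal couples, then merge the two nested time-ordered integrals $t>t_1>\cdots>t_{m^+}>0$ and $t>s_1>\cdots>s_{m^-}>0$ into a single ordering $t>u_1>\cdots>u_m>0$ indexed by a partition $(A,B)$ of $\{1,\dots,m\}$. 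The key difference from Proposition \ref{nonemptyZ} is that now there is no alternating sign $(-1)^{\mathtt{I}_j}$, so instead of the cancellation $\prod(1+(-1)^{\mathtt I_j'})=0$ I get $\sum_{B\subset\{1,\dots,m\}}1=2^m$, which combines with the $2^m$ from the free choice of the second digit of each mini-tree code (as noted in Proposition \ref{nonemptyZ}) and with the $2^{2n_{j,\iota}}$ factors from induction applied to each $\Xs_{j,\iota}$ and to $\Ys$. I then need to verify that the total power of $2$ assembles to exactly $2^{2n}$: each mini tree contributes one branching-node pair (two nodes), so $m$ mini-tree slots plus the internal half-scales $\sum n_{j,\iota}$ plus the half-scale of $\Ys$ sum to $n$, and tracking $2^m\cdot 2^m\cdot\prod 2^{2n_{j,\iota}}\cdot(\text{power of }2\text{ from }\Ys)$ should give $2^{2n}$. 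On the encoded-tree side, I sum \eqref{inductg} over $\Tc\in\Xs$ using the type-2 structure from Proposition \ref{structure+} and Definition \ref{equcodetree}: an encoded tree in $\Xs$ is given by an encoded chain of length $m$ whose branching nodes carry the codes $\mathtt{c}_j$ in some order, with subtrees in the classes $\Xs_{j,\iota}$ and tail in $\Ys$; the resulting time integral is again $\int_{t>u_1>\cdots>u_m>0}\prod_j(\text{products of }g\text{'s})\cdot g_\Ys(u_m)$, matching the couple side up to the power of $2$. The hardest part will be getting the $2$-power bookkeeping exactly right and confirming that the ``permutation of tuples'' structure on the couple side matches the ``permutation of triples'' structure on the tree side term-by-term — this is essentially a careful transcription of the argument in Proposition \ref{nonemptyZ} with signs removed, so I would present it by referencing that proof and indicating only the modifications. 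Combining the type-1 and type-2 cases closes the induction and proves \eqref{emptyZ1}.
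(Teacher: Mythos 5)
Your proposal is correct and follows essentially the same route as the paper: induction using the recurrences of Propositions \ref{regcplcal} and \ref{wkelwp}, with the extra factor of $2$ in the type-1 case coming (as you eventually identify) from the two possible $(1,1)$-mini-couple codes, and the type-2 case obtained by repeating the combinatorics of Proposition \ref{nonemptyZ} without the signs, so that the sum over $B$ yields $2^m$, which combines with the $2^m$ from the free second code digits. Your power-of-two bookkeeping ($n=n_1+n_2+n_3+1$ in type 1, $n=m+\sum n_{j,\iota}+n(\Ys)$ in type 2) closes the induction exactly as in the paper's proof.
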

\begin{proof} Define
\[G_\Xs(t)=\sum_{\Tc\in\Xs}g_\Tc(t),\] then by definition of equivalence and the recurrence relation (\ref{inductg}) of $g_\Tc(t)$, we can show that if $\Xs$ has type 1, then
\begin{equation}\label{inductG1}G_\Xs(t)=\int_0^t \prod_{j=1}^3 G_{\Xs_j}(t')\,\mathrm{d}t'.\end{equation} If $\Xs$ has type 2, then
\begin{equation}\label{inductG2}G_\Xs(t)=\sum_{(\As_1,\cdots,\As_m)}\int_{t>t_1>\cdots>t_m>0}\prod_{j=1}^m G_{\Xs_{j,1}'}(t_j)G_{\Xs_{j,2}'}(t_j)\cdot G_\Ys(t_m)\,\mathrm{d}t_1\cdots\mathrm{d}t_m,\end{equation} where the sum is taken over all permutations $(\As_1,\cdots,\As_m)$ of the triples $(\mathtt{c}_j,\Xs_{j,1},\Xs_{j,2})_{1\leq j\leq m}$ (in particular the number of terms in this summation varies, depending on whether some of the triples coincide or not), and $(\Xs_{j,1}',\Xs_{j,2}')$ represents the last two components of $\As_j$.

In order to prove (\ref{emptyZ1}), as the base case is easily verified, it will suffice to show that the quantity
\[\Gc_\Xs(t):=\sum_{\Qc\in\Xs} \Jc\widetilde{\Bc}_\Qc(t,t)\] satisfies the same recurrence relation (\ref{inductG1})--(\ref{inductG2}), but with an extra factor of $2^2$ on the right hand side of (\ref{inductG1}), and an extra factor of $2^{2m}$ on the right hand side of (\ref{inductG2}).

The case when $\Xs$ is type $1$ is in fact quite easy, as the recurrence relation satisfied by $\Jc\widetilde{\Bc}_\Qc(t,t)$, namely (\ref{recurtype1}), has the same form as (\ref{inductG1}) assuming $t=s$. If one sums over all $\Qc\in\Xs$, which is equivalent to summing over all $\Qc_j\in\Xs_j$ for $1\leq j\leq 3$, one gets the same recurrence relation for $\Gc_\Xs(t)$ in place of $\Jc\widetilde{\Bc}_\Qc(t,t)$. The factor of $2^2$---instead of $2$ on the right hand side of (\ref{recurtype1})---comes from the two possible codes (i.e. $00$ or $01$) for the $(1,1)$-mini couple forming the structure of $\Qc$.

From now on we assume $\Xs$ has type 2. Let $m\geq 1$, the triples $(\mathtt{c}_j,\Xs_{j,1},\Xs_{j,2})$ where $1\leq j\leq m$, and $\Ys$ be fixed as in Section \ref{summary}. We can argue in essentially the same way as in the proof of Proposition \ref{nonemptyZ}, except that (i) now the $\As_j$, $\Bs_j$ and $\Cs_j$ only contain three components, for example $\Cs_j=(\mathtt{c}_j',\Xs_{j,1}',\Xs_{j,2}')$ as $\mathtt{I}_j'$ is always $0$, and (ii) we do not have the factors $(-1)^{\mathtt{I}_{j,-}'}$ in (\ref{splitformula0}) or $(-1)^{\mathtt{I}_{j}'}$ in (\ref{splitformula1}). Therefore, we do not have the cancellation as in Proposition \ref{nonemptyZ}, instead we have
\[\Gc_\Xs(t)=2^m\sum_{(\Cs_1,\cdots,\Cs_m)}\int_{t>u_1>\cdots>u_m>0}\prod_{j=1}^m\Ms(\Cs_j)(u_j)\cdot \Gc_\Ys(u_m)\,\mathrm{d}u_1\cdots\mathrm{d}u_m\cdot\bigg(\sum_B1\bigg),\] where again the inner summation is taken over all subsets $B\subset\{1,\cdots,m\}$. In this way we get $\Gc_\Xs(t)=2^{2m}\Ks$, where $\Ks$ is exactly the right hand side of (\ref{inductG2}) with the $G$ quantities replaced by $\Gc$ quantities. This verifies the recurrence relation satisfied by $\Gc$, and completes the proof.
\end{proof}
\begin{prop}\label{regcplapprox} For $n\leq N^3$, we have
\[\bigg\|\sum_{\substack{n(\Qc)=2n\\\Qc\,\,\mathrm{regular}}}\Kc_\Qc(t,t,k)-\Mc_n(t,k)\bigg\|_{X_{\mathrm{loc}}^{40d}}\lesssim (C^+\delta)^nL^{-2\nu}.\]
\end{prop}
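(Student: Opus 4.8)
The statement to be proved, Proposition \ref{regcplapprox}, is essentially the final accounting step that assembles the ingredients developed across Sections \ref{regasymp}--\ref{domasymp}. The plan is to chain together the four intermediate results that have already been established: Proposition \ref{domasymp1} (which rewrites the sum over regular couples of scale $2n$ as a sum over enhanced dominant couples plus an $X_{\mathrm{loc}}^{40d}$-small remainder), Proposition \ref{nonemptyZ} (which kills all enhanced dominant couples with $Z\neq\varnothing$ after summing over each equivalence class), Proposition \ref{emptyZ} (which evaluates $\sum_{\Qc\in\Xs}\Jc\widetilde{\Bc}_\Qc(t,t)$ for $Z=\varnothing$ as $2^{2n}\sum_{\Tc\in\Xs}g_\Tc(t)$), and Proposition \ref{multiexp} together with Proposition \ref{wkelwp} (which identify the multilinear momentum integrals $\Mc_\Qs^*(k)=\widetilde{\Mc}_\Xs(k)$ and recover exactly the Taylor coefficients $\Mc_n(t,k)$ of the solution to \eqref{wke}).

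First I would start from the conclusion of Proposition \ref{domasymp1}, which gives
\[\sum_{\substack{n(\Qc)=2n\\\Qc\,\,\mathrm{regular}}}\Kc_\Qc(t,t,k)=\sum_{\Qs=(\Qc,Z)}2^{-2n}\delta^n\zeta^*(\Qc)\prod_{\nf\in Z}\frac{1}{\zeta_\nf\pi i}\cdot\Jc\widetilde{\Bc}_\Qc(t,t)\cdot\Mc_\Qs^*(k)+\Rs,\]
with $\|\Rs\|_{X_{\mathrm{loc}}^{40d}}\lesssim (C^+\delta)^nL^{-2\nu}$. Next I would partition the sum over enhanced dominant couples $\Qs$ into equivalence classes $\Xs$ under the relation $\sim_1$ of Definition \ref{equivcpl}. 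Within a fixed class $\Xs$, both $n(\Qc)$ and $\Mc_\Qs^*(k)=\Mc_\Xs^*(k)$ are constant (the latter by Proposition \ref{multiexp}), and the net sign $\zeta^*(\Qc)$ is preserved under equivalence (fact (2) of Section \ref{summary}); pulling these out of the inner sum reduces the contribution of $\Xs$ to $2^{-2n}\delta^n\Mc_\Xs^*(k)$ times $\sum_{\Qs\in\Xs}\zeta^*(\Qc)\prod_{\nf\in Z}\frac{1}{\zeta_\nf\pi i}\Jc\widetilde{\Bc}_\Qc(t,t)$. For classes with $Z\neq\varnothing$ this inner sum vanishes by Proposition \ref{nonemptyZ} (after noting $|Z|$ is constant on $\Xs$, so the $(\pi i)^{-|Z|}$ factor and the overall $\zeta^*$ factor factor out harmlessly). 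For classes with $Z=\varnothing$, the inner sum is $\zeta^*(\Qc)\sum_{\Qc\in\Xs}\Jc\widetilde{\Bc}_\Qc(t,t)=\zeta^*(\Qc)\cdot 2^{2n}\sum_{\Tc\in\Xs}g_\Tc(t)$ by Proposition \ref{emptyZ}, where on the right $\Xs$ is viewed as an equivalence class of encoded trees via the bijection of Proposition \ref{bijection} (which also identifies $\zeta^*(\Qc)=\zeta^*(\Tc)$). The $2^{-2n}$ and $2^{2n}$ cancel, and summing over all equivalence classes of half-scale $n$ I would get
\[\sum_{\substack{n(\Qc)=2n\\\Qc\,\,\mathrm{regular}}}\Kc_\Qc(t,t,k)=\delta^n\sum_{\Xs}\zeta^*(\Tc)\Big(\sum_{\Tc\in\Xs}g_\Tc(t)\Big)\widetilde{\Mc}_\Xs(k)+\Rs=\delta^n\sum_{n(\Tc)=n}\zeta^*(\Tc)\,g_\Tc(t)\,\widetilde{\Mc}_\Tc(k)+\Rs,\]
using $\Mc_\Xs^*(k)=\widetilde{\Mc}_\Xs(k)$ from the last line of Proposition \ref{wkelwp} and the fact that $\widetilde{\Mc}_\Tc$ and $g_\Tc$-sums depend only on the class. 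By the expansion \eqref{expandmnk} of Proposition \ref{wkelwp}, the main term is exactly $\Mc_n(t,k)$, so the difference is $\Rs$, which satisfies the claimed bound.

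The main obstacle — though it is really bookkeeping rather than a new idea — is making sure the sign factors, the powers of $2$, the $\pi i$ denominators, and the dependence on $Z$ are tracked consistently between the couple-side objects ($\zeta^*(\Qc)$, $\Jc\widetilde{\Bc}_\Qc$, $\Mc_{\Qc,Z}^*$, the structural codes) and the tree-side objects ($\zeta^*(\Tc)$, $g_\Tc$, $\widetilde{\Mc}_\Tc$), so that the bijection of Proposition \ref{bijection} and the equivalence-class summations dovetail exactly; in particular one must check that the $Z=\varnothing$ restriction forces all the indicator variables $\mathtt{I}_j$ to $0$ so that Proposition \ref{multiexp}'s recurrence collapses onto Proposition \ref{wkelwp}'s recurrence with only $\dirac(\Omega)$ factors, and that the ``$2^m$ from the second digit of the mini-tree code'' in Proposition \ref{emptyZ} is the same combinatorial factor that, together with the two codes of the $(1,1)$-mini couple, accounts for the $2^{2n}$ overall. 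Finally, since the number of equivalence classes of half-scale $n$ and the number of encoded trees of scale $n$ are both $O(C^n)$ (Corollary \ref{countcouple1} and Proposition \ref{wkelwp}), accumulating the per-class error terms $\Rs$ only multiplies the bound by $C^n$, which is absorbed into the constant $C^+$, so the final estimate $\|\cdot\|_{X_{\mathrm{loc}}^{40d}}\lesssim(C^+\delta)^nL^{-2\nu}$ holds uniformly for $n\leq N^3$.
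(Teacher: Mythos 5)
Your argument is correct and is exactly the paper's proof: the paper's own argument for this proposition is a one-line deduction combining Propositions \ref{domasymp1}, \ref{multiexp}, \ref{nonemptyZ}, \ref{wkelwp} and \ref{emptyZ}, which is precisely the chain you assemble (with the bookkeeping of signs, powers of $2$, and equivalence classes spelled out). No gaps; the per-class error accumulation is handled as you say, since the number of classes is $O(C^n)$.
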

\begin{proof} This follows from Propositions \ref{domasymp1}, \ref{multiexp}, \ref{nonemptyZ}, \ref{wkelwp} and \ref{emptyZ}.
\end{proof}
\section{Non-regular couples I: cancellation of irregular chains}\label{irchaincancel} We now turn to the study of non-regular couples, until the end of Section \ref{l1coef}. Since regular couples have been studied in Section \ref{regasymp}--\ref{domasymp}, in view of Proposition \ref{structure2}, we can reduce any non-regular couple $\Qc$ to its skeleton $\Qc_{sk}$, which is a nontrivial prime couple. Then, we will focus on the study of prime couples.
\subsection{From general to prime couples} Let $\Qc$ be a non-regular couple with skeleton $\Qc_{sk}$, then $\Qc_{sk}\neq\times$ is a prime couple. By Proposition \ref{structure2}, $\Qc$ is formed from $\Qc_{sk}$ in a unique way by replacing each branching node $\mf$ with a regular tree $\Tc^{(\mf)}$ and each leaf pair $\{\mf,\mf'\}$ with a regular couple $\Qc^{(\mf,\mf')}$. Using the results of Section \ref{regasymp}, we shall reduce $\Kc_\Qc(t,s,k)$ to an expression that has similar form with $\Kc_{\Qc_{sk}}(t,s,k)$.

In fact, by (\ref{defkq}) and (\ref{defcoefb2}) we have
\begin{equation}\label{bigformula}\Kc_\Qc(t,s,k)=\bigg(\frac{\delta}{2L^{d-1}}\bigg)^n\zeta^*(\Qc)\sum_{\Es}\int_\Ec\epsilon_\Es\prod_{\nf\in \Nc^*} e^{\zeta_\nf\pi i\cdot\delta L^2\Omega_\nf t_\nf}\,\mathrm{d}t_\nf{\prod_{\lf\in\Lc^*}^{(+)}n_{\mathrm{in}}(k_\lf)},
\end{equation}  where $n$ is the scale of $\Qc$, $\Ec$ is the domain defined in (\ref{timedom}), $\Es$ is a $k$-decoration and other objects are defined as before, all associated to the couple $\Qc$. By definition, the restriction of $\Es$ to nodes in $\Qc_{sk}$ forms a $k$-decoration of $\Qc_{sk}$, and the relevant quantities such as $\Omega_\nf$ are the same for both decorations (i.e. each $\Omega_\nf$ in the decoration of $\Qc_{sk}$ uniquely corresponds to some $\Omega_\nf$ in the corresponding decoration of $\Qc$).

Now, let $\{\mf,\mf'\}$ be a leaf pair in $\Qc_{sk}$, which becomes the roots of the regular sub-couple $\Qc^{(\mf,\mf')}$ in $\Qc$. We must have $k_\mf=k_{\mf'}$. In (\ref{bigformula}), consider the summation in the variables $k_\nf$, where $\nf$ runs over all nodes in $\Qc^{(\mf,\mf')}$ other than $\mf$ and $\mf'$ (these variables, together with $k_\mf$ and $k_{\mf'}$, form a $k_\mf$-decoration of $\Qc^{(\mf,\mf')}$), and the integration in the variables $t_\nf$, where $\nf$ runs over all branching nodes in $\Qc^{(\mf,\mf')}$, with all the other variables fixed. By definition, this summation and integration equals, up to some sign $\zeta^*(\Qc^{(\mf,\mf')})$ and some power of $\delta(2L^{d-1})^{-1}$, the \emph{exact} expression $\Kc_{\Qc^{(\mf,\mf')}}(t_{\mf^p},t_{(\mf')^p},k_\mf)$. Here we assume $\zeta_\mf=+$ and $\zeta_{\mf'}=-$, and $\mf^p$ is the parent of $\mf$ (if $\mf$ is the root then $t_{\mf^p}$ should be replaced by $t$; similarly for $(\mf')^p$).

Similarly, let $\mf$ be a branching node in $\Qc_{sk}$, which becomes the root $\pf$ and lone leaf $\qf$ of a regular tree $\Tc^{(\mf)}$ in $\Qc$. We must have $k_\pf=k_\qf$. In (\ref{bigformula}), consider the summation in the variables $k_\nf$, where $\nf$ runs over all nodes in $\Tc^{(\mf)}$ other than $\pf$ and $\qf$ (these variables, together with $k_\pf$ and $k_{\qf}$, form a $k_\mf$-decoration of $\Tc^{(\mf)}$, where $k_\mf=k_\pf=k_\qf$), and the integration in the variables $t_\nf$, where $\nf$ runs over all branching nodes in $\Tc^{(\mf)}$, with all the other variables fixed. In the same way, this summation and integration equals, up to some sign $\widetilde{\zeta}(\Tc^{(\mf)})$ and some power of $\delta(2L^{d-1})^{-1}$, the \emph{exact} expression $\Kc_{\Tc^{(\mf)}}^*(t_{\pf^p},t_{\qf},k_\pf)$. Here $\pf^p$ is the parent of $\pf$ (again, if $\pf$ is the root then $t_{\pf^p}$ should be replaced by $t$ or $s$) and the relevant notations are defined as in Proposition \ref{varregtree}.

After performing this reduction for each leaf pair and branching node of $\Qc_{sk}$, we can reduce the summation in (\ref{bigformula}) to the summation in $k_\mf$ for all leaves and branching nodes $\mf$ of $\Qc_{sk}$, i.e. a $k$-decoration of $\Qc_{sk}$. Moreover, we can reduce  the integration in (\ref{bigformula}) to the integration in $t_\mf$ for all branching nodes $\mf$ of $\Qc_{sk}$ (for a regular tree, the time variables $t_{\pf^p}$ and $t_{\qf}$ for $\Qc$ correspond to $t_{\mf^p}$ and $t_\mf$ for $\Qc_{sk}$ where $\mf^p$ is the parent of $\mf$). This implies that
\begin{multline}\label{bigformula2}\Kc_\Qc(t,s,k)=\bigg(\frac{\delta}{2L^{d-1}}\bigg)^{n_0}\zeta^*(\Qc_{sk})\sum_{\Es_{sk}}\int_{\Ec_{sk}}\epsilon_{\Es_{sk}}\prod_{\nf\in \Nc_{sk}^*} e^{\zeta_\nf\pi i\cdot\delta L^2\Omega_\nf t_\nf}\,\mathrm{d}t_\nf\\\times{\prod_{\mf\in\Lc_{sk}^*}^{(+)}\Kc_{\Qc^{(\mf,\mf')}}(t_{\mf^p},t_{(\mf')^p},k_\mf)}\prod_{\mf\in\Nc_{sk}^*}\Kc_{\Tc^{(\mf)}}^*(t_{\mf^p},t_{\mf},k_\mf),
\end{multline} where $n_0$ is the scale of $\Qc_{sk}$, $\Ec_{sk}$ is the domain defined in (\ref{timedom}), $\Es_{sk}$ is a $k$-decoration, the other objects are as before but associated to the couple $\Qc_{sk}$. Moreover in (\ref{bigformula2}), { the first product is taken over all leaves $\mf$ of sign $+$ with $\mf'$ being the leaf paired to $\mf$}, the second product is taken over all branching nodes $\mf$, and $\mf^p$ is the parent of $\mf$.

Using Propositions \ref{asymptotics1} and \ref{varregtree}, in (\ref{bigformula2}) we can decompose
\begin{equation}\label{inputdecomp}\Kc_{\Qc^{(\mf,\mf')}}=(\Kc_{\Qc^{(\mf,\mf')}})_{\mathrm{app}}+\Rs_{\Qc^{(\mf,\mf')}},\quad \Kc_{\Tc^{(\mf)}}^*=(\Kc_{\Tc^{(\mf)}}^*)_{\mathrm{app}}+\Rs_{\Tc^{(\mf)}}^*.
\end{equation} Here $(\Kc_{\Qc^{(\mf,\mf')}})_{\mathrm{app}}$ is the leading term in Proposition \ref{asymptotics1}, and is a linear combination of functions of $(t,s)$ multiplied by functions of $k$, which in turn satisfy (\ref{holderbound}) and the $X_{\mathrm{loc}}$ bound in Remark \ref{timebound}; the remainder $\Rs_{\Qc^{(\mf,\mf')}}$ is bounded in $X_{\mathrm{loc}}^{40d}$ with extra gain $L^{-\nu}$ as in Proposition \ref{asymptotics1}. The terms $(\Kc_{\Tc^{(\mf)}}^*)_{\mathrm{app}}$ and $\Rs_{\Tc^{(\mf)}}^*$ are as in Proposition \ref{varregtree}, and satisfy the bound (\ref{vardecomp}). 

We may fix a \emph{mark} in $\{\Lf,\Rf\}$ for each leaf pair and each branching node in $\Qc_{sk}$ which indicates whether we select the \emph{leading} term $(\cdots)_{\mathrm{app}}$ or the \emph{remainder} term $\Rs$ or $\Rs^*$; for a general couple $\Qc$ we can do the same but only for the nodes of its skeleton $\Qc_{sk}$. In this way we can define \emph{marked} couples, which we still denote by $\Qc$, and expressions of form (\ref{bigformula2}) but with $\Kc_{\Qc^{(\mf,\mf')}}$ and $\Kc_{\Tc^{(\mf)}}^*$ replaced by the corresponding leading or remainder terms, which we still denote by $\Kc_{\Qc}$. By definition, any sum of $\Kc_\Qc$ over unmarked couples $\Qc$ equals the corresponding sum over marked couples $\Qc$ for all possible unmarked couples and all possible markings.

Using the relevant $X_{\mathrm{loc}}^{40d}$ and $X_{\mathrm{loc}}^0$ bounds (which control weighted $L^1$ norms in time Fourier variables), we can expand the $(\cdots)_{\mathrm{app}}$ and $\Rs$ (or $\Rs^*$) factors as a Fourier integral in $(t_{\mf^p},t_{(\mf')^p})$ (or  $(t_{\mf^p},t_{\mf})$), which reduces (\ref{bigformula2}) to a formula of form similar to (\ref{defkq}) for $\Qc_{sk}$, but with the $\Omega_\nf$ variables appearing in $\Bc_{\Qc_{sk}}$ suitably shifted, $n_\mathrm{in}$ replaced by factors coming from $(\Kc_{\Qc^{(\mf,\mf')}})_{\mathrm{app}}$ or $\Rs_{\Qc^{(\mf,\mf')}}$, and with extra factors coming  from $(\Kc_{\Tc^{(\mf)}}^*)_{\mathrm{app}}$ or $\Rs_{\Tc^{(\mf)}}^*$ included. Before doing so, however, we need to exploit the cancellation between $\Kc_\Qc$ for some different couples $\Qc$ with specific symmetries. Such cancellation is linked to the notion of \emph{irregular chains}, which we now introduce.
\subsection{Irregular chains and congruence} We now introduce the main object that causes difficulty in the analysis of $\Qc_{sk}$, namely the irregular chains.
\begin{df}[Irregular chains]\label{irrechain} Given a couple $\Qc$ (or a paired tree $\Tc$), we define an \emph{irregular chain} to be a sequence of nodes $(\nf_0,\cdots,\nf_q)$, such that (i) $\nf_{j+1}$ is a child of $\nf_j$ for $0\leq j\leq q-1$, and the other two children of $\nf_j$ are leaves, and (ii) for $0\leq j\leq q-1$, there is a child $\mf_j$ of $\nf_j$, which has opposite sign with $\nf_{j+1}$, and is paired (as a leaf) to a child $\pf_{j+1}$ of $\nf_{j+1}$. We also define $\pf_0$ to be the child of $\nf_0$ other than $\nf_1$ and $\mf_0$. See Figure \ref{fig:irrechain}.
  \begin{figure}[h!]
  \includegraphics[scale=.5]{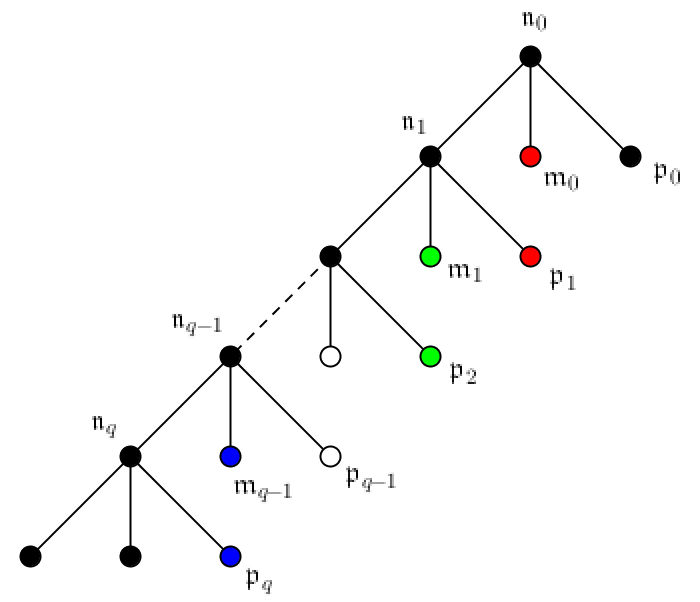}
  \caption{An irregular chain, as in Definition \ref{irrechain}. Here $\mf_j$ and $\nf_{j+1}$ are required to have opposite signs. A white leaf may be paired with a leaf in the omitted part.}
  \label{fig:irrechain}
\end{figure}
\end{df}
\begin{df}[Congruence and a relabeling]\label{equivirrechain} Consider any irregular chain $\Hc=(\nf_0,\cdots,\nf_q)$. By Definition \ref{irrechain}, we know $\pf_j$ is the child of $\nf_j$ other than $\nf_{j+1}$ and $\mf_j$ for $0\leq j\leq q-1$, thus $\pf_j$ has the same sign with $\nf_j$ (hence it is either its first or third child). Now for two irregular chains $\Hc=(\nf_0,\cdots,\nf_q)$ and $\Hc'=(\nf_0',\cdots,\nf_q')$, with $\pf_j$ and $\pf_j'$ etc. defined accordingly, we say they are \emph{congruent}, if $\zeta_{\nf_0}=\zeta_{\nf_0'}$, and for each $0\leq j\leq q-1$, either $\pf_j$ is the first child of $\nf_j$ and $\pf_j'$ is the first child of $\nf_j'$, or $\pf_j$ is the third child of $\nf_j$ and $\pf_j'$ is the third child of $\nf_j'$, counting from left to right. See Figure \ref{fig:equivirrechain}.

In particular, if $q$ and the congruence class (and hence $\zeta_{\nf_0}$) are fixed, then an irregular chain $\Hc$ is uniquely determined by the signs $\zeta_{\nf_j}$ for $1\leq j\leq q$. We relabel the nodes $\nf_j,\pf_j\,(0\leq j\leq q)$ by defining $\{\bff_j,\cf_j\}=\{\nf_j,\pf_j\}$, and that $\bff_j=\nf_j$ if and only if $\zeta_{\nf_j}=+$. Further, we label the two children of $\nf_q$ other than $\pf_q$ as $\ef$ and $\ff$, with $\zeta_\ef=+$ and $\zeta_\ff=-$.
  \begin{figure}[h!]
  \includegraphics[scale=.5]{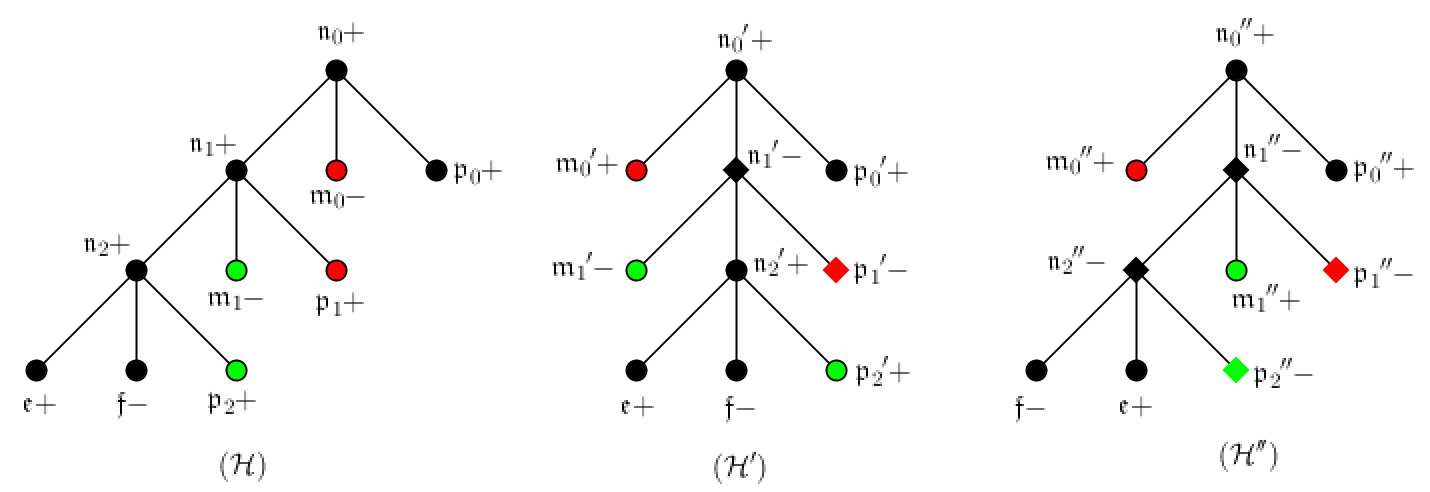}
  \caption{Three congruent irregular chains $\Hc$, $\Hc'$ and $\Hc''$, as in Definition \ref{equivirrechain}; here each $\pf_j$ (or $\pf_j'$ etc.) is the third child of $\nf_j$ (or $\nf_j'$ etc.). For convenience, we have included the sign of each node. As for the relabeling, we represent the case $(\bff_j,\cf_j)=(\nf_j,\pf_j)$ by round points, and the other case $(\bff_j,\cf_j)=(\pf_j,\nf_j)$ by diamond shaped points. Points of the same color are still paired regardless of their shapes.}
  \label{fig:equivirrechain}
\end{figure}
\end{df}
\begin{prop}\label{congdec} Let $\Hc=(\nf_0,\cdots,\nf_q)$ be an irregular chain. For any decoration $\Ds$ (or $\Es$), its restriction to $\nf_j\,(0\leq j\leq q)$ and their children is uniquely determined by $2(q+2)$ vectors $k_j,\ell_j\in\Zb_L^d\,(0\leq j\leq q+1)$, such that $k_{\bff_j}=k_j$ and $k_{\cf_j}=\ell_j$ for $0\leq j\leq q$, and $k_\ef=k_{q+1}$ and $k_\ff=\ell_{q+1}$. See Figure \ref{fig:congdec} for an example corresponding to the irregular chains in Figure \ref{fig:equivirrechain}. These vectors satisfy
\[k_0-\ell_0=k_1-\ell_1=\cdots =k_{q+1}-\ell_{q+1}:=h,\] and for each $0\leq j\leq q$ we have $\zeta_{\nf_j}\Omega_{\nf_j}=2\langle h,k_{j+1}-k_j\rangle_\beta$. Moreover $\epsilon_{k_{\nf_{j1}}k_{\nf_{j2}}k_{\nf_{j3}}}=\epsilon_{k_{j+1}\ell_{j+1}\ell_j}$, where $(\nf_{j1},\nf_{j2},\nf_{j3})$ are the children of $\nf_j$ from left to right. We say this decoration has \emph{small gap}, \emph{large gap} or \emph{zero gap} with respect to $\Hc$, if we have $0<|h|\leq \frac{1}{100\delta L}$, $|h|\geq \frac{1}{100\delta L}$ or $h=0$.
\end{prop}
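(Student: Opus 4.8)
\textbf{Proof proposal for Proposition \ref{congdec}.}

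The plan is to set up the parametrization node by node, moving down the chain $(\nf_0,\cdots,\nf_q)$, and to read off all the stated identities from the decoration relations $k_\nf = k_{\nf_1}-k_{\nf_2}+k_{\nf_3}$ at each branching node $\nf_j$, together with the pairing constraints $k_\lf = k_{\lf'}$ imposed by the irregular chain. First I would fix the convention from Definition \ref{equivirrechain}: at each level $j$, the node $\nf_j$ has three children; one of them is $\nf_{j+1}$, one is $\mf_j$, and one is $\pf_j$ (for $j \le q-1$), where $\pf_j$ is either the first or the third child of $\nf_j$ (it has sign $\zeta_{\nf_j}$), $\mf_j$ is the middle child (sign $-\zeta_{\nf_j}$), and $\nf_{j+1}$ occupies the remaining slot; at the bottom, $\nf_q$ has children $\pf_q$, $\ef$, $\ff$ with $\zeta_\ef=+,\zeta_\ff=-$. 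Using the relabeling $\{\bff_j,\cf_j\}=\{\nf_j,\pf_j\}$ with $\bff_j = \nf_j \iff \zeta_{\nf_j}=+$, I would set $k_j := k_{\bff_j}$, $\ell_j := k_{\cf_j}$ for $0\le j\le q$, and $k_{q+1}:=k_\ef$, $\ell_{q+1}:=k_\ff$. The point is that $\{k_j,\ell_j\}$ is just $\{k_{\nf_j}, k_{\pf_j}\}$ with the ordering chosen so that $k_j$ always labels the $+$ node and $\ell_j$ the $-$ node.

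The core computation is the following induction on $j$. At $\nf_j$ (for $0\le j\le q-1$), the decoration relation in signed form (Definition \ref{defdec}) reads $\zeta_{\nf_j}k_{\nf_j} = \zeta_{\pf_j}k_{\pf_j} + \zeta_{\mf_j}k_{\mf_j} + \zeta_{\nf_{j+1}}k_{\nf_{j+1}}$. Since $\zeta_{\pf_j}=\zeta_{\nf_j}$ and $\zeta_{\mf_j} = \zeta_{\nf_{j+1}} = -\zeta_{\nf_j}$, this gives $\zeta_{\nf_j}(k_{\nf_j}-k_{\pf_j}) = -\zeta_{\nf_j}(k_{\mf_j}+k_{\nf_{j+1}})$, i.e. $k_{\nf_j} - k_{\pf_j} = -(k_{\mf_j} + k_{\nf_{j+1}})$. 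In the $(k_j,\ell_j)$ notation, $k_{\nf_j}-k_{\pf_j} = \pm(k_j - \ell_j)$ with the sign $+$ iff $\zeta_{\nf_j}=+$; a parallel statement holds for $k_{\nf_{j+1}}$. Now the pairing constraint of the irregular chain says $k_{\mf_j} = k_{\pf_{j+1}}$ (the leaf $\mf_j$ is paired with $\pf_{j+1}$), and $\pf_{j+1}$ has sign $\zeta_{\nf_{j+1}} = -\zeta_{\nf_j}$. Combining these, one finds that $k_j - \ell_j = k_{j+1}-\ell_{j+1}$ for every $j$; calling this common value $h$ gives $k_0-\ell_0=\cdots=k_{q+1}-\ell_{q+1}=h$, where the last equality uses the relation at $\nf_q$ with children $\pf_q,\ef,\ff$ in the same way. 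Conversely, given $h$ and the $k_j$'s, every $k_\nf$ in the chain is determined, and one checks the relabeled vectors range freely over $\Zb_L^d$ subject only to the common-difference constraint, so the restriction of the decoration is exactly parametrized by the $2(q+2)$ vectors. This is bookkeeping; I expect the only mild annoyance is tracking the two cases (whether $\pf_j$ is the first or third child) uniformly, which is precisely what the $\bff_j/\cf_j$ relabeling is designed to absorb.

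For the resonance factor, I would use the identity $\Omega(k_1,k_2,k_3,k) = 2\langle k_1-k, k-k_3\rangle_\beta$ from \eqref{res}, applied at $\nf_j$. Writing $\Omega_{\nf_j} = \Omega(k_{\nf_{j1}},k_{\nf_{j2}},k_{\nf_{j3}},k_{\nf_j})$ and substituting the identifications above, one vertex difference becomes $\pm h$ (up to $\zeta_{\nf_j}$) and the other becomes $\pm(k_{j+1}-k_j)$, and the signs combine to give $\zeta_{\nf_j}\Omega_{\nf_j} = 2\langle h, k_{j+1}-k_j\rangle_\beta$; the $\langle\cdot,\cdot\rangle_\beta$ bilinearity and symmetry make the sign tracking routine. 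The coefficient identity $\epsilon_{k_{\nf_{j1}}k_{\nf_{j2}}k_{\nf_{j3}}} = \epsilon_{k_{j+1}\ell_{j+1}\ell_j}$ follows because $\epsilon$ (see \eqref{defcoef0}) depends only on the \emph{unordered} pattern of coincidences among its three arguments, and under the identification $\{k_{\nf_{j1}},k_{\nf_{j2}},k_{\nf_{j3}}\}$ corresponds to $\{k_{j+1}, \ell_{j+1}, \ell_j\}$ as a triple (the middle entry $k_{\mf_j}=k_{\pf_{j+1}}=\ell_{j+1}$ or $k_{j+1}$ depending on sign, but in all cases the three-element multiset matches, and one orders them so the signed-$+$, signed-$-$, signed-$+$ slots line up with $k_{j+1},\ell_{j+1},\ell_j$). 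Finally the gap classification is just the definition of the three cases $0<|h|\le (100\delta L)^{-1}$, $|h|\ge (100\delta L)^{-1}$, $h=0$, with no further content. I do not anticipate a genuine obstacle here; the statement is essentially a change of variables on the chain, and the main care needed is in the case analysis for the left/right position of $\pf_j$, which the relabeling handles cleanly.
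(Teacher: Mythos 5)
Your overall plan (parametrize the chain by $(k_j,\ell_j)$, use the decoration relation at each $\nf_j$ together with the pairing $k_{\mf_j}=k_{\pf_{j+1}}$, then read off $h$, $\Omega_{\nf_j}$ and $\epsilon$) is exactly the paper's route, but your sign bookkeeping contains a genuine error that invalidates the central computation as written. You assume that $\mf_j$ is always the middle child of $\nf_j$ and write $\zeta_{\mf_j}=\zeta_{\nf_{j+1}}=-\zeta_{\nf_j}$; Definition \ref{irrechain} requires the opposite, $\zeta_{\mf_j}=-\zeta_{\nf_{j+1}}$ (paired leaves have opposite signs), and since exactly one child of $\nf_j$ carries the sign $-\zeta_{\nf_j}$, the nodes $\mf_j$ and $\nf_{j+1}$ can never have the same sign. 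Consequently your intermediate identity $k_{\nf_j}-k_{\pf_j}=-(k_{\mf_j}+k_{\nf_{j+1}})$ is impossible for a ternary-tree decoration (the signed relation $\zeta_{\nf_j}k_{\nf_j}=\zeta_{\nf_{j1}}k_{\nf_{j1}}+\zeta_{\nf_{j2}}k_{\nf_{j2}}+\zeta_{\nf_{j3}}k_{\nf_{j3}}$ always has exactly one minus sign among the children), and the telescoping identity $k_j-\ell_j=k_{j+1}-\ell_{j+1}$ does not follow from it; you assert the conclusion without a valid derivation. The case split that actually matters is not whether $\pf_j$ is the first or third child (the relations $\Omega$, $\epsilon$ and the linear constraint are symmetric in the first and third slots) but whether $\zeta_{\nf_{j+1}}=\zeta_{\nf_j}$ (then $\mf_j$ is the middle child) or $\zeta_{\nf_{j+1}}=-\zeta_{\nf_j}$ (then $\nf_{j+1}$ is the middle child and $\mf_j$ is a side child); both configurations occur, since the congruence class runs over all $2^q$ sign patterns. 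The paper handles this by simply listing the possible quadruples, $(k_{\nf_j},k_{\nf_{j1}},k_{\nf_{j2}},k_{\nf_{j3}})\in\{(k_j,k_{j+1},\ell_{j+1},\ell_j),(k_j,\ell_j,\ell_{j+1},k_{j+1})\}$ when $\zeta_{\nf_j}=+$ and $\{(\ell_j,\ell_{j+1},k_{j+1},k_j),(\ell_j,k_j,k_{j+1},\ell_{j+1})\}$ when $\zeta_{\nf_j}=-$; with the correct signs the computation does close in both cases, e.g. if $\zeta_{\nf_{j+1}}=-\zeta_{\nf_j}$ one gets $k_{\nf_j}-k_{\pf_j}=k_{\mf_j}-k_{\nf_{j+1}}=k_{\pf_{j+1}}-k_{\nf_{j+1}}=\zeta_{\nf_j}(k_{j+1}-\ell_{j+1})$, which again yields $k_j-\ell_j=k_{j+1}-\ell_{j+1}$.

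Two further points. First, $\nf_j$ and $\pf_j$ have the \emph{same} sign $\zeta_{\nf_j}$, so your description of the relabeling (``$k_j$ always labels the $+$ node'') is not what Definition \ref{equivirrechain} says; the convention is $\bff_j=\nf_j$ iff $\zeta_{\nf_j}=+$, a choice on the pair, not a sign distinction between the two nodes. Second, your justification of $\epsilon_{k_{\nf_{j1}}k_{\nf_{j2}}k_{\nf_{j3}}}=\epsilon_{k_{j+1}\ell_{j+1}\ell_j}$ via ``$\epsilon$ depends only on the unordered pattern of coincidences'' is false: for $a\neq b$ one has $\epsilon_{aab}=0$ but $\epsilon_{aba}=1$, so $\epsilon$ is not determined by the multiset of its arguments. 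In the configuration $\zeta_{\nf_j}=-$ the middle child carries $k_{j+1}$ rather than $\ell_{j+1}$, and the required identity $\epsilon_{\ell_{j+1}k_{j+1}k_j}=\epsilon_{k_{j+1}\ell_{j+1}\ell_j}$ uses the common-difference relation $k_{j+1}-\ell_{j+1}=k_j-\ell_j$ (so that $k_{j+1}=\ell_{j+1}\Leftrightarrow \ell_{j+1}=\ell_j$ fails or holds simultaneously with the corresponding coincidences on the other side); it is not a consequence of rearrangement alone. These are fixable slips, but as written the proof of the key identities is not correct.
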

  \begin{figure}[h!]
  \includegraphics[scale=.5]{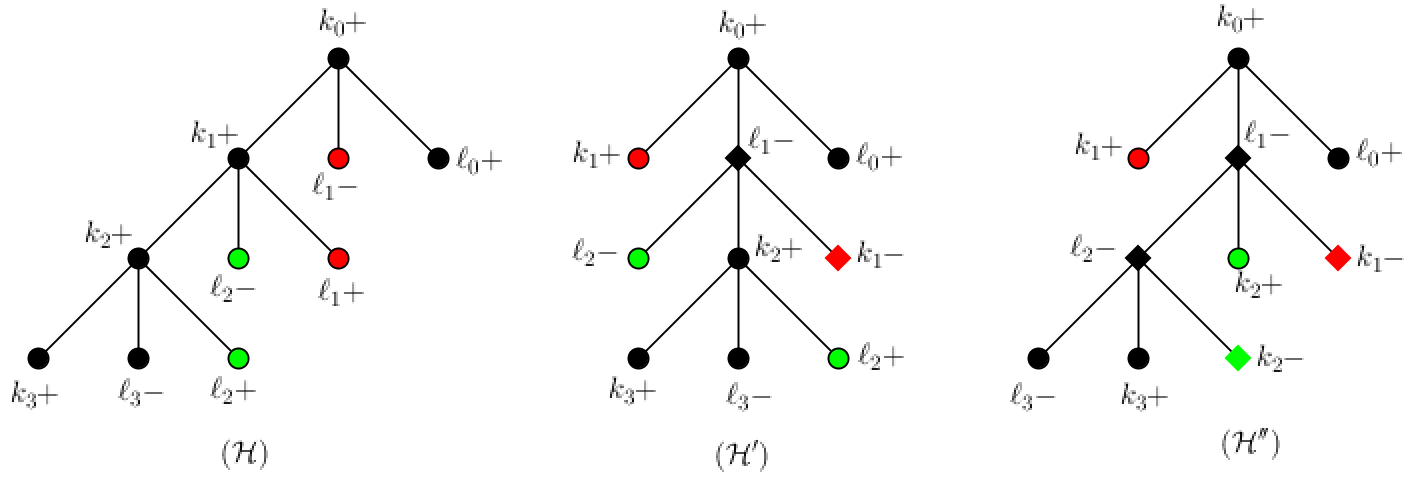}
  \caption{Decorations of irregular chains in Figure \ref{fig:equivirrechain} with sign of each node included. For the nodes $\bff_j,\cf_j$ etc. see Definition \ref{equivirrechain}.}
  \label{fig:congdec}
\end{figure}
\begin{proof} We can verify that $(k_{\nf_j},k_{\nf_{j1}},k_{\nf_{j2}},k_{\nf_{j3}})\in\{(k_j,k_{j+1},\ell_{j+1},\ell_j),(k_j,\ell_j,\ell_{j+1},k_{j+1})\}$ if $\zeta_{\nf_j}=+$, and $(k_{\nf_j},k_{\nf_{j1}},k_{\nf_{j2}},k_{\nf_{j3}})\in\{(\ell_j,\ell_{j+1},k_{j+1},k_j),(\ell_j,k_j,k_{j+1},\ell_{j+1})\}$ if $\zeta_{\nf_j}=-$. Moreover by pairing we know $k_{\mf_j}=k_{\pf_{j+1}}$ for $0\leq j\leq q-1$. The result then follows.
\end{proof}
\begin{df}\label{conggen} Let $\Hc=(\nf_0,\cdots,\nf_q)$ be an irregular chain contained in a couple $\Qc$ or a paired tree $\Tc$. If we replace $\Hc$ by a congruent irregular chain $\Hc'=(\nf_0',\cdots,\nf_q')$, then we obtain a modified couple $\Qc'$ or paired tree $\Tc'$ by (i) attaching the same subtree of $\ef$ and $\ff$ in $\Qc$ (or $\Tc$) to the bottom of $\ef'$ and $\ff'$, and (ii) assigning to $\nf_0'$ the same parent of $\nf_0$ and keeping the rest of the couple unchanged.

Given a marked prime couple $\Qc_{sk}$, we identify all the maximal irregular chains $\Hc=(\nf_0,\cdots,\nf_q)$, such that $q\geq 10^3d$, and all $\nf_j$ and their children have mark $\Lf$. For each such maximal irregular chain $\Hc$, consider $\Hc^\circ=(\nf_5,\cdots,\nf_{q-5})$ formed by omitting $5$ nodes at both ends (so that it does not affect other possible irregular chains). We define another marked prime couple $\widetilde{\Qc}_{sk}$ to be \emph{congruent} to $\Qc_{sk}$, if it can be obtained from $\Qc_{sk}$ by changing each of the irregular chains $\Hc^\circ$ to a congruent irregular chain, as described above.

Given a marked couple $\Qc$, we define $\widetilde{\Qc}$ to be congruent to $\Qc$, if it can be formed as follows. First obtain the (marked) skeleton $\Qc_{sk}$ and change it to a congruent marked prime couple $\widetilde{\Qc}_{sk}$. Then, we attach the regular couples $\Qc^{(\mf,\mf')}$ and regular trees $\Tc^{(\mf)}$ from $\Qc$ to the relevant leaf pairs and branching nodes of $\widetilde{\Qc}_{sk}$. Note that if an irregular chain $\Hc^\circ=(\nf_0,\cdots,\nf_q)$ in $\Qc_{sk}$ is replaced by $(\Hc^\circ)'=(\nf_0',\cdots,\nf_q')$ in $\widetilde{\Qc}_{sk}$, with relevant nodes $\mf_j, \pf_j$ etc. as in Definition \ref{irrechain}, then for $0\leq j\leq q-1$, the same regular couple $\Qc^{(\mf_j,\pf_{j+1})}$ is attached to the leaf pair $\{\mf_j',\pf_{j+1}'\}$ in $\widetilde{\Qc}_{sk}$. Similarly, for $1\leq j\leq q$, if $\zeta_{\nf_j'}=\zeta_{\nf_j}$ then the same regular tree $\Tc^{(\nf_j)}$ is placed at the branching node $\nf_j'$ in $\widetilde{\Qc}_{sk}$; otherwise the conjugate regular tree $\overline{\Tc^{(\nf_j)}}$ is placed at $\nf_j'$. See Figure \ref{fig:conggen} for a description of two congruent couples.
  \begin{figure}[h!]
  \includegraphics[scale=.5]{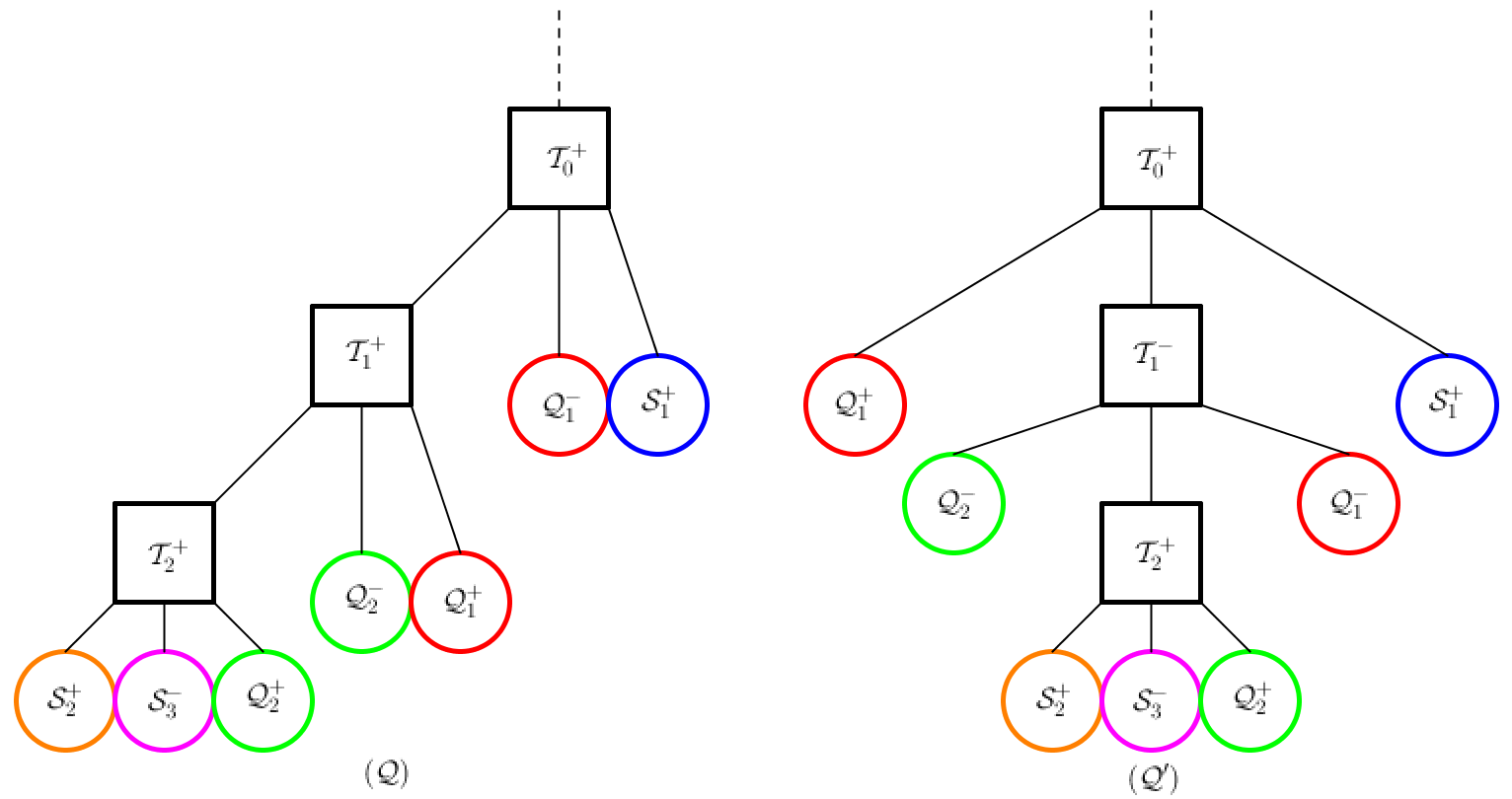}
  \caption{Two congruent couples $\Qc$ and $\Qc'$ as in Definition \ref{conggen} (formed by altering one irregular chain $\Hc^\circ$ in $\Qc_{sk}$; of course the actual $\Hc^\circ$ is much longer). Here each circle (labeled by $\Qc_j$ or $\Sc_j$) represents a paired tree, two circles of the same color (labeled by the same $\Qc_j$) in the same couple form a regular sub-couple, and each black box (labeled by $\Tc_j$) represents a regular tree. Two circles in different couples of the same color and same labeling (including signs) represent the same paired tree, two boxes of the same labeling represent the same regular tree (if they have the same sign) or two conjugate regular trees (if they have opposite signs). Finally, the signs represent the signs of the corresponding nodes in $\Qc_{sk}$ and $\Qc_{sk}'$.}
  \label{fig:conggen}
\end{figure}
\end{df}
\subsection{Expressions associated with irregular chains}\label{1chain} Given \emph{one congruence class} $\Fs$ of marked couples as in Definition \ref{conggen}, the goal of this section is to analyze the sum
\begin{equation}\label{irrechainsum}\sum_{\Qc\in\Fs}\Kc_\Qc(t,s,k),\end{equation} where the sum is taken over all \emph{marked couples} $\Qc\in\Fs$. Let the lengths of all the irregular chains $\Hc^\circ$ involved in the congruence class $\Fs$, as in Definition \ref{conggen}, be $q_1,\cdots,q_r$, then $|\Fs|=2^Q$ where $Q=q_1+\cdots+ q_r$. Since these irregular chains do not affect each other, we may focus on one individual chain, say $\Hc^\circ=(\nf_0,\cdots,\nf_q)$; that is, we only sum over $\Qc\in \Fs$ obtained by altering this irregular chain $\Hc^\circ$.

In the summation and integration in (\ref{bigformula2}), we will first fix all the variables $k_\nf$ and $t_\nf$, \emph{except} $k_\nf$ with $\nf\in\{\nf_j,\pf_j,\mf_{j-1}\}\,(1\leq j\leq q)$ and $t_\nf$ with $\nf=\nf_j\,(1\leq j\leq q-1)$, and sum and integrate over these variables. Note that we are fixing $k_{\nf_0}$ and $k_{\pf_0}$ as well as $k_\ef$ and $k_\ff$, in the notation of Definition \ref{equivirrechain}, and are thus fixing $(k_0,\ell_0,k_{q+1},\ell_{q+1})$ and $k_0-\ell_0=k_{q+1}-\ell_{q+1}=h$ as in Proposition \ref{congdec}. It is easy to see that in the summation and integration in (\ref{bigformula2}) over the \emph{fixed variables} (i.e. those $k_\nf$ and $t_\nf$ not in the above list), the summand and integrand does not depend on the way $\Hc^\circ$ is changed, because the rest of the couple is preserved under the change of $\Hc^\circ$, by Definition \ref{conggen}.

We thus only need to consider the sum and integral over the variables listed above. By Proposition \ref{congdec}, this is the same as the sum over the variables $k_j\,(1\leq j\leq q)$, with $\ell_j:=k_j-h$, and integral over the variables $t_j:=t_{\nf_j}\,(1\leq j\leq q-1)$, which satisfies $t_0>t_1>\cdots >t_{q-1}>t_q$ with $t_0:=t_{\nf_0}$ and $t_q:=t_{\nf_q}$. For any possible choice of $\Hc^\circ$ (there are $2^q$ of them), the sum and integral can be written, using (\ref{bigformula2}) and Proposition \ref{congdec}, as
\begin{multline}\label{irrechainexp}\sum_{k_1,\cdots,k_q}\int_{t_0>t_1>\cdots >t_{q-1}>t_q}\bigg(\frac{\delta}{2L^{d-1}}\bigg)^q\prod_{j=1}^q(i\zeta_{\nf_j})\prod_{j=0}^{q}\epsilon_{k_{j+1}\ell_{j+1}\ell_j}\\\times\prod_{j=0}^q e^{2\pi i\delta L^2\langle h,k_{j+1}-k_j\rangle_\beta t_j}\prod_{j=1}^{q}\Kc_{j,\Hc^\circ}\cdot \Kc_{j,\Hc^\circ}^*\,\mathrm{d}t_1\cdots\mathrm{d}t_{q-1}.\end{multline} Here in (\ref{irrechainexp}), we have
\[\Kc_{j,\Hc^\circ}=\Kc_j(t_{j},t_{j-1},k_j-h),\quad \Kc_{j,\Hc^\circ}^*=\Kc_j^*(t_{j-1},t_j,k_j)\] if $\zeta_{\nf_j}=+$, and
\[\Kc_{j,\Hc^\circ}=\Kc_j(t_{j-1},t_{j},k_j),\quad \Kc_{j,\Hc^\circ}^*=\overline{\Kc_j^*(t_{j-1},t_j,k_j-h)}\] if $\zeta_{\nf_j}=-$, where $\Kc_j=(\Kc_{\Qc^{(\pf_j,\mf_{j-1})}})_{\mathrm{app}}$ and $\Kc_j^*=(\Kc_{\Tc^{(\nf_j)}}^*)_{\mathrm{app}}$ where $\Tc^{(\nf_j)}$ is chosen to have sign $+$; note that if $\overline{\Tc}$ is the regular tree conjugate to $\Tc$ then $\Kc_{\overline{\Tc}}^*=\overline{\Kc_\Tc^*}$, and the same holds for the leading contribution $(\cdots)_{\mathrm{app}}$.

In what follows we shall study the expression (\ref{irrechainexp}), where we also sum over all possible choices of $\Hc^\circ$, i.e. all possible choices of $\zeta_{\nf_j}\,(1\leq j\leq q)$. We will view it as a function of $(k_0,\ell_0,k_{q+1},\ell_{q+1},t_0,t_q)$. Depending on the value of $h$ we have three possibilities. However, the zero gap case $h=0$ is very easy, as we have $k_j=\ell_j$, so in view of the $\epsilon_{k_{j+1}\ell_{j+1}\ell_j}$ factors we must have $k_1=\cdots=k_q=k_0$, so the expression (\ref{irrechainexp}) is bounded by $L^{-(d-1)q}/q!$, which is a large negative power of $L$ when $q$ is large (we have at least $q\geq10^3d-10$ by Definition \ref{conggen}). This term can then be easily treated, in the same way as the small gap term below.
\subsubsection{Small gap case}\label{sgcase} Assume the small gap condition $0<|h|\leq1/(100\delta L)$. Summing over all choices of $\zeta_{\nf_j}$ in (\ref{irrechainexp}), we get the expression
\begin{multline}\label{irrechainexp2}\sum_{k_1,\cdots,k_q}\int_{t_0>t_1>\cdots >t_{q-1}>t_q}\bigg(\frac{i\delta}{2L^{d-1}}\bigg)^q\prod_{j=0}^{q}\epsilon_{k_{j+1}\ell_{j+1}\ell_j}\prod_{j=0}^q e^{2\pi i\delta L^2\langle h,k_{j+1}-k_j\rangle_\beta t_j}\\\times\prod_{j=1}^{q}\big[\Kc_j(t_{j},t_{j-1},k_j-h)\Kc_j^*(t_{j-1},t_j,k_j)-\Kc_j(t_{j-1},t_{j},k_j)\overline{\Kc_j^*(t_{j-1},t_j,k_j-h)}\big]\,\mathrm{d}t_1\cdots\mathrm{d}t_{q-1}.\end{multline}
Recall that $\Kc_j$ and $\Kc_j^*$ are of form $(\cdots)_{\mathrm{app}}$, by Propositions \ref{asymptotics1} and \ref{varregtree}, they can be decomposed into terms which are products of functions of time variables $t_j$ and functions of frequency variables $k_j$. Due to bilinearity of (\ref{irrechainexp2}), we may thus assume
\begin{equation}\label{irreassump1}\Kc_j(t,s,k)=(C^+\delta)^{m_j}\Jc\Ac_j(t,s)\Mc_j(k),\quad \Kc_j^*(t,s,k)=(C^+\delta)^{m_j'}\Jc\Ac_j^*(t,s)\Mc_j^*(k),
\end{equation} where $2m_j$ and $2m_j'$ are the scales of $\Qc^{(\pf_j,\mf_{j-1})}$ and $\Tc^{(\nf_j)}$, the functions $\Jc\Ac_j,\Jc\Ac_j^*$ and $\Mc_j,\Mc_j^*$ satisfy that
\begin{equation}\label{irreassump2}\|\Jc\Ac_j\|_{X_{\mathrm{loc}}},\,\|\Jc\Ac_j^*\|_{X_{\mathrm{loc}}}\lesssim 1;\quad \sup_{|\rho|\leq 40d}\big(\langle k\rangle^{40d}|\partial^\rho\Mc_j(k)|+|\partial^\rho\Mc_j^*(k)|\big)\lesssim 1.
\end{equation} After extracting the factor $(C^+\delta)^{m_j+m_j'}$, we can write the difference factor in (\ref{irrechainexp2}) as 
\begin{multline}\label{cancellation}\Jc\Ac_j(t_j,t_{j-1})\Jc\Ac_j^*(t_{j-1},t_j)\cdot\Mc_j(k_j-h)\Mc_j^*(k_j)-\Jc\Ac_j(t_{j-1},t_j)\overline{\Jc\Ac_j^*(t_{j-1},t_j)}\cdot\Mc_j(k_j)\Mc_j^*(k_j-h)\\
=\big[\Jc\Ac_j(t_j,t_{j-1})\Jc\Ac_j^*(t_{j-1},t_j)-\Jc\Ac_j(t_{j-1},t_j)\overline{\Jc\Ac_j^*(t_{j-1},t_j)}\big]\cdot \Mc_j(k_j-h)\Mc_j^*(k_j)\\
+\Jc\Ac_j(t_{j-1},t_j)\overline{\Jc\Ac_j^*(t_{j-1},t_j)}\cdot\big[\Mc_j(k_j-h)\Mc_j^*(k_j)-\Mc_j(k_j)\Mc_j^*(k_j-h)\big],\end{multline} since $\Mc_j$ and $\Mc_j^*$ are real valued. 

For any $|h|\leq1/(100\delta L)$, by (\ref{irreassump2}) we get
\begin{equation}\label{cancel1}\sup_{|\rho|\leq 30d}\langle k_j\rangle^{30d}|\partial^\rho\big[\Mc_j(k_j-h)\Mc_j^*(k_j)-\Mc_j(k_j)\Mc_j^*(k_j-h)\big]|\lesssim \delta^{-1}L^{-1},\end{equation} which we shall use to control the second term on the right hand side of (\ref{cancellation}). To deal with the first term, we notice that both factors
\begin{equation}\label{difffactor}\Jc\Ac_j(t,s)-\Jc\Ac_j(s,t)\quad\mathrm{and}\quad\Jc\Ac_j^*(t,s)-\overline{\Jc\Ac_j^*(t,s)}\end{equation} vanish at $t=s$. In fact, if $\Tc^{(\nf_j)}$ is formed from a regular chain of scale $2m'$ (see Remark \ref{regtree}), then we may apply similar arguments as in Section \ref{domasymp}, and calculate $\Jc\Ac_j^*(t,s)$ in the same way as $\Jc\widetilde{\Bc}_\Qc(t,s)$, so that it is either $0$ or is given (up to a constant multiple) by (\ref{recurtype2}), except that the domain of integration is now $t>t_1>\cdots>t_{m'}>s$ (as the regular tree $\Tc^{(\nf_j)}$ only has one regular chain), and the irrelevant factors in the integrand are omitted. This shows that $\Jc\Ac_j^*(t,t)=0$ if $m'\geq 1$; if $m'=0$ then $\Tc^{(\nf_j)}$ is trivial so $\Jc\Ac_j^*(t,t)=1$, so in either case the desired vanishing of (\ref{difffactor}) is true. Since $\Jc\Ac_j$ and $\Jc\Ac_j^*$ are bounded in $X_{\mathrm{loc}}$ as in Remark \ref{timebound} and (\ref{vardecomp}), we can write the functions in (\ref{difffactor}) in the form
\begin{equation}\label{cancel2}(\ref{difffactor})=|t-s|^{\frac{1}{18}}\int_{\Rb^2}G(\lambda,\mu)e^{\pi i(\lambda s+\mu t)}\,\mathrm{d}\lambda\mathrm{d\mu},\quad\|(\langle \lambda\rangle+\langle \mu\rangle)^{\frac{1}{18}}G\|_{L^1}\lesssim (C^+)^{m_j},
\end{equation} for $s,t\in[0,1]$ (and also $t>s$ for the latter term in (\ref{difffactor})). Here the bound in (\ref{cancel2}) follows from the simple fact that the Fourier $L^1$ norm of $|x|^{-\gamma}\chi_0(x)(e^{\pi i\lambda x}-1)$ is bounded by $\langle\lambda\rangle^\gamma$ for $0<\gamma<1$. By (\ref{cancel1}), (\ref{cancel2}), and making further decompositions if necessary, we can rewrite (\ref{cancellation}) as a linear combination (in the form of an integral in $\lambda_j$ and $\mu_j$) of
\begin{equation}\label{cancellation2}|t_{j-1}-t_j|^{\kappa_j}e^{\pi i(\lambda_jt_j+\mu_jt_{j-1})}\cdot \Nc_j(k_j),
\end{equation} where either $\kappa_j=0$ and $\Nc_j$ satisfies (\ref{cancel1}), or $\kappa_j=1/18$ and $\Nc_j$ satisfies (\ref{cancel1}) with right hand side replaced by $1$. The coefficient of this linear combination is a function of $(\lambda_j,\mu_j)$ that satisfies the weighted bounds in (\ref{cancel2}).

By performing the above reduction for all $j$, we can rewrite (\ref{irrechainexp2}) as a linear combination (in the form of an integral in $(\lambda_j,\mu_j)$ variables) of terms, where the coefficient of this linear combination is a function of these $(\lambda_j,\mu_j)$, and is bounded in some weighted $L^1$ norm which is a tensor power of the one in (\ref{cancel2}). The term then has the following form:
\begin{multline}\label{defnewterm}\Zc:=(C^+\delta)^{m_{\mathrm{tot}}}\bigg(\frac{i\delta}{2L^{d-1}}\bigg)^q\int_{t_0>t_1>\cdots>t_{q-1}>t_q}{\,\mathrm{d}t_1\cdots\mathrm{d}t_{q-1}\cdot\prod_{j=1}^q|t_{j-1}-t_j|^{\kappa_j}e^{\pi i(\lambda_jt_j+\mu_jt_{j-1})}}\\\times e^{2\pi i\delta L^2(t_q\langle h,k_{q+1}\rangle_\beta-t_0\langle h,k_0\rangle_\beta)}\sum_{k_1,\cdots,k_q}\prod_{j=0}^q\mathbf{1}_{k_j\neq k_{j+1}}\cdot \prod_{j=1}^q e^{2\pi i\delta L^2(t_{j-1}-t_j)\langle h,k_j\rangle_\beta}\Nc_j(k_j),
\end{multline} where $(\lambda_j,\mu_j)$ are parameters as above, and $m_{\mathrm{tot}}$ is the sum of all the half-scales $m_j$ and $m_j'$.

We will first fix the $t_j$ variables and sum in $k_j$ in (\ref{defnewterm}). In this sum we may ignore the factors $\mathbf{1}_{k_j\neq k_{j+1}}$, because if any $k_j=k_{j+1}$ then in (\ref{defnewterm}) we have $e^{2\pi i\delta L^2(t_{j-1}-t_{j+1})\langle h,k_j\rangle_\beta}(\Nc_j\Nc_{j+1})(k_j)$, which can be treated in the same way with much better estimates as we are summing over fewer variables. Thus, up to lower order error terms which have the same form, we have
\begin{multline}\label{defnewterm2}\Zc:=(C^+\delta)^{m_{\mathrm{tot}}}(i\delta/2)^qe^{2\pi i\delta L^2(t_q\langle h,k_{q+1}\rangle_\beta-t_0\langle h,k_0\rangle_\beta)}\\\times\int_{t_0>t_1>\cdots>t_{q-1}>t_q}\prod_{j=1}^q|t_{j-1}-t_j|^{\kappa_j}F_j(h,{t_{j-1}-t_j})e^{\pi i(\lambda_jt_j+\mu_jt_{j-1})}\,\mathrm{d}t_1\cdots\mathrm{d}t_{q-1},
\end{multline} where $F_j$ is defined to be
\[F_j(h,t)=L^{-(d-1)}\sum_k e^{2\pi i\delta L^2t\langle h,k\rangle_\beta}\Nc_j(k).\] By Poisson summation we have (here $\widehat{\Nc_j}$ denotes the Fourier transform in $\Rb^d$)
\begin{equation}\label{poissonsum}F_j(h,t)=L\sum_{y\in\Zb^d}\widehat{\Nc_j}\big(L(y-\delta Lt (\beta^1h^1,\cdots,\beta^dh^d))\big)\end{equation} where $h^j\,(1\leq j\leq d)$ are coordinates of $h$, and we assume $\beta^j\in[1,2]$. Note that by assumption, see (\ref{cancel1}), we have $|\widehat{\Nc_j}(\xi)|\lesssim\langle \xi\rangle^{-40d}$, and also $|\delta Lt\beta^jh^j|\leq1/50$, so the sum corresponding to $y\neq 0$ in the above formula contributes at most $L^{-30d}$, hence
\begin{equation}\label{poisson}|F_j(h,t)|\lesssim L^{-30d}+L(1+\delta L^2t\cdot|h|)^{-40d}\lesssim L^{-30d}+L(1+\delta Lt)^{-40d}\end{equation} using also that $|h|\geq L^{-1}$. Moreover, for $j$ with $\kappa_j=0$, we have an extra $\delta^{-1}L^{-1}$ factor in the above bound, due to (\ref{cancel1}). In particular, for each $j$, we have
\begin{equation}\label{extradecay}\int_{|t|\leq 1}|t|^{\kappa_j}|F_j(h,t)|\,\mathrm{d}t\lesssim L^{-\frac{1}{20}}.\end{equation}

Now, let $t_0-t_q:=\sigma$, using (\ref{defnewterm2}) we can rewrite, for fixed parameters $(\lambda_j,\mu_j)$, that
\[\Zc=\Zc(k_0,\ell_0,k_{q+1},\ell_{q+1},t_0,t_q)=(C^+\delta)^{m_{\mathrm{tot}}}(i\delta/2)^qe^{-2\pi i\delta L^2\sigma\langle h,k_0\rangle_\beta}\cdot e^{\pi i\delta L^2\Omega^*t_q}e^{\pi i\lambda_q^*t_q}\cdot P(\sigma,h).\]Here $\Omega^*:=|k_{q+1}|_\beta^2-|\ell_{q+1}|_\beta^2+|\ell_0|_\beta^2-|k_0|_\beta^2$, and $\lambda_{q}^*$ is the last component of the vector $(\lambda_0^*,\cdots,\lambda_q^*)$ that satisfies
\[\sum_{j=0}^{q-1}\lambda_j^*(t_{j+1}-t_{j})+\lambda_q^*t_q\equiv \sum_{j=1}^q(\lambda_jt_j+\mu_jt_{j-1})\] (in particular each $\lambda_j^*$ is a linear combination of $\lambda_j$ and $\mu_j$). Moreover $P$ is defined by
\[P(\sigma,h)=\int_{\sigma>t_1>\cdots >t_{q-1}>0}\prod_{j=0}^{q-1}|t_{j+1}-t_j|^{\kappa_j}F_j(h,t_{j+1}-t_j)e^{\pi i\lambda_j^*(t_{j+1}-t_j)},\] where we replace $t_0$ by $\sigma$ and $t_q$ by $0$ in the above integral. By (\ref{extradecay}) and our choice of $q$ we have
\begin{equation}\label{extradecay2}\sup_{|h|\leq (100\delta L)^{-1}}\int_{|\sigma|\leq 1}|P(\sigma,h)|\,\mathrm{d}\sigma\lesssim L^{-\frac{q}{20}}\lesssim L^{-40d}.
\end{equation}

In summary, we get that
\begin{equation}\label{summary1}(\ref{irrechainexp2})=(C^+\delta)^{m_{\mathrm{tot}}}(i\delta/2)^q\int_\Rb\int_0^1G(\lambda)\Pc(\lambda,\sigma,k_0,\ell_0)\cdot\dirac(t_0-t_q-\sigma)e^{\pi i\delta L^2\Omega^*t_q}e^{\pi i\lambda t_q}\,\mathrm{d}\sigma\mathrm{d}\lambda,
\end{equation} where $\Omega^*$ is as above, and the functions $G$ and $\Pc$ satisfies
\begin{equation}\label{coefgpbd}\|\langle \lambda\rangle^{\frac{1}{18}}G\|_{L^1}\lesssim (C^+)^{m_{\mathrm{tot}}},\quad \sup_{\lambda,k_0,\ell_0}\int_0^1|\Pc(\lambda,\sigma,k_0,\ell_0)|\,\mathrm{d}\sigma\lesssim L^{-40d}.
\end{equation}
where the supremum in $(k_0, \ell_0)$ is taken here over $|k_0-\ell_0|\leq (100\delta L)^{-1}$.

Now, define the new marked couple $\Qc_{sk}^<$ by removing the irregular chain $\Hc^\circ$ from $\Qc_{sk}$; namely we set $(\pf_0,\ef,\ff)$ (see Definition \ref{equivirrechain}) to be the three children nodes of $\nf_0$, with the order determined by their signs and the relative position of $\pf_0$, and remove the other nodes (i.e. $(\nf_j,\pf_j)$ for $1\leq j\leq q$ and $\mf_j$ for $0\leq j\leq q-1$). See Figure \ref{fig:reduceirrechain}. Denote the scale of $\Qc_{sk}^<$ by $n_0^<$. Note that $\Qc_{sk}^<$ does \emph{not} depend on the choice of $\Hc^\circ$ in the fixed congruence class, and for the decoration of $\Qc_{sk}^<$ coming from the decoration of $\Qc_{sk}$, we have $\zeta_{\nf_0}\Omega_{\nf_0}=\Omega^*$ for each choice of $\Hc^\circ$. 
  \begin{figure}[h!]
  \includegraphics[scale=.5]{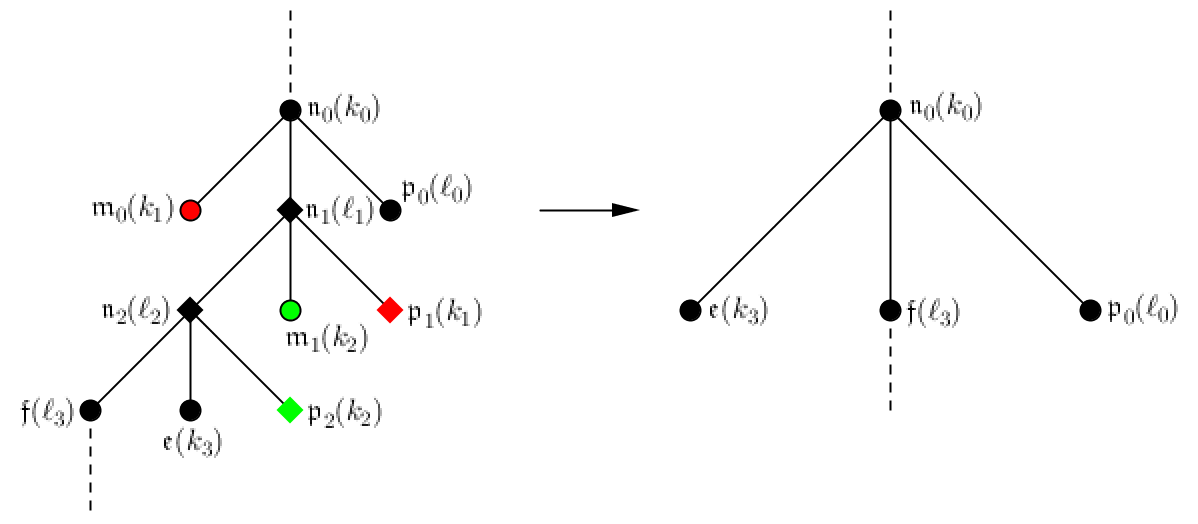}
  \caption{An example of (parts of) $\Qc_{sk}$ and $\Qc_{sk}^<$, where $q=2$. For simplicity, we have also included a decoration of $\Qc_{sk}$ and the corresponding decoration for $\Qc_{sk}^<$.}
  \label{fig:reduceirrechain}
\end{figure}

We now consider the sum
\begin{equation}\label{smallgap}\sum_{\Qc}\Kc_\Qc^{\mathrm{sg}}(t,s,k),
\end{equation} where $\Qc$ ranges over all marked couples formed by altering the irregular chain $\Hc^\circ$ in $\Qc_{sk}$, and the superscript $\mathrm{sg}$ represents the small gap case. With (\ref{summary1}), we can rewrite it as
\begin{multline}\label{smallgap2}(\ref{smallgap})=(C^+\delta)^{m_{\mathrm{tot}}}(i\delta/2)^q\cdot\bigg(\frac{\delta}{2L^{d-1}}\bigg)^{n_0^<}\zeta^*(\Qc_{sk}^<)\int_\Rb G(\lambda)\,\mathrm{d}\lambda\int_0^1\mathrm{d}\sigma\cdot\sum_{\Es_{sk}^<}\int_{\widetilde{\Ec}_{sk}^<}\widetilde{\epsilon}_{\Es_{sk}^<}\cdot\Pc(\lambda,\sigma,k_0,\ell_0)\\\times e^{\pi i\lambda t_{\nf_0}}\prod_{\nf\in (\Nc_{sk}^<)^*} e^{\zeta_\nf\pi i\cdot\delta L^2\Omega_\nf t_\nf}\,\mathrm{d}t_\nf{\prod_{\mf\in(\Lc_{sk}^<)^*}^{(+)}\Kc_{\Qc^{(\mf,\mf')}}(t_{\mf^p},t_{(\mf')^p},k_\mf)}\prod_{\mf\in(\Nc_{sk}^<)^*}\Kc_{\Tc^{(\mf)}}^*(t_{\mf^p},t_{\mf},k_\mf).
\end{multline} Here in (\ref{smallgap2}) the sum is taken over all $k$-decorations $\Es_{sk}^<$ of $\Qc_{sk}^<$, and the other notations are all associated with $\Qc_{sk}^<$, except $\widetilde{\Ec}_{sk}^<$ and $\widetilde{\epsilon}_{\Es_{sk}^<}$; instead, for $\widetilde{\Ec}_{sk}^<$ we add the one extra condition $t_{\nf_0^p}>t_{\nf_0}+\sigma$ (where $\nf_0^p$ is the parent of $\nf_0$) to the original definition (\ref{timedom}), and for $\widetilde{\epsilon}_{\Es_{sk}^<}$ we remove the one factor $\epsilon_{k_{\nf_{01}}k_{\nf_{02}}k_{\nf_{03}}}$ (where $\nf_{0j}$ are the children of $\nf_0$ from left to right) from the original definition (\ref{defcoef}). The functions $G$ and $\Pc$, and variables $(k_0,\ell_0)$ etc. are as in (\ref{summary1}),  and we may also insert the small gap restriction $0<|h|\leq 1/(100\delta L)$ in (\ref{smallgap2}). Finally, in the functions $\Kc_{\Tc^{(\nf_0)}}^*$ and $\Kc_{\Qc^{(\mf,\mf')}}$ for the leaf pair $\{\mf,\mf'\}$ containing $\pf_0$, the input variable $t_{\nf_0}$ should be replaced by $t_{\nf_0}+\sigma$.
\begin{rem}\label{newres} Due to the absence of $\epsilon_{k_{\nf_{01}}k_{\nf_{02}}k_{\nf_{03}}}$ in $\widetilde{\epsilon}_{\Es_{sk}^<}$, in the summation in (\ref{smallgap2}), the decoration $(k_\nf)$ may be resonant at the node $\nf_0$ (i.e. $(k_{\nf_{01}},k_{\nf_{02}},k_{\nf_{03}})\not\in\Sf$, see (\ref{defset})), but it must not be resonant at any other branching node. This resonance may lead to an (at most) $L^{4d}$ loss in the counting estimates in Section \ref{improvecount}, but this can always be covered by the $L^{-40d}$ gain from $\Pc$ in (\ref{coefgpbd}). See Remark \ref{countingrem} for further explanation.
\end{rem}
\subsubsection{Large gap case}\label{lgcase} Now consider the large gap case $|h|>1/(100\delta L)$. Here we will not need a big $L^{-40}$ power gain as in Section \ref{sgcase}, and it is also not necessary to exploit the cancellation. Therefore, we may fix a single choice for the irregular chain $\Hc^\circ$.

We proceed as in Section \ref{sgcase}, and in the proof below we may assume $k_0\neq k_{q+1}$ in the decoration of $\Qc_{sk}$. In fact, if $k_0=k_{q+1}$ then we must have $k_0\neq k_q$ (as $k_q\neq k_{q+1}$ in view of the factor $\epsilon_{k_{q+1}\ell_{q+1}\ell_q}$ in (\ref{irrechainexp})), so we may apply the same analysis to the shorter chain $(\nf_0,\cdots,\nf_{q-1})$, which will not make a difference in the proofs in later sections, as we leave out only one node for this chain.

We now repeat the calculations in Section \ref{sgcase} for (\ref{irrechainexp}), using again (\ref{irreassump2}). The main difference is that we do not have \eqref{cancel1}. By Poisson summation formula, we still have  (\ref{poissonsum}), but now the contribution of $y\neq 0$ is not negligible.  Still we may assume $|y|\leq C \delta L|h|$, as the remaining contribution is at most $C^+L^{-40d}$ when $|t|\leq 1$. Assume $|h^1|\geq C^{-1}|h|$, then replacing (\ref{extradecay}) we have
\begin{equation}\label{noextradecay}\int_{|t|\leq 1}|F_j(h,t)|\,\mathrm{d}t\lesssim L\sum_{|y^1|\lesssim \delta L|h|}\int_{\Rb}\frac{\mathrm{d}t}{(1+L|y^1-\delta Lt\beta^1h^1|)^{40}}\sum_{y'}\prod_{j=2}^d\frac{1}{(1+L|y^j-\delta Lt\beta^jh^j|)^{40}}\lesssim 1,
\end{equation} where $y'=(y^2,\cdots,y^d)$. This is because in (\ref{noextradecay}), the the inner sum over $y'$ is trivially bounded by $1$, so the integral over $t$ is bounded by $C^+(\delta L|h|)^{-1}L^{-1}$, and the final sum over $y^1$ is bounded by $C^+(\delta L|h|)^{-1}L^{-1}\cdot \delta L|h|=C^+L^{-1}$, noting also that $\delta L|h|\geq C^{-1}$.

With (\ref{noextradecay}) and the same arguments as before, in the end we can still write (\ref{irrechainexp}) in the form of (\ref{summary1}), together with (\ref{coefgpbd}), except that the right hand side of the second inequality of (\ref{coefgpbd}) will be $1$ instead of $L^{-40d}$. We then define the marked couple $\Qc_{sk}^<$ in the same way as in Figure \ref{fig:reduceirrechain}, which also does not depend on the choice of $\Hc^\circ$ in the fixed congruence class (except when $k_0=k_{q+1}$ and we remove the chain with one less node, where $\Qc_{sk}^<$ may depend on the last digit $\zeta_{\nf_q}$; however this is obviously acceptable and we will ignore it below), so that from (\ref{summary1}) we can again deduce (\ref{smallgap2}) for (\ref{smallgap}), except that the small gap condition in (\ref{smallgap}) should be replaced by the large gap condition. Moreover the assumption $k_0\neq k_{q+1}$ means we can recover the factor $\epsilon_{k_{\nf_{01}}k_{\nf_{02}}k_{\nf_{03}}}$ in (\ref{smallgap2}), hence instead of $\widetilde{\epsilon}_{\Es_{sk}^<}$ we have the original factor $\epsilon_{\Es_{sk}^-}$ from (\ref{defcoef}) in Definition \ref{defdec}. This means $(k_{\nf_{1}},k_{\nf_{2}},k_{\nf_{3}})\in\Sf$ for any $\nf\in(\Nc_{sk}^<)^*$ and any decoration appearing in (\ref{smallgap2}), which will allow us to apply the appropriate counting estimates in Section \ref{improvecount}.

In summary, in both small gap and large gap cases we have arrived at the formula (\ref{smallgap2}), possibly with minor differences indicated above.
\subsection{Conclusion}\label{section6summary} In Section \ref{1chain} we have fixed a single irregular chain $\Hc^\circ$ in $\Qc_{sk}$. Since different irregular chains do not affect each other, we can combine them and get an expression for the full sum (\ref{irrechainsum}). Namely, let $\Qc_{sk}^\#$ be the marked couple obtained by removing all the irregular chains $\Hc^\circ$ from $\Qc_{sk}$ as described in Figure \ref{fig:reduceirrechain} (perhaps with minor modification in the large gap case as described in Section \ref{lgcase} above, which we will ignore). This does not depend on the choice of $\Qc_{sk}$ in the fixed congruence class, nor on the choice of $\Qc\in\Fs$. We then have 
\begin{multline}\label{section6fin1} (\ref{irrechainsum})=(C^+\delta)^{n_1}\bigg(\frac{\delta}{2L^{d-1}}\bigg)^{n_0'}\zeta^*(\Qc_{sk}^\#)\int_{\Rb^\Xi} G(\vlambda)\,\mathrm{d}\vlambda\int_{[0,1]^\Xi}\mathrm{d}\vsigma\sum_{\Es_{sk}^\#}\int_{\widetilde{\Ec}_{sk}^\#}\epsilon_{\Es_{sk}^\#}\Pc(\vlambda,\vsigma,k[\Qc_{sk}^\#])\prod_{\nf\in \Xi}e^{\pi i\lambda_\nf t_\nf}\\\times\prod_{\nf\in (\Nc_{sk}^\#)^*} e^{\zeta_\nf\pi i\cdot\delta L^2\Omega_\nf t_\nf}\,\mathrm{d}t_\nf{\prod_{\mf\in(\Lc_{sk}^\#)^*}^{(+)}\Kc_{\Qc^{(\mf,\mf')}}(t_{\mf^p},t_{(\mf')^p},k_\mf)}\prod_{\mf\in(\Nc_{sk}^\#)^*}\Kc_{\Tc^{(\mf)}}^*(t_{\mf^p},t_{\mf},k_\mf).
\end{multline} Here in (\ref{section6fin1}), $n_0'$ is the scale of $\Qc_{sk}^\#$ and $n_1$ is the sum of all the $m_{\mathrm{tot}}$ and $q$ in (\ref{smallgap2}), the summation is taken over all $k$-decorations $\Es_{sk}^\#$ of $\Qc_{sk}^\#$, and the other notations are all associated with $\Qc_{sk}^\#$, except $\widetilde{\Ec}_{sk}^\#$; instead, for $\widetilde{\Ec}_{sk}^\#$ we add the extra conditions $t_{\nf^p}>t_{\nf}+\sigma_\nf$ (where $\nf^p$ is the parent of $\nf$) to the original definition (\ref{timedom}), for $\nf\in\Xi$, where $\Xi$ is a subset of the set $(\Nc_{sk}^\#)^*$ of branching nodes. The vector parameters are $\vlambda=\lambda[\Xi]\in\Rb^\Xi$ and $\vsigma=\sigma[\Xi]\in[0,1]^\Xi$ respectively, and $k[\Qc_{sk}^\#]$ is the vector of all the $k_\nf$'s. The functions $G(\vlambda)$ and $\Pc(\vlambda,\vsigma,k[\Qc_{sk}^\#])$ satisfy the bounds
\begin{equation}\label{tensorbd}\bigg\|\prod_{\nf\in\Xi}\langle \lambda_\nf\rangle^{\frac{1}{18}}G\bigg\|_{L^1}\lesssim (C^+)^n,\quad \sup_{\vlambda,k[\Qc_{sk}^\#]}\int_{[0,1]^\Xi}|\Pc(\vlambda,\vsigma,k[\Qc_{sk}^\#])|\,\mathrm{d}\vsigma\lesssim 1.
\end{equation} We may also insert various small gap or large gap conditions (including the ones coming from $k_0\neq k_{q+1}$ in Section \ref{lgcase}), and some input variables in some of the $\Kc_{\Qc^{(\mf,\mf')}}$ or $\Kc_{\Tc^{(\mf)}}^*$ functions may be translated by $\sigma_\nf$, as in (\ref{smallgap2}) in Section \ref{sgcase}. Finally, the function $\epsilon_{\Es_{sk}^\#}$ may miss a few $\epsilon_{k_{\nf}k_{\nf_1}k_{\nf_2}k_{\nf_3}}$ factors compared to the original definition (\ref{defcoef}), but for each such missing factor we can gain a power $L^{-40d}$ on the right hand side in the second inequality in (\ref{tensorbd}).

At this point, we may expand the functions $\Kc_{\Qc^{(\mf,\mf')}}$ and $\Kc_{\Tc^{(\mf)}}^*$ (or their leading or remainder contributions) using their Fourier $L^1$ (or $X_{\mathrm{loc}}^\kappa$) bounds, and combine the $\Kc$ factors and the $\Pc$ factor in (\ref{section6fin1}), to further reduce to the expression
\begin{multline}\label{section6fin2} (\ref{irrechainsum})=(C^+\delta)^{\frac{n-n_0'}{2}}\bigg(\frac{\delta}{2L^{d-1}}\bigg)^{n_0'}\zeta^*(\Qc_{sk}^\#)\int_{\Rb^{\Lambda}\times\Rb^2} G(\vlambda)\cdot e^{\pi i(\lambda t+\mu s)}\,\mathrm{d}\vlambda\int_{[0,1]^\Xi}\mathrm{d}\vsigma\sum_{\Es_{sk}^\#}\int_{\widetilde{\Ec}_{sk}^\#}\epsilon_{\Es_{sk}^\#}\\\times\prod_{\nf\in {\Lambda}} e^{\zeta_\nf\pi i\cdot\delta L^2\Omega_\nf t_\nf}\prod_{\nf\in \Lambda}e^{\pi i\lambda_\nf t_\nf}\,\mathrm{d}t_\nf\cdot \Xc_{\mathrm{tot}}(\vlambda,\vsigma,k[\Qc_{sk}^\#]),
\end{multline}
Here in (\ref{section6fin2}) the set $\Lambda=(\Nc_{sk}^\#)^*$ and $\vlambda=(\lambda[\Lambda],\lambda,\mu)\in\Rb^\Lambda\times\Rb^2$, the function $G$ is different from the one in (\ref{section6fin1}), but still satisfies the same first inequality in (\ref{tensorbd}) (with $\Xi$ replaced by $\Lambda$, and the extra factor $\langle\lambda\rangle^{\frac{1}{18}}\langle\mu\rangle^{\frac{1}{18}}$ on the left hand side). Using the second bound in (\ref{tensorbd}), the $X_{\mathrm{loc}}^\kappa$ bounds for $\Kc_{\Qc^{(\mf,\mf')}}$ and $\Kc_{\Tc^{(\mf)}}^*$ and their components, and the definition of markings $\Lf$ and $\Rf$, we deduce that the function $\Xc_{\mathrm{tot}}$ satisfies
\begin{equation}\label{xtotbound}\int_{[0,1]^\Xi}|\Xc_{\mathrm{tot}}(\vlambda,\vsigma,k[\Qc_{sk}^\#])|\,\mathrm{d}\vsigma\lesssim{\prod_{\lf\in(\Lc_{sk}^\#)^*}^{(+)}\langle k_\lf\rangle^{-40d}}\cdot L^{-2\nu r_0}
\end{equation} uniformly in $\vlambda$, where $r_0$ is the total number of branching nodes and leaf pairs that are marked $\Rf$ in the marked couple $\Qc_{sk}^\#$. In (\ref{xtotbound}) we can also gain a power $L^{-40d}$ per missing factor $\epsilon_{k_{\nf}k_{\nf_1}k_{\nf_2}k_{\nf_3}}$ in $\epsilon_{\Es_{sk}^\#}$, as described above.

Note that the couple $\Qc_{sk}^\#$ is still prime. Moreover by definition, it \emph{does not contain an irregular chain of length $>10^3d$ with all branching nodes and leaf pairs marked $\Lf$}. In particular, if $r_0$ is the number of branching nodes and leaf pairs that are marked $\Rf$, $r_{\mathrm{irr}}$ is the number of maximal irregular chains, and $Q$ is the total length of these irregular chains, then we have \begin{equation}\label{typeIcontrol}Q\leq C(r_0+r_{\mathrm{irr}}).
\end{equation}Based on this information, as well as the first inequality in (\ref{tensorbd}) and (\ref{xtotbound}), we will establish an \emph{absolute upper bound} for the expression (\ref{section6fin2}). This will be done in Sections \ref{improvecount} and \ref{l1coef}.
\section{Non-regular couples II: improved counting estimates}\label{improvecount} We shall reduce the estimate of (\ref{section6fin2}) to bounding the number of solutions to some counting problem, see (\ref{finalexp3}). In this section we first introduce and study this counting problem, and then use it to control (\ref{section6fin2}) in Section \ref{l1coef}.
\subsection{Couples and molecules} To study the counting problem, we introduce the notion of molecules, which is more flexible than couples.
\begin{df}\label{defmole0} A \emph{molecule} $\Mb$ is a directed graph, formed by {vertices (called \emph{atoms})} and edges (called \emph{bonds}), where multiple and self-connecting bonds are allowed. We will write $v\in \Mb$ and $\ell\in\Mb$ for atoms $v$ and bonds $\ell$ in $\Mb$; we also write $\ell\sim v$ if $v$ is {one of the two endpoints} of $\ell$. We further require that (i) each atom has at most 2 outgoing bonds and at most 2 incoming bonds (a self-connecting bond counts as outgoing once and incoming once), and (ii) there is no \emph{saturated} (connected) component, where a component is saturated means that it contains only degree 4 atoms. Here and below connectedness is always understood in terms of \emph{undirected graphs}.

It is clear that a subgraph of a molecule is still a molecule. We will be interested in certain special subgraphs (or types of subgraph) of molecules, which we will refer to as \emph{functional groups}. We introduce the following notation for molecules $\Mb$, which will be used throughout this section: $V$ is the number of atoms, $V_j\,(0\leq j\leq 4)$ is the number of degree $j$ atoms ($V_0$ is the number of isolated atoms), $E$ is the number of bonds, $F$ is the number of components. We also define
\begin{equation}\label{defchar}
\begin{aligned}
\chi&:=E-V+F,&\eta&:= V_3+2V_2+3V_1+4V_0-4F,\\
&&\eta_*&:=V_3+2V_2+2V_1+2V_0-2F.
\end{aligned}
\end{equation} {This $\chi$ is called the \emph{circuit rank} of $\Mb$ and represents the number of independent cycles; $\eta$ and $\eta_*$ are auxiliary quantities designed to control several types of steps in the algorithm, see Section \ref{proofofgain}.}
\end{df}
\begin{prop}\label{moleprop} In a molecule any self-connecting bond must be single, and between any two atoms there is at most a triple bond. A molecule of $n\geq 1$ atoms has at most $2n-1$ bonds; if it has exactly $2n-1$ bonds we will call it a \emph{base molecule}. Then, a base molecule must be connected. It either has two degree 3 atoms or one degree 2 atom, while all other atoms have degree 4.
\end{prop}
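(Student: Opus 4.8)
Proof proposal for Proposition \ref{moleprop}.

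\textbf{Setup and strategy.} The statement collects four elementary structural facts about molecules, and the natural approach is a sequence of short counting and degree arguments, all flowing from the degree constraint in Definition \ref{defmole0}(i) — every atom has at most two outgoing and at most two incoming bonds, hence degree at most $4$ — together with the no-saturated-component condition (ii). I would organize the proof as follows: (a) rule out multiple self-connecting bonds and bonds of multiplicity $\geq 4$; (b) prove the bound $E\leq 2n-1$ for a molecule with $n\geq 1$ atoms; (c) analyze the extremal (base molecule) case. Throughout, the only serious input is handshake-type counting $\sum_{v\in\Mb}\deg(v)=2E$ combined with the degree cap $\deg(v)\leq 4$.

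\textbf{Steps (a) and (b).} For (a): a self-connecting bond at an atom $v$ contributes one outgoing and one incoming edge at $v$; a double self-connecting bond would contribute two outgoing and two incoming, which is still allowed by the degree count alone, so I need to look more carefully at the directedness. Actually the cleanest route: a self-connecting bond uses up one outgoing slot and one incoming slot of the same atom; two such bonds would saturate $v$ completely with a component consisting of $v$ alone (degree $4$), violating (ii) since $\{v\}$ would be a saturated component. For a triple-or-higher bond between distinct atoms $u,v$: four parallel bonds between $u$ and $v$ force each of $u,v$ to have degree $\geq 4$ with all edges going to the other, so $\{u,v\}$ is a saturated component — again contradicting (ii). This also handles the remark that a triple bond is the maximum. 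For (b): by the degree cap, $2E=\sum_{v}\deg(v)\leq 4n$, giving $E\leq 2n$. To improve this to $E\leq 2n-1$, I would use (ii): if $E=2n$ then every atom has degree exactly $4$, so \emph{every} component is saturated, contradicting (ii) (note $n\geq 1$ guarantees at least one component exists). Hence $E\leq 2n-1$.

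\textbf{Step (c): base molecules.} Suppose $E=2n-1$. Then $2E=4n-2=\sum_v\deg(v)$, so the total degree deficit from $4n$ is exactly $2$; since each atom has degree at most $4$, the only two possibilities for the multiset of degrees are: exactly one atom of degree $2$ and the rest degree $4$; or exactly two atoms of degree $3$ and the rest degree $4$ (degree $\leq 1$ atoms would cause a deficit $\geq 3$ at that atom, too much). This gives the stated dichotomy. For connectedness: suppose a base molecule $\Mb$ on $n$ atoms had two or more components $\Mb_1,\dots,\Mb_F$ with $F\geq 2$. Each $\Mb_i$ is itself a molecule (a subgraph of a molecule), so by part (b) applied to $\Mb_i$ on $n_i\geq 1$ atoms we get $E_i\leq 2n_i-1$, hence $E=\sum_i E_i\leq \sum_i(2n_i-1)=2n-F\leq 2n-2<2n-1$, a contradiction. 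So $F=1$ and $\Mb$ is connected.

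\textbf{Expected main obstacle.} None of the steps is deep; the one place requiring a little care is making the self-connecting-bond and high-multiplicity-bond arguments in part (a) genuinely rigorous given that a double self-loop is \emph{not} immediately forbidden by the raw degree count — one must invoke the no-saturated-component hypothesis, and check that the offending subgraph ($\{v\}$ or $\{u,v\}$ with all bonds internal) is indeed a full connected component (i.e. that the extremal bond configuration leaves no room for bonds to the rest of the molecule). This is where I would be most careful to phrase the argument correctly; everything else is routine handshake counting.
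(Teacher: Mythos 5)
Your proposal is correct and follows essentially the same argument as the paper: the degree cap $\deg(v)\leq 4$ plus the no-saturated-component condition rule out double self-loops, quadruple bonds, and $E=2n$, and the handshake count $\sum_v\deg(v)=4n-2$ forces the stated degree dichotomy for base molecules. The only (harmless) variation is your connectedness step, where you apply the bound $E_i\leq 2n_i-1$ to each component and sum, whereas the paper argues that a disconnected base molecule would have a component consisting only of degree-$4$ atoms; both routes work, and yours in fact avoids the small parity check left implicit in the paper's version.
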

\begin{proof} In a molecule each atom has degree $\leq 4$, so the number of bonds is at most $2n$. Equality cannot hold when $n>0$ because otherwise each atom would have degree 4, contradicting (ii) in Definition \ref{defmole0}. For the same reason there cannot be quadruple bonds or self-connecting double bonds. For a base molecule, the degrees of atoms have to be as stated because the total degree is $4n-2$. If it is not connected, then it has at least two components, so at least one of them will contain only degree 4 atoms, contradiction.
\end{proof}
\begin{df}\label{defmole} Let $\Qc$ be a nontrivial couple, we will define a directed graph $\Mb$ associated with $\Qc$ as follows. The atoms are all $4$-node subsets of $\Qc$ that contain a branching node $\nf\in\Nc^*$ and its three children nodes. For any two atoms, we connect them by a bond if either (i) a branching node is the parent in one atom and a child in the other, or (ii) two leaves from these two atoms are paired with each other. We call this bond a PC (parent-child) bond in case (i) and a LP (leaf-pair) bond in case (ii). Note that multiple bonds are possible, and a self-connecting bond occurs when two sibling leaves are paired. {This definition applies even if one of the trees of $\Qc$ is trivial; note that in this case the root of the trivial tree is regarded as a leaf instead of a branching node.}

We fix a direction of each bond as follows. If a bond corresponds to a leaf pair, then it goes from the atom containing the leaf with $-$ sign to the atom containing the leaf with $+$ sign. If a bond corresponds to a branching node $\nf$ that is not a root, suppose $\nf$ is the parent in the atom $v_1$ and is a child in the atom $v_2$, then the bond goes from $v_1$ to $v_2$ if $\nf$ has $+$ sign, and go from $v_2$ to $v_1$ otherwise. See Figure \ref{fig:cplmole} for an example.
  \begin{figure}[h!]
  \includegraphics[scale=.5]{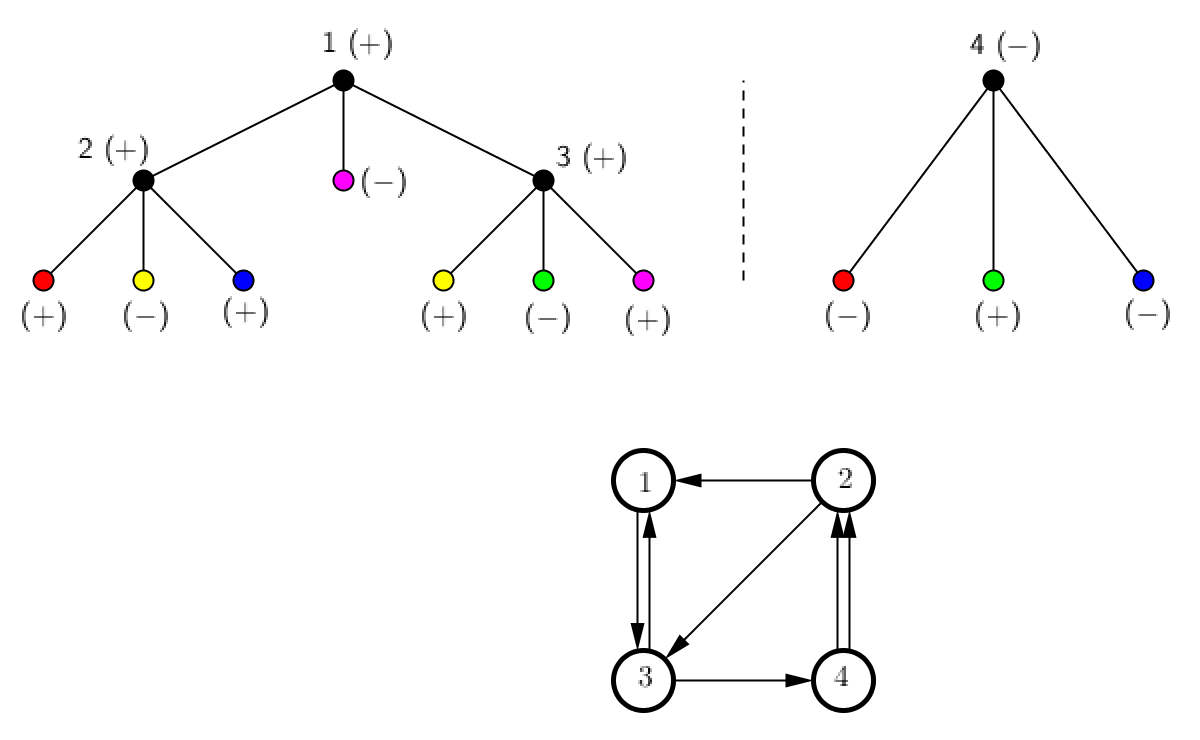}
  \caption{A base molecule (Definition \ref{defmole}), which comes from the couple in Definition \ref{defcouple}. Here each atom has the same label as its parent node in the couple.}
  \label{fig:cplmole}
\end{figure} 
\end{df}
\begin{prop} The directed graph $\Mb$ defined in Definition \ref{defmole} is a base molecule.
\end{prop}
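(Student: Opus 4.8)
The plan is to verify the defining properties of a molecule (Definition \ref{defmole0}) for the directed graph $\Mb$ attached to a nontrivial couple $\Qc$, and then to count bonds to see that $\Mb$ is in fact a base molecule. Throughout, let $n=n(\Qc)$ be the scale of $\Qc$, so that $\Qc$ has $n$ branching nodes and $2n+1$ leaves.

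First I would check the degree constraint. Each atom of $\Mb$ corresponds to a branching node $\nf$ together with its three children $\nf_1,\nf_2,\nf_3$. Outgoing/incoming bonds at this atom come from exactly four ``slots'': the node $\nf$ itself (as seen from above, i.e. as a child of its own parent, unless $\nf$ is a root, in which case this slot is absent) and each of the three children $\nf_j$. Each slot contributes exactly one bond-endpoint: if $\nf_j$ is a branching node it is the parent in its own atom and a child here, giving one PC bond; if $\nf_j$ is a leaf it is paired to exactly one other leaf, giving one LP bond. Hence every atom has degree at most $4$, and the direction conventions in Definition \ref{defmole} assign to each such endpoint a well-defined orientation; one checks directly from the sign rules that at most $2$ endpoints at a given atom are outgoing and at most $2$ are incoming (a PC bond for a node of sign $+$ is outgoing from the ``parent'' atom and incoming to the ``child'' atom, and sign $-$ reverses this; an LP bond is outgoing from the $-$-leaf's atom; a self-connecting bond, occurring exactly when two sibling leaves of opposite sign are paired, counts once outgoing and once incoming). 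This gives property (i).

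Next I would rule out saturated components. A saturated component would consist entirely of degree-$4$ atoms, i.e. atoms whose node $\nf$ is not a root; but every tree has a root, and the root of $\Tc^+$ (say) sits in an atom of degree $\leq 3$, which lies in the component containing that atom — and by following PC bonds downward every atom of $\Tc^+$ is connected to the root's atom, so the component containing the root atom is not saturated. More carefully: in a couple there are one or two root nodes depending on whether the roots are trivial, and since $\Qc\neq\times$ at least one tree is nontrivial, hence has a genuine branching root, producing a degree-$\leq 3$ atom in its component. Every component contains at least one atom, and I would argue that each component must contain such a ``low-degree'' atom: if a component had all atoms of degree $4$, then tracing PC bonds upward (each branching node has a parent unless it is a root) would produce an infinite ascending chain in a finite couple, a contradiction — so the component contains the atom of some root. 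This establishes property (ii), so $\Mb$ is a molecule.

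Finally, to see $\Mb$ is a \emph{base} molecule I would count: $\Mb$ has $V=n$ atoms. The bonds are of two kinds. LP bonds: the $2n+1$ leaves are partitioned into $n+1$ pairs, giving $n+1$ LP bond-endpoint-pairs; but a pair of sibling leaves gives a self-connecting bond which still counts as one bond, so there are exactly $n+1$ LP bonds. PC bonds: each of the $n$ branching nodes, except the one or two roots, is a child of another branching node, contributing one PC bond; if both roots are branching nodes this is $n-2$, but then $\Qc$ has an extra structural feature — actually the cleanest bookkeeping is by total degree. Summing degrees: each of the $n$ atoms has degree $4$ except at the root slot(s), which are missing; the total degree is $4n$ minus (number of root nodes among branching nodes). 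Since the total degree equals $2E$, and a short case check (the roots of $\Tc^\pm$: if $\Qc\neq\times$, exactly the configuration forcing $2$ missing slots occurs, giving total degree $4n-2$), we get $E=2n-1$. By Proposition \ref{moleprop}, a molecule on $n$ atoms with $2n-1$ bonds is a base molecule, and is automatically connected with the stated degree sequence. The main obstacle I anticipate is the careful bookkeeping of the root slots — correctly accounting for whether zero, one, or two of the tree-roots of $\Qc$ are branching nodes versus trivial, and confirming that for every nontrivial couple exactly two degree-slots are absent so that $E=2n-1$ — since Definition \ref{defcouple} allows trivial trees and the figure \ref{fig:cplmole} only illustrates one case.
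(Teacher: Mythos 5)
Your approach---verify the molecule axioms for $\Mb$, then count bonds to show $E=2n-1$ and invoke Proposition \ref{moleprop}---is exactly the paper's, which observes that bonds are in bijection with non-root branching nodes and non-root leaf pairs. Your degree/direction check and the no-saturated-component argument are both fine (the latter correctly uses only the implication ``degree $4$ $\Rightarrow$ not a root''). The gap is exactly where you flagged it, but the fix you sketch does not actually close it. You write that the total degree is ``$4n$ minus (number of root nodes among branching nodes).'' When one tree of $\Qc$ is the trivial tree $\bullet$, only one root is a branching node, so your formula yields $4n-1$, which is odd and therefore cannot equal $2E$. The piece you are missing is a second mechanism that deletes a slot: when $\Tc^-=\bullet$, its single node is simultaneously root and leaf, and since it is not the child of any branching node it lies in no atom. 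The leaf of $\Tc^+$ paired with it then occupies a child slot that produces \emph{no} LP bond (the claim ``if $\nf_j$ is a leaf it is paired to exactly one other leaf, giving one LP bond'' fails there, and your count of $n+1$ LP bonds drops to $n$). That is the second missing slot, and it restores the parity: in both configurations (two nontrivial trees, or one trivial) exactly two slots across all atoms contribute no bond, so total degree is $4n-2$ and $E=2n-1$. This is precisely the content of the paper's phrase ``non-root leaf pairs.'' Separately, a small arithmetic slip: a couple of scale $n$ has $2n+2$ leaves, not $2n+1$; that is why they split into $n+1$ pairs.
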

\begin{proof} Let $n\geq 1$ be the scale of $\Qc$, then $\Mb$ has $n$ atoms and $2n-1$ bonds, because the atoms are in one-to-one correspondence with branching nodes, and the bonds are in one-to-one correspondence with non-root branching nodes and non-root leaf pairs. The statements about outgoing and incoming bonds follow directly from Definition \ref{defmole}, and it is also easy to check that $\Mb$ is connected. Therefore $\Mb$ is a base molecule.
\end{proof}
\begin{rem}In the proof below (for example in some figures), we may omit the directions of some bonds, if these directions do not play a significant role; however they still need to satisfy the conditions in Definition \ref{defmole0}. In the convention we use, arrows indicate bonds with fixed direction, segments without arrow indicate bonds with uncertain direction, and dashed segments indicate possible bond(s) connecting the given atom(s) to the the rest of the molecule. Boxes with dashed boundary indicate components after removing certain bonds or atoms.
\end{rem}
\begin{prop}\label{recover} Given a base molecule $\Mb$ with $n$ atoms as in Definition \ref{defmole0}, the number of couples $\Qc$ such that the corresponding molecule equals $\Mb$ is at most $C^n$.
\end{prop}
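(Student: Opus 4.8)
The plan is to recover the couple $\Qc$ from the base molecule $\Mb$ together with a bounded-per-atom amount of auxiliary combinatorial data, and then to bound the number of such data tuples by $C^{n}$. Recall the dictionary of Definition \ref{defmole}: the atoms of $\Mb=\Mb(\Qc)$ are in bijection with the $n$ branching nodes of $\Qc$, while the $2n-1$ bonds are in bijection with the non-root branching nodes (each giving a PC bond, connecting the atom where that node is a parent to the atom where it is a child) and the leaf pairs not involving the root of a trivial tree (each giving an LP bond). From $\Mb$ alone one cannot read off (a) which bonds are PC and which are LP, nor (b) for a given atom, which of its at most four incident half-edges occupies the ``parent'' slot and which occupy the ``left/mid/right child'' slots. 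I would record exactly this missing information: a labeling of each of the $2n-1$ bonds by a symbol in $\{\mathrm{PC},\mathrm{LP}\}$, and for each atom an injection from its multiset of incident half-edges (of size equal to its degree, so $\le 4$) into the four roles $\{\mathrm{parent},\mathrm{left},\mathrm{mid},\mathrm{right}\}$. In addition one must account for the possibility that one of the two trees is trivial, which amounts to singling out one child slot of one atom as the one paired with the root of the trivial tree (at most $3n$ choices) together with the sign of that root (at most $2$ choices); equivalently one records which atoms are the two roots and the sign of one of them. The number of such data tuples is at most $2^{2n-1}\cdot 24^{n}\cdot(3n+2)\cdot 2$, which is bounded by $C^{n}$ for a suitable absolute $C$.

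Next I would check that $\Mb$ together with a fixed data tuple determines $\Qc$ up to at worst a universally bounded ambiguity (which only improves the constant). Once the PC/LP labels and the slot assignments are fixed, the PC bonds reconstruct the parent--child relations among the branching nodes---the atom carrying a given PC bond in its ``parent'' slot is declared the parent---hence the two underlying trees $\Tc^{\pm}$ together with their left/mid/right child structure; the LP bonds together with the slot assignments reconstruct the pairing $\Ps$ of the leaves; and the sign of every node is then forced by the sign-propagation rule of Definition \ref{deftree} as soon as the sign of one root is chosen. The directions of the bonds of $\Mb$, which were not used in the reconstruction, must be consistent with the signs so produced; any data tuple that is internally inconsistent (e.g.\ a PC bond in the ``parent'' slot at both endpoints, a cycle in the declared parent relation, an LP direction incompatible with the propagated signs, or a configuration that fails to assemble into a legitimate ternary couple of scale $n$) simply yields no couple and contributes $0$ to the count. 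Thus the fibre of the map $\Qc\mapsto(\Mb,\text{data})$ over each valid pair has size $O(1)$, and summing over the $\le C^{n}$ data tuples shows that at most $C^{n}$ couples have molecule $\Mb$.

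The main obstacle, and the part that needs genuine care rather than routine bookkeeping, is the treatment of the degenerate features of the dictionary: self-connecting bonds (which correspond to a pair of sibling leaves and must be matched to two distinct child slots of the same atom), trivial trees (whose root is a leaf belonging to no atom, so that the corresponding leaf pair appears in $\Mb$ not as a bond but as a missing half-edge---precisely the reason, via Proposition \ref{moleprop}, that a base molecule has either a degree-$2$ atom or two degree-$3$ atoms), and the verification that the reconstructed tree structure and pairing actually satisfy the ternary-tree and opposite-sign-pairing constraints of Definitions \ref{deftree} and \ref{defcouple}. Each of these contributes at most polynomially many additional choices and no new obstructions, so all are absorbed into $C$; what remains is the mechanical check that the reconstruction map described above is well defined and inverts the molecule construction of Definition \ref{defmole} on its image.
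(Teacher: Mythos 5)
Your proposal is correct and follows essentially the same route as the paper: the paper also augments $\Mb$ with a code for each incident (atom, bond) pair recording the node's role (parent or left/mid/right child) — which subsumes your PC/LP labels — bounds the number of such encoded molecules by $C^n$, and then reconstructs the parent–child relations, leaves, and leaf pairings from the codes, with the trivial-tree case noted as a minor variant. Your extra bookkeeping for signs and consistency is the same $O(1)$-ambiguity the paper absorbs implicitly, so the two arguments coincide in substance.
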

\begin{proof} For each atom $v\in\Mb$, each bond $\ell\sim v$ corresponds to a unique node $\nf\in v$. We may assign a code to this pair $(v,\ell)$ indicating the relative position of $\nf$ in $v$ (say code $0$ if $\nf$ is the parent in this atom, and codes $1$, $2$ or $3$ if $\nf$ is the left, mid or right child in this atom). In this way we get an encoded molecule which has a code assigned to each pair $(v,\ell)$ where $\ell\sim v$. Clearly if $\Mb$ is fixed then the corresponding encoded molecule has at most $C^n$ possibilities, so it suffices to show that $\Qc$ can be reconstructed from the encoded molecule.

In fact, if the encoded molecule is fixed, then the branching nodes of $\Qc$ uniquely correspond to the atoms of $\Mb$. Moreover, the branching node corresponding to $v_2$ is the $\alpha$-th child of the branching node corresponding to $v_1$, if and only if $v_1$ and $v_2$ are connected by a bond $\ell$ such that the codes of $(v_1,\ell)$ and $(v_2,\ell)$ are $\alpha$ and $0$ respectively. Next, we can determine the leaves of $\Qc$ by putting a leaf as the $\alpha$-th child for each branching node and each $\alpha$, as long as this position is not occupied by another branching node; moreover, the $\alpha$-th child of the branching node corresponding to $v_1$ and the $\beta$-th child of the branching node corresponding to $v_2$ are paired, if and only if $v_1$ and $v_2$ are connected by a bond $\ell$ such that the codes of $(v_1,\ell)$ and $(v_2,\ell)$ are $\alpha$ and $\beta$ respectively. Therefore $\Qc$ can be uniquely reconstructed (if one of the trees in $\Qc$ is trivial the reconstruction will be slightly different but this does affect the result).
\end{proof}
\begin{df}\label{molechain}We define two functional groups, which we call \emph{type I} and \emph{type II (molecular) chains}, as in Figure \ref{fig:molechain}. Note that type I chains are formed by double bonds, and type II chains are formed by double bonds and pairs of single bonds. For type I chains, we require that the two bonds in any double bond have opposite directions. For type II chains, we require that any pair of single bonds have opposite directions, see Figure \ref{fig:molechain}.
  \begin{figure}[h!]
  \includegraphics[scale=.5]{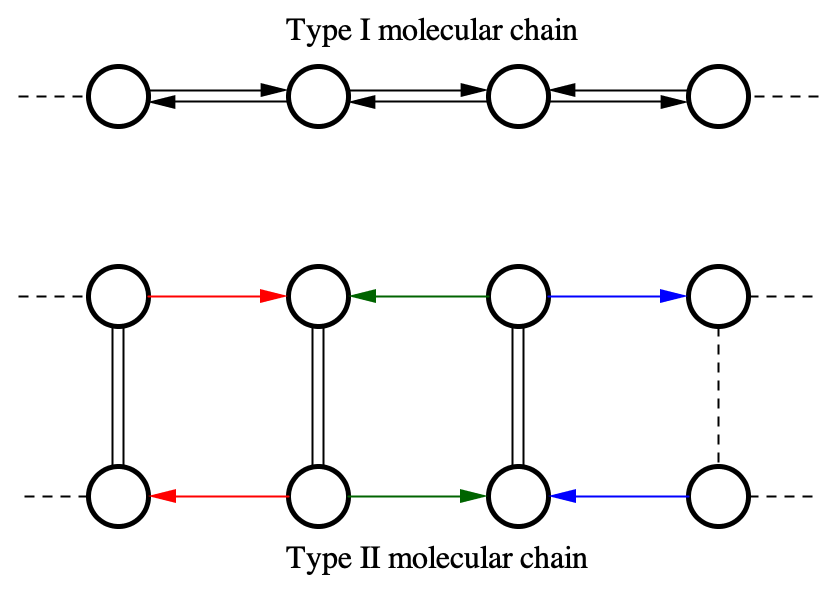}
  \caption{The two types of molecular chains. For type II, the single bonds of the same color are paired single bonds, and must have opposite directions. The directions of the double bonds are not drawn here, but they must satisfy the conditions in Proposition \ref{moleprop}.}
  \label{fig:molechain}
\end{figure} 
\end{df}
We now define the counting problem associated with a molecule (or a couple, see Remark \ref{moledec}), which is the main thing we study in the rest of this section.
\begin{df}\label{countingproblem} Given a molecule $\Mb$ and a set $S$ of atoms. Suppose we fix (i) $a_\ell\in\Zb_L^d$ for each bond $\ell\in\Mb$, (ii) $c_v\in\Zb_L^d$ for each {non-isolated} atom $v\in\Mb$, assuming $c_v=0$ if $v$ has degree 4, (iii) $\Gamma_v\in\Rb$ for each {non-isolated} atom $v$, and (iv) $f_v\in\Zb_L^d$ for each {non-isolated} $v\in S$ with $d(v)<4$. Define $\Df(\Mb)$ to be the set of vectors $k[\Mb]:=(k_\ell)_{\ell\in\Mb}$, such that each $k_\ell\in\Zb_L^d$ and $|k_\ell-a_\ell|\leq 1$, and
\begin{equation}\label{decmole}\sum_{\ell\sim v}\zeta_{v,\ell}k_\ell=c_v,\quad\bigg|\sum_{\ell\sim v}\zeta_{v,\ell}|k_\ell|_\beta^2-\Gamma_v\bigg|\leq \delta^{-1}L^{-2}\end{equation} for each {non-isolated} atom $v$. Here in (\ref{decmole}) the sum is taken over all bonds $\ell\sim v$, and $\zeta_{v,\ell}$ equals $1$ if $\ell$ is outgoing from $v$, and equals $-1$ otherwise. We also require that (a) the values of $k_\ell$ for different $\ell\sim v$ are all equal given each {non-isolated} $v\in S$, and this value equals $f_v$ if also $d(v)<4$, and (b) for any {non-isolated} $v\not\in S$ and any bonds $\ell_1,\ell_2\sim v$ of opposite directions (viewing from $v$), we have $k_{\ell_1}\neq k_{\ell_2}$. Note that this actually makes $\Df$ depending on $S$, but we will omit this dependence for simplicity. We say an atom $v$ is \emph{degenerate} if $v\in S$, and is \emph{tame} if moreover $d(v)<4$.

In addition, we may add some extra conditions to the definition of $\Df(\Mb)$. These conditions are independent of the parameters, and have the form of (combinations of) $(k_{\ell_1}-k_{\ell_2}\in E)$ for some bonds $\ell_1,\ell_2\in\Mb$ and fixed subsets $E\subset \Zb_L^d$. Let $\mathtt{Ext}$ be the set of these extra conditions, and denote the corresponding set of vectors $k[\Mb]$ be $\Df(\Mb,\mathtt{Ext})$. We are interested in the quantities $\sup\#\Df(\Mb,\mathtt{Ext})$, where the supremum is taken over all possible choices of parameters $(a_\ell,c_v,\Gamma_v,f_v)$.
\end{df}
\begin{rem}\label{moledec} The vectors $k[\Mb]$ will come from decorations of the couple $\Qc$ from which $\Mb$ is obtained. In fact, if $k[\Qc]$ is a $k$-decoration of $\Qc$, then it uniquely corresponds to a vector $k[\Mb]$. It is easy to check, using Definitions \ref{defdec} and \ref{countingproblem}, that $\sum_{\ell\sim v}\zeta_{v,\ell}k_\ell$ equals $0$ if $d(v)\in\{2,4\}$ and equals $\pm k$ if $d(v)=3$, and $\sum_{\ell\sim v}\zeta_{v,\ell}|k_\ell|_\beta^2$ equals $0$ if $d(v)=2$, equals $-\zeta_\nf\Omega_\nf$ if $d(v)=4$ (where $\nf$ is the parent node in the atom $v$), and equals $-\zeta_\nf(\Omega_\nf\pm|k|_\beta^2)$ if $d(v)=3$. Moreover, if $(k_{\nf_1},k_{\nf_2},k_{\nf_3})\in\Sf$, then either the values of $k_\ell$ for different $\ell\sim v$ are all equal (and this value equals $k$ if $d(v)<4$), or for any bonds $\ell_1,\ell_2\sim v$ of opposite directions we have $k_{\ell_1}\neq k_{\ell_2}$. Note that a degenerate atom corresponds exactly to a branching node $\nf$ for which $\epsilon_{k_{\nf_1}k_{\nf_2}k_{\nf_3}}=-1$.
\end{rem}
\begin{prop}[A rigidity theorem]\label{gain} Let $\Mb$ be a base molecule of $n$ atoms, where $1\leq n\leq (\log L)^3$, that \emph{does not contain any triple bond}. Then, $\Df(\Mb)$ is the union of at most $C^n$ subsets. Each subset has the form $\Df(\Mb,\mathtt{Ext})$, and there exists $1\leq r\leq n$, and a collection of at most $Cr$ molecular chains of either type I or type II in $\Mb$, such that (i) the number of atoms not in one of these chains is at most $Cr$, and (ii) for any type II chain in the collection and any two paired single bonds $(\ell_1,\ell_2)$ in this chain (see Figure \ref{fig:molechain}), the set $\mathtt{Ext}$ includes the condition $(k_{\ell_1}=k_{\ell_2})$. Moreover we have the estimate that  \begin{equation}\label{defect2}\sup\#\Df(\Mb,\mathtt{Ext})\leq (C^+)^n\delta^{-\frac{n+m}{2}}L^{(d-1)n-2\nu r},\end{equation} where $m$ is the number of atoms in the union of type I chains.
\end{prop}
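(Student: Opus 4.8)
The plan is to prove the estimate by induction on the number $n$ of atoms, using a \emph{molecule reduction algorithm} that repeatedly removes small functional groups of atoms from $\Mb$, each time reducing to a strictly smaller molecule while accounting for the loss/gain in the counting bound. The key point is that each elementary reduction step must (a) bound the local counting problem for the removed atoms using the counting lemma (Lemma \ref{lem:counting}), (b) preserve the no-triple-bond property (or handle the exceptional cases where it fails), and (c) quantify whether the removed group ``costs'' the full $\delta^{-1/2}L^{d-1}$ per atom (in which case those atoms had better lie in a type I or type II chain) or yields a genuine gain $L^{-2\nu}$ (contributing to $r$). I would organize the steps of the algorithm by priority: first clear away degree-$\leq 2$ atoms and components that become small; then look for type I chains (double bonds with opposite directions) and type II chains and peel them off as blocks, recording them in the collection and, for type II chains, inserting the extra conditions $(k_{\ell_1}=k_{\ell_2})$ into $\mathtt{Ext}$; finally, when no chain structure remains, show that the ``core'' molecule must admit a reduction step producing a gain. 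The running bookkeeping is: after removing a set $A$ of atoms joined to the rest by a set of bonds, the change in the exponent of $L$ should be controlled by a local Euler-characteristic-type quantity (the $\chi,\eta,\eta_*$ of Definition \ref{defmole0}), and each non-chain reduction step must be shown to contribute at least one unit to $r$ via an extra independent quadratic constraint.

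The heart of the argument is the case analysis for the reduction step on a molecule with no remaining type I or type II chains and no triple bonds. Here I would argue that such a molecule, being a base molecule (connected, total degree $4n-2$), must contain a specific local configuration — e.g. a degree-$3$ or degree-$2$ atom, or a short cycle, or two atoms joined by a double bond whose two bonds have the \emph{same} direction, or an atom with a self-connecting bond — and in each case a direct application of the counting lemma to the $\leq 4$ atoms involved gains a factor $L^{-2\nu}$ (in fact a genuine power saving, since the quadratic equation $\sum \zeta |k_\ell|_\beta^2 = \Gamma + O(\delta^{-1}L^{-2})$ restricts the solution count beyond the linear constraints, using the genericity of $\beta$ where needed). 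The absence of chains is exactly what forces one of these ``gainful'' configurations to exist — this is the combinatorial rigidity statement. One then removes this configuration, obtains a smaller molecule (re-splitting into $C^{O(1)}$ subcases according to whether newly-created multiple bonds appear, whether degenerate atoms are involved, etc.), applies the induction hypothesis, and multiplies the bounds. The factor $C^n$ on the number of subsets $\Df(\Mb,\mathtt{Ext})$ accumulates because each of the $O(n)$ reduction steps branches into boundedly many cases; the $(C^+)^n$ in \eqref{defect2} absorbs the constants from each application of Lemma \ref{lem:counting}.

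The main obstacle, as I see it, is twofold. First, the \emph{exhaustiveness} of the case analysis: one must show that after removing all type I and type II chains, \emph{every} base molecule with no triple bond admits a reduction step with a strictly positive gain — missing even one configuration breaks the induction. This requires a careful enumeration of local neighborhoods in a degree-$\leq 4$ directed graph, organized so that the removed block has at most $4$ atoms and the gain is visible from the rank of the associated linear-plus-quadratic system. Second, one must propagate the \emph{$L^1$-integrability prerequisites} and the degenerate-atom exceptions cleanly: degenerate atoms (from $S$) can cost up to $L^{4d}$ each, so the algorithm must be arranged so that degenerate atoms are removed early and their cost is either absorbed into the $(C^+)^n$ bound (there are boundedly many of them in the relevant applications) or offset, as noted in Remark \ref{newres} and Remark \ref{countingrem}. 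I would handle the chain-counting (step (a), (b) of the displayed conclusion) by noting type I chains contribute exactly the ``break-even'' bound $\delta^{-1/2}L^{d-1}$ per atom with no gain — hence their total atom count $m$ appears with no $\nu$-saving in \eqref{defect2} — while type II chains, thanks to the extra condition $k_{\ell_1}=k_{\ell_2}$ placed in $\mathtt{Ext}$, also break even but in a way compatible with the later $L^1$ recovery in Section \ref{l1coef}. The quantity $r$ is then defined as the number of atoms \emph{not} in any chain (up to the constant factor), and conditions (i)–(ii) of the proposition are satisfied by construction of the algorithm.
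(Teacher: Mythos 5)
Your high-level framework (a prioritized reduction algorithm whose steps are estimated by Lemma \ref{lem:counting}, with branching into $C^n$ subsets carrying extra conditions, and with type II chains generating the conditions $k_{\ell_1}=k_{\ell_2}$) matches the paper's strategy. But the step you identify as ``the heart of the argument'' — that after all type I and type II chains are removed, \emph{every} remaining base molecule without triple bonds must contain a local configuration of at most $4$ atoms whose counting problem gains a power $L^{-2\nu}$ — is false, and the paper never proves such a dichotomy. There are break-even local configurations that belong to no chain: a degree-$3$ atom joined by three single bonds to three degree-$4$ atoms (step (3R-1)) costs exactly $\delta^{-1}L^{2(d-1)}$ for one atom with no gain; two adjacent degree-$3$ atoms joined by a single bond, in the branch where the extra conditions $k_{\ell_2}=k_{\ell_4}$, $k_{\ell_3}=k_{\ell_5}$ hold (step (3S3-1)), are likewise break-even; the same is true of bridge removal (BR) and the degree-$2$ removals (2R-3)--(2R-5). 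A $4$-regular graph minus one edge, with all single bonds and large girth, has no type I or II chain and no triple bond, yet every available local move is of this break-even kind, so your induction cannot produce the required gain from a single ``gainful'' step. Relatedly, your definition of $r$ as (a constant multiple of) the number of non-chain atoms makes the estimate \eqref{defect2} precisely the thing to be proved, and a local chain-or-gain analysis cannot supply it.

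What the paper actually does at this point is a \emph{global} charging argument: it sets $r$ to be the number of good (gain-producing) steps along a track, allows break-even non-chain steps to occur, and then proves that their total number is at most $Cr$ by monitoring several counters over the whole reduction — $\eta$, $\eta_*$, $V_3$, the number $V_2^*$ of degree-$2$ atoms with two single bonds, and the number of special bonds — together with the priority order of the algorithm (see the inequalities \eqref{increeta}--\eqref{increv3} and the subsequent bookkeeping in Section \ref{proofofgain}). For instance, (3R-1) has $\Delta\eta=+2$ while $\eta$ must increase by only $2$ over the whole track, so the count of (3R-1) steps is controlled by the steps with $\Delta\eta<0$, whose counts are in turn tied to $r$ through $V_3$, $V_2^*$ and the special-bond count. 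It is this cascade of balance identities, not a local rigidity of chain-free molecules, that yields $r\geq 1$, bounds the non-chain atoms by $Cr$, and produces the factor $L^{-2\nu r}$ in \eqref{defect2}. (A minor additional correction: type I chain atoms are removed by (2R-1) at cost $\delta^{-1}L^{d-1}$ \emph{per atom}, not $\delta^{-1/2}L^{d-1}$; this is exactly why the exponent in \eqref{defect2} is $\delta^{-(n+m)/2}$ rather than $\delta^{-n/2}$.) To repair your proposal you would need to replace the claimed local dichotomy with such a global accounting of break-even steps against good steps.
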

\begin{rem}\label{countingrem} In view of Remark \ref{newres}, in Definition \ref{countingproblem} we may also fix some set $S^*$ of atoms such that neither (a) nor (b) is required for $v\in S^*$, but we are allowed to multiply the left hand side of (\ref{defect2}) by $L^{-40d\cdot|S^*|}$. In this way we can restate Proposition \ref{gain} appropriately, and the new result can be easily proved with little difference in the arguments, due to the large power gains. For simplicity we will not include this in the proof below.
\end{rem}
\subsection{The general framework}\label{moleframe} The framework of proving Proposition \ref{gain} is as follows. We will perform a sequence of operations on $\Mb$, following some specific algorithm, until reducing $\Mb$ to isolated atoms only. The operations are usually removing bonds or atoms from $\Mb$, but in some cases may also add new bonds to $\Mb$. {As is standard in graph theory, whenever we remove some atoms, we also automatically remove all bonds connected to them.}

Together with each operation we also specify an extra condition, which has the form appearing in $\mathtt{Ext}$ and will be denoted by $\Delta\mathtt{Ext}$. This is usually $\varnothing$ but in some cases may be nontrivial. The operation and the extra condition together is called a \emph{step}. A sequence of steps ending at isolated atoms is called a \emph{track}. In each track, the time immediately after a step and before the next step is called a \emph{timespot}. In our algorithm, there are timespots, which we call \emph{checkpoints}, where the next step has two choices, leading to different tracks. Any track will contain at most $Cn$ steps, and the total number of tracks is at most $C^n$.

For each step, we use the subscript $(\cdot)_{\mathrm{pre}}$ to denote any object before this step, and use $(\cdot)_{\mathrm{pos}}$ to denote the object after this step. If $X$ is a real-valued variable we define $\Delta X=X_{\mathrm{pos}}-X_{\mathrm{pre}}$. During each track we will monitor the values of various quantities associated with $\Mb$, such as $\chi$, $\eta$, etc. We will also retrospectively (i.e. in the opposite direction of the steps) define two variables $(\gamma,\kappa)$ and a set $\mathtt{Ext}$ of extra conditions. In the end state with only isolated atoms, we set $\gamma=\kappa=0$ and $\mathtt{Ext}=\varnothing$. For each step we will fix the value of $\Delta\gamma$ and $\Delta\kappa$, and will determine $\mathtt{Ext}_{\mathrm{pre}}$ from $\mathtt{Ext}_{\mathrm{pos}}$ and $\Delta\mathtt{Ext}$. Given a track and a timespot $t^*$, consider all the possible tracks that coincide with the given track up to $t^*$; these different tracks lead to different values of $\gamma$ and $\mathtt{Ext}$ calculated at $t^*$, and we define $\Upsilon$ to be the collection of all such possible $\mathtt{Ext}$'s.

We will set our steps and algorithm in such a way that, for any timespot in any track, the following conditions are always satisfied:
\begin{itemize}
\item Condition 1: $\Mb$ is always a molecule;
\item Condition 2: any vector $k[\Mb]$ must satisfy one of the conditions $\mathtt{Ext}\in\Upsilon$;
\item Condition 3: if $\Mb$ consists of components $\Mb_j$, then $\mathtt{Ext}$ is the union of $\mathtt{Ext}_j$ which only involves bonds in $\Mb_j$;
\item Condition 4: $\sup\#\Df(\Mb,\mathtt{Ext})\leq (C^+)^{n_0}\delta^{-\kappa} L^{(d-1)\gamma}$, where $n_0$ is the number of remaining steps in this track.
\end{itemize} 

The above conditions are trivially satisfied in the end state, so we only need to verify that they are preserved during the execution of the algorithm (Conditions 2--4 will be verified retrospectively). Now Condition 3 is easy to verify as the operation in each step will be restricted to one component of $\Mb$, and so will the extra condition $\Delta\mathtt{Ext}$. Condition 1 will be preserved if an operation only removes bonds or atoms; in the exceptional cases where new bonds are added, we only need to show that the (outgoing or incoming) degree of each atom does not increase, and no saturated component is created, which will be done within the definition of steps. Condition 2 depends on the algorithm, but at each non-checkpoint where the next step has only one choice, we will always set $\mathtt{Ext}_{\mathrm{pre}}=\mathtt{Ext}_{\mathrm{pos}}$, which preserves Condition 2; checkpoints will only appear in specific places where we will verify Condition 2 within the definition of the algorithm. Finally, for Condition 4, we only need to show that
\begin{equation}\label{keyineq}\sup\#\Df(\Mb_{\mathrm{pre}},\mathtt{Ext}_{\mathrm{pre}})\leq C^+\delta^{\Delta\kappa}L^{-(d-1)\Delta\gamma}\cdot\sup\#\Df(\Mb_{\mathrm{pos}},\mathtt{Ext}_{\mathrm{pos}}),
\end{equation} which will be one of the key components of the proof.

In Section \ref{oper} we define all the steps together with $(\Delta\gamma,\Delta\kappa)$ and $\Delta\mathtt{Ext}$, then prove (\ref{keyineq}), and study some properties of these steps which will be used in analyzing the algorithm. The algorithm is described in Section \ref{alg}, and we use it to prove Proposition \ref{gain} in Section \ref{proofofgain}.
\subsubsection{Some useful facts} We record some definitions and facts which will be frequently used below.
\begin{df}\label{defbridge} Given a molecule $\Mb$, we say a single bond $\ell$ is a \emph{bridge} if removing $\ell$ adds one new component, see Figure \ref{fig:bridge}. We say $\ell$ is \emph{special} if both atoms connected by $\ell$ have degree 3, and each of them has a double edge, connected to two different atoms.
  \begin{figure}[h!]
  \includegraphics[scale=.5]{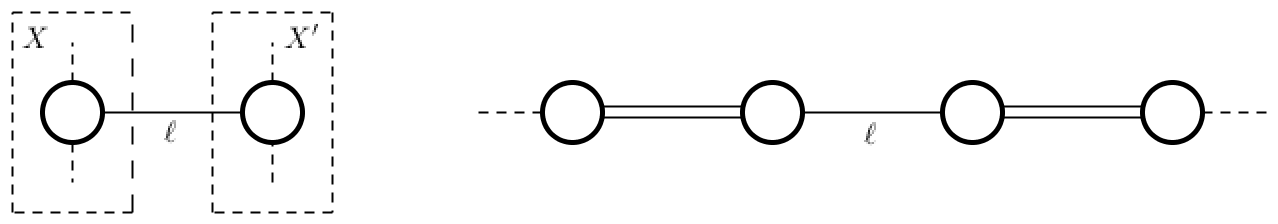}
  \caption{A bridge and a special single bond (see Definition \ref{defbridge}). The bridge can also be seen as a special case of Lemma \ref{cutlem} below with $r=1$.}
  \label{fig:bridge}
\end{figure} 
\end{df}
\begin{lem}\label{increF} Suppose $\Mb$ has no bridge. Suppose we remove a set $Y$ of atoms from $\Mb$ together with all the bonds connecting to them, and consider the possible {new} components generated by this operation. Then in $\Mb$, the total number of bonds connecting each component to $Y$ is at least 2. In particular, the number of such components is at most $h/2$, where $h$ is the total number of bonds connecting $Y$ to $Y^c$.
\end{lem}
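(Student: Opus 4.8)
The plan is to argue component by component, first proving the "at least two bonds" statement by a bridge contradiction, and then deducing the counting bound by a simple disjointness argument. Write $\Mb' := \Mb \setminus Y$ for the molecule obtained by deleting the atoms of $Y$ and all bonds incident to them. Call a component $Z$ of $\Mb'$ \emph{touching} if at least one bond of $\Mb$ joins an atom of $Z$ to an atom of $Y$; these are exactly the ``components generated by this operation'' in the statement, and I would phrase the lemma this way in order to discard any pre-existing component of $\Mb$ disjoint from $Y$ (for which the assertion is vacuously false). For a touching component $Z$, let $b(Z)$ denote the number of bonds of $\Mb$ with one endpoint in $Z$ and the other in $Y$; by definition $b(Z)\geq 1$, and the goal is to upgrade this to $b(Z)\geq 2$.

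First I would establish $b(Z)\geq 2$ for every touching $Z$. Suppose for contradiction that $b(Z)=1$, so there is a unique bond $\ell$ joining $Z$ to $Y$. Since its two endpoints lie in the disjoint sets $Z\subset Y^c$ and $Y$, it is not a self-connecting bond, and ``exactly one bond'' means $\ell$ is a single bond in the sense of the molecule. I claim $\ell$ is a bridge: because $Z$ is a \emph{maximal} connected piece of $\Mb' = \Mb\setminus Y$, the only bonds of $\Mb$ leaving $Z$ are bonds to $Y$, and $\ell$ is the only such bond; hence within $\Mb$ the atom set $Z$ is attached to the rest of $\Mb$ solely through $\ell$, so removing $\ell$ separates $Z$ off and creates a new component. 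This contradicts the hypothesis that $\Mb$ has no bridge, so $b(Z)\geq 2$.

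Then I would count the touching components. Every bond of $\Mb$ joining $Y$ to $Y^c$ has its $Y^c$-endpoint in a unique component of $\Mb'$, which is therefore touching, and it contributes to $b(Z)$ for exactly that one component $Z$; conversely every bond counted by some $b(Z)$ is a bond from $Y$ to $Y^c$. Thus, if $Z_1,\dots,Z_p$ are the touching components, then $\sum_{i=1}^{p} b(Z_i) = h$, where $h$ is the total number of bonds joining $Y$ to $Y^c$. Combining with $b(Z_i)\geq 2$ gives $2p\leq h$, i.e. $p\leq h/2$, which is the claimed bound.

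I do not expect a serious obstacle here; the only point that needs genuine care is the bookkeeping in the bridge step — verifying that when $b(Z)=1$ the single connecting bond really does increase the component count when removed, which rests precisely on $Z$ being a full connected component of $\Mb\setminus Y$ (so that it has no alternative route to the rest of $\Mb$). A secondary, purely cosmetic point is to fix once and for all the convention that ``components generated by this operation'' means touching components, so as to avoid the spurious counterexample coming from components of $\Mb$ that never met $Y$.
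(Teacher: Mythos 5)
Your proof is correct and follows essentially the same route as the paper's: the key step is identical — if a component $Z$ of $\Mb\setminus Y$ is joined to $Y$ by only one bond $\ell$, then $\ell$ is a bridge in $\Mb$, contradicting the hypothesis — and the counting bound follows by summing. Your write-up is simply a more careful elaboration, including the clarification that one should discard pre-existing components of $\Mb$ disjoint from $Y$.
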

\begin{proof} If $Z$ is one of the components and there is only one bond $\ell$ connecting $Z$ to $Y$, then since $Z$ cannot be connected to any other component, we know that $\ell$ is a bridge in the original $\Mb$. The second statement follows immediately.
\end{proof}
\begin{lem}\label{cutlem}Suppose $X$ and $X'$ form a partition of atoms in (some component of) $\Mb$, and $\ell_1,\cdots,\ell_r$ are all the bonds connecting $X$ to $X'$. then for any $k[\Mb]\in\Df(\Mb)$ we have\begin{equation}\label{cuts}\sum_{j=1}^r\zeta_jk_{\ell_j}=c_0,\quad \bigg|\sum_{j=1}^r\zeta_j|k_{\ell_j}|_\beta^2-\Gamma_0\bigg|\leq n\delta^{-1}L^{-2}\end{equation}where $n$ is the number of atoms in $\Mb$ (note that $n\leq (\log L)^3$), $\zeta_j$ equals $1$ or $-1$ depending on whether $\ell_j$ goes from $X$ to $X'$ or otherwise, $c_0$ is a constant vector depending only on the parameters $(c_v)$, and $\Gamma_0$ is a constant depending only on $(\Gamma_v)$. In particular, if $r=1$ (which means $\ell_1$ is a bridge) then $k_{\ell_1}$ is uniquely determined by $(c_v)$.
\end{lem}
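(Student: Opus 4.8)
The statement to prove is Lemma~\ref{cutlem}, which asserts that summing the ``linear'' and ``quadratic'' decoration equations over all atoms in one part $X$ of a partition leaves only a boundary contribution coming from the cut bonds $\ell_1,\dots,\ell_r$.

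\medskip

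\textbf{Plan of proof.} The plan is to simply add up the two equations in \eqref{decmole} over all atoms $v\in X$, and track the cancellations that occur for every bond that is \emph{internal} to $X$. First I would recall that for each atom $v\in\Mb$ and each bond $\ell\sim v$ we have the sign $\zeta_{v,\ell}\in\{\pm1\}$, equal to $+1$ if $\ell$ is outgoing from $v$ and $-1$ if incoming (a self-connecting bond, which by Proposition~\ref{moleprop} must be single, contributes once with each sign and thus cancels within its own atom). The key bookkeeping observation is: for a bond $\ell$ whose \emph{both} endpoints lie in $X$, the term $\zeta_{v,\ell}k_\ell$ appears exactly twice in $\sum_{v\in X}\sum_{\ell\sim v}\zeta_{v,\ell}k_\ell$ — once for each endpoint — and these two occurrences carry opposite signs $\zeta_{v_1,\ell}=-\zeta_{v_2,\ell}$ because $\ell$ is outgoing from one endpoint and incoming to the other (or cancels against itself if self-connecting). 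Hence all internal bonds cancel, and only the cut bonds $\ell_1,\dots,\ell_r$ survive, each with the sign $\zeta_j=\zeta_{v,\ell_j}$ for the unique endpoint $v\in X$; this sign is exactly $+1$ if $\ell_j$ goes from $X$ to $X'$ and $-1$ otherwise, matching the statement.

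\medskip

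\textbf{Carrying it out.} For the linear equation: summing the first identity in \eqref{decmole} over $v\in X$ gives
\[
\sum_{j=1}^r \zeta_j k_{\ell_j} \;=\; \sum_{v\in X} c_v \;=:\; c_0,
\]
so $c_0$ depends only on the parameters $(c_v)_{v\in\Mb}$, as claimed. For the quadratic equation I would do the same with $|k_\ell|_\beta^2$ in place of $k_\ell$ — the same pairwise cancellation of internal bonds applies verbatim since it is purely a statement about incidence signs, not about the quantity attached to the bond. The only difference is that \eqref{decmole} gives an \emph{inequality} $\bigl|\sum_{\ell\sim v}\zeta_{v,\ell}|k_\ell|_\beta^2-\Gamma_v\bigr|\le \delta^{-1}L^{-2}$ for each $v$, so after summing over $v\in X$ and applying the triangle inequality (there are at most $n$ atoms, each contributing an error $\le\delta^{-1}L^{-2}$, and $n\le(\log L)^3$) one obtains
\[
\Bigl|\sum_{j=1}^r \zeta_j |k_{\ell_j}|_\beta^2 - \Gamma_0\Bigr| \;\le\; n\,\delta^{-1}L^{-2},\qquad \Gamma_0:=\sum_{v\in X}\Gamma_v,
\]
with $\Gamma_0$ depending only on $(\Gamma_v)$. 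Finally, when $r=1$, the bond $\ell_1$ is by definition a bridge, and the surviving linear equation reads $\zeta_1 k_{\ell_1}=c_0$, which determines $k_{\ell_1}=\zeta_1 c_0$ uniquely in terms of the parameters $(c_v)$.

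\medskip

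\textbf{Remarks on difficulty.} This lemma is essentially a combinatorial identity and I do not expect any genuine obstacle; the only point requiring a little care is the sign bookkeeping at self-connecting bonds and at bonds both of whose endpoints are in $X$ — one must invoke Proposition~\ref{moleprop} to rule out self-connecting double bonds and check that the two incidences of an internal bond genuinely carry opposite $\zeta_{v,\ell}$, which follows directly from the orientation conventions in Definition~\ref{defmole}. One should also note that the lemma as stated implicitly assumes $X$ and $X'$ both nonempty and contained in a single component, which is how it will be applied (in particular to bridges, $r=1$); the proof is insensitive to whether $\Mb$ itself is connected since the summation is confined to the component containing $X$ and $X'$.
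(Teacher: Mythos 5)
Your proof is correct and follows exactly the paper's argument: sum the two relations in \eqref{decmole} over $v\in X$, observe that each internal bond appears twice with opposite signs $\zeta_{v,\ell}$ and cancels (self-connecting bonds cancel within their own atom), so only the cut bonds survive, with the triangle inequality giving the $n\delta^{-1}L^{-2}$ error for the quadratic part. The extra sign bookkeeping you spell out is just a more detailed writing of the same one-line proof in the paper.
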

\begin{proof} This follows from summing (\ref{decmole}) for all $v\in X$, and noticing that $k_\ell$, where $\ell$ is a bond connecting two atoms in $X$, appears exactly twice with opposite signs.
\end{proof}
\subsection{The steps}\label{oper} We start by defining all different types of steps. Recall the quantities defined in (\ref{defchar}). We will always have either $\Delta\gamma=\Delta\chi$ or $\Delta\gamma\geq\Delta\chi+\frac{1}{6(d-1)}$. In these two cases we call the step \emph{normal} or \emph{good}; good steps will be indicated by the letter ``G" appearing in the names.
\subsubsection{Degenerate atoms} In this step, assume $v$ is a non-isolated degenerate atom. Note that any atom with self-connecting bond must be degenerate, otherwise $\Df(\Mb)=\varnothing$ trivially.
\begin{itemize}
\item Step (DA): we remove the atom $v$, and all bonds connecting to it, and set $\Delta\mathtt{Ext}=\varnothing$.
\end{itemize}

Suppose $j\in\{0,1\}$ is the number of self-connecting bonds at $v$, and $h$ is the number of other atoms having bond(s) with $v$. Then for (DA) we have $\Delta E=-d(v)+j$, $\Delta V=-1$ and $\Delta F\leq h-1$. We define $\Delta \gamma=0$ if $d(v)\leq 3$ or if $d(v)=4$ and $\Delta F+j\geq 2$; otherwise let $\Delta\gamma=-2+\frac{1}{4}$. We also define $\Delta\kappa=0$, and $\mathtt{Ext}_{\mathrm{pre}}=\mathtt{Ext}_{\mathrm{pos}}$.
\begin{prop}\label{daprop} The step (DA) is either normal or good, and it satisfies (\ref{keyineq}). If $d(v)\geq 2$ and the step is normal we must have $\Delta\eta_*\leq -2$.
\end{prop}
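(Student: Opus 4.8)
Proposition \ref{daprop} asks for three things about step (DA): that it is either normal or good (i.e. $\Delta\gamma=\Delta\chi$ or $\Delta\gamma\geq\Delta\chi+\tfrac{1}{6(d-1)}$), that it satisfies the key inequality (\ref{keyineq}), and that if $d(v)\geq 2$ and the step is normal then $\Delta\eta_*\leq -2$. My plan is to treat these in order, keeping the bookkeeping variables $j\in\{0,1\}$ (number of self-connecting bonds at $v$) and $h$ (number of other atoms bonded to $v$), together with the counts $\Delta E=-d(v)+j$, $\Delta V=-1$, $\Delta F\leq h-1$ fixed in the statement, and noting the easily checked relation $d(v)=2j+(\text{number of non-self bonds at }v)$, whence the number of non-self bonds is $d(v)-2j$ and so $h\le d(v)-2j$.

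\medskip

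\textbf{Normal/good dichotomy.} Since $\chi=E-V+F$ we have $\Delta\chi=\Delta E-\Delta V+\Delta F\le (-d(v)+j)+1+(h-1)=h+j-d(v)\le -j\le 0$, using $h\le d(v)-2j$. In the ``good'' branch we have chosen $\Delta\gamma=-2+\tfrac14=-\tfrac74$, which occurs only when $d(v)=4$ and $\Delta F+j\le 1$; in that case $h\le 4-2j$ and $\Delta\chi=\Delta E-\Delta V+\Delta F\le (-4+j)+1+(\Delta F)$, and since $\Delta F+j\le1$ this gives $\Delta\chi\le -4+j+1+(1-j)=-2$, so $\Delta\gamma-\Delta\chi\ge -\tfrac74-(-2)=\tfrac14\ge\tfrac1{6(d-1)}$ (recall $d\ge3$), so the step is good. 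In all other cases $\Delta\gamma=0$, and I must check $\Delta\gamma\ge\Delta\chi$, i.e. $\Delta\chi\le 0$: this was already shown above. So whenever $\Delta\gamma=0$ the step is normal (indeed $\Delta\gamma\ge\Delta\chi$ always, and equality need not hold, which is allowed by the convention that ``normal'' means $\Delta\gamma=\Delta\chi$ — here I should double-check the convention; the text says $\Delta\gamma=\Delta\chi$ for normal, so strictly I should argue that in the normal branch $\Delta\chi=0$; this forces re-examining the subcases $d(v)\le 3$ and $d(v)=4,\Delta F+j\ge2$ and showing equality $\Delta\chi=0$ can indeed fail — more likely the actual convention intends $\Delta\gamma\ge\Delta\chi$ for normal, and this is the point to reconcile with the surrounding text).

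\medskip

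\textbf{The counting inequality (\ref{keyineq}).} This is the substantive part. I would argue as follows: given any $k[\Mb_{\mathrm{pos}}]\in\Df(\Mb_{\mathrm{pos}},\mathtt{Ext}_{\mathrm{pos}})$, I must bound the number of ways to extend it to $k[\Mb_{\mathrm{pre}}]\in\Df(\Mb_{\mathrm{pre}},\mathtt{Ext}_{\mathrm{pre}})$. The new bonds are exactly the $d(v)-j$ bonds of $\Mb_{\mathrm{pre}}$ incident to $v$ that are not self-connecting (the self bond contributes $j$ more but its $k_\ell$ is one of these). Because $v$ is degenerate, condition (a) of Definition \ref{countingproblem} forces all $k_\ell$ for $\ell\sim v$ to be \emph{equal} to a single vector, say $w$, and if $d(v)<4$ this $w=f_v$ is fixed, contributing factor $1$; if $d(v)=4$ then $w$ ranges over a ball of radius $1$ but must also satisfy $|w-a_\ell|\le1$ for the relevant bond already present in $\Mb_{\mathrm{pos}}$ when $h\ge1$, or when $v$ is isolated-after-removal the linear relation $\sum_{\ell\sim v}\zeta_{v,\ell}k_\ell=c_v$ (which for $d(v)=4$ reads $2w$ or $0$ depending on orientations of the self bond) pins down $w$ up to $O(1)$ choices, or the quadratic relation $|\,\sum\zeta|k_\ell|^2-\Gamma_v|\le\delta^{-1}L^{-2}$ gives a further $\delta^{-1}L^{d-2}$-type count — this is where the $\delta^{-\frac{n+m}2}$ and the power $L^{(d-1)n}$ budget is consumed. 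I would organize this by cases on $(d(v),\Delta F,j)$ matching exactly the three definitional cases, and in each case show the number of extensions is $\le C^+\delta^{\Delta\kappa}L^{-(d-1)\Delta\gamma}=C^+L^{-(d-1)\Delta\gamma}$ (since $\Delta\kappa=0$). The worst case $d(v)=4$, $\Delta F+j\le1$ gives $\Delta\gamma=-\tfrac74$ and I need an $L^{7(d-1)/4}$ bound on the extension count — here $w$ is free in $\Rb^d$ subject only to one quadratic constraint, giving $\le C^+L^d\cdot\delta^{-1}L^{-2}\le C^+ L^{d-2}\delta^{-1}$; I will need $d-2\le \tfrac74(d-1)$, i.e. $d\ge -3$, always true, and also absorb the $\delta^{-1}$: here the chosen $\Delta\kappa=0$ seems problematic, so I expect the actual argument either redefines $\Delta\kappa$ in this subcase or uses additional structure (the $O(1)$ ball constraint from a surviving bond). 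Reconciling the exact power accounting here is the main obstacle.

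\medskip

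\textbf{The assertion $\Delta\eta_*\le-2$ when $d(v)\ge2$ and the step is normal.} Recall $\eta_*=V_3+2V_2+2V_1+2V_0-2F$. Removing $v$ changes $V_{d(v)}$ by $-1$, changes $F$ by $\Delta F\le h-1$, and changes the degrees of the $h$ neighbouring atoms (each drops by the multiplicity of its bond(s) to $v$). I would compute $\Delta\eta_*$ directly from these changes. The key point is that in the normal case we are \emph{not} in the good subcase, so either $d(v)\le3$, or $d(v)=4$ with $\Delta F+j\ge2$. When $d(v)=2$: $v$ is degenerate with $j\le1$; if $j=1$ it is an isolated self-loop atom (so $h=0$, $\Delta F=-1$, and removing it: $\Delta V_2=-1$, $\Delta F=-1$, giving $\Delta\eta_*=-2-(-2)=\ldots$), if $j=0$ then $h\le2$. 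I would enumerate these small cases and the $d(v)=3$, $d(v)=4$ (normal) cases, using that neighbours losing a full bond's worth of degree push $\eta_*$ down, while the loss of $F$ pushes it up, and the normal-case hypothesis (bounding $\Delta F$) ensures the net is $\le-2$. The cleanest route is to write $\Delta\eta_* = -(\text{contribution of }v\text{ to }\eta_*) + \sum_{\text{neighbours }u}(\eta_*\text{-weight drop of }u) - 2\Delta F$ and check the inequality term by term; I expect this to be routine once the cases are laid out, the only care being atoms dropping to degree $1$ or $0$ where the $\eta_*$-weight is $2$ rather than decreasing further.

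\medskip

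In summary: the normal/good dichotomy and the $\Delta\eta_*$ bound are finite case-checks on the combinatorial data $(d(v),h,j,\Delta F)$; the real work is (\ref{keyineq}), which is a counting estimate for the fiber of the restriction map $\Df(\Mb_{\mathrm{pre}})\to\Df(\Mb_{\mathrm{pos}})$ over a degenerate atom, and where I expect to have to be careful about exactly how the $\delta$-powers and the worst-case $\Delta\gamma=-\tfrac74$ are balanced, possibly invoking the $O(1)$ localization $|k_\ell-a_\ell|\le1$ for surviving bonds rather than only the quadratic constraint.
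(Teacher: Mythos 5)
Your normal/good dichotomy computation is fine, and the convention worry at the end is easily resolved: $\Delta\chi$ is an integer and you have shown $\Delta\chi\le 0$, so when $\Delta\gamma=0$ the step is normal if $\Delta\chi=0$ and good if $\Delta\chi\le-1$ (since $1\ge\tfrac{1}{6(d-1)}$); the proposition only claims the step is \emph{either} normal \emph{or} good, which covers both. For the $\Delta\eta_*\le-2$ claim your plan of direct bookkeeping can be made to work, but you are making life hard by enumerating $(d(v),h,j,\Delta F)$ freely: the hypothesis "normal" gives $\Delta\chi=0$, which together with $h+2j\le d(v)$ and $\Delta F\le h-1$ pins down $j=0$, $h=d(v)$, every bond at $v$ single, and $\Delta F=d(v)-1$. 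With that, each of the $d(v)$ neighbours has its $\rho_*$-weight increase by at most $1$, the contribution of $v$ itself to $\rho_*$ is $4-d(v)$, and $\Delta\eta_*\le d(v)-(4-d(v))-2(d(v)-1)=-2$ is a one-line consequence.

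The genuine gap is in (\ref{keyineq}), and your own hesitation about ``absorbing the $\delta^{-1}$'' was a correct alarm. For a degree-$4$ degenerate atom the local relations at $v$ give you nothing: with all $k_\ell=w$ and two outgoing and two incoming bonds, $\sum_\ell \zeta_{v,\ell}k_\ell = w+w-w-w=0$ is vacuous, and so is the quadratic relation $\sum_\ell\zeta_{v,\ell}|k_\ell|_\beta^2=0$. So the $\delta^{-1}L^{d-2}$ figure you produce from the quadratic constraint has no source, and the $\delta^{-1}$ really cannot be paid for since $\Delta\kappa=0$. In the good subcase $d(v)=4$, $\Delta F+j\le1$, the correct (and much simpler) accounting is just $\Nf\lesssim L^d$ choices for $w$ from the localization $|w-a_\ell|\le1$, and $L^d\le L^{\frac74(d-1)}$ for $d\ge3$ closes (\ref{keyineq}). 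More importantly, you are missing the normal subcase $d(v)=4$, $\Delta F+j\ge2$ entirely, and that is the only subtle point of the proposition. There $\Delta\gamma=0$, so $\Nf$ must be $O(1)$, but this cannot come from constraints at $v$ for the reason above. The paper obtains it structurally: with $h'=2(2-j)$ non-self bonds at $v$, Lemma \ref{increF} (applied in contrapositive) shows that $\Delta F\ge 2-j$ is impossible unless one of those bonds is a bridge; then Lemma \ref{cutlem} forces $k_{\ell_1}$ on a bridge to be a fixed constant, hence $k^*=k_{\ell_1}$ is uniquely determined and $\Nf=O(1)$. Without invoking the bridge/cut lemmas this case cannot be closed.
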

\begin{proof} First, by counting the degree of $v$ we know $h+2j\leq d(v)$, so $\Delta\chi\leq -d(v)+j+h\leq 0$. If $\Delta\gamma=0$, then we have a normal or good step; if $\Delta\gamma=-2+\frac{1}{4}$, then $d(v)=4$ and $\Delta F+j\leq 1$, so $\Delta\chi=-4+j+\Delta F+1\leq -2$, and we have a good step. Now suppose $d(v)\geq 2$ and the step is normal, then $\Delta\chi=0$, hence $\Delta F=h-1$ and $d(v)=j+h$, which means that $j=0$, $d(v)=h$, and each bond connecting to $v$ is a single bond. We then have $\Delta F=d(v)-1$. As for the quantity $\rho_*:=V_3+2V_2+2V_1+2V_0$, the contribution to $\rho_*$ of each of the $d(v)$ atoms connected to $v$ changes from 0 to 1, or 1 to 2, or 2 to 2 after the removal of $v$. The contribution of $v$ itself to $\rho_*$ is $4-d(v)$ as $d(v)\geq 2$. We conclude that $\Delta\eta_*\leq d(v)-2(d(v)-1)-(4-d(v))=-2$, as desired.

Now we prove (\ref{keyineq}). Recall that $v$ is a degenerate atom, so $k_\ell$ are all equal for $\ell\sim v$, let this value be $k^*$. If $k[\Mb_{\mathrm{pre}}]\in\Df(\Mb_{\mathrm{pre}},\mathtt{Ext}_{\mathrm{pre}})$ and $k^*$ is fixed, then $k[\Mb_{\mathrm{pos}}]$, which is the restriction of $k[\Mb_{\mathrm{pre}}]$ to the bonds in $\Mb_{\mathrm{pos}}$, belongs to $\Df(\Mb_{\mathrm{pos}},\mathtt{Ext}_{\mathrm{pos}})$ with some new parameters that depend on the original parameters as well as $k^*$ (note that, if a degenerate atom $v'$ that is not tame in $\Mb_{\mathrm{pre}}$ becomes tame in $\Mb_{\mathrm{pos}}$, then $v'$ must be adjacent to $v$, so the value of $k_{\ell'}$ for $\ell'\sim v'$ must be fixed, once $k^*$ is fixed). This implies that
\[\sup\#\Df(\Mb_{\mathrm{pre}},\mathtt{Ext}_{\mathrm{pre}})\leq \Nf\cdot\sup\#\Df(\Mb_{\mathrm{pos}},\mathtt{Ext}_{\mathrm{pos}})\] where $\Nf$ is the number of choices for $k^*$. If $\Delta\gamma=-2+\frac{1}{4}$, this already implies (\ref{keyineq}), since $\Nf\lesssim L^d$ and $(d-1)(2-\frac{1}{4})>d$. If $\Delta\gamma=0$, we only need to show that $k^*$ is uniquely determined. This is true by definition if $d(v)\leq 3$; if $d(v)=4$ then $1+\Delta F> 2-j$, but the number of non-self-connecting bonds at $v$ is $2(2-j)$, so Lemma \ref{increF} implies that some bond $\ell_1$ connecting to $v$ must be a bridge. By Lemma \ref{cutlem} we know that $k_{\ell_1}$ is constant, hence $k^*$ is also constant and (\ref{keyineq}) is still true.
\end{proof}
\subsubsection{Triple bonds} From now on, in all subsequent steps, we assume $\Mb_{\mathrm{pre}}$ has no degenerate atom (and hence no self-connecting bond). In the current step, assume there is a triple bond between two atoms $v_1$ and $v_2$ in $\Mb_{\mathrm{pre}}$, such that $d(v_1)$ and $d(v_2)$ are not both $4$. In (TB-1) we assume $d(v_1)=d(v_2)=3$, so the triple bond is separated from the rest of the molecule; in (TB-2) we assume $d(v_1)=3$ and $d(v_2)=4$, so $v_2$ has an extra single bond.
\begin{itemize}
\item Steps (TB-1)--(TB2): we remove atoms $v_1$, $v_2$ and all bonds connecting to them, and set $\Delta\mathtt{Ext}=\varnothing$.
\end{itemize}

For (TB-1) we have $(\Delta V, \Delta E,\Delta F)=(-2,-3,-1)$, and for (TB-2), we have $(\Delta V, \Delta E,\Delta F)=(-2,-4,0)$. For both steps we define $\Delta\gamma=-2$, $\Delta\kappa=-1$ and $\mathtt{Ext}_{\mathrm{pre}}=\mathtt{Ext}_{\mathrm{pos}}$.
\begin{prop}\label{tbprop} The steps (TB-1) and (TB-2) are normal, and satisfy (\ref{keyineq}).
\end{prop}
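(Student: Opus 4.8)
\textbf{Proof proposal for Proposition \ref{tbprop}.}

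The plan is to check the two assertions separately: that the steps (TB-1) and (TB-2) are \emph{normal}, i.e.\ $\Delta\gamma=\Delta\chi$, and that they satisfy the key inequality (\ref{keyineq}). For the first assertion I would just record the combinatorial changes and compute $\chi=E-V+F$ from (\ref{defchar}). In (TB-1) the two atoms $v_1,v_2$ together with their triple bond form an entire connected component (both have degree $3$, all absorbed into the triple bond), whose removal gives $(\Delta V,\Delta E,\Delta F)=(-2,-3,-1)$, hence $\Delta\chi=\Delta E-\Delta V+\Delta F=-3+2-1=-2=\Delta\gamma$. In (TB-2) the atom $v_1$ is a pendant on $v_2$, and $v_2$ meets the rest of its component only through the single bond to an atom $v_3\notin\{v_1,v_2\}$, so deleting $\{v_1,v_2\}$ and their four incident bonds leaves the remaining atoms connected: $(\Delta V,\Delta E,\Delta F)=(-2,-4,0)$, hence $\Delta\chi=-4+2+0=-2=\Delta\gamma$. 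In passing one notes that removing bonds cannot violate the degree bounds and that the surviving component still contains $v_3$ of degree $\le 3$, so it is not saturated; thus Condition~1 of Section~\ref{moleframe} is preserved (Conditions~2--4 are handled by the general framework, with $\mathtt{Ext}_{\mathrm{pre}}=\mathtt{Ext}_{\mathrm{pos}}$ since $\Delta\mathtt{Ext}=\varnothing$).

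For (\ref{keyineq}), with $(\Delta\gamma,\Delta\kappa)=(-2,-1)$ the target reads $\sup\#\Df(\Mb_{\mathrm{pre}},\mathtt{Ext}_{\mathrm{pre}})\leq C^+\delta^{-1}L^{2(d-1)}\cdot\sup\#\Df(\Mb_{\mathrm{pos}},\mathtt{Ext}_{\mathrm{pos}})$. Let $\ell_1,\ell_2,\ell_3$ be the bonds of the triple bond and, in (TB-2), $\ell_4$ the extra single bond at $v_2$. Any $k[\Mb_{\mathrm{pre}}]\in\Df(\Mb_{\mathrm{pre}},\mathtt{Ext}_{\mathrm{pre}})$ restricts to its values on the surviving bonds, and this restriction lies in $\Df(\Mb_{\mathrm{pos}},\mathtt{Ext}_{\mathrm{pos}})$ for appropriately re-defined parameters: in (TB-2) the parameters $c_{v_3},\Gamma_{v_3}$ pick up the $\ell_4$-terms, which is legitimate since $\ell_4\in\Mb_{\mathrm{pos}}$ and hence $k_{\ell_4}$ is part of $k[\Mb_{\mathrm{pos}}]$; also $\mathtt{Ext}_{\mathrm{pos}}$ involves only bonds of $\Mb_{\mathrm{pos}}$, so it carries over. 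Thus it suffices to bound, for a fixed value of $k[\Mb_{\mathrm{pos}}]$, the number of completions $(k_{\ell_1},k_{\ell_2},k_{\ell_3})$ to an element of $\Df(\Mb_{\mathrm{pre}})$.

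This count is the elementary point of the argument. The balance relations (\ref{decmole}) at $v_1$ involve exactly $\ell_1,\ell_2,\ell_3$ and read $\sum_{i=1}^3\zeta_{v_1,\ell_i}k_{\ell_i}=c_{v_1}$ and $\big|\sum_{i=1}^3\zeta_{v_1,\ell_i}|k_{\ell_i}|_\beta^2-\Gamma_{v_1}\big|\leq\delta^{-1}L^{-2}$, where by the degree constraint at most two of the signs $\zeta_{v_1,\ell_i}$ coincide. Since $\zeta_{v_2,\ell_i}=-\zeta_{v_1,\ell_i}$ for $i=1,2,3$, the relations at $v_2$ (which in (TB-2) also carry the already-fixed $k_{\ell_4}$-terms) reduce, upon combining with those at $v_1$, to consistency conditions on the fixed data and impose nothing further on $(k_{\ell_1},k_{\ell_2},k_{\ell_3})$. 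Using the linear relation at $v_1$ to solve for $k_{\ell_1}$ in terms of $k_{\ell_2},k_{\ell_3}$ — each ranging over the $\lesssim L^d$ points of $\Zb_L^d$ in a unit ball — the $v_1$-quadratic relation becomes, for each fixed $k_{\ell_3}$, a constraint on $k_{\ell_2}$ of the form $\big||k_{\ell_2}|_\beta^2+\langle k_{\ell_2},w\rangle_\beta+\mu\big|\leq\delta^{-1}L^{-2}$ (a shifted positive- or indefinite-quadratic form with leading coefficient $\pm 1$ or $\pm 2$). By the elementary counting estimate Lemma~\ref{lem:counting}, and since $\delta$ is a fixed constant so $\delta^{-1}L^{-2}\gg L^{-2}$, this has at most $C^+\delta^{-1}L^{d-2}$ solutions $k_{\ell_2}$. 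Hence the number of completions is $\lesssim L^d\cdot\delta^{-1}L^{d-2}=\delta^{-1}L^{2(d-1)}$, which is (\ref{keyineq}).

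The main obstacle — modest in this case — is the bookkeeping of the bond orientations $\zeta_{v,\ell}$ on the triple bond: one must see that the two incident atoms contribute only \emph{one} effective linear and \emph{one} effective quadratic constraint on the three new variables (the second atom's equations being redundant modulo consistency), and then match the exponents of the resulting count $\delta^{-1}L^{2(d-1)}$ to the prescribed values $(\Delta\gamma,\Delta\kappa)=(-2,-1)$, which is precisely why these constants are chosen. Everything else — the values of $\Delta V,\Delta E,\Delta F$, preservation of the molecule structure, and the compatibility of the parameters and of $\mathtt{Ext}$ — is immediate from the definitions.
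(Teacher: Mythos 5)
The proposal handles the normality calculation and the reduction from $\Df(\Mb_{\mathrm{pre}})$ to $\Df(\Mb_{\mathrm{pos}})$ correctly, and correctly observes that the $v_2$-constraints are redundant modulo the $v_1$-constraints. But the actual lattice point count --- the heart of the matter --- is wrong.

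You fix $k_{\ell_3}$ first and claim that the residual constraint on $k_{\ell_2}$ is a nondegenerate quadratic with at most $C^+\delta^{-1}L^{d-2}$ solutions, so that multiplying by $L^d$ choices of $k_{\ell_3}$ gives $\delta^{-1}L^{2d-2}$. Both steps fail. First, by the degree constraint at $v_1$ (at most two outgoing, at most two incoming), exactly two of $\zeta_{v_1,\ell_1},\zeta_{v_1,\ell_2},\zeta_{v_1,\ell_3}$ agree; if you fix the odd-sign variable, the remaining two signs coincide and the leading quadratic coefficient is indeed $\pm2$, but if you fix one of the two like-signed variables, the quadratic terms in $k_{\ell_2}$ \emph{cancel} (coefficient $\zeta_1+\zeta_2=0$) and you are left with the linear constraint $|\langle c',k_{\ell_2}\rangle_\beta-\mu|\leq\delta^{-1}L^{-2}$, so the claim ``leading coefficient $\pm1$ or $\pm2$'' is simply false in general. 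Second, even in the nondegenerate case, the best available sphere-counting bound for $\Zb_L^d$ points in a unit ball with a quadratic constraint of width $\delta^{-1}L^{-2}$ is $\delta^{-1}L^{d-4/3}$ (Lemma \ref{jensen}), not $\delta^{-1}L^{d-2}$; and in the degenerate (linear) case the worst-case count for a single slice is $\sim L^{d-1}$, which dominates $\delta^{-1}L^{d-2}$. Either way the slice-by-slice bound yields at best $\delta^{-1}L^{2d-4/3}$ or $L^{2d-1}$ after summing over $k_{\ell_3}$, losing a power of $L$ relative to the required $\delta^{-1}L^{2d-2}$. The gain from $L^{2d-1}$ down to $\delta^{-1}L^{2d-2}$ is precisely what \emph{cannot} be obtained by slicing: it is an averaged gain over $k_{\ell_3}$, and is what Lemma \ref{lem:counting}(2) produces by the change of variables $y=k_1-x_1$, $z=k_1-x_3$ turning the two constraints into a bilinear one $\langle y,z\rangle_\beta\approx\mathrm{const}$ and then invoking Proposition \ref{approxnt} (the circle method, using the genericity of $\beta$). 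The paper simply cites Lemma \ref{lem:counting}(2) here; your proposal should do the same rather than attempt an elementary slicing argument, which provably cannot close.

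A secondary point: in (TB-2) you say the $\ell_4$-terms at $v_3$ get absorbed into the parameters ``since $\ell_4\in\Mb_{\mathrm{pos}}$,'' but $\ell_4$ is incident to the removed atom $v_2$ and hence is deleted, so $\ell_4\notin\Mb_{\mathrm{pos}}$. The correct justification is the paper's: $\ell_4$ is the unique bond joining $\{v_1,v_2\}$ to its complement, hence a bridge in the sense of Lemma \ref{cutlem}, which forces $k_{\ell_4}$ to be constant; only then can its contribution to the $v_3$-constraints be absorbed into new parameters $c_{v_3},\Gamma_{v_3}$.
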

\begin{proof} These steps are normal by definition, as $\Delta\chi=-2$. To prove (\ref{keyineq}), let the bonds in the triple bond be $\ell_j\,(1\leq j\leq 3)$, and let the extra single bond in the case of (TB-2) be $\ell_4$. For (TB-2) we have that $k_{\ell_4}$ is constant due to Lemma \ref{cutlem}, and in both cases $(k_{\ell_1},k_{\ell_2},k_{\ell_3})$ satisfies the system (\ref{3vcounting}) in Lemma \ref{lem:counting}, thanks to (\ref{decmole}). By Lemma \ref{lem:counting} (2) we have at most $C^+\delta^{-1}L^{2(d-1)}$ choices for these $(k_{\ell_j})$; by fixing their values and reducing to $k[\Mb_{\mathrm{pos}}]$ as in the proof of Proposition \ref{daprop} we can prove (\ref{keyineq}).
\end{proof}
\subsubsection{Bridge removal} In all subsequent steps, we assume $\Mb_{\mathrm{pre}}$ has no triple bonds. In the current step, we assume $\Mb_{\mathrm{pre}}$ contains a bridge $\ell$, which is a single bond connecting atoms $v_1$ and $v_2$.
\begin{itemize}
\item Step (BR): we remove the bond $\ell$, and set $\Delta\mathtt{Ext}=\varnothing$.
\end{itemize}
For (BR) we have $(\Delta V, \Delta E,\Delta F)=(0,-1,1)$ because removing a bridge adds one component. We also define $\Delta\gamma=\Delta\kappa=0$ and $\mathtt{Ext}_{\mathrm{pre}}=\mathtt{Ext}_{\mathrm{pos}}$.
\begin{prop}\label{brprop} The step (BR) is normal, and satisfies (\ref{keyineq}). Moreover we have $\Delta\eta=-2$ and $\Delta V_3\geq -2$, with equality holding only when $d(v_1)=d(v_2)=3$.
\end{prop}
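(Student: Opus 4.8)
The plan is to read off all four assertions directly from the bookkeeping of the graph invariants of $\Mb$ attached to step (BR), the only nontrivial input being Lemma \ref{cutlem} applied to the single bridge $\ell$.

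\emph{Normality and (\ref{keyineq}).} First I would record that for step (BR) we have $(\Delta V,\Delta E,\Delta F)=(0,-1,+1)$, the value $\Delta F=+1$ being exactly the defining property of a bridge in Definition \ref{defbridge}; hence $\Delta\chi=\Delta E-\Delta V+\Delta F=0$, which matches the declared $\Delta\gamma=0$, so the step is normal. For (\ref{keyineq}), since $\Delta\gamma=\Delta\kappa=0$ and $\mathtt{Ext}_{\mathrm{pre}}=\mathtt{Ext}_{\mathrm{pos}}$, it suffices to produce an injection from $\Df(\Mb_{\mathrm{pre}},\mathtt{Ext}_{\mathrm{pre}})$ into $\Df(\Mb_{\mathrm{pos}},\mathtt{Ext}_{\mathrm{pos}})$ for a suitable choice of the $\Mb_{\mathrm{pos}}$-parameters, which gives (\ref{keyineq}) with constant $1$. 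Here $\Mb_{\mathrm{pos}}$, obtained by deleting the single bond $\ell$, is still a molecule: its two endpoints $v_1\ne v_2$ (distinct because at this stage $\Mb_{\mathrm{pre}}$ has no self-connecting bond) each drop one unit of degree, so each of the two components that appear contains an atom of degree $\le 3$ and no component is saturated. Then Lemma \ref{cutlem} with $r=1$ shows $k_\ell$ is uniquely determined by the parameters $(c_v)$, so the coordinate-forgetting map $k[\Mb_{\mathrm{pre}}]\mapsto (k_{\ell'})_{\ell'\ne\ell}$ is injective on $\Df(\Mb_{\mathrm{pre}},\mathtt{Ext}_{\mathrm{pre}})$ and lands in $\Df(\Mb_{\mathrm{pos}},\mathtt{Ext}_{\mathrm{pos}})$ once the now-constant contribution of $k_\ell$ is absorbed into $c_{v_1},c_{v_2},\Gamma_{v_1},\Gamma_{v_2}$; conditions (a)--(b) of Definition \ref{countingproblem} are only weakened by deleting a bond, so they persist.

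\emph{The relations $\Delta\eta=-2$ and $\Delta V_3\ge -2$.} For $\Delta\eta$ I would use the identity $V_3+2V_2+3V_1+4V_0=\sum_v(4-d(v))$, so that $\eta=\sum_v(4-d(v))-4F$; deleting $\ell$ keeps $V$ fixed, lowers $d(v_1)$ and $d(v_2)$ each by $1$ (so $\sum_v(4-d(v))$ rises by $2$) and raises $F$ by $1$, giving $\Delta\eta=2-4=-2$. For $\Delta V_3$, only the degrees of $v_1,v_2$ change, and the contribution of $v_i$ to $\Delta V_3$ is $+1$ if $d(v_i)=4$ (it becomes a degree-$3$ atom), $-1$ if $d(v_i)=3$ (it leaves the degree-$3$ class), and $0$ if $d(v_i)\le 2$. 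Summing the two contributions gives $\Delta V_3\ge -2$, with equality precisely when both contributions are $-1$, i.e. $d(v_1)=d(v_2)=3$.

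There is no genuine obstacle in this proposition; it is a routine consequence of the definitions. The one place deserving a moment of care is the parameter bookkeeping in (\ref{keyineq}) — checking that freezing $k_\ell$ really yields an admissible counting problem for $\Mb_{\mathrm{pos}}$ — but this is immediate, since the supremum defining $\#\Df$ ranges over all admissible parameters and the required adjustments of $c_{v_i},\Gamma_{v_i}$ are of the allowed type.
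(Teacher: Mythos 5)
Your proof is correct and follows essentially the same path as the paper's: normality from $\Delta\chi=0$, the bound (\ref{keyineq}) from Lemma \ref{cutlem} forcing $k_\ell$ to be constant and then restricting to $k[\Mb_{\mathrm{pos}}]$, and the $\Delta\eta$ and $\Delta V_3$ bookkeeping by tracking how removing a bridge lowers $d(v_1),d(v_2)$ by one and raises $F$ by one. The only cosmetic difference is that you make the identity $\rho=\sum_v(4-d(v))$ explicit and you verify the admissibility of the induced parameters in a bit more detail, both of which the paper leaves implicit.
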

\begin{proof} The step is normal because $\Delta\chi=0$. Let the bridge be $\ell$, then the value of $k_\ell$ must be fixed by Lemma \ref{cutlem}. Once $k_\ell$ is fixed, we can reduce to $k[\Mb_{\mathrm{pos}}]$ as before and this leads to (\ref{keyineq}).

The effect of (BR) reduces the degrees of two atoms each by 1, and adds one new component. by definition of $\eta$ we have $\Delta\eta=2-4=-2$, because the contribution to $\rho:=V_3+2V_2+3V_1+4V_0$ of each of the two atoms connected by $\ell$ changes from 0 to 1, or 1 to 2, or 2 to 3, or 3 to 4 after the removal of $\ell$. Moreover $\Delta V_3\geq -2$ is clear from definition, and equality holds only when both $v_j$ before removal of the bridge have degree 3.
\end{proof}
\subsubsection{Degree 3 atoms connected by a single bond}\label{3s3} In all subsequent steps, we assume there is no bridge in $\Mb_{\mathrm{pre}}$. In the current step, we assume that there are two degree 3 atoms $v_1$ and $v_2$, connected by a single bond $\ell_1$. Then $\Mb_{\mathrm{pre}}$ must contain one of the functional groups shown in Figures \ref{fig:3s3} and \ref{fig:3s3new}. Recall the definition of good and bad vectors in Lemma \ref{goodvec}.

In steps (3S3-1)--(3S3-4G) we assume that $v_1$ and $v_2$ each has two more single bonds $\ell_2,\ell_3$ and $\ell_4,\ell_5$, connecting to four different atoms $v_3,v_4$ and $v_5,v_6$ { labeled as in Figure \ref{fig:3s3}}. In (3S3-1)--(3S3-3G) we assume that (i) after removing $\{v_1,v_2\}$ and all bonds connecting to them, $\{v_3,v_5\}$ is in one new component, and $\{v_4,v_6\}$ is in the other new component, and that (ii) the bonds $\ell_2$ and $\ell_4$ have opposite directions (viewing from $\{v_1,v_2\}$), and the bonds $\ell_3$ and $\ell_5$ also have opposite directions. In (3S3-4G) we assume either (i) or (ii) is false. Moreover, in (3S3-1) we assume that $d(v_3)=\cdots=d(v_6)=4$, and in (3S3-3G) we assume that $d(v_3)$ and $d(v_5)$ are not both 4. Finally, in (3S3-5G) we assume the functional groups around $v_1$ and $v_2$ are like the ones shown in Figure \ref{fig:3s3new}.
  \begin{figure}[h!]
  \includegraphics[scale=.5]{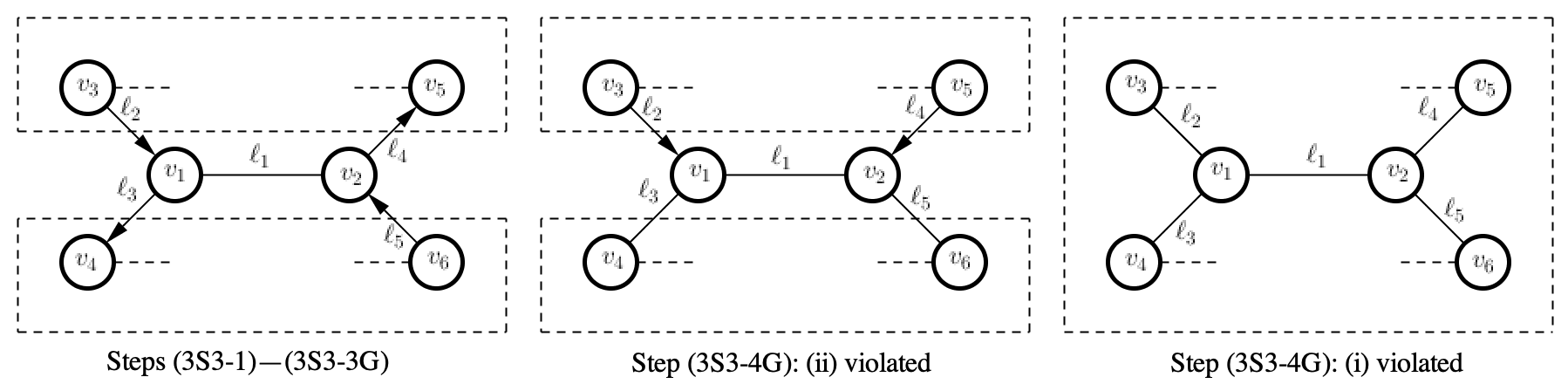}
  \caption{The functional group involved in steps (3S3-1)--(3S3-4G). In the first two pictures $\{v_3,v_5\}$ are $\{v_4,v_6\}$ are not in the same component after removing $v_1$ and $v_2$, while in the third picture they are.}
  \label{fig:3s3}
\end{figure} 
  \begin{figure}[h!]
  \includegraphics[scale=.5]{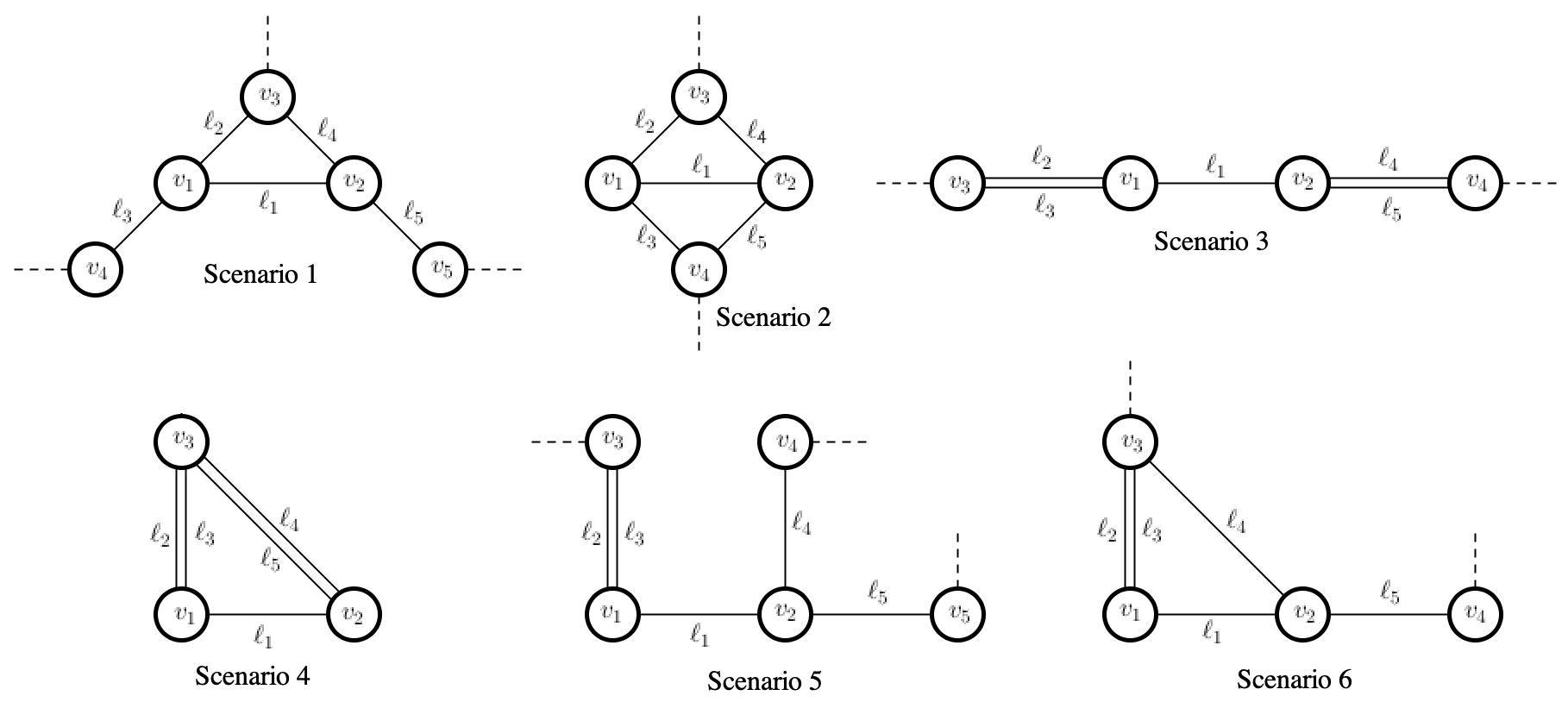}
  \caption{The functional groups involved in step (3S3-5G). In total there are 6 scenarios.}
  \label{fig:3s3new}
\end{figure} 
\begin{itemize}
\item Step (3S3-1): we remove the atoms $\{v_1,v_2\}$ and all (five) bonds connecting to them. In this step we set $\Delta\mathtt{Ext}$ to be {the condition ``$k_{\ell_2}=k_{\ell_4}$ and $k_{\ell_3}=k_{\ell_5}$ and $k_{\ell_1}-k_{\ell_3}$ is a good vector" if $\ell_1$ and $\ell_3$ have opposite directions (viewing from $v_1$), and to be the condition ``$k_{\ell_2}=k_{\ell_4}$ and $k_{\ell_3}=k_{\ell_5}$" if $\ell_1$ and $\ell_3$ have the same direction.}
\item Step (3S3-2G): we remove $\{v_1,v_2\}$ and all bonds connecting to them, but set $\Delta\mathtt{Ext}$ to be the negation {(i.e. logical $\mathtt{NOT}$)} of the condition in (3S3-1).
\item Step (3S3-3G): we remove $\{v_1,v_2\}$ and all bonds connecting to them, but add a new bond $\ell_6$ between $v_3$ and $v_5$ (not drawn in Figure \ref{fig:3s3}), which goes from $v_3$ to $v_5$ if $\ell_2$ goes from $v_3$ to $v_1$ and vice versa. We set $\Delta\mathtt{Ext}$ to be the condition in (3S3-1).
\item Step (3S3-4G)--(3S3-5G): we remove $\{v_1,v_2\}$ and all bonds connecting to them, and set $\Delta\mathtt{Ext}=\varnothing$.
\end{itemize}

{We remark that (3S3-1)--(3S3-5G) are just the possible steps one can perform; the exact choice of steps and order of performing will be fixed in the algorithm in Section \ref{alg} below.} For (3S3-1) and (3S3-2G) we have $(\Delta V,\Delta E,\Delta F)=(-2,-5,1)$, and for (3S3-3G) we have $(\Delta V,\Delta E,\Delta F)=(-2,-4,1)$. For (3S3-4G), if (i) is not violated, then we still have $(\Delta V,\Delta E,\Delta F)=(-2,-5,1)$; if (i) is violated then we must have $(\Delta V,\Delta E,\Delta F)=(-2,-5,0)$. This is because $\Delta F\leq 1$ by Lemma \ref{increF}, and if $\Delta F=1$ then we may assume $\{v_3,v_5\}$ is in one component and $\{v_4,v_6\}$ is in the other component after the removing $\{v_1,v_2\}$, since otherwise $\ell_1$ would be a bridge. As for (3S3-5G), the calculation depends on the scenario. For Scenarios 1 and 2, we have $(\Delta V,\Delta E,\Delta F)$ equals either $(-2,-5,0)$ or $(-2,-5,1)$, while for Scenarios 3--6 we must have $(\Delta V,\Delta E,\Delta F)=(-2,-5,0)$; these can be verified basically by using Lemma \ref{increF}.

We define $\Delta\gamma=-2$ for (3S3-1), and $\Delta\gamma=\Delta\chi+\frac{1}{6(d-1)}$ for all other steps. We also define $\Delta\kappa=-1$ for (3S3-1) and (3S3-3G), and $\Delta\kappa=-2$ for all other steps. For the four steps other than (3S3-3G), we define $\mathtt{Ext}_{\mathrm{pre}}=\mathtt{Ext}_{\mathrm{pos}}\cup\Delta\mathtt{Ext}$, while for (3S3-3G) we define \begin{equation}\label{renewext}\mathtt{Ext}_{\mathrm{pre}}=\mathtt{Ext}_{\mathrm{pos}}'\cup\Delta\mathtt{Ext},\end{equation} where $\mathtt{Ext}_{\mathrm{pos}}'$ is obtained by replacing each occurrence of $k_{\ell_6}$ in $\mathtt{Ext}_{\mathrm{pos}}$ with $k_{\ell_2}$.
\begin{prop}\label{3s3prop} Each of the five steps verifies Condition 1 and satisfies (\ref{keyineq}). Moreover (3S3-1) is normal and satisfies $\Delta\eta=-2$ and $\Delta V_3=2$, while the other four are good. 
\end{prop}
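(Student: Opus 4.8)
The plan is to dispose of the five steps one by one, carrying out in each case the same three tasks: (a) verify that $\Mb_{\mathrm{pos}}$ is still a molecule (Condition~1 of Section~\ref{moleframe}); (b) compute $\Delta\chi=\Delta E-\Delta V+\Delta F$ from the values of $(\Delta V,\Delta E,\Delta F)$ recorded just above the proposition and compare it with the prescribed $\Delta\gamma$; and (c) establish the key inequality~(\ref{keyineq}). For (3S3-1) one reads off, in addition, $\Delta\eta$ and $\Delta V_3$ from~(\ref{defchar}).

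\textbf{Condition~1 and the Euler bookkeeping.} For the four steps (3S3-1), (3S3-2G), (3S3-4G), (3S3-5G) the operation only deletes atoms and bonds, so $\Mb_{\mathrm{pos}}$ is a subgraph of $\Mb_{\mathrm{pre}}$: no degree can increase, and since $\Mb_{\mathrm{pre}}$ is bridge-free at this stage, Lemma~\ref{increF} shows that each newly created component still contains an atom of degree $<4$, so no saturated component appears and Condition~1 holds. For (3S3-3G) one must check the inserted bond $\ell_6$: deleting $\ell_2$ and $\ell_4$ frees one outgoing slot at $v_3$ and one incoming slot at $v_5$ (or conversely), and since $\ell_2,\ell_4$ point in opposite directions as seen from $\{v_1,v_2\}$ one can orient $\ell_6$ so that every atom keeps at most two outgoing and two incoming bonds; the hypothesis that $d(v_3),d(v_5)$ are not both $4$ then guarantees that the component of $\Mb_{\mathrm{pos}}$ carrying $\ell_6$ is not saturated, and the substitution~(\ref{renewext}) leaves $\mathtt{Ext}$ of the admissible form, compatible with Condition~3. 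For the characteristic: in (3S3-1) one has $\Delta\chi=-5-(-2)+1=-2=\Delta\gamma$, a normal step; removing the degree-$3$ atoms $v_1,v_2$ turns each of $v_3,v_4,v_5,v_6$ from degree $4$ into degree $3$, so $\Delta V_3=4-2=2$ and hence $\Delta\eta=\Delta V_3-4\Delta F=2-4=-2$. For each of the other four steps $\Delta\gamma$ is, by definition, $\Delta\chi+\frac{1}{6(d-1)}$, so the step is good no matter which of the sub-cases of $(\Delta V,\Delta E,\Delta F)$ occurs (the case split, governed by Lemma~\ref{increF}, only changes the numerical value of $\Delta\chi$).

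\textbf{The key inequality.} As in the proofs of Propositions~\ref{daprop}--\ref{brprop}, one reduces~(\ref{keyineq}) to bounding the number $\Nf$ of admissible values on the deleted bonds: restricting any $k[\Mb_{\mathrm{pre}}]\in\Df(\Mb_{\mathrm{pre}},\mathtt{Ext}_{\mathrm{pre}})$ to $\Mb_{\mathrm{pos}}$ gives an element of $\Df(\Mb_{\mathrm{pos}},\mathtt{Ext}_{\mathrm{pos}})$ for parameters that are functions of the $\Mb_{\mathrm{pre}}$-parameters together with the deleted-bond values (and, for (3S3-3G), the renaming $k_{\ell_6}=k_{\ell_2}$), whence $\#\Df(\Mb_{\mathrm{pre}},\mathtt{Ext}_{\mathrm{pre}})\leq \Nf\cdot\sup\#\Df(\Mb_{\mathrm{pos}},\mathtt{Ext}_{\mathrm{pos}})$ and it suffices to prove $\Nf\lesssim\delta^{\Delta\kappa}L^{-(d-1)\Delta\gamma}$. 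The vectors $k_{\ell_1},k_{\ell_2},k_{\ell_3}$ satisfy the linear and quadratic equations~(\ref{decmole}) at $v_1$, and $k_{\ell_1},k_{\ell_4},k_{\ell_5}$ those at $v_2$; when the molecule splits ($\Delta F=1$) Lemma~\ref{cutlem} supplies further linear and quadratic relations among $k_{\ell_2},\dots,k_{\ell_5}$ coming from the two new components of $\Mb_{\mathrm{pos}}$ (the crossing bonds necessarily split $2+2$ because $\Mb_{\mathrm{pre}}$ is bridge-free). Feeding these systems into the counting estimates of Lemma~\ref{lem:counting}, and using the coincidence constraints $k_{\ell_2}=k_{\ell_4}$, $k_{\ell_3}=k_{\ell_5}$ and/or the good-vector constraint of Lemma~\ref{goodvec} recorded in $\Delta\mathtt{Ext}$, produces the desired bound in each case: in (3S3-1) the good case reduces the problem to three vectors with one linear and one quadratic relation, giving $\Nf\lesssim\delta^{-1}L^{2(d-1)}$, which is exactly $\delta^{\Delta\kappa}L^{-(d-1)\Delta\gamma}$; in (3S3-2G) its negation forces $k_{\ell_1}$ into a sparse set, yielding the extra $L^{-1/6}$ power that matches the $\frac{1}{6(d-1)}$ improvement in $\Delta\gamma$; in (3S3-3G) the renamed bond $\ell_6$ absorbs the value $k_{\ell_2}$ so that only a single free vector survives; and in (3S3-4G), (3S3-5G) no extra constraint is needed, the splitting (via Lemma~\ref{cutlem}) or the extra linear equation coming from a same-direction pair $\ell_2,\ell_4$ already bringing the count down below the target.

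\textbf{Main obstacle.} The delicate point is the bookkeeping in task (c): a count that treats $v_1,v_2$ in isolation overshoots the target for the steps with $\Delta F=1$, and one must exploit the splitting of $\Mb_{\mathrm{pos}}$ fully, applying Lemma~\ref{cutlem} to each of the two new components exactly as in the bridge case, and then tracking---through the scenarios depicted in Figures~\ref{fig:3s3} and~\ref{fig:3s3new}---which deleted bonds still carry a free vector after all these linear and quadratic constraints have been imposed. Matching the exponents of $L$ precisely, and checking that the spare powers of $L$ in the degenerate sub-cases dominate the harmless $\delta^{-O(1)}$ and $(\log L)^{O(1)}$ losses (using that $\delta$ is a fixed constant with $L$ large, and $n\leq(\log L)^3$), is where essentially all the work lies; everything else is a direct transcription of the framework set up in Section~\ref{moleframe}.
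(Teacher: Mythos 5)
Your skeleton coincides with the paper's (restrict to $\Mb_{\mathrm{pos}}$, count the deleted-bond values via Lemmas \ref{lem:counting}, \ref{cutlem}, \ref{goodvec}), and the Condition 1 check, the normal/good classification, and the computation $\Delta\eta=-2$, $\Delta V_3=2$ for (3S3-1) are complete. The genuine gap is in the counting, exactly where the fractional gain encoded in $\Delta\gamma=\Delta\chi+\frac{1}{6(d-1)}$ must come from. For (3S3-2G), (3S3-4G), (3S3-5G) in the split case $\Delta F=1$ (so $\Delta\chi=-2$), the five-vector system (\ref{5vcounting1}) together with the extra relations (\ref{5vcounting3}) from Lemma \ref{cutlem} yields the improved bound only through Lemma \ref{lem:counting} (7), whose hypothesis excludes the resonant configuration $(\zeta_2,\zeta_3,k_{\ell_2},k_{\ell_3})=(\zeta_4,\zeta_5,k_{\ell_4},k_{\ell_5})$; handling this configuration is the real content of the proof. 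In (3S3-4G) it is ruled out by the directions of $\ell_2,\ell_4$; in (3S3-5G) because $v_3$ cannot be degenerate; in (3S3-2G) it cannot be ruled out, and this is precisely why $\Delta\mathtt{Ext}$ there is the negation of the (3S3-1) condition: in the residual sub-case where both coincidences hold and $\ell_1,\ell_3$ have opposite directions, $k_{\ell_1}-k_{\ell_3}$ is a bad vector, so by Lemma \ref{goodvec} it is $k_{\ell_2}$ (an affine image of $k_{\ell_1}-k_{\ell_3}$ via the linear relation at $v_1$), not $k_{\ell_1}$, that is confined to at most $L^{d-1-\frac14}$ values, after which $(k_{\ell_1},k_{\ell_3},k_{\ell_5})$ is counted by Lemma \ref{lem:counting} (1). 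Your claims that ``no extra constraint is needed'' for (3S3-4G)/(3S3-5G) and that the negation ``forces $k_{\ell_1}$ into a sparse set'' for (3S3-2G) respectively omit and misstate this case analysis; without it one only gets a bound of type $\delta^{-2}L^{2(d-1)}$, short of the good-step target by exactly the fraction at stake.

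For (3S3-3G) two further points are missing. First, ``only a single free vector survives'' is not enough: with $k_{\ell_2}=k_{\ell_4}$ absorbed into $\ell_6$, the pair $(k_{\ell_1},k_{\ell_3})$ must still be counted through the two-vector system (\ref{2vcounting}) at $v_1$, and the needed gain over $\delta^{-1}L^{d-1}$ (the target is $\delta^{-1}L^{(d-1)-\frac16}$) comes from the clause of $\Delta\mathtt{Ext}$ you do not invoke here: either $\ell_1,\ell_3$ have the same direction (sphere counting, Lemma \ref{jensen}, gain $L^{-\frac13}$) or $k_{\ell_1}-k_{\ell_3}$ is a good vector (gain $L^{-\frac14}$). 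Second, your generic reduction ``count the deleted-bond values, then restrict'' does not apply verbatim to this step: $k_{\ell_2}$ coincides with the value on the new bond $\ell_6$, and $k_{\ell_3}$ enters the parameters of the component $\Mb''$, so one must use Condition 3 and the factorization $\sup\#\Df(\Mb',\mathtt{Ext}')\cdot\sup\#\Df(\Mb'',\mathtt{Ext}'')=\sup\#\Df(\Mb_{\mathrm{pos}},\mathtt{Ext}_{\mathrm{pos}})$, fixing the decoration of $\Mb'$ first (which fixes $k_{\ell_2}=k_{\ell_4}=k'_{\ell_6}$), then counting $(k_{\ell_1},k_{\ell_3})$, and only then letting the $\Mb''$-decoration vary with parameters shifted by $k_{\ell_3}$. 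As written, a count of all five deleted values before fixing $\Mb_{\mathrm{pos}}$ would overshoot the target by roughly a factor $L^{d-1}$.
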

\begin{proof} We only need to verify Condition 1 for (3S3-3G), which adds a new bond to the molecule. This is true because the new bond is added in the component containing $v_3$ and $v_5$, and this component does not become saturated because $d(v_3)$ and $d(v_5)$ are not both 4. Moreover (3S3-1) is normal and the other four steps are good by definition, and for (3S3-1) we have $\Delta\eta=-2$ and $\Delta V_3=2$ since originally $d(v_3)=\cdots=d(v_6)=4$. Now we need to prove (\ref{keyineq}).

For (3S3-1), as part of $\mathtt{Ext}_{\mathrm{pre}}$ we have $k_{\ell_2}=k_{\ell_4}$ and $k_{\ell_3}=k_{\ell_5}$, and $(k_{\ell_1},k_{\ell_2},k_{\ell_3})$ satisfies the system (\ref{3vcounting}) in Lemma \ref{lem:counting} due to (\ref{decmole}). Therefore we have at most $C^+\delta^{-1}L^{2(d-1)}$ choices for these due to Lemma \ref{lem:counting} (2), and if $(k_{\ell_1},k_{\ell_2},k_{\ell_3})$ is fixed, we can reduce to $k[\Mb_{\mathrm{pos}}]$ and prove (\ref{keyineq}).

For (3S3-2G), (3S3-4G) and (3S3-5G) the argument is the same, except that now $(k_{\ell_1},\cdots,k_{\ell_5})$ satisfies the system (\ref{5vcounting1}) for some choice of signs $(\zeta_1,\cdots,\zeta_5)$. If $\Delta\chi=-3$ then the number of choices for $(k_{\ell_1},\cdots,k_{\ell_5})$ is at most $C^+\delta^{-2}L^{3(d-1)-\frac{1}{4}}$ by Lemma \ref{lem:counting} (5), which proves (\ref{keyineq}); so we only need to consider $\Delta\chi=-2$. In (3S3-2G) and (3S3-4G), by using Lemma \ref{cutlem} we know that in addition to (\ref{5vcounting1}) we also have (\ref{5vcounting3}); in (3S3-5G), if $\Delta\chi=-2$ then we must be in Scenarios 1 or 2, and it is easy to check that (\ref{5vcounting3}) also holds. As such, Lemma \ref{lem:counting} (7) bounds the number of choices for $(k_{\ell_1},\cdots,k_{\ell_5})$ by $C^+\delta^{-2}L^{3d-3-\frac{1}{6}}$, which proves (\ref{keyineq}), \emph{unless} $(\zeta_2,\zeta_3)=(\zeta_4,\zeta_5)$ and $(k_{\ell_2},k_{\ell_3})=(k_{\ell_4},k_{\ell_5})$. This last case cannot happen in (3S3-4G) due to the directions of $\ell_2$ and $\ell_4$, nor in (3S3-5G) as $v_3$ cannot be degenerate, so we only need to consider (3S3-2G), where $\mathtt{Ext}_{\mathrm{pre}}$ implies that $\ell_1$ and $\ell_3$ have opposite directions, and $k_{\ell_1}-k_{\ell_3}$ is a bad vector. By Lemma \ref{goodvec}, we know $k_{\ell_2}$ has at most $C^+L^{d-1-\frac{1}{4}}$ choices, and when $k_{\ell_2}=k_{\ell_4}$ is fixed, the number of choices for $(k_{\ell_1},k_{\ell_3},k_{\ell_5})$ is at most $C^+\delta^{-1}L^{d-1}$ using Lemma \ref{lem:counting} (1). Thus the number of choices for $(k_{\ell_1},\cdots,k_{\ell_5})$ is at most $C^+\delta^{-1}L^{2(d-1)-\frac{1}{4}}$, which proves (\ref{keyineq}).

Finally consider (3S3-3G). Note that $\Mb_{\mathrm{pos}}$ has two components (assuming $\Mb_{\mathrm{pre}}$ is connected; otherwise consider the current component of $\Mb_{\mathrm{pre}}$), namely $\Mb'$ containing $\{v_3,v_5\}$, and $\Mb''$ containing $\{v_4,v_6\}$. Moreover by Condition 3, $\mathtt{Ext}_{\mathrm{pos}}$ is the union of $\mathtt{Ext}'$ and $\mathtt{Ext}''$, which only involve bonds in $\Mb'$ and $\Mb''$ respectively. For any $k[\Mb_{\mathrm{pre}}]\in\Df(\Mb_{\mathrm{pre}},\mathtt{Ext}_{\mathrm{pre}})$ and assuming $k_{\ell_2}=k_{\ell_4}$, we can define \begin{equation}\label{renewk}k'[\Mb_{\mathrm{pos}}]=(k_\ell')_{\ell\in\Mb_{\mathrm{pos}}},\quad k_{\ell}'=\left\{
\begin{aligned}&k_\ell,&\ell&\neq \ell_6,\\
&k_{\ell_2},&\ell&=\ell_6.
\end{aligned}
\right.
\end{equation}Note that $k'[\Mb_{\mathrm{pos}}]$ can be divided into $k'[\Mb']$ and $k'[\Mb'']$, the latter being the restriction of $k[\Mb_{\mathrm{pre}}]$ to $\Mb''$. Moreover, we can check that $k'[\Mb']$ belongs to $\Df(\Mb',\mathtt{Ext}')$ with essentially the original parameters (where in the place of $a_{\ell_6}$ we have $a_{\ell_2}$). Once $k'[\Mb']$ is fixed, in particular $k_{\ell_2}=k_{\ell_4}=k_{\ell_6}'$ is fixed, then $k_{\ell_1}$ and $k_{\ell_3}=k_{\ell_5}$ satisfy the system (\ref{2vcounting}) in Lemma \ref{lem:counting}. If $\ell_1$ and $\ell_3$ have the same direction, then the number of choices for $(k_{\ell_1},k_{\ell_3})$ is at most $C^+\delta^{-1}L^{d-1-\frac{1}{3}}$ by Lemma \ref{lem:counting} (1); if they have opposite directions, then $k_{\ell_1}-k_{\ell_3}$ must be a good vector due to $\mathtt{Ext}_{\mathrm{pre}}$. Repeating the argument in the proof of Lemma \ref{lem:counting} (1), and using the definition of good vectors (decomposing intervals of length $\delta^{-1}L^{-2}$ into intervals of length $L^{-2}$ if necessary), we see that the number of choices for $(k_{\ell_1},k_{\ell_3})$ is at most $C^+\delta^{-1}L^{d-1-\frac{1}{4}}$. In either case, once $k_{\ell_3}=k_{\ell_5}$ is fixed, $k'[\Mb'']$ will belong to $\Df(\Mb'',\mathtt{Ext}'')$ with some new parameters that depend on the original parameters as well as $k_{\ell_3}$. This implies that
\[\sup\#\Df(\Mb_{\mathrm{pre}},\mathtt{Ext}_{\mathrm{pre}})\leq \sup\#\Df(\Mb',\mathtt{Ext}')\cdot C^+\delta^{-1}L^{d-1-\frac{1}{4}}\cdot \sup\#\Df(\Mb'',\mathtt{Ext}''),\] however since $\Mb_{\mathrm{pos}}$ is the disjoint union of $\Mb'$ and $\Mb''$ and $\mathtt{Ext}_{\mathrm{pos}}$ is the union of $\mathtt{Ext}'$ and $\mathtt{Ext}''$, it is easy to see that
\[\sup\#\Df(\Mb',\mathtt{Ext}')\cdot\sup\#\Df(\Mb'',\mathtt{Ext}'')=\sup\#\Df(\Mb_{\mathrm{pos}},\mathtt{Ext}_{\mathrm{pos}}),\] which proves (\ref{keyineq}).
\end{proof}
\subsubsection{Degree 3 atoms connected by a double bond}\label{3d3} In this step, we assume there are two degree $3$ atoms $v_1$ and $v_2$, connected by a double bond $(\ell_1,\ell_2)$, which are also connected to two other atoms $v_3$ and $v_4$ by two single bonds $\ell_3$ and $\ell_4$, see Figures \ref{fig:3d3} and \ref{fig:3d3new}. In (3D3-1)--(3D3-3G) and (3D3-6G) we assume $v_3\neq v_4$ and $\ell_3$ and $\ell_4$ are in opposite directions (viewing from $\{v_1,v_2\}$); in (3D3-1) we assume $d(v_3)=d(v_4)=4$, and in (3D3-3G) we assume that \emph{not} all atoms in the current component other than $\{v_1,v_2\}$ have degree 4. In (3D3-4G) we assume $v_3\neq v_4$ and $\ell_3$ and $\ell_4$ are in the same direction, and in (3D3-5G) we assume $v_3=v_4$. Finally, in (3D3-6G) we assume that $v_3$ is connected to $v_4$ via a single bond $\ell_5$, and $v_3$ and $v_4$ are each connected to different atoms $v_5$ and $v_6$ via double bonds $(\ell_6,\ell_7)$ and $(\ell_8,\ell_9)$, see Figure \ref{fig:3d3new}. Recall the definition of good and bad vectors in Lemma \ref{goodvec}.
  \begin{figure}[h!]
  \includegraphics[scale=.5]{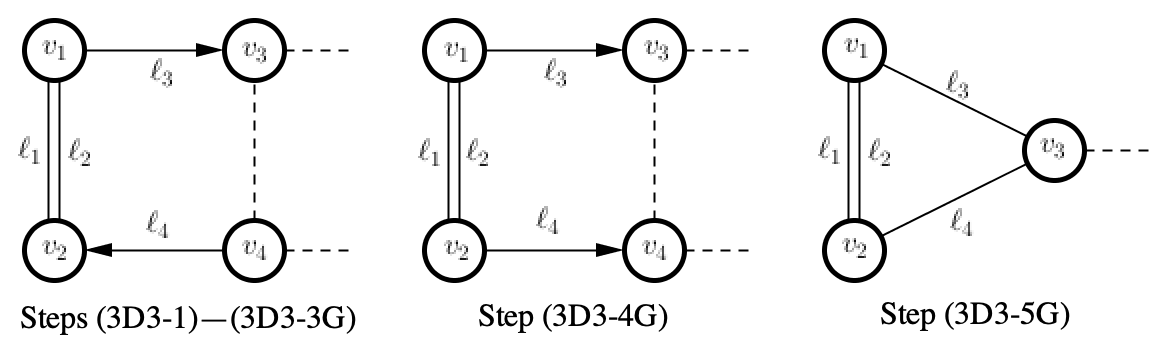}
  \caption{The functional groups involved in steps (3D3-1)--(3D3-5G).}
  \label{fig:3d3}
\end{figure} 
  \begin{figure}[h!]
  \includegraphics[scale=.5]{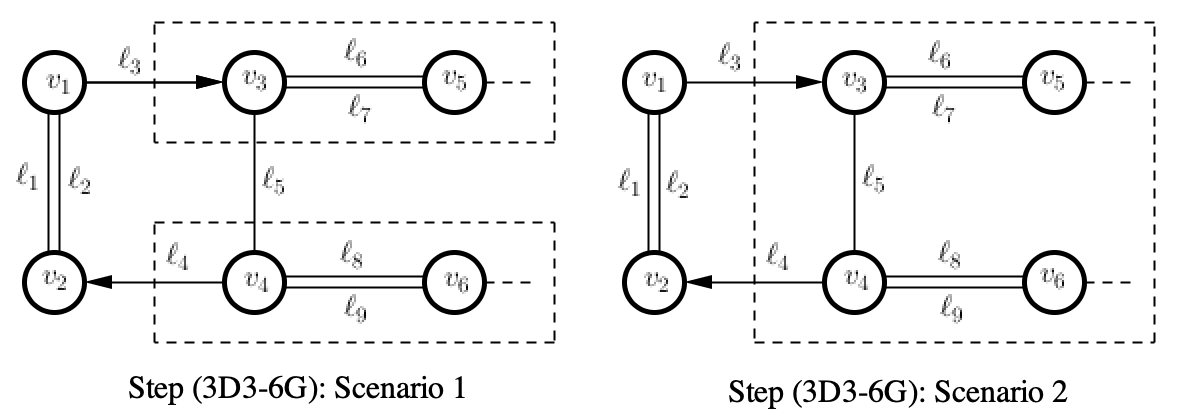}
  \caption{The functional group involved in step (3D3-6G). In the left picture $\ell_5$ becomes a bridge after removing $\{v_1,v_2\}$, while in the right picture it does not.}
  \label{fig:3d3new}
\end{figure} 
\begin{itemize}
\item Step (3D3-1): we remove the atoms $\{v_1,v_2\}$ and all (four) bonds connecting to them. In this step we set $\Delta\mathtt{Ext}$ to be {the condition that ``$k_{\ell_3}=k_{\ell_4}$ and $k_{\ell_1}-k_{\ell_2}$ is a good vector" if $\ell_1$ and $\ell_2$ have opposite directions, and to be the condition ``$k_{\ell_3}=k_{\ell_4}$" if $\ell_1$ and $\ell_2$ have the same direction.}
\item Step (3D3-2G): we remove $\{v_1,v_2\}$ and all bonds connecting to them, but set $\Delta\mathtt{Ext}$ to be the negation {(i.e. logical $\mathtt{NOT}$)} of the condition in (3D3-1).
\item Step (3D3-3G): we remove $\{v_1,v_2\}$ and all bonds connecting to them, but add a new bond $\ell_5$ between $v_3$ and $v_4$ (not drawn in Figure \ref{fig:3d3}), which goes from $v_4$ to $v_3$ if $\ell_3$ goes from $v_1$ to $v_3$ and vice versa. We set $\Delta\mathtt{Ext}$ to be the condition in (3D3-1).
\item Steps (3D3-4G)--(3D3-5G): we remove $v_1$ and $v_2$ and all bonds connecting to them, and set $\Delta\mathtt{Ext}=\varnothing$.
\item Step (3D3-6G): we remove the atoms $\{v_1,\cdots,v_4\}$ and all (nine) bonds connecting to them, and set $\Delta\mathtt{Ext}=\varnothing$.
\end{itemize}

For the four steps other than (3D3-3G) and (3D3-6G) we have $(\Delta V, \Delta E,\Delta F)=(-2,-4,0)$, where $\Delta F=0$ due to Lemma \ref{increF}, since $\Mb_{\mathrm{pre}}$ has no bridge. For (3D3-3G) we have $(\Delta V, \Delta E,\Delta F)=(-2,-3,0)$ for the same reason. Finally for (3D3-6G) we have $(\Delta V, \Delta E,\Delta F)$ equals either $(-4,-9,0)$ (if $\ell_5$ does not become a bridge after removing $\{v_1,v_2\}$) or $(-4,-9,1)$ (if it does).

Define $(\Delta\gamma,\Delta\kappa)=(-2,-1)$ for (3D3-1), $(\Delta\gamma,\Delta\kappa)=(-1+\frac{1}{4(d-1)},-1)$ for (3D3-3G), $(\Delta\gamma,\Delta\kappa)=(\Delta\chi+\frac{1}{6(d-1)},-4)$ for (3D3-6G), and $(\Delta\gamma,\Delta\kappa)=(-2+\frac{1}{4(d-1)},-2)$ for the other three steps. For the five steps other than (3D3-3G), which do not add new bonds, we define $\mathtt{Ext}_{\mathrm{pre}}=\mathtt{Ext}_{\mathrm{pos}}\cup\Delta\mathtt{Ext}$; for (3D3-3G) we define $\mathtt{Ext}_{\mathrm{pre}}$ as in (\ref{renewext}), but in $\mathtt{Ext}_{\mathrm{pos}}'$ we replace each occurrence of $k_{\ell_5}$ by $k_{\ell_3}$.
\begin{prop}\label{3d3prop} Each of the six steps verifies Condition 1 and satisfies (\ref{keyineq}). Moreover (3D3-1) is normal and and satisfies $\Delta\eta=\Delta V_3=0$, while the other five are good. 
\end{prop}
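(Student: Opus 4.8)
The plan is to follow the template of the just-proved Proposition \ref{3s3prop}: for each of the six steps, first check Condition 1 and the normal/good classification, then establish the key inequality (\ref{keyineq}) by isolating a local counting problem around $v_1,v_2$ (and additionally $v_3,v_4$ for (3D3-6G)), bounding the number of admissible values of the bond-vectors that get deleted via Lemma \ref{lem:counting}, and then restricting a decoration in $\Df(\Mb_{\mathrm{pre}},\mathtt{Ext}_{\mathrm{pre}})$ to $\Mb_{\mathrm{pos}}$ once those vectors are fixed.

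Condition 1 is automatic for the five steps that only delete atoms and bonds, since any subgraph of a molecule is a molecule: deletions cannot raise degrees, and they cannot produce a saturated component because an atom inside such a component would already have had internal degree $4$ together with an external bond in $\Mb_{\mathrm{pre}}$, which is impossible. For (3D3-3G) one checks that deleting $\ell_3,\ell_4$ and adding $\ell_5$ leaves $d(v_3),d(v_4)$ unchanged while lowering all other degrees, that the prescribed direction of $\ell_5$ preserves the outgoing/incoming counts at $v_3$ and $v_4$, and that the hypothesis ``not all atoms of the ambient component other than $\{v_1,v_2\}$ have degree $4$'' prevents a saturated component. For the classification one computes $\Delta\chi=\Delta E-\Delta V+\Delta F$ from the tabulated $(\Delta V,\Delta E,\Delta F)$: $\Delta\chi=-2$ for (3D3-1), matching $\Delta\gamma=-2$, so (3D3-1) is normal; $\Delta\chi=-1$ for (3D3-3G), $\Delta\chi=-2$ for (3D3-2G),(3D3-4G),(3D3-5G), and $\Delta\chi\in\{-5,-4\}$ for (3D3-6G), while in each of these $\Delta\gamma=\Delta\chi+\tfrac1{4(d-1)}$ or $\Delta\chi+\tfrac1{6(d-1)}$, hence good. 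Finally, for (3D3-1) the hypothesis $d(v_3)=d(v_4)=4$ gives that deleting $\{v_1,v_2\}$ and $\ell_1,\ell_2,\ell_3,\ell_4$ turns $v_3,v_4$ into degree-$3$ atoms while removing the degree-$3$ atoms $v_1,v_2$, so $\Delta V_3=0$ and, since $\Delta F=0$, $\Delta\eta=-2+2=0$.

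For (\ref{keyineq}): in (3D3-1) the identification $k_{\ell_3}=k_{\ell_4}$ imposed by $\mathtt{Ext}_{\mathrm{pre}}$ collapses the two quadruple equations at $v_1,v_2$ into a three-vector system of the type (\ref{3vcounting}), so Lemma \ref{lem:counting}(2) yields $C^+\delta^{-1}L^{2(d-1)}$ choices for $(k_{\ell_1},k_{\ell_2},k_{\ell_3})$, exactly matching $\delta^{\Delta\kappa}L^{-(d-1)\Delta\gamma}$ with $(\Delta\gamma,\Delta\kappa)=(-2,-1)$; the added ``$k_{\ell_1}-k_{\ell_2}$ good'' clause (when $\ell_1,\ell_2$ are anti-parallel) is not needed for the bound here but feeds the algorithm later. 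For (3D3-2G),(3D3-4G),(3D3-5G) the deleted vectors satisfy a four-vector system; the crude bound from Lemma \ref{lem:counting} already suffices when $\Delta\chi=-3$, so the only real case is $\Delta\chi=-2$, where Lemma \ref{cutlem} supplies one extra quadratic relation (as in (3S3-2G),(3S3-4G),(3S3-5G), with the Scenario-by-Scenario check analogous to (3S3-5G)) and the sharp parts of Lemma \ref{lem:counting} deliver the $L^{-1/6}$ gain, \emph{unless} $\ell_3,\ell_4$ carry equal vectors — a degenerate configuration ruled out by the opposite-direction hypothesis in (3D3-4G) and by non-degeneracy of $v_3$ in (3D3-5G), and in (3D3-2G) handled through the bad-vector dichotomy of Lemma \ref{goodvec}: there $\mathtt{Ext}_{\mathrm{pre}}$ forces $k_{\ell_1}-k_{\ell_2}$ bad, cutting the count for $k_{\ell_1}$ by $L^{-1/4}$. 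For (3D3-3G), setting $k'_{\ell_5}:=k_{\ell_3}=k_{\ell_4}$ produces an element of $\Df(\Mb_{\mathrm{pos}},\mathtt{Ext}_{\mathrm{pos}})$ (with $k_{\ell_5}$ replaced by $k_{\ell_3}$ in the extra conditions, whence the (\ref{renewext})-style definition of $\mathtt{Ext}_{\mathrm{pre}}$), after which $(k_{\ell_1},k_{\ell_2})$ obey a two-vector system (\ref{2vcounting}), giving $C^+\delta^{-1}L^{d-1-1/3}$ choices, improved to $C^+\delta^{-1}L^{d-1-1/4}$ when $\ell_1,\ell_2$ are anti-parallel using the good-vector refinement from $\mathtt{Ext}_{\mathrm{pre}}$ — matching $(\Delta\gamma,\Delta\kappa)=(-1+\tfrac1{4(d-1)},-1)$.

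I expect (3D3-6G) to be the main obstacle. There one must count the nine deleted bond-vectors subject to the four quadruple equations at $v_1,v_2,v_3,v_4$, keep track of whether $\ell_5$ becomes a bridge after deleting $\{v_1,v_2\}$ (which toggles $\Delta F$ between $0$ and $1$, hence $\Delta\chi$ between $-5$ and $-4$), and extract the full $L^{-1/6}$ gain from a multi-vector counting estimate adapted to the two double bonds $(\ell_6,\ell_7)$ and $(\ell_8,\ell_9)$ while staying within the $\delta^{-4}$ budget; because of these double bonds the relevant case of Lemma \ref{lem:counting} differs from the (3S3) analysis, and the sign bookkeeping and the bridge dichotomy are where the argument is most delicate. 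Everything outside (3D3-6G) is essentially a transcription of the (3S3) proof, so the final write-up will consist of the case table for $(\Delta V,\Delta E,\Delta F,\Delta\chi,\Delta\gamma,\Delta\kappa)$ followed by the six local counting arguments sketched above, with Remark \ref{countingrem} handling any possible degenerate atoms introduced by the irregular-chain reduction.
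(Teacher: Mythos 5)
Your treatment of Condition 1, the normal/good classification, the computation of $\Delta\eta=\Delta V_3=0$ for (3D3-1), and the counting arguments for (3D3-1) and (3D3-3G) all match the paper's proof. However, there are two genuine gaps. First, you never actually prove (\ref{keyineq}) for (3D3-6G): you only flag it as ``the main obstacle'' and list the difficulties. This case is part of the statement and is not a transcription of the (3S3) analysis. In the subcase where $\ell_5$ becomes a bridge after removing $\{v_1,v_2\}$ (so $\Delta\chi=-4$), one must combine the cut relation from Lemma \ref{cutlem} for $(k_{\ell_3},k_{\ell_5})$ (a system of type (\ref{2vcounting}) with $n\delta^{-1}L^{-2}$ in place of $\delta^{-1}L^{-2}$) with the three-vector equation at $v_1$, apply Lemma \ref{lem:counting} (3) to get at most $nC^+\delta^{-2}L^{2(d-1)-\frac{1}{4}}$ choices for $(k_{\ell_1},k_{\ell_2},k_{\ell_3},k_{\ell_5})$, note that $k_{\ell_4}$ is then determined, count $(k_{\ell_6},\dots,k_{\ell_9})$ by Lemma \ref{lem:counting} (1), and absorb the factor $n\leq(\log L)^3$ using the weaker exponent $\frac{1}{6}$ in $\Delta\gamma=\Delta\chi+\frac{1}{6(d-1)}$; in the non-bridge subcase ($\Delta\chi=-5$) one counts $(k_{\ell_1},\dots,k_{\ell_4})$ first and then $(k_{\ell_5},\dots,k_{\ell_9})$ as in (3S3-5G), Scenario 3. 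None of this appears in your write-up.

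Second, for (3D3-2G), (3D3-4G), (3D3-5G) your quantitative claim does not close the estimate. Since $(\Delta V,\Delta E,\Delta F)=(-2,-4,0)$ is forced (Lemma \ref{increF}, no bridges in $\Mb_{\mathrm{pre}}$), there is no $\Delta\chi=-3$ case to dispose of, and since $\Delta\gamma=-2+\frac{1}{4(d-1)}$ the inequality (\ref{keyineq}) requires a gain of $L^{-\frac{1}{4}}$, not the $L^{-\frac{1}{6}}$ you claim; moreover Lemma \ref{cutlem} supplies nothing beyond the two atom equations at $v_1,v_2$ here, since the cut relation across $\{v_1,v_2\}$ is just their difference. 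The correct route is to observe that $(k_{\ell_1},\dots,k_{\ell_4})$ satisfies exactly the system (\ref{4vcounting2}) and to apply Lemma \ref{lem:counting} (4), which yields $C^+\delta^{-2}L^{2(d-1)-\frac{1}{4}}$ outside the degenerate configuration $\zeta_3=\zeta_4$, $k_{\ell_3}=k_{\ell_4}$. Your handling of that degenerate configuration (opposite directions for (3D3-4G), non-degeneracy of $v_3$ for (3D3-5G), and the bad-vector count via Lemma \ref{goodvec} for (3D3-2G)) is the same as the paper's, but the non-degenerate bound as you state it falls short of what $\Delta\gamma$ demands.
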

\begin{proof} We only need to verify Condition 1 for (3D3-3G). This is because the operation does not add any new component, and the existing component does not become saturated, because by assumption at least one atom in the current component other than $v_1$ and $v_2$ does not have degree 4. Moreover (3D3-1) is normal and the other four steps are good, which follows directly from definition, and in (3D3-1) we are assuming $d(v_3)=d(v_4)=4$ before the operation, so it is clear that $\Delta\eta=\Delta V_3=0$. Thus it suffices to prove (\ref{keyineq}).

For (3D3-1), as part of $\mathtt{Ext}_{\mathrm{pre}}$ we have $k_{\ell_3}=k_{\ell_4}$, and $(k_{\ell_1},k_{\ell_2},k_{\ell_3})$ satisfies the system (\ref{3vcounting}) in Lemma \ref{lem:counting} due to (\ref{decmole}). Therefore we have at most $C^+\delta^{-1}L^{2(d-1)}$ choices for these due to Lemma \ref{lem:counting}, and if $(k_{\ell_1},k_{\ell_2},k_{\ell_3})$ is fixed, we can reduce to $k[\Mb_{\mathrm{pos}}]$ and prove (\ref{keyineq}).

For (3D3-2G), (3D3-4G) and (3D3-5G) the argument is the same, except that now $(k_{\ell_1},\cdots,k_{\ell_4})$ has to satisfy the system (\ref{4vcounting2}) with some choice of signs $(\zeta_1,\cdots,\zeta_4)$. By Lemma \ref{lem:counting} (4), we get at most $C^+\delta^{-2}L^{2(d-1)-\frac{1}{4}}$ choices for $(k_{\ell_1},\cdots,k_{\ell_4})$, which proves (\ref{keyineq}), \emph{unless} $\zeta_3=\zeta_4$ and $k_{\ell_3}=k_{\ell_4}$. The latter case cannot happen in (3D3-4G) due to the directions of $\ell_3$ and $\ell_4$, nor in (3D3-5G) because $v_3$ cannot be degenerate. If it happens in (3D3-2G), then due to $\mathtt{Ext}_{\mathrm{pre}}$, we know that the directions of $\ell_1$ and $\ell_2$ must be opposite, and $k_{\ell_1}-k_{\ell_2}$ is a bad vector. Then, just like in the proof of Proposition \ref{3s3prop}, we know $k_{\ell_3}$ has at most $C^+L^{d-1-\frac{1}{4}}$ choices, and the number of choices for $(k_{\ell_1},\cdots,k_{\ell_4})$ is at most $C^+\delta^{-1}L^{2(d-1)-\frac{1}{4}}$, which proves (\ref{keyineq}).

Next consider (3D3-6G). By the same argument, we only need to bound the number of choices for $(k_{\ell_1},\cdots,k_{\ell_9})$. If $\ell_5$ does not become a bridge after removing $v_1$ and $v_2$, then $\Delta\chi=-5$. By repeating the proof above and the proof of Proposition \ref{3s3prop} (see (3S3-5G), Scenario 3), we know that (i) the number of choices for $(k_{\ell_1},\cdots,k_{\ell_4})$ is at most $C^+\delta^{-1}L^{2(d-1)}$, and (ii) once $(k_{\ell_1},\cdots,k_{\ell_4})$ is fixed, the number of choices for $(k_{\ell_5},\cdots,k_{\ell_9})$ is at most $C^+\delta^{-2}L^{3(d-1)-\frac{1}{4}}$. Therefore the number of choices for $(k_{\ell_1},\cdots,k_{\ell_9})$ is at most $C^+\delta^{-3}L^{5(d-1)-\frac{1}{4}}$, which implies (\ref{keyineq}).

Now, if $\ell_5$ does become a bridge after removing $v_1$ and $v_2$, then $\Delta\chi=-4$. By Lemma \ref{cutlem}, we know that  $(k_{\ell_3},k_{\ell_5})$ satisfies the system (\ref{2vcounting}) in Lemma \ref{lem:counting}, but with $n\delta^{-1}L^{-2}$ replacing $\delta^{-1}L^{-2}$. Since  $(k_{\ell_1},k_{\ell_2},k_{\ell_3})$ also satisfies (\ref{3vcounting}), we can apply Lemma \ref{lem:counting} (3), with a further division of intervals if necessary, to bound the number of choices for $(k_{\ell_1},k_{\ell_2},k_{\ell_3},k_{\ell_5})$ by $nC^+\delta^{-2} L^{2(d-1)-\frac{1}{4}}$. Once $(k_{\ell_1},k_{\ell_2},k_{\ell_3},k_{\ell_5})$ is fixed, then $k_{\ell_4}$ is also fixed, and number of choices for $(k_{\ell_6},\cdots,k_{\ell_9})$ is bounded by $C^+\delta^{-2}L^{2(d-1)}$ by Lemma \ref{lem:counting} (1). Therefore the number of choices for $(k_{\ell_1},\cdots,k_{\ell_9})$ is at most $C^+\delta^{-4}L^{4(d-1)-\frac{1}{6}}$ (recall $n\leq (\log L)^3$), which implies (\ref{keyineq}).

Finally consider (3D3-3G). Given any $k[\Mb_{\mathrm{pre}}]\in\Df(\Mb_{\mathrm{pre}},\mathtt{Ext}_{\mathrm{pre}})$ and assuming $k_{\ell_3}=k_{\ell_4}$, we define $k'[\Mb_{\mathrm{pos}}]$ as (\ref{renewk}), but with $k_{\ell_5'}=k_{\ell_3}$. By the same observation, we see that $k'[\Mb_{\mathrm{pos}}]$ belongs to $\Df(\Mb_{\mathrm{pos}},\mathtt{Ext}_{\mathrm{pos}})$ with essentially the original parameters (where in the place of $a_{\ell_5}$ we have $a_{\ell_3}$). Once $k'[\Mb_{\mathrm{pos}}]$ is fixed, then $(k_{\ell_1},k_{\ell_2})$ satisfies the system (\ref{2vcounting}) in Lemma \ref{lem:counting}; moreover by $\mathtt{Ext}_{\mathrm{pre}}$ we know that either $\ell_1$ and $\ell_2$ have the same direction or $k_{\ell_1}-k_{\ell_2}$ is a good vector. Just like in the proof of Proposition \ref{3s3prop}, we see that the number of choices for $(k_{\ell_1},k_{\ell_2})$ is at most $C^+\delta^{-1}L^{d-1-\frac{1}{4}}$. This implies that
\[\sup\#\Df(\Mb_{\mathrm{pre}},\mathtt{Ext}_{\mathrm{pre}})\leq \sup\#\Df(\Mb_{\mathrm{pos}},\mathtt{Ext}_{\mathrm{pos}})\cdot C^+\delta^{-1}L^{d-1-\frac{1}{4}},\] which proves (\ref{keyineq}).
\end{proof}
\subsubsection{Degree 3 and 4 atoms connected by a double bond} In this step, we assume there is an atom $v_1$ of degree 3, and another atom $v_2$ of degree 4, that are connected by a double bond $(\ell_1,\ell_2)$. Then $\Mb_{\mathrm{pre}}$ must contain one of the functional groups shown in Figure \ref{fig:3d4}.
  \begin{figure}[h!]
  \includegraphics[scale=.5]{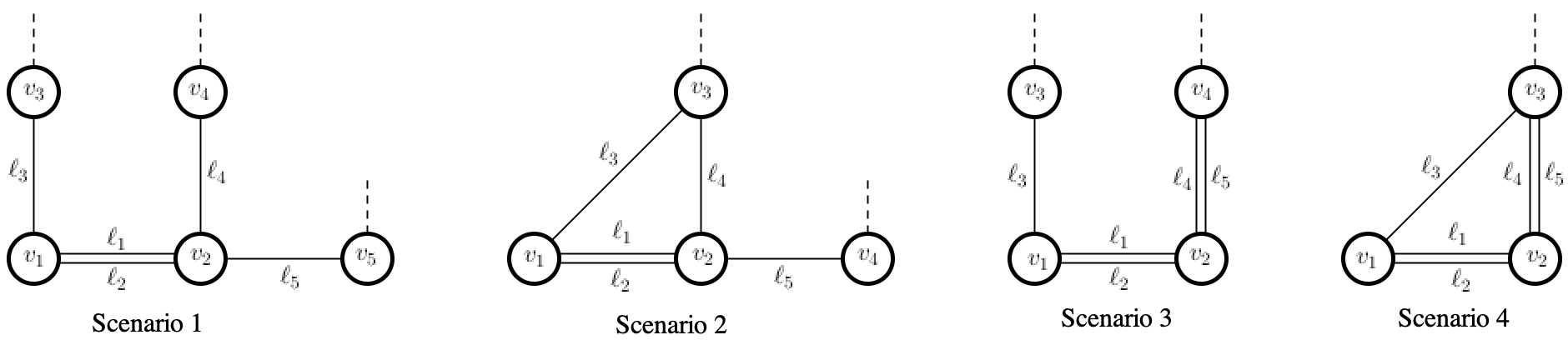}
  \caption{The functional groups involved in step (3D4G). In total there are 4 scenarios.}
  \label{fig:3d4}
\end{figure} 
\begin{itemize}
\item Step (3D4G): we remove the atoms $\{v_1,v_2\}$ and all (five) bonds connecting to them, and set $\Delta\mathtt{Ext}=\varnothing$.
\end{itemize}

For (3D4G), we can check using Lemma \ref{increF} that, in each scenario, we always have $(\Delta V,\Delta E,\Delta F)=(-2,-5,0)$. We define $\Delta\gamma=-3+\frac{1}{4(d-1)}$, $\Delta\kappa=-2$ and $\mathtt{Ext}_{\mathrm{pre}}=\mathtt{Ext}_{\mathrm{pos}}$.
\begin{prop}\label{3d4prop} The step (3D4G) is good, and satisfies (\ref{keyineq}).
\end{prop}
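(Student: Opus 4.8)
The plan is to follow the template that has been established for all the preceding step-verification propositions (Propositions~\ref{daprop}, \ref{tbprop}, \ref{brprop}, \ref{3s3prop}, \ref{3d3prop}): the step (3D4G) is \emph{good} simply by the definition $\Delta\gamma=\Delta\chi+\frac{1}{6(d-1)}$, since here $\Delta\chi=\Delta E-\Delta V+\Delta F=-5-(-2)+0=-3$, so $\Delta\gamma=-3+\frac{1}{4(d-1)}>\Delta\chi+\frac{1}{6(d-1)}$ (indeed $\frac{1}{4(d-1)}>\frac{1}{6(d-1)}$). So the only real content is verifying the counting inequality~\eqref{keyineq}, namely
\[
\sup\#\Df(\Mb_{\mathrm{pre}},\mathtt{Ext}_{\mathrm{pre}})\leq C^+\delta^{\Delta\kappa}L^{-(d-1)\Delta\gamma}\cdot\sup\#\Df(\Mb_{\mathrm{pos}},\mathtt{Ext}_{\mathrm{pos}})
=C^+\delta^{-2}L^{(d-1)(3-\frac{1}{4(d-1)})}\cdot\sup\#\Df(\Mb_{\mathrm{pos}},\mathtt{Ext}_{\mathrm{pos}}),
\]
i.e. $\sup\#\Df(\Mb_{\mathrm{pre}},\mathtt{Ext}_{\mathrm{pre}})\leq C^+\delta^{-2}L^{3(d-1)-\frac14}\cdot\sup\#\Df(\Mb_{\mathrm{pos}},\mathtt{Ext}_{\mathrm{pos}})$. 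As in all the earlier proofs, the mechanism is: let $\ell_1,\dots,\ell_5$ be the five bonds attached to $\{v_1,v_2\}$ that get removed (the double bond $(\ell_1,\ell_2)$ between $v_1$ and $v_2$, the remaining single bond $\ell_3$ at $v_1$, and the remaining two single bonds $\ell_4,\ell_5$ at $v_2$). The two equations~\eqref{decmole} at $v_1$ and $v_2$ give a system of the form~\eqref{5vcounting1} in $(k_{\ell_1},\dots,k_{\ell_5})$ with some signs $(\zeta_1,\dots,\zeta_5)$; bounding the number of solutions of this system by the appropriate part of Lemma~\ref{lem:counting}, and then observing that once $(k_{\ell_1},\dots,k_{\ell_5})$ is fixed the restriction $k[\Mb_{\mathrm{pos}}]$ lies in $\Df(\Mb_{\mathrm{pos}},\mathtt{Ext}_{\mathrm{pos}})$ with new parameters depending only on the old ones and these fixed vectors, yields~\eqref{keyineq}.

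The first thing I would do is pin down exactly which of the $5$-vector estimates in Lemma~\ref{lem:counting} applies. Since $v_1$ and $v_2$ are joined by a \emph{double} bond, two of the five vectors, $k_{\ell_1}$ and $k_{\ell_2}$, enter \emph{both} equations~\eqref{decmole} (at $v_1$ and at $v_2$), so the two quadratic equations of the system are not of the ``generic'' $5$-vector type~\eqref{5vcounting1} but of the more constrained ``double-bond'' type -- the same structural feature that in step (3D3-\,$\cdot$) produced the $4$-vector system~\eqref{4vcounting2}. Concretely, eliminating, one gets from the two atoms a $4$-vector-type subsystem in $(k_{\ell_1},k_{\ell_2},k_{\ell_3})$ together with the linear/quadratic constraint coupling $k_{\ell_3}$ to $k_{\ell_4},k_{\ell_5}$. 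I expect the bound on $(k_{\ell_1},k_{\ell_2},k_{\ell_3},k_{\ell_4},k_{\ell_5})$ to come out to $C^+\delta^{-2}L^{3(d-1)-\frac14}$ exactly -- the ``$\frac14$ gain'' being the extra saving typical of systems built around a double bond, cf. the proof of Proposition~\ref{3d3prop} -- which is precisely what is needed for~\eqref{keyineq}. The case analysis over the ``$4$ scenarios'' in Figure~\ref{fig:3d4} should only affect how the remaining bonds $\ell_3,\ell_4,\ell_5$ reconnect to the rest of $\Mb$ (equivalently which atoms coincide and whether certain bonds become bridges), not the counting of $(k_{\ell_1},\dots,k_{\ell_5})$ itself; for scenarios where a bond becomes a bridge after removing $\{v_1,v_2\}$ one extracts an extra constraint via Lemma~\ref{cutlem} and the bound only improves.

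I would then handle the scenarios one by one, in each case: (i) read off the system~\eqref{decmole} at $v_1,v_2$ and, using the double-bond structure, reduce it to the appropriate named system in Lemma~\ref{lem:counting}; (ii) apply that lemma, possibly refining intervals of length $\delta^{-1}L^{-2}$ into intervals of length $L^{-2}$ where the finer ``$-\frac14$'' counting is invoked, exactly as in the proof of Proposition~\ref{3d3prop}; (iii) in scenarios with an induced bridge, extract the additional linear+quadratic relation from Lemma~\ref{cutlem} before counting; (iv) fix $(k_{\ell_1},\dots,k_{\ell_5})$ and conclude that $k[\Mb_{\mathrm{pos}}]\in\Df(\Mb_{\mathrm{pos}},\mathtt{Ext}_{\mathrm{pos}})$ with admissible parameters, giving~\eqref{keyineq}. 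The main obstacle, as usual in these step lemmas, is purely bookkeeping: making sure that in \emph{every} one of the four scenarios the double bond actually forces the constrained ($\delta^{-1}$-cheaper, or $L^{-1/4}$-cheaper) version of the relevant counting estimate, rather than the generic version, so that the promised exponent $3(d-1)-\frac14$ (and not, say, $3(d-1)$) is attained; the rest is a direct transcription of the pattern already used for steps (3D3-1)--(3D3-6G) and (3S3-5G).
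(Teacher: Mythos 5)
Your approach and final bound are correct and agree with the paper, but the identification of the relevant counting lemma is muddled and should be pinned down exactly. The five vectors $(k_{\ell_1},\dots,k_{\ell_5})$ satisfy the system \eqref{5vcounting2}: the equation \eqref{decmole} at the degree-$3$ atom $v_1$ involves the three bonds $\ell_1,\ell_2,\ell_3$, while the equation at the degree-$4$ atom $v_2$ involves the four bonds $\ell_1,\ell_2,\ell_4,\ell_5$, with the double bond $(\ell_1,\ell_2)$ shared. This is precisely the hypothesis of Lemma~\ref{lem:counting}~(6), which directly gives the bound $C^+\delta^{-2}L^{3(d-1)-\frac14}$; after fixing the five vectors one restricts to $k[\Mb_{\mathrm{pos}}]$ exactly as in Propositions~\ref{3s3prop} and~\ref{3d3prop}, proving~\eqref{keyineq}. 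Your description of a ``$4$-vector-type subsystem in $(k_{\ell_1},k_{\ell_2},k_{\ell_3})$'' is not right -- that is a three-vector equation, and the reduction (subtracting the two equations) is already done inside the proof of part~(6), where it is shown to reduce to part~(5); you do not need to reinvent it. Note also that the subtraction has a degenerate case $(\zeta_3,x_3)=(\zeta_4,x_4)$ (or $(\zeta_5,x_5)$), which part~(6) handles explicitly, and which your sketch omits.

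The other point worth correcting: the paper invokes the four scenarios of Figure~\ref{fig:3d4} only to verify that $(\Delta V,\Delta E,\Delta F)=(-2,-5,0)$ in every case (via Lemma~\ref{increF}), \emph{not} to run a case-by-case counting argument. The counting bound on $(k_{\ell_1},\dots,k_{\ell_5})$ is identical across all scenarios. In particular, since $\Delta F=0$ always holds, no bond ever becomes a bridge after removing $\{v_1,v_2\}$, so the branch you anticipate in point~(iii), extracting an extra constraint via Lemma~\ref{cutlem}, never arises here (that mechanism is needed in steps like (3D3-6G) and (3R-2G), but not in (3D4G)). With those two corrections your argument is a faithful transcription of the paper's proof.
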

\begin{proof} The step is good by definition. Now by (\ref{decmole}) we know that $(k_{\ell_1},\cdots,k_{\ell_5})$ satisfies the system (\ref{5vcounting2}) in Lemma \ref{lem:counting} with some choice of signs $(\zeta_1,\cdots,\zeta_5)$. By Lemma \ref{lem:counting} (6) they have at most $C^+\delta^{-2}L^{3(d-1)-\frac{1}{4}}$ choices, and once they are fixed we can reduce to $k[\Mb_{\mathrm{pos}}]$ and prove (\ref{keyineq}).
\end{proof}
\subsubsection{Degree 3 and 2 atoms connected} In this step, we assume there is an atom $v_1$ of degree 3, and another atom $v_2$ of degree 2, that are connected. Note that they must be connected by a single bond $\ell_1$, otherwise there would be a bridge. Then, $\Mb_{\mathrm{pre}}$ must contain one of the functional groups shown in Figure \ref{fig:3s2}.
  \begin{figure}[h!]
  \includegraphics[scale=.5]{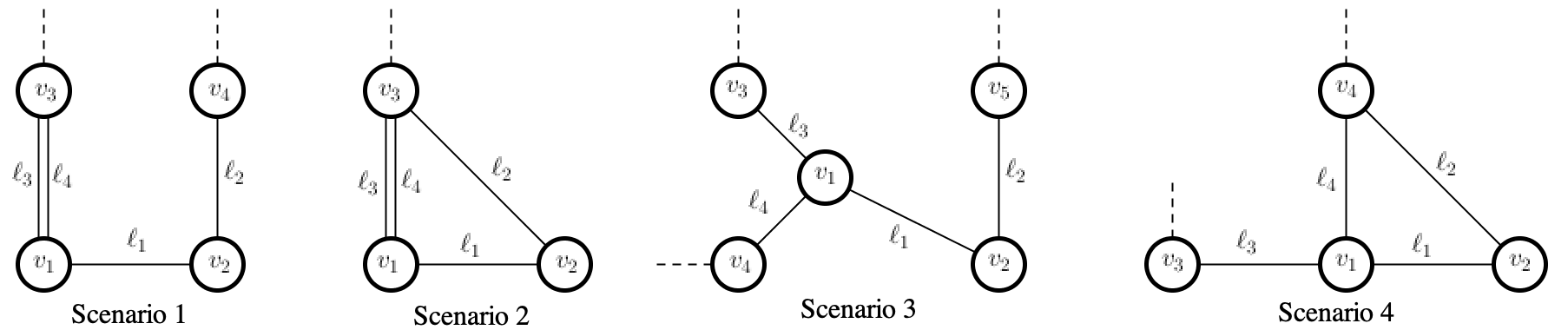}
  \caption{The functional groups involved in step (3S2G). In total there are 4 scenarios.}
  \label{fig:3s2}
\end{figure} 
\begin{itemize}
\item Step (3S2G): we remove the atoms $\{v_1,v_2\}$ and all (four) bonds connecting to them, and set $\Delta\mathtt{Ext}=\varnothing$.
\end{itemize}

For (3S2G), we can check using Lemma \ref{increF} that, in each scenario, we always have $(\Delta V,\Delta E,\Delta F)=(-2,-4,0)$. We define $\Delta\gamma=-2+\frac{1}{4(d-1)}$, $\Delta\kappa=-2$ and $\mathtt{Ext}_{\mathrm{pre}}=\mathtt{Ext}_{\mathrm{pos}}$.
\begin{prop}\label{3s2prop} The step (3S2G) is good, and satisfies (\ref{keyineq}).
\end{prop}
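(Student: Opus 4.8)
The plan is to follow the template of the preceding ``good-step'' propositions (cf. Propositions \ref{3d3prop} and \ref{3d4prop}): dispose of the claim ``good'' by a one-line count of $\Delta\chi$, and establish the counting inequality \eqref{keyineq} by bounding the number of choices for the vectors on the bonds incident to $\{v_1,v_2\}$ and then, as always, fixing those vectors and passing to $k[\Mb_{\mathrm{pos}}]$. That ``good'' holds is immediate: with $(\Delta V,\Delta E,\Delta F)=(-2,-4,0)$ one has $\Delta\chi=-2$, whereas $\Delta\gamma=-2+\tfrac1{4(d-1)}=\Delta\chi+\tfrac1{4(d-1)}\ge\Delta\chi+\tfrac1{6(d-1)}$; the value $\Delta F=0$ in each of the four scenarios of Figure \ref{fig:3s2} is the one recorded when the step was defined, and follows from Lemma \ref{increF} because $\Mb_{\mathrm{pre}}$ has no bridge and only three bonds leave $\{v_1,v_2\}$. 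Since (3S2G) only deletes atoms and bonds and takes $\mathtt{Ext}_{\mathrm{pre}}=\mathtt{Ext}_{\mathrm{pos}}$, Conditions 1--3 of Section \ref{moleframe} are preserved trivially (in particular no component becomes saturated, since every neighbor of $v_1,v_2$ loses a bond), so the content of the proposition is exactly \eqref{keyineq}, which with $\Delta\kappa=-2$ unwinds to the assertion that the number of tuples $(k_{\ell_1},k_{\ell_2},k_{\ell_3},k_{\ell_4})$ --- where $\ell_1$ joins $v_1$ and $v_2$, $\ell_2,\ell_3$ are the other two bonds at $v_1$, and $\ell_4$ the other bond at $v_2$ --- is at most $C^{+}\delta^{-2}L^{2(d-1)-1/4}$.

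To obtain this count I would invoke \eqref{decmole} at both atoms. At the degree-$2$ atom $v_2$ (so $c_{v_2}=0$) the vector relation forces $k_{\ell_1}=\pm k_{\ell_4}$, and together with the resonance relation at $v_2$ it yields a genuine two-vector system of the form \eqref{2vcounting} for $(k_{\ell_1},k_{\ell_4})$ --- genuine precisely because, the two bonds at $v_2$ having equal orientation, the resonance relation reads $\pm 2|k_{\ell_4}|_\beta^2=\Gamma_{v_2}+O(\delta^{-1}L^{-2})$ rather than the vacuous $0=O(\delta^{-1}L^{-2})$. At $v_1$ the vector relation together with the resonance relation gives a three-vector system of the form \eqref{3vcounting} for $(k_{\ell_1},k_{\ell_2},k_{\ell_3})$. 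The two systems share the variable $k_{\ell_1}$, so Lemma \ref{lem:counting}(3) (the clause also used in the bridge case of (3D3-6G), here with no logarithmic loss since the window at $v_2$ is the standard $\delta^{-1}L^{-2}$) bounds the number of tuples $(k_{\ell_1},\dots,k_{\ell_4})$ by $C^{+}\delta^{-2}L^{2(d-1)-1/4}$. Fixing such a tuple, the restriction of any $k[\Mb_{\mathrm{pre}}]\in\Df(\Mb_{\mathrm{pre}},\mathtt{Ext}_{\mathrm{pre}})$ to $\Mb_{\mathrm{pos}}$ lies in $\Df(\Mb_{\mathrm{pos}},\mathtt{Ext}_{\mathrm{pos}})$ with parameters depending on the fixed values (as in the proof of Proposition \ref{daprop}, any degenerate atom adjacent to $v_1$ or $v_2$ that becomes tame has its bond-value fixed as well), which is exactly \eqref{keyineq}.

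The one genuinely delicate point --- and the main obstacle to writing the proof up --- is justifying that the two-vector system at $v_2$ is nondegenerate, i.e. that the two bonds at $v_2$ carry equal orientation; this is what supplies the $L^{-1/4}$ improvement (and hence both ``good'' and \eqref{keyineq}). The opposite-orientation ``pass-through'' configuration at a degree-$2$ atom is precisely a link of a type-II molecular chain (Definition \ref{molechain}), and by the design of the algorithm in Section \ref{alg} it has already been removed before step (3S2G) can be invoked, so it does not occur here; what then remains is the routine case-check that in each of the four scenarios of Figure \ref{fig:3s2} the systems produced at $v_1$ and $v_2$ are indeed of the shapes \eqref{3vcounting} and \eqref{2vcounting} to which Lemma \ref{lem:counting} applies with exponent $2(d-1)-1/4$, together with verifying the orientations needed for the reduction $k_{\ell_1}=\pm k_{\ell_4}$. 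There is room to spare in the bookkeeping: the $\delta^{-2}$ on the right of \eqref{keyineq} absorbs one power of $\delta^{-1}$ beyond the $\delta^{-1}$ already present in the three-vector count at $v_1$, which is exactly what makes the coupled-system bound of Lemma \ref{lem:counting}(3) sufficient.
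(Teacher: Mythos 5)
Your counting reduction is exactly the one the paper uses: the four vectors $(k_{\ell_1},k_{\ell_2},k_{\ell_3},k_{\ell_4})$ satisfy the coupled system \eqref{4vcounting1} (a two-vector relation at $v_2$ and a three-vector relation at $v_1$, sharing $k_{\ell_1}$), and Lemma \ref{lem:counting}~(3) gives the bound $C^+\delta^{-2}L^{2(d-1)-\frac14}$, after which one fixes the tuple and passes to $k[\Mb_{\mathrm{pos}}]$ as in Proposition \ref{daprop}. The verification that the step is good is also as you say. So the skeleton of your argument is the paper's proof.

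However, the point you single out as ``genuinely delicate'' is a misdiagnosis, and the argument you give for it is wrong. First, the $L^{-1/4}$ gain in Lemma \ref{lem:counting}~(3) does \emph{not} come from the resonance relation at the degree-$2$ atom $v_2$: in the proof of that lemma the two-vector count for $(x_1,x_2)$ is only used with the generic bound $\delta^{-1}L^{d-1}$, and the gain is extracted from the pair $(x_3,x_4)$ at the degree-$3$ atom --- either $\zeta_3=\zeta_4$ and one uses the sphere count (Lemma \ref{jensen}), or $\zeta_3+\zeta_4=0$ and one splits according to whether $x_3-x_4$ is a good or bad vector (Lemma \ref{goodvec}), the bad case restricting $x_1$ instead. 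Consequently no restriction on the orientations of the two bonds at $v_2$ is needed at all; the lemma is uniform in all sign choices, the only input being caveat (ii) ($x_1\neq x_2$ when the signs are opposite), which holds because all degenerate atoms were removed in phase one, so condition (b) of Definition \ref{countingproblem} applies at $v_2$. Second, your justification for excluding the opposite-orientation configuration is incorrect: a degree-$2$ atom with two single bonds of opposite directions is not a link of a type~II chain (Definition \ref{molechain} builds type~II chains out of degree-$4$ atoms joined by double bonds and paired single bonds), and the steps that do remove pass-through degree-$2$ atoms, namely (2R-1)--(2R-5), occur in items (7)--(8) of the algorithm, i.e.\ \emph{after} (3S2G) in the priority order, so nothing has ``already been removed'' when (3S2G) is invoked. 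A minor related slip: in the general counting problem of Definition \ref{countingproblem} one may not assume $c_{v_2}=0$ (that normalization is imposed only at degree-$4$ atoms), so the reduction ``$k_{\ell_1}=\pm k_{\ell_4}$'' should not be hard-coded; one should simply quote \eqref{4vcounting1} with arbitrary parameters and signs, as the paper does. With the false claim and its justification deleted, your proof reduces to the paper's and is complete.
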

\begin{proof}The step is good by definition. Now by (\ref{decmole}) we know that $(k_{\ell_1},\cdots,k_{\ell_4})$ satisfies the system (\ref{4vcounting1}) in Lemma \ref{lem:counting}, with some choice of signs $(\zeta_1,\cdots,\zeta_4)$. By Lemma \ref{lem:counting} (3) they have at most $C^+\delta^{-2}L^{2(d-1)-\frac{1}{4}}$ choices, and once they are fixed we can reduce to $k[\Mb_{\mathrm{pos}}]$ and prove (\ref{keyineq}).
\end{proof}
\subsubsection{Degree 3 atom removal}\label{3r} In this step, we assume there is an atom $v$ of degree 3, which is connected to three atoms $v_j\,(1\leq j\leq 3)$ of degree 4, by three single bonds $\ell_j\,(1\leq j\leq 3)$. In step (3R-2G) we further assume that, there is a special bond $\ell_1'$ (see Definition \ref{defbridge}) in the molecule (or component) after removing the atom $v$ and the bonds $\ell_j$. In this case, suppose $\ell_1'$ connects atoms $v_1'$ and $v_2'$, $v_1'$ is connected to $v_3'$ by a double bond $(\ell_2',\ell_3')$, and $v_2'$ is connected to $v_4'$ by a double bond $(\ell_4',\ell_5')$, see Figure \ref{fig:3rproof}.
  \begin{figure}[h!]
  \includegraphics[scale=.5]{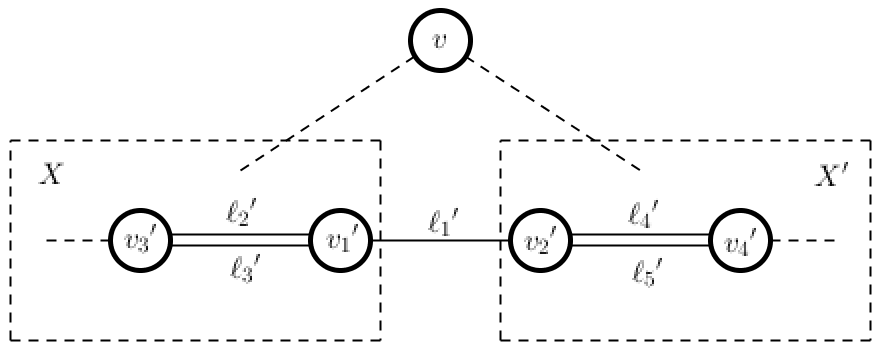}
  \caption{The functional group involved in step (3R-2G). Here $v_1,v_2,v_3$ are not drawn; some of them may coincide with some $v_j'$. Also we only draw the scenario where $\ell_1'$ becomes a bridge after removing $v$, but the other scenario is also possible}
  \label{fig:3rproof}
\end{figure} 
\begin{itemize}
\item Step (3R-1): we remove the atom $v$ and all (three) bonds connecting to it, and set $\Delta\mathtt{Ext}=\varnothing$.
\item Step (3R-2G): we we remove the atom $v$ and all (three) bonds connecting to it. Then we remove the atoms $\{v_1',v_2'\}$  and all (five) bonds connecting to them. We also set $\Delta\mathtt{Ext}=\varnothing$.
\end{itemize}

Clearly the operation of removing $v$ and $\ell_j\,(1\leq j\leq 3)$ does not increase the number of components (by Lemma \ref{increF}). Therefore for (3R-1) we have $(\Delta V,\Delta E,\Delta F)=(-1,-3,0)$. As for (3R-2), we have $(\Delta V,\Delta E,\Delta F)$ equals either $(-3,-8,0)$ or $(-3,-8,1)$, depending on whether $\ell_1'$ becomes a bridge after removing $v$. For (3R-1) we define $\Delta\gamma=-2$ and $\Delta\kappa=-1$, and for (3R-2) we define $\Delta\gamma=\Delta\chi+\frac{1}{6(d-1)}$ and $\Delta\kappa=-4$. In both cases we define $\mathtt{Ext}_{\mathrm{pre}}=\mathtt{Ext}_{\mathrm{pos}}$.
\begin{prop}\label{3rprop} The step (3R-1) is normal, and satisfies $\Delta\eta=2$ and $\Delta V_3=2$. The step (3R-2G) is good. Both satisfy (\ref{keyineq}).
\end{prop}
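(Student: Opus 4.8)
The statement contains three assertions: (3R-1) is normal with $\Delta\eta=\Delta V_3=2$; (3R-2G) is good; and both satisfy (\ref{keyineq}). The "normal"/"good" labels are immediate from the bookkeeping of $(\Delta V,\Delta E,\Delta F)$ recorded just before the proposition, so the real content is the counting estimate (\ref{keyineq}) in each case. As in all the preceding propositions (\ref{tbprop}, \ref{brprop}, \ref{3s3prop}, \ref{3d3prop}, \ref{3d4prop}, \ref{3s2prop}), the plan is: freeze the values $k_\ell$ of the bonds $\ell$ that the step removes, bound the number of admissible choices for this tuple using Lemma \ref{lem:counting} (together with Lemma \ref{cutlem} wherever a bridge appears), and then observe that once those values are frozen the restriction of $k[\Mb_{\mathrm{pre}}]$ to the surviving bonds lies in $\Df(\Mb_{\mathrm{pos}},\mathtt{Ext}_{\mathrm{pos}})$ for an appropriate choice of the parameters $(a_\ell,c_v,\Gamma_v,f_v)$; this produces (\ref{keyineq}).

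\textbf{Step (3R-1).} Here $(\Delta V,\Delta E,\Delta F)=(-1,-3,0)$, the equality $\Delta F=0$ holding because $\Mb_{\mathrm{pre}}$ has no bridge and $v$ carries only three bonds, so Lemma \ref{increF} rules out a new component; hence $\Delta\chi=-2=\Delta\gamma$ and the step is normal. Removing the degree-$3$ atom $v$ deletes one degree-$3$ atom and turns each of its three degree-$4$ neighbours $v_1,v_2,v_3$ into a degree-$3$ atom, while $V_2,V_1,V_0,F$ are unchanged; thus $\Delta V_3=-1+3=2$ and $\Delta\eta=\Delta V_3=2$. For (\ref{keyineq}), where $\Delta\gamma=-2$ and $\Delta\kappa=-1$, the relations (\ref{decmole}) at $v$ say precisely that $(k_{\ell_1},k_{\ell_2},k_{\ell_3})$ solves a three-vector system of type (\ref{3vcounting}), so Lemma \ref{lem:counting}(2) gives at most $C^+\delta^{-1}L^{2(d-1)}$ choices; freezing these and passing to $\Mb_{\mathrm{pos}}$ finishes the case.

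\textbf{Step (3R-2G).} The step is good because $\Delta\gamma=\Delta\chi+\frac{1}{6(d-1)}>\Delta\chi$ by definition. For (\ref{keyineq}), with $\Delta\kappa=-4$, I first freeze $(k_{\ell_1},k_{\ell_2},k_{\ell_3})$ at $v$ through the three-vector system (\ref{3vcounting}), at cost $C^+\delta^{-1}L^{2(d-1)}$, which also converts $v_1,v_2,v_3$ to degree-$3$ atoms. It remains to count the values on the five bonds $\ell_1',\dots,\ell_5'$ of the special-bond functional group of Figure \ref{fig:3rproof}, splitting into the two scenarios there. If $\ell_1'$ does not become a bridge after removing $v$, then $\Delta\chi=-5$; the tuple $(k_{\ell_1'},\dots,k_{\ell_5'})$ satisfies the same five-vector system (with its "good vector" gain) handled in the proof of Proposition \ref{3s3prop} for step (3S3-5G), so Lemma \ref{lem:counting} bounds it by $C^+\delta^{-2}L^{3(d-1)-1/4}$; in total $C^+\delta^{-3}L^{5(d-1)-1/4}\le C^+\delta^{-4}L^{5(d-1)-1/6}$, which is (\ref{keyineq}). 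If instead $\ell_1'$ does become a bridge after removing $v$, then $\Delta\chi=-4$: applying Lemma \ref{cutlem} to the cut of $\Mb_{\mathrm{pre}}$ separated by $\ell_1'$ together with those of $v$'s bonds that cross it shows $k_{\ell_1'}$ obeys a cut relation sharing one or two variables with the $v$-system, so — exactly as in the bridge case of (3D3-6G) in the proof of Proposition \ref{3d3prop} — Lemma \ref{lem:counting} (after subdividing the $\delta^{-1}L^{-2}$ windows into $L^{-2}$ windows, which costs at most a $(\log L)^{C}$ factor by $n\le(\log L)^3$ and therefore downgrades $L^{-1/4}$ to $L^{-1/6}$) bounds the choices of $(k_{\ell_1},k_{\ell_2},k_{\ell_3},k_{\ell_1'})$ by $C^+\delta^{-2}L^{2(d-1)-1/6}$; once $k_{\ell_1'}$ is frozen, the two double bonds $(\ell_2',\ell_3')$ at $v_1'$ and $(\ell_4',\ell_5')$ at $v_2'$ each give a two-vector system (\ref{2vcounting}), contributing $C^+\delta^{-1}L^{d-1}$ apiece by Lemma \ref{lem:counting}(1). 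The product is $C^+\delta^{-4}L^{4(d-1)-1/6}$, again matching (\ref{keyineq}); freezing all eight bond values and passing to $\Mb_{\mathrm{pos}}$ concludes.

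\textbf{Main obstacle.} The delicate point is the bridge sub-case of (3R-2G): one must correctly locate the cut of $\Mb_{\mathrm{pre}}$ that degenerates into the bridge $\ell_1'$ of $\Mb_{\mathrm{pre}}\setminus v$, count how many of $v$'s three bonds cross it (this dictates which variant of the multi-vector counting in Lemma \ref{lem:counting} applies), and check that the $L^{-1/4}$ gain from the four-vector counting survives the logarithmic losses from subdividing the quadratic windows, leaving the required net $L^{-1/6}$. The non-bridge scenario and all of (3R-1) are then routine once the analogous steps (3S3-5G) and (3D3-6G) are in hand.
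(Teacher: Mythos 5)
There is a genuine gap in your treatment of the bridge sub-case of (3R-2G), which you correctly flag as the delicate point but do not resolve. You import the (3D3-6G) bridge argument wholesale, but that argument relies on a structural feature absent here: in (3D3-6G), the two bonds crossing the cut (there $\ell_3$ and $\ell_5$) share the atom $v_3$, and since $v_3$ is non-degenerate, condition (b) in Definition \ref{countingproblem} automatically rules out the resonance $k_{\ell_3}=k_{\ell_5}$ when they have opposite directions — so Lemma \ref{lem:counting}(3), which requires $x_j\neq x_{j'}$ whenever $\zeta_j+\zeta_{j'}=0$, applies with no extra work. In (3R-2G) the cut is crossed by $\ell_1$ (at $v$) and $\ell_1'$ (connecting $v_1'$ to $v_2'$), which in general do not share an atom, so nothing forbids $\ell_1,\ell_1'$ from having opposite directions viewed from $X$ with $k_{\ell_1}=k_{\ell_1'}$. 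The cut relation from Lemma \ref{cutlem} must therefore be applied \emph{with resonance allowed}, and Lemma \ref{lem:counting}(3) fails precisely on that resonance stratum. Your estimate $C^+\delta^{-2}L^{2(d-1)-1/6}$ for $(k_{\ell_1},k_{\ell_2},k_{\ell_3},k_{\ell_1'})$ is thus not justified on that stratum, and the subsequent two applications of (\ref{2vcounting}) do not recover the missing $L^{-1/6}$.

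The paper handles this by splitting: off resonance, Lemma \ref{lem:counting}(3) applies as you describe (and the $n$-factor from the widened $n\delta^{-1}L^{-2}$ window, $n\le(\log L)^3$, absorbs the downgrade from $L^{-1/4}$ to $L^{-1/6}$ — this is how the degradation actually enters, not via a separate subdivision cost as you suggest). On the resonance stratum $k_{\ell_1}=k_{\ell_1'}$, one first observes $v_1\neq v_1'$ (since if $v_1=v_1'$ then $\ell_1,\ell_1'$ would be opposite-direction bonds at the non-degenerate atom $v_1$, forcing $k_{\ell_1}\neq k_{\ell_1'}$), which lets $(k_{\ell_1},k_{\ell_2},k_{\ell_3},k_{\ell_2'},k_{\ell_3'})$ form a legitimate five-vector system of type (\ref{5vcounting1}); Lemma \ref{lem:counting}(5) then gives $C^+\delta^{-2}L^{3(d-1)-1/4}$, and $(k_{\ell_4'},k_{\ell_5'})$ contribute $C^+\delta^{-1}L^{d-1}$ via (\ref{2vcounting}), for a total of $C^+\delta^{-4}L^{4(d-1)-1/4}$, still sufficient. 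Everything else in your proposal — (3R-1), the non-bridge case of (3R-2G), and the bookkeeping for $\Delta\eta,\Delta V_3$ and the normal/good classification — matches the paper's argument.
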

\begin{proof} The step (3R-1) is normal and (3R-2G) is good by definition, the equalities {for} $\Delta \eta$ and $\Delta V_3$ are also easily verified.

To prove (\ref{keyineq}), note that this is clear for (3R-1) because $(k_{\ell_1},k_{\ell_2},k_{\ell_3})$ satisfies the system (\ref{3vcounting}) in Lemma \ref{lem:counting} and thus the number of choices for these is at most $C^+\delta^{-1}L^{2(d-1)}$, and then (\ref{keyineq}) follows by reducing to $k[\Mb_{\mathrm{pos}}]$ as before. Now we only need to consider (3R-2G). If $\ell_1'$ does not become a bridge after removing $v$, then $\Delta\chi=-5$ and $\Delta\gamma=-5+\frac{1}{6(d-1)}$. In this case, by repeating the proof of Proposition \ref{3s3prop} (see (3S3-5G), Scenario 3), we know that (i) the number of choices for $(k_{\ell_1},k_{\ell_2},k_{\ell_3})$ is at most $C^+\delta^{-1}L^{2(d-1)}$, and (ii) once $(k_{\ell_1},k_{\ell_2},k_{\ell_3})$ is fixed, the number of choices for $(k_{\ell_1'},\cdots,k_{\ell_5'})$ is at most $C^+\delta^{-2}L^{3(d-1)-\frac{1}{4}}$. Therefore the number of choices for $(k_{\ell_1},k_{\ell_2},k_{\ell_3},k_{\ell_1'},\cdots,k_{\ell_5'})$ is at most $C^+\delta^{-3}L^{5(d-1)-\frac{1}{4}}$, which implies (\ref{keyineq}).

Now, we may assume $\ell_1'$ becomes a (special) bridge after removing $v$, see Figure \ref{fig:3rproof}. Since $\ell_1'$ is not a bridge in $\Mb_{\mathrm{pre}}$, we know $v$ must have at least one bond connecting to each of the two components after removing $v$ and $\ell_1'$. Without loss of generality, assume $v$ has only one bond, say $\ell_1$, connecting to an atom $v_1$ in $X$ (the component containing $\{v_1',v_3'\}$), then by Lemma \ref{cutlem} we know that  $(k_{\ell_1},k_{\ell_1'})$ satisfies the system (\ref{2vcounting}) in Lemma \ref{lem:counting}, but with $n\delta^{-1}L^{-2}$ replacing $\delta^{-1}L^{-2}$ and resonance (i.e. $k_{\ell_1}=k_{\ell_1'}$ and they have opposite signs in (\ref{2vcounting})) allowed. Since  $(k_{\ell_1},k_{\ell_2},k_{\ell_3})$ also satisfies (\ref{3vcounting}), we can apply Lemma \ref{lem:counting} (3) to bound the number of choices for $(k_{\ell_1},k_{\ell_2},k_{\ell_3},k_{\ell_1'})$ by $nC^+\delta^{-2} L^{2(d-1)-\frac{1}{4}}$, \emph{unless} $\ell_1$ and $\ell_1'$ have opposite directions (viewing from $X$) and $k_{\ell_1}=k_{\ell_1'}$. If the above improved bound holds, then the number of choices for $(k_{\ell_1},k_{\ell_2},k_{\ell_3},k_{\ell_1'},\cdots,k_{\ell_5'})$ is at most $C^+\delta^{-4}L^{4(d-1)-\frac{1}{6}}$, since once $(k_{\ell_1},k_{\ell_2},k_{\ell_3},k_{\ell_1'})$ is fixed, the number of choices for $(k_{\ell_2'},\cdots,k_{\ell_5'})$ is at most $C^+\delta^{-2}L^{2(d-1)}$ by Lemma \ref{lem:counting} (1).

Finally, suppose $\ell_1$ and $\ell_1'$ have opposite directions and $k_{\ell_1}=k_{\ell_1'}$. In particular we must have $v_1\neq v_1'$, hence $(k_{\ell_1},k_{\ell_2},k_{\ell_3},k_{\ell_2'},k_{\ell_3'})$ will satisfy the system (\ref{5vcounting1}) in Lemma \ref{lem:counting}. By Lemma \ref{lem:counting} (5), we can bound the number of choices for these by $C^+\delta^{-2}L^{3(d-1)-\frac{1}{4}}$. Once these are fixed, the number of choices for $(k_{\ell_4'},k_{\ell_5'})$ is at most $C^+\delta^{-1}L^{d-1}$ by Lemma \ref{lem:counting} (1), so the number of choices for $(k_{\ell_1},k_{\ell_2},k_{\ell_3},k_{\ell_1'},\cdots,k_{\ell_5'})$ is still at most $C^+\delta^{-4}L^{4(d-1)-\frac{1}{4}}$. This proves (\ref{keyineq}).
\end{proof}
\subsubsection{Degree 2 atom removal} In this step, we assume there is an atom $v$ of degree 2, connected to one or two atom(s) of degree 2 or 4.
\begin{itemize}
\item Step (2R-1): suppose $v$ is connected to a degree 4 atom by a double bond, where the two bonds have opposite directions. We remove the atom $v$ and the double bond.
\item Step (2R-2G): suppose $v$ is connected to a degree 4 atom by a double bond, where the two bonds have the same direction. We remove the atom $v$ and the double bond.
\item Step (2R-3): suppose $v$ is connected to a degree 4 atom by a single bond, and also connected to another atom of degree 2 or 4 by a single bond. We remove the atom $v$ and the two bonds.
\item Step (2R-4): suppose $v$ is connected to two degree 2 atoms $v_1$ and $v_2$ by two single bonds, such that neither $v_1$ nor $v_2$ is connected to a degree 3 atom. We remove the atoms $\{v, v_1,v_2\}$, and all bonds connecting to them.
\item Step (2R-5): suppose $v$ is connected to a degree 2 atom $v'$ by a double bond. We remove the atoms $v,v'$ and the double bond. In all steps we set $\Delta\mathtt{Ext}=\varnothing$.
\end{itemize}

For (2R-1)--(2R-3) we have $(\Delta V, \Delta E,\Delta F)=(-1,-2,0)$ (note that $F=0$ due to Lemma \ref{increF}). For (2R-4) we have $(\Delta V, \Delta E,\Delta F)$ can be $(-3,-4,0)$ or $(-3,-3,-1)$, and for (2R-5) we have $(\Delta V, \Delta E,\Delta F)=(-2,-2,-1)$. For (2R-2G) we define $\Delta\gamma=-1+\frac{1}{3(d-1)}$ and $\Delta\kappa=-1$, and for all others define $\Delta\gamma=-1$ and $\Delta\kappa=-1$. We also define $\mathtt{Ext}_{\mathrm{pre}}=\mathtt{Ext}_{\mathrm{pos}}$.
\begin{prop}\label{2rprop} The step (2R-2G) is good, and the other four are normal. For (2R-1) and (2R-5) we have $\Delta V_3=\Delta \eta=0$. For (2R-3) we have $\Delta\eta=0$ and $\Delta V_3\geq 1$; for (2R-4) we have $\Delta V_3\geq 0$ and $\Delta\eta\leq -2$.
\end{prop}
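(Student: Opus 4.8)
The proof of Proposition \ref{2rprop} has three ingredients: (a) the \emph{normal}/\emph{good} classification of the five steps; (b) the identities $\Delta\eta=0$ for (2R-1), (2R-3), (2R-5) and $\Delta\eta\le-2$ for (2R-4), together with the lower bounds on $\Delta V_3$; and (c) the counting inequality (\ref{keyineq}), proved in the style of Propositions \ref{daprop}--\ref{3rprop}. Parts (a) and (b) are pure bookkeeping with the data $(\Delta V,\Delta E,\Delta F)$ and the values of $\Delta\gamma$ already recorded in the definitions of the steps. For (a), recall $\Delta\chi=\Delta E-\Delta V+\Delta F$; in every one of the five steps the recorded triples give $\Delta\chi=-1$ (for (2R-4) this holds for both possibilities $(-3,-4,0)$ and $(-3,-3,-1)$, and for (2R-5) the triple is $(-2,-2,-1)$). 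Hence (2R-1), (2R-3), (2R-4), (2R-5), whose $\Delta\gamma$ equals $-1$, are normal, while (2R-2G), with $\Delta\gamma=-1+\tfrac1{3(d-1)}$, satisfies $\Delta\gamma-\Delta\chi=\tfrac1{3(d-1)}\ge\tfrac1{6(d-1)}$ and is good.

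For (b), the key observation is that, writing $\rho:=V_3+2V_2+3V_1+4V_0=\sum_v(4-d(v))=4V-2E$, the quantity $\eta$ in (\ref{defchar}) equals $4V-2E-4F$, so $\Delta\eta=4\Delta V-2\Delta E-4\Delta F$. Substituting the recorded triples gives $\Delta\eta=0$ for (2R-1), (2R-3), (2R-5), and $\Delta\eta=-4$ or $-2$ for the two possibilities of (2R-4), so $\Delta\eta\le-2$. The bounds on $\Delta V_3$ are obtained by tallying which atoms enter or leave $V_3$. In (2R-1) and (2R-5) the only atoms whose degree changes are the removed degree-$2$ atoms and, in (2R-1), the degree-$4$ neighbour which drops to degree $2$; none of these is ever in $V_3$, so $\Delta V_3=0$. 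In (2R-3) the degree-$4$ neighbour $w_1$ drops to degree $3$ and enters $V_3$, while the second neighbour $w_2$, of degree $2$ or $4$, either drops to degree $3$ (entering $V_3$) or to degree $1$ (staying out), so $\Delta V_3\ge1$. In (2R-4) the removed atoms have degree $2$; in the isolated-triangle case no further atom is touched, and in the other case the external neighbours $u_1,u_2$ are, by the hypothesis that neither $v_1$ nor $v_2$ is adjacent to a degree-$3$ atom and by the absence of bridges in $\Mb_{\mathrm{pre}}$, of degree $2$ or $4$; hence their degree drop removes nobody from $V_3$ and $\Delta V_3\ge0$.

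For (c) one follows the template of the earlier propositions: resolve the $k$-vectors on the bonds removed by the step, then fix them and reduce to $\Df(\Mb_{\mathrm{pos}},\mathtt{Ext}_{\mathrm{pos}})$ with the parameters at the surviving neighbours shifted accordingly (no shift at all in the isolated-component cases, namely (2R-4) with triple $(-3,-3,-1)$ and (2R-5)). In each step exactly one removed bond carries a free vector, confined to a ball of radius $1$ (so $\lesssim L^d$ lattice choices), all other removed bonds being pinned to it by the linear balance equations in (\ref{decmole}) at the removed atoms and by Lemma \ref{cutlem}. The quadratic balance equation in (\ref{decmole}) at the removed degree-$2$ atom $v$ then cuts the free vector down. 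Here one uses that $v$ is \emph{not} degenerate (all degenerate atoms have already been removed by (DA)): condition (b) of Definition \ref{countingproblem} forces the two bonds at $v$ to carry distinct vectors, so when they point in opposite directions from $v$ the linear equation at $v$ has nonzero right-hand side and the quadratic equation collapses to a genuine hyperplane constraint of thickness $O(\delta^{-1}L^{-2})$, giving $\lesssim\delta^{-1}L^{d-1}$ choices, whereas when they point in the same direction the quadratic equation at $v$ is a sum of two squares and collapses to a genuine sphere-shell constraint, giving $\lesssim\delta^{-1}L^{d-2}+L^{d-1-1/3}$ choices by Lemma \ref{lem:counting} and the genericity of $\beta$; this sphere gain is precisely why (2R-2G), where the double bond is same-directed, is good. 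Since $\Delta\kappa=-1$ in all five steps, in every case the count is $\lesssim\delta^{\Delta\kappa}L^{-(d-1)\Delta\gamma}$, which gives (\ref{keyineq}).

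There is no deep obstacle here: this is the mildest end of the molecule case analysis—degree-$2$ atoms carry at most two bonds, so at most one free vector ever needs to be counted, in contrast to the five-vector systems handled via Lemma \ref{lem:counting} in Propositions \ref{3s3prop}--\ref{3d4prop}. The only points demanding attention are (i) checking that the quadratic constraint at $v$ is never vacuous—i.e.\ that condition (b) genuinely rules out $k_{\ell_1}=k_{\ell_2}$ in the opposite-directed subcases, so that in those cases the linear equation has nonzero right-hand side—and (ii) in (2R-4), correctly enumerating the two graph topologies (the isolated triangle of degree-$2$ atoms versus the open chain with two external ends, including the coincidence $u_1=u_2$) so that the recorded $(\Delta V,\Delta E,\Delta F)$, and hence the $\Delta\eta$ identity and the reduction in (c), are valid in both.
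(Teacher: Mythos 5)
Your proposal is correct and follows essentially the same route as the paper: direct bookkeeping for the normal/good classification and for $\Delta\eta$, $\Delta V_3$ (your identity $\eta=4V-2E-4F$ is just a compact way of doing the paper's ``direct verification''), then the two-vector counting of Lemma \ref{lem:counting} (1) at the removed degree-$2$ atom together with the reduction to $k[\Mb_{\mathrm{pos}}]$, with the extra bonds in (2R-4) uniquely pinned by the linear relations at $v_1,v_2$, exactly as in the paper's proof. One cosmetic caveat: in the same-direction case the bound $\delta^{-1}L^{d-2}+L^{d-1-1/3}$ you quote is sharper than what Lemma \ref{lem:counting} (1) literally states, but its stated bound $\delta^{-1}L^{d-1-\frac{1}{3}}$ already matches $\Delta\gamma=-1+\frac{1}{3(d-1)}$, $\Delta\kappa=-1$, so (\ref{keyineq}) follows as you intend.
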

\begin{proof} The statements about good or normal, as well as the ones regarding $\Delta\eta$ and $\Delta V_3$, can be shown by direct verification. As for (\ref{keyineq}), if $(\ell_1,\ell_2)$ are the two bonds of $v$, then $(k_{\ell_1},k_{\ell_2})$ satisfies the system (\ref{2vcounting}) in Lemma \ref{lem:counting}, so (\ref{keyineq}) follows from Lemma \ref{lem:counting} (1) and reduction to $k[\Mb_{\mathrm{pos}}]$. For (2R-4) just notice that $v_1$ and $v_2$ become degree 1 after removing $v$, so if $\ell_3$ and $\ell_4$ are the bonds they have other than $\ell_1$ and $\ell_2$, then $k_{\ell_3}$ and $k_{\ell_4}$ must be uniquely fixed once $(k_{\ell_1},k_{\ell_2})$ is fixed, so the total number of choices for $(k_{\ell_1},\cdots,k_{\ell_4})$ is still at most $C^+\delta^{-1}L^{d-1}$.
\end{proof}
\subsection{The algorithm}\label{alg} We now describe the algorithm. It is done in two phases. In phase one we remove the degenerate atoms using steps (DA) only; moreover we remove the non-tame degenerate atoms (i.e. those with degree 4) strictly before the tame ones. Once phase one is finished we enter phase two, where there is no more degenerate atoms; note that none of our steps can create any (possibly) degenerate atom, which is easily checked by definition.

In phase two, we will describe the algorithm as a big loop. Once we enter the loop, we shall follow a set of rules so that depending on the current molecule $\Mb$, we either (i) choose the next step, or (ii) claim a checkpoint and choose the two possibilities for the next step. In some cases, we may also choose more than one steps or claim more than one checkpoints successively, again following a specific set of rules, until we are done with this execution of the loop and return to the start of the loop. The loop ends when $\Mb$ contains only isolated atoms.
\subsubsection{Phase one: degenerate atom removal} The steps in phase one are determined as follows.
\begin{itemize}
\item If there is a degenerate atom of degree 4, remove it using (DA).
\item If there is no degenerate atom of degree 4 but there is a tame atom, remove it using (DA).
\item Repeat this until there is no degenerate atom. Then enter phase two.
\end{itemize}

Note that these steps will not create new degenerate atom, or new degenerate atom of degree 4, but may transform degenerate atom of degree 4 into tame ones. At the end of phase one there will be no degenerate atom, which will be preserved for the rest of the algorithm.
\subsubsection{Phase two: description of the loop}\label{loop} We now describe the loop in phase two. For an example of this algorithm, see Appendix \ref{algexample}. {Note that there is no triple bond in the beginning.}
\begin{enumerate}
\item If $\Mb$ contains a bridge, then remove it using (BR). {Repeat until $\Mb$ contains no bridge.}
\item {Now $\Mb$ contains no bridge.} If $\Mb$ contains two degree 3 atoms $v_1$ and $v_2$ connected by a single bond $\ell_1$, then:
\begin{enumerate}
\item If $\Mb$ contains one of the functional groups in Figure \ref{fig:3s3new}, then {perform} (3S3-5G). Go to (1).
\item Otherwise, $\Mb$ contains the functional group in Figure \ref{fig:3s3}. If it satisfies (i) and (ii) in Section \ref{3s3}, and $d(v_3)=\cdots =d(v_6)=4$, then we claim a checkpoint, and choose the two possibilities for the next step to be (3S3-1) and (3S3-2G) ({the pre-assumptions for (3S3-1) and (3S3-2G) are satisfied, see Section \ref{3s3}}). Go to (1).
\item If it satisfies (i) and (ii) in Section \ref{3s3}, but (say) $d(v_3)$ and $d(v_5)$ are not both 4, then we claim a checkpoint, and choose the two possibilities for the next step to be (3S3-2G) and (3S3-3G) ({the pre-assumptions for (3S3-2G) and (3S3-3G) are satisfied, see Section \ref{3s3}}). If after (3S3-3G) a triple bond forms between $v_3$ and $v_5$, immediately remove it by (TB-1)--(TB-2). Go to (1).
\item If either (i) or (ii) in Section \ref{3s3} is violated, then we perform (3S3-4G) ({the pre-assumption for (3S3-4G) is satisfied, see Section \ref{3s3}}). Go to (1).
\end{enumerate}
\item Otherwise, if $\Mb$ contains two degree 3 atoms $v_1$ and $v_2$ connected by a double bond $(\ell_1,\ell_2)$, then:
\begin{enumerate}
\item If $\Mb$ contains the functional group in Figure \ref{fig:3d3} corresponding to (3D3-4G) or (3D3-5G), then we perform the corresponding step. Go to (1).
\item Otherwise, $\Mb$ contains the functional group in Figure \ref{fig:3d3} corresponding to (3D3-1)--(3D3-3G). This can be seen as the start of a type II chain. Now, if and while this chain \emph{continues} (i.e. $v_3$ and $v_4$ are connected by a double bond, and they are connected to two different atoms $v_5$ and $v_6$ by two single bonds of opposite directions viewing form $\{v_3,v_4\}$), we claim a checkpoint, and choose the two possibilities for the next step to be (3D3-1) and (3D3-2G) ({the pre-assumptions for (3D3-1) and (3D3-2G) are satisfied, see Section \ref{3d3}}). Proceed with (c) below.
\item Now assume the type II chain does not continue, {i.e. we have reached the end of the type II chain} {(if the type II chain does not continue in the beginning then we skip (b) and directly move to (c) here)}. Then:
\begin{enumerate}
\item If not all atoms in the current component other than $\{v_1,v_2\}$ have degree 4, then we claim a checkpoint and choose the two possibilities for the next step to be (3D3-2G) and (3D3-3G) {(the pre-assumptions for (3D3-2G) and (3D3-3G) are satisfied, see Section \ref{3d3})}. If after (3D3-3G) a triple bond forms between $v_3$ and $v_4$, immediately remove it by (TB-1)--(TB-2) {(this is always doable, see Remark 2 immediately following the description of this algorithm)}. Go to (1).
\item Otherwise, if $v_3$ and $v_4$ are like in Figure \ref{fig:3d3new}, then perform (3D3-6G) ({the pre-assumption for (3D3-6G) is satisfied, see Section \ref{3d3}}). Go to (1).
\item Otherwise, we { claim a checkpoint, and choose the two possibilities for the next step to be (3D3-1) and (3D3-2G)} ({the pre-assumptions for (3D3-1) and (3D3-2G) are satisfied, see Section \ref{3d3}}). Go to (1) but scan within this component (see explanation below).
\end{enumerate}
\end{enumerate}
\item Otherwise, if $\Mb$ contains a degree 3 atom $v_1$ connected to a degree 4 atom $v_2$ by a double bond $(\ell_1,\ell_2)$, then we have one of the functional groups in Figure \ref{fig:3d4}. We perform (3D4G). Go to (1).
\item Otherwise, if $\Mb$ contains a degree 3 atom $v_1$ connected to a degree 2 atom $v_2$, then we have one of the functional groups in Figure \ref{fig:3s2}. We perform (3S2G). Go to (1).
\item Otherwise, if $\Mb$ contains a degree 3 atom $v$, then $v$ must be connected to three degree 4 atoms $v_j\,(1\leq j\leq 3)$ by three single bonds $\ell_j\,(1\leq j\leq 3)$. Then:
\begin{enumerate}
\item If the component after removing $v$ and $\ell_j$ contains a special bond, then we perform (3R-2G) ({the pre-assumption for (3R-2G) is satisfied, see Section \ref{3r}}). Go to (1).
\item Otherwise, we perform (3R-1). Go to (1).
\end{enumerate}
\item Otherwise, $\Mb$ must only contain atoms of degree (0 and) 2 and 4. If we are in one of the cases corresponding to steps (2R-2G)--(2R-5), then perform the corresponding step. Go to (1).
\item Otherwise, there is a degree 2 atom $v$ connected to a degree 4 atom $v_1$ by a double bond of opposite directions. This can be seen as the start of a type I chain. Now, if and while this chain \emph{exists} (we do \emph{not} require this chain to continue from $v_1$, which is slightly different from (3-b)), we perform (2R-1) {until we reach the end of the type I chain}. Go to (7) but scan within this component (see explanation below).
\end{enumerate}

Before proceeding, we make a few remarks about the validity of the algorithm and Condition 2. 

1. There is no triple bond when we perform any step other than (TB-1)--(TB-2). This is because only steps (3S3-3G) and (3D3-3G) may create triple bonds, but they are immediately removed using (TB-1)--(TB-2), as in (2-c) and (3-c-i). 

2. In (3-c-i), after (3D3-3G), {suppose $v_3$ and $v_4$ are connected by a triple bond. If not both $v_3$ and $v_4$ have degree $4$, then we can perform (TB-1)--(TB-2). If $d(v_3)=d(v_4)=4$, then the two extra single bonds $\ell_1'$ and $\ell_2'$ from $v_3$ and $v_4$ must have opposite directions (by the requirement in Definition \ref{defmole0}); since the type II chain does not continue, $\ell_1'$ and $\ell_2'$ \emph{must} share a common atom, say $v_5$. The first equation in (\ref{decmole}), with $c_{v_3}=c_{v_4}=0$, and the condition $k_{\ell_3}=k_{\ell_4}$ in $\Delta\mathtt{Ext}$, then forces $k_{\ell_1'}=k_{\ell_2'}$, which is impossible as $v_5$ cannot be degenerate.}

3. When executing a ``Go to" sentence, we may proceed to scan the whole molecule for the relevant structures, except in (3-c-iii) and (8), where we only scan \emph{the current component}. Note that after performing (3D3-1) {or (3D3-2G)} in (3-c-iii), $v_3$ and $v_4$ will have degree 3, and all other atoms in the current component will have degree 4. Therefore the next step(s) we perform in this component, following our algorithm, may be (BR), (3S3-1)--(3S3-5G), (3D3-4G)--(3D3-5G), (3D4G), (3R-1)--(3R-2G), possibly accompanied by (TB-1)--(TB-2), but \emph{cannot} be (3D3-1)--(3D3-3G) because the type II chain does not continue. Similarly, after performing the last (2R-1) in (8), $v_1$ will have degree 2, and no atom in the current component may have degree 3. Therefore the next step we perform in this component may be (2R-2G)--(2R-5), but \emph{cannot} be (2R-1).

4. There is no bridge when we perform any step other than (TB-1)--(TB-2) or (BR). This is because step (BR) has the top priority {due to the ``Go to (1)" sentences in the algorithm}. Moreover, if we are in (3-b), i.e. the type II chain continues, then the steps (3D3-1) and (3D3-2G) cause the same change on $\Mb$, and this change does not create any bridge. In the same way, if we are in (8), then the step (2R-1) does not create any bridge.

5. In the whole process we never have a saturated component, thus in (7) there must be at least one degree 2 atom (unless there are only isolated atoms, in which case the loop ends; note that we are also not considering degree 1 atoms, as those imply the existence of bridges).

6. The timespots where we claim checkpoints are in (2-b), (2-c), (3-b), (3-c-i) { and (3-c-iii)}. In each case Condition 2 is preserved, because (i) by our choice, the two possible $\Delta\mathtt{Ext}$'s for the two possibilities for the next step at this checkpoint are exactly negations of each other, so any $k[\Mb_{\mathrm{pre}}]$ must satisfy one of them, and (ii) for (3S3-3G) (same for (3D3-3G)), if $k[\Mb_{\mathrm{pre}}]$ satisfies $\Delta\mathtt{Ext}$ and $k'[\Mb_{\mathrm{pos}}]$ satisfies $\mathtt{Ext}_{\mathrm{pos}}$, then $k[\Mb_{\mathrm{pre}}]$ must satisfy $\mathtt{Ext}_{\mathrm{pre}}$, which follows from (\ref{renewext}) and (\ref{renewk}).
\subsection{Proof of Proposition \ref{gain}}\label{proofofgain} The algorithm described in \ref{loop} leads to at most $C^n$ tracks. Each track contains at most $Cn$ steps as each step removes at least one bond, while there are only $2n-1$ from the beginning. We will fix a track in the discussion below. Let $r$ be the total number of \emph{good} steps in this track. Note that the change of any of the quantities we will study below, caused by any single step we defined above, is at most $C$ {(in fact, at most $100$).}
\subsubsection{Phase one} We start with phase one. {Note that $\eta_*$ must remain nonnegative due to absence of saturated components, as each component must have at least one atom of degree in $\{0,1,2\}$ or two atoms of degree $3$; moreover initially $\eta_*=0$ because there are only two atoms of degree $3$ or only one atom of degree $2$.} Let $s$ be the number of (DA) removing degree 4 degenerate atoms that are \emph{normal}, and let $s'$ be the number of (DA) removing tame atoms. After removing all the degree 4 degenerate atoms, by Proposition \ref{daprop}, we know that $0\leq \eta_*\leq -2s+Cr$, we know that $s\leq Cr$. 

At this time, the number of tame atoms is at most $2+C(s+r)\leq 2+Cr$, as originally the number of tame atoms is at most 2, and the number of newly created tame atoms is at most $C(s+r)$. Moreover, if $r=0$, then also $s=0$. If a degree 2 or degree 3 atom in the original base molecule is degenerate (hence tame), then after the first (DA) step, by Proposition \ref{daprop} we know that $\eta_*$ will become negative, which is impossible. This means that if $r=0$ then $s=s'=0$, hence in all cases $s+s'\leq Cr$.
\subsubsection{Phase two: increments of $\eta$ and $V_3$} Since the total number of steps in phase one is at most $Cr$, we know at the start of phase two, each of the quantities we will study below has changed at most $Cr$ compared to the initial state. Note that (TB-1) and (TB-2) only occur once after (3S3-3G) or (3D3-3G) which are good steps, the number of those is also at most $Cr$.

Let the number of (BR) where $d(v_1)=d(v_2)=3$ (see Proposition \ref{brprop}) be $z_1$, the number of other (BR) be $z_1'$. Let the number of (3S3-1) be $z_2$, the number of (3R-1) be $z_3$, the numbers of (2R-3)--(2R-5) be $z_4$, $z_5$ and $z_6$. By Propositions \ref{brprop}--\ref{2rprop}, we can examine the increment of $\eta$ in the whole process and get
\begin{equation}\label{increeta}-2z_1-2z_1'-2z_2+2z_3-2z_5\geq 2-Cr,
\end{equation} note that initially $\eta=-2$ and in the end $\eta=0$. In the same way, by examining the increment of $V_3$ we get
\begin{equation}\label{increv3}-2z_1-z_1'+2z_2+2z_3+z_4\leq  Cr,
\end{equation} note that initially $V_3\in\{0,2\}$ and in the end $V_3=0$. Subtracting these two inequalities yields $z_1'+z_2+z_4+z_5+2\leq Cr$. In particular we also know $r\geq 1$.
\subsubsection{Phase two: remaining steps} Next we will prove that $z_1+z_3+z_6\leq Cr$. Let $V_2^*$ be the number of degree 2 atoms with two single bonds. It is clear that $|\Delta V_2^*|\leq C$ for any step, $\Delta V_2^*=0$ for (3D3-1), (3R-1) and (2R-5), and $\Delta V_2^*\geq 0$ for (2R-1), and for (BR) assuming $d(v_1)=d(v_2)=3$. Moreover, equality holds for (BR) if and only if the bridge removed is special. Therefore, with at most $Cr$ exceptions, all the bridges appearing in (BR) are special. Then, if we consider the increment of $V_2$, we similarly see that $z_6\leq z_1+Cr$ (using also $r\geq 1$).  Combining with (\ref{increv3}) which implies $z_3\leq z_1+Cr$, we only need to prove $z_1\leq Cr$.

Consider the increment of the number of special bonds, denoted by $\xi$. Clearly $\Delta\xi=0$ for (2R-1) and (2R-5); for (BR) which removes a special bridge, we can check that this operation cannot make any existing non-special bond special, so $\Delta\xi=-1$. Moreover, by  our algorithm, whenever we perform (3R-1), it is always assumed that the component contains no special bond after this step, so $\Delta\xi\leq 0$. Similarly, whenever we perform (3D3-1) we are always in (3-b) or (3-c-iii). For (3-c-iii), $v_3$ and $v_4$ are the only two degree 3 atom in the component after performing (3D3-1) {or (3D3-2G)}, and they are not connected by a special bond (otherwise we shall perform (3D3-6G)), so this step also does not create any special bond, hence $\Delta\xi\leq 0$.

Now let us consider steps (3D3-1) occurring in (3-b). By our algorithm, if we also include the possible (3D3-2G), then such steps occur in the form of sequences which follow the type II chains in the molecule. For any step in this sequence \emph{except} the last one, we must have $\Delta\xi=0$ (because in this case, after (3D3-1), neither $v_3$ nor $v_4$ is connected to a degree 3 atom by a single bond). Moreover, if for the last one in the sequence we do have $\Delta\xi>0$, then immediately after this sequence we must have a good step (because in this case, after we finish the sequence and move to (3-c), {either $v_3$ or $v_4$} will have degree 3 instead of 4, so we must be in (3-c-i)). Since the number of good steps is at most $r$, we know that the number of steps for which $\Delta\xi>0$ is at most $Cr$. Thus, considering the increment of $\xi$, we see that $z_1\leq Cr$.
\subsubsection{Type I and type II chains} Now we see that the number of steps \emph{different from} (3D3-1) and (2R-1) is at most $Cr$. In particular steps (3D3-1) {or (3D3-2G)} occurring in (3-c-iii) is also at most $Cr$ because each of them must be followed by an operation different from (3D3-1) and (2R-1). As for the sequences of (3D3-1) or (3D3-2G) occurring in (3-b), each sequence corresponds to a type II chain, and each chain can be as long as $Cn$, but the number of chains must be at most $Cr$ for the same reason. Moreover, following each chain we have a sequence of checkpoints, and at each checkpoint we may choose (3D3-1) or (3D3-2G), but the number of (3D3-2G) chosen must be at most $Cr$. If necessary we can further divide these chains, so that (3D3-1) is chosen at each checkpoint of each type II chain.

In the same way, steps (2R-1) also occur in the form of sequences which follow the type I chains in the molecule, and at the end of each sequence we have a step different from (3D3-1) and (2R-1). Thus each sequence corresponds to a type I chain, and the number of chains is at most $Cr$. Note that some of the edges in the chains may not exist in the original base molecule, but the number of those is again at most $Cr$ because (3S3-3G) and (3D3-3G) are both good steps. Upon further dividing, we can find these (at most $Cr$) chains in the original base molecule, such that the number of atoms and bonds not belonging to one of these chains is at most $Cr$. In addition, since we are choosing (3D3-1) in type II chains, by definition, the set $\mathtt{Ext}$ obtained in the start must contain (possibly among other things) the conditions $k_{\ell_1}=k_{\ell_2}$ for any two paired single bond $(\ell_1,\ell_2)$ in any type II chain.
\subsubsection{{Conclusion}} Finally we prove (\ref{defect2}). At the initial timespot, $\Df(\Mb)$ is the union of all the possible $\Df(\Mb,\mathtt{Ext})$ for $\mathtt{Ext}\in\Upsilon$, thanks to Condition 2. The number of possible tracks is at most $C^n$, so we only need to fix one track. Now by Condition 4, we get
\[\sup\#\Df(\Mb,\mathtt{Ext})\leq (C^+)^n\delta^{-\kappa}L^{(d-1)\gamma}.\] Since initially $\chi=n$, we see that $(d-1)\gamma\leq (d-1)n-2\nu r$ for $0<\nu\leq \frac{1}{12}$ by the definition of good and normal steps. As for $\kappa$, note that $\Delta\kappa=-1$ for both (3D3-1) and (2R-1). If the total numbers of atoms in type I chains and type II chains are $m$ and $m'$, then $m'=n-m$ up to error $Cr$, and the number of steps (3D3-1) and (2R-1) are $m'/2$ and $m$ respectively (all up to error $Cr$), so initially $\kappa=m+\frac{m'}{2}=\frac{n+m}{2}$ up to error $Cr$. Clearly factors $\delta^{-Cr}$ is acceptable in view of the gain $L^{-2\nu r}$, so we have proved (\ref{defect2}).
\section{Non-regular couples III: $L^1$ bounds for coefficients}\label{l1coef} We now return to the study of the expression (\ref{section6fin2}). Let $\Qc_{sk}^\#$ and $(r_0,r_{\mathrm{irr}})$ be as in Section \ref{section6summary}. For simplicity, until the end of the proof of Proposition \ref{section8main} we will write $\Qc_{sk}^\#$ simply as $\Qc$, and the associated sets $(\Nc_{sk}^\#)^*$ as $\Nc^*$ etc. Recall, by (\ref{typeIcontrol}), that the total length of the irregular chains in $\Qc$ is at most $C(r_0+r_{\mathrm{irr}})$. Let $\Xi$ be a subset of $\Nc^*$, we may define, as in (\ref{section6fin2}), the function
\begin{equation}\label{modifiedtimeint}\Uc_{\Qc}(t,s,\vsigma,\alpha[\Nc^*])=\int_{\widetilde{\Ec}}\prod_{\nf\in\Nc^*}e^{\pi i\alpha_\nf t_\nf}\,\mathrm{d}t_\nf,
\end{equation} where $\vsigma=\sigma[\Xi]\in[0,1]^\Xi$, and the domain $\widetilde{\Ec}$ is defined as in (\ref{timedom}), but with the extra conditions $t_{\nf^p}>t_\nf+\sigma_\nf$ for $\nf\in\Xi$, where $\nf^p$ is the parent of $\nf$. Note that the definition here is slightly different from (\ref{defcoefb2}) as we include the signs $\zeta_\nf$ in the variables $\alpha_\nf$, which is more convenient for this section. Then, let $n_0'$ be the scale of $\Qc$, we can write
\begin{multline}\label{finalexp}(\ref{section6fin2})=(C^+\delta)^{\frac{n-n_0'}{2}}\bigg(\frac{\delta}{2L^{d-1}}\bigg)^{n_0'}\zeta^*(\Qc)\int_{\Rb^{\Nc^*}\times\Rb^2} G(\vlambda)\cdot e^{\pi i(\lambda t+\mu s)}\,\mathrm{d}\vlambda\int_{[0,1]^\Xi}\mathrm{d}\vsigma\\\times\sum_{\Es}\epsilon_\Es\Uc_{\Qc}\big(t,s,\vsigma, (\delta L^2\zeta_\nf\Omega_\nf+\lambda_\nf)_{\nf\in\Nc^*}\big)\cdot \Xc_{\mathrm{tot}}(\vlambda,\vsigma,k[\Qc]).
\end{multline}

Let $\Mb$ be the base molecule obtained from $\Qc$ as in Definition \ref{defmole}. It is easy to see that $\Mb$ contains no triple bond, as triple bonds in $\Mb$ can only come from $(1,1)$-mini couples and mini trees (as in Definition \ref{defmini}) in $\Qc$. By the proofs in Section \ref{improvecount}, we can introduce at most $C^{n_0'}$  sets of extra conditions $\mathtt{Ext}$, such that the summation in $\Es=k[\Qc]$ in (\ref{section6fin2}) can be decomposed into the summations with each of these sets of extra conditions imposed on $k[\Qc]$. Moreover, for each choice of $\mathtt{Ext}$ there is $1\leq r_1\leq n_0'$ such that the conclusion of Proposition \ref{gain}, including (\ref{defect2}), holds true (with $r$ replaced by $r_1$).

Notice that a type I chain in $\Mb$ can \emph{only} be obtained from either one irregular chain, or the union of two irregular chains in $\Qc$; this can be proved in the same way as in Section \ref{PClp} below (which involves the more complicated type II chains), see Remark \ref{RemarkTypeIanalysis}. Therefore, the total length $m$ of type I chains in $\Mb$ is bounded by the total length of irregular chains in $\Qc$, which is at most $C(r_0+r_{\mathrm{irr}})$. However, each irregular chain in $\Qc$ also corresponds to a type I chain in the base molecule, so $r_{\mathrm{irr}}\leq Cr_1$, hence $m\leq Cr$, where $r=r_0+r_1$. This means the number of atoms in $\Mb$ that are not in one of those (at most $Cr$) type II chains is at most $Cr$.

Now, suppose $\nf$ and $\nf'$ are two branching nodes in $\Qc$ which correspond to two atoms in $\Mb$ \emph{that are connected by a double bond in a type II chain}, then we must have $\zeta_{\nf'}\Omega_{\nf'}=-\zeta_\nf\Omega_\nf$ under the extra conditions in $\mathtt{Ext}$, see Remark \ref{moledec}. In fact we will restrict $\{\nf,\nf'\}$ to the \emph{interior} of this type II chain by omitting $5$ pairs of atoms at both ends of the chain, in the same way as in Definition \ref{conggen}. Then, we make such $\{\nf,\nf'\}$ a pair (this is related to but different from the pairing of branching nodes in Proposition \ref{branchpair}), and choose one node from each such pair to form a set $\widetilde{\Nc}^{ch}$. If it happens that one of $\{\nf, \nf'\}$ is a parent of the other, we assume the parent belongs to $\widetilde \Nc^{ch}$. Let $\Nc^{rm}$ be the set of branching nodes not in these pairs, and define $\widetilde{\Nc}=\widetilde{\Nc}^{ch}\cup\Nc^{rm}$. 

We will be interested in estimates on the function $\Uc_\Qc$ in \eqref{modifiedtimeint} where $\alpha_\nf=\delta L^2\zeta_\nf\Omega_\nf+\lambda_\nf$, which means that $\alpha_{\nf}+\alpha_{\nf'}=\mu_\nf$ for each $\nf\in\widetilde{\Nc}^{ch}$, where $\nf'$ is the node paired to $\nf$ and $\mu_\nf=\lambda_\nf+\lambda_{\nf'}$ is a parameter depending on $\vlambda$. Under this assumption on $\alpha_{\nf}$, we can write (similar to (\ref{deftildeb}))
\begin{equation}\label{eqnuv}\Uc_{\Qc}(t,s,\vsigma,\alpha[\Nc^*])=\Vc_{\Qc}(t,s,\vsigma,\alpha[\widetilde{\Nc}])
\end{equation} for some function $\Vc_{\Qc}$. This function actually depends also on the parameters $\mu_\nf$ for $\nf\in\widetilde{\Nc}^{ch}$, but we will omit this for notational convenience. The main goal of this section is to prove the following:
\begin{prop}\label{section8main} {Suppose $\Qc$ has scale $n_0'$.} For each $\nf\in\widetilde{\Nc}$, suppose $S_\nf\subset\Zb$ and $\#S_\nf\leq L^{10d}$. Then, uniformly in $(t,s)$, in the choices of $(S_\nf)_{\nf\in\widetilde{\Nc}}$, and in the parameters $(\mu_\nf)_{\nf\in\widetilde{\Nc}^{ch}}$, we have
	\begin{equation}\label{uniformL1}\delta^{n_0'/4}\cdot\sum_{(m_\nf):m_\nf\in S_\nf}\sup_{(\alpha_\nf):|\alpha_\nf-m_\nf|\leq 1}\sup_{\vsigma}\big|\Vc_{\Qc}(t,s,\vsigma,\alpha[\widetilde{\Nc}])\big|\leq (C^+)^{n_0'}L^{Cr\sqrt{\delta}}(\log L)^{Cr},
	\end{equation} where $r=r_0+r_1$.
\end{prop}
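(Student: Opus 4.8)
The function $\Vc_{\Qc}$ is a multidimensional time integral over a tree-domain with the constraints $t_{\nf^p}>t_\nf$ (and the extra slots $t_{\nf^p}>t_\nf+\sigma_\nf$ for $\nf\in\Xi$), and the key structural input is that, after fixing $\alpha_{\nf'}=\mu_\nf-\alpha_\nf$ along the type II chain pairings, each variable $\alpha_\nf$ with $\nf\in\widetilde\Nc^{ch}$ occurs \emph{twice} in the exponential. So the overall strategy mirrors the regular-couple analysis of Section~\ref{regasymp}: I would prove, by induction on $n_0'=n(\Qc)$, that for fixed $(t,s,\vsigma)$ the function $\Vc_\Qc$, viewed as a function of $\alpha[\widetilde\Nc]$, is (up to losing at most $(\log L)^{Cr}$ and $L^{Cr\sqrt\delta}$) genuinely $L^1$ in the variables associated with the interiors of the type II chains, with the bound gaining the factor $\delta^{n_0'/4}$ as stated. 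The key elementary input is the integrability inequality \eqref{elementary}, which turns a ``product of two $\langle\,\cdot\,\rangle^{-1}$ with the same variable'' into an honest $L^1$ factor, exactly as in \eqref{typeJ2}--\eqref{typeR2}. Once genuine integrability in the chain variables is established, the discrete sum $\sum_{m_\nf\in S_\nf}$ over those variables is controlled by the $L^1$ norm (the $\sup$ over unit cells costs only a constant after enlarging the cutoffs), and over the $O(r)$ remaining ``exceptional'' variables I may use $\#S_\nf\le L^{10d}$ together with the power gain already extracted in the counting estimate of Proposition~\ref{gain}; the product of these $L^{Cr\sqrt\delta}$-type losses is admissible because $r\le n_0'\le N^3\ll \log L$.

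\textbf{Key steps, in order.} First I would set up the inductive decomposition of $\Qc$ analogous to Proposition~\ref{structure2}: peel off the type II chains of $\Mb$ one at a time. A type II chain in the molecule corresponds in the couple to a (possibly irregular-chain-like) sub-structure built from mini trees; I would verify that its contribution to $\Uc_\Qc$ factorizes through a ``chain operator'' of the form \eqref{regchaink}, to which Lemmas~\ref{regchainlem7} and \ref{regchainlem8} apply, producing class $J$ and class $R$ operators. This is where $\chi_\infty(\alpha_\nf)/\alpha_\nf$ factors and the circle-method parity (the set $Z$) appear; crucially, because here we only need genuine integrability (not asymptotics), and because each chain variable comes paired, the $Z$ set will be \emph{empty} along chain interiors, and genuine $L^1$ is recovered via \eqref{elementary}. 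Second, I would run the induction: the base case is trivial ($\Vc_\times\equiv1$), and in the inductive step I reduce $\Vc_\Qc$ to $\Vc_{\Qc_0}$ (a product of chain operators) composed with the $\Vc$'s of the smaller couples/trees filling the leaf pairs, exactly as in the proof of Proposition~\ref{maincoef} in Section~\ref{proofmaincoef}, now also carrying the $\vsigma$-translations through (which only shifts arguments and is harmless after taking $\sup_{\vsigma}$, since the chain operators' estimates are uniform in those shifts). Third, I would track the constants: each of the $\le Cr$ exceptional atoms contributes a bounded power loss, each chain of length $q$ contributes a $(\log L)^{Cq}$-type loss from \eqref{differenceloss}-style estimates when shifting $\alpha_\nf+\mu_\nf$ vs $\alpha_\nf$, but $\sum q\le Cr$, so the total is $(\log L)^{Cr}$; and the $\delta^{n_0'/4}$ is harvested at the rate $\sqrt\delta$ per atom exactly as in the ``$C^+\sqrt\delta$'' convention flagged after \eqref{naive}, arising from the time-integration over the $n_0'$ branching-node slots combined with the $\delta$-weighting in the exponentials. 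Finally, I would convert the $L^1$-in-$\alpha$ bound into the discrete sum over $(m_\nf)$ in \eqref{uniformL1}: over chain variables use $\sum_{m}\sup_{|\alpha-m|\le1}|\cdot|\lesssim \|\cdot\|_{L^1}$ after replacing cutoffs by slightly fatter ones, and over the $O(r)$ exceptional variables bound the sum trivially by $\#S_\nf\le L^{10d}$ but absorb it using the genuine $L$-power gain inside $\Uc_\Qc$ on those slots — this is the place where the large power losses allowed in Remark~\ref{countingrem} are used.

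\textbf{Main obstacle.} The hard part is the bookkeeping that type II chains in $\Mb$ pull back to \emph{exactly} the right sub-structures in $\Qc$ — i.e., that every variable $\alpha_\nf$ for $\nf\in\widetilde\Nc^{ch}$ really does appear twice in $\Vc_\Qc$ with opposite sign and that the two appearances sit in positions where \eqref{elementary} (or its class-$J$/$R$ generalizations in Lemmas~\ref{regchainlem1}--\ref{regchainlem6}) can be applied after the chain is straightened out. This requires the case analysis of which mini trees / irregular chains produce type II molecular chains (the content paralleling Section~\ref{PClp} and Remark~\ref{RemarkTypeIanalysis}), and the verification that trimming $5$ pairs at each chain end — to stay in the ``interior'' and avoid interference with the rest of the couple, exactly as in Definition~\ref{conggen} — does not damage the pairing structure. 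A secondary difficulty is keeping every constant of the form $C^{n_0'}$ rather than $n_0'!$: one cannot afford $\log L$ losses \emph{per branching node}, only per chain and per exceptional atom, so the estimates from Section~\ref{regchainest} must be invoked in their sharp ($C^m$, not $m!$) form, and the combinatorial identity Lemma~\ref{combineq} is needed to control the Leibniz-rule sums as in the proof of Lemma~\ref{regchainlem1}. Once these structural facts are in place, the analytic estimates are essentially a rerun of Sections~\ref{regasymp}--\ref{domasymp} with ``asymptotics'' downgraded to ``$L^1$ bounds,'' which simplifies rather than complicates the argument.
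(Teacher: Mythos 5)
There is a genuine gap, and it sits at the heart of your plan: the claim that along the interiors of type II chains each $\alpha_\nf$ ($\nf\in\widetilde\Nc^{ch}$) occurs twice in a way that restores \emph{genuine} $L^1$ integrability via \eqref{elementary}, so that the sum over $S_\nf$ costs only $(\log L)^{Cr}$. This is true in the configuration where the two paired branching nodes are connected by an LP--LP double bond (neither a child of the other, two leaf pairs between them): there both time integrations can be performed and one indeed gets $\int\langle\alpha_\pf\rangle^{-1}\langle\alpha_\pf-\mu_\pf\rangle^{-1}\,\mathrm{d}\alpha_\pf\le C$, which is exactly how the paper peels off such pairs. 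But in the main case the double bond is LP--PC: $\pf'$ is a \emph{child} of $\pf$, and a nontrivial subtree $\Tc_{ch}$ hangs off each module (this is forced by primeness of $\Qc$ — if the remaining children were paired leaves you would have a mini tree, contradicting that all regular structures were already removed). Then the $t_{\pf'}$-integral produces a single factor $\langle\alpha_\pf-\mu_\pf\rangle^{-1}$, while the $t_\pf$-integral cannot be executed independently of $\alpha_\pf$ because $\Uc_{\Tc_{ch}}(t_\pf,\cdot)$ depends on $t_\pf$; the kernel one obtains satisfies only $\|\widehat G_\pf\|_{L^1}\lesssim\langle\alpha_\pf-\mu_\pf\rangle^{-1}$, i.e.\ \emph{one} unit of decay per paired node, not two. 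Consequently summing over $m_\nf\in S_\nf$ with $\#S_\nf\le L^{10d}$ unavoidably costs a $\log L$ \emph{per chain node}, and a chain of length $\ell\sim n_0'$ would give $(\log L)^{cn_0'}$ — which your scheme cannot absorb, since $r$ is small compared to $n_0'$. Relatedly, the reduction of a type II chain to a regular-chain operator of the form \eqref{regchaink} (so that Lemmas \ref{regchainlem7}--\ref{regchainlem8} apply, with ``$Z=\varnothing$ along chain interiors'') is structurally incorrect: those lemmas concern legal partitions of paired leaves, whereas the type II modules interleave subtrees $\Tc_{ch}$ between consecutive double bonds, and even in the regular theory the $\chi_\infty(\alpha)/\alpha$ factors ($Z\neq\varnothing$) are generically present and are handled by principal values, not by $L^1$ bounds.

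The missing idea — and the reason the statement itself carries the factor $L^{Cr\sqrt\delta}$ rather than just $(\log L)^{Cr}$ — is that the $(\log L)$-per-node loss must be beaten not by integrability but by the \emph{factorial volume gain of the ordered time simplex along the whole chain}. The paper integrates out an entire chain of $\ell$ modules at once, obtaining a kernel $\Kc$ with $\|\widehat\Kc\|_{L^1}\lesssim 1/(\ell-5)!$, so that after summing the single-decay factors over the $S_\nf(1)$ one is left with $\delta^{\ell/2}(\log L)^{\ell}/(\ell-5)!\lesssim(\log L)^{C}e^{C\sqrt\delta\log L}=(\log L)^{C}L^{C\sqrt\delta}$ per chain; the prefactor $\delta^{n_0'/4}$ is essential input here, not something ``harvested'' inside the proposition, and the induction runs on the number of chains (and on $|\widetilde\Nc^{ch}|$ for the LP--LP reductions), not on the scale. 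Your use of Proposition \ref{gain}/Remark \ref{countingrem} to absorb the $L^{10d}$ sums is also misplaced: that proposition bounds the decoration-counting problem, not the time-integral $\Vc_\Qc$, and the $O(r)$ exceptional variables are harmless anyway; the real difficulty is precisely the long chains, where your argument as written breaks down.
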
	
Before proving Proposition \ref{section8main}, we first make some observations. By \eqref{modifiedtimeint}, one can see that the function $\Vc_\Qc$ is completely determined by the \emph{tree structures} of the trees of $\Qc$, as well as the pairing between branching nodes described above (i.e. it does \emph{not} depend on the pairings between leaves of $\Qc$, nor on the signs of the nodes). Thus, below we will \emph{forget} the signs of the nodes of $\Qc$ and view it as an unsigned couple, which corresponds to an undirected molecule as in Definition \ref{defmole} (we retain the pairings between branching nodes). Denote the unsigned couple by $\Qc^{ns}$ and the undirected molecule still by $\Mb$. Later in the inductive step, we may further forget the leaf pairing structure of $\Qc$, hence viewing it as a double-tree with some of the branching nodes paired, and denote it by $\Qc^{tr}$. We may then write
$$
\Vc_{\Qc}(t,s,\vsigma,\alpha[\widetilde{\Nc}])=\Vc_{\Qc^{ns}}(t,s,\vsigma,\alpha[\widetilde{\Nc}])=\Vc_{\Qc^{tr}}(t,s,\vsigma,\alpha[\widetilde{\Nc}]).
$$

Next, for any function $V=V(x)$ defined on $[0,1]^n$, by inducting on $n$ we can prove that
\begin{equation}\label{elemineq}\sup_{x\in[0,1]^n}|V(x)|\leq\sum_{\rho}\int_{[0,1]^n}|\partial_x^\rho V|\,\mathrm{d}x,\end{equation} where $\rho$ ranges over all multi-indices with each component being $0$ or $1$. This implies that 
\begin{equation}\label{suptoint}\sum_{(m_\nf):m_\nf\in S_\nf}\sup_{(\alpha_\nf):|\alpha_\nf-m_\nf|\leq 1}\sup_{\vsigma}\big|\Vc_{\Qc}(t,s,\vsigma,\alpha[\widetilde{\Nc}])\big|\leq\sum_\rho\int_{(\alpha_\nf):\alpha_\nf\in S_\nf(1)}\sup_{\vsigma}\big|\partial_\alpha^\rho\Vc_{\Qc}(t,s,\vsigma,\alpha[\widetilde{\Nc}])\big|\,\mathrm{d}\alpha[\widetilde{\Nc}],\end{equation} where $\rho$ is as above, and $S_\nf(1)$ is the $1$-neighborhood of $S_\nf$ in $\Rb$ which has measure $\leq L^{10d}$. If we fix $\rho$ in (\ref{suptoint}), which has at most $2^{n_0'}$ choices, then $\partial_\alpha^\rho\Vc_\Qc$ has a similar form as $\Vc_\Qc$ except that one has some extra $\pi it_\nf$ factors in the integral (\ref{modifiedtimeint}). From the proof below it is clear that such factors will not make a difference, so we will focus on the right hand side of (\ref{suptoint}) without the $\partial_\alpha^\rho$ derivative.

Finally, we record the following lemma, which will be useful in the proof of Proposition \ref{section8main}.
\begin{lem}\label{timeintlemma}
	Let $\Tc$ be a ternary tree, and denote by $\Nc$ the set of branching nodes. Let $\Xi\subset \Nc$, and consider 
	\begin{equation}\label{modifiedtimeint2}
	\Uc_{\Tc}(t,\vsigma,\alpha[\Nc])=\int_{\widetilde{\Dc}}\prod_{\nf\in\Nc}e^{\pi i\alpha_\nf t_\nf}\,\mathrm{d}t_\nf,
	\end{equation} where $\vsigma=\sigma[\Xi]\in[0,1]^\Xi$, and the domain $\widetilde{\Dc}$ is defined as in (\ref{timedom0}), but with the extra conditions $t_{\nf^p}>t_\nf+\sigma_\nf$ for $\nf\in\Xi$, where $\nf^p$ is the parent of $\nf$.
	
	For every choice of $d_{\nf}\in \{0, 1\}\,(\nf\in \Nc)$, we define  $q_{\nf}$ for $\nf \in \Nc$ inductively as follows: Set $q_{\nf}=0$ if $\nf$ is a leaf, and otherwise define $q_{\nf}=\alpha_\nf+d_{\nf_1}q_{\nf_1}+d_{\nf_2}q_{\nf_2}+d_{\nf_3}q_{\nf_3}$ where $\nf_1, \nf_2, \nf_3$ are the three children of $\nf$.
	
	Uniformly in $\vsigma$ and $t$, the following estimate holds:
	\begin{equation}\label{timeintlemma1}
	|\Uc_\Tc (t, \vsigma, \alpha[\Nc])| \leq (C^+)^n\sum_{d_\nf\in \{0, 1\}}\prod_{\nf \in \Nc}\frac{1}{\langle q_\nf\rangle }.
	\end{equation}
	
\end{lem}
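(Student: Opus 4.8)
\textbf{Proof proposal for Lemma \ref{timeintlemma}.}

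The plan is to induct on the scale $n$ of the tree $\Tc$. The base case $n = 0$ (trivial tree) is immediate since $\Uc_\bullet \equiv 1$ and the empty product is $1$. For the inductive step, I would peel off the root $\rf$ with its three subtrees $\Tc_1, \Tc_2, \Tc_3$ of scales $n_1, n_2, n_3$. Integrating first over the time variables $t_\nf$ for $\nf$ in the interior of the subtrees and then over $t_\rf$, one writes $\Uc_\Tc$ as a nested integral: there is an outer integral $\int_0^t e^{\pi i \alpha_\rf t_\rf}\, \mathrm{d}t_\rf$ and, inside, three factors which are essentially $\Uc_{\Tc_j}$ but evaluated at upper endpoint $t_\rf$ (or $t_\rf - \sigma_{(\rf)_j}$ if that child lies in $\Xi$) rather than $t$. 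The delicate point is that the three subtree integrals share the same upper endpoint $t_\rf$, so they are not literally a product; but after the change of variables shifting each subtree's time window to start at $0$, the coupling enters only through the phases, and one can expand each $\Uc_{\Tc_j}$ by the induction hypothesis and then perform the $t_\rf$ integration.

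Concretely, I would use the induction hypothesis in a slightly strengthened (but equivalent) form: each $\Uc_{\Tc_j}(t_\rf, \cdot, \alpha[\Nc_j])$ can be written, for $t_\rf \in [0,1]$, as a sum over the choices $d_\nf \in \{0,1\}$ ($\nf \in \Nc_j$) of terms of the shape $e^{\pi i q_{(\rf)_j} t_\rf} \cdot (\text{bounded coefficient}) \cdot \prod_{\nf \in \Nc_j}\langle q_\nf\rangle^{-1}$, where $q_{(\rf)_j}$ is the cluster attached to the root of $\Tc_j$ (this is what makes the induction go through — one needs to track the residual phase, not just the bound). Granting this, the product of the three subtree factors contributes a phase $e^{\pi i (d_{\rf_1}q_{\rf_1} + d_{\rf_2}q_{\rf_2} + d_{\rf_3}q_{\rf_3}) t_\rf}$ together with the product of all the $\langle q_\nf\rangle^{-1}$ factors over $\nf$ in the three subtrees, times $e^{\pm \pi i q_{\rf_j}\sigma_{(\rf)_j}}$ factors of modulus one coming from the $\Xi$-shifts. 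Then
\[
\int_0^t e^{\pi i \alpha_\rf t_\rf} e^{\pi i (d_{\rf_1}q_{\rf_1} + d_{\rf_2}q_{\rf_2} + d_{\rf_3}q_{\rf_3}) t_\rf}\, \mathrm{d}t_\rf = \frac{e^{\pi i q_\rf t} - 1}{\pi i q_\rf}
\]
with $q_\rf = \alpha_\rf + d_{\rf_1}q_{\rf_1} + d_{\rf_2}q_{\rf_2} + d_{\rf_3}q_{\rf_3}$ exactly as in the statement; splitting via cutoffs $\chi_0(q_\rf), \chi_\infty(q_\rf)$ this is $\lesssim \langle q_\rf\rangle^{-1}$ and produces, in the $\chi_\infty$ case, a residual phase $e^{\pi i q_\rf t}$ (absorbed into the strengthened hypothesis at the next level) and in the $\chi_0$ case a bounded smooth coefficient with no phase (corresponding effectively to $d_\rf = 0$, i.e. $q_\rf$ replaced by $0$ upstream). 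Summing over the $2^{n_1 + n_2 + n_3} \cdot 2 = 2^n$ choices of $(d_\nf)_{\nf \in \Nc}$ and collecting the $(C^+)^{n_j}$ constants into $(C^+)^n$ gives \eqref{timeintlemma1}.

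I expect the main obstacle to be bookkeeping rather than genuine difficulty: one must set up the induction with the right auxiliary statement (carrying the root cluster phase $e^{\pi i q_{\rf} t_\rf}$ and a bounded, $\alpha$-smooth coefficient alongside the $\prod \langle q_\nf\rangle^{-1}$ bound), and verify that the $\Xi$-shift terms $\sigma_\nf$ only ever appear inside phases of modulus one, so they are harmless and the estimate is uniform in $\vsigma$. A minor technical care is needed in the $\chi_0(q_\rf)$ branch to see that replacing the oscillatory factor by a Schwartz bump does not spoil the recursion — this is handled exactly as in the treatment of class $J$ operators in Section \ref{regchainest} (e.g. the decomposition used in Lemma \ref{regchainlem1}), writing $\chi_0(q)\frac{e^{\pi i q t}-1}{\pi i q}$ as a Fourier integral of a fixed rapidly decaying function. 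Uniformity in $t \in [0,1]$ and the factorial-free constant $(C^+)^n$ both come for free from this scheme since nothing beyond $O(1)$-per-node losses is incurred.
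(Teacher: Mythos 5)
Your induction is correct and is essentially the argument of Proposition 2.3 in \cite{DH}, which is all the paper's one-sentence proof points to (noting, as you also do, that the $\vsigma$-shifts only enter through unimodular factors and shifts of the upper integration limits, hence are harmless). In a full write-up the strengthened inductive hypothesis should carry the residual phase $e^{\pi i d_{\rf_j}q_{\rf_j}t_\rf}$ with the $d_{\rf_j}$ factor made explicit, and in the $\chi_0$ branch the remaining $t$-dependence should be expressed as a Fourier integral against a fixed Schwartz weight rather than as a literal phase --- but you identify both points in passing (via the reference to the class $J$ decompositions of Section \ref{regchainest}), so there is no genuine gap.
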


\begin{proof}
	The proof is straightforward, see Proposition 2.3 in \cite{DH}. Note that here we have the extra parameters $\vsigma$, but they only contribute unimodular coefficients to various components of $\Uc_\Tc$ and do not affect any of the estimates.
\end{proof}

\begin{proof}[Proof of Proposition \ref{section8main}]
	The proof will proceed by induction on the size of $\widetilde\Nc^{ch}$. The base case in which $\widetilde \Nc^{ch}$ is empty is covered by Lemma \ref{timeintlemma}, since for any choice of $y_\nf \in \Rb$ and $S_\nf(1)$ of measure $\leq L^{10d}$, one has that
	$$\int_{S_\nf(1)}\frac{1}{\langle\alpha_{\nf}+y_\nf\rangle}\,\mathrm{d}\alpha_\nf\lesssim \log L
	$$
Therefore the left hand side of \eqref{uniformL1} is bounded by 
	$\delta^{n_0'/4}(C^+)^{n_0'} (\log L)^{n_0'}$ which is more than acceptable since if $\widetilde \Nc^{ch}$ is empty we must have $n_0'\leq Cr$.

	We now assume that $\widetilde \Nc^{ch}$ is nonempty and that estimate \eqref{uniformL1} hold for couples with smaller $\widetilde \Nc^{ch}$ (equivalently molecules with shorter Type II chains). To prove (\ref{uniformL1}) for $\Qc$, we first need to analyze the structure of the couple $\Qc$, which is done in the next section.
\subsection{Tree Structure near $\nf \in \widetilde \Nc^{ch}$}

	Recall the definition of PC and LP bonds in Definition \ref{defmole}. Clearly, the two edges of a double bond cannot be both PC bonds, but we can have them both being LP bonds (we call this an LP-LP double bond) or one LP and one PC bond (we call that an LP-PC double bond). Denote by $\Nf$ the set of atoms in $\Mb$ connected by double bonds in the type II chains, then each such pair of atoms corresponds to a pair of branching nodes in $\Qc$, and only one of the two nodes belongs to $\widetilde \Nc^{ch}$. There are two cases for the molecule $\Mb$:  \emph{Case 1} where there exists at least one LP-LP double bond connecting a pair of atoms in $\Nf$, or \emph{Case 2} where all double bonds connecting a pair of atoms in $\Nf$ are LP-PC double bonds. 
	
	\subsubsection{ LP-LP double bonds}\label{LPlp} Suppose that one of the double bonds appearing in $\Nf$ is an LP-LP double bond. In this case, if $(\pf,\cf_1, \cf_2, \cf_3)$ and $(\pf', \cf_1', \cf_2', \cf_3')$ denote the two $4$-node subsets corresponding to the two atoms connected by an LP-LP double bond, then $\pf$ and $\pf'$ are two branching nodes in $\Nc^*$ such that neither is a child of the other. We also have two leaf pairings between the children $\cf_{k_1}, \cf_{k_2}$ and $\cf_{j_1}', \cf_{j_2}'$ where $k_i, j_i \in \{1, 2, 3\}$, see Figure \ref{fig:Case1}. Note that $\pf$ and $\pf'$ may or may not be in the same tree. In fact one of them may be a descendant of the other, in which case Figure \ref{fig:Case1} will be depicted differently (but the proof will not be affected).
		\begin{figure}[h!]
		\includegraphics[scale=0.85]{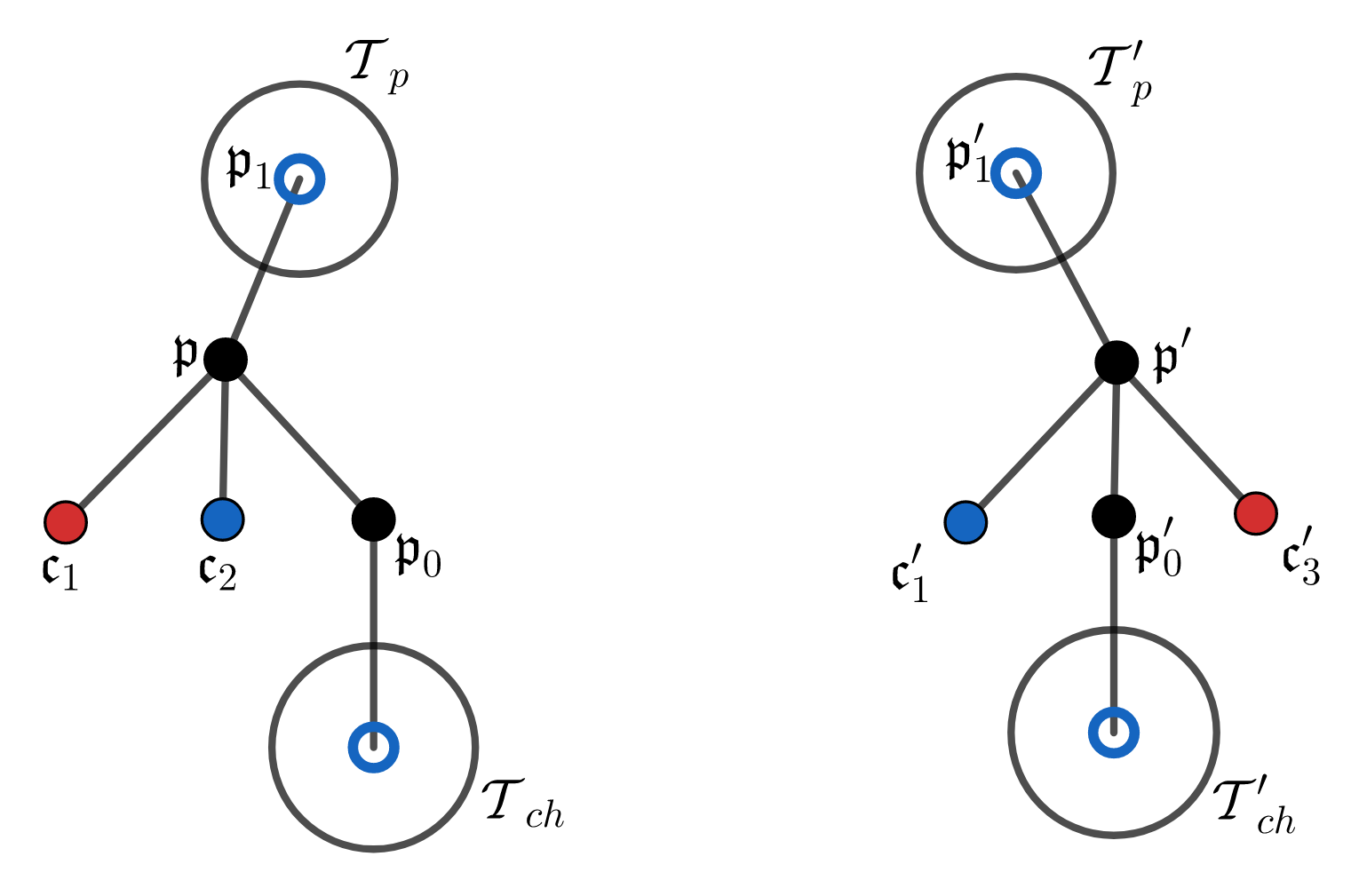}
		\caption{In \emph{Case 1}, the two paired nodes are $\pf$ and $\pf'$, neither of which is a child of the other. Each has two children leaves paired with the children of the other. Here $\Tc_p$ and $\Tc_p'$ denote the trees obtained by deleting the subtrees rooted at $\pf$ and $\pf'$ respectively (keeping the leaves $\pf$ and $\pf'$), and $\Tc_{ch}, \Tc_{ch}'$ denote the trees rooted at $\pf_0, \pf_0'$ respectively.}
		\label{fig:Case1}
	\end{figure}

	\subsubsection{LP-PC double bonds}\label{PClp} Now consider \emph{Case 2} in which all the double bonds in all type II chains connecting pairs of atoms in $\Nf$ are LP-PC double bonds. Here we can verify that, the two horizontal parallel single bonds in Figure \ref{fig:molechain} that connect two LP-PC double bonds cannot be both PC bonds (since each node in $\Nc^*$ has a single parent), which means that at least one of the two parallel single bonds is an LP bond. Since the total number of LP bonds is $\in\{n_0',n_0'+1\}$ and that of PC bonds is $\in\{n_0'-1,n_0'-2\}$, we conclude that the number of the parallel single bonds that are both LP is bounded by the number of bonds outside all the type II chains which is $Cr$.

	As a result of this, by splitting the type II chains appearing in $\Mb$ at the (at most) $Cr$ sites where the parallel single bonds are both LP bonds, we obtain that the molecule $\Mb$ has at most $Cr$ type II chains where the double bonds are all LP-PC and the parallel single bonds connecting them are such that one is LP and the other is PC. We shall abuse notation, and refer to those (possibly smaller) chains as the type II chains below and still denote by $\Nf$ the smaller set of atoms connected by such LP-PC double bonds, such that each pair of single bonds has one LP and one PC bond. See Figure \ref{fig:molechainS8}.
	
	\begin{figure}[h!]
		\includegraphics[scale=0.55]{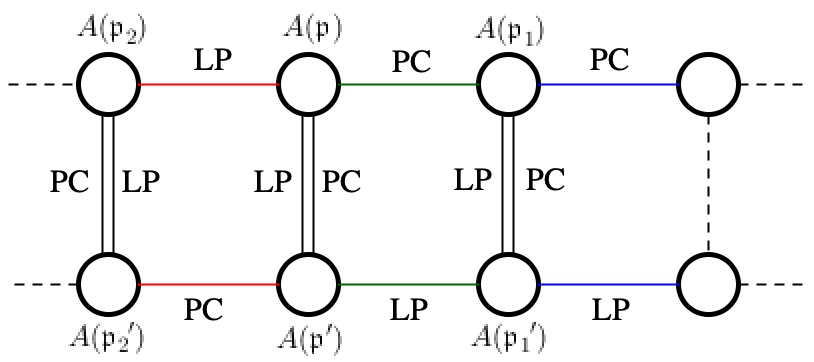}
		\caption{A type II chain in which we label the bonds as either LP (leaf pair) or PC (parent-child). Here we assume that all double bonds are LP-PC, and for any pair of parallel colored bonds, one is LP and the other is PC. Here $A(\pf)$ is the atom corresponding to the branching node $\pf$, and we assume $\pf$ is the parent of $\pf'$. Similar for $\pf_1$ and $\pf_2$.}
		\label{fig:molechainS8}
	\end{figure} 
	
	\medskip
	
	Let $(\pf,\cf_1, \cf_2, \cf_3)$ and $(\pf', \cf_1', \cf_2', \cf_3')$ denote the two $4$-node subsets corresponding to the two atoms of $\Nf$ connected by a LP-PC double bond in a type II molecular chain, and suppose that $\pf'$ is a child of $\pf$. Since there is a double bond between $A(\pf)$ and $A(\pf')$, some child $\cf_k$ of $\pf$ must be paired to a child $\cf_j'$ of $\pf'$; in particular, $\cf_k$ and $\cf_j'$ are leafs.
	
	We claim that: (1) among the one remaining child of $\pf$ and the two remaining children of $\pf'$, exactly $2$ are leaves, and the other one, denoted by $\pf_0$, is a branching node corresponding to an atom in $\Nf$; (2) the parent of $\pf$, denoted by $\pf_1$, corresponds to an atom in $\Nf$ that is connected to $A(\pf)$ by a single bond. Note that the node $\pf_0$ in (1) is either a child of $\pf$ or a child of $\pf'$; we call these \emph{Case 2A} and \emph{Case 2B}, see Figure \ref{FigCase2AB}.
\begin{figure}[h!]
\includegraphics[scale=1]{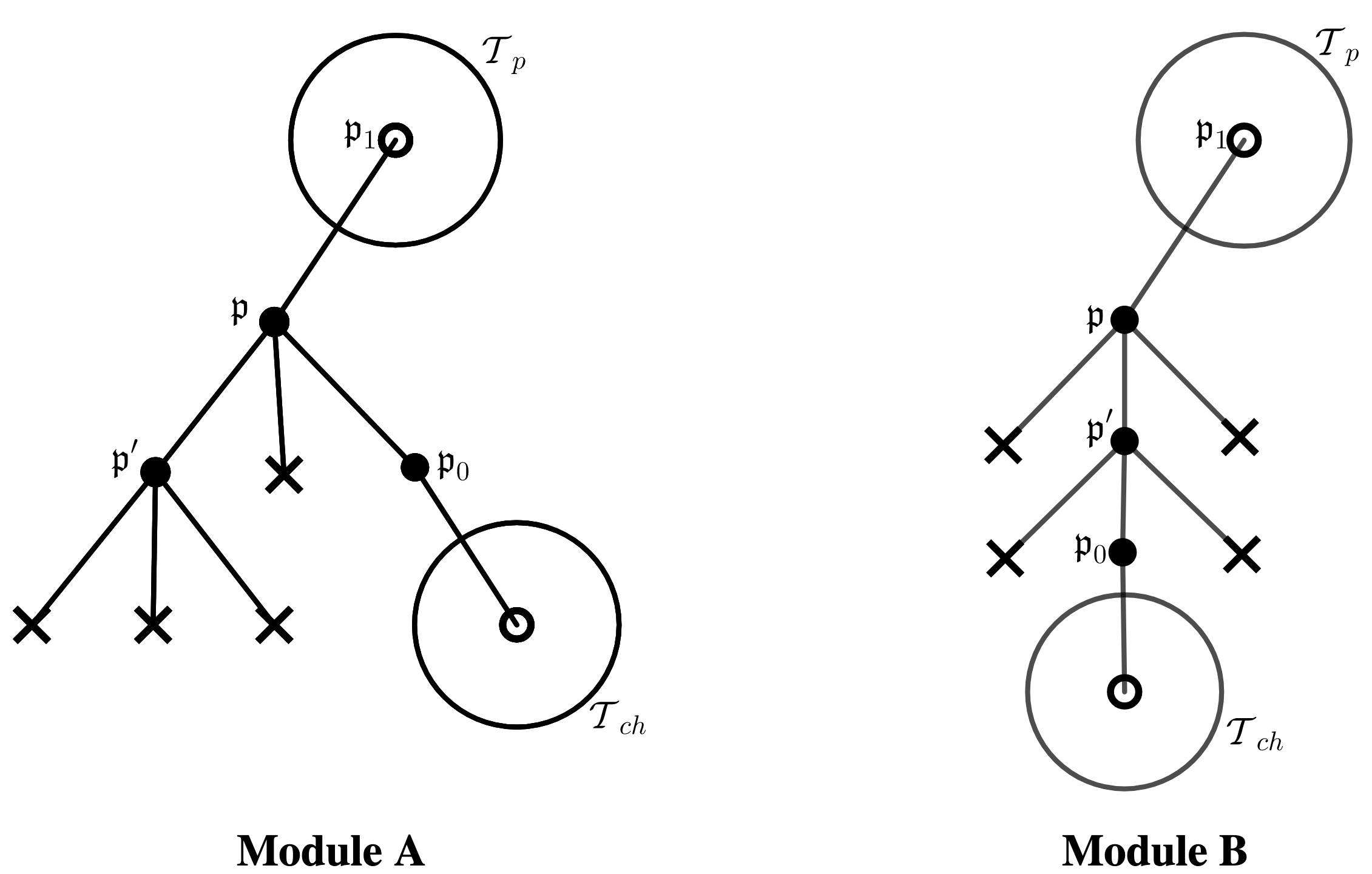}
\caption{The type II chain in \emph{Case 2} corresponds to a chain of modules of form A and B depicted here. All nodes marked by $\times$ are leafs. Each module connects to the next one either through the parent node $\pf$ or through the child node $\pf_0$. Here $\Tc_p$ denotes the tree obtained by deleting the subtree rooted at $\pf$ (keeping the leaf $\pf$), and $\Tc_{ch}$ denotes the tree rooted at $\pf_0$.}
		\label{FigCase2AB}
	\end{figure} 
	
	In fact, apart from the double bond connecting to $A(\pf')$, there is at least one more PC bond (which corresponds to the non-root branching node $\pf$) at the atom $A(\pf)$; this must be a single bond connecting to $A(\pf_1)$ where $\pf_1$ is the parent of $\pf$, so (2) is true. Now, apart from this single bond and the double bond between $A(\pf)$ and $A(\pf')$, there are three remaining bonds connecting to either $A(\pf)$ or $A(\pf')$, which correspond to the three remaining children of $\pf$ and $\pf'$ listed in (1). Among these three bonds, exactly two are LP bonds and exactly one is a PC bond (thanks to the assumption we made above), hence exactly two of the three children are leaves, and the other one, denoted by $\pf_0$, is a branching node which corresponds to an atom connected to either $A(\pf)$ or $A(\pf')$ by a single bond. This proves (1) and thus we are in either \emph{Case 2A} or \emph{Case 2B}.
	
	If we perform the above analysis for the LP-PC double bond at $\pf_0$ or $\pf_1$, and repeat this process, it is easy to see that each type II chain in $\Mb$ corresponds to a chain in $\Qc$, which is formed by repeatedly stacking one of the modules A or B depicted in Figure \ref{FigCase2AB} (with each module connecting to the next one either through the parent node $\pf$ or through the child node $\pf_0$).
\begin{rem}\label{RemarkTypeIanalysis} A similar (and simpler) argument to the above can be used to show that, each type I chain in $\Mb$ must be obtained from either one irregular chain, or the union of two irregular chains in $\Qc$. Note that unlike here, the argument for type I chains will involve signs, but this can be easily adjusted.
\end{rem}
\subsubsection{Conclusion on the tree structure of $\Qc$}\label{conclusionq} From the discussions in Sections \ref{LPlp} and \ref{PClp} we conclude the followings.
	
In \emph{Case 1}, there exist two paired branching nodes $\pf$ and $\pf'$, with $\pf \in \widetilde \Nc^{ch}$, such that neither is a child of the other. Moreover, $\pf$ and $\pf'$ each has two children leaves that form two pairs, see Figure \ref{fig:Case1}. Note that this is a property of the unsigned couple $\Qc^{ns}$.
	
In \emph{Case 2}, the couple $\Qc$ contains at most $Cr$ chains, each consisting of modules A and B as described in Figure \ref{FigCase2AB}, such that the rest of the couple has at most $Cr$ nodes. Moreover for each module A or B in this chain, the nodes $\pf$ and $\pf'$ (as in Figure \ref{FigCase2AB}) are paired with $\pf \in \widetilde \Nc^{ch}$, and $\pf'$ is a child of $\pf$. Note that this is a property for the double-tree $\Qc^{tr}$.
\subsection{Induction step} Now we can proceed with the induction step in the proof of Proposition \ref{section8main}. As stated before we will deal with the unsigned couple $\Qc^{ns}$. Recall that the couple $\Qc$ is formed by two trees $\Tc^\pm$, with corresponding sets of branching nodes $\Nc^\pm$.
	
	Suppose first that we are in \emph{Case 1}, and we fix $\pf$ and $\pf'$ as in Section \ref{conclusionq}. In this case, using the notation in Figure \ref{fig:Case1}, denote by $\Tc_{ch}$ the subtree attached to $\pf_0$, and $\Nc_{ch}$ the set of branching nodes in $\Tc_{ch}$, and let $\pf_1$ be the parent of $\pf$. Also denote by $\Tc_p$ the tree obtained by deleting the subtree rooted at $\pf$ from the tree containing $\pf$ (keeping $\pf$ as a leaf), and let $\Nc_p$ be its set of branching nodes. Without loss of generality assume $\pf\in\Tc^+$, define $\Uc_{\Tc^+}=\Uc_{\Tc^{+}}(t, \vsigma,\alpha[\Nc^{+}])$ as in \eqref{modifiedtimeint2}. Then we have that (note $\sigma_\pf$ and $\sigma_{\pf_0}$ may be replaced by $0$)
	\begin{align*}
	\Uc_{\Tc^{+}}(t, \vsigma,\alpha[\Nc^{+}])=\int_{\Fc_1}\prod_{\nf \in \Nc_{p}} \left(e^{\pi i \alpha_\nf t_{\nf}}\right) \int_{0}^{ t_{\pf_1}-\sigma_\pf} e^{\pi i \alpha_\pf t_{\pf}}\,\mathrm{d}t_\pf\int_0^{t_\pf-\sigma_{\pf_0}}e^{\pi i \alpha_{\pf_0}t_{\pf_0}}\,\mathrm{d}t_{\pf_0}\prod_{j=1}^3\Uc_{\Tc_{ch}^{(j)}}(t_{\pf_0},\vsigma, \alpha[\Nc_{ch}^{(j)}]),
	\end{align*}
	where $\Tc_{ch}^{(j)}$ are the three subtrees of $\Tc_{ch}$, $\Nc_{ch}^{(j)}$ are defined accordingly, and
	\begin{equation}\label{timedomS8}
	\Fc_1:=\big\{t[\Nc_p]:0<t_{\nf}<t_{\nf^p}<t,\text{ where }\nf^p\text{ is the parent of }\nf,\text{ and }t_\nf<t_{\nf^p}-\sigma_\nf \text{ if }\nf\in \Xi\}.
	\end{equation}
	 Interchanging the order of integration, we obtain that 
	\begin{align*}
	\Uc_{\Tc^{+}}(t, \vsigma, \alpha[\Nc^{\pm}])=&\int_{\Fc_1}\prod_{\nf \in \Nc_{p}} \left(e^{\pi i \alpha_\nf t_{\nf}}\right) \int_{0}^{ t_{\pf_1}-\sigma_{\pf}-\sigma_{\pf_0}}G(t_{\pf_1}, t_{\pf_0})e^{\pi i \alpha_{\pf_0}t_{\pf_0}} \,\mathrm{d}t_{\pf_0}\prod_{j=1}^3\Uc_{\Tc_{ch}^{(j)}}(t_{\pf_0}, \vsigma,\alpha[\Nc_{ch}^{(j)}]), \\
	G(t_{\pf_1}, t_{\pf_0})=&\frac{1}{\pi i \alpha_\pf}\left(e^{\pi i \alpha_\pf (t_{\pf_1}-\sigma_\pf)}-e^{\pi i \alpha_\pf (t_{\pf_0}+\sigma_{\pf_0})}\right)\chi_0(\frac{t_{\pf_1}}{10})\chi_0(\frac{t_{\pf_0}}{10}).
	\end{align*}
	Clearly $G$ satisfies $\|\widehat G(\eta,\theta)\|_{L_{\eta,\theta}^1L_{\sigma_{\pf},\sigma_{\pf_0}}^\infty}\leq C\langle \alpha_p\rangle^{-1}$ (where $\widehat{G}$ is the Fourier transform on $\Rb^2$), hence
	\begin{align*}
	\Uc_{\Tc^{+}}(t, \vsigma, \alpha[\Nc^{+}])=&\int_{\Rb^2}\widehat G(\eta, \theta)\int_{\Fc_1}\prod_{\nf \in \Nc^{p}} \left(e^{\pi i \alpha_\nf t_{\nf}}\right) \int_{0}^{ t_{\pf_1}-\sigma_{\pf}-\sigma_{\pf_0}}e^{2\pi i \eta t_{\pf_1}}e^{\pi i(\alpha_{\pf_0}+2 \theta)t_{\pf_0}}\mathrm{d}t_{\pf_0} \\\times&\prod_{j=1}^3\Uc_{\Tc_{ch}^{(j)}}(t_{\pf_0}, \vsigma,\alpha[\Nc_{ch}^{(j)}])
	\,\,\,=\,\,\,\int_{\Rb^2}\widehat G(\eta, \theta) \Uc_{\Tc_*^{+}}(t,\vsigma',\alpha'[{\Nc}_*^+])
	\end{align*}
	where $ \Tc_*^{+}$ is the tree obtained by replacing the subtree rooted at $\pf$ with the subtree rooted at $\pf_0$ (i.e. merging $\pf$ and $\pf_0$), which has ${\Nc}_*^+=\Nc^{\pm}\backslash\{\pf\}$ as its set of branching nodes, and $\alpha'$ is obtained from $\alpha$ by adding $2\eta$ and $2\theta$ to $\alpha_{\pf_1}$ and $\alpha_{\pf_0}$ respectively. Similarly, $\vsigma'$ is obtained from $\vsigma$ by restricting to the new set of branching nodes and replacing $\sigma_{\pf_0}$ by $\sigma_{\pf_0}+\sigma_{\pf}$.
	
	Doing the same computation for the node $\pf'$ (for which $\alpha_{\pf'}=-\alpha_{\pf}+\mu_\pf$), noticing that $\pf'\not\in\{\pf_1,\pf_0\}$. We obtain that 
	\begin{align*}
	\Vc_{\Qc^{ns}}(t, s, \vsigma,\alpha[\widetilde\Nc])&=\Uc_{\Tc^+}(t,\vsigma, \alpha[\Nc^+])\Uc_{\Tc^-}(s, \vsigma,\alpha[\Nc^-])=\int_{\Rb^4}\widehat G(\eta, \theta)\widehat G'(\eta', \theta') \Vc_{\Qc_{new}^{ns}}(t, s, \widetilde \vsigma, \widetilde \alpha[\widetilde{\Nc}_{new}])
	\end{align*}
	where $\Qc^{ns}_{new}$ is the unsigned couple obtained from $\Qc^{ns}$ by replacing the trees rooted at $\pf$ and $\pf'$ with the trees rooted at $\pf_0$ and $\pf_0'$ respectively, and has the same leaf pairing and branching node pairing structures as $\Qc^{ns}$. The set of branching nodes $\Nc^*_{new}=\Nc^*\backslash\{\pf, \pf'\}$, and $\widetilde \Nc_{new}=\widetilde \Nc\backslash\{\pf\}$ is the set obtained from $\Nc^*_{new}$ by pairing branching nodes as above. The variables $\widetilde{\alpha}[\widetilde{\Nc}_{new}]$ is the restriction of $\alpha[\widetilde{\Nc}]$ to  $\widetilde{\Nc}_{new}$, which then has at most four entries translated by some linear combinations of $(\pm2\theta, \pm2\eta, \pm2\theta', \pm2\eta')$. Similarly, $\widetilde \vsigma$ is obtained from $\vsigma$ by translations as explained above. 
	
The function $G'$ satisfies the same bound as $G$, but with the right hand side replaced by $C\langle \alpha_\pf-\mu_\pf\rangle^{-1}$. Using that $\int_\Rb\langle \alpha_\pf\rangle^{-1}\langle \alpha_\pf-\mu_\pf\rangle^{-1}\mathrm{d}\alpha_\pf\leq C$, we can directly estimate 
	\begin{equation*}
	\int_{(\alpha_\nf):\alpha_\nf\in S_\nf(1)}\sup_{\vsigma}\left| \Vc_{\Qc^{ns}}(t, s, \vsigma, \alpha[\widetilde\Nc])\right|
	\leq C\sup_{(T_\nf(1))}\int_{(\alpha_\nf):\alpha_\nf\in T_\nf(1)}\sup_{\vsigma}\left|\Vc_{ \Qc^{ns}_{new}}(t, s, \vsigma, \alpha[\widetilde \Nc_{new}])\right|,
	\end{equation*} 
where $T_\nf(1)$ ranges over all subsets of $\Rb$ with measure $\leq L^{10d}$, and we assume $\nf\in\widetilde{\Nc}$ in the first integral, and $\nf\in\widetilde{\Nc}_{new}$ in the second integral. Using the induction hypothesis on $\Qc_{new}^{ns}$, we obtain the needed estimate. 
	
	\medskip 
	
	We are thus left with \emph{Case 2} where $\Qc$ is the union of at most $p\leq Cr$ chains of modules A and B as described in Figure \ref{FigCase2AB}, plus at most $Cr$ other nodes. At this point we will forget the leaf pairing structure of $\Qc$ and view it as a double-tree $\Qc^{tr}$ with some branching nodes paired. We will prove, with $S_n(1)$ defined as above, that
	\begin{equation}\label{s9Indbd}\delta^{n_0'/4}\cdot\int_{(\alpha_\nf):\alpha_\nf\in S_\nf(1)}\sup_{\vsigma}\big|\Vc_{\Qc^{tr}}(t,s,\vsigma,\alpha[\widetilde{\Nc}])\big|\leq (C^+)^{n_0'}L^{Cp\sqrt{\delta}}(\log L)^{|\Nc^{rm}|+Cp},
	\end{equation}
	where $n_0'$ is the scale of $\Qc$. This estimate would give \eqref{uniformL1} since $|\Nc^{rm}|+p\leq Cr$.
	
	\medskip 
	
	We will prove estimate $\eqref{s9Indbd}$ by induction on $p$, with the base case $p=0$ being a consequence of Lemma \ref{timeintlemma}. Let $\pf\in \widetilde{\Nc}^{ch}$, and let us start by assuming that $\pf$ is the parent node in a Module A. Assume without loss of generality that both nodes $\pf$ and $\pf'$ belong to the tree $\Tc^+$. Let $\Tc_{ch}$ be the tree rooted at $\pf_0$ (see Figure \ref{FigCase2AB}), $\Tc_{p}$ be the tree obtained from $\Tc$ by removing the subtree rooted at $\pf$ (keeping $\pf$ as a leaf), and $\Nc_{ch}$ and $\Nc_p$ be the respective sets of branching nodes. Then if $\pf_1$ is the parent of $\pf$, we have
	\begin{align}
	\Uc_{\Tc^+}(t, \vsigma,\alpha[\Nc^{+}])&=\int_{\Fc_1}\prod_{\nf \in \Nc_{p}} \left(e^{\pi i \alpha_\nf t_{\nf}}\right) \,\mathrm{d}t_{\nf}\int_{0}^{ t_{\pf_1-\sigma_{\pf}}}\,\mathrm{d}t_\pf e^{\pi i \alpha_\pf t_{\pf}}\Uc_{\Tc_{ch}}(t_\pf,\vsigma, \alpha[\Nc_{ch}]) \int_0^{t_\pf-\sigma_{\pf'}}\,\mathrm{d}t_{\pf'}e^{-\pi i (\alpha_{\pf}-\mu_{\pf})t_{\pf'}}\nonumber \\
	\label{lowerbd}&=\int_{\Fc_1}\prod_{\nf \in \Nc_{p}} \left(e^{\pi i \alpha_\nf t_{\nf}}\right)\,\mathrm{d}t_{\nf} \int_{\sigma_{\pf'}}^{ t_{\pf_1-\sigma_{\pf}}}\,\mathrm{d}t_\pf G_\pf(t_{\pf_1}, t_{\pf}) \Uc_{\Tc_{ch}}(t_\pf,\vsigma, \alpha[\Nc_{ch}]),
	\end{align}
	where $\Fc_1:=\big\{t[\Nc_p]:0<t_{\nf}<t_{\nf^p}<t,\text{ and } t_{\nf}<t_{\nf^p}-\sigma_\nf \text{ if }\nf\in \Xi\}$. The function $G_{\pf}$ is defined by
	\begin{equation}\label{defgp}G_{\pf}(t_{\pf_1},t_\pf)=\chi_0\big(\frac{t_{\pf}}{10}\big)\chi_0\big(\frac{t_{\pf_1}}{10}\big)e^{\pi i \alpha_\pf t_{\pf}}\frac{1}{\pi i(\mu_\pf-\alpha_\pf)}\big(e^{\pi i(\mu_\pf-\alpha_\pf)(t_\pf-\sigma_{\pf'})}-1\big)\end{equation}and satisfies that
	\begin{equation}\label{gpbound1}
	\|\widehat G_{\pf}(\eta,\theta)\|_{L_{\eta,\theta}^1L_{\sigma_{\pf'}}^\infty}\leq \frac{C}{\langle \alpha_\pf-\mu_\pf\rangle}.
	\end{equation} Moreover, in view of the restriction $t_\pf>\sigma_{\pf'}$ in the last integral in (\ref{lowerbd}), we may truncate $G_\pf$ and define $G_\pf^{cut}:=G_\pf\cdot \mathbf{1}_{t_\pf-\sigma_{\pf'}\geq 0}$. This truncated function then satisfies (\ref{gpbound1}), but with the right hand side replaced by $C\langle\alpha_\pf-\mu_\pf\rangle^{-1}\log (2+|\alpha_\pf-\mu_\pf|)$, which follows from direct calculations.
	
	In case 2B, the computation is similar and one obtains that 
	\begin{align*}
	\Uc_{\Tc^{+}}(t, \alpha[\Nc^{+}])&=\int_{\Fc_1}\prod_{\nf \in \Nc_{p}} \left(e^{\pi i \alpha_\nf t_{\nf}}\right) \int_{0}^{ t_{\pf_1}-\sigma_{\pf}}\,\mathrm{d}t_\pf e^{\pi i \alpha_\pf t_{\pf}} \int_0^{t_\pf-\sigma_{\pf'}}\,\mathrm{d}t_{\pf'}e^{-\pi i (\alpha_{\pf}-\mu_{\pf})t_{\pf'}}\Uc_{\Tc_{ch}}(t_{\pf'},\vsigma, \alpha[\Nc_{ch}]) ,\\
	&=\int_{\Fc_1}\prod_{\nf \in \Nc_{p}} \left(e^{\pi i \alpha_\nf t_{\nf}}\right) \int_{0}^{ t_{\pf_1-\sigma_\pf-\sigma_{\pf'}}}\,\mathrm{d}t_{\pf'}G_{\pf}(t_{\pf_1}, t_{\pf'})\Uc_{\Tc_{ch}}(t_{\pf'}, \vsigma,\alpha[\Nc_{ch}])
	\end{align*}
	with the kernel \[G_\pf(t_{\pf_1}, t_{\pf'})=\chi_0\big(\frac{t_{\pf}}{10}\big)\chi_0\big(\frac{t_{\pf_1}}{10}\big)e^{-\pi i (\alpha_{\pf}-\mu_{\pf})t_{\pf'}}\frac{1}{\pi i\alpha_\pf}\big(e^{\pi i\alpha_\pf(t_{\pf_1}-\sigma_\pf)}-e^{\pi i\alpha_\pf(t_{\pf'}+\sigma_{\pf'})}\big)\] that satisfies the bound
	\begin{equation}\label{gpbound2}
	\|\widehat G_{\pf}(\eta,\theta)\|_{L_{\eta,\theta}^1L_{\sigma_\pf,\sigma_{\pf'}}^\infty}\leq \frac{C}{\langle \alpha_\pf\rangle}.
	\end{equation}

	Consider now one of the $p$ chains of modules A and B, suppose that it contains $\ell$ modules, which we list from top to bottom, and is contained in the tree $\Tc^+$. Define $\hf_1$ to be the $\pf$ node (see Figure \ref{FigCase2AB}) of the top module, and $\pf_1$ to be the parent of $\hf_1$; also define $\hf_{\ell+1}$ to be the $\pf_0$ node of the bottom module, and $\pf_{\ell+1}$ to be the parent of $\hf_{\ell+1}$. Define $\Tc_{above}$ to be the tree obtained by removing the subtree rooted at $\hf_1$ and keeping $\hf_1$ as a leaf, and $\Tc_{below}$ to be the tree rooted at $\hf_{\ell+1}$ (we define $\Nc_{above}$ and $\Nc_{below}$ accordingly). Note that $\Tc_{above}$ is just $\Tc_p$ for the top module and $\Tc_{below}$ is just the $\Tc_{ch}$ for the bottom module. Let $\hf_k\,(1\leq k\leq \ell)$ be the $\pf$ node of the $k$-th module from top to bottom, and write $(\alpha_k,\mu_k):=(\alpha_{\hf_k},\mu_{\hf_k})$. Then by iterating the above calculations, we have
	\begin{equation*}
	\Uc_{\Tc^{+}}(t,\vsigma, \alpha[\Nc^{+}])=\int_{\Fc_1}\prod_{\nf \in \Nc_{above}} \left(e^{\pi i \alpha_\nf t_{\nf}}\right)\mathrm{d}t_\nf \int_\Cc\prod_{k=1}^\ell                           G_k^*(t_{k},t_{k+1})\,\mathrm{d}t_{k+1}\cdot \Uc_{\Tc_{below}}(t_{\pf_{\ell+1}},\vsigma, \alpha[\Nc_{below}]).
	\end{equation*}
	Here $\Fc_1$ is the set defined before but associated with $\Nc_{above}$, and $(t_1,t_{\ell+1}):=(t_{\pf_1},t_{\pf_{\ell+1}})$. The function $G_k^*$ equals $G_{\hf_k}^{cut}$ if the $k$-th module is A and either $k\leq 4$ or $k\geq \ell-4$, and $G_k^*=G_{\hf_k}$ otherwise. The domain
\[\Cc=\{(t_2, \ldots, t_{\ell+1}): t_{k}>t_{k+1}+\widetilde \sigma_{k},\text{ for } 1\leq k \leq \ell;\,\,t_k>\sigma_k',\text{ for } 5\leq k\leq \ell-5\},\] where $\widetilde \sigma_k$ is the sum of zero, one or two $\sigma_\nf$ variables appearing in $\vsigma$, and $\sigma_k'$ equals either $0$ or some $\sigma_\nf$ that appears in $\vsigma$. The function $G_k^*$ satisfies either (\ref{gpbound1}) or (\ref{gpbound2}), and with the right hand side of (\ref{gpbound1}) multiplied by $\log(2+|\alpha_\pf-\mu_\pf|)$ \emph{only if} $k\leq 4$ or $k\geq \ell-4$. As a result, we may write, with $\widetilde \sigma_{\text{tot}}=\widetilde \sigma_1+\ldots+\widetilde \sigma_{\ell}$, that
	\begin{align*}
	\Uc_{\Tc^{+}}(t,\vsigma, \alpha[\Nc^{+}])&=\int_{(\Rb^2)^{\ell}}\prod_{k=1}^\ell \widehat {G_{k}^*}(\eta_k, \theta_k)\int_{\Fc_1}\prod_{\nf \in \Nc_{above}} \left(e^{\pi i \alpha_\nf t_{\nf}}\right)\mathrm{d}t_{\nf} \cdot e^{2\pi i \eta_1t_{\pf_1}}\\
	&\qquad \qquad \qquad  \qquad\quad \,\times \int_\Cc \prod_{k=2}^{\ell}e^{2\pi i (\eta_k+\theta_{k-1})t_{k}}\,\mathrm{d}t_k\cdot e^{2\pi i \theta_\ell t_{\pf_{\ell+1}}}\Uc_{\Tc_{below}}(t_{\pf_{\ell+1}},\vsigma, \alpha[\Nc_{below}])\,\mathrm{d}t_{\pf_{\ell+1}}\\
	&=\int_{(\Rb^2)^{\ell}}\prod_{k=1}^\ell \widehat {G_{k}^*}(\eta_k, \theta_k)\int_{\Fc_1}\prod_{\nf \in \Nc_{above}} \left(e^{\pi i \alpha_\nf t_{\nf}}\right)\mathrm{d}t_{\nf}  \\
	&\qquad \qquad \qquad  \qquad\quad \times\int_0^{t_{\pf_1}-\widetilde \sigma_{\text{tot}}} \Kc(t_{\pf_1}, t_{\pf_{\ell+1}})\cdot\Uc_{\Tc_{below}}(t_{\pf_{\ell+1}},\vsigma, \alpha[\Nc_{below}])\mathrm{d}t_{\pf_{\ell+1}},\\
	\Kc(t_{\pf_1},t_{\pf_{\ell+1}})&=\chi_0\big(\frac{t_{\pf_1}}{10}\big)\chi_0\big(\frac{t_{\pf_{\ell+1}}}{10}\big)e^{2\pi i \eta_1t_{\pf_1}}e^{2\pi i \theta_\ell t_{\pf_{\ell+1}}}\int_{\Cc_0} \prod_{k=2}^{\ell}e^{2\pi i (\eta_k+\theta_{k-1})t_{k}},
	\end{align*}
	where $\Cc_0=\{(t_2, \ldots, t_{\ell}): (t_2,\cdots,t_{\ell+1})\in\Cc\}$. 
	
	Now let $\eta_k+\theta_{k-1}=\beta_k$, then the above integral in $\Cc_0$ can be written as
	\[\int_{-\infty}^{t_{\pf_1}-\widetilde{\sigma}_1}e^{2\pi i\beta_2t_2}\mathrm{d}t_2\int_{-\infty}^{t_2-\widetilde{\sigma}_2}e^{2\pi i\beta_3t_3}\mathrm{d}t_3\int_{-\infty}^{t_3-\widetilde{\sigma}_3}e^{2\pi i\beta_4t_4}G(t_4)\mathrm{d}t_4;\qquad G(t_4):=\int_{\Cc_4}\prod_{k=5}^\ell e^{2\pi i\beta_kt_k},\] where $\Cc_4=\{(t_5,\cdots,t_\ell):(t_2,\cdots,t_{\ell+1})\in\Cc\}$. Therefore, if the derivatives do not fall on $\chi_0$ factors, we have
	\[\big|(\partial_{t_{\pf_1}}-2\pi i(\beta_3+\beta_2+\eta_1))(\partial_{t_{\pf_1}}-2\pi i(\beta_2+\eta_1))(\partial_{t_{\pf_1}}-2\pi i\eta_1)\Kc\big|\leq C\|G\|_{L^\infty}\leq\frac{C}{(\ell-5)!}\] uniformly in $(t_{\pf_1},t_{\pf_{\ell+1}})$ and $\vsigma$, noticing that $\Cc_4$ is a subset of a simplex. If any of the above derivatives falls on $\chi_0$ then we can take that derivative again and get similar estimates. Since also $|\Kc|\leq C/(\ell-1)!$, we conclude that
	\[|\widehat{\Kc}(\eta_0,\theta_0)|\leq \frac{C}{(\ell-5)!}\min(1,|\eta_0-\eta_1|^{-1}|\eta_0-(\beta_2+\eta_1)|^{-1}|\eta_0-(\beta_3+\beta_2+\eta_1)|^{-1})\] uniformly in $\vsigma$ (there may be other possibilities for denominators but the results are the same). In the same way we can get similar estimates for $\theta_0$, and combing these two yields the bound
	$$
	\sup_{(\eta_k ,\theta_k)_{k=1, \ldots \ell}}\|\widehat \Kc(\eta_0,\theta_0)\|_{L_{\eta_0,\theta_0}^1L_{\vsigma}^\infty}\leq \frac{C}{(\ell-5)!}.
	$$
As a result, we have that
	\begin{align*}
	\Uc_{\Tc^{+}}(t,\vsigma, \alpha[\Nc^{+}])&=\int_{(\Rb^2)^{\ell+1}}\widehat \Kc(\eta_0, \theta_0)\prod_{k=1}^\ell \widehat {G_{k}^*}(\eta_k, \theta_k)\int_{\Fc_1}\prod_{\nf \in \Nc_{above}} \left(e^{\pi i \alpha_\nf t_{\nf}}\right)\mathrm{d}t_{\nf} \cdot e^{2\pi i \eta_0t_{\pf_1}}\\
	&\qquad \qquad \qquad \qquad \qquad \qquad \times \int_0^{t_{\pf_1}-\widetilde \sigma_{\text{tot}}} e^{2\pi i \theta_0 t_{\pf_{\ell+1}}} \Uc_{\Tc_{below}}(t_{\pf_{\ell+1}},\vsigma, \alpha[\Nc_{below}])dt_{\pf_{\ell+1}}\\
	&=\int_{(\Rb^2)^{\ell+1}}\widehat \Kc(\eta_0, \theta_0)\prod_{k=1}^\ell \widehat {G_{k}^*}(\eta_k, \theta_k) \cdot\Uc_{\Tc^+_{new}}(t, \widetilde \vsigma, \widetilde \alpha[\Nc_{new}^+]) \, \prod_{k=0}^\ell \mathrm{d}\eta_k \mathrm{d}\theta_k,
	\end{align*}
	where $\Tc^+_{new}$ is the tree obtained from $\Tc^+$ by deleting this chain of Modules A and B as follows: Attach the tree $\Tc_{below}$ at its root as one of three children of $\hf_1$ (see Figure \ref{S9lastfig}) keeping the other two children as leaves. $\Nc_{new}^+$ is the set of branching nodes of $\Tc_{new}$, $\widetilde{\vsigma}$ is the restriction of $\vsigma$ with $\sigma_{\hf_1}=\widetilde{\sigma_{\mathrm{tot}}}$, and $\widetilde \alpha[\Nc_{new}^+]$ is obtained from $\alpha[\Nc_{new}^+]$ by translating $\alpha_{\pf_1}$ by $2\eta_0$, defining $\alpha_{\hf_1}=2\theta_0$, and keeping all remaining $\alpha_{\nf}$ for $\nf \in \Nc^+_{new}\setminus\{\pf_1, \hf_1\}$ the same.
\begin{figure}[h!]
		\includegraphics[scale=1]{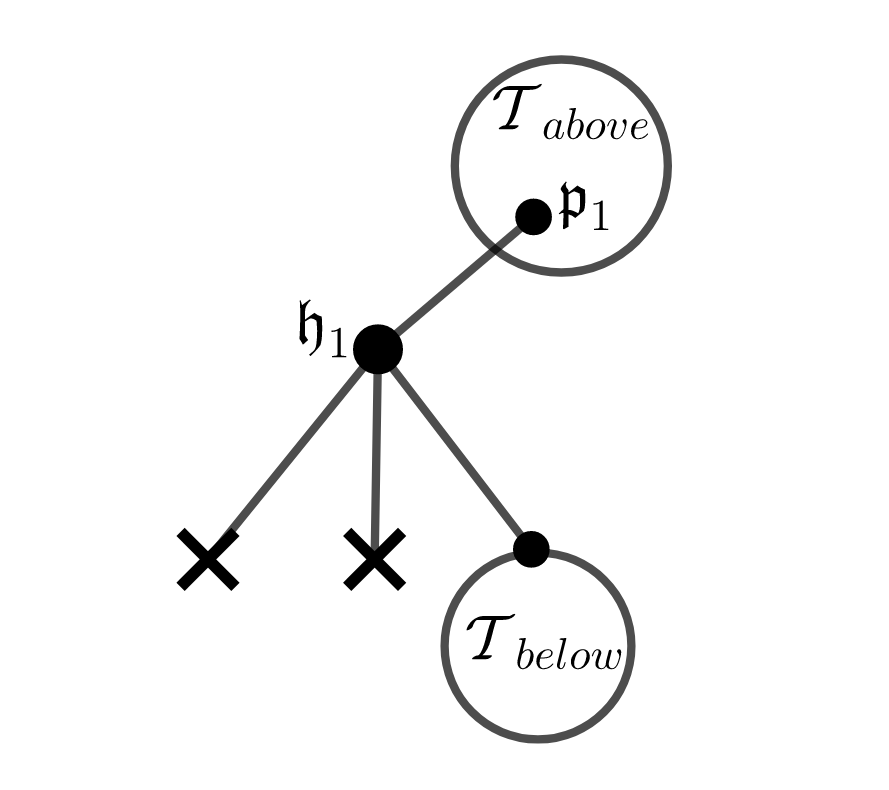}
		\caption{The tree $\Tc^+_{new}$ is obtained from $\Tc^+$ by removing the chain of A and B modules, and connecting the tree $\Tc_{below}$ at one of the children of $\hf_1$ keeping the other two children as leaves.}
		\label{S9lastfig}
	\end{figure}

	We define the double-tree $\Qc_{new}^{tr}=\Tc_{new}^+\cup \Tc^-$ (which has no leaf pairing structure), with the branching node pairing structure inherited from $\Qc^{tr}$ and not involving $\hf_1$. Also define $\widetilde{\Nc}_{new}^{tr}$ accordingly. Using the induction hypothesis, we can take supremum over $\vsigma$, then integrate in $\alpha_\nf$ for $\nf\in\widetilde{\Nc}_{new}^{tr}$, to obtain that the left hand side of \eqref{s9Indbd} is bounded by (recalling that the removed chain of A and B modules has $2\ell$ branching nodes)
	\begin{align*}
	&(C^+)^{n_0'-2\ell}L^{C(p-1)\sqrt \delta }(\log L)^{|\Nc^{rm}|+C(p-1)}\\ &\qquad \qquad \qquad \times\left(\delta^{(2\ell)/4}\int_{(\alpha_1,\cdots,\alpha_\ell):\alpha_k\in S_k(1)}\int_{(\Rb^2)^{\ell+1}}\sup_{\vsigma}|\widehat \Kc(\eta_0, \theta_0)|\prod_{k=1}^\ell |\widehat {G_{k}^*}(\eta_k, \theta_k)| \right)\\
	&\leq (C^+)^{n_0'-2\ell}C^\ell L^{C(p-1)\sqrt \delta }(\log L)^{|\Nc^{rm}|+C(p-1)} \left(\delta^{(2\ell)/4}\int_{(\alpha_1,\cdots,\alpha_\ell):\alpha_k\in S_k(1)}\frac{\prod_{k=1}^\ell \Phi_k(\alpha_{k}-\widetilde \mu_{k})}{(\ell-5)!}\right)\\
	&\leq (C^+)^{n_0'-2\ell}C^\ell L^{C(p-1)\sqrt \delta }(\log L)^{|\Nc^{rm}|+C(p-1)} \left(C^\ell\delta^{(2\ell)/4}\frac{(\log L)^{\ell+10}}{(\ell-5)!}\right)\\
	&\leq (C^+)^{n_0'-2\ell}C^{2\ell}L^{C(p-1)\sqrt \delta }(\log L)^{|\Nc^{rm}|+C(p-1)} \left((\log L)^{15}e^{C\sqrt \delta \log L}\right),
	\end{align*}
	where $S_k(1)$ is a set of measure $\leq L^{10d}$, we denoted by $\widetilde \mu_{k}$ either $0$ or $\mu_k$ (depending on whether the $k$-th module is A or B), and $\Phi_k(z)$ is either $\langle z\rangle^{-1}$ or (for at most $10$ values of $k$) $\langle z\rangle^{-1}\log (2+|z|)$. In the final step we used the bound $\frac{x^{\ell-5}}{(\ell-5)!}\leq e^x$ for any $\ell$. Note also that in applying the induction hypothesis for $\Nc_{new}^{tr}$ we have fixed the value of $\alpha_{\hf_1}=2\theta_0$ (using integrability of $\widehat{\Kc}$), but it is clear from the proof that fixing the value of any $\alpha_\nf$ will only lead to better estimates than integrating in $\alpha_\nf$. This gives the estimate \eqref{s9Indbd} and finishes the proof. 
	\end{proof}
\subsection{Proof of Propositions \ref{mainprop1} and \ref{mainprop4}} We are now ready to prove Propositions \ref{mainprop1} and \ref{mainprop4}. First we establish the absolute upper bound for (\ref{section6fin2}), which then allows us to control (\ref{irrechainsum}).
\begin{prop}\label{congsumest} Given one congruence class $\Fs$ of non-regular marked couples of scale $n$ as in Definition \ref{conggen}, the expression
\begin{equation}\label{congsum}\sum_{\Qc\in\Fs}\Kc_\Qc(t,s,k)
\end{equation} can be decomposed into at most $C^n$ terms. For each term there is an integer $1\leq r\leq n$ such that this term is bounded, uniformly in $(t,s)\in[0,1]^2$, by $(C^+\delta^{1/4})^{n}\langle k\rangle^{-20d}\cdot L^{-\nu r}$. Moreover, for each fixed $r$, the number of possibilities of $\Qc$ (or $\Fs$) that correspond to this $r$ is at most $C^n(Cr)!$.
\end{prop}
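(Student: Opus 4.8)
The plan is to assemble Proposition \ref{congsumest} from the two main achievements of Sections \ref{irchaincancel}, \ref{improvecount} and \ref{l1coef}: the cancellation reduction that rewrites $\sum_{\Qc\in\Fs}\Kc_\Qc$ as the single expression (\ref{section6fin2})--(\ref{finalexp}) attached to the reduced prime couple $\Qc_{sk}^\#$, and the counting plus $L^1$ bounds that estimate that expression. First I would recall from Section \ref{section6summary} that summing over $\Qc\in\Fs$ and over all markings produces formula (\ref{section6fin1}), then (\ref{section6fin2}), with the tensorized bounds (\ref{tensorbd}), (\ref{xtotbound}), (\ref{typeIcontrol}) in hand; the parameter $r_0$ counts the $\Rf$-marked nodes in $\Qc_{sk}^\#$, and $r_{\mathrm{irr}}$, $Q$ control the irregular-chain lengths. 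Passing to (\ref{finalexp}) identifies the sum over $\Es=k[\Qc]$ as a decorated counting sum for the base molecule $\Mb$ of $\Qc_{sk}^\#$, which by Proposition \ref{moleprop}-type considerations contains no triple bond.

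Next I would apply Proposition \ref{gain} (the rigidity theorem): decompose $\Df(\Mb)$ into at most $C^{n_0'}$ pieces $\Df(\Mb,\mathtt{Ext})$, each with an index $1\leq r_1\leq n_0'$, a collection of $\le Cr_1$ type I/II molecular chains covering all but $\le Cr_1$ atoms, and the counting estimate (\ref{defect2}): $\sup\#\Df(\Mb,\mathtt{Ext})\le (C^+)^{n_0'}\delta^{-(n_0'+m)/2}L^{(d-1)n_0'-2\nu r_1}$. As explained at the start of Section \ref{l1coef}, type I chains in $\Mb$ arise only from one or two irregular chains of $\Qc_{sk}^\#$, so $m\le C(r_0+r_{\mathrm{irr}})$ and $r_{\mathrm{irr}}\le Cr_1$, hence $m\le Cr$ with $r:=r_0+r_1$; the $\delta^{-m/2}$ loss is therefore only $\delta^{-Cr}$. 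Crucially, the $\alpha$-summation/integration over the variables $\alpha_\nf=\delta L^2\zeta_\nf\Omega_\nf+\lambda_\nf$ attached to the type II chain nodes must be handled not by the crude $\langle\cdot\rangle^{-1}$ bound of Lemma \ref{timeintlemma} (which costs $(\log L)^{n_0'}$) but by Proposition \ref{section8main}: under the pairing $\widetilde\Nc^{ch}$ forced by $\mathtt{Ext}$ (which includes $k_{\ell_1}=k_{\ell_2}$ on paired single bonds), $\Uc_{\Qc_{sk}^\#}=\Vc_{\Qc_{sk}^\#}$ and (\ref{uniformL1}) gives $\delta^{n_0'/4}\sum\sup|\Vc|\le (C^+)^{n_0'}L^{Cr\sqrt\delta}(\log L)^{Cr}$. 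Combining: discretize the $\lambda_\nf$ integral against the weighted $L^1$ bound (\ref{tensorbd}) for $G$ (the weights $\langle\lambda_\nf\rangle^{1/18}$ kill the discretization loss), for each discrete cube apply Proposition \ref{section8main} to the $\Omega$-summation via the identification $\alpha_\nf\in S_\nf$ with $S_\nf$ the set of achievable $\delta L^2\zeta_\nf\Omega_\nf$ values and $\#S_\nf\le L^{10d}$, then bound the remaining $\Omega$ variables by the counting estimate (\ref{defect2}). The prefactor $(\delta/2L^{d-1})^{n_0'}$ against $L^{(d-1)n_0'}$ cancels exactly, leaving $\delta^{n_0'}\cdot\delta^{-n_0'/4}\cdot\delta^{-Cr}\cdot(C^+\delta)^{(n-n_0')/2}\cdot L^{-2\nu r_1}\cdot L^{-2\nu r_0}\cdot L^{Cr\sqrt\delta}(\log L)^{Cr}$; absorbing $\delta^{-Cr}L^{Cr\sqrt\delta}(\log L)^{Cr}$ into $L^{-\nu r}$ (since $\nu\gg$ the exponents for $L$ large) and tidying powers of $\delta$ yields the claimed $(C^+\delta^{1/4})^n\langle k\rangle^{-20d}L^{-\nu r}$; the $\langle k\rangle^{-20d}$ decay comes from the Schwartz decay of the input factors $n_{\mathrm{in}}$ and the leading regular-couple contributions, exactly as in the proof of Proposition \ref{asymptotics1}. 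The $\le C^n$ count of terms tallies the marking choices, the $\mathtt{Ext}$-decomposition of $\Df(\Mb)$, and the small/large-gap splittings.

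For the final combinatorial count: fixing $r$, a couple $\Qc\in\Fs$ is reconstructed from $\Qc_{sk}^\#$ by re-inserting irregular chains, type I and type II molecular chains, and the regular sub-structures, plus the markings. By Corollary \ref{corcpl} the number of couples with a given skeleton is $\le C^n$, by Proposition \ref{recover} a couple is recovered from its molecule in $\le C^n$ ways, and by Corollary \ref{countcouple1} regular sub-couples cost only $C^n$; inserting copies of the explicit chain structures (irregular, type I, type II) costs $C^n$, while the non-chain ``core'' of $\Qc_{sk}^\#$ has size $\le Cr$, so it has $\le (Cr)!\,C^r$ possibilities as in the heuristic (\ref{indexintro2}) discussion. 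This gives $\le C^n(Cr)!$ as stated. I expect the main obstacle to be the bookkeeping that makes the three distinct reductions compatible in a single estimate: ensuring that the index $r_1$ from Proposition \ref{gain}, the $\Rf$-count $r_0$ and chain-count $r_{\mathrm{irr}}$ from Section \ref{section6summary}, and the type II chain structure $\widetilde\Nc^{ch}$ used in Proposition \ref{section8main} all refer consistently to the same reduced couple $\Qc_{sk}^\#$ and the same $\mathtt{Ext}$-piece, and that the $\delta$-powers ($\delta^{n_0'}$ prefactor, $\delta^{-n_0'/4}$ from (\ref{uniformL1}), $\delta^{-Cr}$ from type I losses, $(C^+\delta)^{(n-n_0')/2}$ from regular pieces) combine to a clean $(C^+\delta^{1/4})^n$ without any residual $\delta^{-cn_0'}$; this is the place where the factor $\delta^{1/4}$ rather than $\delta^{1/2}$ is forced, and tracking it honestly requires care.
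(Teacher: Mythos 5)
Your proposal is correct and follows essentially the same route as the paper's proof: reduce via the congruence cancellation to (\ref{section6fin2})--(\ref{finalexp}), apply the $\mathtt{Ext}$-decomposition and rigidity estimate (\ref{defect2}) of Proposition \ref{gain} together with the $L^1$-type bound (\ref{uniformL1}) of Proposition \ref{section8main} (using $m\le Cr$ for the type I chains), and then count couples with fixed $r$ via Propositions \ref{recover}, \ref{corcpl} to get $C^n(Cr)!$. The only caveat is that your explicit $\delta$-tally omits the $\delta^{-n_0'/2}$ portion of the $\delta^{-(n_0'+m)/2}$ factor in (\ref{defect2}); including it gives exactly the paper's accounting $\delta^{(n-n_0')/2}\delta^{3n_0'/4}\delta^{-(n_0'+m)/2}$, which is where the exponent $\delta^{1/4}$ per node comes from, and your stated conclusion is unaffected.
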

\begin{proof} As in Section \ref{irchaincancel} we can reduce to (\ref{section6fin2}), and then to (\ref{finalexp}). Note that in (\ref{finalexp}) the $\Qc$ actually means $\Qc_{sk}^\#$ by our notation. Using the decay factors in (\ref{xtotbound}) we can gain the power $\langle k\rangle^{-30d}$, and also restrict to the subset where $|k_\lf-a_\lf|\leq 1$ for some fixed parameters $(a_\lf)$ (with summability in $(a_\lf)$ guaranteed). Using the bound for $G(\vlambda)$, which is a modification of the first inequality in (\ref{tensorbd}), we may also fix the value of $\vlambda$ (and hence $\mu_\nf$).

As in Section \ref{improvecount}, by decomposing into at most $C^{n_0'}$ terms (where $n_0'$ is the scale of $\Qc_{sk}^\#$), we can add the set of extra conditions $\mathtt{Ext}$, which also defines the sets $\widetilde{\Nc}$ (as in Proposition \ref{section8main}), etc., and the value $r_1\geq 1$. Let $r=r_0+r_1$ as above, then thanks to $\mathtt{Ext}$, we can use (\ref{eqnuv}) to reduce $\Uc_{\Qc_{sk}^\#}$ to $\Vc_{\Qc_{sk}^\#}$. Moreover, for each $\nf\in\widetilde{\Nc}$, the value $\delta L^2\zeta_\nf\Omega_\nf+\lambda_\nf$ belongs to some subset of $\Rb$ of cardinality at most $L^{3d}$, as $k[\Qc_{sk}^\#]$ varies (this is because each $k_\nf$ belongs to a ball of radius at most $n\leq (\log L)^3$ under our assumptions). In particular the value $m_\nf=\lfloor \delta L^2\zeta_\nf\Omega_\nf+\lambda_\nf\rfloor$ belongs to a set $S_\nf\subset\Zb$ with cardinality at most $L^{3d}$, for all possible choices of $k[\Qc_{sk}^\#]$.

To estimate (\ref{finalexp}) with $\vlambda$ fixed, we first integrate in $\vsigma$. Using (\ref{xtotbound}), we can estimate (\ref{finalexp}) using
\begin{equation}\label{finalexp2}\sum_{\Es_{sk}^\#}|\epsilon_{\Es_{sk}^\#}|\cdot\sup_{\vsigma}\big|\Vc_{\Qc_{sk}^\#}\big(t,s,\vsigma, (\delta L^2\zeta_\nf\Omega_\nf+\lambda_\nf)_{\nf\in\widetilde{\Nc}}\big)\big|,
\end{equation} where $\Es_{sk}^\#=k[\Qc_{sk}^\#]$ is a $k$-decoration of $\Qc_{sk}^\#$ {(we also have additional factors that will be collected at the end)}. We next fix the values of $m_\nf\in S_\nf$ for each $\nf$; note that then
\[\sup_{\vsigma}\big|\Vc_{\Qc_{sk}^\#}\big(t,s,\vsigma, (\delta L^2\zeta_\nf\Omega_\nf+\lambda_\nf)_{\nf\in\widetilde{\Nc}}\big)\big|\leq\sup_{(\alpha_\nf):|\alpha_\nf-m_\nf|\leq 1}\sup_{\vsigma}\big|\Vc_{\Qc_{sk}^\#}(t,s,\vsigma,\alpha[\widetilde{\Nc}])\big|\] by definition, so if we use (\ref{uniformL1}) to sum over $(m_\nf)$ in the end, we can further estimate (\ref{finalexp2}) using
\begin{equation}\label{finalexp3}\sum_{\Es_{sk}^\#}|\epsilon_{\Es_{sk}^\#}|\cdot\prod_\lf\mathbf{1}_{|k_\lf-a_\lf|\leq 1}\prod_\nf\mathbf{1}_{|\Omega_\nf-b_\nf|\leq \delta ^{-1}L^{-2}},
\end{equation} where $a_\lf$ and $b_\nf$ are constants, and we also include the conditions in $\mathtt{Ext}$. {Now (\ref{finalexp3}) is almost exactly the counting problem $\Df(\Mb,\mathtt{Ext})$ stated in Definition \ref{countingproblem}, due to Remark \ref{moledec}, except that we only assume $|k_\lf-a_\lf|\leq 1$ for \emph{leaves} $\lf$. However, for any branching node $\nf$ there exists a child $\nf'$ of $\nf$ such that $k_\nf\pm k_{\nf'}$ belongs to a fixed ball of radius $\mu_\nf^\circ$ as in Lemma \ref{auxlem}, so by using (\ref{auxineq}), one can reduce (\ref{finalexp3}) to at most $C^{n_0'}$ counting problems, each of which having exactly the same form as $\Df(\Mb,\mathtt{Ext})$ in Definition \ref{countingproblem}.} Therefore, (\ref{finalexp3}) can be bounded using Proposition \ref{gain} {(and using Remark \ref{countingrem} if necessary)}. Collecting all the factors appearing in the above estimates, we get that
\begin{equation}\label{finalest1}{\langle k\rangle^{20d}\cdot|(\ref{congsum})|}\leq (C^+)^n\delta^{(n-n_0')/2}\delta^{3n_0'/4} L^{-(d-1)n_0'}\cdot L^{-2\nu r _0}\cdot L^{Cr\sqrt{\delta}}(\log L)^{Cr}\delta^{-(n_0'+m)/2} L^{(d-1)n_0'-2\nu r_1},
\end{equation} which is then bounded by $(C^+\delta^{1/4})^{n} L^{-3\nu r/2}\delta^{-m/2}$, where $m$ is the total length of type I chains in the molecule obtained from $\Qc_{sk}^\#$. We know $m\leq Cr$ so $\delta^{-m/2}\leq L^{\nu r/2}$, which implies the desired bound.

Finally, suppose we fix $r$, then the base molecule formed by $\Qc_{sk}^\#$ is, up to at most $Cr$ remaining atoms, a union of at most $Cr$ type II chains with total length at most $n_0'$. This clearly has at most $(Cr)!C^n$ possibilities. By Proposition \ref{recover}, the number of choices for $\Qc_{sk}^\#$ is also at most $(Cr)!C^n$. To form $\Qc_{sk}$ from $\Qc_{sk}^\#$ one needs to insert at most $Cr$ irregular chains with total length at most $n$, which also has at most $C^n$ possibilities. Finally, using Corollary \ref{corcpl}, we see that $\Qc$ has at most $(Cr)!C^n$ choices. The number of choices for markings, as well as $\mathtt{Ext}$, are also at most $C^n$ and can be accommodated.
\end{proof}
\begin{proof}[Proof of Proposition \ref{mainprop1}] By definition, we have
\[\Eb|(\Jc_n)_k(t)|^2=\sum_{\Qc}\Kc_\Qc(t,t,k),\] where the sum is taken over all couples $\Qc=(\Tc^+,\Tc^-)$ such that $n(\Tc^+)=n(\Tc^-)=n$. If $\Qc$ is regular, then the number of such $\Qc$'s is at most $C^n$ by Proposition \ref{countcouple1}, and for each $\Qc$ we have $|\Kc_\Qc(t,t,k)|\lesssim \langle k\rangle^{-20d}(C^+\delta)^n$ by Proposition \ref{asymptotics1} and Remark \ref{timebound}. Therefore, the sum over these $\Qc$'s is under control.

Now consider non-regular $\Qc$. It follows from definition that the congruence relation (as in Definition \ref{conggen}) preserves the scales of both trees of a couple. Thus, the sum over $\Qc$ can be decomposed into sums over $\Qc\in\Fs$ (i.e. sums of form (\ref{congsum})), where $\Fs$ runs over the (possible) different congruence classes. Applying Proposition \ref{congsumest}, we can regroup these terms according to the value of $1\leq r\leq 2n$ (which we call the index), such that (i) each single term with index $r$ is bounded by $(C^+\delta^{1/4})^{2n}\langle k\rangle^{-20d}\cdot L^{-\nu r}$, and (ii) the number of terms with index $r$ is at most $(Cr)!C^{2n}$. Hence
\[\Eb|(\Jc_n)_k(t)|^2\lesssim \langle k\rangle^{-20d}(C^+\delta)^n+\langle k\rangle^{-20d}(C^+\delta^{1/4})^{2n}\sum_{r=1}^{2n}L^{-\nu r} (Cr)!C^{2n}\lesssim \langle k\rangle^{-20d}(C^+\sqrt{\delta})^{n},\] noticing also that $r\leq 2n\leq 2(\log L)^3$. This completes the proof.
\end{proof}
\begin{proof}[Proof of Proposition \ref{mainprop4}] Here we are considering the sum of $\Kc_\Qc(t,t,k)$ over all couples $\Qc$ such that $n(\Qc)=m$ for some fixed value $m$. If $\Qc$ is non-regular, then using the same argument as in the above proof we can bound the corresponding contribution by $\langle k\rangle^{-20d}(C^+\delta^{1/4})^{m}L^{-\nu}$ since we also have $r\geq 1$. Therefore we only need to consider regular couples $\Qc$. If $m$ is odd then this sum is zero because the scale of regular couples must be even. If $m=2n$, we only need to show that
\[\bigg|\sum_{\substack{n(\Qc)=2n\\\Qc\,\,\mathrm{regular}}}\Kc_\Qc(t,t,k)-\Mc_n(t,k)\bigg|\lesssim \langle k\rangle^{-20d}(C^+\sqrt{\delta})^nL^{-\nu},\] but this is a consequence of Proposition \ref{regcplapprox}. This completes the proof.
\end{proof}
\section{The operator $\Ls$}\label{operatornorm} In this section we prove Proposition \ref{mainprop2}. The arguments are mostly the same as in previous sections, so we will only point out the necessary changes in the proof. First, in order to expand the kernel $(\Ls^n)_{k\ell}^\zeta(t,s)$, we need to slightly modify the definition of trees and couples.
\begin{df}\label{flowers} A \emph{flower tree} is a tree $\Tc$ with one leaf $\ff$ specified, called the \emph{flower}; different choices of $\ff$ for the same tree $\Tc$ leads to different flower trees. There is a unique path from the root $\rf$ to the flower $\ff$, which we call the \emph{stem}. A \emph{flower couple} is a couple formed by two flower trees, such the two flowers are paired (in particular they have opposite signs).

The \emph{height} of a flower tree $\Tc$ is the number of branching nodes in the stem of $\Tc$. Clearly a flower tree of height $n$ is formed by attaching two sub-trees each time, and repeating $n$ times, starting from a single node; see Figure \ref{fig:flowertree}. We say a flower tree is \emph{admissible} if all these sub-trees have scale at most $N$.
  \begin{figure}[h!]
  \includegraphics[scale=.5]{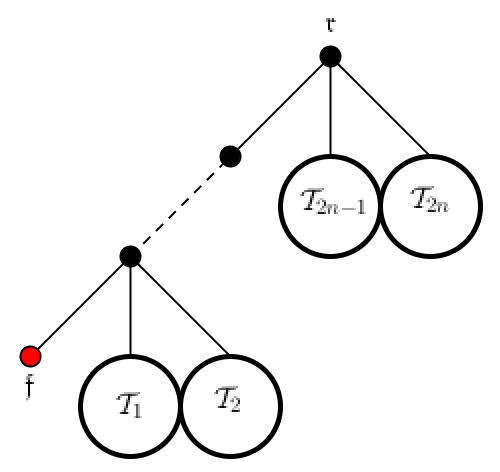}
  \caption{A flower tree, as in Definition \ref{flowers}. The red leaf $\ff$ is the flower, $\rf$ is the root, and $\Tc_j\,(1\leq j\leq 2n)$ are attached sub-trees, where $n$ is the height.}
  \label{fig:flowertree}
\end{figure} 
 \end{df}
 \begin{prop}\label{L^nexpansion} Given $\zeta\in\{\pm\}$, we can make the decomposition (\ref{decomposem}), such that for each $m$,
 \begin{equation}\label{linkerexpansion}\Eb|(\Ls^n)_{k\ell}^{m,\zeta}(t,s)|^2=\sum_{\Qc}\widetilde{\Kc}_\Qc(t,s,k,\ell),
 \end{equation} where the sum is taken over all \emph{flower} couples $\Qc=(\Tc^+,\Tc^-)$, such that both $\Tc^\pm$ are admissible, have \emph{height} $n$ and \emph{scale} $m$, and the flower of $\Tc^\pm$ has sign $\pm\zeta$. For $t>s$, the quantity $\widetilde{\Kc}_\Qc$ is defined similar to (\ref{defkq}):
 \begin{equation}\label{defaltkq}\widetilde{\Kc}_\Qc(t,s,k,\ell):=\bigg(\frac{\delta}{2L^{d-1}}\bigg)^{2m}\zeta^*(\Qc)\sum_\Es\epsilon_\Es\int_{\Ec}\prod_{\nf\in\Nc^*}e^{\zeta_\nf\pi i \delta L^2\Omega_\nf t_\nf}\mathrm{d}t_\nf\prod\dirac(t_{\ff^p}-s){\prod_{\ff\neq\lf\in\Lc^*}^{(+)}n_{\mathrm{in}}(k_\lf)}\mathbf{1}_{k_\ff=\ell},
 \end{equation} where $\Es$ is a $k$-decoration of $\Qc$, the other objects are associated with the couple $\Qc$, and the set $\Ec$ is defined as in (\ref{timedom}) but with $s$ replaced by $t$; {in the last product we assume $\lf$ has sign $+$ and is not one of the two flowers $\ff$ of the flower couple $\Qc$.}
 
 The differences between (\ref{defaltkq}) and (\ref{defkq}) are the (two) Dirac factors $\dirac(t_{\ff^p}-s)$, where $\ff^p$ is the parent of $\ff$ for both flowers $\ff$, and the (one) factor $\mathbf{1}_{k_\ff=\ell}$.
 \end{prop}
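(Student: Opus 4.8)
The plan is to prove (\ref{linkerexpansion}) by repeating, almost verbatim, the derivation of the formula (\ref{formulajt}) for $\Jc_\Tc$ in Section \ref{treeterm} and of the formulas (\ref{formulae})--(\ref{defkq}) for $\Kc_\Qc$ in Section \ref{correlcal}, the only new features being the ``free'' flower leaf and the extra time localization at $s$.

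\emph{Step 1: the flower-tree expansion of the kernel.} By (\ref{eqnbk1.5}), the operator $\Ls$ sends a function $h$ to $\sum_{(1)}\Ic\Cc_+(u,\overline v,w)$, where exactly one of $u,v,w$ equals $h$ and each of the other two is some $\Jc_{n'}$ with $0\leq n'\leq N$. Iterating this $n$ times, one shows by induction (exactly as (\ref{formulajt}) is proved by induction) that $(\Ls^n b)_k(t)$ is a finite sum, over all \emph{admissible flower trees} $\Tc$ of height $n$ and sign $+$, of expressions built from nested $\Ic\Cc_+$'s: the $n$ branching nodes of the stem are each equipped with two attached sub-trees $\Tc_j$ ($1\leq j\leq 2n$) of scale $\leq N$ (so admissibility in the sense of Definition \ref{flowers} holds automatically), and the bottom of the stem is the flower $\ff$, carrying the factor $b_{k_\ff}(t_{\ff^p})^{\zeta_\ff}$, where $\zeta_\ff$ is the sign of $\ff$ as a leaf of $\Tc$. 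This sign is $+$ or $-$ according to the parity of the number of times the stem descends through the conjugated middle slot, which is precisely how the $\overline v$ in (\ref{eqnbk1.5}) propagates under iteration (and is why $\Ls^n$ is only $\Rb$-linear). Writing the Duhamel integrals explicitly as in (\ref{defcoefa2}) and stripping off the factor $b_\ell(s)^\zeta=b_{k_\ff}(t_{\ff^p})^{\zeta_\ff}$ — which forces $k_\ff=\ell$, $\zeta_\ff=\zeta$, $t_{\ff^p}=s$, and restricts the remaining stem times to $t>t_{\nf_1}>\cdots>t_{\nf_{n-1}}>s$ (hence $(\Ls^n)_{k\ell}^\zeta(t,s)=0$ unless $t>s$) — one gets a formula for $(\Ls^n)_{k\ell}^\zeta(t,s)$ exactly analogous to (\ref{formulajt}): a sum over admissible flower trees of height $n$, sign $+$ and flower sign $\zeta$, of a deterministic coefficient times $\prod_{\ff\neq\lf\in\Lc}\sqrt{n_{\mathrm{in}}(k_\lf)}\,\eta_{k_\lf}^{\zeta_\lf}$. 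Grouping by the scale $m=n(\Tc)$ of the flower tree yields the decomposition (\ref{decomposem}); since the stem contributes $n$ and each of the $2n$ attached sub-trees has scale $\leq N$, we get $n\leq m\leq n+2nN\leq N^3$ for $L$ (hence $N$) large, matching the stated range.

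\emph{Step 2: the second moment, via Isserlis.} Write $\Eb|(\Ls^n)_{k\ell}^{m,\zeta}(t,s)|^2=\Eb\big[(\Ls^n)_{k\ell}^{m,\zeta}(t,s)\cdot\overline{(\Ls^n)_{k\ell}^{m,\zeta}(t,s)}\big]$. The first factor is a sum over flower trees $\Tc^+$ as in Step 1; conjugating the second factor replaces each by the conjugate $\Tc^-=\overline{\Tc}$, which has sign $-$, flower sign $-\zeta$, and flips the signs of all Gaussians. For a fixed pair $(\Tc^+,\Tc^-)$ we expand the product of Gaussians over $(\Lc^+\cup\Lc^-)\setminus\{\ff^+,\ff^-\}$ and apply the complex Isserlis theorem (Lemma \ref{isserlis0}) exactly as in the passage from (\ref{formulajt}) to (\ref{formulae}); this produces a pairing $\Ps$ of these leaves into opposite-sign, equal-decoration pairs. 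Adjoining the forced pair $\{\ff^+,\ff^-\}$ — the two flowers sit at frequency $\ell$, time $s$, with opposite signs $\zeta$ and $-\zeta$ — makes $\Qc=(\Tc^+,\Tc^-,\Ps\cup\{\{\ff^+,\ff^-\}\})$ an admissible flower couple with both trees of height $n$ and scale $m$, and the resulting term is exactly $\widetilde{\Kc}_\Qc(t,s,k,\ell)$ of (\ref{defaltkq}): the two prefactors $\big(\tfrac{\delta}{2L^{d-1}}\big)^m$ combine to $\big(\tfrac{\delta}{2L^{d-1}}\big)^{2m}$, the $(i\zeta_\nf)$'s over $\Nc^*=\Nc^+\cup\Nc^-$ give $\zeta^*(\Qc)$, the nested Duhamel integrals over both trees assemble into $\int_\Ec\prod_{\nf\in\Nc^*}e^{\zeta_\nf\pi i\delta L^2\Omega_\nf t_\nf}\,\mathrm{d}t_\nf$ with $\Ec$ as in (\ref{timedom}) but $s$ replaced by $t$, the two localizations $t_{\ff^p}=s$ become the two $\dirac(t_{\ff^p}-s)$ factors, the Isserlis weights give $\prod^{(+)}_{\ff\neq\lf\in\Lc^*}n_{\mathrm{in}}(k_\lf)$, and $k_{\ff^+}=k_{\ff^-}=\ell$ gives $\mathbf 1_{k_\ff=\ell}$. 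Summing over $\Tc^+$ and over all admissible pairings recovers the full sum over flower couples $\Qc$ in (\ref{linkerexpansion}).

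I expect the main (and only genuinely non-routine) obstacle to be the inductive bookkeeping of Step 1: verifying carefully that the iterates of the $\Rb$-linear operator $\Ls$ organize precisely into the flower-tree structure of Definition \ref{flowers}, with the correct propagation of the flower's sign $\zeta_\ff$ through the conjugated middle slots, the scale-$\leq N$ constraint on every attached sub-tree, and the identification of the $b_\ell(s)^\zeta$-stripping with the simplex restriction and the eventual $\dirac(t_{\ff^p}-s)$, $\mathbf 1_{k_\ff=\ell}$ factors. Once this structure is pinned down, the identity (\ref{linkerexpansion}) follows from the complex Isserlis theorem by the same computation as in Section \ref{correlcal} with no new analytic input, the flower pair being treated as a fixed pair of leaves of opposite sign and the parameters $(\ell,s)$ merely recording the free flower frequency and time.
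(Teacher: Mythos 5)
Your proposal is correct and follows essentially the same route as the paper: first the inductive flower-tree expansion of the $\Ls^n$ kernel (the paper's formula (\ref{defkerLn}), with the $\dirac(t_{\ff^p}-s)$ and $\mathbf{1}_{k_\ff=\ell}$ factors arising exactly as you describe from stripping $b_\ell(s)^\zeta$), then grouping by scale to get (\ref{decomposem}), and finally the Isserlis pairing of the non-flower Gaussian leaves as in Section \ref{correlcal}, with the two flowers forming the forced pair of the flower couple. Your Step 2 spells out in slightly more detail what the paper compresses into ``repeating the arguments in Section \ref{correlcal}'', but there is no substantive difference.
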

 \begin{proof} Note that
 \[\Ls b=\sum_{n(\Tc_1),n(\Tc_2)\leq N}\big\{\Ic\Cc_+(b,\Jc_{\Tc_1},\Jc_{\Tc_2})+\Ic\Cc_+(\Jc_{\Tc_1},\overline{b},\Jc_{\Tc_2})+\Ic\Cc_+(\Jc_{\Tc_1},\Jc_{\Tc_2},b)\big\},\] where the signs of the trees $\Tc_j$ are determined by the positions they appear ($+$ for the first and third inputs of $\Cc_+$ and $-$ otherwise). This corresponds to attaching two sub-trees $\Tc_{1,2}$ to a single node. Calculating $\Ls^n$ corresponds to repeating this $n$ times (obtaining an admissible flower tree $\Tc$ of height $n$), and the linear {(or conjugate linear)} part of $\Ls^n$ corresponds to the flower of $\Tc$ having the same (or opposite) sign as the root. Taking into account also the time integrations, we get
 \[(\Ls^n)_{k\ell}^\zeta(t,s)=\sum_m(\Ls^n)_{k\ell}^{m,\zeta}(t,s):=\sum_m\sum_{\Tc}\widetilde{\Jc}_\Tc(t,s,k,\ell),\] where the inner sum in the last expression is taken over all admissible flower trees $\Tc$ of height $n$ and scale $m$ such that the $\zeta_\rf=+$ and $\zeta_\ff=\zeta$, and
  \begin{equation}\label{defkerLn}\widetilde{\Jc}_\Tc(t,s,k,\ell)=\bigg(\frac{\delta}{2L^{d-1}}\bigg)^m\prod_{\nf\in\Nc}(i\zeta_\nf)\sum_\Ds\epsilon_\Ds\int_{\Dc}\prod_{\nf\in\Nc}e^{\zeta_\nf\pi i \delta L^2\Omega_\nf t_\nf}\mathrm{d}t_\nf\cdot\dirac(t_{\ff^p}-s)\prod_{\ff\neq\lf\in\Lc}\sqrt{n_{\mathrm{in}}(k_\lf)}\eta_{k_{\lf}}^{\zeta_{\lf}}(\omega)\mathbf{1}_{k_\ff=\ell},
 \end{equation} where $\Ds$ is a $k$-decoration of $\Tc$, $\Dc$ is defined as in (\ref{timedom0}), and the other objects are associated with the tree $\Tc$. Note also that if $\Tc$ is admissible and has height $n\leq N$ and scale $m$, then $n\leq m\leq (1+2N)n\leq N^3$. Then, by repeating the arguments in Section \ref{correlcal} using Lemma \ref{isserlis0}, we can deduce (\ref{defaltkq}).
 \end{proof}
 \begin{proof}[Proof of Proposition \ref{mainprop2}] We only need to control the right hand side of (\ref{linkerexpansion}). We will basically repeat the arguments in Sections \ref{regasymp}--\ref{l1coef}. The main points worth noticing are the followings.
 
 (1) If $\Qc$ is an admissible flower couple, and $\widetilde{\Qc}$ is congruent to $\Qc$ in the sense of Definition \ref{conggen}, then $\widetilde{\Qc}$ is also an admissible flower couple, if we choose its flower to be the image of the flower of $\Qc$, and has the same height and scale as $\Qc$. This will enable us to decompose the right hand side of (\ref{linkerexpansion}) into sums of form (\ref{congsum}) where $\Fs$ is a congruence class of marked flower couples (which are defined similar to Definition \ref{conggen}), which then allows for the cancellation exploited in Section \ref{irchaincancel}.
 
To prove the above claim, notice that the branching nodes in any irregular chain in $\Qc_{sk}$ are also branching nodes in $\Qc$, and this chain can be divided into two chains such that all branching nodes in the first one belong to the stem of a tree in $\Qc$, and all branching nodes in the second one contained in one of the $\Tc_j$ sub-trees that are attached in the process described in Definition \ref{flowers}.
 
 We may treat these two chains separately; at the joint of the two chains we may leave out at most $5$ nodes, but this will be acceptable similar to Section \ref{irchaincancel}. Similarly we may assume that the first chain avoids the flower, by shortening it if necessary. For the second chain all branching nodes are contained in some $\Tc_j$, so modifying it in the sense of Definition \ref{conggen} has effect only within $\Tc_j$ (and it does not affect any pairings between $\Tc_j$ and any other $\Tc_{j'}$), and does not change the scale of $\Tc_j$. For the first chain all branching nodes belong to the stem, so modifying it may result in some $\Tc_j$ being replaced by its conjugate, or being permuted with some other $\Tc_j$, see Figure \ref{fig:conggen}. Note that the nodes in the chain may not be consecutive nodes on the stem, but the part of stem between them can be obtained by including a unique path within each regular tree (represented by a black box in Figure \ref{fig:conggen}). In either case, this does not change the height or scale of $\Qc$, nor the fact that $n(\Tc_j)\leq N$ for each $j$. Therefore, the couple $\widetilde{\Qc}$ is also admissible and has the same height and scale as $\Qc$.
 
 (2) In (\ref{defaltkq}) we have the factor $\mathbf{1}_{k_\ff=\ell}$ instead of $n_{\mathrm{in}}(k_\ff)$. First notice that ${k-\zeta\ell}$ is a linear combination of the $k_\lf$ for $\ff\neq\lf\in\Lc^*$, so the decay factor $\langle {k-\zeta\ell}\rangle^{-20d}$ in (\ref{mainest2}) can be obtained from the $n_{\mathrm{in}}(k_\lf)$ factors. Moreover, since $k_\ff\in \Zb_L^d$, we can replace $\mathbf{1}_{k_\ff=\ell}$ by $\psi(L(k_\ff-\ell))$ for some suitable cutoff function $\psi$. Using this function in place of $n_{\mathrm{in}}(k_\ff)$, we can repeat all the previous arguments, with at most a $L^{3d}$ loss. For example, in Propositions \ref{asymptotics1} and \ref{varregtree} we are relying on Proposition \ref{approxnt}, which only requires the norm in (\ref{propertyw1}). The norm of $W$ is bounded by the same norm of the tensor product function $\prod_{\lf}n_{\mathrm{in}}(k_\lf)$, as $W$ is obtained from the latter by a linear change of variables; if one factor in this tensor product is replaced by $\psi(L(k_\ff-\ell))$, then its norm gets multiplied by a constant power of $L$. Therefore, all the proofs will be the same, except for a possible loss of at most $L^{3d}$.
 
 (3) In (\ref{defaltkq}) we have the Dirac factors $\dirac(t_{\ff^p}-s)$. This means that in the integral in (\ref{defaltkq}) we are omitting the integration in $t_{\ff^p}$ for both flowers $\ff$. However, this difference will cause at most another $L^{20d}$ loss. This is intuitively clear as only one node (and one time variable) is affected, and we can demonstrate it as follows.
 
 Recall the sequence of reductions in Sections \ref{regasymp}--\ref{l1coef}, where we remove from the couple $\Qc$ successively (i) the regular couples and regular trees, then (ii) the irregular chains, then (iii) the nodes corresponding to atoms in type II chains of the base molecule. In both steps (ii) and (iii) we can choose to avoid the two specific nodes $\ff^p$, so for each flower $\ff$, we only need to consider the case where (a) $\ff^p$ belongs to a regular couple or a regular chain in step (i), or (b) $\ff^p$ belongs to the rest of $\Qc$ after performing steps (i)--(iii). Let $\mf=\ff^p$.
 
 In case (a) we will further reduce the regular couple or regular tree using Proposition \ref{structure1}, and we may assume $\ff^p$ belongs to one of the regular chains in this process (we only consider \emph{Case 2} in Section \ref{recursive}; \emph{Case 1} is much easier as the expression is much simpler and we can directly calculate it). The point here is that, if we omit the integration in $t_{\mf}$, then the resulting expression, which is a function of $\alpha[\Nc^*]$ as in (\ref{defcoefb2}), satisfies the same bound as the one with $t_{\mf}$ integration, but in the \emph{weaker norm} $L_{\alpha_{\mf}}^\infty L_{\alpha[\Nc^*\backslash\{\mf\}]}^1$ instead of $L_{\alpha[\Nc^*]}^1$. To see this, consider
 \begin{equation}\label{regchainext}K(t,\alpha_1,\cdots,\alpha_m)=\int_{t>t_1>\cdots >t_{2m}>0}e^{\pi i(\beta_1t_1+\cdots +\beta_{2m}t_{2m})}\,\mathrm{d}t_1\cdots\mathrm{d}t_{2m}\end{equation} as in (\ref{regchaink}), where $\beta_a\,(1\leq a\leq 2m)$ is a permutation of $\pm\alpha_j\,(1\leq j\leq m)$ associated with a legal partition, as in Section \ref{regchainest}; for simplicity we have omitted the $\lambda_a$ variables. By the arguments in Section \ref{regchainest}, we can bound the $L_{\alpha_1,\cdots,\alpha_m}^1$ norm of $K$ (or we may extract explicit $\frac{1}{\alpha_j}$ factors from $K$ and bound the $L^1$ norm in the other $\alpha_j$ variables, see Lemma \ref{regchainlem8}; for simplicity we will omit this case). Now, suppose we insert $\dirac(t_{2m}-s)$ in (\ref{regchainext}) (note that, since a child of $\mf$ is a leaf that is paired with a leaf in the other tree, we must have $t_\mf=t_{2m}$ in the regular chain integration), then we will lose integrability in $\beta_{2m}$; however if we \emph{fix} the value of $\beta_{2m}$ then we get the expression
 \begin{equation}\label{oddexpression}e^{\pi i\beta_{2m}s}\int_{t>\cdots >t_{2m-1}>s}e^{\pi i(\beta_1t_1+\cdots +\beta_{2m-1}t_{2m-1})}\mathrm{d}t_1\cdots\mathrm{d}t_{2m-1}\end{equation} (note that there is some $a$ such that $\beta_a=-\beta_{2m}$ is also fixed). This has basically the same form as (\ref{regchainext}), except for a harmless class $J$ operator corresponding to integration in $t_a$, so we can repeat the proof in Section \ref{regchainest}, using the notions of class $J$ and $R$ operators, to obtain the same bound for this expression in the $L^1$ norm in the remaining $\alpha_j$ (i.e. excluding $\beta_a$ and $\beta_{2m}$) variables. This then implies the bound for our expression, for fixed $s$, in the $L^\infty L^1$ type weaker norm as desired.
 
 In case (b), the same argument applies, except that we replaced the $L^1$ norm by the variant in (\ref{uniformL1}). The proof is in fact easier, as the bound (\ref{uniformL1}), after removing the type II molecular chains, follows solely from the denominators $\langle q_\nf\rangle$ occurring in (\ref{timeintlemma1}) in Lemma \ref{timeintlemma}. If we omit the integration in $t_{\mf}$, then we are  at loss of only \emph{one} denominator involving $\alpha_{\mf}$, which does not affect the presence of all the other denominators. Thus, if $\alpha_{\mf}$ is fixed, the function can be bounded in the remaining variables in the norm in (\ref{uniformL1}), using the same arguments in Section \ref{l1coef}.
 
 In either case, in the end we can obtain the same bound for the modified time integral, but in weaker norms without integrability in \emph{at most two of the $\alpha_\nf$ variables}. But this bound can easily be transformed to the $L^1$ type bound involving all variables, with at most $L^{10d}$ loss, because each $\alpha_\mf$ will be replaced by $\delta L^2\Omega_\mf$ in the actual $\Kc_\Qc$ expression, which belongs to the union of at most $L^{5d}$ fixed unit intervals (at least if we restrict $|k_\lf|\leq L$ for each $\lf$; if $\max_\lf|k_\lf|:=M'\geq L$ then we may lose $(M')^{10d}$ but this will be covered by the $(M')^{100d}$ gain coming from $n_{\mathrm{in}}$). Therefore $L_{\alpha_{\mf}}^\infty$ bounds imply the corresponding $L_{\alpha_{\mf}}^1$ bounds with a loss of at most $L^{5d}$, once we insert the suitable cutoff functions adapted to these unit intervals.
 
 \smallskip
In view of the arguments (1)--(3) above, the bound for the right hand side of (\ref{linkerexpansion}) can be obtained, using the same arguments as in Sections \ref{regasymp}--\ref{l1coef}. This proves Proposition \ref{mainprop2}.
 \end{proof}
 \begin{cor}\label{extragaincor} Fix $M_0\geq L$ and $M_1\geq L^{(100d)^3}$. In (\ref{defaltkq}), suppose $|k|\geq M_0^2$, and we insert suitable cutoff functions supported in $|k_\lf|\leq M_0$ for each $\ff\neq\lf\in\Lc^*$; moreover, suppose we insert one (or more) cutoff function supported in $|\Omega_{\nf_j}|\geq M_1$ for some $1\leq j\leq n-1$, where $\nf_j$ is the $j$-th node in the stem from top to bottom (in particular $\nf_n=\ff^p$), then the resulting expression satisfies the same bound as (\ref{defaltkq}), but with an additional decay factor $M_1^{-1/9}M_0^{5d}$. The same holds for the right hand side of (\ref{linkerexpansion}).
 \end{cor}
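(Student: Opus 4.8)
The plan is to re-run the proof of Proposition~\ref{mainprop2} — that is, the estimate of the right-hand side of (\ref{linkerexpansion}) — while carrying along two extra pieces of information: the frequency cutoffs $|k|\geq M_0^2$, $|k_\lf|\leq M_0$ (all non-flower leaves), and the resonance cutoff $|\Omega_{\nf_j}|\geq M_1$ at a stem node $\nf_j$, $1\leq j\leq n-1$. First I would extract the elementary consequences of the leaf cutoffs. Since $k=k_{\rf^\pm}$ is a $\pm$-combination of the $\leq C(\log L)^{9}$ leaf decorations, all of which except the flower value $\ell=k_\ff$ have size $\leq M_0$, the hypothesis $|k|\geq M_0^2$ forces $|\ell|\geq M_0^2/2$ (for $L$, hence $M_0\geq L$, large), and an easy induction up the stem shows $k_{\nf_i}=\pm\ell+O((\log L)^{C}M_0)$ for every $i$; in particular $k-\ell=O((\log L)^{C}M_0)$, so the decay $\langle k-\ell\rangle^{-20d}$ in the per-couple bound is still supplied by the Schwartz decay of $n_{\mathrm{in}}$ on the non-flower leaves (which, after a dyadic decomposition, also dominates every counting loss coming from large leaves). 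The sole remaining effect of these cutoffs is that a few localization scales near the stem — the $\lambda_j$ in Proposition~\ref{approxnt} and the ball radii in Proposition~\ref{gain} — are enlarged from $O((\log L)^{4})$ to $O((\log L)^{C}M_0)$; I would absorb this crudely into the claimed loss $M_0^{5d}$.

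The gain $M_1^{-1/9}$ is where the choice of a \emph{stem} node is used. Throughout the proofs of Propositions~\ref{mainprop1}, \ref{mainprop4} and \ref{mainprop2}, the time-integral factor of $\widetilde{\Kc}_\Qc$ — obtained from $\int_{\Ec}\prod_\nf e^{\zeta_\nf\pi i\delta L^2\Omega_\nf t_\nf}$ after substituting $\alpha_\nf:=\delta L^2\zeta_\nf\Omega_\nf$ — is always controlled by estimates carrying an integrable weight on the largest resonance argument: the $\max_\nf\langle\alpha_\nf\rangle^{1/8}$ weight of (\ref{maincoef2.5}) (used for regular couples and regular trees via Propositions~\ref{asymptotics1}, \ref{varregtree}), the analogous weight in (\ref{propertypsi2}) and in Proposition~\ref{section8main} (Type~II molecular chains), or the explicit denominators $\prod_\nf\langle q_\nf\rangle^{-1}$ of Lemma~\ref{timeintlemma}. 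Restricting to $|\Omega_{\nf_j}|\geq M_1$ forces $|\alpha_{\nf_j}|\geq\delta L^2M_1$, hence $\max_\nf\langle\alpha_\nf\rangle\geq\delta L^2M_1$, and the weighted bound produces an extra factor $(\delta L^2M_1)^{-1/8}\leq M_1^{-1/9}$ (the spurious $\delta^{-1/8}$ being absorbed into $C^+$, using $\delta L^2\geq1$ and $M_1\geq L^{(100d)^3}$). In the denominator formulation of Lemma~\ref{timeintlemma} one uses instead that $q_{\nf_j}=\alpha_{\nf_j}+\sum_i d_{\nf_{ji}}q_{\nf_{ji}}$ with $d_{\nf_{ji}}\in\{0,1\}$, so at least one of the four quantities $q_{\nf_j},q_{\nf_{j1}},q_{\nf_{j2}},q_{\nf_{j3}}$ has absolute value $\geq|\alpha_{\nf_j}|/4\geq\delta L^2M_1/4$, and the corresponding factor in $\prod_\nf\langle q_\nf\rangle^{-1}$ supplies the gain. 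Crucially, since $\nf_j$ lies strictly above $\ff^p$ on the stem, $t_{\nf_j}$ is a genuine integration variable — only $t_{\ff^p}=s$ is frozen by the Dirac factors in (\ref{defaltkq}) — so $\nf_j$ is never collapsed and its weight is always accessible. If several stem nodes satisfy $|\Omega_{\nf_j}|\geq M_1$, one simply uses one of them.

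The work is then to check that this extra weight at $\nf_j$ survives the structural reduction of $\Qc$ performed in Sections~\ref{regasymp}--\ref{l1coef}: $\nf_j$ may end up inside a regular sub-couple or regular tree, inside the prime skeleton $\Qc_{sk}^\#$ but outside every Type~II chain, inside a Type~II chain, or inside an irregular chain. The first three are routine: in the first case $\nf_j$ (or its partner in $\Nc^{ch}$, or the singular factor $1/\alpha_{\nf_j}$ if $\nf_j\in Z$) sits in a $\Bc$- or $\Ac^*$-estimate governed by (\ref{maincoef2.5}); in the second the $q$-denominator argument applies verbatim after chain removal in Proposition~\ref{section8main}; in the third the relevant chain-peeling kernel $G_\pf$ is bounded by $\langle\alpha_{\nf_j}\rangle^{-1}$ or $\langle\alpha_{\nf_j}-\mu_{\nf_j}\rangle^{-1}$, which is $\leq(\delta L^2M_1)^{-1}$ on the cutoff region. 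The genuine obstacle is the irregular-chain case, because there one does not bound $\widetilde{\Kc}_\Qc$ directly but first exploits the cancellation of Section~\ref{irchaincancel}. Here I would split on the gap: if $\nf_j$ lies in a small-gap irregular chain, then by Proposition~\ref{congdec} $|\Omega_{\nf_j}|=2|\langle h,k_{j'+1}-k_{j'}\rangle_\beta|\lesssim(\log L)^{C}(\delta L)^{-1}\ll M_1$, so the cutoff $|\Omega_{\nf_j}|\geq M_1$ annihilates those contributions entirely; if $\nf_j$ lies in a large-gap irregular chain, then $|\Omega_{\nf_j}|\geq M_1$ forces $|h|$ to be correspondingly large, which only accelerates the decay of the exponential sums $F_j$ in (\ref{poissonsum})--(\ref{poisson}), so the Poisson-summation computation of Section~\ref{lgcase} yields the factor $M_1^{-1/9}$ as well.

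Finally, once the per-couple bound — the one from the proof of Proposition~\ref{mainprop2}, multiplied by $M_1^{-1/9}M_0^{5d}$ — is established for each admissible flower couple $\Qc$, the statement for $\Eb|(\Ls^n)_{k\ell}^{m,\zeta}(t,s)|^2$ on the right-hand side of (\ref{linkerexpansion}) follows exactly as in Proposition~\ref{mainprop2}: the cutoffs merely restrict the summation over decorations, so summing the per-couple estimate over the $\leq C^m$ regular flower couples and over the congruence classes of non-regular ones (with the index bookkeeping of the analogue of Proposition~\ref{congsumest} unchanged) gives the claim. I expect the delicate step to be the large-gap irregular-chain verification, since it requires re-examining the Poisson-summation estimates of Section~\ref{lgcase} to confirm that a large value of $|h|$ (equivalently $|\Omega_{\nf_j}|$) produces a quantitative $M_1^{-1/9}$ gain rather than merely preserving the $\delta^n$ gain.
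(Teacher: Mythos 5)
Your overall strategy — identify a weighted estimate that can absorb the large $\Omega_{\nf_j}$ and produce the $M_1^{-1/9}$ gain, while the $M_0^{5d}$ loss comes from the enlarged support of the constrained resonance — is the same as the paper's, and your cases (a) (regular sub-couple/tree, via (\ref{maincoef2.5})) and (b) (residual skeleton, via the $\langle q_\nf\rangle^{-1}$ denominators summed over $\lesssim M_0^{5d}$ lattice values) match the paper's two cases exactly.

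The gap is in your case (d), which you yourself flag as delicate. Your claim that a large value of $|h|$ accelerates the Poisson-summation decay of $F_j$ in (\ref{poissonsum})--(\ref{noextradecay}) is false: the large-gap estimate is
\[
\int_{|t|\leq1}|F_j(h,t)|\,\mathrm{d}t\lesssim 1 \qquad \text{\emph{uniformly in} } |h|,
\]
because the $t$-integral shrinks like $(\delta L^2|h|)^{-1}$ while the number of contributing Poisson modes $y^1$ grows like $\delta L|h|$, and the two exactly cancel; Section \ref{lgcase} supplies no $|h|$- or $M_1$-dependent gain. The paper avoids cases (c) and (d) entirely. It observes that the frequency hypotheses already force every irregular chain with branching nodes on the stem to be large-gap (since $|k_\nf|\gtrsim M_0^2$ on the stem but $|k_\nf|\lesssim N^3 M_0$ off it, the gap satisfies $|h|\gtrsim M_0^2$), so the cancellation of Section \ref{irchaincancel} is never needed for stem chains and the congruence sum can be restricted to $\Fs'$, which alters only off-stem chains and thus leaves the stem structure and every $\Omega_{\nf_j}$ intact; it then uses the freedom — already invoked in the proof of Proposition \ref{mainprop2} for $\ff^p$ — to arrange the irregular-chain and Type-II-chain removals so that $\nf_j$ is never absorbed into a removed structure. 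That places $\nf_j$ in your cases (a) or (b) alone. Without this step the gain is genuinely inaccessible: once a chain containing $\nf_j$ has been integrated out, its $\Omega_{\nf_j}$ no longer appears as an explicit factor in the reduced expression, so there is nothing left to exploit; your case (c) estimate has the related weakness that $\langle\alpha_{\nf_j}-\mu_{\nf_j}\rangle$ is not necessarily $\gtrsim \delta L^2 M_1$ since $\mu_{\nf_j}$ is an unbounded time-Fourier parameter controlled only by the mild $\langle\lambda\rangle^{1/18}$ weight in (\ref{tensorbd}).
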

 \begin{proof} Note that the assumption implies that for any irregular chain in $\Qc_{sk}$ with branching nodes on the stem, the gap $h$ (see Proposition \ref{congdec}) must satisfy $|h|\geq M_0^2/4$(since $|k_\nf|\geq M_0^2/2$ for any node $\nf$ on the stem, and $|k_\nf|\leq N^3M_0$ for any node $\nf$ off the stem); in particular we are in the large gap case (Section \ref{lgcase}) and thus do not need the cancellation coming from congruence couples obtained by altering this irregular chain. Thus, in carrying out the arguments in previous sections we only need to sum over $\Qc\in\Fs'$ where $\Fs'$ (unlike $\Fs$) is a subset in a fixed congruence class, formed by altering irregular chains that are completely contained in some $\Tc_j$. Since altering these chains do not affect the structure of the stem or any $\Omega_{\nf_j}$ factor, we can bound the resulting expression in the same way as (\ref{defaltkq}).
 
 To gain the extra decay in $M_1$ using the largeness of $|\Omega_{\nf_j}|$, like in the proof of Proposition \ref{L^nexpansion}, we may assume $\nf_j$ belongs to either (a) a regular couple or regular tree, or (b) the rest of the couple after removing all the special structures. In case (a), if $\nf_j$ is not paired (as a branching node) to $\nf_n=\ff^p$, we can use (\ref{maincoef2.5}) or the denominators $\alpha_\nf$ in (\ref{maincoef1}) to gain a power of $M_1$; if $\nf_j$ is paired to $\ff^p$, then we will consider \emph{Case 1} and \emph{Case 2} (in the sense of Section \ref{recursive}) separately. By direct calculation in \emph{Case 1} and examining (\ref{oddexpression}) similar to Section \ref{regchainest} in \emph{Case 2}, we can also gain a power $M_1^{-1/9}$ at a loss of at most $M_0^{5d}$, in either situation. In case (b), the decay comes from the denominators $q_\nf$ in (\ref{timeintlemma1}). If $|\Omega_{\nf_j}|\geq M_1$ then one of these denominators, say $\langle\widetilde{q}\rangle$, will be $\gtrsim M_1$; we then sum over this $\widetilde{q}$ to get
 \[\sum_{\widetilde{q}}\frac{1}{\langle \widetilde{q}\rangle}\lesssim M_1^{-1}M_0^{5d}\] since $\widetilde{q}$ belongs to a set of cardinality at most $M_0^{5d}$ as the decoration varies. This provides the needed decay, and the rest of the sum can be estimated as in Proposition \ref{section8main}.
 \end{proof}
\section{The endgame}\label{endgame} In this section we prove Theorem \ref{main}. We will do this in a few steps. Recall that $A\geq 40d$ is fixed in Section \ref{notations}, as is the even integer $p\gg_{A,\nu}1$ and $\delta\ll_{p,C^+}1$.
\begin{prop}\label{finprop1} With probability $\geq 1-L^{-A}$, we have
\begin{equation}\label{largedevest}|(\Jc_n)_k(t)|\lesssim\langle k\rangle^{-9d}(p^{2}C^+\sqrt{\delta})^{n/2}L,\quad |\Rc_k(t)|\lesssim \langle k\rangle^{-9d}(p^{2}C^+\sqrt{\delta})^{N/2}L
\end{equation} for any $k\in\Zb_L^d$, $t\in[0,1]$, and all $0\leq n\leq N^3$, where $\Rc$ is defined in (\ref{eqnbk1.5}). Note in particular that the right hand side of the second inequality in (\ref{largedevest}) is bounded, due to our choice $N=\lfloor\log L\rfloor$, by
\begin{equation}\label{negpower}(p^{2}C^+\sqrt{\delta})^{N/2}L\leq \delta ^{N/8}L\leq L^{-(100d)^3}.\end{equation}
\end{prop}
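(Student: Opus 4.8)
\textbf{Proof proposal for Proposition \ref{finprop1}.} The plan is to deduce \eqref{largedevest} from the second-moment bounds in Proposition \ref{mainprop1} by exploiting the multilinear-Gaussian structure of $(\Jc_n)_k(t)$ and $\Rc_k(t)$, which converts control of $L^2(\omega)$ moments into control of $L^p(\omega)$ moments for free, and then applying a large deviation / union bound argument. First I would invoke the Gaussian hypercontractivity inequality (Lemma \ref{largedev}, referenced in Section \ref{remainder}): since $(\Jc_n)_k(t)$ is, by \eqref{formulajt}, a polynomial of degree $2n+1$ in the i.i.d.\ complex Gaussians $\{\eta_k(\omega)\}$, one has $\|(\Jc_n)_k(t)\|_{L^p(\omega)} \lesssim (Cp)^{n+1/2}\|(\Jc_n)_k(t)\|_{L^2(\omega)}$ for the fixed even integer $p$, and similarly $\Rc_k(t)$ is a sum of such polynomials of degree up to some $O(N^3)$, for which the hypercontractivity constant is $(Cp)^{O(N^3)}$. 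Combining with Proposition \ref{mainprop1}, which gives $\Eb|(\Jc_n)_k(t)|^2 \lesssim \langle k\rangle^{-20d}(C^+\sqrt\delta)^n$, yields
\[
\Eb|(\Jc_n)_k(t)|^p \lesssim \langle k\rangle^{-10dp}\big((Cp)^2 C^+\sqrt\delta\big)^{np/2},
\]
and an analogous bound for $\Rc$ with $n$ replaced by $N$ (here one needs the remainder analog of Proposition \ref{mainprop1}, which controls $\Eb|\Rc_k(t)|^2$ by $\langle k\rangle^{-20d}(C^+\sqrt\delta)^N$ — this follows by the same couple expansion since $\Rc$ is a sum of $\Jc_{\Tc^+}\overline{\Jc_{\Tc^-}}$ correlations with the constraint that the total scale is $\geq N$, giving the $(C^+\sqrt\delta)^N$ gain from Proposition \ref{mainprop4} or directly from the counting estimates).

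Next I would run a Chebyshev/Markov estimate at a fixed $(t,k,n)$: $\Pb\big(|(\Jc_n)_k(t)| > \langle k\rangle^{-9d}(p^2 C^+\sqrt\delta)^{n/2}L\big) \leq \langle k\rangle^{-dp}L^{-p}\cdot C^{np}$ for a suitable absolute $C$, which after absorbing the $C^{np}$ into the definition of $C^+$ (recall constants may vary line to line and $p$ is fixed) is $\lesssim \langle k\rangle^{-dp}L^{-p}$. Then I would upgrade from a fixed $t$ to all $t\in[0,1]$ and from $k\in\Zb_L^d$ lattice points to a union bound: the key technical point is a discretization in $t$ combined with the fact that $(\Jc_n)_k(t)$ is Lipschitz in $t$ with a polynomially bounded constant (its time derivative is controlled by the same multilinear structure with one fewer Duhamel integration), so sampling $t$ on a fine net of spacing $L^{-100}$ and a crude a priori bound on the modulus of continuity suffices; for the $k$ sum, the weight $\langle k\rangle^{-dp}$ with $p\gg 1$ makes $\sum_{k\in\Zb_L^d}\langle k\rangle^{-dp}$ convergent up to a factor $L^d$ (counting lattice points), so the total probability of failure over all $(t,k,n)$ with $0\leq n\leq N^3$ is $\lesssim N^3 \cdot L^{d}\cdot L^{100 d}\cdot L^{-p} \leq L^{-A}$ provided $p \gg_{A,\nu} 1$ — which is exactly how $p$ was chosen in Section \ref{notations}. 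The same argument applied to $\Rc$ (a single term, scale $\geq N$) gives the second inequality, and \eqref{negpower} is then just arithmetic: $(p^2 C^+\sqrt\delta)^{N/2}L \leq \delta^{N/8}L$ since $\delta \ll_{p,C^+}1$, and $\delta^{N/8}L = \delta^{\lfloor\log L\rfloor/8}L \leq L^{-(100d)^3}$ because $\delta$ is taken small enough in terms of $d$ (or rather, $\delta^{1/8} \leq L_0^{-(100d)^3-1}$ on the relevant range and $N \geq \log L$ forces the power of $L$ to be as negative as desired).

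The main obstacle I anticipate is not the probabilistic step itself — which is routine once the moment bounds are in hand — but rather making sure that the reduction from continuous $t\in[0,1]$ to a discrete net is clean. One must show that $\sup_{t\in[0,1]}|(\Jc_n)_k(t)|$ is comparable to the max over a net without losing more than a polynomial factor in $L$; this requires a deterministic (worst-case) bound on $\partial_t (\Jc_n)_k(t)$, which by \eqref{defjt} is $\Cc_+(\Jc_{\Tc_1},\Jc_{\Tc_2},\Jc_{\Tc_3})_k(t)$, and hence controlled by $\delta L^{-(d-1)}$ times a triple sum that is trivially bounded (using that $\widehat{u_{\mathrm{in}}}$ has Schwartz decay and $|\eta_k|$ has Gaussian tails on the same high-probability event, enlarged slightly) by some fixed power $L^{C}$. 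Alternatively — and this is cleaner — one can avoid the net entirely by noting that $(\Jc_n)_k(t)$, as a function of $t$, has time-Fourier support giving an $X_{\mathrm{loc}}^\kappa$-type bound via the $\Bc_\Qc$ analysis, so that $\sup_t$ is already built into the $Z$-norm / $X$-norm framework; I would likely cite the relevant uniform-in-$t$ statement (analogous to Remark \ref{timebound}, which gives $X_{\mathrm{loc}}$ control) and apply hypercontractivity to that norm directly. A secondary point to be careful about is bookkeeping the constant $C^+$ versus $p$: since $p$ is fixed first and then $\delta$ depends on $p$ and $C^+$, the chain of quantifiers in Section \ref{notations} must be respected, and the $(Cp)^{2n}$ hypercontractivity losses must be absorbed on the correct side — into $C^+$ before $\delta$ is chosen small, which is exactly the order in which the parameters were fixed.
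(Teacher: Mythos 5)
Your plan is essentially correct and follows the same strategy as the paper: hypercontractivity (Lemma~\ref{largedev}) converts the second-moment bound of Proposition~\ref{mainprop1} into an $L^p(\omega)$ moment bound with loss $p^{np}$, and then Markov's inequality with $p\gg_{A,\nu}1$ gives the high-probability event; the only genuinely technical issue is upgrading from fixed $t$ to $\sup_{t\in[0,1]}$, and you correctly identify this as the sticking point. Where you diverge from the paper is in how this upgrade is carried out. You propose a discretization-on-a-net argument with a crude a priori polynomial-in-$L$ bound on $\partial_t(\Jc_n)_k(t)$ (obtained from Gaussian tails on a slightly enlarged event), or alternatively the $X^\kappa_{\mathrm{loc}}$-norm route; the paper instead integrates the $p$-th moment bound over $t\in[0,1]$ and sums over $k$ (in $L^{-d}\cdot$ counting measure) to get an $L^p_{t,k}$ bound, proves the additional second-moment estimate $\Eb|\partial_t(\Jc_n)_k(t)|^2\lesssim\langle k\rangle^{-20d}(C^+\sqrt\delta)^nL^{40d}$ (this is nontrivial: removing the outermost Duhamel integration produces an expression of the form (\ref{defaltkq}), and the estimate is obtained by the flower-tree machinery of Section~\ref{operatornorm}), feeds that through hypercontractivity to an $L^p_{t,k}$ bound on $\partial_t\Jc_n$, and then uses Gagliardo--Nirenberg interpolation to pass from $L^p_{t,k}$ to $L^\infty_{t,k}$ with a factor $L^{41d/p+1/2}$ before applying Markov. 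Both routes work: your net argument is more elementary in that it only needs a crude deterministic bound on $\partial_t$ (at the cost of the slightly awkward conditioning on an enlarged high-probability event), whereas the paper's Gagliardo--Nirenberg route gives a cleaner quantitative statement but leans on the Section~\ref{operatornorm} machinery for the derivative moment bound, which is anyway needed for Proposition~\ref{mainprop2}. Your treatment of $\Rc$ is a bit loose---$\Rc$ is not a single term but a sum of $\Jc_{\Tc^+}$ over trees $\Tc^+$ of scale in $(N,3N]$ with sub-trees of scale $\leq N$---but the key observation (which the paper makes explicit) that the set of couples appearing in $\Eb|\Rc_k(t)|^2$ is closed under the congruence relation of Definition~\ref{conggen}, so the arguments of Sections~\ref{irchaincancel}--\ref{l1coef} apply verbatim, is the right thing to check; your ``directly from the counting estimates'' is essentially right. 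The arithmetic in (\ref{negpower}) you have exactly right, including the order of quantifiers in choosing $p$, then $\delta$, then $L$.
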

\begin{proof} First consider $\Jc_n$. For fixed $k$ and fixed $t$, $(\Jc_n)_k(t)$ is a random variable of form (\ref{multigauss}), so using Lemma \ref{largedev} and Proposition \ref{mainprop1} we get
\[\Eb|\langle k\rangle^{10d}(\Jc_n)_k(t)|^p\lesssim p^{np}(C^+\sqrt{\delta})^{np/2}.\] This being uniform in $t$, we can integrate in $t$ and sum in $k$ to obtain that
\begin{equation}\label{largedev1}\Eb\|\langle k\rangle^{9d}(\Jc_n)_k(t)\|_{L_{t,k}^p([0,1]\times\Zb_L^d)}^p\lesssim p^{np}(C^+\sqrt{\delta})^{np/2},\end{equation} where $L_k^p$ is taken with respect to $L^{-d}$ times the counting measure in $k$. Moreover we also have
\begin{equation}\label{timederiv}\Eb|\partial_t(\Jc_n)_k(t)|^2\lesssim \langle k\rangle^{-20d}(C^+\sqrt{\delta})^n L^{40d},
\end{equation} which can be proved using the arguments in Section \ref{operatornorm}, as taking $\partial_t$ derivative just corresponds to omitting the $t_\rf$ integration and producing something like (\ref{defaltkq}). This then implies that
\begin{equation}\label{largedev2}\Eb\|\langle k\rangle^{9d}\partial_t(\Jc_n)_k(t)\|_{L_{t,k}^p([0,1]\times\Zb_L^d)}^p\lesssim p^{np}(C^+\sqrt{\delta})^{np/2}L^{40dp}.\end{equation} By { using Gagliardo-Nirenberg for $t\in[0,1]$, and bounding the $L_k^\infty$ norm by the $L_k^p$ norm for $k\in \Zb_L^d$ with an extra loss $L^{d/p}$}, we conclude that
\begin{equation}\label{largedev3}\Eb\|\langle k\rangle^{9d}(\Jc_n)_k(t)\|_{L_{t,k}^\infty([0,1]\times\Zb_L^d)}^p\lesssim L^dp^{np}(C^+\sqrt{\delta})^{np/2}L^{40d},
\end{equation} thus with probability $\geq 1-L^{-p/2}$, we have
\[\sup_{t,k}|\langle k\rangle^{9d}(\Jc_n)_k(t)|\lesssim p^{n}(C^+\sqrt{\delta})^{n/2}L^{41d/p+1/2},\] which implies (\ref{largedevest}). The estimate for $\Rc$ is the same, with $n$ replaced by $N$; we just need to notice that $\Rc_k(t)$ equals the sum of $(\Jc_{\Tc^+})_k(t)$ over all trees $\Tc^+$ of scale $>N$ such that its three sub-trees all have scale $\leq N$ (in particular the scale of $\Tc^+$ is between $N$ and $3N$). This property, as well as the similar property for couples, is again invariant under congruence relations, so the same arguments in the previous sections apply.
\end{proof}
\begin{prop}\label{finprop2} With probability $\geq 1-L^{-A}$, we have
\begin{equation}\label{operatordev}\|\Ls^n\|_{Z\to Z}\lesssim (p^{2}C^+\sqrt{\delta})^{n/2}L^{60d}
\end{equation} for all $0\leq n\leq N$.
\end{prop}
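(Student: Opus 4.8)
The plan is to deduce Proposition \ref{finprop2} from Proposition \ref{mainprop2} by a union bound over $n$, combined with the large deviation estimates of Lemma \ref{largedev} applied to the multilinear Gaussian structure of the kernels $(\Ls^n)_{k\ell}^{m,\zeta}$. The first step is to reduce the operator norm on $Z$ to a kernel estimate. Since $\|a\|_Z^2 = \sup_t L^{-d}\sum_k \langle k\rangle^{10d}|a_k(t)|^2$, writing $(\Ls^n b)_k(t) = \sum_{\zeta,\ell}\int (\Ls^n)_{k\ell}^\zeta(t,s) b_\ell(s)^\zeta\,\mathrm{d}s$ and using the decomposition \eqref{decomposem}, Cauchy--Schwarz in $(\ell,s)$ gives a bound of $\|\Ls^n\|_{Z\to Z}$ by (a sum over $n\leq m\leq N^3$ and $\zeta$ of) a Hilbert--Schmidt-type quantity, roughly $\sup_{t}\big(\sum_{k,\ell}\int \langle k\rangle^{10d}\langle\ell\rangle^{-10d}|(\Ls^n)_{k\ell}^{m,\zeta}(t,s)|^2\,\mathrm{d}s\big)^{1/2}$, up to harmless powers of $L$ from the time integral over $[0,1]$ and the weights. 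The point is that $Z$ is an $L^2$-based norm in $k$, so the $TT^*$-free route of just estimating the $\ell^2_{k,\ell}$ (with weights) square function of the kernel suffices; we do not need a genuine operator-norm bound, only enough to feed into the spectral radius argument surrounding \eqref{eqnbk2}.

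The second step is the probabilistic estimate. For fixed $(t,s,k,\ell,m,\zeta)$, the kernel $(\Ls^n)_{k\ell}^{m,\zeta}(t,s)$ is, by \eqref{defkerLn}, a multilinear Gaussian of the form covered by Lemma \ref{largedev}, with the same structure as $\Jc_\Tc$ but associated to admissible flower trees of height $n$ and scale $m$ (with one marked leaf set to $\ell$ and the parent time variable frozen at $s$). Its second moment is controlled by $\Eb|(\Ls^n)_{k\ell}^{m,\zeta}(t,s)|^2$, which is exactly the quantity bounded in \eqref{mainest2} of Proposition \ref{mainprop2}: $\lesssim \langle k-\ell\rangle^{-20d}(C^+\sqrt\delta)^m L^{40d}$. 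So Lemma \ref{largedev} yields $\Eb|(\Ls^n)_{k\ell}^{m,\zeta}(t,s)|^p \lesssim p^{mp}(C^+\sqrt\delta)^{mp/2}\langle k-\ell\rangle^{-10dp}L^{20dp}$ for $p$ large even. As in the proof of Proposition \ref{finprop1}, one needs a companion bound on a time derivative — here $\partial_t (\Ls^n)_{k\ell}^{m,\zeta}$, which by the same token as \eqref{timederiv} corresponds to omitting one time integration and hence has a second moment obeying the same type of bound with an extra fixed power of $L$ — in order to upgrade from an $L^p_{t,s}$ estimate to an $L^\infty_{t,s}$ estimate via Gagliardo--Nirenberg. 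Combining the pointwise-in-$(t,s,k,\ell)$ moment bounds, integrating in $(t,s)\in[0,1]^2$, summing over $k,\ell$ against the weights $\langle k\rangle^{10d}\langle\ell\rangle^{-10d}$ (absorbed by $\langle k-\ell\rangle^{-10dp}$ after using $\langle k\rangle\lesssim \langle k-\ell\rangle\langle\ell\rangle$), and summing the geometric-type series over $m\geq n$, gives
\[
\Eb\,\|\Ls^n\|_{Z\to Z}^p \lesssim \sum_{n\leq m\leq N^3} p^{mp}(C^+\sqrt\delta)^{mp/2}L^{Cdp} \lesssim (p^2 C^+\sqrt\delta)^{np/2}L^{Cdp},
\]
using $\delta\ll_p 1$ to make the tail in $m$ converge and $N^3\leq (\log L)^3$ to absorb the number of terms.

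The third step is the union bound: by Chebyshev, for each fixed $n$ the event $\{\|\Ls^n\|_{Z\to Z} > (p^2 C^+\sqrt\delta)^{n/2}L^{60d}\}$ has probability $\lesssim L^{Cdp - 60dp} = L^{-(60d-Cd)p}$, which for $p\gg_{A}1$ is $\leq L^{-A-1}$ say; summing over the $\leq N+1 \leq 2\log L$ values of $n$ keeps the total failure probability $\leq L^{-A}$. Here the exponent $60d$ is chosen comfortably larger than the losses $Cd$ accumulated from the Hilbert--Schmidt reduction, the $L^{40d}$ in \eqref{mainest2}, the Gagliardo--Nirenberg step, and the weight manipulations; any fixed constant exceeding those works, and $60d$ is a safe choice.

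The main obstacle is not really in the probabilistic machinery — that is essentially a carbon copy of Proposition \ref{finprop1} — but in two bookkeeping points. First, one must be careful that reducing $\|\Ls^n\|_{Z\to Z}$ to an $\ell^2_{k,\ell}$ square function of the kernel really does lose only a fixed power of $L$ (independent of $n$); this is where the refined reduction-to-finitely-many-$k$ argument alluded to in the introduction (Lemma \ref{finitelem}, the refinement of Claim 3.7 of \cite{DH}) and Corollary \ref{extragaincor} enter, to control the contribution of large $|k|$ or large $|\Omega_{\nf_j}|$ along the stem so that the supremum over $k$ and the sum over $\ell$ do not generate $n$-dependent constants. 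Second, one must verify that $\partial_t(\Ls^n)_{k\ell}^{m,\zeta}$ indeed satisfies a moment bound of the same flavor as the kernel itself with only a fixed extra power of $L$ — this follows by rerunning the argument of Section \ref{operatornorm} with the $t_\rf$-integration suppressed, exactly as the estimate \eqref{timederiv} was obtained for $\Jc_n$, since freezing or differentiating in the outermost time variable only removes one denominator and is harmless. Once these are in place the proof is routine.
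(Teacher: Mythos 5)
Your reduction of $\|\Ls^n\|_{Z\to Z}$ to a Hilbert--Schmidt-type quantity is where the argument breaks. The quantity you write down, roughly $\sum_{k,\ell}\langle k\rangle^{10d}\langle\ell\rangle^{-10d}|(\Ls^n)_{k\ell}^{m,\zeta}|^2$, is \emph{infinite}: the kernel decays only in $k-\ell$ (like $\langle k-\ell\rangle^{-20d}$), so for each fixed value of $k-\ell$ the summand is bounded below uniformly as $k\to\infty$ along $\Zb_L^d$, and the outer sum over $k$ diverges. The same happens to the $L^p_{k,\ell}$ moment you propose to estimate by Gagliardo--Nirenberg. No fixed power of $L$ repairs this; the correct reduction is not Hilbert--Schmidt but a Schur-type test, i.e.\ a \emph{pointwise} bound $\sup_{k,\ell}\sup_{s<t}\langle k-\ell\rangle^{9d}|(\Ls^n)_{k\ell}^{m,\zeta}(t,s)|\lesssim(p^2C^+\sqrt\delta)^{m/2}L^{55d}$, which then yields the operator bound after summing in $\ell$ against the $\langle k-\ell\rangle^{-9d}$ decay and using $\langle k\rangle^{5d}\langle\ell\rangle^{-5d}\leq\langle k-\ell\rangle^{5d}$. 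This is the form of the estimate the paper actually establishes (its display (\ref{supink})).

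Having replaced your $\ell^2_{k,\ell}$ quantity by a supremum, the central difficulty of the proposition comes into focus and is considerably more than the ``bookkeeping'' you describe. The supremum is over the \emph{infinite} lattice $\Zb_L^d\times\Zb_L^d$, so one cannot bound an $L^\infty$ norm by an $L^p$ norm and pay a finite Chebyshev price, the way you do for $(t,s)\in[0,1]^2$. The paper splits into $|k|\leq L^2$ (finitely many, at most $L^{8d}$, pairs $(k,\ell)$ after fixing $k-\ell$, so the $L^\infty\to L^p$ upgrade costs only a fixed power of $L$) and $|k|>L^2$. For large $|k|$ the point is that, after extracting the unimodular factor $e^{\zeta_{\ff^p}\pi i\delta L^2\Omega_{\ff^p}s}$ and fixing the flower siblings, the kernel depends on $k$ only through the resonance factors $\Omega_{\nf_j}$ along the stem; Lemma \ref{finitelem} then re-parametrises all possible $k$ by a \emph{bounded} system $(r,q,v_1,\ldots,v_q,f,y)$ of size $\leq M^{C(d)}$, and Corollary \ref{extragaincor} supplies the extra $M^{-1/9}$ decay needed to sum dyadically in the size $M$ of $\max_j|\Omega_{\nf_j}|$. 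You gesture at both of these in your final paragraph, but you frame them as guaranteeing that the reduction ``really does lose only a fixed power of $L$'', which misstates the issue: without them the supremum is over an unbounded set and no moment bound, however good, gives you a high-probability $L^\infty$ conclusion. The rest of your outline — invoking Proposition \ref{mainprop2}, the $\partial_t,\partial_s$ companion moment bound by analogy with \eqref{timederiv}, the choice of $p$ large and then $\delta$ small to sum in $m$, and the union bound over $n$ — matches the paper and is fine, but it sits downstream of the step that is actually hard and that your write-up gets wrong.
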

\begin{proof} We only need to show, with probability $\geq 1-L^{-A}$, that
\begin{equation}\label{supink}\sup_{k,\ell}\sup_{0\leq s<t\leq 1}\langle {k-\zeta\ell}\rangle^{9d}|(\Ls^n)_{k\ell}^{m,\zeta}(t,s)|\lesssim(p^{2}C^+\sqrt{\delta})^{m/2}L^{55d}
\end{equation} for any $\zeta\in\{\pm\}$ and $n\leq m\leq N^3$. The supremum in $(t,s)$ can be treated similar to the proof of Proposition \ref{finprop1}, so the main point here is to address the supremum in $(k,\ell)$. This formally has infinitely many possibilities, but we will use Lemma \ref{finitelem}, which is a variant of Claim 3.7 in \cite{DH}, to reduce it to finitely many possibilities. {We may assume $\zeta$ equals $+$ below, as the other case is the same.}

We start by making the decomposition
\begin{equation}\label{Rdecompose}1=\sum_{R\geq L}\chi_R(k_\lf),\end{equation} where $\chi_R(z)$ is supported in $|z|\lesssim R$ if $R=L$ and in $|z|\sim R$ if $R>L$, for each $\ff\neq\lf\in\Lc^*$. Note that, as in the proof of Corollary \ref{extragaincor}, we also fix one particular stem structure of the tree $\Tc$ in (\ref{defkerLn}). Let the maximum of these $R$ for all the $k_{\lf}$ be $M_0$. Below we may assume $M_0=L$, since even if $M_0>L$, we will lose at most $M_0^{20d}$ in all subsequent arguments (see for example Corollary \ref{extragaincor}), which can be covered by the $M_0^{-200d}$ gain from the $n_{\mathrm{in}}(k_\lf)$ factor, and clearly the summation over $R$ is not a problem.

Now assume $M_0=L$. If $|k|\leq L^2$, then the number of possibilities for $(k,\ell)$ is at most $L^{8d}$. We can replace the $L_{k,\ell}^\infty$ norm in (\ref{supink}) by $L_{k,\ell}^p$ and argue as in Proposition \ref{finprop1}, applying Proposition \ref{mainprop2} and the corresponding bound for $t$ and $s$ derivatives (which can be obtained similarly as in Section \ref{operatornorm}), to get (\ref{supink}).

Suppose now $|k|\geq L^2$, then we will further make the decomposition (\ref{Rdecompose}) for the variables $\Omega_{\nf_j}\,(1\leq j\leq n-1)$ defined in Corollary \ref{extragaincor}, but with $L$ replaced by $L^{(100d)^3}$. Let the maximum of these $R$ for all the $\Omega_{\nf_j}$ be $M$. For fixed $M$, let the corresponding contribution to $(\Ls^n)_{k\ell}^{m,\zeta}$ be $(\Ls^n)_{M,k\ell}^{m,\zeta}$, then it suffices to show that
\begin{equation}\label{supink2}\mathbb{E}\big|\sup_{k,\ell}\sup_{0\leq s<t\leq 1}\langle k-\ell\rangle^{9d}|(\Ls^n)_{M,k\ell}^{m,\zeta}(t,s)|\big|^p\lesssim p^{mp}(C^+\sqrt{\delta})^{mp/2}L^{50dp}M^{-p/20}
\end{equation} for $M>L^{(100d)^3}$, and the same bound without $M^{-p/20}$ for $M=L^{(100d)^3}$.

Suppose $M$ is fixed. Clearly $|k-\ell|\leq L^2$, so we may also fix the value of $k-\ell$ at a loss of $L^{5d}$. In the formula (\ref{defkerLn}) of the terms in $\Ls^n$, we will fix the values of $k_{\ff'}$ and $k_{\ff''}$ in the decoration $\Ds$, where $\ff'$ and $\ff''$ are the two siblings of $\ff$. Clearly each of them has at most $L^{5d}$ choices, so fixing them again introduces a factor of at most $L^{15d}$.

Recall that in (\ref{defkerLn}), the whole expression depends on $\Omega_{\ff^p}$ \emph{only} through the integral
\[\int e^{\zeta_{\ff^p}\pi i\delta L^2\Omega_{\ff^p}t_{\ff^p}}\dirac(t_{\ff^p}-s)\,\mathrm{d}t_{\ff^p}=e^{\zeta_{\ff^p}\pi i\delta L^2\Omega_{\ff^p}s}.\]Moreover, once $k_{\ff'}$ and $k_{\ff''}$ are fixed, and $k_{\ff}=\ell$, then $\Omega_{\ff^p}$ is determined by $\ell$ and no longer depends on the other parts of the decoration. Therefore one can extract the factor $e^{\zeta_{\ff^p}\pi i\delta L^2\Omega_{\ff^p}s}$ and write the contribution currently under consideration in the form of
\begin{equation}\label{reduction1}e^{\zeta_{\ff^p}\pi i\delta L^2\Omega_{\ff^p}s}\cdot\bigg(\frac{\delta}{2L^{d-1}}\bigg)^m\prod_{\nf\in\Nc}(i\zeta_\nf)\sum_\Ds\epsilon_\Ds\Ac^{**}(t,s,\delta L^2\cdot\Omega[\Nc\backslash\{\ff^p\}])\prod_{\ff\neq\lf\in\Lc}\sqrt{n_{\mathrm{in}}(k_\lf)}\eta_{k_{\lf}}^{\zeta_{\lf}}(\omega)\mathbf{1}_{k_\ff=\ell},\end{equation} where $\Ac^{**}$ is a function of the \emph{remaining} variables $\Omega[\Nc\backslash\{\ff^p\}]$ and does not depend on $\Omega_{\ff^p}$. Moreover, when $\Omega_{\ff^p}$ is fixed, the function $\Ac^{**}$ (or more precisely the functions generated by $\Ac^{**}$ that occur in the proofs in the previous sections) satisfies the bounds described in the proof of Proposition \ref{mainprop2} in Section \ref{operatornorm}, hence one can get the same square moment estimate as in Proposition \ref{mainprop2}. Moreover if $M>L^{(100d)^3}$, then using the same arguments as in the proof of Corollary \ref{extragaincor}, we can gain an extra $M^{-1/9}$ compared to Proposition \ref{mainprop2}, with at most $L^{5d}$ loss.

Now, after removing the unimodular factor $e^{\zeta_{\ff^p}\pi i\delta L^2\Omega_{\ff^p}s}$ in (\ref{reduction1}), we notice that the rest of (\ref{reduction1}) depends on $k$ \emph{only} through the resonance factors $\Omega_{\nf_j}$ for $1\leq j\leq n-1$. Moreover we have assumed that $|\Omega_{\nf_j}|\leq M$ for each such $j$. If any $\nf_j$ and $\nf_{j+1}$ have opposite signs, then by definition and $|\Omega_{\nf_j}|\leq M$, we easily see that $|k|\lesssim M$, so we can replace the $L_{k,\ell}^\infty$ norm by $L_{k,\ell}^p$ and close as before, where the loss is at most $M^{C/p}$ and can either be absorbed by the gain $M^{-1/9}$ coming from Corollary \ref{extragaincor} if $M>L^{(100d)^3}$, or neglected if $M=L^{(100d)^3}$. We may thus assume all $\nf_j$ have the same sign, and in this case we have
\[\Omega_{\nf_j}=|k+a_j|_\beta^2-|k+b_j|_\beta^2+\Omega_j'=\langle k,c_j\rangle_\beta+\Omega_j''\] where $(a_j,b_j,c_j)$ are vectors, and $(\Omega_j',\Omega_j'')$ are expressions, that do not depend on $k$ (hence they are bounded by $L^3$).

We may then apply Lemma \ref{finitelem} and assume $k$ is represented by a system $(r,q,v_1,\cdots,v_q,f,y)$. Then, if $\chi_{M}(z)$ is a cutoff function supported in $|z|\leq M$, and $F$ is an arbitrary function, we have
\begin{equation}\label{finitered}\prod_{j=1}^{n-1}\chi_{M}(\Omega_{\nf_j})\cdot F(\Omega_{\nf_1},\cdots,\Omega_{\nf_{n-1}})=\prod_{j=1}^{n-1}\mathbf{1}_S(c_j)\chi_{M}(G(f,y,c_j)+\Omega_j'')\cdot F((G(f,y,c_j)+\Omega_j'')_{1\leq j\leq n-1}),\end{equation} where $S$ is the set of $z$ whose first $r$ coordinates form a linear combination of $\{v_1,\cdots,v_q\}$ and $G$ is some function. The point here is that, the right hand side of (\ref{finitered}) does \emph{not} explicitly involve $k$, but only depends on the variables $(v_1,\cdots,v_q,f,y)$, which are bounded in size by $M^C$ for a constant $C$ depending only on $d$. This is in contrast with $k$, which has no a priori upper bound and may have infinitely many choices.

Therefore, for any $k$, the function $\Ac^{**}(t,s,\delta L^2\cdot\Omega[\Nc\backslash\{\ff^p\}])$, viewed as a function of $(t,s,k[\Tc\backslash\{\rf,{\ff}\}])$, equals $\Gc^{**}(t,s,v_1,\cdots,v_q,f,y,k[\Tc\backslash\{\rf,\ff\}])$ for some function $\Gc^{**}$. This means that
\begin{equation}\label{reduct2}\sup_k\big|(\cdots[\Ac^{**}]\cdots)\big|=\sup_{(v_1,\cdots,v_q,f,y)}\big|(\cdots [\Gc^{**}]\cdots)\big|,\end{equation} where the parenthesis $(\cdots)$ in (\ref{reduct2}) represents (\ref{reduction1}) without the $e^{\zeta_{\ff^p}\pi i\delta L^2\Omega_{\ff^p}s}$ factor. Now, with the finite volume that $(v_1,\cdots,v_q,f,y)$ and $(t,s)$ occupy, we can bound
\begin{equation}\label{neweqn0}\big\|(\cdots [\Gc^{**}]\cdots)\big\|_{L_{t,s}^\infty L_{v_1,\cdots,v_q,f,y}^\infty}\lesssim\big\|(\cdots [\Gc^{**}]\cdots)\big\|_{L_{t,s}^pL_{v_1,\cdots,v_q,f,y}^p}^{1-C/p}\big\|(\partial_{t,s,y}\cdots [\Gc^{**}]\cdots)\big\|_{L_{t,s}^pL_{v_1,\cdots,v_q,f,y}^p}^{C/p}.\end{equation} Then, we take $p$-th power moments and argue as in the proof of Proposition \ref{finprop1}. Note that by Lemma \ref{finitelem}, for \emph{any} system $(r,q,v_1,\cdots,v_q,f,y)$, the $\Gc^{**}$ function is the \emph{limit} of the $\Ac^{**}$ functions for some sequence of $k$; the same limit holds for the $(\partial_t,\partial_s)$ derivative, and $\partial_y\Gc^{**}$ is the limit of $\partial_{\Omega_{\nf_j}}\Ac^{**}$ multiplied by some harmless factors (these factors are bounded by $M^C$ which is diminished by the $C/p$ power in (\ref{neweqn0}); also any derivative of $\Ac^{**}$ can be treated in the same way as $\Ac^{**}$ itself), so the $p$-th power moments for fixed $(v_1,\cdots,v_q,f,y)$ can be estimated as in Proposition \ref{mainprop2}, with the extra decay in $M$ for the term without derivatives if $M>L^{(100d)^3}$.

In summary we get\[\mathbb{E}\big|\sup_{k,\ell}\sup_{0\leq s<t\leq 1}\langle k-\ell\rangle^{9d}|(\Ls^n)_{M,k\ell}^{m,\zeta}(t,s)|\big|^p\lesssim p^{mp}(C^+\sqrt{\delta})^{mp/2}L^{45dp}M^{C-p/18},\] which clearly implies (\ref{supink2}), if $M>L^{(100d)^3}$. Here notice that we gain the power $M^{-p/18}$ from the $p$-th moment of the $(\cdots [\Gc^{**}]\cdots)$ term without derivatives, thanks to Corollary \ref{extragaincor}, and that all the losses caused by the summation or integration in $(r,q,v_1,\cdots,v_q,f,y)$, or by the $(\partial_t,\partial_s,\partial_y)$ derivatives, are at most $M^C$. If $M=L^{(100d)^3}$ then we do not have the gain $M^{-p/18}$, but the losses are still at most $M^C\leq L^p$ which is also acceptable. The proof is now complete.
\end{proof}
\begin{prop}\label{finprop3} With probability $\geq 1-L^{-A}$, the mapping defined by the right hand side of (\ref{eqnbk2}) is a contraction mapping from the set $\{b:\|b\|_Z\leq L^{-500d}\}$ to itself.
\end{prop}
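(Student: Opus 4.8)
The plan is to verify the two standard contraction conditions on the ball $B := \{b : \|b\|_Z\le L^{-500d}\}$: that the map $\Phi(b) := (1-\Ls)^{-1}(\Rc + \Bs(b,b) + \Cs(b,b,b))$ sends $B$ into itself, and that it is a contraction on $B$. The key new ingredient, replacing the $X^{s,b}$ fixed point of \cite{DH}, is to invert $1-\Ls$ via its spectral radius rather than its norm. Concretely, on the event of probability $\ge 1-L^{-A}$ furnished by Propositions \ref{finprop1} and \ref{finprop2}, I would first record that $\Ls$ is bounded on $Z$ (the $n=1$ case of \eqref{operatordev}) and that for $N=\lfloor\log L\rfloor$ one has, by \eqref{operatordev} and \eqref{negpower}, $\|\Ls^{N}\|_{Z\to Z}\lesssim (p^2 C^+\sqrt\delta)^{N/2}L^{60d}\le L^{-(100d)^3+60d}\ll 1$. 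Writing any $m\ge 0$ as $m = qN + j$ with $0\le j<N$, this gives $\|\Ls^m\|_{Z\to Z}\le (\|\Ls^N\|_{Z\to Z})^{q}\,\|\Ls\|_{Z\to Z}^{j} \lesssim L^{-q}\cdot (C^+\sqrt\delta\,L^{60d})^{j}$, which is summable in $m$; hence the Neumann series $(1-\Ls)^{-1}=\sum_{m\ge 0}\Ls^m$ converges in operator norm on $Z$, with $\|(1-\Ls)^{-1}\|_{Z\to Z}\le \sum_m \|\Ls^m\|_{Z\to Z}=: \Lambda \lesssim L^{60d N}$ (a crude but harmless bound; in fact one gets $\Lambda = O(1)$ once $\delta$ is small, but a polynomial bound suffices).

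Next I would bound the remaining pieces on $B$. For $\Rc$: by Proposition \ref{finprop1}, on our event $|\Rc_k(t)|\lesssim \langle k\rangle^{-9d}(p^2C^+\sqrt\delta)^{N/2}L \le \langle k\rangle^{-9d}L^{-(100d)^3}$ for all $k,t$, and since the $Z$-norm \eqref{defznorm} is $\sup_t L^{-d}\sum_k\langle k\rangle^{10d}|\cdot|^2$, the weight $\langle k\rangle^{-9d}$ more than compensates: $\|\Rc\|_Z\lesssim L^{-(100d)^3 + 10d}$. For the multilinear terms $\Bs$ and $\Cs$: these are built from $\Ic\Cc_+$ applied to inputs among $b$ and the Gaussians $\Jc_n$. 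I would use the pointwise bound $\|\Ic\Cc_+(u,\overline v,w)\|_Z \lesssim \delta L^{-(d-1)}\cdot(\text{number-of-lattice-points factor})\cdot\|u\|_Z\|v\|_Z\|w\|_Z$ that follows from \eqref{akeqn2}–\eqref{duhamel} together with the counting bound in Lemma \ref{lem:counting} (a single-atom counting estimate), exactly as in the trilinear estimates of \cite{DH}; combined with the bounds $\|\Jc_n\|_Z\lesssim (p^2C^+\sqrt\delta)^{n/2}L$ from \eqref{largedevest} and the definition of $\sum_{(j)}$, each of $\Bs(b,b)$ and $\Cs(b,b,b)$ is controlled by $C^+\|b\|_Z$ times a gain — $\|\Bs(b,b)\|_Z \lesssim C^+ \big(\max_n\|\Jc_n\|_Z\big)\,\|b\|_Z^2 \lesssim C^+ L^{O(1)}\|b\|_Z^2$ and $\|\Cs(b,b,b)\|_Z\lesssim C^+\|b\|_Z^3$; here one uses that $T=\delta T_{\mathrm{kin}}$ was rescaled to $t\in[0,1]$ so each $\Ic$ gains a full power of $\delta$, which for small $\delta$ beats the $L^{O(1)}$ losses because the $Z$-norm of $\Jc_n$ only costs one power of $L$ while $\Ic\Cc_+$ supplies $\delta L^{-(d-1)}$ together with the $\lesssim L^{2(d-1)}/\delta$ counting loss, i.e.\ a net $L^{d-1}$ that is then beaten by requiring $\|b\|_Z$ small).

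Putting these together: for $b\in B$, $\|\Phi(b)\|_Z \le \Lambda\big(\|\Rc\|_Z + \|\Bs(b,b)\|_Z + \|\Cs(b,b,b)\|_Z\big) \lesssim \Lambda\big(L^{-(100d)^3+10d} + C^+ L^{O(1)}L^{-1000d} + C^+ L^{-1500d}\big) \le L^{-500d}$, once $L$ is large (the first term dominates and beats $\Lambda\lesssim L^{60dN}=L^{O((\log L)\cdot d)}$ since $(100d)^3 \gg 60d\log L$ for $L$ in our range, and the powers of $\delta$ swallow $C^+$). For the contraction estimate, $\Phi(b)-\Phi(b') = (1-\Ls)^{-1}\big(\Bs(b,b)-\Bs(b',b') + \Cs(b,b,b)-\Cs(b',b',b')\big)$; writing the differences as sums of terms multilinear in $b,b'$ and $b-b'$ (one factor being $b-b'$, the rest being inputs in $B$ or Gaussians $\Jc_n$), the same trilinear/bilinear estimates give $\|\Phi(b)-\Phi(b')\|_Z \le \Lambda\, C^+ L^{O(1)}\big(\|b\|_Z + \|b'\|_Z + \max_n\|\Jc_n\|_Z\big)\|b-b'\|_Z \le \tfrac12\|b-b'\|_Z$ on $B$, again because each surviving factor of a $Z$-small $b$ or of $\delta$ defeats the polynomial-in-$L$ losses. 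Hence $\Phi$ is a contraction on the complete metric space $B$.

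The main obstacle I anticipate is purely bookkeeping rather than conceptual: one must make sure the crude operator-norm bound $\Lambda$ on $(1-\Ls)^{-1}$, which a priori is only polynomial in $L$ (or even mildly super-polynomial like $L^{O(\log L)}$ if one is careless about the $L^{60d}$ prefactors in \eqref{operatordev} across the $q\sim m/N$ blocks), is genuinely beaten by the $L^{-(100d)^3}$-type smallness of $\Rc$ and of the higher iterates — this is where the exponents $A\ge 40d$, $p\gg_{A,\nu}1$, $\delta\ll_{p,C^+}1$ and $N=\lfloor\log L\rfloor$ fixed in Section \ref{notations} are used decisively, and the inequality $(100d)^3 > C d\log L$ (valid for all $L$ under consideration, since $L$ may be taken as large as we like depending on $\delta$) must be invoked explicitly. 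A secondary, minor point is to confirm that the trilinear $Z\to Z$ estimate for $\Ic\Cc_+$ is purely deterministic (it is, being just Duhamel plus the lattice-point count of Lemma \ref{lem:counting}), so that no additional exceptional set beyond the one from Propositions \ref{finprop1}–\ref{finprop2} is incurred, and the whole argument runs on a single event of probability $\ge 1-L^{-A}$.
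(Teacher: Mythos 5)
Your overall strategy (inverting $1-\Ls$ through the smallness of $\Ls^N$ rather than a norm bound on $\Ls$ itself, the $L^{-(100d)^3}$-type smallness of $\Rc$ from Proposition \ref{finprop1}, a crude deterministic $Z$-trilinear bound for $\Ic\Cc_+$, and closing on the ball $\|b\|_Z\le L^{-500d}$) is the same as the paper's, but the step where you bound $\|(1-\Ls)^{-1}\|_{Z\to Z}$ contains a genuine gap. Writing $m=qN+j$ and estimating $\|\Ls^m\|\le\|\Ls^N\|^q\,\|\Ls\|^j$ iterates the $n=1$ case of \eqref{operatordev}, so each of the up to $N-1$ single factors costs a full $L^{60d}$; the resulting bound $\Lambda\lesssim L^{60dN}=L^{60d\lfloor\log L\rfloor}=e^{O(d(\log L)^2)}$ is super-polynomial in $L$, and it is not ``harmless''. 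Your attempt to absorb it --- ``the first term beats $\Lambda$ since $(100d)^3\gg 60d\log L$ for $L$ in our range (valid for all $L$ under consideration, since $L$ may be taken as large as we like)'' --- is backwards: the regime of Theorem \ref{main} is $L\to\infty$ with $\delta$ fixed, so $\log L\to\infty$ and for all large $L$ one has $60d\log L\gg(100d)^3$; then $\Lambda\cdot L^{-(100d)^3+Cd}$ is not $\le L^{-500d}$, and neither the self-mapping nor the contraction estimate closes. The parenthetical claim that $\Lambda=O(1)$ once $\delta$ is small is also unsubstantiated, since the prefactor $L^{60d}$ in \eqref{operatordev} is present for every $n\ge 1$ regardless of $\delta$.

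The repair is available in the tools you already cite, and is exactly what the paper does: Proposition \ref{finprop2} bounds $\|\Ls^j\|_{Z\to Z}\lesssim(p^2C^+\sqrt{\delta})^{j/2}L^{60d}$ for \emph{every} $0\le j\le N$, with only one factor $L^{60d}$ each, so these bounds should be used directly instead of iterating the $j=1$ bound. For instance, factor $(1-\Ls)^{-1}=(1-\Ls^N)^{-1}(1+\Ls+\cdots+\Ls^{N-1})$; the Neumann series for $(1-\Ls^N)^{-1}$ converges since $\|\Ls^N\|_{Z\to Z}\ll 1$ by \eqref{operatordev} and \eqref{negpower}, while $\sum_{j=0}^{N-1}\|\Ls^j\|_{Z\to Z}\lesssim L^{60d}$ because the $\delta$-factors are geometrically summable. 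This gives $\|(1-\Ls)^{-1}\|_{Z\to Z}\lesssim L^{62d}$, a \emph{fixed} power of $L$, after which your remaining estimates do close: $\|\Rc\|_Z\lesssim L^{-(100d)^3+Cd}$ from \eqref{largedevest}--\eqref{negpower}, and a deterministic trilinear bound (the paper uses the even cruder $\|\Ic\Cc_+(u,v,w)\|_Z\lesssim L^{20d}\|u\|_Z\|v\|_Z\|w\|_Z$, so your appeal to Lemma \ref{lem:counting} is unnecessary, though not harmful) combined with $\|b\|_Z,\|b'\|_Z\le L^{-500d}$ and \eqref{largedevest} yields both the self-mapping and the contraction property on a single event of probability $\ge 1-L^{-A}$, as you correctly note.
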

\begin{proof} Suppose we exclude the exceptional set of probability $L^{-A}$ in Propositions \ref{finprop1}--\ref{finprop2}. Consider the mapping
\[b\mapsto (1-\Ls)^{-1}(\Rc+\Bs(b,b)+\Cs(b,b,b)),\] as usual, we just need to prove it maps the given set to itself, and the contraction property will follow similarly. Suppose $\|b\|_Z\leq L^{-500d}$, note that
\[(1-\Ls)^{-1}=(1-\Ls^N)^{-1}(1+\Ls+\cdots +\Ls^{N-1})\] maps $Z$ to $Z$, where $(1-\Ls^N)^{-1}$ can be constructed by Neumann series; using (\ref{operatordev}) we get that $\|(1-\Ls)^{-1}\|_{Z\to Z}\lesssim L^{62d}$. Therefore, it suffices to show that
\[\|\Rc\|_Z+\|\Bs(b,b)\|_Z+\|\Cs(b,b,b)\|_Z\lesssim L^{-600d}.\] The bound for $\Rc$ follows from (\ref{largedevest}) and (\ref{negpower}), so we only need to consider $\Bs$ and $\Cs$. But this is again easy, using the loose estimate
\[\|\Ic\Cc_+(u,v,w)\|_Z\lesssim \|\Cc_+(u,v,w)\|_Z\lesssim L^{20d}\|u\|_Z\|v\|_Z\|w\|_Z,\] together with (\ref{largedevest}) and the assumption $\|b\|_Z\leq L^{-500d}$. This completes the proof.
\end{proof}
\begin{proof}[Proof of Theorem \ref{main}] By Propositions \ref{finprop1}--\ref{finprop3}, with probability $\geq 1-L^{-A}$, the solution $a=a_k(t)$ to (\ref{akeqn})--(\ref{akeqn2}) can be written as the ansatz (\ref{ansatz}) for $t\in[0,1]$, where each $\Jc_n$ satisfies (\ref{largedevest}), and $b$ is constructed by contraction mapping and satisfies $\|b\|_Z\leq L^{-500d}$. Denote this event by $E$, so that $\mathbb{P}(E)\geq 1-L^{-A}$. Let $E_1\supset E$ be the event that (\ref{nls}) has a smooth solution on $[0,\delta\cdot T_{\mathrm{kin}}]$.

For each $\tau\in[0,\delta]$ we will calculate, with $\widehat{u}$ as in (\ref{fourier}), that
\[\Eb(|\widehat{u}(\tau\cdot T_{\mathrm{kin}},k)|^2\mathbf{1}_{E_1})=\Eb(|a_k(\delta^{-1}\tau)|^2\mathbf{1}_{E_1}).\] If we replace $\mathbf{1}_{E_1}$ by $\mathbf{1}_{E_1\backslash E}$, then the resulting contribution is bounded by $L^{-A+10d}$, since $|a_k(t)|^2$ is bounded uniformly in $k$ and $t$ by mass conservation, so we may replace $\mathbf{1}_{E_1}$ by $\mathbf{1}_E$. This then reduces to the expression
\[\sum_{0\leq n_1,n_2\leq N}\Eb((\Jc_{n_1})_k(t)\overline{(\Jc_{n_2})_k(t)}\mathbf{1}_E)+2\sum_{n=0}^N\mathrm{Re}\,\Eb((\Jc_{n})_k(t)\overline{b_k(t)}\mathbf{1}_E)+\Eb(|b_k(t)|^2\mathbf{1}_E),\] where $t=\delta^{-1}\tau$. The terms involving $b$ are obviously bounded by $L^{-100d}$ using (\ref{largedevest}) and $\|b\|_Z\leq L^{-500d}$, so we just need to consider the correlations between $\Jc_{n_1}$ and $\Jc_{n_2}$. In these correlations, if we replace $\mathbf{1}_E$ by $\mathbf{1}_{E^c}$, then the resulting contribution is
\[\big|\Eb((\Jc_{n_1})_k(t)\overline{(\Jc_{n_2})_k(t)}\mathbf{1}_{E^c})\big|\leq(\Eb|(\Jc_{n_1})_k(t)|^4)^{1/4}(\Eb|(\Jc_{n_2})_k(t)|^4)^{1/4}(\Pb(E^c))^{1/2}\lesssim L^{-A/2+10d}\] using Lemma \ref{largedev} and Proposition \ref{mainprop1}, so we may replace $\mathbf{1}_E$ by $1$, thus reducing to the expression
\[\sum _{0\leq n_1,n_2\leq N}\Eb((\Jc_{n_1})_k(t)\overline{(\Jc_{n_2})_k(t)})=\sum_\Qc\Kc_\Qc(t,t,k),\] where the last sum is taken over all couples $\Qc=(\Tc^+,\Tc^-)$ with $n(\Tc^\pm)\leq N$. We may replace this condition by the condition $n(\Qc)\leq 2N$, because each term $\Kc_\Qc$ in the difference must satisfy $N\leq n(\Qc)\leq 2N$, and the set of these $\Qc$ is invariant under congruence, so we can bound the difference by $(C^+\sqrt{\delta})^{N/2}\leq L^{-100d}$. This reduces our target, up to errors $O( L^{-10d})$, to
\[\sum_{n=0}^{2N}\sum_{n(\Qc)=n}\Kc_\Qc(t,t,k)=\sum_{n=0}^N\Mc_n(t,k)+O(L^{-\nu})=n(\delta t,k)+O(L^{-\nu})=n(\tau,k)+O(L^{-\nu}),\] where the last steps are due to Propositions \ref{mainprop4} and \ref{wkelwp}. In the end we get that
\[\Eb(|\widehat{u}(\tau\cdot T_{\mathrm{kin}},k)|^2\mathbf{1}_{E_1})=n(\tau,k)+O(L^{-\nu}),\] uniformly in $\tau\in[0,\delta]$ and $k\in \Zb_L^d$. This proves Theorem \ref{main}.
\end{proof}
\appendix
\section{Preliminary lemmas}
\subsection{The exceptional set $\Zf$} We will define in Lemma \ref{genericity} the Lebesgue null set $\Zf$ used in Theorem \ref{main}. Once Lemma \ref{genericity} is proved, for the rest of the paper we will fix one $\beta\in(\Rb^+)^d\backslash \Zf$.
\begin{lem}[The genericity condition]\label{genericity} There exists a Lebesgue null set $\Zf\subset(\Rb^+)^d$ such that the followings hold for any $\beta=(\beta^1,\cdots,\beta^d)\in(\Rb^+)^d\backslash \Zf$.

(1) For any integers $(K^1,K^2)\neq (0,0)$, we have
\begin{equation}\label{generic1}|\beta^1K^1+\beta^2K^2|\gtrsim(1+|K^1|+|K^2|)^{-1}\log^{-4}(2+|K^1|+|K^2|);\end{equation}

(2) The numbers $\beta^1,\cdots,\beta^d$ are algebraically independent over $\Qb$, and for any $R$ we have
\begin{equation}\label{generic1.5}\#\left\{(X,Y,Z)\in(\Zb^d)^3:|X|,|Y|,|Z|\leq R,\,X\neq 0,\,\max(|\langle X,Y\rangle_\beta|,|\langle X,Z\rangle_\beta|)\leq 1\right\}\lesssim R^{3d-4+\frac{1}{6}}.
\end{equation}
\end{lem}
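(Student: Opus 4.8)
\textbf{Proof proposal for Lemma \ref{genericity}.}

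The plan is to construct $\Zf$ as a countable union of null sets, one family for each of the two Diophantine-type conditions, and then appeal to the fact that a countable union of Lebesgue null sets is null. For part (1), the approach is classical metric number theory: for a fixed pair $(K^1,K^2)\ne(0,0)$ and a threshold $\varepsilon>0$, the set of $(\beta^1,\beta^2)$ in a fixed bounded box with $|\beta^1 K^1+\beta^2 K^2|<\varepsilon$ is a strip of width $\sim \varepsilon/|K|$ (where $|K|=|K^1|+|K^2|$), hence has measure $\lesssim \varepsilon/|K|$. Taking $\varepsilon = c\,|K|^{-1}\log^{-4}(2+|K|)$, the total measure over all $(K^1,K^2)$ is controlled (after summing over the $\sim|K|$ lattice points on each shell) by $\sum_{|K|\ge 1} |K|^{-1}\log^{-4}(2+|K|)\cdot |K|^{-1}\cdot|K| \sim \sum |K|^{-1}\log^{-4}|K|$, which converges since the exponent $4>1$ on the logarithm. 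Letting $c\to 0$ along a sequence and restricting the $\beta^1,\beta^2$-box to an exhausting sequence of boxes in $(\Rb^+)^d$ (with the remaining coordinates free), we obtain a null set outside which \eqref{generic1} holds; call it $\Zf_1$.

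For part (2), algebraic independence of $\beta^1,\dots,\beta^d$ over $\Qb$ is immediate: the zero set of any fixed nonzero polynomial in $\Qb[x_1,\dots,x_d]$ is a proper algebraic subvariety of $\Rb^d$, hence Lebesgue null, and there are only countably many such polynomials, so their union is null; call it $\Zf_2$. The substantive part is the counting bound \eqref{generic1.5}. The strategy here is to bound, for $\beta$ outside a suitable null set, the number of triples $(X,Y,Z)$ with $|X|,|Y|,|Z|\le R$, $X\ne 0$, and $|\langle X,Y\rangle_\beta|,|\langle X,Z\rangle_\beta|\le 1$. Fixing $X\ne 0$, the conditions on $Y$ and $Z$ decouple, so the count is $\sum_{0\ne|X|\le R} N_\beta(X,R)^2$ where $N_\beta(X,R)=\#\{Y\in\Zb^d: |Y|\le R,\ |\langle X,Y\rangle_\beta|\le 1\}$. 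For a generic $\beta$, the linear form $Y\mapsto \langle X,Y\rangle_\beta$ is ``badly approximable'' in the sense that $|\langle X,Y\rangle_\beta|\le 1$ forces $Y$ to lie within $O(1)$ of a hyperplane whose normal direction is irrational, so one expects $N_\beta(X,R)\lesssim R^{d-1}$ for most $X$, but with a small exceptional contribution where $N_\beta(X,R)$ can be as large as $R^{d-1}\cdot(\text{power of }R)$ — these come from $X$ for which $(\beta^1 X^1,\dots,\beta^d X^d)$ is abnormally close to a rational subspace. The bound $R^{3d-4+1/6}$ (rather than the ``expected'' $R^{3d-3}$ one might naively guess, or the $R^{3d-3}$ that would come from $\sum_X (R^{d-1})^2 = R^d\cdot R^{2d-2}$) indicates that one must \emph{gain} over the trivial estimate, exploiting that for most $X$ one actually has a better-than-$R^{d-1}$ bound on $N_\beta(X,R)$, or equivalently that the measure of the set of $\beta$ for which many $X$ simultaneously have large $N_\beta(X,R)$ is small. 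Concretely, I would run a Borel–Cantelli argument: for each dyadic $R$ and each integer parameter $Q$, estimate the measure of $\{\beta : \#\{(X,Y,Z) \text{ as above}\} \ge R^{3d-4+1/6}\}$ by a first-moment (Markov) bound, computing $\Eb_\beta \#\{(X,Y,Z)\}$ over a bounded $\beta$-box, which by Fubini equals $\sum_{X,Y,Z}\mathrm{meas}\{\beta: |\langle X,Y\rangle_\beta|,|\langle X,Z\rangle_\beta|\le 1\}$; for fixed $X\ne 0$ the pair of conditions cuts out a set of $\beta$-measure $\lesssim |X|^{-2}$ (two nearly independent linear constraints each of width $\sim |X|^{-1}$ in the appropriate coordinate), giving $\sum_{0\ne|X|\le R}\sum_{|Y|,|Z|\le R} |X|^{-2} \lesssim R^{2d}\cdot R^{d-2} = R^{3d-2}$ — which is too weak by $R^{2-1/6}$. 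So the first moment alone does not suffice, and one must instead subdivide according to the size of $N_\beta(X,R)$ and use that the $\beta$-measure where $N_\beta(X,R)\gtrsim R^{d-1}/T$ fails to improve is itself small, i.e. a more refined large-deviation/covering estimate on the distribution of the linear forms $\langle X,\cdot\rangle_\beta$, summed against $X$ with the two-fold gain coming from the $Y$ and $Z$ being tied to the \emph{same} $X$.

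The main obstacle, then, is precisely establishing \eqref{generic1.5} with the stated exponent $3d-4+\tfrac16$: this is a nontrivial piece of metric Diophantine analysis that must quantify how often the bilinear resonance $\langle X,Y\rangle_\beta$ can be small, and the saving of roughly $R^{2-1/6}$ over the crudest bound is exactly the kind of estimate that the rest of the paper (the counting lemmas in the molecule analysis) relies on. I expect the argument to combine (i) a decomposition of $X$-space by the ``quality'' of rational approximation to the direction $(\beta^1 X^1,\dots,\beta^d X^d)$, (ii) for good $X$, a lattice-point count giving $N_\beta(X,R)\lesssim R^{d-1}$ with the loss controlled, and (iii) a Borel–Cantelli step over dyadic $R$ to pass from ``for a.e. $\beta$ and all large $R$'' to a single null exceptional set. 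Setting $\Zf=\Zf_1\cup\Zf_2\cup\Zf_3$ where $\Zf_3$ is the null set produced in step (iii), and noting each is Lebesgue null, completes the proof. I would also remark that the precise value $1/6$ of the saving is not essential to the scheme — any fixed positive saving propagates through — but matching it requires care in steps (ii)–(iii), and this calibration is where essentially all the work lies.
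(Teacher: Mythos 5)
Your treatment of part (1) and of algebraic independence is fine and matches the standard arguments (the paper simply cites Cassels for (1)). The problem is part (2): you correctly set up the Fubini/first-moment/Borel--Cantelli framework and correctly observe that the naive measure bound $\mathrm{meas}\{\beta:|\langle X,Y\rangle_\beta|\le 1,\ |\langle X,Z\rangle_\beta|\le 1\}\lesssim|X|^{-2}$ only yields $R^{3d-2}$, but you then stop and gesture at an unexecuted refinement (stratifying by the size of $N_\beta(X,R)$, rational approximation quality of $(\beta^1X^1,\dots,\beta^dX^d)$, a ``large-deviation/covering estimate''), conceding that ``essentially all the work lies'' there. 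That is precisely the gap: the counting bound \eqref{generic1.5} is not proved, and the scheme you outline is both vaguer and harder than what is actually needed -- a bound $N_\beta(X,R)\lesssim R^{d-1}$ for good $X$ plus a crude treatment of bad $X$ cannot by itself reach the exponent $3d-4+\tfrac16$ without exactly the quantitative input you are missing.

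The missing idea is that the joint $\beta$-measure for \emph{fixed} $(X,Y,Z)$ should be computed keeping its dependence on $Y$ and $Z$, not just on $X$. If some $2\times2$ minor is nondegenerate, say $X^1X^2(Y^1Z^2-Y^2Z^1)\neq 0$, the two linear constraints are transverse in the $(\beta^1,\beta^2)$-plane and
\[
\mathrm{meas}\big\{\beta\in[1,2]^d:\ |\langle X,Y\rangle_\beta|\le 1,\ |\langle X,Z\rangle_\beta|\le 1\big\}\ \lesssim\ |X^1X^2|^{-1}\,|Y^1Z^2-Y^2Z^1|^{-1}.
\]
Summing this over $(X,Y,Z)$ with $|X|,|Y|,|Z|\le R$, fixing $X^1,X^2$ and the determinant $D=Y^1Z^2-Y^2Z^1$ and using the divisor estimate, already bounds the \emph{plain} first moment by $R^{3d-4+\frac18}$; the degenerate configurations (all minors $X^iX^j(Y^iZ^j-Y^jZ^i)$ vanish) form a set of at most $\sim R^{2d-1}$ triples, which is admissible since $2d-1\le 3d-4$ for $d\ge 3$ (with the intermediate case, some $X^jY^j\ne0$ but all minors zero, handled by the one-constraint measure bound $|X^1Y^1|^{-1}$ plus the structural rigidity it forces on the remaining coordinates). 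After that, a single Markov inequality at each dyadic $R$ gives exceptional measure $\lesssim R^{1/8-1/6}$, summable, and Borel--Cantelli closes the argument -- no stratification by $N_\beta(X,R)$ or large-deviation machinery is required. In short: the saving of $R^{2-1/6}$ you were looking for comes from the determinant factor in the joint measure, averaged out by the divisor bound, and without this your proposal does not establish \eqref{generic1.5}.
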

\begin{proof} (1) This is standard in Diophantine approximation, {which can be proved by summing over all $(K^1,K^2)$ the measure of the set of $(\beta^1,\beta^2)$ not satisfying (\ref{generic1}) and applying Borel-Centelli.}

(2) Without loss of generality we may assume $\beta\in[1,2]^d$. If $X^jY^j=X^jZ^j=0$ for all $j$, since $X\neq 0$, the number of choices for $(X,Y,Z)$ is clearly at most $R^{2d-1}$ which satisfies (\ref{generic1.5}) since $d\geq 3$.  If $X^iX^j(Y^iZ^j-Y^jZ^i)=0$ for all $(i,j)$, but not all $X^jY^j$ and $X^jZ^j$ are zero, say $X^1Y^1\neq 0$, then for fixed $(X,Y,Z)$, the Lebesgue measure of the set
\[E:=\bigg\{\beta\in[1,2]^d:\bigg|\sum_{\ell=1}^d\beta^\ell X^\ell Y^\ell\bigg|\leq 1,\,\bigg|\sum_{\ell=1}^d\beta^\ell X^\ell Z^\ell\bigg|\leq 1\bigg\}\] is bounded by $C|X^1Y^1|^{-1}$. Moreover, once $X^1$ and $Y^1$ are fixed, the number of choices for $(X^j,Y^j,Z^j)$ for each $j\geq 2$ is at most $R^2$. This implies that
\[\int_{[1,2]^d}(\mathrm{left\ hand\ side\ of\ (\ref{generic1.5})})\,\mathrm{d}\beta\leq C\sum_{{|X|,|Y|,|Z|\leq R}}|X^1Y^1|^{-1}\leq CR^{2d-1+\frac{1}{8}},\] {where the sum in $(X,Y,Z)$ is taken under the assumption $X^iX^j(Y^iZ^j-Y^jZ^i)=0$ and $X^1Y^1\neq 0$}. By Borel-Cantelli lemma, and using that $2d-1\leq 3d-4$, we get (\ref{generic1.5}) for any $R$ and almost all $\beta$.

Now suppose there is $1\leq i<j\leq d$ such that $X^iX^j(Y^iZ^j-Y^jZ^i)\neq 0$, say $(i,j)=(1,2)$. Then for fixed $(X,Y,Z)$, the Lebesgue measure of $E$ is bounded by $C|X^1X^2|^{-1}\cdot|Y^1Z^2-Y^2Z^1|^{-1}$, therefore
\[\int_{[1,2]^d}(\mathrm{left\ hand\ side\ of\ (\ref{generic1.5})})\,\mathrm{d}\beta\leq C\sum_{{|X|,|Y|,|Z|\leq R}}|X^1X^2|^{-1}\cdot|Y^1Z^2-Y^2Z^1|^{-1},\] {where the sum in $(X,Y,Z)$ is taken under the assumption $X^1X^2(Y^1Z^2-Y^2Z^1)\neq0$.} The last sum is bounded by $R^{3d-4+\frac{1}{8}}$, by fixing the values of $X^1$, $X^2$ and $Y^1Z^2-Y^2Z^1$ and using the divisor estimate {(i.e. the number of divisors of any nonzero integer $x$ is $O_{\epsilon}(|x|^\epsilon)$ for any $\epsilon>0$)}. Again by Borel-Cantelli, we obtain (\ref{generic1.5}) for almost all $\beta$.
\end{proof}
\subsection{Miscellaneous results} We collect some auxiliary results needed in the main proof.
\begin{lem}[Complex Isserlis' theorem]\label{isserlis0} Given $k_j\in\Zb_L^d$ (not necessarily distinct) and $\zeta_j\in\{\pm\}$ for $1\leq j\leq n$, then
\begin{equation}\label{isserlis}\Eb\bigg[\prod_{j=1}^n\eta_{k_j}^{\zeta_j}(\omega)\bigg]=\sum_\Ps\prod_{\{j,j'\}\in\Ps}\mathbf{1}_{k_j=k_{j'}},\end{equation}where the summation is taken over all partitions $\Ps$ of $\{1,2,\cdots,n\}$ into two-element subsets $\{j,j'\}$ such that $\zeta_{j'}=-\zeta_j$.
\end{lem}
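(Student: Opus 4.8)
\textbf{Proof proposal for Lemma \ref{isserlis0} (complex Isserlis' theorem for the specific case).} The plan is to reduce the statement to the real Gaussian Wick theorem (the classical Isserlis formula) by splitting each $\eta_k$ into its real and imaginary parts. Write $\eta_k = \xi_k + i\,\omega_k$ where $\{\xi_k,\omega_k\}_{k\in\Zb_L^d}$ are i.i.d. real Gaussians with $\Eb\xi_k^2 = \Eb\omega_k^2 = 1/2$ (this is the normalization for a centered normalized complex Gaussian, i.e. $\Eb|\eta_k|^2 = 1$ and $\Eb\eta_k^2 = 0$). The key algebraic inputs are the two-point identities $\Eb(\eta_k\overline{\eta_{k'}}) = \mathbf 1_{k=k'}$ and $\Eb(\eta_k\eta_{k'}) = \Eb(\overline{\eta_k}\,\overline{\eta_{k'}}) = 0$ for all $k,k'$ (including $k=k'$); the vanishing of the ``non-conjugate'' pairings is exactly what forces the constraint $\zeta_{j'} = -\zeta_j$ in the partitions appearing on the right-hand side of \eqref{isserlis}.

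First I would expand $\prod_{j=1}^n \eta_{k_j}^{\zeta_j}$ into a sum of $2^n$ monomials in the real variables $\{\xi_{k_j},\omega_{k_j}\}$, and apply the classical real Wick theorem to each resulting monomial: the expectation of a product of jointly centered Gaussians equals the sum over all pair partitions of the product of pairwise covariances. Collecting terms, one obtains a sum over pair partitions $\Ps$ of $\{1,\dots,n\}$, where the contribution of each pair $\{j,j'\}$ is precisely $\Eb(\eta_{k_j}^{\zeta_j}\eta_{k_{j'}}^{\zeta_{j'}})$ — this is the step where one must check that the bookkeeping of real/imaginary cross-covariances reassembles exactly into the complex two-point functions, using $\Eb(\xi_k\xi_{k'}) = \Eb(\omega_k\omega_{k'}) = \tfrac12\mathbf 1_{k=k'}$ and $\Eb(\xi_k\omega_{k'}) = 0$. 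Then I would invoke the two-point identities above: $\Eb(\eta_{k_j}^{\zeta_j}\eta_{k_{j'}}^{\zeta_{j'}})$ is $0$ unless $\zeta_{j'} = -\zeta_j$, in which case it equals $\mathbf 1_{k_j = k_{j'}}$. Hence only partitions $\Ps$ in which every pair has opposite signs survive, and each such partition contributes $\prod_{\{j,j'\}\in\Ps}\mathbf 1_{k_j=k_{j'}}$, which is exactly \eqref{isserlis}.

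The only mild subtlety — and the step I would be most careful about — is the case of repeated indices $k_j = k_{j'}$, where one must confirm that the ``self-pairing'' terms $\Eb(\eta_k^2) = \Eb(\overline{\eta_k}^2) = 0$ are correctly produced as $0$ by the real computation (they are: $\Eb(\xi_k + i\omega_k)^2 = \Eb\xi_k^2 - \Eb\omega_k^2 + 2i\Eb(\xi_k\omega_k) = \tfrac12 - \tfrac12 + 0 = 0$), and that no spurious contributions arise from pairing an index with itself — which is impossible since pair partitions by definition pair distinct indices. There is also the unimodular (random phase) case to address, where $\eta_k = e^{i\theta_k}$ with $\theta_k$ uniform on $[0,2\pi)$; there the same identity holds because $\Eb(\prod_j \eta_{k_j}^{\zeta_j})$ factors over the distinct values of $k$ appearing, and for a single value $k$ with multiplicity, $\Eb(e^{i(\sum \zeta_j)\theta_k})$ is $1$ if the signed multiplicity vanishes and $0$ otherwise — however a clean proof of \eqref{isserlis} in that case requires a small extra combinatorial argument (or one simply cites Lemma 3.1 of \cite{DH} as the paper does). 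For the Gaussian case, which is all that is needed in the body of the paper, the real-Wick reduction is routine and I would not expect any genuine obstacle.
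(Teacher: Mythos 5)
Your proof is correct but takes a genuinely different route from the paper's. You reduce to the classical real-Gaussian Wick theorem by splitting $\eta_k$ into real and imaginary parts, then argue (via multilinear re-assembly of the real covariances) that the contribution of each pair in a pair partition is the complex two-point function $\Eb(\eta_{k_j}^{\zeta_j}\eta_{k_{j'}}^{\zeta_{j'}})$, which vanishes for pairings with matching signs and leaves $\mathbf{1}_{k_j=k_{j'}}$ otherwise. The paper argues more directly: it factorizes the left-hand side over the distinct values $k^{(1)},\dots,k^{(r)}$ using independence, computes each factor by the elementary single-Gaussian identity $\Eb[g^a\bar g^b]=a!\,\mathbf{1}_{a=b}$, and then counts the admissible partitions $\Ps$ on the right-hand side to match the product of factorials $(a_1^+)!\cdots(a_r^+)!$. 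The paper's computation is self-contained (it needs no Wick theorem at all) and automatically handles repeated indices by organizing everything around the multiplicities $a_i^\pm$ from the outset, so the subtlety you flag about self-pairings never arises; yours is more modular --- citing the real Wick theorem as a black box --- at the cost of the real/imaginary re-assembly bookkeeping you correctly identify as the step to be careful about. Both approaches are short and legitimate, and your aside on the unimodular case is accurate but not needed here since the paper restricts the proof to the Gaussian case.
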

\begin{proof} Let all the different vectors in $\{k_j:1\leq j\leq n\}$ be $k^{(1)},\cdots, k^{(r)}$. Assume for each $1\leq i\leq r$ and $\zeta\in\{\pm\}$ that the number of $j$'s such that $k_j=k^{(i)}$ and $\zeta_j=\zeta$ is $a_i^\zeta$, then we have
\[\Eb\bigg[\prod_{j=1}^n\eta_{k_j}^{\zeta_j}(\omega)\bigg]=\Eb\bigg[\prod_{i=1}^r(g_{k^{(i)}})^{a_i^+}(\overline{g_{k^{(i)}}})^{a_i^-}\bigg]=
\left\{
\begin{split}(a_1^+)!\cdots (a_r^+)!,&&\mathrm{if\ }a_i^+=a_i^-\mathrm{\ for\ all\ }i,\\
0,&&\mathrm{otherwise}.
\end{split}
\right.\] On the other hand, for fixed $\Ps$ the product on the right hand side of (\ref{isserlis}) is either $0$ or $1$, and equals $1$ if and only if each pair $\{j,j'\}\in\Ps$ is such that $k_j=k_{j'}=k^{(i)}$ for some $i$, and $\zeta_{j'}=-\zeta_j$. If $a_i^+=a_i^-$ for all $i$, then the number of choices for $\Ps$ clearly equals $(a_1^+)!\cdots (a_r^+)!$; otherwise no such $\Ps$ exists. This proves (\ref{isserlis}).
\end{proof}
\begin{lem}[Gaussian hypercontractivity]\label{largedev} Given $n$ and $\zeta_j\in\{\pm\}$ for $1\leq j\leq n$, suppose the random variable $X$ has the form
\begin{equation}\label{multigauss}X=\sum_{k_1,\cdots,k_n}a_{k_1\cdots k_n}\prod_{j=1}^n\eta_{k_j}^{\zeta_j}(\omega),\end{equation} where $a_{k_1\cdots k_n}$ are constants, then for any $q\geq 2$ we have
\begin{equation}\label{hyper}\Eb|X|^q\leq(q-1)^{\frac{nq}{2}}\cdot(\Eb|X|^2)^{\frac{q}{2}}.\end{equation}
\end{lem}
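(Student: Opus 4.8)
The statement to be proved is the Gaussian hypercontractivity bound \eqref{hyper} for a multilinear Gaussian chaos $X$ of the form \eqref{multigauss}. The plan is to reduce to the classical hypercontractivity inequality for the Ornstein--Uhlenbeck semigroup, applied to polynomials of Gaussian random variables of homogeneous degree $n$ (here counting a factor $\eta_k$ and a factor $\overline{\eta_k}$ each as degree one). I would first dispose of the unimodular case: if the $\eta_k$ are uniform on the unit circle rather than Gaussian, one replaces them by normalized complex Gaussians at the cost of harmless constants, exactly as indicated in the paper's discussion after \eqref{data} (cf. Lemma 3.1 of \cite{DH}); so it suffices to treat the Gaussian case. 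In the Gaussian case, write each complex Gaussian $\eta_k = (g_k^1 + i g_k^2)/\sqrt 2$ with $g_k^1, g_k^2$ i.i.d.\ real standard Gaussians; then each monomial $\prod_{j=1}^n \eta_{k_j}^{\zeta_j}$ expands into a sum of monomials in the real Gaussians $\{g_k^a\}$, each of total degree exactly $n$. Hence $X = P + iQ$ where $P, Q$ are real polynomials in countably many i.i.d.\ real standard Gaussians, each lying in the direct sum of Wiener chaoses of order $\le n$; in fact, since every expanded monomial has degree exactly $n$ and no $g_k^a$ appears squared within a single monomial in a way that would lower the chaos order below $n$ after Wick ordering — more carefully, the relevant point is only that $P,Q$ belong to $\bigoplus_{m\le n}$ (the $m$-th Wiener chaos), which is immediate from the degree-$n$ homogeneity.

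The key analytic input is the Nelson hypercontractivity theorem: for the OU semigroup $T_\rho$ on Gaussian space, $\|T_\rho F\|_{L^q} \le \|F\|_{L^2}$ whenever $\rho \le (q-1)^{-1/2}$, together with the fact that $T_\rho$ acts on the $m$-th Wiener chaos as multiplication by $\rho^m$. Consequently, for $F$ in the $m$-th chaos one has $\|F\|_{L^q} \le (q-1)^{m/2}\|F\|_{L^2}$, and since the chaos decomposition is orthogonal in $L^2$ and $q\ge 2$, summing over $m \le n$ and using that $(q-1)^{m/2}$ is increasing in $m$ (as $q\ge 2$ so $q-1\ge 1$) gives $\|F\|_{L^q}\le (q-1)^{n/2}\|F\|_{L^2}$ for any $F\in\bigoplus_{m\le n}$. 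Applying this to $F = P$ and $F = Q$ and combining via $|X|^2 = P^2 + Q^2$ (so $\|X\|_{L^q} \le \|P\|_{L^q} + \|Q\|_{L^q}$ by the triangle inequality, or more sharply by working directly with the $\Cb$-valued chaos) yields $\|X\|_{L^q} \le (q-1)^{n/2}\|X\|_{L^2}$, which upon raising to the $q$-th power is exactly \eqref{hyper}.

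Alternatively — and this is the route I would actually write up to avoid the bookkeeping of splitting into real and imaginary parts — one can invoke the complex hypercontractivity estimate directly for $\Cb$-valued Gaussian chaos: there is a clean statement (see e.g.\ the standard references on Gaussian analysis, or one can cite it as in \cite{DH}) that for $X$ a homogeneous chaos of order $n$ in complex Gaussians, $\Eb|X|^q \le (q-1)^{nq/2}(\Eb|X|^2)^{q/2}$ for $q\ge 2$. The mild subtlety is that \eqref{multigauss} is not a priori \emph{homogeneous} of chaos order exactly $n$: terms where some index repeats with opposite signs, $k_j = k_{j'}$ with $\zeta_{j'} = -\zeta_j$, contribute $|\eta_{k_j}|^2$ which has nonzero mean and drops the chaos order. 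This is handled precisely as above by the remark that membership in $\bigoplus_{m\le n}$ (not exact homogeneity) is all that is needed, again because $q - 1 \ge 1$.

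The routine calculation is the monomial expansion and the verification that everything lands in chaoses of order $\le n$; the only genuine ingredient is Nelson's theorem, which I would cite rather than prove. I do not expect a real obstacle here: this is a standard lemma, and the proof is essentially a citation plus the elementary observation about the monotonicity of $(q-1)^{m/2}$. If one wanted a fully self-contained argument one could instead give the classical induction on $n$ using the fact that conditioning on all but one Gaussian variable reduces the chaos order by one and applying the scalar hypercontractivity bound for a single Gaussian, but citing Nelson's theorem is cleaner and is the approach I would take.
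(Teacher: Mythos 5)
Your proposal is correct and takes the same route as the paper, which disposes of this lemma with a single citation to \cite{OT} for the standard Gaussian hypercontractivity bound; what you have written out (Nelson's theorem for the OU semigroup, the chaos decomposition of $X$, and the observation that membership in $\bigoplus_{m\le n}$ suffices because $(q-1)^{m/2}\le(q-1)^{n/2}$ for $q\ge 2$) is precisely the content of that citation. One small caution: the parenthetical estimate $\|X\|_{L^q}\le\|P\|_{L^q}+\|Q\|_{L^q}$ would, when recombined with $\|X\|_{L^2}^2=\|P\|_{L^2}^2+\|Q\|_{L^2}^2$, lose a factor of $\sqrt{2}$, so you should indeed use the $\Cb$-valued version directly as you indicate — this follows from the real case because the Mehler kernel is nonnegative, hence $|T_\rho X|\le T_\rho|X|$ pointwise and the $L^2\to L^q$ contraction carries over verbatim.
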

\begin{proof} This is the standard hypercontractivity estimate for Gaussians, see \cite{OT}, {Lemma 2.6}.
\end{proof}
\begin{lem}[A combinatoric inequality]\label{combineq} Given a multi-index $\rho$, we have
\begin{equation}\label{combineq2}
\sum_{\rho^1+\cdots +\rho^9=\rho}\frac{\rho!}{(\rho^1)!\cdots(\rho^9)!}\cdot(2|\rho^1|)!\cdots (2|\rho^9|)!\leq C (2|\rho|)!.
\end{equation}
\end{lem}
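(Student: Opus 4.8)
\textbf{Proof proposal for Lemma \ref{combineq}.}

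The plan is to reduce the nine-fold sum to an iterated application of the two-fold inequality
\begin{equation*}
\sum_{a+b=m}\binom{m}{a}(2a)!\,(2b)!\leq C\,(2m)!,
\end{equation*}
which is the one-variable ($\rho\in\mathbb Z_{\geq 0}$) version of what we want, applied coordinate by coordinate and then combined. First I would record the elementary fact that for nonnegative integers $a,b$ one has $(2a)!\,(2b)!\leq (2a+2b)!$ and moreover $\binom{2a+2b}{2a}\geq\binom{a+b}{a}$ (the latter because $\binom{2n}{2k}/\binom{n}{k}$ is an integer, or by a direct injection of lattice paths). Hence $\binom{m}{a}(2a)!(2b)!\leq \binom{2m}{2a}^{-1}\binom{m}{a}(2m)!\leq (2m)!$ for each individual term, and summing over the $m+1$ choices of $a$ gives the bound $(m+1)(2m)!$. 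To get a \emph{constant} $C$ independent of $m$ rather than the factor $m+1$, I would instead argue that the sum $\sum_{a+b=m}\binom{m}{a}(2a)!(2b)!/(2m)!$ is dominated by its two extreme terms: writing $t_a=\binom{m}{a}(2a)!(2b)!$ with $b=m-a$, the ratio $t_{a+1}/t_a=\frac{(m-a)}{(a+1)}\cdot\frac{(2a+1)(2a+2)}{(2b-1)(2b)}$ is $\geq 2$ once $a\geq m/2$ is bounded away from the center by a fixed amount, so the sequence $(t_a)$ is (up to a bounded number of central terms) geometrically increasing toward $a=m$ and geometrically decreasing toward $a=0$; therefore $\sum_a t_a\leq C\,(t_0+t_m)=C\cdot 2\,(2m)!$, i.e. the two-fold inequality holds with an absolute constant $C$.

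Next I would upgrade from integers to multi-indices. For a fixed multi-index $\rho=(\rho_1,\dots,\rho_k)$ and the partition $\rho=\rho^1+\rho^2$, one has $\frac{\rho!}{(\rho^1)!(\rho^2)!}=\prod_{i=1}^k\binom{\rho_i}{\rho^1_i}$, but the factorials appearing are $(2|\rho^1|)!$ and $(2|\rho^2|)!$, which see only the totals $|\rho^1|,|\rho^2|$. So I would sum first over all multi-indices $\rho^1$ with a \emph{prescribed} total $|\rho^1|=a$ (forcing $|\rho^2|=|\rho|-a=:b$): by Vandermonde's identity $\sum_{|\rho^1|=a}\prod_i\binom{\rho_i}{\rho^1_i}=\binom{|\rho|}{a}$. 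This collapses the multi-index two-fold sum to exactly the integer two-fold sum with $m=|\rho|$, and the bound $C\,(2|\rho|)!$ follows from the previous paragraph. Thus the two-fold multi-index inequality holds with an absolute constant.

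Finally I would iterate. Group the nine parts as $\rho=\rho^1+(\rho^2+\cdots+\rho^9)$ and apply the two-fold inequality to split off $\rho^1$: the sum over $\rho^1$ (with everything else grouped) is bounded by $C$ times the same quantity with the nine factors replaced by $(2|\rho^1|+\cdots)$ — more precisely, using $(2|\rho^1|)!\cdot(2|\rho^2|)!\cdots(2|\rho^9|)!$ and the multinomial splitting of $\rho!$, one peels off one index at a time, each peel costing a factor $C$, until only $(2|\rho|)!$ remains; since the number of peels is $8$, a fixed number, the total loss is $C^8$, an absolute constant. The only mild subtlety — and the step I would be most careful about — is making sure the intermediate sums are organized so that at each stage one is genuinely in the shape $\sum_{a+b=\text{(total)}}\binom{\text{total}}{a}(2a)!\,(\text{rest})$, which is handled cleanly by always splitting off one group at a time and invoking Vandermonde to reduce the multi-index sum over that group to a single integer sum over its total; there is no real obstacle here, just bookkeeping. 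The constant $C$ in \eqref{combineq2} may then be taken to be $C_0^{8}$ where $C_0$ is the absolute constant from the two-fold inequality.
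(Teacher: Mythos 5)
Your overall route coincides with the paper's: peel off one index at a time, use Vandermonde to collapse the multi-index sum with $|\rho^1|=a$ fixed to the single binomial $\binom{|\rho|}{a}$, and reduce everything to the one-variable inequality $\sum_{a+b=m}\binom{m}{a}(2a)!\,(2b)!\leq C(2m)!$. The gap is in your justification of that one-variable inequality. Writing $t_a=\binom{m}{a}(2a)!\,(2m-2a)!$ and $b=m-a$, the ratio you compute simplifies exactly to
\begin{equation*}
\frac{t_{a+1}}{t_a}=\frac{m-a}{a+1}\cdot\frac{(2a+1)(2a+2)}{(2b-1)(2b)}=\frac{2a+1}{2b-1},
\end{equation*}
which is $\geq 2$ only when $2a+1\geq 4b-2$, i.e. $a\geq \tfrac{2m}{3}-\tfrac12$. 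So the ratio does \emph{not} reach $2$ once $a$ exceeds $m/2$ by a fixed amount: on the whole range $m/2\leq a\lesssim 2m/3$ (about $m/6$ terms, not a bounded number) the ratio lies strictly between $1$ and $2$, and your claim ``geometrically increasing with ratio $\geq 2$ up to a bounded number of central terms'' fails as stated. The conclusion $\sum_a t_a\leq C\,(2m)!$ is nevertheless true and your argument is repairable: since the ratio exceeds $1$ for all $a\geq m/2$, every term with $m/2\leq a\leq 2m/3$ is at most $t_{\lceil 2m/3\rceil}$, which by the genuine geometric growth beyond $2m/3$ is at most $2^{-m/3+O(1)}t_m$; even multiplied by the $\leq m+1$ terms this is $O(t_m)$, and the outer range is geometric. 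A cleaner fix (and the one the paper uses) avoids ratios altogether: $\binom{2m}{2a}\geq\binom{m}{a}^2$ by Vandermonde, so $\binom{m}{a}(2a)!(2b)!/(2m)!=\binom{m}{a}/\binom{2m}{2a}\leq 1/\binom{m}{a}\leq 2^{-\min(a,b)}$, which sums to an absolute constant.

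Two smaller points. First, your parenthetical ``because $\binom{2n}{2k}/\binom{n}{k}$ is an integer'' is false (take $n=4$, $k=2$: $70/6$); the inequality $\binom{2n}{2k}\geq\binom{n}{k}$ is still true (e.g. from $\binom{2n}{2k}\geq\binom{n}{k}^2$ or your lattice-path injection), and in any case that preliminary $(m+1)(2m)!$ bound is discarded later, so this is harmless. Second, the peeling/iteration step and the Vandermonde collapse are exactly as in the paper's proof and are fine; the only substantive repair needed is the one-variable estimate above.
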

\begin{proof} We first fix $\rho_*^2:=\rho^2+\cdots +\rho^9$ and sum over $(\rho^2,\cdots,\rho^9)$, then sum over $(\rho^1,\rho_*^2)$. The sum over $(\rho^2,\cdots,\rho^9)$ can be bounded by inductively repeating this process, provided one can bound the sum over $(\rho^1,\rho_*^2)$. To bound this latter sum, if $\rho=(a_1,\cdots, a_n)$ then it can be written as
\[\sum_{0\leq b_j\leq a_j}\prod_{j=1}^n\binom{a_j}{b_j}(2A-2B)!(2B)!\] where $A=a_1+\cdots+a_n$ and $B=b_1+\cdots+b_n$. If $B$ is fixed, then the sum over $(b_1,\cdots,b_n)$ equals $\binom{A}{B}$ by a simple application of the binomial theorem (or the Vandermonde identity), so (\ref{combineq2}) would follow from the inequality
\begin{equation}\label{expdecay}\sum_{0\leq B\leq A}\binom{A}{B}\binom{2A}{2B}^{-1}\leq C.\end{equation} By symmetry, in (\ref{expdecay}) we may assume $B\leq A/2$, so $\binom{A}{B}/\binom{2A}{2B}\leq 1/\binom{A}{B}\leq1/ \binom{2B}{B}\leq 2^{-B}$, which proves (\ref{expdecay}) and hence (\ref{combineq2}) by induction.
\end{proof}
\begin{lem}[A sharp Hua's lemma]\label{hua} For $s ,r \in \Rb$ and $h\in\Zb$, define the Gauss sums 
\begin{equation}\label{huagausssum}G_h(s, r,n)=\sum_{p=h}^{h+n}e(s p^2+rp), n \in \Nb; \qquad \mathrm{and} \qquad G_h(s, r,x)=G_h(s, r, \lfloor x\rfloor), x\in \Rb_+,\end{equation} where $\lfloor x\rfloor$ is the floor function, and $e(z)=e^{2\pi iz}$. Then we have
\begin{equation}\label{sharphua}\|G_h(\cdot,r,n)\|_{L^4([0,1])}^4\lesssim n^2\log(2+n),\quad \|G_h(\cdot,r,n)\|_{L^6([0,1])}^6\lesssim n^4
\end{equation}uniformly in $(r,h)$. The constants involved in $\lesssim$ here are absolute constants.
\end{lem}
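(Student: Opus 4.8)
The plan is to turn both norms into lattice-point counts by squaring/cubing the Gauss sum and integrating in $s$. For the fourth moment, expand
\[
|G_h(s,r,n)|^4 = \sum_{p_1,p_2,p_3,p_4 \in [h,h+n]} e\big(s(p_1^2+p_2^2-p_3^2-p_4^2)\big)\, e\big(r(p_1+p_2-p_3-p_4)\big);
\]
since each $p_i^2$ is an integer, $\int_0^1 e(sM)\,ds = \mathbf 1_{M=0}$, so $\|G_h(\cdot,r,n)\|_{L^4}^4 = \sum_{p_1^2+p_2^2=p_3^2+p_4^2} e(r(p_1+p_2-p_3-p_4))$ over $p_i\in[h,h+n]$. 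Taking absolute values inside the sum gives the uniformity in $r$ for free, so it suffices to show $\#\{(p_1,\dots,p_4)\in[h,h+n]^4 : p_1^2+p_2^2 = p_3^2+p_4^2\} \lesssim n^2\log(2+n)$, uniformly in $h$. Identically, cubing gives $\|G_h(\cdot,r,n)\|_{L^6}^6 \le \#\{(p_1,\dots,p_6)\in[h,h+n]^6 : p_1^2+p_2^2+p_3^2 = p_4^2+p_5^2+p_6^2\}$, uniformly in $r$, so I would need the latter $\lesssim n^4$.

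\textbf{The fourth-moment count.} Write $p_1^2 - p_3^2 = p_4^2 - p_2^2$, i.e. $(p_1-p_3)(p_1+p_3) = (p_4-p_2)(p_4+p_2)$, and set $a = p_1-p_3,\ A = p_1+p_3,\ b = p_4-p_2,\ B = p_4+p_2$, so $aA = bB$ with $|a|,|b|\le n$, with $A,B$ in an interval of length $2n$ around $2h$, and with matching parities. The cases $a=0$ or $b=0$ contribute $O(n^2)$ directly. For $aA = bB \ne 0$, write $b/a = q/r$ in lowest terms; then $A = qt$, $B = rt$ for a common $t$, and for a given coprime pair $(q,r)$ the factor $\gcd(a,b)$ has $\le n/\max(|q|,|r|)$ choices. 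The key refinement — which is exactly what yields a single power of $\log$ rather than two — is that requiring \emph{both} $A$ and $B$ (not just $B$) to lie in the same length-$2n$ interval confines $t$ to a window of length $\lesssim n/\max(|q|,|r|)$, and in fact forces the window to be empty unless $q=\pm r$ once $h$ is large relative to $n$. Summing $\frac{n}{\max(|q|,|r|)}\big(\frac{n}{\max(|q|,|r|)}+1\big)$ over coprime $(q,r)$ with $|q|,|r|\le n$ gives $\lesssim n^2\log(2+n)$; incidentally this shows the bound collapses to $O(n^2)$ when $h\gg n$, so the extremal regime is $h$ comparable to $n$.

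\textbf{The sixth-moment count.} I would run the parallel substitution $u_i = p_i - p_{i+3}$, $v_i = p_i + p_{i+3}$ for $i=1,2,3$, turning the equation into $u_1v_1 + u_2v_2 + u_3v_3 = 0$ with $|u_i|\le n$ and each $v_i$ in a length-$2n$ interval around $2h$. The part where some $u_i$ vanishes is $\lesssim n^3 + n^3\log(2+n)$ using the fourth-moment bound just proved, hence $\ll n^4$. For the main part with all $u_i\ne 0$, fixing $u_1,u_2,u_3$ and $v_2,v_3$ pins $v_1 = -(u_2v_2+u_3v_3)/u_1$ to be an integer in a short interval; organizing by the common factors of the $u_i$ and exploiting, as above, that all three ``sum'' variables $v_1,v_2,v_3$ are confined to one common short interval, one obtains the claimed $\lesssim n^4$ with no logarithmic loss (and, again, a collapse for large $h$). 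I expect the main obstacle to be precisely this divisor bookkeeping in the sixth-moment estimate: the crude routes — Hölder interpolation of $L^6$ between $L^4$ and $L^\infty$, or dropping the cofactor-interval constraint — all lose at least one factor of $\log n$, so the genuinely sharp bound must lean on the rigidity forced by having all the $v_i$ in a single short interval. (I note that Lemma \ref{hua} is used downstream only through Hölder against six or more Gauss sums, where the subsequent estimates carry ample logarithmic room; so even a bound $\lesssim n^4(\log n)^{C}$ would suffice for the rest of the paper, though I would aim for the stated form.)
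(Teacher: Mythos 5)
Your approach is the same as the paper's: integrate out $s$ to reduce to counting solutions of $\sum\pm p_i^2=0$ with $p_i\in[h,h+n]$, and then factor differences of squares via $(p_i-p_j,p_i+p_j)$ to land on a bilinear equation with ``difference'' variables of size $\le n$ and ``sum'' variables confined to a length-$2n$ interval around $2h$. Your fourth-moment count is correct (and close to the paper's, which instead takes WLOG $|a|\ge|c|$ and counts $d$ in a residue class $\bmod\ a/\gcd(a,c)$; both give $n^2\log n$ without any extra device). Two cautions on your framing, though. First, the ``key refinement'' you identify --- that \emph{both} $A$ and $B$ lie in a short interval --- is an artifact of your coprime-$(q,r)$ parametrization; in the paper's WLOG-on-sizes bookkeeping the single interval constraint that corresponds to the larger modulus suffices, so there is no ``crude route that loses $\log^2$'' to dodge. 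Second, for the sixth moment you attribute the sharpness to ``all three sum variables $v_1,v_2,v_3$ confined to one common short interval,'' but the interval constraint on $v_1$ is not actually used (once $v_2,v_3$ and the $u_i$ are fixed, $v_1$ is determined and its interval membership only cuts the count further); the sharpness comes from counting lattice points $(v_2,v_3)$ subject to the congruence $u_2v_2+u_3v_3\equiv 0\ (\bmod\ u_1)$.

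The genuine gap is that the sixth-moment count is only gestured at. ``Organizing by the common factors of the $u_i$'' must be made precise, and a naive version (fix $v_2$ freely, then constrain $v_3$) overcounts. The paper's execution, which you would need to reproduce, is a nested residue-class argument: with $|u_1|\ge|u_2|\ge|u_3|>0$, first $v_3\equiv 0\ (\bmod\ \gcd(u_1,u_2)/\gcd(u_1,u_2,u_3))$, giving $\lesssim n\gcd(u_1,u_2,u_3)/\gcd(u_1,u_2)+1$ choices; then, for each such $v_3$, $v_2$ is determined $\bmod\ u_1/\gcd(u_1,u_2)$, giving $\lesssim n\gcd(u_1,u_2)/|u_1|+1$ choices; then $v_1$ is unique. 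Multiplying and setting $\Delta=\gcd(u_1,u_2,u_3)$, $u_i=u_i'\Delta$, the dominant term is
\[
n^2\sum_{|u_1|\ge|u_2|\ge|u_3|>0}\frac{\gcd(u_1,u_2,u_3)}{|u_1|}\;\lesssim\; n^2\sum_{\Delta\le n}\sum_{u_1'\le n/\Delta}\frac{(u_1')^2}{u_1'}\;\lesssim\; n^2\sum_{\Delta\le n}\Bigl(\frac{n}{\Delta}\Bigr)^2\;\lesssim\; n^4,
\]
with the cross terms likewise $\lesssim n^4$. Your ``collapse for large $h$'' remark and your observation that $n^4(\log n)^C$ would suffice downstream are both correct but not needed: the lemma is only ever applied inside an estimate that already gains a power of $L^{-\upsilon}$, so logarithmic losses are harmless, but the argument above delivers the stated sharp form anyway.
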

\begin{proof} We only need to bound the cardinalities of the sets
\begin{multline*}A_4=\{(a,b,c,d)\in[h,h+n]^4:a^2-b^2+c^2-d^2=0\},\\\mathrm{and}\,\,A_6=\{(a,\cdots,f)\in[h,h+n]^6:a^2-b^2+c^2-d^2+e^2-f^2=0\}.\end{multline*} By changing variables $(a,b)\mapsto (a+b,a-b)$ etc., we can reduce to the sets
\[B_4=\{(a,b,c,d):ab+cd=0\}\quad\mathrm{and}\quad B_6=\{(a,\cdots,f):ab+cd+ef=0\},\] where $|a|,|c|,|e|\leq n$ and $b,d,f\in[2h,2h+2n]$. To count $\#B_6$, we may assume $a\geq |c|\geq |e|$ (and $a>0$). Note that $f$ belongs to a fixed residue class modulo $\gcd(a,c)/\gcd(a,c,e)$; once $f$ is fixed, then $d$ belongs to a fixed residue class modulo $a/\gcd(a,c)$. When $f$ and $d$ are fixed then $b$ is unique. This implies that
\[\#B_6\lesssim\sum_{a\geq|c|\geq |e|}\frac{n}{\gcd(a,c)/\gcd(a,c,e)}\cdot\frac{n}{a/\gcd(a,c)}=n^2\sum_{a\geq |c|\geq |e|}\frac{\gcd(a,c,e)}{a}.\] For the last sum, let $\gcd(a,c,e)=\Delta$ and $a=a'\Delta $ etc., then
\[\sum_{a\geq |c|\geq |e|}\frac{\gcd(a,c,e)}{a}\lesssim\sum_{0<\Delta\leq n}\sum_{0<a'\leq n/\Delta}\sum_{|c'|,|e'|\leq a'}\frac{1}{a'}\lesssim\sum_{0<\Delta\leq n}(n/\Delta)^2\lesssim n^2,\] hence $\#B_6\lesssim n^4$. In the same way we can bound $\# B_4\lesssim n^2\log (2+n)$.
\end{proof}
\begin{lem}\label{finitelem} Fix $M\geq L^{(100d)^3}$. Consider $k=(k^1,\cdots,k^d)\in \Zb_L^d$ and a system $(r,q,v_1,\cdots,v_q,f,y)$ where $0\leq q\leq r\leq d$, $v_j\in \Zb_L^r\,(1\leq j\leq q)$ are nonzero orthogonal vectors and $f=(f^{r+1},\cdots,f^d)\in\Zb_L^{d-r}$ and $y=(y^1,\cdots,y^q)\in\Rb^q$, such that $|f|,|v_j|,|y|\leq M^{C_0(d)}$ where $C_0(d)$ is a fixed large constant depending on $d$, and that the linear span of $\{v_1,\cdots,v_q\}$ does not contain any coordinate vector in $\Rb^r$. We say that the system $(r,q,v_1,\cdots,v_q,f,y)$  \emph{represents} $k$, if the followings hold:
\begin{enumerate}
\item If $z=(z^1,\cdots,z^d)\in \Zb_L^d$, $|z|\leq M$ and $|\langle k,z\rangle_\beta|\leq M$, then the vector $(z^1,\cdots,z^r)$ is a linear combination of $\{v_1,\cdots,v_q\}$;
\item If $(z^1,\cdots,z^r)=\gamma^1v_1+\cdots +\gamma^qv_q$, then we have $\langle k,z\rangle_\beta=y^1\gamma^1+\cdots +y^q\gamma^q+\beta^{r+1}f^{r+1}z^{r+1}+\cdots +\beta^df^dz^d$.
\end{enumerate}

Then, each $k\in \Zb^d$, after possibly permuting the coordinates, is represented by some system $(r,q,v_1,\cdots,v_q,f,y)$. Conversely, for each system $(r,q,v_1,\cdots,v_q,f,y)$ and $\theta>0$, there exists $k\in\Zb^d$ represented by a system $(r,q,v_1,\cdots,v_q,f,y')$, such that $|y'-y|<\theta$.
\end{lem}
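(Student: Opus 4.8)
\textbf{Proof strategy for Lemma~\ref{finitelem}.} The plan is to produce, for any given $k$, a natural candidate system by diagonalizing the lattice of ``small resonance'' vectors, and to prove the converse by a perturbation/approximation argument on the auxiliary parameters $y$. For the forward direction, fix $k\in\Zb^d$. Consider the set $W$ of all $z\in\Zb_L^d$ with $|z|\le M$ and $|\langle k,z\rangle_\beta|\le M$; this is (the intersection with a box of) a subgroup-like structure, and I would pass to the $\Zb$-module $\Lambda$ generated by $W$ together with its coordinate projection $\pi$ onto the first $r$ coordinates, where $r$ is chosen so that, after permuting coordinates, the projection to the last $d-r$ coordinates of every $z\in W$ is forced. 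More precisely, I would run a greedy/Gaussian-elimination procedure: repeatedly pick a $z\in W$ whose support, among the coordinates not yet ``frozen'', is as small as possible; each such $z$ contributes a new generator $v_j$ of the relevant sublattice of $\Zb^r$. Because $|z|\le M$ and there are at most $d$ such independent steps, all resulting $v_j$ have size $\le M^{C_0(d)}$ for a suitable $C_0(d)$, and by construction $\{v_1,\dots,v_q\}$ are nonzero; after a further Gram--Schmidt-type integral reduction (clearing denominators) one can take them orthogonal without increasing the size bound beyond $M^{C_0(d)}$. Property (1) then holds by design: every $z\in W$ has its first-$r$-block in $\mathrm{span}\{v_j\}$. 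For (2), once $z^{1..r}=\sum\gamma^j v_j$ I would set $y^j:=\langle k, \widetilde v_j\rangle_\beta$ where $\widetilde v_j\in\Zb_L^d$ is the vector with first block $v_j$ and zero elsewhere, and $f^\ell := k^\ell$ for $\ell>r$ (with the $\beta^\ell$ absorbed as in the statement); linearity of $\langle k,\cdot\rangle_\beta$ in $z$ gives the displayed identity immediately. One must check $|y|\le M^{C_0(d)}$: since $|\widetilde v_j|\le M^{C_0(d)}$ and $\widetilde v_j$ need not lie in $W$ a priori, I would instead bound $|y^j|$ using that $v_j$ arose as (a combination of) small resonance vectors, so $|\langle k,\widetilde v_j\rangle_\beta|\le M^{C_0'(d)}$; this is the one place requiring a little care, and the constant $C_0(d)$ must be chosen large enough to swallow it. Finally one must verify $\mathrm{span}\{v_j\}$ contains no coordinate vector of $\Rb^r$: if it did, say $e_i\in\mathrm{span}$, then $e_i$ (suitably scaled into $\Zb_L^d$) would be a small resonance vector forcing $|k^i|_\beta\lesssim M$, contradicting the choice that the last $d-r$ coordinates are precisely the ``frozen/large'' ones — i.e., by permuting coordinates we arrange that such $i$ would have been moved past index $r$.

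For the converse, fix a system $(r,q,v_1,\dots,v_q,f,y)$ and $\theta>0$. Here $y\in\Rb^q$ is a real vector, and the point is that the map $k\mapsto (\langle k,\widetilde v_1\rangle_\beta,\dots,\langle k,\widetilde v_q\rangle_\beta)$ has dense image modulo the constraints, by the genericity of $\beta$ (algebraic independence of $\beta^1,\dots,\beta^d$ over $\Qb$, Lemma~\ref{genericity}(2)). Concretely I would look for $k$ of the form $k=f'\cdot(\text{extension of }f)+\sum_j t_j u_j$ where $u_j$ are dual vectors to $v_j$ in the appropriate quotient and $t_j\in\Zb$; then $\langle k,\widetilde v_i\rangle_\beta$ is, up to a fixed constant, a linear form in $(t_1,\dots,t_q)$ with coefficients that are $\Qb$-linearly-independent (indeed $\beta$-multiples producing an irrational flow), so by simultaneous Diophantine approximation (Weyl/Kronecker) one can choose integers $t_j$ making $\langle k,\widetilde v_i\rangle_\beta$ within $\theta$ of $y^i$ for all $i$. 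This produces $y'$ with $|y'-y|<\theta$. One then checks that this $k$ is actually \emph{represented} by $(r,q,v_1,\dots,v_q,f,y')$ — property (2) is automatic from the construction, and property (1) holds provided $k$ is chosen ``generic enough'' within the approximation, which one can guarantee because the bad set (vectors $z$ with $|z|\le M$, $|\langle k,z\rangle_\beta|\le M$, but $z^{1..r}\notin\mathrm{span}\{v_j\}$) is a finite union of lattice cosets avoided by all but a density-zero set of the admissible $t_j$; here one uses Lemma~\ref{genericity}(1) to quantify how $\langle k,z\rangle_\beta$ stays bounded away from small values for $z$ outside the prescribed span. Alternatively, and perhaps more cleanly, one can first take \emph{any} $k_0$ whose forward-direction system is exactly $(r,q,v_1,\dots,v_q)$ with some auxiliary vector $y_0$ (such $k_0$ exists by a dimension count once $q\le r\le d$ and the span condition holds), then perturb $k_0$ by adding large multiples of vectors orthogonal to all $v_j$ and to the last $d-r$ coordinate directions; such perturbations do not change which $z$ are small resonances (for $|z|\le M$ one still needs $z^{1..r}\in\mathrm{span}\{v_j\}$, and then the perturbation lands in a direction $\beta$-orthogonal to $\widetilde v_j$ so $\langle k,\widetilde v_j\rangle_\beta$ is essentially unchanged) but do shift $y$ continuously (densely) by the genericity of $\beta$, giving $y'$ within $\theta$ of $y$.

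The main obstacle I anticipate is not either individual step but the bookkeeping that makes them compatible: one must choose the coordinate permutation, the index $r$, and the generators $v_j$ in a canonical enough way that (i) the size bound $|v_j|,|f|,|y|\le M^{C_0(d)}$ holds with a single dimensional constant, (ii) the span-of-$v_j$-contains-no-coordinate-vector condition is automatic rather than something extra to impose, and (iii) in the converse the perturbation directions that move $y$ are genuinely available — i.e., the orthogonal complement (in the $\beta$-form) of $\mathrm{span}\{\widetilde v_1,\dots,\widetilde v_q\}$ together with the last $d-r$ axes is nontrivial and its $\beta$-pairings with the $\widetilde v_j$ are dense, which is exactly where $\beta\notin\Zf$ enters. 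I would handle (i)--(ii) by phrasing the forward construction as an explicit Hermite/LLL-type reduction of the lattice $\{z\in\Zb^d: |\langle k,z\rangle_\beta|\le M\}\cap(\text{box})$ and tracking the size growth coordinate by coordinate, and (iii) by the Kronecker/Weyl equidistribution argument sketched above. Everything else — the two displayed properties — follows from linearity of $\langle\cdot,\cdot\rangle_\beta$ and is essentially bookkeeping once the lattice-reduction data are fixed.
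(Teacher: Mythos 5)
Your proposal captures the two core ideas exactly as the paper does: build the $v_j$ from a maximal independent family of ``small resonance'' vectors $z$ with $|z|\leq M$ and $|\langle k,z\rangle_\beta|\leq M$, orthogonalize, and read off $y^j$ and $f^\ell$ from linearity; for the converse, invoke Kronecker--Weyl equidistribution coming from the $\Qb$-linear independence of the $\beta^j$. But two places where you wave your hands conceal real work.

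In the forward direction, your handling of the span condition is circular. You say ``if $e_i\in\mathrm{span}$ \dots by permuting coordinates we arrange that such $i$ would have been moved past index $r$,'' but the permutation and the index $r$ are supposed to be \emph{outputs} of the construction, not inputs you can adjust after the fact; a greedy/LLL-type reduction does not by itself produce an $r$ for which the span avoids all coordinate vectors. The paper resolves this by induction on $d$: run the construction with $r=d$; if $\mathrm{span}\{v_j\}$ contains some $e_i$, that forces $|k^i|\leq M^C$, so this coordinate is peeled off into the $f$-part and the construction recurses in dimension $d-1$. This is what makes the permutation and the choice of $r$ canonical, and it is also what keeps the constant $C_0(d)$ under control (one bounded loss per induction step). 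Your bound on $|y^j|$ is also not quite right as stated --- $\widetilde v_j$ is \emph{not} in $W$, but the paper notes $|\langle k,w_j\rangle_\beta|\leq M$ for the original generators $w_j\in H$, and Gram--Schmidt only inflates this by $M^C$.

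In the converse, the decisive subtlety is that the approximation target is a non-lattice vector. The paper picks $u_1,\dots,u_{d-q}\in\Zb^d$ orthogonal to the $v_j$ in the \emph{standard} inner product, sets $k^*=\sum_i\rho_i u_i$ (which is not in $\Zb^d$ after dividing by $\beta$), and then must find $k\in\Zb^d$ with $|k_\beta-(k^*-h)|$ small, i.e.\ solve $\{((k^*)^j-h^j)/\beta^j\}<M^{-2C_1}\theta$ simultaneously for all $j$. The fact that this is possible for a positive proportion of $\rho\in[0,B]^{d-q}$ is exactly the Weyl step, and it requires knowing that for each $j$ some $u_{i(j)}$ has $(u_{i(j)})^j\neq 0$ --- which is precisely the ``span contains no coordinate vector'' hypothesis. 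Your parametrization $k=\mathrm{ext}(f)+\sum_j t_j u_j$ with $t_j\in\Zb$ keeps $k$ integral but makes $\langle k,\widetilde v_i\rangle_\beta$ a $\Zb$-combination of the \emph{irrational} numbers $\langle u_j,\widetilde v_i\rangle_\beta$; you can still run a Kronecker argument, but you then have to control simultaneously (a) the density of $t$ avoiding the bad resonance cosets and (b) the density of $t$ approximating the target $y$ --- and both now involve irrational linear forms whose Diophantine properties you would have to establish from scratch, rather than reducing, as the paper does, to the standard $\Qb$-linear independence of $\{1,(u_i)^j/\beta^j\}$ with integer coefficients. Finally, your ``alternative'' route asserts the existence of a $k_0$ with prescribed system ``by a dimension count,'' but that existence is essentially the statement being proved and cannot be assumed.
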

\begin{proof}[Proof of Lemma \ref{finitelem}] As $M\geq L^{(100d)^3}$, upon multiplying everything by $L$, we may replace $\Zb_L$ by $\Zb$. As our convention, in the proof $C$ will denote any large constant depending only on $d$. In the first part, given $k$ we will construct the system $(r,q,v_1,\cdots,v_q,f,y)$, which is done by induction in $d$. The case $d=1$ is obvious, now suppose the result is true for $d-1$, with constant $C_0=C_0(d-1)$. We also denote $k_\beta=(k^1\beta^1,\cdots,k^d\beta^d)$ for $k=(k^1,\cdots,k^d)$ and $\beta=(\beta^1,\cdots,\beta^d)$, so that $\langle k,z\rangle_\beta=\langle k_\beta,z\rangle$.

Fix $k\in\Zb^d$, consider the set $H$ of $z\in\Zb^d$ such that $|z|\leq M$ and $|\langle k,z\rangle_\beta|\leq M$ (clearly $0\in H$). Let $q$ be the maximal number of linearly independent vectors in $H$, we may fix a maximum independent set $\{w_1,\cdots, w_q\}\subset H$, and apply Gram-Schmidt process to get orthogonal vectors $(v_1,\cdots,v_q)$. Since each $w_j\in\Zb^d$ and $|w_j|\leq M$, we can easily make $v_j\in\Zb^d$ and $|v_j|\leq M^{C}$. If the linear span of $\{v_1,\cdots,v_q\}$ does not contain any coordinate vector in $\Rb^d$, then we shall prove the result with $r=d$. In fact, (1) is already satisfied by definition; since $|\langle k,w_j\rangle_\beta|\leq M$ for $1\leq j\leq q$, we also know that $|\langle k,v_j\rangle_\beta|\leq M^{C}$ for $1\leq j\leq q$. Let $y^j=\langle k,v_j\rangle_\beta$, then $|y|\leq M^C$ and (2) is also satisfied.

If, instead, the linear span of $\{v_1,\cdots,v_q\}$ contains a coordinate vector in $\Rb^d$, say $e_d=(0,\cdots,0,1)$, we shall apply the induction hypothesis. In this case we have $|\langle k,v_j\rangle_\beta|\leq M^{C}$ for $1\leq j\leq q$ and hence $|k^d|\leq M^C$. By induction hypothesis (with $M$ replaced by $M^C$), the vector $(k^1,\cdots,k^{d-1})$ is represented by some system $(r,q,v_1,\cdots,v_q,f,y)$ where $0\leq q\leq r\leq d-1$ and $|f|,|v_j|,|y|\leq M^{CC_0}$. Now we claim that $k$ is represented by $(r,q,v_1,\cdots,v_q,f',y)$ where $f'=(f,k^d)$; in fact (2) is satisfied by definition, as for (1), if $|z|\leq M$ and $|\langle k,z\rangle_\beta|\leq M$, then $|\beta^1k^1z^1+\cdots+\beta^{d-1}k^{d-1}z^{d-1}|\leq M^C$ as $|k^d|\leq M^C$, so we may apply the induction hypothesis (with $M$ replaced by $M^C$) to show that $(z^1,\cdots,z^r)$ is a linear combination of $\{v_1,\cdots,v_q\}$. In either case we have constructed the desired system, with $C_0(d)=C\cdot C_0(d-1)$.

\smallskip
Now, suppose a system $(r,q,v_1,\cdots,v_q,f,y)$, and $\theta>0$, is fixed. We may choose $k^j=f^j$ for $r+1\leq j\leq d$, and again notice that $|z|\leq M$ and $|\langle k,z\rangle_\beta|\leq M$ implies that $|\beta^1k^1z^1+\cdots+\beta^{r}k^{r}z^{r}|\leq M^C$. Therefore we only need to consider $r=d$. Select vectors $u_1,\cdots,u_{d-q}\in\Zb^d$ such that they form an orthogonal basis with $\{v_1,\cdots,v_q\}$, and $|u_j|\leq M^{C}$ for $1\leq j\leq d-q$. Now choose $k^*=\rho_1u_1+\cdots +\rho_{d-q}u_{d-q}$, where $\rho_j$ are large integers, and assume $k$ is chosen such that $|k_\beta-k^*|\leq M^{C}$. We will assume $0\leq \rho_j\leq B$ and $B\gg_{M,\theta}1$. Clearly, if $|z|\leq M$ and $|\langle k,z\rangle_\beta|\leq M$, then $|\langle k^*,z\rangle|\leq M^{C}$.

Suppose $|z|\leq M$ and $|\langle k^*,z\rangle|\leq M^{C}$. If we decompose $z=x_1u_1+\cdots +x_{d-q}u_{d-q}+z'$ where $z'$ is a linear combination of $(v_1,\cdots,v_q)$, then
\[\langle k^*,z\rangle=\sum_{j=1}^{d-q}\rho_j|u_j|^2x_j.\] Each $x_j$ is a rational number and $\max_j|x_j|\geq M^{-C}$ unless all $x_j=0$, and the number of choices for $(x_1,\cdots,x_{d-q})$ is at most $M^C$ when $z$ varies. For each fixed nonzero $(x_1,\cdots, x_{d-q})$ the number of choices for $(\rho_1,\cdots,\rho_{d-q})$ satisfying $|\langle k^*,z\rangle|\leq M^{C}$ is at most $M^{C}B^{d-q-1}$, so for at least $B^{d-q}-M^{C}B^{d-q-1}$ choices of $(\rho_1,\cdots,\rho_{d-q})$ (and for any choice of $k$ satisfying $|k_\beta-k^*|\leq M^{C}$ given $k^*$), we have that $|z|\leq M$ and $|\langle k,z\rangle_\beta|\leq M$ implies that $z$ is a linear combination of $v_j\,(1\leq j\leq q)$.

For such choices we already have (1). Clearly $z$ is then represented by $(q,v_1,\cdots,v_q,y')$ (with $r=d$ and $f$ being void) where $(y')^j=\langle k_\beta,v_j\rangle=-\langle k^*-k_\beta,v_j\rangle$. Given $(v_j)$ and $y$, we may fix a vector $h\in\Rb^d$, where $|h|\leq M^{C}$, such that $\langle h,v_j\rangle=-y^j$ for $1\leq j\leq q$; it then suffices to choose $k^*$ and $k$ such that $|k^*-k_\beta-h|\leq M^{-C_1}\theta$ with $C_1$ larger than all the $C$ appearing above. (note that this also implies $|k_\beta-k^*|\leq M^{C}$). Since $k$ can be arbitrarily chosen, it suffices to have
\[\bigg\{\frac{(k^*)^j-h^j}{\beta^j}\bigg\}< M^{-2C_1}\theta\quad\mathrm{for}\quad1\leq j\leq d,\] where $\{\cdot\}$ means the distance to the nearest integer. Clearly
\[\frac{(k^*)^j-h^j}{\beta^j}=\sum_{i=1}^{d-q}\frac{(u_i)^j}{\beta^j}\rho_i-\frac{h^j}{\beta^j}.\]Since the linear span of $\{v_j\}$ does not contain any coordinate vector in $\Rb^d$, we know that, for each $1\leq j\leq d$, there exists $1\leq i=i(j)\leq d-q$ such that $(u_i)^j\neq 0$.

We may choose $\rho_i$ such that
\[\bigg\{\frac{(u_i)^j}{\beta^j}\rho_i\bigg\}<M^{-3C_1}\theta\,\,(i\neq i(j));\quad\bigg\{\frac{(u_i)^j}{\beta^j}\rho_i-\frac{h^j}{\beta^j}\bigg\}<M^{-3C_1}\theta\,\,(i=i(j)).\] Given $i$, since all the nonzero numbers in the set $\{1,(u_i)^j(\beta^j)^{-1}:1\leq j\leq d\}$ are $\mathbb{Q}$-linearly independent, by Weyl's equidistribution theorem, we see that the number of $(\rho_1,\cdots,\rho_{d-q})$ satisfying all the above conditions is $\gtrsim_{M,\theta}B^{d-q}$. Therefore, we may choose $(\rho_1,\cdots,\rho_{d-q})$, and hence $k^*$ and $k$, such that both (1) and (2) are satisfied.
\end{proof}
\subsection{Lattice point counting bounds} We list the various lattice point counting bounds, which are the main technical tools used in Section \ref{oper}.
\begin{lem}[Sphere counting]\label{jensen} Uniformly in $a\in\Zb_L^d$ and $\gamma\in\Rb$, we have the bound
\[\#\big\{x\in\Zb_L^d:|x-a|\leq 1,\,\,||x|_\beta^2-\gamma|\leq \delta^{-1} L^{-2}\big\}\lesssim \delta^{-1}L^{d-\frac{4}{3}}.\]
\end{lem}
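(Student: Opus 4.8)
\textbf{Proof proposal for Lemma \ref{jensen}.}

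The plan is to estimate the number of lattice points of $\Zb_L^d$ in the intersection of a unit ball centered at $a$ with the thin anti-annulus $\{||x|_\beta^2-\gamma|\leq \delta^{-1}L^{-2}\}$, by first rescaling to the standard integer lattice and then applying a classical sphere-in-an-annulus counting estimate. Writing $x = L^{-1}\xi$ with $\xi\in\Zb^d$, the condition $|x-a|\leq 1$ becomes $|\xi - La|\leq L$, i.e. $\xi$ ranges over $O(L^d)$ integer points in a ball of radius $L$, and the condition $||x|_\beta^2-\gamma|\leq \delta^{-1}L^{-2}$ becomes $\big|\sum_j \beta^j (\xi^j)^2 - \gamma L^2\big|\leq \delta^{-1}$. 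So the quantity to bound is the number of $\xi\in\Zb^d$ with $|\xi-La|\leq L$ lying in a $\delta^{-1}$-neighborhood (in the value of the positive-definite quadratic form $Q(\xi):=\sum_j\beta^j(\xi^j)^2$) of the level set $\{Q=\gamma L^2\}$.

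First I would reduce to the case $\gamma L^2 \sim L^2$ (i.e. the relevant sphere has radius comparable to $L$): if $\gamma L^2 \gg L^2$ the intersection is empty since $|\xi|\les L$ forces $Q(\xi)\les L^2$, and if $\gamma L^2 \ll L^2$ one gets an even better bound (the sphere is small, so the anti-annulus contains $O(\delta^{-1}L^{d-2})$ points by a cruder volume count, which is stronger than claimed). In the main regime, the standard approach is a dyadic decomposition of the annulus $\{r^2 \le Q(\xi) < r^2 + \delta^{-1}\}$ with $r\sim L$ into $O(1)$ pieces, together with the classical estimate that the number of integer points on a single sphere of radius $\sim L$ in $\Rb^d$ is $O(L^{d-2+\epsilon})$ in general, but more usefully the number of integer points in a \emph{fixed-width} annulus $\{|Q(\xi)-T|\le W\}$ with $T\sim L^2$ and $1\le W\le L$ is $O(L^{d-2}W + L^{d-2+1/3})$ or similar. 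With $W = \delta^{-1}$ and noting the problem statement only claims $\delta^{-1}L^{d-4/3}$ — which since $\delta^{-1}\ll L^2$ (indeed $\delta$ is a fixed small constant while $L\to\infty$) is weaker than $\delta^{-1}L^{d-2}\cdot L^{2/3}$ — one sees the bound follows from the annulus count $\les \delta^{-1} L^{d-2} + L^{d-4/3}$, which is $\les \delta^{-1}L^{d-4/3}$ since $\delta^{-1}\geq 1$. Concretely, I would invoke a Jensen-type bound: split the quadratic form value into $O(\delta^{-1}L^{-2}\cdot \#\{\text{relevant shells}\})$ and on each shell $\{n \le Q(\xi) < n+1\}$ (with $n\sim L^2$) use $\#\{\xi: |\xi-La|\le L,\ Q(\xi)=n+O(1)\}\les L^{d-2+\epsilon}$; summing over $O(\delta^{-1})$ consecutive shells gives $\delta^{-1}L^{d-2+\epsilon}$, and absorbing the $\epsilon$ loss into the gap $L^{2/3-\epsilon}$ between exponents $d-2$ and $d-4/3$ yields the claim. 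Here one may use the genericity of $\beta$ (Lemma \ref{genericity}) if needed to control the shell counts, though for the claimed exponent a soft divisor-type argument suffices.

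The main obstacle I anticipate is getting the correct power-saving exponent on a single shell $\{Q(\xi) = n + O(1)\}$ uniformly in the center $La$ of the ball — i.e. proving that a unit-scale (after rescaling, radius-$L$) ball meets a level set of $Q$ in at most $L^{d-4/3+\epsilon}$ lattice points rather than the trivial $L^{d-1}$. This is essentially a restricted lattice-point-on-a-sphere estimate, and the cleanest route is to localize: cover the ball $\{|\xi - La|\le L\}$ by $O(1)$ pieces and on each piece use that one coordinate of $\xi$ is determined up to $O(\delta^{-1}L^{-2}/(\text{gradient of }Q)) = O(\delta^{-1}/L)$ values once the others are fixed, giving a first bound $\les \delta^{-1}L^{d-2}$; this already suffices. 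So in fact the genuinely hard sphere-counting is \emph{not} needed for the stated exponent, and the proof reduces to: rescale, observe $|\nabla Q(\xi)|\sim L$ on the relevant region (using $\beta^j\sim 1$ and $|\xi|\sim L$), fix all but the largest coordinate $\xi^{j_0}$, and count $O(\delta^{-1}/L + 1)$ choices for $\xi^{j_0}$, for a total of $O(L^{d-1}(\delta^{-1}L^{-1}+1))$. The remaining point is to do slightly better than this to reach $L^{d-4/3}$ in the range where $\delta^{-1}\les L^{2/3}$; since $\delta$ is a \emph{fixed} constant independent of $L$, the term $\delta^{-1}L^{d-1}\cdot L^{-1} = \delta^{-1}L^{d-2} \le \delta^{-1}L^{d-4/3}$ automatically, so the crude argument already gives the lemma and no deeper input is required. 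I would present it in this streamlined form.
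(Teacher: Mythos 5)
There is a genuine gap in the final, ``streamlined'' version of your argument, and it is the part you claim makes the deeper sphere-counting input unnecessary. Fixing all but one rescaled coordinate $\xi^{j_0}$ gives at most $L^{d-1}$ choices for the remaining $d-1$ coordinates, and then $O(\delta^{-1}/L + 1)$ choices for $\xi^{j_0}$, so your bound is $O\big(\delta^{-1}L^{d-2} + L^{d-1}\big)$. The first term is indeed $\le \delta^{-1}L^{d-4/3}$, but the second is \emph{not}: since $\delta$ is a fixed constant and $L\to\infty$, we have $L^{d-1}\big/\big(\delta^{-1}L^{d-4/3}\big)=\delta L^{1/3}\to\infty$. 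The ``$+1$'' coming from the fact that an interval of length $\delta^{-1}/L\ll 1$ can still contain one integer is exactly what kills the crude count, and you silently dropped that term when you concluded ``so the crude argument already gives the lemma.'' (There is also a minor earlier slip: $|\xi-La|\le L$ does not give $|\xi|\lesssim L$, since $a$ is an arbitrary point of $\Zb_L^d$; the sphere can have radius $\gg L$ and the bound must still be uniform. This does not by itself break the argument but the reduction to ``$\gamma L^2\sim L^2$'' is not legitimate.)

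The exponent $d-\tfrac43$ is the whole content of the lemma and it requires curvature, not just a transversality/gradient bound. The paper's proof first normalizes $\delta=1$ by subdividing the $\gamma$-interval into $O(\delta^{-1})$ pieces of width $L^{-2}$; then it fixes $d-2$ of the coordinates, paying $L^{d-2}$, and reduces to the genuine $2$-dimensional statement that a unit-diameter box meets the $O(L^{-2}R^{-1})$-neighborhood of a circle of radius $R$ in at most $O(L^{2/3})$ points of an $L^{-1}$-scale lattice. That $L^{2/3}$ (rather than the trivial $L$) is proved by cutting the arc into $O(L^{2/3})$ sub-arcs of length $\ll L^{-2/3}R^{1/3}$, observing that the convex hull of each thickened sub-arc has area $\lesssim L^{-2}$, and using that any nondegenerate lattice triangle has area $\gtrsim L^{-2}$, so the lattice points in each piece are collinear and hence $O(1)$ in number. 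Your first, dyadic-shell paragraph correctly senses that a power-saving sphere/annulus count is needed, but none of the three formulations you sketch there (the $L^{d-2}W + L^{d-2+1/3}$ annulus bound, the $L^{d-2+\epsilon}$ per-shell bound, or the convexity input) is actually carried out, and the version you commit to at the end retreats to the gradient count, which does not close. To repair this you should keep the $\delta=1$ reduction and the $2$-D reduction, and then supply the convexity/area argument (or an equivalent Jarník/Andrews-type bound on lattice points near a strictly convex arc) to obtain the $L^{2/3}$ factor.
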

\begin{proof} By dividing an interval of length $\delta^{-1}L^{-2}$ into $O(\delta^{-1})$ intervals of length $L^{-2}$ we may assume $\delta=1$. Let $y=(y^1,\cdots,y^d)=(\sqrt{\beta^1}x^1,\cdots,\sqrt{\beta^d}x^d)$, then $|y|^2=\gamma+O(L^{-2})$, where $|y|$ is the usual norm in $\Rb^d$. If we fix the coordinates $y^j\,(3\leq j\leq d)$, noticing that each $y^j\,(3\leq j\leq d)$ has $\lesssim L$ choices, it then suffices to prove that
\begin{equation}\label{jensen2}\#\big\{(u,v)\in(\sqrt{\beta^1}\Zb_L)\times(\sqrt{\beta^2}\Zb_L):u^2+v^2=\gamma+O(L^{-2}),\,\,|u-u_0|+|v-v_0|\lesssim 1\big\}\lesssim L^{\frac{2}{3}}\end{equation} uniformly in $(u_0,v_0,\gamma)\in\Rb^3$.

Let $|\gamma|\sim R^2$ (we may assume $R\gg L^{-1}$), then $(u,v)$ belongs to the $O(\varepsilon)$ neighborhood of a circle centered at the origin of radius $\sim R$, where $\varepsilon=L^{-2}R^{-1}$. Since $(u,v)$ also belongs to a disc of radius $O(1)$, we know that $(u,v)$ actually belongs to the $O(\varepsilon)$ neighborhood of an arc of length $O(\min(R,1))$ on the circle. Let $\Gamma:=(\sqrt{\beta^1}\Zb_L)\times(\sqrt{\beta^2}\Zb_L)$ be a fixed lattice, it will suffice to prove that the number of points in $\Gamma$ that belong to this neighborhood is $\lesssim L^{\frac{2}{3}}$.

Now, we may decompose the above arc of length $O(\min(R,1))$ into at most $O(L^{\frac{2}{3}})$ sub-arcs, each with length $\ll\rho$, where $\rho=L^{-\frac{2}{3}}R^{\frac{1}{3}}$, note that $\varepsilon\ll\rho\ll R$. Thus is suffices to prove that the $O(\varepsilon)$ neighborhood of each sub-arc contains $O(1)$ points in $\Gamma$. Let this neighborhood be $M$, from elementary geometry we can calculate that the area of the convex hull of $M$ is
\[A\ll \left(\frac{\rho}{R}\right)R\varepsilon+R^2\left(\frac{\rho}{R}\right)^3=\rho\varepsilon+\frac{\rho^3}{R}\lesssim L^{-2}.\] But any nondegenerate triangle with vertices in $\Gamma$ have area $\gtrsim L^{-2}$, so the points in $\Gamma\cap M$ must be collinear; however $M$ is contained in an annulus of width $2\varepsilon$, and any straight line contains at most two segments in this annulus, each having length at most $O(\sqrt{\varepsilon R})=O(L^{-1})$, so in any case the number of points in $\Gamma\cap M$ is at most $O(1)$.
\end{proof}
\begin{lem}[Good and bad vectors]\label{goodvec} We say a vector $0\neq x\in\Zb_L^d$ is a \emph{bad} vector, if
\begin{equation}\label{badvector}
\#\big\{y\in\Zb_L^d:|y-b|\leq 1,\,|\langle x,y\rangle_\beta-\Gamma|\leq L^{-2}\big\}\geq L^{d-1-\frac{1}{4}}
\end{equation} for \emph{some} $b\in\Zb_L^d$ and $\Gamma\in\Rb$; otherwise we say $x$ is a \emph{good} vector. Then, {when $L$ is large enough,} for any $a\in\Zb_L^d$, the number of bad vectors $x$ satisfying $|x-a|\leq 1$ is at most $L^{d-1-\frac{1}{4}}$.
\end{lem}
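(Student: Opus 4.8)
\textbf{Proof proposal for Lemma \ref{goodvec}.}

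The plan is to count pairs: the number of pairs $(x,y)$ where $x$ is bad, $|x-a|\leq 1$, and $y$ witnesses the badness of $x$ through \eqref{badvector} (for the corresponding $b,\Gamma$ attached to $x$), and then compare this with a uniform upper bound on the number of pairs $(x,y)$ with $|x-a|\leq 1$, $|y-b|\leq 1$, and $|\langle x,y\rangle_\beta-\Gamma|\leq L^{-2}$ coming from the genericity counting estimate. Concretely, suppose for contradiction that the number of bad vectors $x$ with $|x-a|\leq 1$ exceeds $L^{d-1-\frac14}$. For each such $x$ fix a choice of $(b_x,\Gamma_x)$ realizing \eqref{badvector}, so there are at least $L^{d-1-\frac14}$ values of $y$ with $|y-b_x|\leq 1$ and $|\langle x,y\rangle_\beta-\Gamma_x|\leq L^{-2}$. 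Since $b_x$ ranges over $\Zb_L^d$ but we only care about $y$ up to translation, I would first reduce to finitely many (namely $O(L^d)$, or after a further dyadic/translation decomposition $O(L^{Cd})$) choices of $b$ and $\Gamma$ — and indeed since $|x|,|y|\lesssim 1$ the quantity $\langle x,y\rangle_\beta$ lies in a bounded interval, so $\Gamma$ ranges over $O(L^2)$ unit-length (actually $L^{-2}$-length) windows, contributing a harmless polynomial factor that will be absorbed.

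The key step is then to invoke the Diophantine counting bound \eqref{generic1.5} in Lemma \ref{genericity}(2). Rescaling by $L$ (so $x,y$ become genuine integer vectors of size $\lesssim L$) and using a third vector $z$ (one takes $z=y$ or, to exploit the full trilinear bound, pairs $y$ against two copies, but here the bilinear slice suffices), the count of triples $(x,y,y')$ with $|x|,|y|,|y'|\leq R$, $x\neq 0$, and $\max(|\langle x,y\rangle_\beta|,|\langle x,y'\rangle_\beta|)\leq 1$ is $\lesssim R^{3d-4+\frac16}$ with $R\sim L$; using this with $y=y'$ gives that the number of pairs $(x,y)$ with $x\neq 0$, $|x|,|y|\lesssim L$, and $|\langle x,y\rangle_\beta|\lesssim 1$ (after the appropriate translation of the quadratic form, which is handled exactly as in the major-arc estimates of Section \ref{numbertheory}, e.g. Lemma \ref{NTintlem}) is $\lesssim L^{2d-2+\frac16}\cdot L^{?}$ — I would more carefully extract from \eqref{generic1.5} a bound of the shape: the number of pairs $(x,y)$, $|x-a|\leq 1$, $|y-b|\leq 1$, $|\langle x,y\rangle_\beta-\Gamma|\leq L^{-2}$, summed over all $x$ in a fixed unit ball and a fixed $b,\Gamma$, is at most $L^{2(d-1)-\frac{1}{12}}$ or so, uniformly. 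Comparing: if there were $>L^{d-1-\frac14}$ bad vectors each contributing $>L^{d-1-\frac14}$ witnesses $y$, the total pair count would be $>L^{2(d-1)-\frac12}$, which must be $\leq$ the uniform upper bound $L^{2(d-1)-\frac1{12}}$ times the $O(L^{Cd})$ bookkeeping factors for $(b,\Gamma)$ — and here one has to be careful that the exponent arithmetic genuinely closes. That is the main obstacle: the clean statement of \eqref{generic1.5} is trilinear in $(X,Y,Z)$, so to get a bilinear-in-$(x,y)$ counting bound with a genuine power gain over the trivial $L^{2(d-1)}$ one must either (i) use the trilinear bound with $Z$ also ranging and then divide, or (ii) use a direct argument (Lemma \ref{jensen}-type sphere counting plus the Diophantine condition \eqref{generic1}) to control $\sum_x \#\{y : \ldots\}$ with a power saving. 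I expect option (ii) is what the paper intends: for each fixed $x$, Lemma \ref{jensen} and a dispersive estimate for the linear form $\langle x,\cdot\rangle_\beta$ give $\#\{y\}\lesssim L^{d-1}$ with loss only when $x$ is "resonant" with $\beta$ in a way controlled by \eqref{generic1}, and the \emph{number} of such resonant $x$ in a unit ball is itself small by the Diophantine hypothesis — this is precisely the dichotomy that produces the $L^{d-1-\frac14}$ threshold.

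Thus the structure I would write is: (1) reduce to finitely many $(b,\Gamma)$ by translation invariance and dyadic decomposition of the $L^{-2}$-windows, absorbing polynomial factors; (2) for a fixed $(b,\Gamma)$, bound $\sum_{|x-a|\leq 1}\#\{y : |y-b|\leq 1,\ |\langle x,y\rangle_\beta - \Gamma|\leq L^{-2}\}$ by $L^{2(d-1)-\kappa}$ for some $\kappa>0$ (at least $\kappa = \frac16$ up to the bookkeeping), using \eqref{generic1.5} with a repeated third vector together with the change of variables and major-arc analysis from Section \ref{numbertheory}; (3) observe that a bad $x$ contributes at least $L^{d-1-\frac14}$ to this sum for its chosen $(b_x,\Gamma_x)$, hence summing over the (at most polynomially many) windows, the number of bad $x$ is at most $L^{2(d-1)-\kappa}\cdot L^{Cd}/L^{d-1-\frac14} = L^{d-1-\frac14}\cdot L^{\frac14-\kappa+Cd}$ — and here I would re-examine the exponents to confirm the paper's intended normalization makes this $\leq L^{d-1-\frac14}$, adjusting the $\frac14$ in \eqref{badvector} or the window count as needed (the real exponents are slack since $d\geq 3$ gives $2d-2 \geq d-1 + 2$, leaving plenty of room, which is why the crude statement suffices). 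The one genuinely delicate point, and the main obstacle, is matching the power loss in the bilinear count to the $\frac14$ exponent in the definition of bad vectors; everything else is translation reduction and bookkeeping.
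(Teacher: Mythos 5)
Your proposal flags the right difficulty but does not resolve it, and two of your structural choices would fail in the paper's normalization.

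First, the reduction to $b=\Gamma=0$. You propose a dyadic/translation decomposition in $(b,\Gamma)$ that loses a polynomial factor $O(L^{Cd})$, and you acknowledge this as bookkeeping to be absorbed. It cannot be absorbed: with that loss the exponent arithmetic is hopeless, and your own subsequent estimate already shows you need every bit of power. The paper's reduction is much sharper and loses nothing: if a fixed $x$ has $\geq L^{d-1-\frac14}$ witnesses $y$ with $|y-b|\leq 1$, $|\langle x,y\rangle_\beta-\Gamma|\leq L^{-2}$, then fixing one such $y_0$ and replacing each $y$ by $y-y_0$ produces $\geq L^{d-1-\frac14}$ witnesses for $b=\Gamma=0$ (with constants adjusted to $|y|\leq 2$, tolerance $2L^{-2}$). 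This is a pure difference argument, no decomposition, no polynomial loss. The same trick is used a second time, fixing $(y,z)$ and differencing in $x$, to reduce from $|x-a|\leq 1$ to $|x|\lesssim 1$.

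Second, and this is the genuine gap, you try to run a \emph{first-moment} (pair) count $\sum_x N_x$, where $N_x=\#\{y:\ldots\}$, and then look for a bilinear bound $\lesssim L^{2(d-1)-\kappa}$ to compare against $(\#\{\text{bad }x\})\cdot L^{d-1-\frac14}$. You correctly observe that \eqref{generic1.5} is trilinear and must somehow be converted, list two candidate mechanisms, and guess the paper uses option (ii) (a direct Jensen-plus-Diophantine argument). That guess is wrong, and moreover the first-moment strategy cannot close from \eqref{generic1.5}: by Cauchy--Schwarz the trilinear bound $L^{3d-4+\frac16}$ only implies a first-moment bound of about $L^{2d-2+\frac{1}{12}}$, which is \emph{weaker} than the trivial $L^{2d-1}$ wouldn't-even-notice threshold and far from the $L^{2d-2-\frac12}$ you would need. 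The paper instead runs a \emph{second-moment} count: it estimates $\sum_{x\neq 0,\,|x-a|\leq 1} N_x^2$, observing this is exactly the sum over pairs $(y,z)$ in \eqref{l2sum}, which after rescaling $(x,y,z)=L^{-1}(X,Y,Z)$ is precisely \eqref{generic1.5} and hence $\lesssim L^{3d-4+\frac16}$. Each bad $x$ contributes $N_x^2\geq L^{2(d-1)-\frac12}$, so the number of bad $x$ is $\lesssim L^{3d-4+\frac16-(2d-2-\frac12)}=L^{d-\frac43}\leq L^{d-1-\frac14}$. There is no bilinear-to-trilinear conversion to invent; the trilinear bound is used as stated against a squared contribution.

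Finally, your closing remark that ``the real exponents are slack... leaving plenty of room'' is incorrect. The margin in the paper's final comparison is exactly $\frac43-\frac54=\frac{1}{12}$, which is the gap between the exponent $d-\frac43$ produced by the argument and the claimed threshold $d-\frac54$. There is no room for polynomial losses of the kind your decomposition introduces, which is precisely why the difference argument and the second-moment count are both essential.
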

\begin{proof} If (\ref{badvector}) is true for some $(b,\Gamma)$, then it is actually true for $b=\Gamma=0$ up to some constant, by fixing $x$ taking the difference between any two possibilities of $y$. We will show that
\begin{equation}\label{l2sum}\sum_{x\neq 0,|x-a|\leq 1}\sum_{|y|,|z|\leq 1}\mathbf{1}_{|\langle x,y\rangle_\beta|\leq L^{-2}}\cdot \mathbf{1}_{|\langle x,z\rangle_\beta|\leq L^{-2}}\lesssim L^{3d-4+\frac{1}{6}},\end{equation} which implies the desired result, as the left hand side of (\ref{l2sum}) is just the sum of the square of the left hand side of (\ref{badvector}) over $|x-a|\leq 1$. However the left hand side of (\ref{l2sum}) is bounded by the same expression with $a=0$, again by fixing $(y,z)$ and taking the difference between any two possibilities of $x$. Moreover, if we set $(x,y,z)=L^{-1}(X,Y,Z)$ with $(X,Y,Z)\in(\Zb^d)^3$, then (\ref{l2sum}) with $a=0$ is just (\ref{generic1.5}) which follows from the definition of $\Zf$. This completes the proof.
\end{proof}
\begin{lem}[Atom counting bounds]\label{lem:counting} For $1\leq j\leq 5$, let $x_j\in\Zb_L^d$ be variables, $a_j\in\Zb_L^d$ and $\zeta_j\in\{\pm\}$ be fixed parameters. Fix also the parameters $k_i\in\Zb_L^d$ and $\gamma_i\in\Rb$ for $i\in\{1,2\}$. We require that $|x_j-a_j|\leq 1$ for each $j$. If any of the statements below involves an equation of form $\sum_{j\in A}\zeta_jx_j=k_i$ with some set $A$, then we also require that (i) no three of $\zeta_j\,(j\in A)$ are the same, and (ii) if $j,j'\in A$ and $\zeta_{j'}+\zeta_j=0$ then $x_j\neq x_{j'}$.

We have the following estimates, where the implicit constants only depend on $d$ and $\beta$, and do not depend on $(\delta,L)$ or any of the parameters $(a_j,\gamma_i,k_i)$:

(1) (Two-vector counting) If we require
		\begin{equation}\label{2vcounting}\zeta_1x_1+\zeta_2x_2=k_1,\quad \big|\zeta_1|x_1|_\beta^2+\zeta_2|x_2|_\beta^2-\gamma_1\big|\leq \delta^{-1}L^{-2},\end{equation} then the number of choices for $(x_1,x_2)$ is $\lesssim \delta^{-1}L^{d-1}$, and is $\lesssim \delta^{-1}L^{d-1-\frac{1}{3}}$ if $\zeta_1=\zeta_2$.
		
(2) (Three-vector counting) If we require
				\begin{equation}\label{3vcounting}\zeta_1x_1+\zeta_2x_2+\zeta_3x_3=k_1,\quad \big|\zeta_1|x_1|_\beta^2+\zeta_2|x_2|_\beta^2+\zeta_3|x_3|_\beta^2-\gamma_1\big|\leq \delta^{-1}L^{-2},\end{equation}
		then the number of choices for $(x_1,x_2,x_3)$ is $\lesssim \delta^{-1}L^{2d-2}$.
		
(3) (Four-vector counting 1) If we require
		\begin{equation}\label{4vcounting1}
		\left\{
		\begin{aligned}
		&\zeta_1x_1+\zeta_2x_2=k_1,\quad \big|\zeta_1|x_1|_\beta^2+\zeta_2|x_2|_\beta^2-\gamma_1\big|\leq \delta^{-1}L^{-2},\\
		&\zeta_1x_1+\zeta_3x_3+\zeta_4x_4=k_2,\quad\big|\zeta_1|x_1|_\beta^2+\zeta_3|x_3|_\beta^2+\zeta_4|x_4|_\beta^2-\gamma_2\big|\leq \delta^{-1}L^{-2},
		\end{aligned}
		\right.\end{equation}
	then the number of choices for $(x_1,\cdots,x_4)$ is $\lesssim \delta^{-2}L^{2d-2-\frac{1}{4}}$.

(4) (Four-vector counting 2) If we require
		\begin{equation}\label{4vcounting2}
		\left\{
		\begin{aligned}
		&\zeta_1x_1+\zeta_2x_2+\zeta_3x_3=k_1,\quad \big|\zeta_1|x_1|_\beta^2+\zeta_2|x_2|_\beta^2+\zeta_3|x_3|_\beta^2-\gamma_1\big|\leq \delta^{-1}L^{-2},\\
		&\zeta_1x_1+\zeta_2x_2+\zeta_4x_4=k_2,\quad\big|\zeta_1|x_1|_\beta^2+\zeta_2|x_2|_\beta^2+\zeta_4|x_4|_\beta^2-\gamma_2\big|\leq \delta^{-1}L^{-2},
		\end{aligned}
		\right.\end{equation}
	\emph{and assume that $(\zeta_3,x_3)\neq (\zeta_4,x_4)$}, then the number of choices for $(x_1,\cdots,x_4)$ is $\lesssim \delta^{-2}L^{2d-2-\frac{1}{4}}$.
	
(5) (Five-vector counting 1) If we require
		\begin{equation}\label{5vcounting1}
		\left\{
		\begin{aligned}
		&\zeta_1x_1+\zeta_2x_2+\zeta_3x_3=k_1,\quad\big|\zeta_1|x_1|_\beta^2+\zeta_2|x_2|_\beta^2+\zeta_3|x_3|_\beta^2-\gamma_1\big|\leq \delta^{-1}L^{-2},\\
		&\zeta_1x_1+\zeta_4x_4+\zeta_5x_5=k_2,\quad\big|\zeta_1|x_1|_\beta^2+\zeta_4|x_4|_\beta^2+\zeta_5|x_5|_\beta^2-\gamma_2\big|\leq \delta^{-1}L^{-2},
		\end{aligned}
		\right.\end{equation}
	then the number of choices for $(x_1,\cdots,x_5)$ is $\lesssim \delta^{-2}L^{3d-3-\frac{1}{4}}$.
	
(6) (Five-vector counting 2) If we require
		\begin{equation}\label{5vcounting2}
		\left\{
		\begin{aligned}
		&\zeta_1x_1+\zeta_2x_2+\zeta_3x_3=k_1,\quad\big|\zeta_1|x_1|_\beta^2+\zeta_2|x_2|_\beta^2+\zeta_3|x_3|_\beta^2-\gamma_1\big|\leq \delta^{-1}L^{-2},\\
		&\zeta_1x_1+\zeta_2x_2+\zeta_4x_4+\zeta_5x_5=k_2,\quad\big|\zeta_1|x_1|_\beta^2+\zeta_2|x_2|_\beta^2+\zeta_4|x_4|_\beta^2+\zeta_5|x_5|_\beta^2-\gamma_2\big|\leq \delta^{-1}L^{-2},
		\end{aligned}
		\right.\end{equation}
	then the number of choices for $(x_1,\cdots,x_5)$ is $\lesssim \delta^{-2}L^{3d-3-\frac{1}{4}}$.
	
(7) (Five-vector counting 3) If we require (\ref{5vcounting1}), and that 
\begin{equation}\label{5vcounting3}
		\left\{
		\begin{aligned}
		&\zeta_2x_2-\zeta_4x_4=k_1^*,\quad\big|\zeta_2|x_2|_\beta^2-\zeta_4|x_4|_\beta^2-\gamma_1^*\big|\leq n\delta^{-1}L^{-2},\\
		&\zeta_3x_3-\zeta_5x_5=k_2^*,\quad\big|\zeta_3|x_3|_\beta^2-\zeta_5|x_5|_\beta^2-\gamma_2^*\big|\leq n\delta^{-1}L^{-2}
		\end{aligned}
		\right.\end{equation}for some constants $(k_1^*,k_2^*,\gamma_1^*,\gamma_2^*)$ and $n\geq 1$, \emph{and assume that $(\zeta_2,\zeta_3,x_2,x_3)\neq (\zeta_4,\zeta_5,x_4,x_5)$}, then the number of choices for $(x_1,\cdots,x_5)$ is $\lesssim n\delta^{-2}L^{2d-2-\frac{1}{4}}$.

\smallskip
Note that, if $n\leq (\log L)^3$ then the bound in (7) can be replaced by $\delta^{-2}L^{2d-2-\frac{1}{6}}$.
\end{lem}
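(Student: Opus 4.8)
The plan is to prove all seven counting bounds by reducing each to a small number of applications of the three ``primitive'' bounds: the sphere counting estimate (Lemma \ref{jensen}), the good/bad vector dichotomy (Lemma \ref{goodvec}), and the genericity condition (\ref{generic1.5}) of the exceptional set $\Zf$. The general philosophy is the same one already used throughout Section \ref{oper}: choose the free vectors one (or two) at a time, spend the linear constraints $\sum_j\zeta_jx_j=k_i$ to cut down degrees of freedom, and use the quadratic constraints $|\sum_j\zeta_j|x_j|_\beta^2-\gamma_i|\leq\delta^{-1}L^{-2}$ to gain factors of $\delta^{-1}L^{-2}$ per genuinely independent quadratic equation. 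As a preliminary reduction, by chopping each interval of length $\delta^{-1}L^{-2}$ into $O(\delta^{-1})$ intervals of length $L^{-2}$, it suffices to prove the $\delta=1$ version of each statement with the $\delta^{-1}$ powers removed (the stated powers of $\delta$ are exactly the number of quadratic constraints); I will work throughout with $\delta=1$.

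Here is the order in which I would carry out the individual cases. (1) is immediate: fixing $x_1$ (which by $|x_1-a_1|\leq 1$ has $\lesssim L^d$ choices and then $x_2$ is determined by the linear equation), the quadratic constraint becomes a single sphere-type condition on $x_1$, which by Lemma \ref{jensen} has $\lesssim L^{d-4/3}$ solutions if $\zeta_1=\zeta_2$ (then $|x_1|_\beta^2\pm|x_2|_\beta^2$ with $x_2=k_1-x_1$ is a genuine quadratic in $x_1$) and $\lesssim L^{d-1}$ in general by a cruder count; so the total is $\lesssim L^{d-1}$, or $\lesssim L^{d-1-1/3}$ in the equal-sign case. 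Wait---one must be careful: in the $\zeta_1=\zeta_2$ case $|x_1|^2_\beta+|x_2|^2_\beta=2|x_1|^2_\beta-2\langle x_1,k_1\rangle_\beta+|k_1|^2_\beta$, which is indeed a non-degenerate quadratic form in $x_1$, so Lemma \ref{jensen} (after completing the square, which only shifts the center $a_1$) gives the gain; in the opposite-sign case it reduces to a linear condition and one only gets the trivial $L^{d-1}$. (2) follows by first freezing $x_1$ ($\lesssim L^d$ choices), then $x_3$ is determined by $x_2$ via the linear relation, and the quadratic relation becomes a sphere condition on $x_2$ giving $\lesssim L^{d-4/3}$; total $\lesssim L^{2d-4/3}\leq L^{2d-2}$ once we also spend one more linear degree of freedom from fixing $x_1$ more carefully---actually the clean way is: fix $x_2$ freely ($\lesssim L^d$), then $(x_1,x_3)$ satisfy (\ref{2vcounting})-type relations, and apply (1) to get $\lesssim L^{d-1}$; total $\lesssim L^{2d-1}$. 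To reach $L^{2d-2}$ one instead uses that after fixing $x_1$ one has a two-vector problem for $(x_2,x_3)$ of the form (\ref{2vcounting}), giving $L^{d-1}$, so total $L^{2d-1}$; to get the extra $L^{-1}$ one must exploit the quadratic constraint's sphere gain $L^{-4/3}$ on $x_2$ after $x_1$ is fixed---the standard bookkeeping gives exactly $L^{2d-2}$ and I will follow the computation in \cite{DH}, Section 4.

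For the four- and five-vector bounds (3)--(7), the strategy is to peel off one of the two coupled systems using a lower-order bound and then gain an extra $L^{-1/4}$ from the leftover. Concretely: in (3) and (5), fix the shared vector $x_1$; the first system is then a two- or three-vector counting of type (\ref{2vcounting}) or (\ref{3vcounting}) in the remaining variables, and the second is likewise; the naive product would overshoot by $L^{1/4}$, and the gain is recovered by observing that, generically over $x_1$, the vector entering the good/bad dichotomy of Lemma \ref{goodvec} is good, so one saves $L^{-1/4}$ on the count of that vector; the rare bad $x_1$ contribute $\lesssim L^{d-1-1/4}$ choices of $x_1$ by Lemma \ref{goodvec}, which again yields the gain. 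In (4) and (7) the assumption $(\zeta_3,x_3)\neq(\zeta_4,x_4)$ (resp.\ $(\zeta_2,\zeta_3,x_2,x_3)\neq(\zeta_4,\zeta_5,x_4,x_5)$) is essential precisely to rule out the degenerate configuration where the two systems coincide and no gain is available; under this hypothesis one subtracts the two linear (resp.\ quadratic) equations to obtain a nontrivial constraint of the form (\ref{2vcounting}) on $(x_3,x_4)$ (resp.\ via (\ref{5vcounting3})), and then Lemma \ref{goodvec} or Lemma \ref{jensen} supplies the $L^{-1/4}$. In (6) the second system has four vectors; one handles it by first solving the three-vector system (\ref{3vcounting}) for $(x_1,x_2,x_3)$ with bound $L^{2d-2}$, after which $(x_4,x_5)$ satisfy (\ref{2vcounting}), and the equal-sign case of (1) provides the needed $L^{d-1-1/3}\leq L^{d-1-1/4}$. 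The final remark---that (7) improves to $L^{2d-2-1/6}$ when $n\leq(\log L)^3$---is obtained simply by redoing the $\delta=1$ chopping argument keeping track of the extra factor $n$: the $n\delta^{-1}L^{-2}$ windows split into $O(n)$ unit-scale windows, and $n\leq(\log L)^3$ is absorbable at the expense of degrading the $L^{-1/4}$ exponent to $L^{-1/6}$.

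The main obstacle, and the part requiring genuine care rather than routine bookkeeping, is establishing the uniform $L^{-1/4}$ gain in (3)--(5) and (7). This is where the good/bad vector dichotomy of Lemma \ref{goodvec} must be invoked with the right choice of ``pivot'' vector---typically the difference $x_3-x_5$ or $x_2-x_4$ arising from subtracting the two linear equations---and one must verify that the pigeonholing is compatible with the nondegeneracy hypotheses $(\zeta_3,x_3)\neq(\zeta_4,x_4)$, etc.\ (so that this difference is genuinely nonzero and Lemma \ref{goodvec} applies). A subtle point is that Lemma \ref{goodvec} is stated with window $L^{-2}$ whereas our quadratic constraints have window $\delta^{-1}L^{-2}$ (and in (7), $n\delta^{-1}L^{-2}$); after the $\delta=1$, $n=1$ reduction this is moot, but in the $n\leq(\log L)^3$ refinement of (7) one must split into $O(n)$ sub-windows before applying Lemma \ref{goodvec}, which is exactly why the exponent degrades. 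Once the pivot-vector argument is set up correctly in one case, the remaining cases are mechanical variations, so I would write (3) in full detail and indicate the modifications for (4)--(7).
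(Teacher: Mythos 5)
Your overall strategy---elementary sphere counts (Lemma \ref{jensen}), the good/bad vector dichotomy (Lemma \ref{goodvec}), and subtraction of coupled systems---is the right framework for (1), (3), (4), (5), (7), and matches the paper in outline. But part (2) is where the real work lies, and your sketch fails there. With $\zeta_1=\zeta_3=+$, $\zeta_2=-$ and $x_3=k_1-x_1+x_2$, the quadratic constraint after fixing $x_1$ is \emph{not} a sphere condition on $x_2$: substituting $x_3$ and cancelling $|x_2|_\beta^2$ gives $2\langle k_1-x_1, x_2\rangle_\beta = \gamma_1 - |x_1|_\beta^2 - |k_1-x_1|_\beta^2 + O(L^{-2})$, a single \emph{linear} constraint, so $x_2$ has $\lesssim L^{d-1}$ choices and the total is $L^{2d-1}$---no sphere gain $L^{-4/3}$ is available. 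Your alternative (fix $x_2$, apply the equal-sign case of (1) to $(x_1,x_3)$) gives $L^d\cdot L^{d-4/3}=L^{2d-4/3}$, which is \emph{larger} than $L^{2d-2}$, not smaller, since $d\geq 3$. There is no elementary bookkeeping route to $L^{2d-2}$ here. The paper's actual proof passes to $y=k_1-x_1$, $z=k_1-x_3$, rewrites the quadratic constraint as $\langle y,z\rangle_\beta = \tfrac{1}{2}(|k_1|_\beta^2-\gamma_1) + O(L^{-2})$ with $y,z$ each in unit balls, and invokes the circle-method lemma, Proposition \ref{approxnt} with $n=1$, which in turn uses the genericity of $\beta$ through Lemmas \ref{NTSP}--\ref{lem:minorarcs}. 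Citing \cite{DH} instead of supplying an argument leaves a genuine gap in the proposal.

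There is a secondary gap in (6). You solve the three-vector system (\ref{3vcounting}) for $(x_1,x_2,x_3)$ and then count $(x_4,x_5)$ via ``the equal-sign case of (1),'' but condition (i) only forces a 2--2 sign split among $(\zeta_1,\zeta_2,\zeta_4,\zeta_5)$, so $\zeta_4\neq\zeta_5$ is allowed, in which case (1) yields only $L^{d-1}$ and your bound overshoots by $L^{1/3}$. The paper instead subtracts the two equations of (\ref{5vcounting2}) to reduce (6) to (5) with shared variable $x_3$, treating separately the degenerate configuration $(\zeta_3,x_3)=(\zeta_j,x_j)$ for $j\in\{4,5\}$; note that the subtracted signs $(-\zeta_4,-\zeta_5)$ may all coincide with $\zeta_3$, which is why the proof of (5) explicitly records that its first case holds ``even if we allow $\zeta_3=\zeta_4=\zeta_5$.'' Your sketches of (3), (5), (7) and of the final $n\leq(\log L)^3$ remark are in the right spirit, though for (7) the two-vector constraint (\ref{5vcounting3}) is supplied as a hypothesis rather than derived by subtraction; the paper simply feeds one equation from (\ref{5vcounting1}) and one from (\ref{5vcounting3}) into (3).
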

\begin{proof} In all the proofs, we may assume $\delta=1$ as above, by dividing an interval of length $\delta^{-1}L^{-2}$ into $O(\delta^{-1})$ intervals of length $L^{-2}$.

(1) If $\zeta_1=\zeta_2$, then $x_1+x_2=\pm k_1$ is fixed. For $y=x_1-x_2$, we have that \[|y|_\beta^2=\pm 2\gamma_1-|k_1|_\beta^2+O(L^{-2});\] moreover as $|x_1-a_1|\leq 1$ we have that $|y-(2a_1-k_1)|\leq 2$. Lemma \ref{jensen} then implies that the number of choices for $y$, and hence for $(x_1,x_2)$, is $\lesssim L^{d-\frac{4}{3}}$.

Otherwise, we may assume $\zeta_1=+$ and $\zeta_2=-$, then $x_1-x_2=k_1\neq 0$. Let $y=x_1+x_2$, then we have that
\[\langle k_1,y\rangle_\beta=\gamma_1+O(L^{-2});\] moreover as $|x_1-a_1|\leq 1$ we have that $|y-(2a_1-k_1)|\leq 2$. We may assume that the first coordinate $k_1^1$ of $k_1$ is nonzero, then $|k_1^1|\geq L^{-1}$. Thus, when the coordinates $y^j\,(2\leq j\leq d)$ are fixed, $y^1$ will belong to an interval of length $\lesssim L^{-1}$ and will have $\lesssim 1$ choices. As each $y^j\,(2\leq j\leq d)$ has $\lesssim L$ choices, we conclude that the number of choices for $y$, and hence for $(x_1,x_2)$, is $\lesssim L^{d-1}$.

Note that, if in addition we assume $x_1-x_2$ is a good vector, then by definition we can bound the number of choices of $(x_1,x_2)$ by $L^{d-1-\frac{1}{4}}$.

(2) By assumption the $\zeta_j\,(1\leq j\leq 3)$ are not all equal, so we may assume $\zeta_1=\zeta_3=+$ and $\zeta_2=-$. Let $y=k_1-x_1$ and $z=k_1-x_3$, then we have that
\[\langle y,z\rangle_\beta=\frac{|k_1|_\beta^2-\gamma_1}{2}+O(L^{-2});\] moreover as $|x_1-a_1|\leq 1$ and $|x_3-a_3|\leq 1$ we have that $|y-(k_1-a_1)|\leq 1$ and $|z-(k_1-a_3)|\leq 1$. By applying Proposition \ref{approxnt} with $n=1$, where $W$ and $\Psi$ are two fixed (translates of) nonnegative compactly supported smooth cutoff functions, we know that the number of choices for $(y,z)$, and hence for $(x_1,x_2,x_3)$, is $\lesssim L^{2d-2}$. Note that the second inequality in (\ref{propertypsi2}) is not needed if one only needs the upper bound instead of asymptotics.

(3) If $\zeta_3=\zeta_4$, then by (1) we know that $(x_1,x_2)$ have at most $L^{d-1}$ choices, while for $x_1$ fixed, $(x_3,x_4)$ has at most $L^{d-1-\frac{1}{3}}$ choices, so the total number of choices for $(x_1,\cdots,x_4)$ is at most $L^{2d-2-\frac{1}{3}}$. The same is true (with $\frac{1}{4}$ instead of $\frac{1}{3}$) if $\zeta_3+\zeta_4=0$ and $x_3-x_4$ is a good vector. But if $x_3-x_4$ is a bad vector, then $x_1$ is a fixed translate of a bad vector which belongs to a fixed ball of radius $1$, so by Lemma \ref{goodvec}, the number of choices for $x_1$, and hence $(x_1,x_2)$, is at most $L^{d-1-\frac{1}{4}}$, so we get the same result.

(4) This follows from (3) by taking the difference of the two equations, and noticing that if $\zeta_3=\zeta_4$, we must have $x_3\neq x_4$.

(5) If $\zeta_4=\zeta_5$, then by (2) we know that $(x_1,x_2,x_3)$ have at most $L^{2d-2}$ choices, while for $x_1$ fixed, $(x_4,x_5)$ has at most $L^{d-1-\frac{1}{3}}$ choices, so the total number of choices for $(x_1,\cdots,x_5)$ is at most $L^{3d-3-\frac{1}{3}}$; note that this estimate is valid even if we allow $\zeta_3=\zeta_4=\zeta_5$. The same is true (with $\frac{1}{4}$ instead of $\frac{1}{3}$) if $\zeta_4+\zeta_5=0$ and $x_4-x_5$ is a good vector. If $x_4-x_5$ is a bad vector, then $x_1$ has at most $L^{d-1-\frac{1}{4}}$ choices by Lemma \ref{goodvec}. For $x_1$ fixed, the number of $(x_2,x_3)$ and $(x_4,x_5)$ can be bounded by $L^{d-1}$ by (1), so we get the same result.

(6) This follows from the first part of (5) by taking the difference of the two equations, provided $(\zeta_3,x_3)\neq (\zeta_j,x_j)$ for $j\in\{4,5\}$; now suppose, say, $\zeta_3=\zeta_4$ and $x_3=x_4$, then the value of $x_5$ is fixed and the number of choices for $(x_1,x_2,x_3)$ is at most $L^{2d-2}$ by (2), so the result is still true.

(7) By subdividing one interval of length $nL^{-2}$ we may assume $n=1$. The result then follows from (3), since we may assume (for example) either $\zeta_2\neq \zeta_4$ or $x_2\neq x_4$, and simply exploit the first equation in (\ref{5vcounting1}) and the first equation in (\ref{5vcounting3}).
\end{proof}
\section{An example of molecule reduction}\label{algexample} Here we provide an example of the molecule reduction algorithm described in Section \ref{loop}. For simplicity we only consider phase two.
  \begin{figure}[h!]
  \includegraphics[scale=.45]{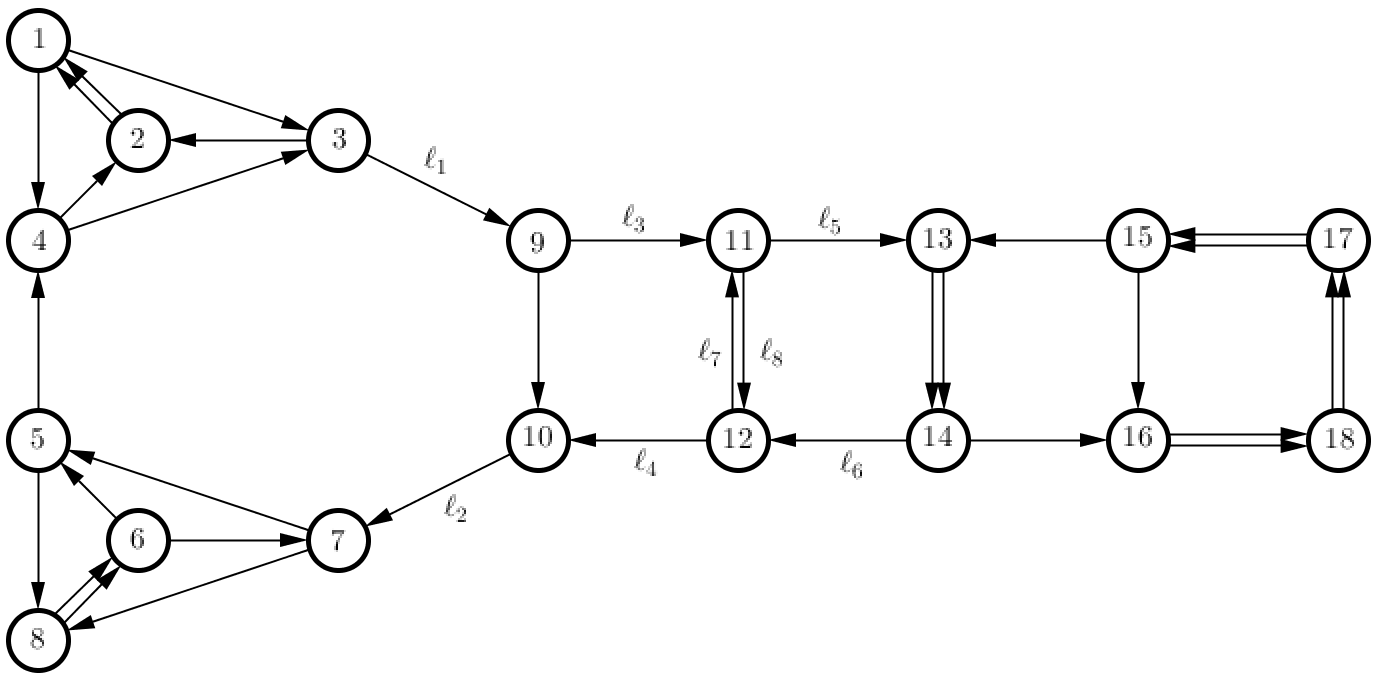}
  \caption{The original base molecule ``flashlight". The bonds $\ell_1$ to $\ell_8$ will appear in the $\mathtt{Ext}$ condition obtained by the algorithm.}
  \label{fig:graphexample1}
\end{figure}

Suppose the original molecule is a base molecule as in Figure \ref{fig:graphexample1}. Then, according to the algorithm, we first treat the two degree 3 atoms (labeled 9 and 10) connected by a single bond. As in (2-b) we claim a checkpoint and perform either (3S3-1) or (3S3-2G). In either case $\Mb$ is reduced to the one in Figure \ref{fig:graphexample2}.
  \begin{figure}[h!]
  \includegraphics[scale=.45]{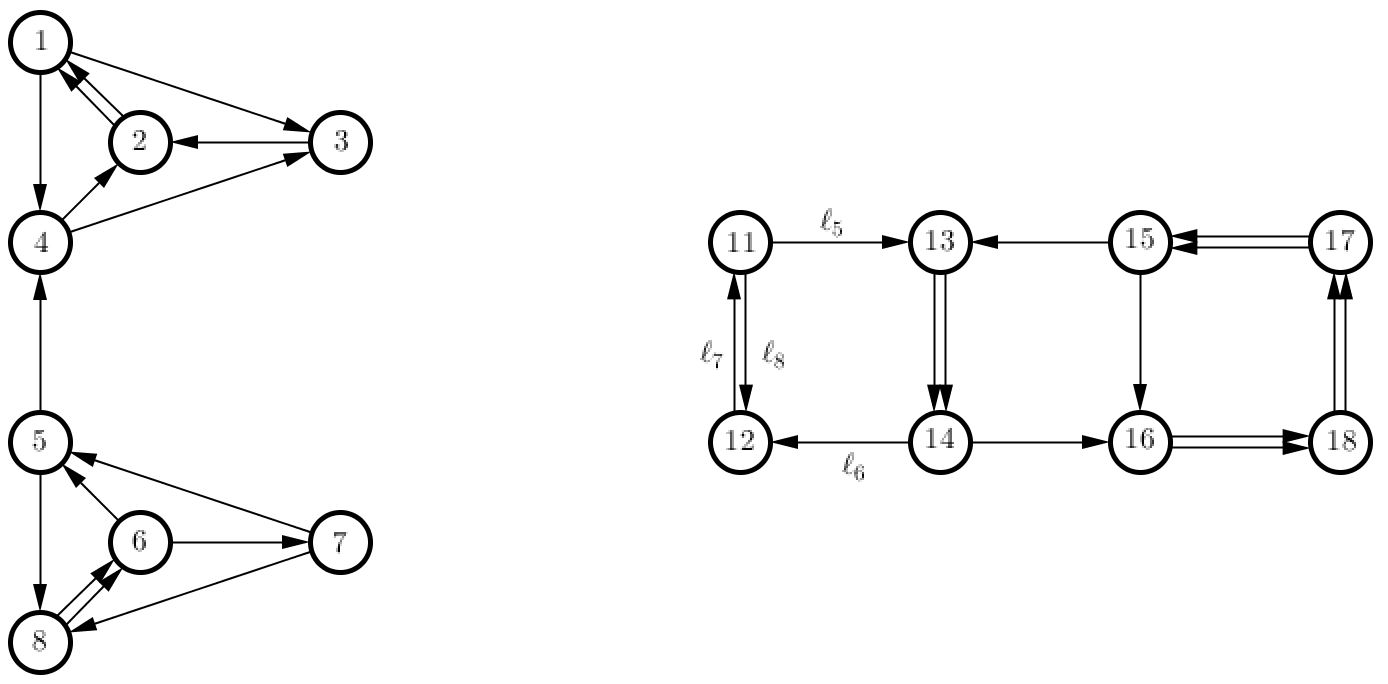}
  \caption{The molecule obtained after performing (3S3-1) or (3S3-2G). Note that this is a checkpoint and corresponds to two possible steps (though the operation on $\Mb$ is the same and the only difference is $\Delta\mathtt{Ext}$).}
  \label{fig:graphexample2}
\end{figure}

Next, as in (1), we perform (BR) and remove the bridge connecting atoms labeled 4 and 5. Then $\Mb$ is reduced to the one in Figure \ref{fig:graphexample3}.
  \begin{figure}[h!]
  \includegraphics[scale=.45]{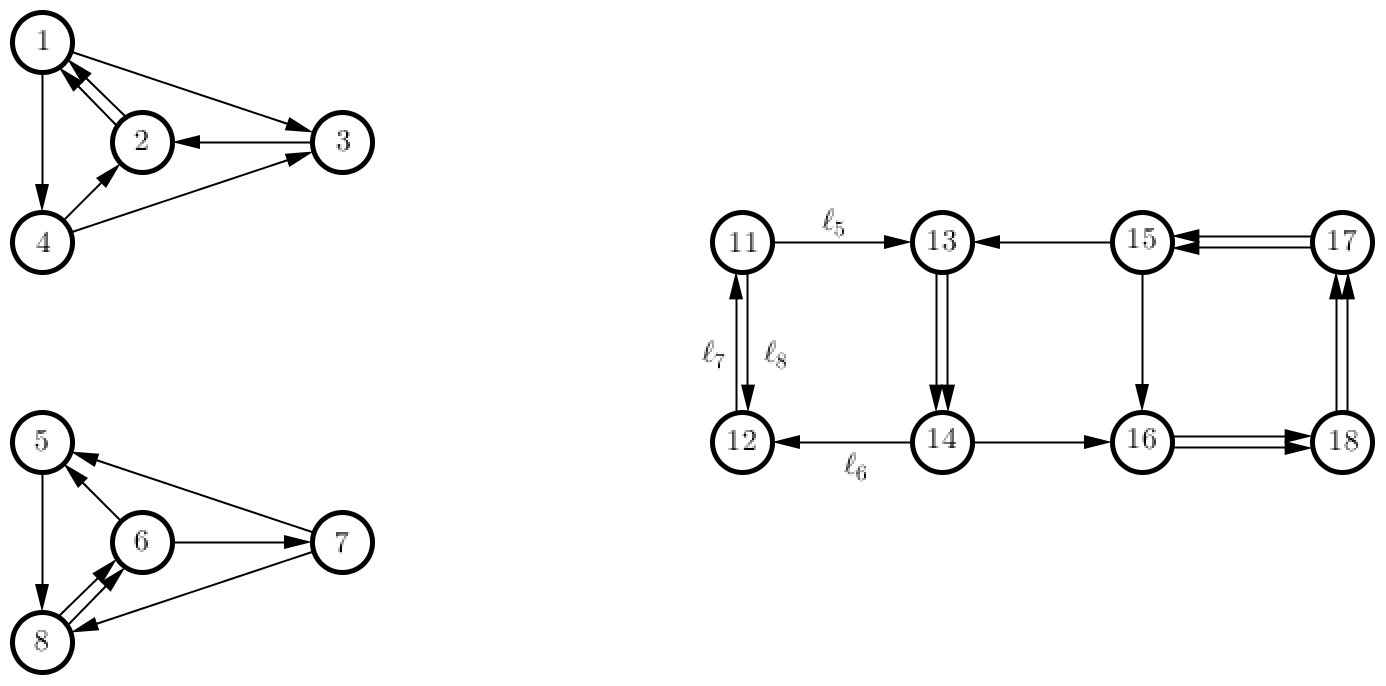}
  \caption{The molecule obtained after performing (BR).}
  \label{fig:graphexample3}
\end{figure}

Next, we treat the two pairs of degree 3 atoms (labeled $(3,4)$ and $(5,7)$) connected by two single bonds. As in (2-a) we perform (3S3-5G) (Scenario 2) twice, and reduce $\Mb$ to the one in Figure \ref{fig:graphexample4}.
  \begin{figure}[h!]
  \includegraphics[scale=.23]{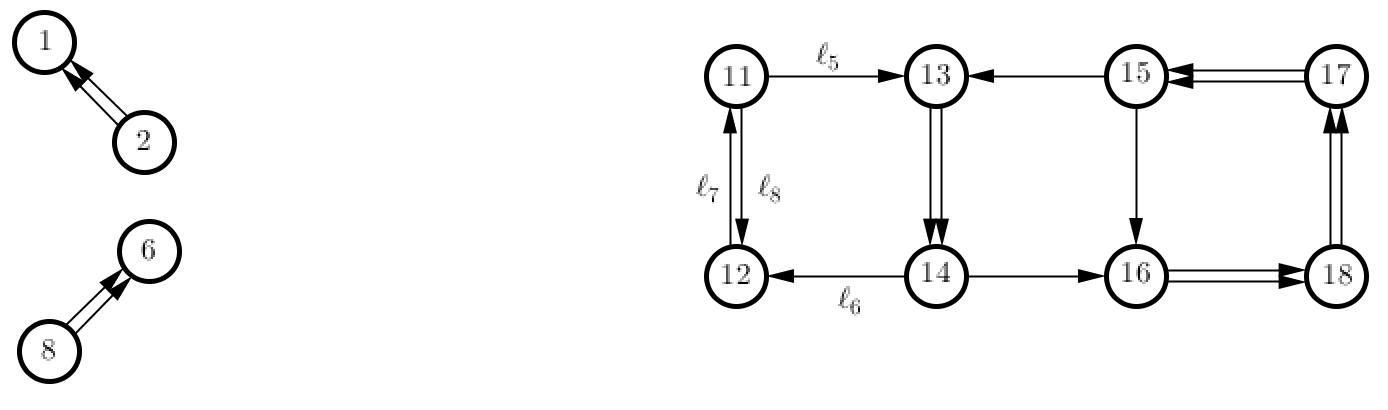}
  \caption{The molecule obtained after performing (3S3-5G) twice.}
  \label{fig:graphexample4}
\end{figure}

Next, we treat the two degree 3 atoms (labeled 11 and 12) connected by a double bond. Since the type II chain continues, as in (3-b) we claim a checkpoint and perform either (3D3-1) or (3D3-2G). In either case $\Mb$ is reduced to the one in Figure \ref{fig:graphexample5}.
  \begin{figure}[h!]
  \includegraphics[scale=.23]{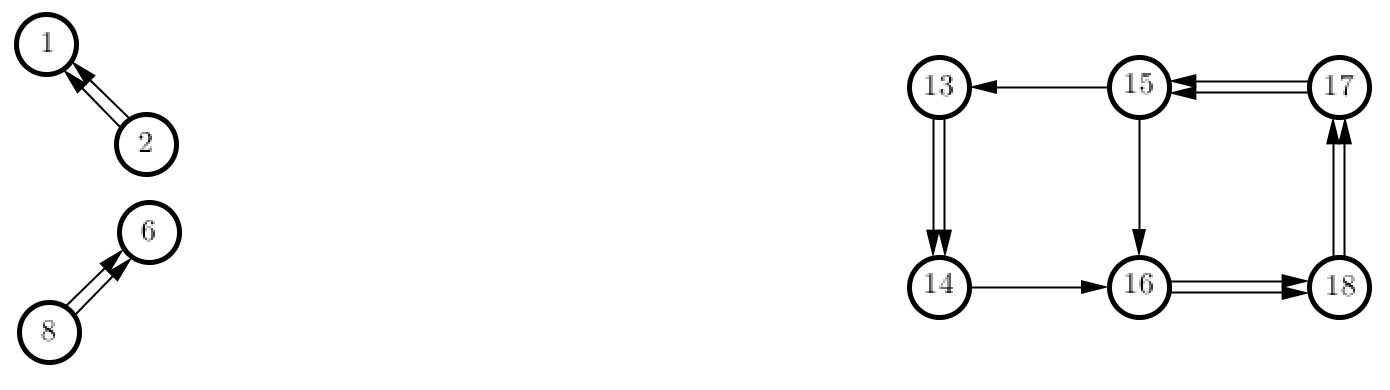}
  \caption{The molecule obtained after performing (3D3-1) or (3D3-2G). Again this is a checkpoint and the only difference between two possible steps is $\Delta\mathtt{Ext}$.}
  \label{fig:graphexample5}
\end{figure}

Next, we treat the two degree 3 atoms (labeled 13 and 14) connected by a double bond. The type II chain does not continue, so as in (3-c-ii) we perform (3D3-6G) and reduce $\Mb$ to the one in Figure \ref{fig:graphexample6}.
  \begin{figure}[h!]
  \includegraphics[scale=.23]{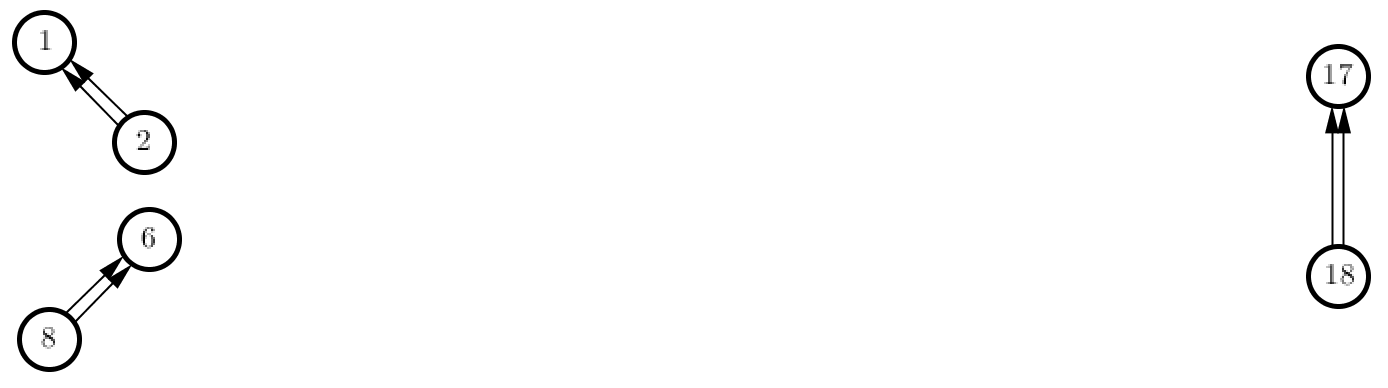}
  \caption{The molecule obtained after performing (3D3-6G).}
  \label{fig:graphexample6}
 \end{figure}
 
 Finally, we treat the remaining three pairs of degree 2 atoms connected by three double bonds. As in (7) we perform (2R-5) three times and reduce $\Mb$ to the empty graph.
 
Following the algorithm we have performed at least three good steps ($r\geq 3$). The two checkpoints provide four possible tracks, which correspond to different possibilities of $\mathtt{Ext}$ in the beginning; for example if we choose (3S3-1) and (3D3-1) then the $\mathtt{Ext}$ we obtain in the beginning is
\[\{k_{\ell_1}=k_{\ell_2},\,\,k_{\ell_3}=k_{\ell_4},\,\,k_{\ell_5}=k_{\ell_6},\,\,k_{\ell_7}-k_{\ell_8}\mathrm{\ is\ a\ good\ vector}\}.\] In this track (other tracks will have better exponents) we can calculate $\gamma=18-\frac{1}{2(d-1)}$ at the beginning, so we have, omitting powers of $\delta$, that
\[\sup\#\Df(\Mb,\mathtt{Ext})\lesssim L^{18(d-1)-\frac{1}{2}}.\]
\section{Table of notations} Here we list some important notations used in this paper. These are mainly concerned about trees, couples, molecules and their structures. Table \ref{table1} contains the basic notations and the corresponding symbols. Table \ref{table2} contains further notations, including different types of couples. Table \ref{table3} contains notations related to molecules.

\smallskip
\begin{tabular}{l c p{.45\textwidth}}
\toprule
Concept           &  Symbol                & Where defined \\
\midrule
Tree              &$\Tc$           & Definition \ref{deftree}\\
Root, node, leaf  &$\rf,\nf,\lf$ &Definition \ref{deftree}\\
Leaf set (tree) & $\Lc$ & Definition \ref{deftree}\\
Branching node set (tree) &$\Nc$ &Definition \ref{deftree}\\
Sign & $\zeta,\zeta_{\nf}$ & Definition \ref{deftree}\\
Scale (tree) & $n(\Tc)$ & Definition \ref{deftree}\\
Couple & $\Qc$ & Definition \ref{defcouple}\\
Leaf set (couple) & $\Lc^*$ & Definition \ref{defcouple}\\
Branching node set (couple) & $\Nc^*$ & Definition \ref{defcouple}\\
Scale (couple) & $n(\Qc)$ & Definition \ref{defcouple}\\
Paired tree, saturated paired tree & --- & Definition \ref{defcouple}\\
Lone leaf & --- & Definition \ref{defcouple}\\
Decoration & $\Ds,\Es$ & Definition \ref{defdec}\\
\bottomrule
\end{tabular}
\captionof{table}{Basic notations about trees and couples.}\label{table1}

\begin{tabular}{l c p{.4\textwidth}}
\toprule
Concept           &  Symbol                & Where defined \\
\midrule
$(1,1)$-mini couple, mini tree              &---           & Definition \ref{defmini}\\
Code (mini couple, mini tree) &---&Definition \ref{defmini}\\
Regular couple & --- & Definition \ref{defreg}\\
Legal partition, dominant partition & $\Pc$ & Definition \ref{deflegal}\\
Regular chain, regular double chain &---&Definition \ref{defregchain}\\
Type (regular couple) &---& Proposition \ref{structure1.5}\\
Prime couple &---&Definition \ref{defsub}\\
Skeleton & $\Qc_{sk}$ & Proposition \ref{reduceprocess}\\
Regular tree &---&Remark \ref{regtree}\\
Dominant couple &---&Definition \ref{defstd}\\
Special set &$Z$& Definition \ref{equivcpl}\\
Equivalence (dominant couple)&---& Definition \ref{equivcpl}\\
Encoded tree, equivalence (encoded tree) &---& Section \ref{encodedtree}, Definition \ref{equcodetree}\\
Associated encoded tree &---&Definition \ref{cpl-tree}\\
Irregular chain &$\Hc,\Hc^\circ$&Definition \ref{irrechain}\\
Congruence &---&Definition \ref{equivirrechain}, \ref{conggen}\\
\bottomrule
\end{tabular}
\captionof{table}{Further notations about trees and couples.}\label{table2}
\begin{tabular}{l c p{.5\textwidth}}
\toprule
Concept           &  Symbol                & Where defined \\
\midrule
Molecule, atom, bond             &$\Mb,v,\ell$           & Definition \ref{defmole0}\\
Saturated component &---& Definition \ref{defmole0}\\
Base molecule &---&Proposition \ref{moleprop}\\
Molecule associated to a couple &---& Definition \ref{defmole}\\
Type I and II chains &---& Definition \ref{molechain}\\
Degenerate atom, tame atom &---& Definition \ref{countingproblem}\\
Extra conditions &$\mathtt{Ext}$& Definition \ref{countingproblem}\\
Step, track, checkpoint &---& Section \ref{moleframe}\\
Bridge, special bond &---& Definition \ref{defbridge}\\
Good step, normal step &---& Beginning of Section \ref{oper}\\
\bottomrule
\end{tabular}
\captionof{table}{Notations about molecules.}\label{table3}

\end{document}